\documentclass[11pt,reqno]{amsart}

\usepackage[T1]{fontenc}
\usepackage[utf8]{inputenc}
\usepackage{amsmath,amssymb,amsthm,mathtools}
\usepackage{mathrsfs}
\usepackage{array}
\usepackage[protrusion=true,expansion=false]{microtype}
\usepackage[a4paper,margin=25mm]{geometry}
\usepackage[
colorlinks=true,
linkcolor=blue,
citecolor=blue,
urlcolor=blue
]{hyperref}
\hypersetup{
  pdftitle={Completing the Arakawa--Moreau Conjecture on Maximal Ideals of Affine Vertex Algebras}, pdfauthor={Sihai Jin},
  pdfsubject={Maximal graded ideals of negative-level affine vertex algebras of types D and E},
  pdfkeywords={affine vertex algebra, minimal W-algebra, Drinfeld--Sokolov reduction, Ramond Zhu algebra, maximal ideal}
}
\usepackage[nameinlink,noabbrev]{cleveref}
\usepackage[
  backend=biber,
  style=numeric-comp,
  sorting=none,
  maxbibnames=99,
  giveninits=true,
  doi=true,
  url=false,
  isbn=false
]{biblatex}
\DeclareNameAlias{author}{family-given}
\DeclareNameAlias{editor}{family-given}
\DeclareFieldFormat[article,incollection,inproceedings]{title}{#1}
\DeclareFieldFormat[article]{pages}{#1}
\AtBeginBibliography{\raggedright}
\renewbibmacro{in:}{%
  \ifentrytype{article}
    {}
    {\printtext{\bibstring{in}\intitlepunct}}}

\allowdisplaybreaks

\renewcommand{\arraystretch}{1.32}
\newtheorem{theorem}{Theorem}[section]
\newtheorem{lemma}[theorem]{Lemma}
\newtheorem{proposition}[theorem]{Proposition}
\newtheorem{corollary}[theorem]{Corollary}
\theoremstyle{definition}

\theoremstyle{remark}
\newtheorem{remark}[theorem]{Remark}

\providecommand{\C}{\mathbb{C}}
\providecommand{\Z}{\mathbb{Z}}
\providecommand{\Vac}{\mathbf{1}}
\providecommand{\g}{\mathfrak{g}}

\providecommand{\W}{\mathcal{W}}
\providecommand{\DS}{\mathrm{DS}}
\providecommand{\wt}{\operatorname{wt}}
\providecommand{\Spec}{\operatorname{Spec}}
\providecommand{\tr}{\operatorname{tr}}
\providecommand{\gr}{\operatorname{gr}}

\title[Completing the Arakawa--Moreau conjecture]
{Completing the Arakawa--Moreau Conjecture on Maximal Ideals of Affine Vertex Algebras}
\author{Sihai Jin}
\address{Department of Mathematics, Sichuan University}
\email{jinsihai@stu.scu.edu.cn}
\date{}
\subjclass[2020]{Primary 17B69; Secondary 17B67, 17B68, 81R10}
\keywords{Affine vertex algebra, affine $W$-algebra, Drinfeld--Sokolov reduction, singular vector, maximal ideal, Ramond Zhu algebra}

\begin{document}
\begin{abstract}
	Arakawa and Moreau constructed explicit singular vectors in a family of
	negative-level universal affine vertex algebras of types $D$ and $E$ and
	conjectured that the ideals generated by these vectors are maximal.
	Previous work established the $n=0$ cases for
	$D_4,E_6,E_7,$ and $E_8$, as well as the level $-2$ statement for
	$D_\ell$, $\ell\ge5$.  We prove all cases left open by these results:
	the level $-1$ statement for $D_\ell$, $\ell\ge5$, and the $n>0$
	negative-level cases for $D_4,E_6,E_7,$ and $E_8$.  Together with the
	previously known cases, this completes Arakawa--Moreau Conjecture~1.
	
	The proof identifies the exact images of the prescribed singular vectors
	under minimal Drinfeld--Sokolov reduction, proves simplicity of the reduced
	quotients by a uniform Ramond--Zhu/Casimir-gap argument combined with Li
	spectral flow, and then lifts simplicity to the affine quotients using
	exactness and a nonvanishing theorem for the reduction functor.  We isolate
	this mechanism as a general minimal-reduction maximality principle and a
	DS--Ramond--Casimir maximality theorem.  Within the same framework, we also
	give an alternative reduction-theoretic proof of the known
	$D_\ell$ level $-2$ theorem and a new rank-reduction proof of the known
	maximal-ideal theorem for the collapsing family $V^{2-2r}(D_{2r})$.
	Consequently, every candidate quotient in Arakawa--Moreau Conjecture~1 is
	the corresponding simple affine vertex algebra.
\end{abstract}

\maketitle
\tableofcontents
\section{Introduction}

\subsection{Background and the conjecture}

Let $\mathfrak g$ be a finite-dimensional simple Lie algebra and let
$V^k(\mathfrak g)$ be the universal affine vertex algebra at level $k$,
with simple graded quotient $L_k(\mathfrak g)$.  Determining the maximal
graded ideal of $V^k(\mathfrak g)$ is a basic presentation problem in
affine representation theory: an explicit set of generators gives a
concrete realization of $L_k(\mathfrak g)$ and makes it possible to study
its modules, associated variety, and quantum Hamiltonian reductions.  We
use the standard conventions for affine vertex algebras and their modules
from \cite{Kac1998,FrenkelBenZvi2004,FrenkelZhu1992}, and for quantized
Drinfeld--Sokolov reduction and the representation theory of affine
$W$-algebras from
\cite{KacRoanWakimoto2003,KacWakimoto2004,Arakawa2004,Arakawa2005,
Arakawa2007}.

Arakawa and Moreau introduced a family of candidate maximal ideals whose
construction is governed by the Joseph ideal and the minimal nilpotent
orbit \cite{ArakawaMoreau2018}.  The finite-$W$-algebra and Slodowy-slice
background underlying this construction goes back to
\cite{Premet2002,GanGinzburg2002}.  Let $\theta$ be the highest root of
$\mathfrak g$.  In the relevant types, the complement of the Cartan square
and the trivial representation in $S^2(\mathfrak g)$ contains distinguished
Kostant components.  After symmetrization, highest-weight vectors in these
components give affine singular vectors at specific negative integral
levels.  Quotienting $V^k(\mathfrak g)$ by the ideals generated by the
prescribed powers produces affine vertex algebras whose associated
varieties are the closures of the minimal nilpotent orbits
\cite[Proposition~5.2]{ArakawaMoreau2018}; their minimal
Drinfeld--Sokolov reductions are lisse
\cite[Theorem~6.1(3)]{ArakawaMoreau2018}.  The remaining question is
whether these candidate quotients are already simple.

We recall the precise form of the conjecture.  For
$\mathfrak g\in\{D_4,E_6,E_7,E_8\}$, set
\begin{equation}\label{eq:intro-exceptional-levels}
  k_n=n-\frac{h^\vee}{6}-1,
  \qquad n\in\mathbb Z_{\ge0}.
\end{equation}
For $E_6,E_7,E_8$, let $w$ be the distinguished highest-weight vector in
the relevant Kostant component; for $D_4$, let $w_1,w_3,w_4$ be the three
triality-related vectors.  For the series $D_\ell$, $\ell\ge5$, set
\begin{equation*}
  k_n=n-2,
  \qquad n\in\mathbb Z_{\ge0},
\end{equation*}
and let $w_A,w_D$ denote the two Kostant components used in the
Arakawa--Moreau construction.  The conjecture asserts that, whenever
$k_n<0$, the quotient by the corresponding singular vectors is simple:
for the exceptional types the generators are the powers
$\sigma(w_i)^{n+1}$, while in type $D_\ell$ they are
\begin{equation*}
  \sigma(w_A)^{n+\ell-3},
  \qquad
  \sigma(w_D)^{n+1}.
\end{equation*}
Throughout the paper, these powers are the associative powers in
$U(\mathfrak g[t^{-1}]t^{-1})$ under the PBW vector-space identification
$V^k(\mathfrak g)\cong U(\mathfrak g[t^{-1}]t^{-1})\mathbf1$, exactly as
in \cite[Section~4]{ArakawaMoreau2018}; they are not iterated normally
ordered products unless this is stated explicitly.
Equivalently, these vectors generate the full maximal graded ideal of the
universal affine vertex algebra
\cite[Conjecture~1]{ArakawaMoreau2018}.

The conjecture contains an infinite-rank family and a finite exceptional
family.  For $D_\ell$, $\ell\ge5$, the inequality $k_n<0$ leaves precisely
$n=0,1$, that is, the levels $-2$ and $-1$.  In the exceptional cases it
leaves $n=0,\ldots,h^\vee/6$.  Arakawa and Moreau proved the $n=0$ cases
for $D_4,E_6,E_7,E_8$ in the proof of their Theorem~3.1
\cite{ArakawaMoreau2018}; the level $-2$ statement for $D_4$ also appears
in \cite[Theorem~4.2]{Perse2013}.  In addition, Adamovi\'c, Kac,
M\"oseneder Frajria, Papi, and Per\v{s}e proved that, for every
$\ell\ge5$, the maximal ideal of $V^{-2}(D_\ell)$ is generated by the two
prescribed Arakawa--Moreau vectors
\cite[Section~6, especially Corollary~6.6(1)]
{AdamovicKacMosenederFrajriaPapiPerse2020}.
In the notation of that reference, their vectors $w_1$ and $w_2$ agree,
up to nonzero scalar multiples, with
$\sigma(w_D)$ and $\sigma(w_A)^{\ell-3}$, respectively.

Thus the cases left open before the present work were the level $-1$
statement for $D_\ell$, $\ell\ge5$ (including $D_5$), together with the
$n>0$ negative-level cases for $D_4,E_6,E_7,$ and $E_8$.  These are the
new maximal-ideal results proved in this paper.  For completeness, and to
exhibit the uniform reduction mechanism, we also give an alternative proof
of the known $D_\ell$ level $-2$ theorem.

Related recent work addresses nearby but logically distinct questions.
Adamovi\'c, Per\v{s}e, and Vukorepa, as well as Jiang and Song, determine
maximal ideals at non-admissible levels in type $A$
\cite{AdamovicPerseVukorepa2026,JiangSong2025}.  Jiang and Song also
determine the weights of singular vectors of minimal conformal weight for
nonsimple simply-laced universal affine vertex algebras
\cite{JiangSongSingular2025}.  Although this result includes types $D$
and $E$, it does not prove that the ideals generated by the
Arakawa--Moreau vectors are maximal.  Work on non-admissible minimal
$W$-algebras instead concerns lisse or rationality properties and explicit
realizations of the simple reductions
\cite{Kawasetsu2018,ArakawaVanEkeren2023,
	ArakawaCreutzigKawasetsu2025,
	CreutzigFasquelKovalchukLinshawNakatsuka2025}; in particular, the recent
type-$D$ orbifold realization at level $-1$ does not identify the kernel of
$V^{-1}(D_\ell)\to L_{-1}(D_\ell)$.  Shan--Yan--Zhao formulate geometric
predictions for associated varieties at non-admissible integer and rational
levels \cite{ShanYanZhao2025,ShanYanZhao2026}.  Such associated-variety
statements do not by themselves determine explicit generators of a maximal
affine ideal.  Apart from the previously known cases explicitly cited
above, these results do not settle the affine-kernel statements proved in
this paper.

\subsection{Main result}

Our new results, combined with the previously known cases, yield the
following complete form of the conjecture.

\begin{theorem}[Completion of the Arakawa--Moreau maximal-ideal conjecture]
\label{thm:main}
Conjecture~1 of \cite{ArakawaMoreau2018} holds at every negative level in
its stated range: the maximal graded ideals are generated by the prescribed
singular vectors.  More precisely:
\begin{enumerate}
\item for every $\ell\ge5$,
\[
\ker\bigl(V^{-2}(D_\ell)\to L_{-2}(D_\ell)\bigr)
 =\bigl\langle\sigma(w_A)^{\ell-3},\sigma(w_D)\bigr\rangle,
\]
\[
\ker\bigl(V^{-1}(D_\ell)\to L_{-1}(D_\ell)\bigr)
 =\bigl\langle\sigma(w_A)^{\ell-2},\sigma(w_D)^2\bigr\rangle;
\]
\item for the exceptional cases, with $k_n$ as in
\eqref{eq:intro-exceptional-levels}, the prescribed ideals are maximal
for
\[
\begin{array}{c|c|c}
\mathfrak g & n & \text{generators}\\ \hline
D_4 & 0,1 & \sigma(w_1)^{n+1},\ \sigma(w_3)^{n+1},\
                 \sigma(w_4)^{n+1}\\
E_6 & 0,1,2 & \sigma(w)^{n+1}\\
E_7 & 0,1,2,3 & \sigma(w)^{n+1}\\
E_8 & 0,1,2,3,4,5 & \sigma(w)^{n+1}.
\end{array}
\]
\end{enumerate}
Consequently, Arakawa--Moreau Conjecture~1 is proved in full.
\end{theorem}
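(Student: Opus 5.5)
The plan is to prove maximality of the prescribed ideal $I_k=\langle\text{prescribed singular vectors}\rangle\subset V^k(\mathfrak g)$ by passing through the minimal Drinfeld--Sokolov reduction $H^0_{f_\theta}$, following the mechanism announced in the abstract. Set $\tilde V_k=V^k(\mathfrak g)/I_k$. The reduction functor $H^0_{f_\theta}$ is exact on the Kazhdan--Lusztig category $\mathcal O_k$ (Arakawa), and it sends $V^k(\mathfrak g)$ to $\W^k(\mathfrak g,f_\theta)$. It also satisfies the nonvanishing property $H^0_{f_\theta}(L(\lambda))\neq0$ unless $\lambda(\alpha_0^\vee)\in\Z_{\ge0}$. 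The first step is to reduce maximality of $I_k$ to two claims:
\begin{enumerate}
\item $H^0_{f_\theta}(\tilde V_k)$ is simple, hence equals $\W_k(\mathfrak g,f_\theta)$.
\item Every nonzero graded ideal $J\subset\tilde V_k$ has $H^0_{f_\theta}(J)\neq0$.
\end{enumerate}
Granting both, exactness gives an injection $H^0(J)\hookrightarrow H^0(\tilde V_k)$ whose image is a proper ideal, because $J$ does not contain the vacuum and the reduction preserves the vacuum in degree $0$. This contradicts simplicity, so $J=0$.

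For claim (2), I would take a lowest-weight component of $J$. This yields a highest-weight vector of weight $\lambda=k\Lambda_0+\mu-m\delta$ with $\mu$ dominant integral and $m>0$, sitting inside $\tilde V_k$. Since $k$ is negative, $\lambda(\alpha_0^\vee)=k-(\mu,\theta)<0$. The nonvanishing theorem therefore applies to the corresponding irreducible subquotient, and exactness propagates this to $H^0(J)\neq0$.

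The substantive work is claim (1), which I would carry out in two steps.
\begin{itemize}
\item \textbf{Computing the images of the generators.} I would compute $H^0_{f_\theta}$ of each singular vector explicitly, showing it maps to a nonzero explicit element of $\W^k$. Typically this element is a power of a Virasoro-type or $\mathfrak g^\natural$-Casimir combination, or a specific field in the $\mathfrak g^\natural$-current subalgebra. This identifies $H^0(\tilde V_k)$ as $\W^k/\langle\text{images}\rangle$, using exactness and the fact that the reduction of the ideal generated by a vector is the ideal generated by its image.
\item \textbf{Proving simplicity of the reduced quotient.} Since the quotient is lisse by Arakawa--Moreau, any proper ideal would contribute a nontrivial Ramond-twisted module of the Ramond Zhu algebra. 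I would compute the Ramond Zhu algebra of $\W^k/\langle\text{images}\rangle$ as a quotient of $U(\mathfrak g^\natural)\otimes\C[L]$ by the Zhu images of the generators. The conditions it imposes are Casimir eigenvalue constraints relating $L_0$ to the $\mathfrak g^\natural$-Casimir; these are the ``Casimir-gap'' conditions. The goal is to show that the only Ramond-type representations allowed have conformal weight that forces them to be the vacuum sector, with the map to ordinary (untwisted) modules given by Li's spectral flow $\Delta(h,z)$ using the $\mathfrak g^\natural$ coweight.
\end{itemize}
This would give that every highest-weight module is a spectral-flow twist of the vacuum, so no proper nonzero ideal can exist: its top component would produce a module of the wrong lowest weight.

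I expect the main obstacle to be the explicit identification of the reduced images, especially for the powers $\sigma(w)^{n+1}$ with $n>0$ and $\sigma(w_A)^{\ell-2}$. There one must control the reduction of an associative power, not of a single quadratic vector. My first attempt would be to work at the level of Zhu/$C_2$ symbols: compute the image of $\sigma(w)$ as a leading symbol in $\gr\W^k\cong\C[\mathcal S_{f_\theta}]$, and argue that the power maps to the power of that symbol plus lower terms that do not affect the Casimir constraint. This argument would be run case by case over the finite list in the table and over the two levels for $D_\ell$. The previously known cases from \cite{ArakawaMoreau2018} and \cite{AdamovicKacMosenederFrajriaPapiPerse2020} can be quoted, but the same argument should reprove them uniformly.
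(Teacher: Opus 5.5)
Your outer architecture matches the paper's. Claim (2) is exactly its detection lemma: take a lowest-degree $\mathfrak g$-highest-weight vector, note $\widehat\lambda(\alpha_0^\vee)=k-\langle\mu,\theta^\vee\rangle<0$, and apply Arakawa's nonvanishing plus exactness. Your final step is its affine-lifting proposition.

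One small repair is needed there. Properness of $H^0(J)$ does not follow from ``$J$ misses the vacuum'', because $e_\theta(-1)\mathbf 1$ is BRST-cohomologous to a nonzero multiple of the $\mathcal W$-vacuum. Instead, apply detection to the nonzero quotient $\tilde V_k/J$. Alternatively, as the paper does, work with $K=\ker(\tilde V_k\to L_k(\mathfrak g))$: once $H^0(\tilde V_k)\to H^0(L_k(\mathfrak g))$ is an isomorphism onto a nonzero target, $H^0(K)=0$ and hence $K=0$.

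The genuine gap is in claim (1), and it has two parts.

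\textbf{Computing the images.} The reduced images are specifically powers of the highest-root current of $\mathfrak g^\natural$, and knowing them only ``up to lower terms'' is not enough. They are $c\,{:}J^{\{e_\vartheta\}\,n+1}{:}$ in the exceptional cases. For $D_\ell$ they are ${:}J_A^{\ell-2}{:}$ and ${:}J_D^2{:}$ at $k=-1$, and ${:}J_A^{\ell-3}{:}$ and $J_D$ at $k=-2$. The paper obtains them exactly in three steps:
\begin{enumerate}
\item Nonvanishing comes from Arakawa's theorem applied to the irreducible quotient of $U(\widehat{\mathfrak g})s$.
\item Cyclicity of $H^0(U(\widehat{\mathfrak g})s)$ comes from Kac--Wakimoto's reduction of Verma modules. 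This is what justifies your assertion that the reduction of the generated ideal is the ideal generated by the image.
\item Exactness comes from the charge estimate $\langle\operatorname{wt}X,\vartheta^\vee\rangle\le2\Delta(X)$ on PBW monomials, which makes $\mathcal W^k_r[r\vartheta]=\mathbb C\,{:}J^{\{e_\vartheta\}r}{:}$ one-dimensional.
\end{enumerate}
Exactness matters because everything downstream rests on the relation $e_\vartheta^{n+1}=0$ in the Ramond Zhu algebra, obtained via $[{:}J^r{:}]_{\mathrm R}=[J]_{\mathrm R}^r$. Lower-order corrections would destroy this relation.

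\textbf{The simplicity step.} More seriously, the mechanism you describe cannot work as stated. The Ramond Zhu algebra is not a quotient of $U(\mathfrak g^\natural)\otimes\mathbb C[L]$. In the Ramond sector the weight-$\frac32$ fields $G^{\{u\}}$ acquire zero modes, and $A_{\mathrm R}$ is a quotient of Premet's finite $W$-algebra, generated by $\mathfrak g^\natural$, $\mathfrak g_{-1/2}$ and $L$. In your presentation the only relations are $e_\vartheta^{n+1}=0$, which do not involve $L$, so no constraint linking $L$ to the Casimir can appear.

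The paper's key step is to trace the relation $[u,v]=\langle u,v\rangle(\Omega^\natural-2(k+h^\vee)L-\frac12p_{\mathfrak g}(k))+Q(u,v)$ over a finite-dimensional module; finite-dimensionality comes from lisse-ness. The commutator has trace zero, and the contraction $\operatorname{tr}Q(u,v)=\langle u,v\rangle\sum_j\gamma_j\tau_j$ yields only an \emph{averaged} identity $\operatorname{tr}_M\mathcal C/\dim M=A(h)$, with $A$ increasing in $h$. This identity must be combined with three further ingredients:
\begin{enumerate}
\item An upper bound $C_{\max}$ on the weighted Casimir of every constituent allowed by the nilpotency relations. For $D_\ell$ at $k=-1$ this also needs a weight-coset exclusion of the spinor types, which would otherwise break the gap in large rank.
\item Li's spectral flow, which places the spectrum of the twisted adjoint module in $h_0+\mathbb Z_{\ge0}$. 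Then $A(h_0+1)>C_{\max}$ forces the lowest energy of a hypothetical ideal to be $h_0$.
\item A second gap or a low-degree argument at $h_0$, forcing the extremal constituent whose highest-weight line is $\mathbb C\mathbf1$, so the ideal contains the vacuum.
\end{enumerate}
Your intended conclusion, that every highest-weight module is a spectral-flow twist of the vacuum, is false in general and is not needed. For example, $\mathcal W_{-2}(D_\ell,f_\theta)\cong L_{\ell-4}(A_1)$ has $\ell-3$ irreducible modules, and for $\ell\ge6$ not all of them are spectral-flow images of the vacuum. Only the lowest space of the twisted adjoint module has to be controlled.
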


The provenance of the cases in Theorem~\ref{thm:main} is as follows.
In part~(1), the level $-2$ equality was proved in
\cite[Section~6, Corollary~6.6(1)]
{AdamovicKacMosenederFrajriaPapiPerse2020}
and is reproved here by a different method, whereas the level $-1$
equality is new.  In part~(2), the cases $n=0$ were proved by Arakawa and
Moreau in the proof of \cite[Theorem~3.1]{ArakawaMoreau2018}, whereas all
cases $n>0$ are new.  Thus the new results of this paper settle every case
that remained open and, together with the cited results, complete the
conjecture.

Theorem~\ref{thm:main} concerns the maximal-ideal statement formulated as
Conjecture~1 in \cite{ArakawaMoreau2018}.  It should not be confused with
the separate ``if and only if'' classification of associated varieties in
Conjecture~2 of that paper.  For every case listed above, the theorem
identifies the candidate quotient with the corresponding simple affine
vertex algebra; hence the singular-vector presentations and associated
varieties obtained in the original construction apply directly to the
simple quotients.

\subsection{Strategy of proof}

The proof has a common structural core, but three points require more than
an associated-graded or dimension argument.

\smallskip
\noindent\emph{Exact reduction of the generated ideal.}
For each singular vector we compute its actual class under minimal
Drinfeld--Sokolov reduction, not merely its principal symbol.  An extremal
PBW-line argument shows that the relevant weight space in the universal
minimal $W$-algebra is one-dimensional.  Once nonvanishing is verified,
the reduced class is therefore the prescribed power of an extremal affine
current.  This yields an exact presentation of the reduced quotient.

\smallskip
\noindent\emph{Simplicity of the reduced quotient.}
Lisse-ness alone does not imply simplicity.  We pass to the finite-dimensional
Ramond Zhu algebra, using twisted Zhu theory
\cite{DongLiMason1998}, the relationship with finite $W$-algebras developed
in \cite{DeSoleKac2006}, and the explicit Ramond-sector relations of
\cite{KacMosenederFrajriaPapi2025}.  The quadratic relation for the
weight-$3/2$ generators gives, after taking traces, a weighted Casimir
identity.  Li spectral flow \cite{Li2012} restricts the possible lowest
Ramond energies.  A finite type-dependent Casimir estimate then creates a
gap: every nonzero ideal would have to contain an extremal
$\mathfrak g^\natural$-constituent, and the low-degree weight-space
calculation identifies that constituent with the vacuum line.  Hence the
reduced quotient is simple.  The polynomial controlling the quadratic
relation and the collapsing endpoint are taken from
\cite{AdamovicKacMosenederFrajriaPapiPerse2018}.

\smallskip
\noindent\emph{Lifting simplicity to the affine quotient.}
The implication
\[
  H^0_{\mathrm{DS},f_\theta}(M)=0\quad\Longrightarrow\quad M=0
\]
is false without hypotheses.  We therefore prove a nonvanishing statement
for every nonzero graded submodule that occurs here.  Choosing a
lowest-degree affine highest-weight vector places an irreducible quotient
in the category on which minimal reduction is exact and nonzero.  Applied
to the kernel of the map from the candidate quotient to
$L_k(\mathfrak g)$, this shows that simplicity of the reduced quotient
forces simplicity of the affine quotient.  In
Theorem~\ref{thm:common-minimal-reduction-maximality} we isolate the resulting
general principle: at a negative integral level, a graded candidate quotient
with nonzero simple minimal reduction is already the simple affine quotient.
Theorem~\ref{thm:common-DS-Ramond-Casimir-maximality} combines this lifting
principle with the Ramond--Casimir gap into a reusable maximality theorem.
Section~\ref{sec:collapsing-application} then applies the first principle
outside the Arakawa--Moreau family, giving a rank-reduction proof of the
known maximal-ideal theorem at the collapsing levels
$V^{2-2r}(D_{2r})$.

These three steps are formalized once in
Section~\ref{sec:common-framework}.  The dependence on the Lie type is
confined to the following finite data:
\begin{enumerate}
\item the exact extremal BRST image of each singular vector;
\item the scalar contraction in the Ramond--Casimir trace identity;
\item the maximal and next-to-maximal Casimir values allowed by the
      nilpotence relations, together with a low-degree exclusion of the
      competing extremal weights.
\end{enumerate}
For the series $D_\ell$ these verifications are uniform in $\ell$.  The
level $-2$ endpoint, whose affine maximal-ideal theorem is already known
\cite[Section~6, Corollary~6.6(1)]
{AdamovicKacMosenederFrajriaPapiPerse2020}, is included in order to give an
alternative proof within the present framework.  It is treated separately
at the local computational stage, where the minimal $W$-algebra collapses
to its surviving $A_1$-affine part; the final simplicity and affine-lifting
arguments are the same as in the other cases.

\subsection{Organization of the paper}

Section~\ref{sec:common-framework} develops the common reduction,
Ramond Zhu, Casimir-gap, spectral-flow, and affine-lifting framework, and
culminates in the general minimal-reduction and DS--Ramond--Casimir
maximality theorems.
Section~\ref{sec:case-dl} proves the previously open level $-1$ theorem for
the complete series $D_\ell$, $\ell\ge5$, and gives an alternative proof
of the known level $-2$ theorem.  Section~\ref{sec:case-d4} treats the
triality case $D_4$, and Sections~\ref{sec:case-e6}--\ref{sec:case-e8}
treat $E_6,E_7,E_8$, respectively.  Section~\ref{sec:collapsing-application}
gives the independent rank-reduction application at the type-$D$ collapsing
levels.  The appendices collect the root data, classical BRST computations,
and finite Casimir tables that are genuinely type dependent.
\section{Common framework}
\label{sec:common-framework}

All vector spaces, Lie algebras, and vertex algebras are over $\mathbb C$.
We normalize the invariant form $(\,\cdot\mid\cdot\,)$ so that long roots
have squared length~$2$, use Bourbaki root numbering, and write
\[
  Y(a,z)=\sum_{r\in\mathbb Z}a_{(r)}z^{-r-1}.
\]
For a simple Lie algebra $\mathfrak g$ with highest root $\theta$, choose an
$\mathfrak{sl}_2$-triple $(e_\theta,\theta^\vee,f_\theta)$ and set
$x_\theta=\theta^\vee/2$.  The associated minimal grading is
\begin{equation*}
  \mathfrak g=\mathfrak g_{-1}\oplus\mathfrak g_{-1/2}
  \oplus\mathfrak g_0\oplus\mathfrak g_{1/2}\oplus\mathfrak g_1,
  \qquad
  \mathfrak g_0=\mathfrak g^\natural\oplus\mathbb Cx_\theta.
\end{equation*}
The type-dependent descriptions of $\mathfrak g^\natural$ and
$\mathfrak g_{-1/2}$ are recorded in the case sections.

\subsection{Affine and minimal \texorpdfstring{$W$}{W}-algebras}

Let $V^k(\mathfrak g)$ be the universal affine vertex algebra and
$L_k(\mathfrak g)$ its simple graded quotient.  We use
\begin{equation*}
  [a_\lambda b]=[a,b]+k(a\mid b)\lambda,
  \qquad a,b\in\mathfrak g.
\end{equation*}
The universal and simple minimal $W$-algebras are
\begin{align*}
  \mathcal W^k(\mathfrak g,f_\theta)
  &=H^0_{\mathrm{DS},f_\theta}\!\left(V^k(\mathfrak g)\right),\\
  \mathcal W_k(\mathfrak g,f_\theta)
  &=H^0_{\mathrm{DS},f_\theta}\!\left(L_k(\mathfrak g)\right)
\end{align*}
whenever the second reduction is nonzero.  The universal algebra is strongly
generated by currents $J^{\{a\}}$ ($a\in\mathfrak g^\natural$), fields
$G^{\{u\}}$ ($u\in\mathfrak g_{-1/2}$), and the conformal vector $\omega$,
of weights $1,3/2,2$, respectively.  Our affine and BRST conventions are
those of \cite{Kac1998,FrenkelBenZvi2004,KacRoanWakimoto2003,
KacWakimoto2004,KacWakimoto2005,Arakawa2007}.

\subsection{Exactness, nonvanishing, and associated varieties}
\label{subsec:common-DS}

Minimal reduction is exact on the affine category $\mathcal O_k$
\cite[Theorem~6.7.1]{Arakawa2005}.  We use the convention of
\cite[Section~2.12]{Arakawa2005}: an object of $\mathcal O_k$ is an
$\widehat{\mathfrak h}$-weight module with finite-dimensional weight spaces
whose support is contained in a finite union of sets
$\mu_i-\widehat Q_+$, where $\widehat Q_+$ is the positive affine root cone.
The following observation verifies both conditions for every affine module
used below.

\begin{lemma}[Graded subquotients lie in the exactness category]
\label{lem:common-graded-subquotient-category-O}
Let $M$ be a graded subquotient of $V^k(\mathfrak g)$ such that
\[
 M=\bigoplus_{r\in d_0+\mathbb Z_{\ge0}}M[r],\qquad
 \dim M[r]<\infty,
\]
and every $M[r]$ is a finite-dimensional $\mathfrak g$-module.  Then
$M\in\mathcal O_k$.  In fact,
\[
 \operatorname{Supp}_{\widehat{\mathfrak h}}M
 \subset k\Lambda_0-\widehat Q_+.
\]
Consequently, minimal reduction is exact on every short exact sequence of
such modules.
\end{lemma}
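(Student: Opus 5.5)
We first compute the $\widehat{\mathfrak h}$-support of $V^k(\mathfrak g)$ itself and then observe that every subquotient inherits it. Using the PBW identification $V^k(\mathfrak g)\cong U(\mathfrak g[t^{-1}]t^{-1})\Vac$, the vacuum has $\widehat{\mathfrak h}$-weight $k\Lambda_0$. We take $\widehat{\mathfrak h}=\mathfrak h\oplus\C K\oplus\C d$ with $d$ acting by minus the conformal degree. A PBW monomial $a^1_{(-n_1)}\cdots a^s_{(-n_s)}\Vac$, with $a^i$ root vectors or Cartan elements of $\mathfrak g$ and $n_i\ge1$, then has weight
\[
k\Lambda_0+\sum_i\bigl(\alpha_i-n_i\delta\bigr),
\]
where $\alpha_i\in\Delta\cup\{0\}$. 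Each root $\alpha_i-n_i\delta$ with $n_i\ge1$ is a negative affine root (real or imaginary), that is, $n_i\delta-\alpha_i\in\widehat Q_+$. Hence
\[
\operatorname{Supp}_{\widehat{\mathfrak h}}V^k(\mathfrak g)\subset k\Lambda_0-\widehat Q_+ .
\]
Graded submodules and quotients are $\widehat{\mathfrak h}$-stable, because the grading is the $d$-eigenspace decomposition and $\mathfrak h$ acts by zero modes that commute with $L_0$. Therefore the support of any graded subquotient $M$ is contained in the support of $V^k(\mathfrak g)$, which gives the displayed inclusion.

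Next we show that $M$ is an $\widehat{\mathfrak h}$-weight module with finite-dimensional weight spaces. Each $M[r]$ is a finite-dimensional $\mathfrak g$-module, so $\mathfrak h$ acts semisimply on it. The central element $K$ acts by $k$, and $d$ acts on $M[r]$ by the scalar $-r$. Thus $M=\bigoplus_r M[r]$ decomposes into simultaneous eigenspaces. Each weight space lies inside a single $M[r]$, since the $\delta$-coefficient of the weight determines $r$. Consequently every weight space is finite-dimensional. The support lies in the single cone $k\Lambda_0-\widehat Q_+$, which is one set of the form $\mu_i-\widehat Q_+$. This verifies the convention of \cite[Section~2.12]{Arakawa2005}, so $M\in\mathcal O_k$.

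For the exactness consequence, a short exact sequence of such modules is a short exact sequence in $\mathcal O_k$, and \cite[Theorem~6.7.1]{Arakawa2005} yields exactness of $H^0_{\mathrm{DS},f_\theta}$ and vanishing of the other cohomologies. The hypothesis $\dim M[r]<\infty$ is automatic for subquotients of $V^k(\mathfrak g)$, and so is the $\mathfrak g$-module structure when the grading is by $L_0$; we nevertheless use both exactly as stated.

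The only point needing care is the sign and normalization convention for $d$ and $\delta$. We have to check that conformal degree $r$ corresponds to $-r\delta$, so that raising the degree moves the weight down in $\widehat Q_+$, and that the Cartan modes $h_{(-n)}$ contribute the imaginary roots $-n\delta$, which also lie in $-\widehat Q_+$. Once this convention is matched with that of \cite{Arakawa2005}, the rest of the argument is bookkeeping.
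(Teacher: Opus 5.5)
Your proposal is correct and follows essentially the same route as the paper: you bound the $\widehat{\mathfrak h}$-support of $V^k(\mathfrak g)$ via the PBW weights $k\Lambda_0-\sum_i(n_i\delta-\alpha_i)$, pass the bound to graded subquotients, get finite-dimensional weight spaces because the $\delta$-coefficient fixes the degree, and then invoke Arakawa's exactness theorem on $\mathcal O_k$. The only differences are that you verify the two conditions in the opposite order and that you comment explicitly on the sign convention for $d$; that convention agrees with the paper's.
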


\begin{proof}
Write
$\widehat{\mathfrak h}=\mathfrak h\oplus\mathbb CK\oplus\mathbb CD$.
Each $M[r]$ is a finite-dimensional $\mathfrak g$-module, so $\mathfrak h$
acts semisimply on it; moreover, $K$ acts as $k$ and the inherited affine
grading diagonalizes $D$.  Hence $M$ is an
$\widehat{\mathfrak h}$-weight module.  A fixed affine weight occurs in a
single homogeneous component, and its weight space is contained in a
finite-dimensional $\mathfrak h$-weight space of that component.  Thus all
$\widehat{\mathfrak h}$-weight spaces of $M$ are finite-dimensional.

It remains to verify the support condition.  A PBW monomial in the vacuum
module has the form
\[
 x_1(-m_1)\cdots x_s(-m_s)\mathbf1,
 \qquad m_i\in\mathbb Z_{>0},
\]
where $x_i\in\mathfrak g_{\alpha_i}$ and $\alpha_i$ may be zero for a
Cartan vector.  Its affine weight is
\[
 k\Lambda_0-\sum_{i=1}^s(m_i\delta-\alpha_i).
\]
For $\alpha_i\ne0$, the element $m_i\delta-\alpha_i$ is a positive affine
root, while for $\alpha_i=0$ it is the positive imaginary root
$m_i\delta$.  Therefore
\[
 \operatorname{Supp}_{\widehat{\mathfrak h}}V^k(\mathfrak g)
 \subset k\Lambda_0-\widehat Q_+.
\]
Taking a submodule or quotient cannot create new weights, so the same
inclusion holds for every graded subquotient $M$.  The defining support
condition for $\mathcal O_k$ is thus satisfied with the single upper weight
$k\Lambda_0$, proving the assertion.
\end{proof}

For an irreducible affine highest-weight module
$L(\widehat\lambda)$, the form of the nonvanishing criterion needed here is
\begin{equation}
  \alpha_0^\vee=K-\theta^\vee,
  \label{eq:common-affine-simple-coroot}
\end{equation}
\begin{equation}
  \widehat\lambda(\alpha_0^\vee)<0
  \quad\Longrightarrow\quad
  H^0_{\mathrm{DS},f_\theta}\!\left(L(\widehat\lambda)\right)\ne0.
  \label{eq:common-DS-nonvanishing}
\end{equation}
This is the nonvanishing part of the irreducible minimal-reduction theorem
\cite[Theorem~6.7.4]{Arakawa2005}: outside the nonnegative-integral affine-wall
values, the zeroth reduction of an irreducible highest-weight module is
nonzero and irreducible.  The displayed strict inequality is the specialization
needed throughout this paper.
For the finitely generated quotients considered below, associated varieties
satisfy \cite[Theorem~6.1(3)]{ArakawaMoreau2018}
\begin{equation}
  X_{H^0_{\mathrm{DS},f_\theta}(M)}
  =X_M\cap\mathcal S_{f_\theta}.
  \label{eq:common-associated-variety}
\end{equation}
Thus a quotient with associated variety $\overline{\mathbb O}_{\min}$ has
lisse minimal reduction.  For the finite-order Ramond twist, the standard
Zhu filtration gives the surjection
\eqref{eq:common-C2-Ramond-Zhu}; hence a lisse vertex algebra has
finite-dimensional Ramond Zhu algebra.  We record this implication as
\begin{equation}
 V\text{ lisse}\quad\Longrightarrow\quad
 \dim A_{\mathrm R}(V)<\infty.
 \label{eq:common-lisse-Ramond-finite}
\end{equation}

\begin{lemma}[Nonvanishing of a reduced singular generator]
\label{lem:common-reduced-singular-generator}
Let $k\in\mathbb Z_{<0}$, let $0\ne s\in V^k(\mathfrak g)$ be an affine
singular vector of positive conformal degree, and put
\[
 N=U(\widehat{\mathfrak g}^{\prime})s.
\]
Let $\widehat\lambda$ be the affine highest weight of $s$, including the
external derivation grading.  Assume
\begin{equation}
 \widehat\lambda(\alpha_0^\vee)<0.
 \label{eq:common-singular-generator-negative-wall}
\end{equation}
Then
\begin{equation*}
 H^0_{\mathrm{DS},f_\theta}(N)\ne0,
 \qquad
 [s]_{\mathrm{DS}}\ne0
 \quad\text{in}\quad
 \mathcal W^k(\mathfrak g,f_\theta),
\end{equation*}
and $H^0_{\mathrm{DS},f_\theta}(N)$ is cyclic, generated by
$[s]_{\mathrm{DS}}$.
\end{lemma}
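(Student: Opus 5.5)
The plan is to realize $N$ as a graded subquotient of $V^k(\mathfrak g)$, place it in $\mathcal O_k$, and compare it with the irreducible module $L(\widehat\lambda)$ through an exact sequence.

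\emph{Step 1: $N$ lies in the exactness category.} Since $s$ is singular, $N=U(\widehat{\mathfrak g}')s=U(\widehat{\mathfrak n}_-)s$ is a highest-weight module of highest weight $\widehat\lambda$. It is a graded submodule of $V^k(\mathfrak g)$, with graded pieces bounded below by the conformal degree of $s$. Each graded piece is finite dimensional and is a finite-dimensional $\mathfrak g$-module, because $\mathfrak g$ preserves degree and the pieces of $V^k(\mathfrak g)$ are integrable $\mathfrak g$-modules. Hence Lemma~\ref{lem:common-graded-subquotient-category-O} applies: $N\in\mathcal O_k$, and minimal reduction is exact on sequences built from $N$.

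\emph{Step 2: nonvanishing.} Let $R\subset N$ be the maximal proper submodule, so that $N/R\cong L(\widehat\lambda)$. The submodule $R$ is again a graded subquotient of $V^k(\mathfrak g)$ of the type covered by Lemma~\ref{lem:common-graded-subquotient-category-O}. Exactness gives a surjection
\[
H^0_{\mathrm{DS},f_\theta}(N)\twoheadrightarrow H^0_{\mathrm{DS},f_\theta}(L(\widehat\lambda)),
\]
and the target is nonzero by \eqref{eq:common-DS-nonvanishing} together with hypothesis \eqref{eq:common-singular-generator-negative-wall}. Therefore $H^0_{\mathrm{DS},f_\theta}(N)\ne0$.

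\emph{Step 3: cyclicity and $[s]_{\mathrm{DS}}\ne0$.} Exactness applied to $0\to N\to V^k(\mathfrak g)\to V^k(\mathfrak g)/N\to0$ makes $H^0_{\mathrm{DS},f_\theta}(N)$ a submodule, in fact an ideal, of $\mathcal W^k(\mathfrak g,f_\theta)$. For cyclicity I would use Arakawa's description of the reduction of a highest-weight module in $\mathcal O_k$. By the Euler--Poincaré character formula and vanishing of higher cohomology, $H^0_{\mathrm{DS},f_\theta}(M(\widehat\lambda))$ is a Verma-type $\mathcal W$-module generated by the class of its highest-weight vector. Since $N$ is a quotient of the Verma module $M(\widehat\lambda)$, exactness gives a surjection $H^0(M(\widehat\lambda))\twoheadrightarrow H^0(N)$. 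The latter is therefore generated by the image of the highest-weight class, namely $[s]_{\mathrm{DS}}$. Being nonzero and cyclic on this class forces $[s]_{\mathrm{DS}}\ne0$. Its image in $\mathcal W^k(\mathfrak g,f_\theta)$ under the injection $H^0(N)\hookrightarrow\mathcal W^k$ is also nonzero, which is the second claim.

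\emph{Main obstacle.} The hardest step is justifying that the highest-weight class of $H^0(M(\widehat\lambda))$ is literally the BRST class of $s$, viewed as a cocycle in the complex of $V^k(\mathfrak g)$. The issue is that Arakawa uses a twisted (``$-$'') reduction whose highest-weight vector is $s\otimes|0\rangle$ in the ghost Fock space. I would check directly that $s\otimes|0\rangle$ is $d$-closed. The $\widehat{\mathfrak n}_+$-annihilation of $s$ kills all terms with positive modes of $\mathfrak g_{>0}$ acting on $s$. The character term $\chi(f_\theta)$ contributes only $\psi^*$-modes annihilating $|0\rangle$ at conformal weight zero. Having checked closedness, I would identify the class of $s\otimes|0\rangle$ with the generator, appealing to \cite[Section~6]{Arakawa2005}, which states that the highest-weight vector of the reduced Verma module is the class of $v_{\widehat\lambda}\otimes|0\rangle$.
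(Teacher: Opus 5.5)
Your proposal is correct and follows essentially the same route as the paper: exactness gives a surjection of $H^0_{\mathrm{DS},f_\theta}(N)$ onto the nonzero $H^0_{\mathrm{DS},f_\theta}(L(\widehat\lambda))$, cyclicity on $[s]_{\mathrm{DS}}$ comes from reducing a Verma module surjecting onto $N$, and exactness on $0\to N\to V^k(\mathfrak g)\to V^k(\mathfrak g)/N\to0$ gives the injection into $\mathcal W^k(\mathfrak g,f_\theta)$. The differences are cosmetic: the paper first shifts the derivation so that $s$ has $D$-eigenvalue zero before comparing with the Verma module, and it disposes of your ``main obstacle'' simply by citing \cite[Theorem~6.3]{KacWakimoto2004}, without checking closedness of $s\otimes|0\rangle$ by hand.
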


\begin{proof}
The graded affine highest-weight module $N$ has an irreducible
highest-weight quotient
\[
 N\twoheadrightarrow L(\widehat\lambda).
\]
By \eqref{eq:common-singular-generator-negative-wall} and the irreducible
nonvanishing criterion \eqref{eq:common-DS-nonvanishing},
\[
 H^0_{\mathrm{DS},f_\theta}\!\left(L(\widehat\lambda)\right)\ne0.
\]
Lemma~\ref{lem:common-graded-subquotient-category-O} places $N$ and its
quotients in the category on which minimal reduction is exact.  Hence
exactness applied to the displayed surjection gives
\[
 H^0_{\mathrm{DS},f_\theta}(N)
 \twoheadrightarrow
 H^0_{\mathrm{DS},f_\theta}\!\left(L(\widehat\lambda)\right),
\]
so $H^0_{\mathrm{DS},f_\theta}(N)$ is nonzero.

Shift the scalar action of the derivation so that $s$ has $D$-value zero,
and denote the resulting module by $N^\circ$ and its highest weight by
$\widehat\lambda^\circ$.  There is a canonical surjection
\[
 M(\widehat\lambda^\circ)\twoheadrightarrow N^\circ
\]
from the affine Verma module.  Both $M(\widehat\lambda^\circ)$ and
$N^\circ$ are objects of $\mathcal O_k$ (the scalar shift of $D$ only
changes the origin of the external grading).  By
\cite[Theorem~6.3]{KacWakimoto2004}, the minimal reduction of the source is
a $W$-Verma module generated by the BRST class of its affine
highest-weight vector.  Exactness applied to
$M(\widehat\lambda^\circ)\twoheadrightarrow N^\circ$ therefore shows
that $H^0_{\mathrm{DS},f_\theta}(N)$ is generated by
$[s]_{\mathrm{DS}}$.  Since this cyclic module is nonzero, its cyclic
generator cannot vanish.

Finally, the short exact sequence
\[
 0\longrightarrow N\longrightarrow V^k(\mathfrak g)
 \longrightarrow V^k(\mathfrak g)/N\longrightarrow0
\]
lies in $\mathcal O_k$ by
Lemma~\ref{lem:common-graded-subquotient-category-O}.  Exactness therefore
gives an injection
\[
 H^0_{\mathrm{DS},f_\theta}(N)
 \hookrightarrow
 H^0_{\mathrm{DS},f_\theta}(V^k(\mathfrak g))
 =\mathcal W^k(\mathfrak g,f_\theta),
\]
so the same nonzero generator $[s]_{\mathrm{DS}}$ is nonzero in the
universal minimal $W$-algebra.
\end{proof}

\begin{proposition}[Exact reduction of a generated quotient]
\label{prop:common-exact-generated-quotient}
Let $s_1,\ldots,s_r$ be affine singular vectors in $V^k(\mathfrak g)$,
let $N_i=U(\widehat{\mathfrak g}^{\prime})s_i$, set
$N=N_1+\cdots+N_r$, and put $Q=V^k(\mathfrak g)/N$.  Assume that all
these modules lie in a category on which minimal reduction is exact and
that $H^0_{\mathrm{DS},f_\theta}(N_i)$ is cyclic, generated by the class
$\bar s_i$.  Then
\begin{equation}
 H^0_{\mathrm{DS},f_\theta}(Q)
 \cong
 \mathcal W^k(\mathfrak g,f_\theta)
 \big/\langle\bar s_1,\ldots,\bar s_r\rangle.
 \label{eq:common-exact-generated-quotient}
\end{equation}
\end{proposition}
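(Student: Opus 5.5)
We apply exactness to the short exact sequence
\[
 0\longrightarrow N\longrightarrow V^k(\mathfrak g)\longrightarrow Q\longrightarrow 0 ,
\]
which yields
\[
 0\to H^0_{\mathrm{DS},f_\theta}(N)\to \mathcal W^k(\mathfrak g,f_\theta)\to H^0_{\mathrm{DS},f_\theta}(Q)\to 0 .
\]
This identifies $H^0_{\mathrm{DS},f_\theta}(Q)$ with $\mathcal W^k(\mathfrak g,f_\theta)/I$, where $I$ is the image of $H^0_{\mathrm{DS},f_\theta}(N)$. It then remains to show $I=\langle\bar s_1,\ldots,\bar s_r\rangle$, the vertex-algebra ideal generated by the classes $\bar s_i$.

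For $I$, we use the surjection $N_1\oplus\cdots\oplus N_r\twoheadrightarrow N$. Exactness together with additivity of $H^0$ on direct sums gives a surjection
\[
 \bigoplus_i H^0_{\mathrm{DS},f_\theta}(N_i)\twoheadrightarrow H^0_{\mathrm{DS},f_\theta}(N).
\]
The inclusions $N_i\hookrightarrow N\hookrightarrow V^k(\mathfrak g)$ are injective after reduction, so $I=\sum_i I_i$, where $I_i$ is the image of $H^0_{\mathrm{DS},f_\theta}(N_i)$. By hypothesis $I_i$ is the $\mathcal W^k$-submodule generated by $\bar s_i$. Here the $\mathcal W^k$-module structure on $H^0(N_i)$ comes from $N_i$ being a $V^k(\mathfrak g)$-submodule: $N_i$ is a $\widehat{\mathfrak g}$-submodule of the vacuum module, hence a vertex ideal. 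The map to $\mathcal W^k$ is the module map induced by the inclusion, so $I_i$ is the $\mathcal W^k$-submodule of $\mathcal W^k$ generated by $\bar s_i$.

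Next we check that the submodule generated by $\bar s_i$ equals the ideal generated by $\bar s_i$. Since $\mathcal W^k$ is a vertex algebra, the left submodule generated by an element, spanned by the $a_{(n)}\bar s_i$, is a two-sided ideal by skew symmetry and translation invariance (it is $L_{-1}$-stable because it contains $\omega_{(0)}\bar s_i$). Hence $I_i=\langle\bar s_i\rangle$ and $I=\langle\bar s_1,\ldots,\bar s_r\rangle$. This gives \eqref{eq:common-exact-generated-quotient}.

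The main obstacle is the compatibility of module structures. "Cyclic, generated by $\bar s_i$" must mean cyclic as a $\mathcal W^k$-module, in the sense of Lemma~\ref{lem:common-reduced-singular-generator}, where generation came from the $W$-Verma module of \cite[Theorem~6.3]{KacWakimoto2004}. We also need the induced map $H^0(N_i)\to\mathcal W^k$ to be $\mathcal W^k$-linear. Both follow from the functoriality of the BRST complex $C(M)=M\otimes F^{\mathrm{ch}}\otimes F^{\mathrm{ne}}$ as a $C(V^k)$-module, with cohomology a module over $H^0(C(V^k))=\mathcal W^k$. I would state this explicitly and note that the short exact sequences lie in $\mathcal O_k$, using Lemma~\ref{lem:common-graded-subquotient-category-O} as assumed in the hypothesis.
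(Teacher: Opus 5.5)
Your proposal is correct and is essentially the paper's own proof: exactness on $0\to N\to V^k(\mathfrak g)\to Q\to0$, surjectivity of the reduced sum map $\bigoplus_i N_i\twoheadrightarrow N$ to identify the image of $H^0_{\mathrm{DS},f_\theta}(N)$ with the sum of the cyclic submodules generated by the $\bar s_i$, and the fact that a submodule of the adjoint $\mathcal W^k(\mathfrak g,f_\theta)$-module is a vertex-algebra ideal. One small correction: the $L_{-1}$-stability you invoke does not follow from the submodule containing $\omega_{(0)}\bar s_i$; it holds because $L_{-1}=\omega_{(0)}$ is itself a mode of an element of $\mathcal W^k(\mathfrak g,f_\theta)$, so every submodule is stable under it.
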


\begin{proof}
The sum map $\bigoplus_iN_i\twoheadrightarrow N$ remains surjective after
minimal reduction.  Hence the image of $H^0_{\mathrm{DS},f_\theta}(N)$
in the adjoint $\mathcal W^k(\mathfrak g,f_\theta)$-module is contained
in the ideal generated by the $\bar s_i$.  Conversely, every $\bar s_i$
belongs to that image.  Since an adjoint submodule is a vertex-algebra
ideal, the image is exactly $\langle\bar s_1,\ldots,\bar s_r\rangle$.
Exactness applied to $0\to N\to V^k(\mathfrak g)\to Q\to0$ gives
\eqref{eq:common-exact-generated-quotient}.
\end{proof}

\begin{proposition}[Extremal PBW line]
\label{prop:common-extremal-PBW-line}
Let \(\mathfrak a\) be a simply laced simple ideal of
\(\mathfrak g^\natural\), let \(\eta\) be its highest root, and choose
\(0\ne e_\eta\in\mathfrak a_\eta\).  Extend
\[
 d_\eta(\mu)=\langle\mu,\eta^\vee\rangle
\]
by zero on the other simple ideals of \(\mathfrak g^\natural\).  Assume that
all weights \(\nu\) of \(\mathfrak g_{-1/2}\) satisfy
\begin{equation}
 d_\eta(\nu)\le1.
 \label{eq:common-extremal-G-bound}
\end{equation}
Then, at every noncritical level \(k\) and for every integer \(r\ge1\),
\begin{equation*}
 \mathcal W^k(\mathfrak g,f_\theta)_r[r\eta]
 =\mathbb C\bigl(J^{\{e_\eta\}}_{(-1)}\bigr)^r\mathbf1
 =\mathbb C:J^{\{e_\eta\}}{}^r:.
\end{equation*}
In particular, every nonzero vector of conformal weight \(r\) and finite
weight \(r\eta\) is a nonzero scalar multiple of this current power.
\end{proposition}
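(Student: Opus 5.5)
The plan is to use the PBW basis of the universal minimal $W$-algebra together with a weight-counting argument.

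By Kac--Wakimoto \cite{KacWakimoto2004}, at noncritical level $\mathcal W^k(\mathfrak g,f_\theta)$ is strongly and freely generated by $J^{\{a\}}$ ($a$ running over a basis of $\mathfrak g^\natural$), $G^{\{u\}}$ ($u$ over a basis of $\mathfrak g_{-1/2}$) and $\omega$. It therefore has a PBW basis of ordered monomials
\[
 J^{\{a_1\}}_{(-n_1)}\cdots G^{\{u_j\}}_{(-m_j)}\cdots \omega_{(-p_l)}\cdots\mathbf 1 .
\]
We take the $a_i,u_j$ to be $\mathfrak h^\natural$-weight vectors. The $\mathfrak g^\natural$-weight of a monomial is the sum of the weights of its letters, and its conformal weight is the sum of the mode degrees. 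Each letter contributes at least $1$ ($J$), $3/2$ ($G$) or $2$ ($\omega$).

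The key step is a \emph{ratio bound}. For each generator we compare the contribution to $d_\eta$ of its weight with its minimal conformal weight:
\begin{itemize}
\item For $J^{\{a\}}$ with $a$ of weight $\beta$, $\beta$ is a root of $\mathfrak g^\natural$ or zero. Since $\mathfrak a$ is simply laced with highest root $\eta$, $d_\eta(\beta)=\langle\beta,\eta^\vee\rangle\le 2$, with equality only for $\beta=\eta$. All other roots of $\mathfrak a$ give values at most $1$, and roots of other simple ideals give $0$. So $d_\eta\le 2\cdot(\text{conformal weight})$, with equality exactly for the letter $J^{\{e_\eta\}}_{(-1)}$.
\item For $G^{\{u\}}$, hypothesis \eqref{eq:common-extremal-G-bound} gives $d_\eta\le1<2\cdot\tfrac32$.
\item For $\omega$, $d_\eta=0<4$.
\end{itemize}
Higher modes $(-n)$ with $n$ larger only increase the conformal weight without changing the weight. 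Summing over letters, a monomial of conformal weight $r$ has $d_\eta(\text{weight})\le 2r$. Equality holds iff every letter is $J^{\{e_\eta\}}_{(-1)}$, and a monomial of weight $r\eta$ has $d_\eta=2r$. Hence the only PBW monomial in $\mathcal W^k(\mathfrak g,f_\theta)_r[r\eta]$ is $(J^{\{e_\eta\}}_{(-1)})^r\mathbf1$.

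Some care is needed for modes. With the standard conformal-weight shift, a field of weight $\Delta$ acting on the vacuum-type PBW basis contributes at least $\Delta$, so the bound is unaffected.

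It then remains to see that this monomial is nonzero and equals $:J^{\{e_\eta\}}{}^r:$.
\begin{itemize}
\item \emph{Nonzero.} It is a PBW basis element, so freeness of strong generation gives it directly.
\item \emph{Normal-ordered power.} Because $[e_\eta,e_\eta]=0$ and $(e_\eta\mid e_\eta)=0$, the $\lambda$-bracket $[J^{\{e_\eta\}}{}_\lambda J^{\{e_\eta\}}]$ vanishes. For $a\in\mathfrak g^\natural$ the bracket of $J$-currents is given by the affine formula with shifted level $k^\natural$, and it vanishes here since it is built from $[e_\eta,e_\eta]$ and $(e_\eta\mid e_\eta)$. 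The modes $J^{\{e_\eta\}}_{(n)}$ therefore commute, so the iterated normal-ordered product coincides with $(J^{\{e_\eta\}}_{(-1)})^r\mathbf1$.
\end{itemize}

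The main obstacle I expect is justifying the PBW description with $\mathfrak h^\natural$-weights compatible with the ordering, that is, ensuring that the free strong generation theorem yields a genuine basis graded by both weights. The delicate point is that $J^{\{a\}}$ are the corrected currents, not the naive $a$; they are still $\mathfrak g^\natural$-equivariant, so the weights are as claimed. I would cite Kac--Wakimoto's structure theorem (Theorem 5.1 of \cite{KacWakimoto2004}) for this.
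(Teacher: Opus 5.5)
Your proposal is correct and follows essentially the same route as the paper. Both arguments use the PBW basis of the freely strongly generated universal minimal $W$-algebra and the factorwise estimate $d_\eta(\operatorname{wt})\le 2\Delta$, with equality only for the undifferentiated current $J^{\{e_\eta\}}$; this rests on simple-lacedness for the currents, the hypothesis \eqref{eq:common-extremal-G-bound} for the $G$-fields, and charge zero for $\omega$, after which PBW independence gives nonvanishing. Your extra remark that the vanishing $\lambda$-bracket identifies $(J^{\{e_\eta\}}_{(-1)})^r\mathbf1$ with $:J^{\{e_\eta\}}{}^r:$ is harmless, and only makes explicit what the paper treats as notation.
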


\begin{proof}
Use the PBW basis of the universal minimal \(W\)-algebra formed by ordered
monomials in derivatives of the currents, the \(G\)-fields, and the conformal
vector.  A current weight \(\mu\) is zero or a root of one of the simple
ideals.  Since \(\mathfrak a\) is simply laced,
\[
 d_\eta(\mu)\le2,
\]
with equality only for \(\mu=\eta\); currents from the other ideals have
charge zero.  Therefore
\[
 d_\eta(\mu)\le2\le2(1+j)
\]
for a factor \(\partial^jJ^{\{a\}}\), and equality throughout occurs only
for the undifferentiated current \(J^{\{e_\eta\}}\).  By
\eqref{eq:common-extremal-G-bound}, every factor
\(\partial^jG^{\{u\}}\) satisfies the strict estimate
\[
 d_\eta(\operatorname{wt}u)\le1
 <2\left(\frac32+j\right),
\]
and every Virasoro factor has charge zero and positive conformal weight.
Thus a PBW monomial \(X\) satisfies
\[
 d_\eta(\operatorname{wt}X)\le2\Delta(X).
\]
For simultaneous weight \((\Delta,\operatorname{wt})=(r,r\eta)\), both
sides are \(2r\).  Every factor must attain equality, so every factor is
\(J^{\{e_\eta\}}\), and exactly \(r\) factors occur.  PBW independence
shows that the resulting vector is nonzero.
\end{proof}

\subsection{Ramond Zhu relation}

The Ramond automorphism fixes $J^{\{a\}}$ and $\omega$ and sends
$G^{\{u\}}$ to $-G^{\{u\}}$.  We denote the associated twisted Zhu algebra
by $A_{\mathrm R}(V)$, following the finite-order twisted Zhu construction
of \cite{DongLiMason1998}.  In this paper $\mathfrak g$ is an ordinary simple
Lie algebra, regarded as purely even.  Accordingly, the bracket in the
Ramond Zhu algebra is the ordinary associative commutator.  This is
compatible with the identification of the Ramond Zhu algebra with Premet's
ordinary finite $W$-algebra
\cite[Equation~(5.1) and Remark~5.8]{KacMosenederFrajriaPapi2025}.  If $I$ is Ramond-stable, then
\[
 A_{\mathrm R}(V/I)\cong A_{\mathrm R}(V)/\langle[I]_{\mathrm R}\rangle.
\]
Moreover,
\begin{equation}
 V/C_2(V)\twoheadrightarrow\operatorname{gr}A_{\mathrm R}(V).
 \label{eq:common-C2-Ramond-Zhu}
\end{equation}
All Casimir elements below use the restriction of the invariant form
normalized at the beginning of this section.  Thus, on each simple ideal
$\mathfrak a\subset\mathfrak g^\natural$, dual bases are taken with
respect to this restricted form, and the corresponding Casimir eigenvalue
on $L_{\mathfrak a}(\lambda)$ is
$C_{\mathfrak a}(\lambda)=(\lambda\mid\lambda+2\rho_{\mathfrak a})$.
Affine levels of the current subalgebras are expressed with the same
restricted form.  These conventions are used uniformly in every
contraction and Casimir table below.

Choose dual bases $\{x_i\}$ and $\{x^i\}$ of $\mathfrak g^\natural$ and
write $\Omega^\natural$ for the quadratic Casimir.  In the universal
Ramond Zhu algebra, the classes of the $G$-generators satisfy
\begin{equation}
 [u,v]=\langle u,v\rangle
 \left(\Omega^\natural-2(k+h^\vee)L-\frac12p_{\mathfrak g}(k)\right)
 +Q(u,v),
 \label{eq:common-Ramond-Zhu}
\end{equation}
where $L=[\omega]_{\mathrm R}$ and
\begin{equation}
 Q(u,v)=\sum_{i,j}\langle[x_i,u],[v,x^j]\rangle
       (x^ix_j+x_jx^i).
 \label{eq:common-quadratic-term}
\end{equation}
The type-dependent polynomial and Casimir normalization are stated where the
relation is used; see \cite{AdamovicKacMosenederFrajriaPapiPerse2018,
KacMosenederFrajriaPapi2025}.

\begin{proposition}[Uniform Ramond--Casimir trace formula]
\label{prop:common-Ramond-Casimir-trace}
Suppose
$\mathfrak g^\natural=\mathfrak a_1\oplus\cdots\oplus\mathfrak a_t$
is semisimple and write
$\Omega^\natural=\Omega_1+\cdots+\Omega_t$.  Let $M$ be a nonzero
finite-dimensional module over a quotient of the Ramond Zhu algebra on
which $L$ acts by $h$.  Put
\[
 N=\dim M,\qquad \tau_j=\operatorname{tr}_M(\Omega_j).
\]
If the type-dependent contraction calculation gives
\begin{equation}
 \operatorname{tr}_M Q(u,v)
 =\langle u,v\rangle\sum_{j=1}^t\gamma_j\tau_j,
 \label{eq:common-Q-trace-coefficients}
\end{equation}
then
\begin{equation*}
 \sum_{j=1}^t(1+\gamma_j)\frac{\tau_j}{N}
 =2(k+h^\vee)h+\frac12p_{\mathfrak g}(k).
\end{equation*}
\end{proposition}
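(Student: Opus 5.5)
We take the trace over $M$ of both sides of the Ramond Zhu relation \eqref{eq:common-Ramond-Zhu}, for a fixed pair $u,v\in\mathfrak g_{-1/2}$ with $\langle u,v\rangle\ne0$. Since $\mathfrak g_{-1/2}$ carries a nondegenerate symplectic form, such a pair exists. This is the only place the choice of $u,v$ enters. The relation holds in the universal Ramond Zhu algebra, hence in every quotient. The images of $u,v$ therefore act on $M$ as operators satisfying the same identity.

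\emph{Left-hand side.} By the convention recorded above, the Zhu bracket of the purely even classes is the ordinary associative commutator. Since $M$ is finite-dimensional, $\operatorname{tr}_M[u,v]=\operatorname{tr}_M(uv)-\operatorname{tr}_M(vu)=0$.

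\emph{First term on the right.} It is $\langle u,v\rangle$ times the operator $\Omega^\natural-2(k+h^\vee)L-\tfrac12p_{\mathfrak g}(k)$. By hypothesis $L$ acts by the scalar $h$. We write $\Omega^\natural=\sum_j\Omega_j$ using the decomposition into simple ideals, so the trace is
\[
\langle u,v\rangle\Bigl(\sum_j\tau_j-N\bigl(2(k+h^\vee)h+\tfrac12p_{\mathfrak g}(k)\bigr)\Bigr).
\]
\emph{Second term.} Its trace is given by the hypothesis \eqref{eq:common-Q-trace-coefficients} as $\langle u,v\rangle\sum_j\gamma_j\tau_j$.

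Adding these gives $0=\langle u,v\rangle\bigl(\sum_j(1+\gamma_j)\tau_j-N(2(k+h^\vee)h+\tfrac12p_{\mathfrak g}(k))\bigr)$. We divide by $\langle u,v\rangle\ne0$ and by $N=\dim M>0$, which is nonzero since $M\ne0$. This yields the stated identity.

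The only point needing care is the vanishing of the trace of $[u,v]$. It requires that the Ramond Zhu bracket of the $G$-classes be the commutator rather than an anticommutator, so that the trace argument is valid. This is exactly the purely-even convention fixed before \eqref{eq:common-Ramond-Zhu}, compatible with \cite{KacMosenederFrajriaPapi2025}. The type-dependent content is isolated in the assumed contraction coefficients $\gamma_j$, so nothing else needs to be computed here.
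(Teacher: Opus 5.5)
Your proof is correct and is the same argument as the paper's: trace the Ramond Zhu relation, note that the left side is an ordinary associative commutator with trace zero, insert the contraction hypothesis for $Q(u,v)$, and divide by $\langle u,v\rangle\ne0$ and by $N$. Your explicit tracking of the scalar terms, and of why the commutator (rather than anticommutator) convention is needed, spells out what the paper's two-line proof leaves implicit.
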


\begin{proof}
Take the trace of \eqref{eq:common-Ramond-Zhu}.  The left-hand side is an
ordinary associative commutator and has trace zero.  Substitute
\eqref{eq:common-Q-trace-coefficients}, choose $u,v$ with
$\langle u,v\rangle\ne0$, and divide by $N$.
\end{proof}

\subsection{Li twisting}
\label{subsec:common-Li-twist}

For a weight-one primary vector $H$ with semisimple rational zero mode, Li's
operator is
\begin{equation*}
 \Delta(H,z)=z^{H_{(0)}}
 \exp\!\left(\sum_{r\ge1}\frac{H_{(r)}}{-r}(-z)^{-r}\right).
\end{equation*}
If $H_{(1)}H=\kappa\mathbf1$, then the twisted Hamiltonian is
\begin{equation}
  L_0^\Delta=L_0+H_0+\frac\kappa2.
  \label{eq:common-Li-Hamiltonian}
\end{equation}
For a current $H=J^{\{x\}}$ at level $r$, Cartan and root modes transform as
\begin{equation}
 J_0^{\{h\},\Delta}=J_0^{\{h\}}+r(x\mid h),
 \qquad
 J_0^{\{e_\gamma\},\Delta}=J_{\gamma(x)}^{\{e_\gamma\}}.
 \label{eq:common-Li-mode-shifts}
\end{equation}
Every vertex-algebra ideal is stable coefficientwise under the twisted
fields; explicitly,
\begin{equation}
 Y(\Delta(H,z)a,z)I\subset I((z^{1/N}))
 \label{eq:common-Li-ideal-stability}
\end{equation}
for a suitable common denominator $N$.  Hence the same vector space is a
submodule of the Li-twisted adjoint module \cite{Li2012}.

\begin{lemma}[Lowest-energy Ramond--Zhu module]
\label{lem:common-lowest-Ramond-Zhu-module}
Let $V$ be a nonzero conformally graded vertex algebra whose Li-twisted
adjoint module is an $L_0^{\mathrm R}$-semisimple positive-energy module
\[
 V^{\mathrm R}=\bigoplus_{m\ge0}V^{\mathrm R}[h_0+m].
\]
Assume that the relevant Ramond Zhu algebra $B$ is finite-dimensional.
If $0\ne I\subset V$ is an ideal, then its twisted underlying space is a
Ramond submodule and there are $m\in\mathbb Z_{\ge0}$ and a nonzero
finite-dimensional $B$-module
\[
 M\subset I^{\mathrm R}[h_0+m]
\]
on which the conformal class $L=[\omega]_{\mathrm R}$ acts as
$h_0+m$.  If a semisimple Lie algebra $\mathfrak a\subset B$ acts on
$M$, then any simple quotient of $M$ splits as an $\mathfrak a$-module.
\end{lemma}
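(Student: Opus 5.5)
The plan is to pass from the ideal $I$ to its twisted underlying space, and then to a lowest-energy piece on which the Ramond Zhu algebra acts.

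\emph{Step 1: $I^{\mathrm R}$ is a Ramond submodule.} By \eqref{eq:common-Li-ideal-stability}, every ideal $I\subset V$ is stable coefficientwise under the Li-twisted fields $Y(\Delta(H,z)a,z)$. So the same vector space $I^{\mathrm R}$ is a submodule of the twisted adjoint module $V^{\mathrm R}$. This is the first assertion.

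\emph{Step 2: a lowest-energy component.} The module $V^{\mathrm R}$ is $L_0^{\mathrm R}$-semisimple, and its energies lie in $h_0+\mathbb Z_{\ge0}$. A submodule of a semisimple module for a diagonalizable operator is again a direct sum of eigenspaces, so
\[
 I^{\mathrm R}=\bigoplus_{m\ge0}I^{\mathrm R}\cap V^{\mathrm R}[h_0+m].
\]
Since $I\ne0$, some summand is nonzero. Let $m$ be the least such index and put $\Omega=I^{\mathrm R}[h_0+m]$. All lowering modes send $\Omega$ into energies below $h_0+m$, where $I^{\mathrm R}$ is zero. Hence $\Omega$ consists of lowest-energy vectors of the submodule $I^{\mathrm R}$.

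\emph{Step 3: the Zhu action on $\Omega$.} By the twisted Zhu theory of \cite{DongLiMason1998}, zero modes give an action of $A_{\mathrm R}(V)$ on the lowest-energy space of a positive-energy twisted module. It is stated for the top level, but the same argument applies to any space annihilated by all lowering modes. It factors through the relevant quotient $B$, and $L=[\omega]_{\mathrm R}$ acts on $\Omega$ as $h_0+m$. Two points need checking here.

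\begin{itemize}
\item The zero mode of $\omega$ in the twisted module is $L_0^{\mathrm R}$, possibly up to the constant shift in \eqref{eq:common-Li-Hamiltonian}. I will absorb that constant into $h_0$, so that $L$ acts by the stated scalar.
\item $\Omega$ may be infinite-dimensional. In that case, pick any $0\ne v\in\Omega$ and set $M=Bv$. This is finite-dimensional because $\dim B<\infty$, it is nonzero, and it lies inside $\Omega$.
\end{itemize}

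\emph{Step 4: splitting under $\mathfrak a$.} Let $\mathfrak a\subset B$ be semisimple, sitting inside the commutator Lie algebra of $B$ (the Ramond bracket is the ordinary commutator). Any finite-dimensional $B$-module, in particular a simple quotient of $M$, is a finite-dimensional $\mathfrak a$-module. Weyl's complete reducibility theorem then shows that it splits as a direct sum of simple $\mathfrak a$-modules.

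\emph{Main obstacle.} The delicate point is Step 3: making sure the zero-mode action on a lowest-energy subspace of a submodule, rather than of the whole module, really factors through $A_{\mathrm R}(V)$. I would handle this with the standard Dong--Li--Mason argument. It shows that $O_{\mathrm R}(V)$ acts by zero on any vector killed by all positive-degree lowering modes, and that property holds for $\Omega$ by the minimality of $m$.
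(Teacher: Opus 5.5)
Your proposal is correct and follows the paper's argument step for step. The steps are: Li stability of $I$; the least eigenvalue $h_0+m$ of $L_0^{\mathrm R}$ on $I^{\mathrm R}$; the Zhu action on that eigenspace, which is just the top level of the positive-energy twisted module $I^{\mathrm R}$, so Dong--Li--Mason applies directly; the finite-dimensional cyclic module $Bv$; and Weyl's theorem.

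The only thing to drop is the hedge about absorbing a constant into $h_0$. In Li's construction $L_0^{\mathrm R}=L_0+H_0+\kappa/2$ is exactly the $z^{-2}$ coefficient of $Y(\Delta(H,z)\omega,z)$, i.e.\ the twisted zero mode of $\omega$. So $L$ acts by precisely $h_0+m$, and no redefinition of $h_0$ is needed. Such a redefinition would not be permitted anyway, since $h_0$ is fixed by the spectral decomposition.
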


\begin{proof}
Equation~\eqref{eq:common-Li-ideal-stability} makes the underlying vector
space of $I$ a Ramond submodule.  Its nonempty spectrum has a least element
$h_0+m$.  The lowest eigenspace is a module over the Ramond Zhu algebra,
and $L$ acts there by that eigenvalue.  For $0\ne v$ in this eigenspace,
the cyclic module $Bv$ is finite-dimensional.  The last assertion follows
from complete reducibility for finite-dimensional modules over a semisimple
Lie algebra.
\end{proof}

\begin{proposition}[Casimir-gap simplicity criterion]
\label{prop:common-Casimir-gap-simplicity}
Let $\pi:V\twoheadrightarrow V_{\mathrm s}$ be a nonzero unital
homomorphism from a conformally graded vertex algebra to a simple vertex
algebra.  Suppose the following data are available.
\begin{enumerate}
\item The Li-twisted adjoint module is $L_0^{\mathrm R}$-semisimple and
\[
 V^{\mathrm R}=\bigoplus_{m\ge0}V^{\mathrm R}[h_0+m],
 \qquad
 (V^{\mathrm R}[h_0])_{\lambda_0}=\mathbb C\mathbf1.
\]
\item The Ramond Zhu algebra $B$ is finite-dimensional, and every simple
$B$-module is completely reducible over a semisimple Lie algebra
$\mathfrak a$.
\item Let $\mathcal C\in B$ be the image of a weighted Casimir element of
$Z(U(\mathfrak a))$.  Assume that it has average
\[
 \frac{\operatorname{tr}_S\mathcal C}{\dim S}=A(h)
\]
on every nonzero finite-dimensional simple $B$-module $S$ on which $L$
acts as $h$.  All allowed irreducible $\mathfrak a$-constituents satisfy
$c_{\mathcal C}\le C_{\max}$, and for every $m\ge1$,
\[
 A(h_0+m)\ge A(h_0+1)>C_{\max}.
\]
\item For every nonzero finite-dimensional $B$-submodule
$M\subset V^{\mathrm R}[h_0]$ and every simple quotient $S$ of $M$, the
finite-dimensional and low-degree analysis forces the restriction of $S$
to $\mathfrak a$ to contain an irreducible constituent of highest weight
$\lambda_0$.
\end{enumerate}
Then $\pi$ is an isomorphism; in particular, $V$ is simple.
\end{proposition}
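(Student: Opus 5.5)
Set $I=\ker\pi$; this is a graded, Ramond-stable vertex-algebra ideal of $V$. The plan is to assume $I\neq0$ and derive a contradiction, which shows $\pi$ is injective. Since $\pi$ is surjective, it is then an isomorphism.

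\emph{Step 1: a lowest Ramond piece of $I$.} Apply Lemma~\ref{lem:common-lowest-Ramond-Zhu-module} to $I$, using hypotheses~(1) and~(2). This produces an integer $m\ge0$ and a nonzero finite-dimensional $B$-module $M\subset I^{\mathrm R}[h_0+m]$ on which $L$ acts as $h_0+m$. Choose a simple quotient $S$ of $M$; it is also a $B$-module with $L=h_0+m$. By hypothesis~(2), $S$ splits over $\mathfrak a$ as a direct sum of irreducible constituents. Each of them has $\mathcal C$-eigenvalue at most $C_{\max}$, because $\mathcal C$ is a weighted central Casimir and so acts by a scalar on every irreducible constituent. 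Taking the trace gives
\[
 A(h_0+m)=\frac{\operatorname{tr}_S\mathcal C}{\dim S}\le C_{\max}.
\]

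\emph{Step 2: the gap forces $m=0$.} If $m\ge1$, hypothesis~(3) gives $A(h_0+m)\ge A(h_0+1)>C_{\max}$, contradicting Step~1. Hence $m=0$, so $M\subset I^{\mathrm R}[h_0]\subset V^{\mathrm R}[h_0]$.

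\emph{Step 3: reaching the vacuum.} By hypothesis~(4), $S$ contains an $\mathfrak a$-constituent of highest weight $\lambda_0$. I will transfer this to $M$ itself. Because $\mathfrak a$ acts semisimply on the finite-dimensional module $M$, the surjection $M\twoheadrightarrow S$ splits over $\mathfrak a$, so $M$ also contains a constituent of highest weight $\lambda_0$. In particular, $M$ has a nonzero vector of $\mathfrak a$-weight $\lambda_0$, and this vector lies in $(V^{\mathrm R}[h_0])_{\lambda_0}$.

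Here I must be careful about which weight space is meant. Hypothesis~(1) is stated for the weight grading inherited from the Cartan part of $\mathfrak a$, interpreted through the twisted zero modes of \eqref{eq:common-Li-mode-shifts}. The vector just found is a $\lambda_0$-weight vector for that same action, so it belongs to $(V^{\mathrm R}[h_0])_{\lambda_0}$. That space is $\C\Vac$ by hypothesis~(1), hence $\Vac\in I$.

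\emph{Step 4: conclusion.} Since $I$ is an ideal containing $\Vac$, we get $I=V$. Then $\pi(\Vac)=0$, contradicting the assumption that $\pi$ is a nonzero unital homomorphism ($\pi(\Vac)=\Vac\ne0$ in the nonzero algebra $V_{\mathrm s}$). Therefore $I=0$, and $\pi$ is injective, hence an isomorphism. Since $V_{\mathrm s}$ is simple, so is $V$.

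The main obstacle is the bookkeeping in Step~3. One has to check that the $\mathfrak a$-action on the Ramond Zhu module, coming from the twisted zero modes, is the same action under which hypothesis~(1) singles out the vacuum line as $(V^{\mathrm R}[h_0])_{\lambda_0}$. Otherwise the highest-weight vector obtained from hypothesis~(4) need not be $\Vac$. I would settle this by fixing, once and for all, the Li-shifted Cartan action and stating both~(1) and~(4) with respect to it.
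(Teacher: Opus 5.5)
Your proof is correct and follows the paper's argument step for step: the lowest-energy Zhu module of $\ker\pi$ from Lemma~\ref{lem:common-lowest-Ramond-Zhu-module}, the Casimir gap forcing $m=0$, the $\mathfrak a$-splitting of $M\twoheadrightarrow S$ that moves the $\lambda_0$-constituent into the ideal, and the one-dimensionality of $(V^{\mathrm R}[h_0])_{\lambda_0}$ that puts $\mathbf 1$ in $\ker\pi$. Your caveat that the weights in hypotheses (1) and (4) must be taken for the Li-shifted Cartan action is the convention the paper uses implicitly, as in its shifted-vacuum-line corollaries.
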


\begin{proof}
Let $I=\ker\pi$ and suppose $I\ne0$.  By
Lemma~\ref{lem:common-lowest-Ramond-Zhu-module}, a simple quotient $S$ of
a cyclic Zhu submodule at the lowest ideal energy has
$h=h_0+m$, $m\ge0$.  The Casimir bound and the strict inequality at
$h_0+1$ force $m=0$.  By the final hypothesis, $S$ contains the
$\lambda_0$-constituent.  Since $M$ is finite-dimensional and
$\mathfrak a$ is semisimple, the surjection $M\twoheadrightarrow S$ splits
as an $\mathfrak a$-module.  Hence $M$, and therefore the lowest ideal
space, contains a $\lambda_0$-constituent.  Its highest-weight line lies in
$(V^{\mathrm R}[h_0])_{\lambda_0}=\mathbb C\mathbf1$, so
$\mathbf1\in I$, contradicting that $\pi$ is a nonzero unital map.
\end{proof}

\begin{remark}[How the criterion is checked]

Every application below verifies the four hypotheses in the same order:
(i) the exact reduced quotient and lisse-ness give a finite Ramond Zhu
algebra; (ii) Li twisting gives the integral spectral lattice and identifies
the extremal vacuum line; (iii) the Ramond trace relation gives the affine
function \(A(h)\), while the root-system appendix supplies
\(C_{\max}\); and (iv) the low original conformal degrees exclude all
nonvacuum constituents attaining the required average at \(h_0\).  Each
case theorem cites these four ingredients explicitly, so no type-dependent
assumption is imported silently into Proposition~\ref{prop:common-Casimir-gap-simplicity}.
\end{remark}

\subsection{Detection, lifting, and maximality principles}

\begin{lemma}[Detection by minimal reduction]
\label{lem:common-DS-detection}
Let $k\in\mathbb Z_{<0}$, and let $M\ne0$ be an object of the relevant affine
category with a grading
\[
 M=\bigoplus_{r\in d_0+\mathbb Z_{\ge0}}M[r],
 \qquad \dim M[r]<\infty,
 \qquad a(n)M[r]\subset M[r-n],
\]
such that every $M[r]$ is a finite-dimensional $\mathfrak g$-module.  Then
$H^0_{\mathrm{DS},f_\theta}(M)\ne0$.
\end{lemma}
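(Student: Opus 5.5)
We argue exactly as in the proof of Lemma~\ref{lem:common-reduced-singular-generator}, with $s$ replaced by a suitably chosen vector of $M$. Since $M\ne0$ and its grading is bounded below, there is a least $r_0\in d_0+\mathbb Z_{\ge0}$ with $M[r_0]\ne0$. For $n>0$ the modes $a(n)$ map $M[r_0]$ into $M[r_0-n]=0$, so $\widehat{\mathfrak g}_{>0}$ kills $M[r_0]$. By hypothesis $M[r_0]$ is a nonzero finite-dimensional $\mathfrak g$-module, so it contains a $\mathfrak g$-highest-weight vector $v$ of some dominant integral weight $\lambda$. This $v$ is then an affine singular vector of weight $\widehat\lambda=\lambda+k\Lambda_0-r_0\delta$ (up to the convention for the $D$-eigenvalue). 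Set $N=U(\widehat{\mathfrak g}')v\subset M$. This is a nonzero graded submodule, and it inherits the grading properties of $M$.

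Next we compute the wall value. Using \eqref{eq:common-affine-simple-coroot},
\[
 \widehat\lambda(\alpha_0^\vee)=k-\langle\lambda,\theta^\vee\rangle .
\]
Here $k<0$ and $\langle\lambda,\theta^\vee\rangle\ge0$ because $\lambda$ is dominant integral, so the value is strictly negative. The inequality holds with no further input, which is why the hypothesis $k\in\mathbb Z_{<0}$ and the finite-dimensionality of $M[r_0]$ are imposed.

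Now we use the irreducible quotient. $N$ is a highest-weight module, so it surjects onto $L(\widehat\lambda)$. By \eqref{eq:common-DS-nonvanishing}, $H^0_{\mathrm{DS},f_\theta}(L(\widehat\lambda))\ne0$. To transport this back to $M$ we use exactness twice:
\begin{itemize}
\item on $N\twoheadrightarrow L(\widehat\lambda)$, which gives $H^0(N)\ne0$;
\item on $0\to N\to M$, which gives $H^0(N)\hookrightarrow H^0(M)$, so $H^0(M)\ne0$.
\end{itemize}

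The main obstacle is justifying that these sequences lie in a category on which minimal reduction is exact. Lemma~\ref{lem:common-graded-subquotient-category-O} covers subquotients of $V^k(\mathfrak g)$, but the statement only says ``object of the relevant affine category.'' I would argue this directly, as in that lemma:
\begin{itemize}
\item $M$ is an $\widehat{\mathfrak h}$-weight module with finite-dimensional weight spaces, since each $M[r]$ is a finite-dimensional $\mathfrak g$-module.
\item Its support lies in a finite union of cones $\mu_i-\widehat Q_+$. I would take the $\mu_i$ to be the finitely many weights of $M[r_0]$ together with those of the finitely many low degrees needed. The degree bound and the finite-dimensionality of each $M[r]$ should make this work; if they do not suffice, I would fall back on the standing hypothesis that $M$ is already in $\mathcal O_k$.
\end{itemize}
Once this is settled, $N$ and $L(\widehat\lambda)$ are subquotients of $M$ and lie in $\mathcal O_k$, so \cite[Theorem~6.7.1]{Arakawa2005} applies to both sequences.
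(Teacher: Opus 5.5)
Your proof is correct and follows the paper's own argument: take a $\mathfrak g$-highest-weight vector in the lowest nonzero degree, observe that $\widehat\lambda(\alpha_0^\vee)=k-\langle\lambda,\theta^\vee\rangle<0$, and apply exactness first to the surjection of its cyclic submodule onto $L(\widehat\lambda)$ and then to the inclusion of that cyclic submodule into $M$. You should drop the attempted direct proof that $M\in\mathcal O_k$, because the grading hypotheses alone do not put the support inside finitely many cones $\mu_i-\widehat Q_+$ (for instance, $\bigoplus_{s\ge0}L(k\Lambda_0+\lambda_s-s\delta)$ with $\langle\lambda_s,\theta^\vee\rangle$ growing superlinearly in $s$ satisfies all of them); membership in the exactness category has to come from the hypothesis ``object of the relevant affine category,'' exactly as in the paper.
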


\begin{proof}
Choose a nonzero $\mathfrak g$-highest-weight vector $v$ in the lowest
nonzero homogeneous subspace, of highest weight $\mu\in P_+$.  Positive
modes annihilate $v$, so it is an affine highest-weight vector.  Its cyclic
module has an irreducible quotient of highest weight
$\widehat\lambda=k\Lambda_0+\mu-d\delta$, and
\[
 \widehat\lambda(\alpha_0^\vee)
 =k-\langle\mu,\theta^\vee\rangle<0.
\]
The quotient has nonzero minimal reduction by
\eqref{eq:common-DS-nonvanishing}.  Exactness applied first to the cyclic
surjection and then to the inclusion of the cyclic module into $M$ proves
the assertion.
\end{proof}

\begin{proposition}[Affine lifting]
\label{prop:common-affine-lifting}
Let $Q$ be a conformally graded quotient of $V^k(\mathfrak g)$ at a negative
integral level, with finite-dimensional homogeneous subspaces that are
finite-dimensional $\mathfrak g$-modules, and let
$Q\twoheadrightarrow L_k(\mathfrak g)$ be the canonical surjection.  If the
induced map
\[
 H^0_{\mathrm{DS},f_\theta}(Q)
 \longrightarrow
 H^0_{\mathrm{DS},f_\theta}(L_k(\mathfrak g))
\]
is an isomorphism, then $Q\cong L_k(\mathfrak g)$.
\end{proposition}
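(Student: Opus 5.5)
Let $K=\ker(Q\twoheadrightarrow L_k(\mathfrak g))$, so that there is a short exact sequence
\[
 0\longrightarrow K\longrightarrow Q\longrightarrow L_k(\mathfrak g)\longrightarrow0 .
\]
The plan is to show $K=0$ by contradiction: assume $K\ne0$, reduce, and use Lemma~\ref{lem:common-DS-detection} to find a nonzero reduction of $K$, which the exact sequence will rule out.

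First we check that all three modules satisfy the hypotheses of Lemma~\ref{lem:common-graded-subquotient-category-O}. $Q$ is a graded quotient of $V^k(\mathfrak g)$ whose homogeneous pieces are finite-dimensional $\mathfrak g$-modules. The kernel $K$ is a graded $\widehat{\mathfrak g}$-submodule of $Q$, because the canonical surjection respects the conformal grading and the $\mathfrak g$-action. So $K$ is a graded subquotient of $V^k(\mathfrak g)$, and each $K[r]$ is a $\mathfrak g$-submodule of the finite-dimensional module $Q[r]$. Its grading lies in $d_0+\mathbb Z_{\ge0}$ with $d_0=0$, since the grading of $Q$ does. The mode condition $a(n)K[r]\subset K[r-n]$ is inherited from $Q$. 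Hence Lemma~\ref{lem:common-graded-subquotient-category-O} places all three modules in $\mathcal O_k$, and minimal reduction is exact on the displayed sequence. This gives
\[
 0\to H^0_{\mathrm{DS},f_\theta}(K)\to H^0_{\mathrm{DS},f_\theta}(Q)\to H^0_{\mathrm{DS},f_\theta}(L_k(\mathfrak g))\to0 .
\]
The right-hand map is the induced map of the statement, which is assumed to be an isomorphism. Exactness in the middle then forces $H^0_{\mathrm{DS},f_\theta}(K)=0$.

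Now suppose $K\ne0$. Then $K$ satisfies every hypothesis of Lemma~\ref{lem:common-DS-detection}: $k$ is a negative integer, $K$ is a nonzero object of $\mathcal O_k$, it has a lower-bounded grading with finite-dimensional pieces that are finite-dimensional $\mathfrak g$-modules, and the modes act with the correct degree shift. The lemma therefore gives $H^0_{\mathrm{DS},f_\theta}(K)\ne0$, contradicting the previous step. So $K=0$ and $Q\cong L_k(\mathfrak g)$.

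I do not expect a serious obstacle, because the substantive input (nonvanishing for a lowest-degree highest-weight constituent when $k<0$) is already contained in Lemma~\ref{lem:common-DS-detection}. The one point needing care is that the kernel inherits the grading and the finite-dimensional $\mathfrak g$-module structure from $Q$, so that both the exactness lemma and the detection lemma genuinely apply to $K$. A minor point is to interpret ``isomorphism'' via the exact sequence, so that injectivity of the induced map is exactly what kills $H^0_{\mathrm{DS},f_\theta}(K)$.
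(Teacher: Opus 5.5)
Your proposal is correct and follows the paper's proof: you place $0\to K\to Q\to L_k(\mathfrak g)\to0$ in the exactness category via Lemma~\ref{lem:common-graded-subquotient-category-O}, deduce $H^0_{\mathrm{DS},f_\theta}(K)=0$ from the isomorphism hypothesis, and reach a contradiction with Lemma~\ref{lem:common-DS-detection} if $K\ne0$. Your explicit check that $K$ inherits the bounded-below grading and the finite-dimensional $\mathfrak g$-module structure from $Q$ is bookkeeping the paper leaves implicit.
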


\begin{proof}
Let $K$ be the kernel.  By
Lemma~\ref{lem:common-graded-subquotient-category-O}, the exact sequence
$0\to K\to Q\to L_k(\mathfrak g)\to0$ lies in the exactness category.
Exactness gives $H^0_{\mathrm{DS},f_\theta}(K)=0$.  If $K\ne0$, it satisfies
the hypotheses of Lemma~\ref{lem:common-DS-detection}, a contradiction.
\end{proof}

Write
\[
 \operatorname{Rad}V^k(\mathfrak g)
 :=\ker\bigl(V^k(\mathfrak g)\to L_k(\mathfrak g)\bigr)
\]
for the unique maximal proper graded ideal.

\begin{theorem}[Minimal-reduction maximality principle]
\label{thm:common-minimal-reduction-maximality}
Let $\mathfrak g$ be a finite-dimensional simple Lie algebra, let
$f_\theta$ be a minimal nilpotent element, and let $k\in\mathbb Z_{<0}$.
Let
\[
 N\subset\operatorname{Rad}V^k(\mathfrak g)
\]
be a graded ideal and put $Q=V^k(\mathfrak g)/N$.  Assume that every
homogeneous subspace of $Q$ is finite-dimensional and is a
finite-dimensional $\mathfrak g$-module.  If
\[
 H^0_{\mathrm{DS},f_\theta}(Q)
\]
is a nonzero simple vertex algebra, then
\[
 Q\cong L_k(\mathfrak g).
\]
Equivalently,
\[
 N=\operatorname{Rad}V^k(\mathfrak g).
\]
\end{theorem}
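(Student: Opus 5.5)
The plan is to reduce the statement to Proposition~\ref{prop:common-affine-lifting}. Since $N\subset\operatorname{Rad}V^k(\mathfrak g)$, there is a canonical surjection $Q\twoheadrightarrow L_k(\mathfrak g)$ with kernel $K=\operatorname{Rad}V^k(\mathfrak g)/N$. It therefore suffices to show that the induced map
\[
 \phi\colon H^0_{\mathrm{DS},f_\theta}(Q)\longrightarrow H^0_{\mathrm{DS},f_\theta}(L_k(\mathfrak g))
\]
is an isomorphism. Proposition~\ref{prop:common-affine-lifting} then gives $Q\cong L_k(\mathfrak g)$, and hence $N=\operatorname{Rad}V^k(\mathfrak g)$.

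First, by Lemma~\ref{lem:common-graded-subquotient-category-O}, the sequence $0\to K\to Q\to L_k(\mathfrak g)\to0$ lies in the exactness category. Indeed, $Q$, $K$ and $L_k(\mathfrak g)$ are graded subquotients of $V^k(\mathfrak g)$ with finite-dimensional homogeneous pieces that are finite-dimensional $\mathfrak g$-modules. The hypothesis is stated for $Q$, and it passes to the subspace $K$ and to the quotient $L_k(\mathfrak g)$. Exactness of minimal reduction therefore yields
\[
 0\to H^0_{\mathrm{DS},f_\theta}(K)\to H^0_{\mathrm{DS},f_\theta}(Q)\xrightarrow{\ \phi\ } H^0_{\mathrm{DS},f_\theta}(L_k(\mathfrak g))\to0.
\]
In particular $\phi$ is surjective.

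Next, $\phi$ is induced by a vertex-algebra surjection $Q\to L_k(\mathfrak g)$. The reduction is functorial on vertex algebras, so $\phi$ is a homomorphism of vertex algebras and its kernel $H^0_{\mathrm{DS},f_\theta}(K)$ is an ideal of $H^0_{\mathrm{DS},f_\theta}(Q)$. By hypothesis $H^0_{\mathrm{DS},f_\theta}(Q)$ is simple, so this ideal is either $0$ or everything.

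The main point is to exclude the second case, which would force $H^0_{\mathrm{DS},f_\theta}(L_k(\mathfrak g))=0$. I would apply Lemma~\ref{lem:common-DS-detection} to $M=L_k(\mathfrak g)$, which is nonzero, graded from degree $0$, and has finite-dimensional $\mathfrak g$-modules as homogeneous pieces; at $k\in\mathbb Z_{<0}$ the lemma gives $H^0_{\mathrm{DS},f_\theta}(L_k(\mathfrak g))\ne0$. More directly, the vacuum has $\widehat\lambda(\alpha_0^\vee)=k<0$, so \eqref{eq:common-DS-nonvanishing} applies. Hence the kernel is $0$, $\phi$ is an isomorphism, and the reduction is complete.

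One subtlety needs checking: that the kernel of $\phi$ identified through exactness is a genuine vertex-algebra ideal, compatible with the $W$-algebra structure on the cohomology of the adjoint module. This is standard, because $H^0_{\mathrm{DS},f_\theta}(K)$ is a submodule of the adjoint $H^0_{\mathrm{DS},f_\theta}(Q)$-module, exactly as in the proof of Proposition~\ref{prop:common-exact-generated-quotient}. An alternative ending avoids ideals: if $H^0_{\mathrm{DS},f_\theta}(K)\ne0$, then by Lemma~\ref{lem:common-DS-detection} applied to $K$ one only learns that it is nonzero, so the simplicity argument above is the essential input. I expect no real obstacle beyond this bookkeeping.
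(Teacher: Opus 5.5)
Your proposal is correct and follows the paper's proof essentially step for step. Exactness via Lemma~\ref{lem:common-graded-subquotient-category-O} gives a surjection $H^0_{\mathrm{DS},f_\theta}(Q)\twoheadrightarrow H^0_{\mathrm{DS},f_\theta}(L_k(\mathfrak g))$, the target is nonzero by Lemma~\ref{lem:common-DS-detection}, simplicity of the source makes the map an isomorphism, and Proposition~\ref{prop:common-affine-lifting} finishes. Your closing remark about an ``alternative ending'' is superfluous, but it does not affect the argument.
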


\begin{proof}
Since $N\subset\operatorname{Rad}V^k(\mathfrak g)$, the canonical quotient
map factors through a surjection
\[
 q:Q\twoheadrightarrow L_k(\mathfrak g).
\]
The corresponding short exact sequence lies in the category on which
minimal reduction is exact, by
Lemma~\ref{lem:common-graded-subquotient-category-O}.  Hence $q$ induces a
surjection
\[
 H^0_{\mathrm{DS},f_\theta}(Q)
 \twoheadrightarrow
 H^0_{\mathrm{DS},f_\theta}(L_k(\mathfrak g)).
\]
The target is nonzero by Lemma~\ref{lem:common-DS-detection}.  Since the
source is simple, the induced map is an isomorphism.
Proposition~\ref{prop:common-affine-lifting} therefore gives
$Q\cong L_k(\mathfrak g)$, and the equality of ideals follows.
\end{proof}

\begin{corollary}[Singular-vector maximality principle]

Let $s_1,\ldots,s_r$ be affine singular vectors contained in
$\operatorname{Rad}V^k(\mathfrak g)$, where $k\in\mathbb Z_{<0}$.  Set
\[
 N_i=U(\widehat{\mathfrak g}^{\prime})s_i,
 \qquad
 N=N_1+\cdots+N_r,
 \qquad
 Q=V^k(\mathfrak g)/N.
\]
Assume that the modules involved lie in a category on which minimal
reduction is exact, that each
$H^0_{\mathrm{DS},f_\theta}(N_i)$ is cyclic with generator $\bar s_i$, and
that
\[
 \mathcal W^k(\mathfrak g,f_\theta)
 \big/\langle\bar s_1,\ldots,\bar s_r\rangle
\]
is nonzero and simple.  Then
\[
 N=\operatorname{Rad}V^k(\mathfrak g).
\]
\end{corollary}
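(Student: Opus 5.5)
The corollary is a direct combination of Proposition~\ref{prop:common-exact-generated-quotient} with Theorem~\ref{thm:common-minimal-reduction-maximality}. First, I would check that $N$ is a graded ideal of $V^k(\mathfrak g)$ contained in $\operatorname{Rad}V^k(\mathfrak g)$. Each $s_i$ is an affine singular vector, hence homogeneous. Therefore $N_i=U(\widehat{\mathfrak g}')s_i$ is a graded $\widehat{\mathfrak g}$-submodule of the vacuum module, and such submodules are vertex-algebra ideals, because $V^k(\mathfrak g)$ is generated by the currents. Since $\operatorname{Rad}V^k(\mathfrak g)$ is an ideal containing every $s_i$, it contains each $N_i$ and hence $N$.

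Second, I would verify the finiteness hypothesis of Theorem~\ref{thm:common-minimal-reduction-maximality} for $Q$. Each homogeneous subspace $V^k(\mathfrak g)[r]$ is finite-dimensional and is a finite-dimensional $\mathfrak g$-module under the zero modes. The space $N[r]$ is a $\mathfrak g$-submodule of it, so the quotient $Q[r]=V^k(\mathfrak g)[r]/N[r]$ inherits both properties. In particular, Lemma~\ref{lem:common-graded-subquotient-category-O} places $N$, the $N_i$, and $Q$ in $\mathcal O_k$, so exactness is in any case available.

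Third, the exactness and cyclicity hypotheses are exactly those of Proposition~\ref{prop:common-exact-generated-quotient}. That proposition gives
\[
 H^0_{\mathrm{DS},f_\theta}(Q)\cong\mathcal W^k(\mathfrak g,f_\theta)/\langle\bar s_1,\ldots,\bar s_r\rangle .
\]
By assumption the right-hand side is nonzero and simple, so $H^0_{\mathrm{DS},f_\theta}(Q)$ is a nonzero simple vertex algebra. Theorem~\ref{thm:common-minimal-reduction-maximality} then gives $Q\cong L_k(\mathfrak g)$, that is, $N=\operatorname{Rad}V^k(\mathfrak g)$.

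The only point requiring care is that the isomorphism in Proposition~\ref{prop:common-exact-generated-quotient} is an isomorphism of vertex algebras, not merely of modules. Simplicity is a vertex-algebra notion, so this matters. I would address it by noting that the map $\mathcal W^k(\mathfrak g,f_\theta)=H^0(V^k(\mathfrak g))\to H^0(Q)$ is induced by the vertex-algebra surjection $V^k(\mathfrak g)\to Q$, and is therefore a vertex-algebra homomorphism. The proposition identifies its kernel with the ideal $\langle\bar s_1,\ldots,\bar s_r\rangle$, so the induced isomorphism is one of vertex algebras. Beyond this, I expect no real obstacle.
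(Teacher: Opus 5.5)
Your proposal is correct and matches the paper's proof: Proposition~\ref{prop:common-exact-generated-quotient} identifies $H^0_{\mathrm{DS},f_\theta}(Q)$ with the nonzero simple quotient $\mathcal W^k(\mathfrak g,f_\theta)/\langle\bar s_1,\ldots,\bar s_r\rangle$, and Theorem~\ref{thm:common-minimal-reduction-maximality} then yields $N=\operatorname{Rad}V^k(\mathfrak g)$. Your extra checks are the hypotheses the paper's two-line proof leaves implicit, and you verify them correctly: that $N$ is a graded ideal contained in $\operatorname{Rad}V^k(\mathfrak g)$, that the homogeneous pieces of $Q$ are finite-dimensional $\mathfrak g$-modules, and that the identification is an isomorphism of vertex algebras.
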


\begin{proof}
Proposition~\ref{prop:common-exact-generated-quotient} identifies the
minimal reduction of $Q$ with the displayed simple quotient.  The conclusion
follows from Theorem~\ref{thm:common-minimal-reduction-maximality}.
\end{proof}

\begin{theorem}[DS--Ramond--Casimir maximality theorem]
\label{thm:common-DS-Ramond-Casimir-maximality}
Let $\mathfrak g$ be a finite-dimensional simple Lie algebra,
$f_\theta$ a minimal nilpotent element, and $k\in\mathbb Z_{<0}$.  Let
$s_1,\ldots,s_r\in\operatorname{Rad}V^k(\mathfrak g)$ be affine singular
vectors, set
\[
 N=\sum_{i=1}^rU(\widehat{\mathfrak g}^{\prime})s_i,
 \qquad
 Q=V^k(\mathfrak g)/N,
\]
and assume that
\[
 Q=\bigoplus_{n\ge0}Q[n],
 \qquad \dim Q[n]<\infty,
\]
with every $Q[n]$ a finite-dimensional $\mathfrak g$-module.  Suppose that
exact minimal
reduction gives
\[
 H^0_{\mathrm{DS},f_\theta}(Q)
 \cong
 \overline{\mathcal W}
 :=\mathcal W^k(\mathfrak g,f_\theta)
   \big/\langle\bar s_1,\ldots,\bar s_r\rangle,
\]
where $\overline{\mathcal W}\ne0$.  Assume further that the following data
are available.
\begin{enumerate}
\item The Li-twisted adjoint module is $L_0^{\mathrm R}$-semisimple and
\[
 \overline{\mathcal W}^{\mathrm R}
 =\bigoplus_{m\ge0}\overline{\mathcal W}^{\mathrm R}[h_0+m],
 \qquad
 \bigl(\overline{\mathcal W}^{\mathrm R}[h_0]\bigr)_{\lambda_0}
 =\mathbb C\mathbf1.
\]
\item The Ramond Zhu algebra $B=A_{\mathrm R}(\overline{\mathcal W})$ is
finite-dimensional, and its finite-dimensional simple modules are completely
reducible over a semisimple Lie algebra $\mathfrak a\subset B$.
\item Let $\mathcal C\in B$ be the image of a weighted Casimir element of
$Z(U(\mathfrak a))$.  Assume that it has average
\[
 \frac{\operatorname{tr}_S\mathcal C}{\dim S}=A(h)
\]
on every nonzero finite-dimensional simple $B$-module $S$ on which the
conformal class acts as $h$.  Every allowed irreducible
$\mathfrak a$-constituent has Casimir eigenvalue at most $C_{\max}$, and
\[
 A(h_0+m)\ge A(h_0+1)>C_{\max}
 \qquad(m\ge1).
\]
\item For every nonzero finite-dimensional $B$-submodule
$M\subset\overline{\mathcal W}^{\mathrm R}[h_0]$ and every simple quotient
$S$ of $M$, the restriction of $S$ to $\mathfrak a$ contains an irreducible
constituent of highest weight $\lambda_0$.
\end{enumerate}
Then
\[
 Q\cong L_k(\mathfrak g),
 \qquad
 N=\operatorname{Rad}V^k(\mathfrak g).
\]
\end{theorem}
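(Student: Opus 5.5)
The plan is to assemble this theorem from the two pieces already isolated: the Casimir-gap simplicity criterion, Proposition~\ref{prop:common-Casimir-gap-simplicity}, gives simplicity of the reduced quotient; the minimal-reduction maximality principle, Theorem~\ref{thm:common-minimal-reduction-maximality}, then lifts it to the affine level. No new estimate is needed. The work is to check that the hypotheses of the theorem match those of the two cited results.

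\emph{Step 1 (simplicity of $\overline{\mathcal W}$).} Since $\overline{\mathcal W}\neq0$ is a conformally graded vertex algebra, it has a unique maximal proper graded ideal. I take $V_{\mathrm s}$ to be the corresponding simple graded quotient and $\pi:\overline{\mathcal W}\twoheadrightarrow V_{\mathrm s}$ the canonical map. This map is unital and nonzero because the vacuum is not in any proper ideal.

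Hypotheses (1)--(4) of the theorem are verbatim hypotheses (1)--(4) of Proposition~\ref{prop:common-Casimir-gap-simplicity}, with $V=\overline{\mathcal W}$ and $B=A_{\mathrm R}(\overline{\mathcal W})$. One small point needs care. The criterion asks for complete reducibility of simple $B$-modules over $\mathfrak a$, and here it is supplied for all finite-dimensional simple modules. Every simple quotient arising in Lemma~\ref{lem:common-lowest-Ramond-Zhu-module} is a quotient of a finite-dimensional cyclic module, so it is finite-dimensional and the two formulations agree.

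The proposition then shows that $\pi$ is an isomorphism, so $\overline{\mathcal W}$ is simple. I should also confirm that the proof of the criterion uses only that $\ker\pi$ is a proper ideal not containing $\mathbf 1$, which holds by construction.

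\emph{Step 2 (lifting).} Each $s_i$ lies in $\operatorname{Rad}V^k(\mathfrak g)$, which is an ideal, so $N\subset\operatorname{Rad}V^k(\mathfrak g)$. The grading hypotheses on $Q$ are exactly those required in Theorem~\ref{thm:common-minimal-reduction-maximality}; in particular each $Q[n]$ is finite-dimensional and is a finite-dimensional $\mathfrak g$-module. That the sequence is in the exactness category is Lemma~\ref{lem:common-graded-subquotient-category-O}.

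Next, $H^0_{\mathrm{DS},f_\theta}(Q)\cong\overline{\mathcal W}$ is nonzero and, by Step 1, simple. Theorem~\ref{thm:common-minimal-reduction-maximality} therefore gives $Q\cong L_k(\mathfrak g)$ and $N=\operatorname{Rad}V^k(\mathfrak g)$.

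\emph{Main obstacle.} This is essentially a bookkeeping argument, and the only delicate point is Step 1. There I must make sure that the abstract hypotheses indeed produce a simple target $V_{\mathrm s}$ to which the criterion applies. Equivalently, I can run its proof directly on an arbitrary nonzero proper ideal $I\subset\overline{\mathcal W}$: take the lowest Ramond-energy Zhu submodule $M\subset I^{\mathrm R}[h_0+m]$ given by Lemma~\ref{lem:common-lowest-Ramond-Zhu-module}. The inequality $A(h_0+m)\ge A(h_0+1)>C_{\max}$ for $m\ge1$ forces $m=0$. Hypothesis (4) then yields a $\lambda_0$-constituent, which splits off inside $M$. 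By (1) its highest-weight line is $\mathbb C\mathbf1$, so $\mathbf1\in I$, a contradiction.
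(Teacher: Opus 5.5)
Your proposal is correct and follows the paper's proof. The Casimir-gap criterion (Proposition~\ref{prop:common-Casimir-gap-simplicity}) gives simplicity of $\overline{\mathcal W}$, and Theorem~\ref{thm:common-minimal-reduction-maximality} then lifts this to $Q\cong L_k(\mathfrak g)$ and $N=\operatorname{Rad}V^k(\mathfrak g)$. The only cosmetic difference is the map fed into the criterion: the paper uses $\overline{\mathcal W}\to\mathcal W_k(\mathfrak g,f_\theta)$, induced by $Q\twoheadrightarrow L_k(\mathfrak g)$ and nonzero by Lemma~\ref{lem:common-DS-detection}, whereas you use the canonical map onto the simple graded quotient of $\overline{\mathcal W}$, which works equally well because the criterion's proof only uses that the kernel is an ideal not containing $\mathbf 1$.
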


\begin{proof}
The quotient map $Q\twoheadrightarrow L_k(\mathfrak g)$ induces a nonzero
unital homomorphism
\[
 \overline{\mathcal W}
 \longrightarrow
 H^0_{\mathrm{DS},f_\theta}(L_k(\mathfrak g))
 =\mathcal W_k(\mathfrak g,f_\theta).
\]
This target is the simple minimal $W$-algebra and is nonzero by
Lemma~\ref{lem:common-DS-detection}.  Hypotheses
(1)--(4) are precisely those of
Proposition~\ref{prop:common-Casimir-gap-simplicity}; hence
$\overline{\mathcal W}$ is simple.
Theorem~\ref{thm:common-minimal-reduction-maximality} now gives the asserted
affine isomorphism and equality of ideals.
\end{proof}

\begin{remark}

The preceding theorem separates the argument into a formal part and a
finite type-dependent part.  The formal part consists of exact reduction,
Ramond--Zhu detection of ideals, and affine lifting.  The only local data to
be verified in an application are the exact reduced singular vectors, the
Ramond--Casimir trace coefficients, the Casimir gap, and the ground-energy
exclusion.  The case sections below verify exactly these inputs.  Consequently, once
reduced simplicity has been established, each affine maximality statement
below is an instance of
Theorem~\ref{thm:common-minimal-reduction-maximality}; the present theorem
packages the reduced and affine steps into a single reusable statement.
\end{remark}
\section{The series \texorpdfstring{$D_\ell$}{D-l}, \texorpdfstring{$\ell\ge5$}{l>=5}}
\label{sec:case-dl}

\subsection{Type data for \texorpdfstring{$D_\ell$}{D-l}}
\label{dl:sec:Dl-preliminaries}

Fix
\begin{equation}
  \ell\ge5,\qquad
  \mathfrak g=D_\ell\cong\mathfrak{so}_{2\ell},\qquad
  m=\ell-2,\qquad K=m-1=\ell-3.
  \label{dl:eq:Dl-global-rank-notation}
\end{equation}
The affine central element is denoted by $\mathbf K$, whereas $K$ is the
positive integer in \eqref{dl:eq:Dl-global-rank-notation}.  We use the standard
orthogonal realization and Bourbaki numbering
\begin{equation}
 \alpha_i=\varepsilon_i-\varepsilon_{i+1}\ (1\le i\le\ell-1),
 \qquad \alpha_\ell=\varepsilon_{\ell-1}+\varepsilon_\ell,
 \label{dl:eq:Dl-simple-roots}
\end{equation}
with highest root $\theta=\varepsilon_1+\varepsilon_2$.  The two distinguished
roots are
\begin{equation}
 \theta_A=\varepsilon_1-\varepsilon_2,
 \qquad \theta_D=\varepsilon_3+\varepsilon_4.
 \label{dl:eq:Dl-two-natural-highest-roots}
\end{equation}
For the minimal grading associated with $f_\theta=e_{-\theta}$,
\begin{equation*}
 \mathfrak g^\natural=\mathfrak s\oplus\mathfrak d,
 \qquad \mathfrak s\cong A_1,
 \qquad \mathfrak d\cong D_m,
 \qquad
 U:=\mathfrak g_{-\frac12}
 \cong L_{\mathfrak s}(\varpi)\boxtimes L_{\mathfrak d}(\eta_1),
\end{equation*}
where $\varpi=\theta_A/2$ and $\eta_1=\varepsilon_3$ is the vector highest
weight of $D_m$.  When $m=3$ we use
\begin{equation}
 D_3\cong A_3,
 \label{dl:eq:Dl-D3-A3-identification}
\end{equation}
under which $L_{\mathfrak d}(\eta_1)$ is the six-dimensional module of
highest weight $\omega_2$.  On $U$ we use
\begin{equation}
 \langle u,v\rangle=(e_\theta\mid[u,v]),
 \label{dl:eq:Dl-skew-form}
\end{equation}
and the Casimir normalization
\begin{equation}
 c_2(\varpi)=\frac32,
 \qquad c_2(\eta_1)=2m-1.
 \label{dl:eq:Dl-basic-Casimir-values}
\end{equation}
All further root, weight-coset, and Casimir data are collected in
Appendix~\ref{dl:app:Dl-root-weight-Casimir-data}.  In particular, that
appendix records the full root system and the two Kostant weights used below.  These conventions agree with
\cite{Bourbaki2002,KacRoanWakimoto2003,KacWakimoto2004,
AdamovicKacMosenederFrajriaPapiPerse2018}.

\subsubsection{A uniform package for the two negative levels}

For \(n\in\{0,1\}\), set
\begin{equation}
 k_n=n-2,
 \qquad
 K_n=m+n-2=\ell+n-4.
 \label{dl:eq:Dl-uniform-level-parameters}
\end{equation}
Then the two natural current levels in the minimal reduction are
\begin{equation*}
 k_{A,n}^{\natural}=K_n,
 \qquad
 k_{D,n}^{\natural}=n,
\end{equation*}
and the Arakawa--Moreau generators have the uniform exponents
\begin{equation}
 s_{A,n}=\sigma(w_A)^{K_n+1},
 \qquad
 s_{D,n}=\sigma(w_D)^{n+1}.
 \label{dl:eq:Dl-uniform-singular-generators}
\end{equation}
The Li twist by \(x=\varpi\) has vacuum energy
\begin{equation*}
 h_{0,n}=\frac{K_n}{4}.
\end{equation*}
Thus the two cases differ only by the level-zero collapse of the
\(D_m\)-current factor when \(n=0\):
\begin{equation*}
\begin{array}{c|c|c|c|c}
 n&k_n&K_n&(s_{A,n},s_{D,n})&(k_{A,n}^{\natural},k_{D,n}^{\natural})\\ \hline
 0&-2&m-2&(\sigma(w_A)^{m-1},\sigma(w_D))&(m-2,0)\\
 1&-1&m-1&(\sigma(w_A)^m,\sigma(w_D)^2)&(m-1,1).
\end{array}
\end{equation*}
This parameter package will also make the two final affine-lifting
arguments identical.

\subsubsection{Level, Ramond, and spectral-flow data}

All formal conventions are those of Section~\ref{sec:common-framework}.  The
minimal $W$-algebra generators are
\begin{equation}
  J^{\{a\}},\ a\in\mathfrak g^\natural;
  \qquad G^{\{u\}},\ u\in U;
  \qquad \omega,
  \label{dl:eq:Dl-minimal-W-generators}
\end{equation}
and the two current levels are
\begin{equation*}
  k_A^\natural=k+m,\qquad k_D^\natural=k+2.
\end{equation*}
At $k=-1$ they are
\begin{equation}
  k_A^\natural=m-1=K,\qquad k_D^\natural=1.
  \label{dl:eq:Dl-level-minus-one-natural-levels}
\end{equation}
Since $p_{D_\ell}(k)=(k+2)(k+m)$, the specialized Ramond relation is
\begin{equation}
 [u,v]=\langle u,v\rangle
 \left(\Omega_A+\Omega_D-2(2m+1)L-\frac{m-1}{2}\right)+Q(u,v).
 \label{dl:eq:Dl-level-minus-one-Ramond-Zhu-relation}
\end{equation}

\subsubsection{Li twist}

For the coweight $x$ used below and $H=J^{\{x\}}$, the formulas of
Section~\ref{subsec:common-Li-twist} specialize to
\begin{align}
 L_0^{\mathrm R}&=L_0+x_0+\frac{m-1}{4},
 \label{dl:eq:Dl-Li-Ramond-Hamiltonian}\\
 J_0^{\{h\},\mathrm R}&=J_0^{\{h\}}+(m-1)(x\mid h),
 \label{dl:eq:Dl-Li-Cartan-shift}\\
 J_0^{\{e_\gamma\},\mathrm R}&=J_{\gamma(x)}^{\{e_\gamma\}}.
 \label{dl:eq:Dl-Li-root-mode-shift}
\end{align}
\begin{lemma}[Ideals under Li twisting]

Every ideal considered in this section is a submodule of the Li-twisted
adjoint module on the same vector space.
\end{lemma}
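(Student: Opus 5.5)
The argument is a direct specialization of the general stability statement \eqref{eq:common-Li-ideal-stability} in Section~\ref{subsec:common-Li-twist}. The ideals in this section are of three kinds: graded vertex-algebra ideals of the universal minimal $W$-algebra $\mathcal W^k(D_\ell,f_\theta)$, such as $\langle\bar s_{A,n},\bar s_{D,n}\rangle$; ideals of the quotient $\overline{\mathcal W}$; and kernels of unital maps out of $\overline{\mathcal W}$. For any such ideal $I\subset V$ we take $H=J^{\{x\}}$, with $x=\varpi$ the coweight fixed above. We must show that every coefficient of $Y(\Delta(H,z)a,z)$, for $a\in V$, maps $I$ into $I$.

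First we check the hypotheses of Li's construction \cite{Li2012}. The vector $H$ has conformal weight one and is primary: its zero-mode action on $\omega$ gives no $\lambda^2$-anomaly because $x\in\mathfrak g^\natural$ is traceless in the minimal $W$-algebra conventions. The zero mode $H_{(0)}$ is semisimple with rational eigenvalues $\langle\mathrm{wt},x\rangle\in\frac12\mathbb Z$. This holds because the currents, $G$-fields and $\omega$ all have $\mathfrak h^\natural$-weights in the weight lattice, and $V$ is spanned by PBW monomials in these fields. Moreover, $H_{(1)}H=k_A^\natural(x\mid x)\mathbf1$, which gives the constant $\kappa$ and hence \eqref{dl:eq:Dl-Li-Ramond-Hamiltonian}.

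The key step is the expansion
\[
\Delta(H,z)a=z^{H_{(0)}}\exp\Bigl(\sum_{r\ge1}\tfrac{H_{(r)}}{-r}(-z)^{-r}\Bigr)a.
\]
Each $H_{(r)}$ with $r\ge1$ lowers conformal weight, so on a homogeneous $a$ the exponential is a finite sum $\sum_j z^{-j}a_j$ with $a_j\in V$. The factor $z^{H_{(0)}}$ acts diagonally on the weight components of each $a_j$. Hence every coefficient of $Y(\Delta(H,z)a,z)$ is a finite linear combination of modes $(a_j^{\mu})_{(p)}$ of ordinary vectors of $V$. Ordinary modes preserve the ideal $I$, so $I$ is stable under all twisted modes. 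We also need $I$ to be stable under $H_{(0)}$ itself, which holds since $H\in V$.

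It follows that $I$, with the twisted action, is a submodule of the Li-twisted adjoint module $V^{\mathrm R}$ on the same underlying space. Since $x$ acts semisimply on $I$, the new grading $L_0^{\mathrm R}$ restricts to $I$ as well.

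The only delicate point is bookkeeping, namely confirming that the $\frac12\mathbb Z$ eigenvalues of $H_{(0)}$ on the $G$-fields produce fractional powers with a common denominator $N=2$. This is exactly what makes the relevant twist the Ramond twist.
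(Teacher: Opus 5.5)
Your argument is correct and is essentially the paper's own: the paper just invokes the coefficientwise stability \eqref{eq:common-Li-ideal-stability}, and your observation is exactly why that holds. Each coefficient of $Y(\Delta(H,z)a,z)$ is a finite linear combination of ordinary modes of vectors of $V$, so it preserves $I$. Your checks of primarity of $H$ and of the constant $\kappa$ are needed only for the twisted Hamiltonian formula, not for the submodule statement itself.
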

\begin{proof}
Apply the coefficientwise stability statement in
Section~\ref{subsec:common-Li-twist}.
\end{proof}

\subsection{Singular vectors and their exact minimal reductions}
\label{dl:sec:Dl-singular-vectors-exact-reductions}

In this section we identify the exact minimal reductions of the two
affine singular vectors that define the \(D_\ell\), \(n=1\) candidate
quotient of Arakawa and Moreau.  We retain the notation
\[
  m=\ell-2,
  \qquad
  K=m-1
\]
from \eqref{dl:eq:Dl-global-rank-notation}.

\subsubsection{The two Kostant components}

Let \(\Omega\in S^2(\mathfrak g)\) be the quadratic Casimir element.
For \(\mathfrak g=D_\ell\), the Kostant decomposition is
\begin{equation*}
  S^2(\mathfrak g)
  =
  L_{\mathfrak g}(2\theta)
  \oplus
  \mathbb C\Omega
  \oplus
  \mathcal K,
  \qquad
  \mathcal K=\mathcal K_A\oplus\mathcal K_D,
\end{equation*}
where
\begin{equation}
  \mathcal K_A
  \cong
  L_{\mathfrak g}(\theta+\theta_A)
  =
  L_{\mathfrak g}(2\omega_1)
  \label{dl:eq:Dl-first-Kostant-component}
\end{equation}
and
\begin{equation*}
  \mathcal K_D
  \cong
  L_{\mathfrak g}(\theta+\theta_D).
\end{equation*}
More explicitly,
\begin{equation}
  \theta+\theta_D
  =
  \begin{cases}
    \omega_4+\omega_5,&\ell=5,\\
    \omega_4,&\ell\ge6.
  \end{cases}
  \label{dl:eq:Dl-second-Kostant-highest-weight-cases}
\end{equation}
See \cite[Section~2 and Theorem~4.2(4)]{ArakawaMoreau2018}.

Choose highest-weight vectors
\begin{equation*}
\begin{aligned}
  0\ne w_A&\in\mathcal K_A,
  &
  \operatorname{wt}_{\mathfrak g}(w_A)
  &=\theta+\theta_A,\\
  0\ne w_D&\in\mathcal K_D,
  &
  \operatorname{wt}_{\mathfrak g}(w_D)
  &=\theta+\theta_D.
\end{aligned}
\end{equation*}
Let
\begin{equation*}
  \widetilde\sigma:S^2(\mathfrak g)
  \longrightarrow
  U\!\left(\mathfrak g[t^{-1}]t^{-1}\right)
\end{equation*}
be the degree-two symmetrization map determined by
\begin{equation*}
  \widetilde\sigma(xy)
  =
  \frac12
  \bigl(
    x(-1)y(-1)+y(-1)x(-1)
  \bigr).
\end{equation*}
Under the PBW vector-space identification
\begin{equation}
  U\!\left(\mathfrak g[t^{-1}]t^{-1}\right)
  \overset{\sim}{\longrightarrow}
  V^k(\mathfrak g),
  \qquad
  u\longmapsto u\mathbf1,
  \label{dl:eq:Dl-affine-PBW-identification}
\end{equation}
we write
\begin{equation*}
  \sigma(w)=\widetilde\sigma(w)\mathbf1.
\end{equation*}
Following the convention of \cite{ArakawaMoreau2018}, powers of
\(\sigma(w)\) below mean associative powers of
\(\widetilde\sigma(w)\) in
\(U(\mathfrak g[t^{-1}]t^{-1})\), applied to the vacuum.  Thus we
define
\begin{equation*}
  s_A
  =
  \widetilde\sigma(w_A)^m\mathbf1
  \in V^{-1}(\mathfrak g)_{2m},
  \qquad
  s_D
  =
  \widetilde\sigma(w_D)^2\mathbf1
  \in V^{-1}(\mathfrak g)_4.
\end{equation*}
We henceforth abbreviate these two vectors as
\(s_A=\sigma(w_A)^m\) and \(s_D=\sigma(w_D)^2\).  This is PBW-power
notation and not an iterated normally ordered product.  Their finite
weights and affine degrees are
\begin{equation*}
\begin{aligned}
  \deg s_A&=2m,
  &
  \operatorname{wt}_{\mathfrak g}(s_A)
  &=m(\theta+\theta_A),\\
  \deg s_D&=4,
  &
  \operatorname{wt}_{\mathfrak g}(s_D)
  &=2(\theta+\theta_D).
\end{aligned}
\end{equation*}

By \cite[Theorem~4.2(4)(a)]{ArakawaMoreau2018},
\(\sigma(w_A)^{r+1}\) is singular precisely at
\begin{equation*}
  k=r-\ell+2.
\end{equation*}
Taking \(r=\ell-3=m-1\) gives \(k=-1\) and exponent \(m\).
Similarly, by
\cite[Theorem~4.2(4)(b)]{ArakawaMoreau2018},
\(\sigma(w_D)^{r+1}\) is singular precisely at
\begin{equation*}
  k=r-2.
\end{equation*}
Taking \(r=1\) gives \(k=-1\) and exponent \(2\).  Consequently,
\begin{equation*}
  s_A\text{ and }s_D
  \text{ are nonzero affine singular vectors in }
  V^{-1}(\mathfrak g).
\end{equation*}
Their nonvanishing also follows from the affine PBW theorem.

Set
\begin{equation*}
  N_A=U(\widehat{\mathfrak g})s_A,
  \qquad
  N_D=U(\widehat{\mathfrak g})s_D
  \subset V^{-1}(\mathfrak g).
\end{equation*}
Since the affine currents strongly generate \(V^{-1}(\mathfrak g)\),
the spaces \(N_A\) and \(N_D\) are graded vertex-algebra ideals as well
as affine submodules.

\begin{lemma}[Properness of the singular submodules]
\label{dl:lem:Dl-singular-submodules-proper}
The submodules \(N_A\) and \(N_D\), and their sum
\(N=N_A+N_D\), are proper graded ideals of
\(V^{-1}(\mathfrak g)\).  In particular,
\begin{equation}
  N_A,\ N_D,\ N
  \subset
  \ker\!\left(
    V^{-1}(\mathfrak g)\longrightarrow L_{-1}(\mathfrak g)
  \right).
  \label{dl:eq:Dl-singular-submodules-in-maximal-ideal}
\end{equation}
\end{lemma}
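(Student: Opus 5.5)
The plan is to use the standard fact that a graded ideal of $V^{-1}(\mathfrak g)$ is proper exactly when it does not contain the vacuum $\mathbf 1$. Equivalently, its degree-zero component must vanish. The unique maximal proper graded ideal $\operatorname{Rad}V^{-1}(\mathfrak g)$ is the sum of all proper graded ideals, so every proper graded ideal lies in the kernel of $V^{-1}(\mathfrak g)\to L_{-1}(\mathfrak g)$. Hence \eqref{dl:eq:Dl-singular-submodules-in-maximal-ideal} follows from properness, and it suffices to show that $N_A$, $N_D$ and $N$ meet $V^{-1}(\mathfrak g)[0]=\mathbb C\mathbf 1$ trivially.

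First I would show $N_A$ and $N_D$ are graded with all conformal degrees at least $\deg s$. Since $s_A$ and $s_D$ are affine singular vectors, $\widehat{\mathfrak n}_+$ (positive modes $x(n)$, $n>0$, together with $e_\alpha(0)$ for $\alpha>0$) annihilates them, and they are $\widehat{\mathfrak h}$-eigenvectors. By the PBW theorem, $U(\widehat{\mathfrak g})s=U(\widehat{\mathfrak n}_-)s$, where $\widehat{\mathfrak n}_-$ consists of the modes $x(-n)$ with $n>0$ and $e_{-\alpha}(0)$. Every such mode has nonpositive $L_0$-charge $-n\le0$ in the sense that it does not lower degree, so
\[
N_A\subset\bigoplus_{d\ge 2m}V^{-1}(\mathfrak g)[d],\qquad N_D\subset\bigoplus_{d\ge4}V^{-1}(\mathfrak g)[d].
\]
Here I use $\deg s_A=2m>0$ and $\deg s_D=4>0$, which were recorded above.

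Next I would deduce the statement for the sum. Since $N=N_A+N_D$, it is supported in degrees at least $\min(2m,4)=4>0$, so $\mathbf 1\notin N$ and $N$ is a proper graded ideal. The same holds for $N_A$ and $N_D$ separately. Their being vertex-algebra ideals was already noted, because the currents strongly generate $V^{-1}(\mathfrak g)$. Finally, since $\operatorname{Rad}V^{-1}(\mathfrak g)$ is the largest proper graded ideal, each of $N_A$, $N_D$ and $N$ is contained in it, which gives \eqref{dl:eq:Dl-singular-submodules-in-maximal-ideal}.

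The only step needing care is the grading claim, namely that $U(\widehat{\mathfrak g})s$ contains nothing of degree below $\deg s$. This relies on the singularity of $s_A$ and $s_D$ at $k=-1$, which is supplied by \cite[Theorem~4.2(4)]{ArakawaMoreau2018} as quoted above; nonvanishing follows from PBW. I expect no real obstacle beyond correctly invoking these results.
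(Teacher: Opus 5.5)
Your proposal is correct and follows the paper's proof. In both, the triangular decomposition writes $U(\widehat{\mathfrak g})s_X$ as degree-nonlowering operators applied to $s_X$. Hence $N_A$, $N_D$ and $N$ vanish below degree $\min\{2m,4\}=4>0$, do not contain $\mathbf 1$, and lie in the maximal proper graded ideal. One wording slip: a mode $x(-n)$ raises the conformal degree by $n$, so its $L_0$-charge is $+n\ge 0$, not $-n\le 0$; your conclusion that it does not lower degree is still right.
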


\begin{proof}
Let \(X\in\{A,D\}\), and let \(d_X>0\) be the conformal degree of
\(s_X\).  Since \(s_X\) is an affine highest-weight vector, the
triangular decomposition of \(\widehat{\mathfrak g}\) gives
\begin{equation*}
  N_X
  =
  U\!\left(\mathfrak g[t^{-1}]t^{-1}\right)
  U(\mathfrak g)\,s_X.
\end{equation*}
The zero modes preserve conformal degree, while every negative affine
mode raises it.  Therefore
\begin{equation*}
  (N_X)_r=0
  \qquad
  (r<d_X).
\end{equation*}
In particular, \(\mathbf1\notin N_X\), so \(N_X\) is proper.  The same
argument gives
\[
  N_r=0
  \qquad
  \bigl(r<\min\{2m,4\}\bigr),
\]
and hence \(N\) is proper.  Since \(L_{-1}(\mathfrak g)\) is the simple
graded quotient of the universal affine vacuum module, every proper
graded submodule is contained in its maximal proper graded submodule.
This proves \eqref{dl:eq:Dl-singular-submodules-in-maximal-ideal}.
\end{proof}

\subsubsection{Nonvanishing under minimal reduction}

The affine highest weights of the two singular vectors, including the
external derivation grading, are
\begin{equation*}
\begin{aligned}
  \widehat\lambda_A
  &=
  -\Lambda_0
  +
  m(\theta+\theta_A)
  -
  2m\delta,\\
  \widehat\lambda_D
  &=
  -\Lambda_0
  +
  2(\theta+\theta_D)
  -
  4\delta.
\end{aligned}
\end{equation*}
The roots \(\theta_A\) and \(\theta_D\) are perpendicular to
\(\theta\), so
\begin{equation*}
  \langle\theta+\theta_A,\theta^\vee\rangle=2,
  \qquad
  \langle\theta+\theta_D,\theta^\vee\rangle=2.
\end{equation*}
Using \eqref{eq:common-affine-simple-coroot}, we obtain
\begin{equation}
\begin{aligned}
  \widehat\lambda_A(\alpha_0^\vee)
  &=
  -1-2m<0,\\
  \widehat\lambda_D(\alpha_0^\vee)
  &=
  -1-4=-5<0.
\end{aligned}
\label{dl:eq:Dl-two-singular-negative-affine-coordinates}
\end{equation}

\begin{lemma}
\label{dl:lem:Dl-reduced-singular-vectors-nonzero}
The BRST classes of the two singular vectors are nonzero:
\begin{equation*}
  [s_A]_{\mathrm{DS}}\ne0,
  \qquad
  [s_D]_{\mathrm{DS}}\ne0
  \quad\text{in}\quad
  \mathcal W^{-1}(\mathfrak g,f_\theta).
\end{equation*}
\end{lemma}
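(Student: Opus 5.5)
The plan is to deduce the statement directly from Lemma~\ref{lem:common-reduced-singular-generator}, applied once to $s=s_A$ and once to $s=s_D$ at the level $k=-1$.

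First, the standing hypotheses of that lemma hold:
\begin{itemize}
\item the level $k=-1$ lies in $\mathbb Z_{<0}$;
\item both $s_A$ and $s_D$ are nonzero affine singular vectors of positive conformal degree ($2m$ and $4$ respectively), by \cite[Theorem~4.2(4)]{ArakawaMoreau2018} as recalled above;
\item the modules $N_A=U(\widehat{\mathfrak g})s_A$ and $N_D=U(\widehat{\mathfrak g})s_D$ are exactly the cyclic modules $U(\widehat{\mathfrak g}')s$ of the lemma, since $\mathbf K$ acts by the scalar $-1$ and $D$ acts diagonally by the grading.
\end{itemize}

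Second, the key input is the negative-wall condition \eqref{eq:common-singular-generator-negative-wall}. This has just been computed in \eqref{dl:eq:Dl-two-singular-negative-affine-coordinates}. I would spell out why it holds. Since $\theta_A\perp\theta$ and $\theta_D\perp\theta$, we have $\langle\theta+\theta_X,\theta^\vee\rangle=2$ for $X\in\{A,D\}$. Also $\delta(\alpha_0^\vee)=0$. Hence, by \eqref{eq:common-affine-simple-coroot},
\[
\widehat\lambda_X(\alpha_0^\vee)=k-\langle\operatorname{wt}_{\mathfrak g}(s_X),\theta^\vee\rangle ,
\]
which equals $-1-2m$ for $X=A$ and $-5$ for $X=D$. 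Both values are strictly negative. The derivation part of the weight plays no role, as Lemma~\ref{lem:common-reduced-singular-generator} explicitly allows.

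Third, Lemma~\ref{lem:common-reduced-singular-generator} now applies to each vector. It yields $H^0_{\mathrm{DS},f_\theta}(N_X)\ne0$, and this module is cyclic with generator $[s_X]_{\mathrm{DS}}$. By exactness (Lemma~\ref{lem:common-graded-subquotient-category-O}), it embeds into $\mathcal W^{-1}(\mathfrak g,f_\theta)$. So $[s_X]_{\mathrm{DS}}\ne0$ in the universal minimal $W$-algebra, which is the assertion.

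I expect no genuine obstacle. The only points needing care are, first, that the orthogonality $\theta_A,\theta_D\perp\theta$ really holds in the Bourbaki realization: $(\varepsilon_1+\varepsilon_2\mid\varepsilon_1-\varepsilon_2)=0$ and $(\varepsilon_1+\varepsilon_2\mid\varepsilon_3+\varepsilon_4)=0$. Second, the vectors must be genuinely nonzero and singular at $k=-1$, not merely of the right weight. If one wants a check independent of the cited nonvanishing theorem, I would cross-check against Proposition~\ref{prop:common-extremal-PBW-line}. That proposition shows the target weight space in $\mathcal W^{-1}$ is one-dimensional, spanned by the expected current power, so a nonzero class must be a multiple of it. This cross-check is not needed for the lemma itself.
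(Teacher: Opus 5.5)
Your proposal is correct and is essentially the paper's proof. The paper re-runs, for $X\in\{A,D\}$, the argument of Lemma~\ref{lem:common-reduced-singular-generator} instead of citing it. That argument uses the irreducible quotient $N_X\twoheadrightarrow L(\widehat\lambda_X)$, the negative values \eqref{dl:eq:Dl-two-singular-negative-affine-coordinates}, cyclicity via the Verma surjection and \cite[Theorem~6.3]{KacWakimoto2004}, and the embedding $H^0_{\mathrm{DS},f_\theta}(N_X)\hookrightarrow\mathcal W^{-1}(\mathfrak g,f_\theta)$.

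One correction to your closing remark: Proposition~\ref{prop:common-extremal-PBW-line} gives no independent check of nonvanishing. It only identifies the class as a multiple of the current power once nonvanishing is known, which is how the paper uses it in Proposition~\ref{dl:prop:Dl-exact-reduced-singular-vectors}.
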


\begin{proof}
We give the argument simultaneously for
\(X\in\{A,D\}\).  The highest-weight module \(N_X\) has an irreducible
highest-weight quotient
\begin{equation*}
  N_X\twoheadrightarrow L(\widehat\lambda_X).
\end{equation*}
By \eqref{dl:eq:Dl-two-singular-negative-affine-coordinates} and the
nonvanishing criterion
\eqref{eq:common-DS-nonvanishing},
\begin{equation*}
  H^0_{\mathrm{DS},f_\theta}
  \!\left(L(\widehat\lambda_X)\right)\ne0.
\end{equation*}
Exactness on \(\mathcal O_{-1}\) gives a surjection
\begin{equation*}
  H^0_{\mathrm{DS},f_\theta}(N_X)
  \twoheadrightarrow
  H^0_{\mathrm{DS},f_\theta}
  \!\left(L(\widehat\lambda_X)\right),
\end{equation*}
so the source is nonzero.

Shift the scalar action of the derivation so that \(s_X\) has
\(D\)-value zero, and denote the resulting module by \(N_X^\circ\).
There is a surjection
\begin{equation}
  M(\widehat\lambda_X^\circ)
  \twoheadrightarrow
  N_X^\circ
  \label{dl:eq:Dl-Verma-to-singular-submodule}
\end{equation}
from the affine Verma module.  By
\cite[Theorem~6.3]{KacWakimoto2004}, its reduction is a
\(W\)-Verma module generated by the BRST class of the affine
highest-weight vector.  Exactness applied to
\eqref{dl:eq:Dl-Verma-to-singular-submodule} therefore shows that
\begin{equation}
  H^0_{\mathrm{DS},f_\theta}(N_X)
  \text{ is generated by }[s_X]_{\mathrm{DS}}.
  \label{dl:eq:Dl-individual-reduction-cyclic}
\end{equation}
Since this cyclic module is nonzero, its cyclic generator is nonzero.
Finally, exactness applied to
\(N_X\hookrightarrow V^{-1}(\mathfrak g)\) identifies it with a
submodule of the universal minimal \(W\)-algebra, proving the claim.
\end{proof}

\subsubsection{The two extremal PBW lines}

Choose nonzero highest-root vectors
\begin{equation}
  e_A=e_{\theta_A}\in\mathfrak s,
  \qquad
  e_D=e_{\theta_D}\in\mathfrak d,
  \qquad
  J_A=J^{\{e_A\}},
  \qquad
  J_D=J^{\{e_D\}}.
  \label{dl:eq:Dl-two-highest-root-currents}
\end{equation}

For a \(\mathfrak g^\natural\)-weight \(\mu\), set
\begin{equation*}
  q_A(\mu)=\langle\mu,\theta_A^\vee\rangle,
  \qquad
  q_D(\mu)=\langle\mu,\theta_D^\vee\rangle.
\end{equation*}

\begin{lemma}[\(A_1\)-extremal PBW line]
\label{dl:lem:Dl-A1-extremal-PBW-line}
One has
\begin{equation*}
  \mathcal W^{-1}(\mathfrak g,f_\theta)_m[m\theta_A]
  =
  \mathbb C
  \bigl(J_A{}_{(-1)}\bigr)^m\mathbf1
  =
  \mathbb C:J_A^m:.
\end{equation*}
\end{lemma}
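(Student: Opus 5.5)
This is a direct specialization of Proposition~\ref{prop:common-extremal-PBW-line}. We take $\mathfrak a=\mathfrak s\cong A_1$, $\eta=\theta_A$, $e_\eta=e_A$, and $r=m$. The level $k=-1$ is noncritical, since $k+h^\vee=-1+2\ell-2>0$. The ideal $\mathfrak s$ is simply laced, so the only hypothesis left to check is the bound \eqref{eq:common-extremal-G-bound}: every weight $\nu$ of $U=\mathfrak g_{-1/2}$ must satisfy $d_{\theta_A}(\nu)=\langle\nu,\theta_A^\vee\rangle\le1$. Here $d_{\theta_A}$ is extended by zero on $\mathfrak d$.

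We verify this bound from the decomposition $U\cong L_{\mathfrak s}(\varpi)\boxtimes L_{\mathfrak d}(\eta_1)$. The $\mathfrak s$-weights of the doublet $L_{\mathfrak s}(\varpi)$ are $\pm\varpi=\pm\theta_A/2$. Hence $d_{\theta_A}(\nu)=\pm1$ for every weight $\nu$ of $U$, and the $\mathfrak d$-component contributes nothing.

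One can also check this concretely in the orthogonal realization. The roots of $\mathfrak g_{-1/2}$ are those $\gamma$ with $\langle\gamma,\theta^\vee\rangle=-1$, namely $-\varepsilon_i\pm\varepsilon_j$ with $i\in\{1,2\}$ and $j\ge3$. Pairing with $\theta_A^\vee=\varepsilon_1-\varepsilon_2$ gives $\mp1$, because $\varepsilon_j$ is orthogonal to $\theta_A$ for $j\ge3$. The restriction of $\gamma$ to $\mathfrak g^\natural$ does not change this pairing, since $\theta_A\perp\theta$. So \eqref{eq:common-extremal-G-bound} holds with equality attained, and Proposition~\ref{prop:common-extremal-PBW-line} yields
\[
\mathcal W^{-1}(\mathfrak g,f_\theta)_m[m\theta_A]=\mathbb C\bigl(J_A{}_{(-1)}\bigr)^m\mathbf1=\mathbb C:J_A^m:.
\]

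The only real point of care is the convention for $d_\eta$. The weight space $[m\theta_A]$ refers to the full $\mathfrak g^\natural$-weight, so we must also make sure the $\mathfrak d$-components vanish. This is harmless: the proposition's argument uses only the $\theta_A$-charge to force every PBW factor to be $J_A$, and a product of $m$ copies of $J_A$ automatically has weight exactly $m\theta_A$. The proof should simply state that the charge estimate rules out every other factor, including Virasoro factors, which have zero charge and positive weight, and derivatives of $J_A$, which have weight strictly greater than the charge bound allows.
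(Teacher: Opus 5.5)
Your proposal is correct and is essentially the paper's proof: the paper also applies Proposition~\ref{prop:common-extremal-PBW-line} to $\mathfrak a=\mathfrak s$ with $\eta=\theta_A$. It likewise notes that $\mathfrak d$-currents carry zero $\theta_A$-charge and that every weight of $U\cong L_{\mathfrak s}(\varpi)\boxtimes L_{\mathfrak d}(\eta_1)$ has $\theta_A^\vee$-charge at most one. Your noncriticality check, the orthogonal-coordinate verification, and the remark that the charge estimate alone pins down the full $\mathfrak g^\natural$-weight are harmless elaborations of the same argument; the paper adds only that the conclusion is independent of the level.
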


\begin{proof}
Apply Proposition~\ref{prop:common-extremal-PBW-line} to the simple ideal
\(\mathfrak s\cong A_1\), with \(\eta=\theta_A\).  Currents from
\(\mathfrak d\) have zero \(q_A\)-charge, while the weights of the
\(A_1\)-adjoint module have charge at most two, with equality only on
\(\mathbb Ce_A\).  Since
\(\mathfrak g_{-1/2}\cong\mathbf2\boxtimes V_{\mathrm{vec}}\), all its
weights have \(q_A\)-charge at most one.  The common proposition gives the
claimed line (and is independent of the affine level).
\end{proof}

\begin{lemma}[\(D_m\)-extremal PBW line]
\label{dl:lem:Dl-D-extremal-PBW-line}
One has
\begin{equation*}
  \mathcal W^{-1}(\mathfrak g,f_\theta)_2[2\theta_D]
  =
  \mathbb C
  \bigl(J_D{}_{(-1)}\bigr)^2\mathbf1
  =
  \mathbb C:J_D^2:.
\end{equation*}
\end{lemma}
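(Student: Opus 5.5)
The plan is to apply Proposition~\ref{prop:common-extremal-PBW-line} to the simple ideal \(\mathfrak a=\mathfrak d\cong D_m\) of \(\mathfrak g^\natural=\mathfrak s\oplus\mathfrak d\), with \(\eta=\theta_D=\varepsilon_3+\varepsilon_4\) and \(r=2\), exactly as in the proof of Lemma~\ref{dl:lem:Dl-A1-extremal-PBW-line}. The functional is \(d_\eta=q_D\), extended by zero on \(\mathfrak s\). Since \(\mathfrak d\) is simply laced, no separate argument is needed for the current factors: the proposition already gives \(q_D(\mu)\le2\) on roots of \(\mathfrak d\), with equality only for \(\mu=\theta_D\), and currents from \(\mathfrak s\) have charge zero.

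The only hypothesis to check is \eqref{eq:common-extremal-G-bound}, namely that every weight \(\nu\) of \(\mathfrak g_{-1/2}\) satisfies \(q_D(\nu)\le1\). We have \(\mathfrak g_{-1/2}\cong L_{\mathfrak s}(\varpi)\boxtimes L_{\mathfrak d}(\eta_1)\), and \(q_D\) vanishes on the \(\mathfrak s\)-component. It therefore suffices to bound \(q_D\) on the weights of the vector representation of \(D_m\). These weights are \(\pm\varepsilon_j\) for \(3\le j\le\ell\), and
\[
 q_D(\pm\varepsilon_j)=\pm(\varepsilon_j\mid\varepsilon_3+\varepsilon_4)\in\{0,\pm1\},
\]
using \(\theta_D^\vee=\theta_D\) for a long root. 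Hence \(q_D(\nu)\le1\). Equivalently, the vector representation is minuscule, so its weights pair with any coroot in \(\{0,\pm1\}\).

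For \(m=3\) (that is, \(\ell=5\)), the identification \eqref{dl:eq:Dl-D3-A3-identification} sends the vector module to \(L(\omega_2)\) of \(A_3\), which is again minuscule. In this case \(\theta_D\) is still the highest root of \(\mathfrak d\), so the same bound holds with no change. Every \(\ell\ge5\) gives \(m\ge3\), so \(\mathfrak d\) is always simple and simply laced, and no degenerate case arises.

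Proposition~\ref{prop:common-extremal-PBW-line}, at the noncritical level \(k=-1\), then yields
\[
 \mathcal W^{-1}(\mathfrak g,f_\theta)_2[2\theta_D]=\mathbb C\bigl(J_D{}_{(-1)}\bigr)^2\mathbf1=\mathbb C:J_D^2:.
\]
There is no real obstacle here. The only point requiring care is to confirm that \(\theta_D\) is indeed the highest root of \(\mathfrak d\) in the chosen embedding, which it is, since \(\mathfrak d\) is spanned by the roots \(\pm\varepsilon_i\pm\varepsilon_j\) with \(i,j\ge3\).
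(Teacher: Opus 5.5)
Your proposal is correct and follows essentially the same route as the paper. The paper also applies Proposition~\ref{prop:common-extremal-PBW-line} to $\mathfrak d\cong D_m$ with $\eta=\theta_D$ and $r=2$, and verifies the $G$-field bound from the vector weights $\pm\varepsilon_j$, which pair with $\theta_D^\vee$ in $\{0,\pm1\}$. Your separate remark on the case $m=3$ is a harmless extra check that the paper leaves implicit.
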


\begin{proof}
Apply Proposition~\ref{prop:common-extremal-PBW-line} to the simply laced
ideal \(\mathfrak d\cong D_m\), with \(\eta=\theta_D\).  Currents from
\(\mathfrak s\) have zero \(q_D\)-charge, and a root of \(D_m\) has
\(q_D\)-charge at most two, with equality only for \(\theta_D\).  The
\(D_m\)-weights of the vector representation are \(\pm\varepsilon_j\), so
all weights of \(\mathfrak g_{-1/2}\) have \(q_D\)-charge at most one.
The common proposition with \(r=2\) proves the assertion.
\end{proof}

\subsubsection{Exact reductions of the singular vectors}

The restriction of \(\theta\) to
\(\mathfrak h^\natural\) is zero, while the restrictions of
\(\theta_A\) and \(\theta_D\) are their respective highest roots.
Therefore,
\begin{equation}
\begin{aligned}
  \operatorname{wt}_{\mathfrak g^\natural}
  \bigl([s_A]_{\mathrm{DS}}\bigr)
  &=m\theta_A,\\
  \operatorname{wt}_{\mathfrak g^\natural}
  \bigl([s_D]_{\mathrm{DS}}\bigr)
  &=2\theta_D.
\end{aligned}
\label{dl:eq:Dl-reduced-singular-natural-weights}
\end{equation}
Indeed, the standard BRST representatives are the affine
highest-weight vectors tensored with the ghost vacuum, and the latter
has \(\mathfrak h^\natural\)-weight zero.

For an affine highest-weight representative of finite weight \(\mu\)
and affine degree \(d\), the minimal-reduction Hamiltonian acts by
\(d-\mu(x_\theta)\).  Since
\begin{equation*}
  \theta(x_\theta)=1,
  \qquad
  \theta_A(x_\theta)=\theta_D(x_\theta)=0,
\end{equation*}
we obtain
\begin{equation}
\begin{aligned}
  \Delta_W\bigl([s_A]_{\mathrm{DS}}\bigr)
  &=
  2m-m=m,\\
  \Delta_W\bigl([s_D]_{\mathrm{DS}}\bigr)
  &=
  4-2=2.
\end{aligned}
\label{dl:eq:Dl-reduced-singular-conformal-weights}
\end{equation}

\begin{proposition}[Exact reduced singular vectors]
\label{dl:prop:Dl-exact-reduced-singular-vectors}
There exist nonzero scalars
\(c_A,c_D\in\mathbb C^\times\) such that
\begin{equation*}
  [s_A]_{\mathrm{DS}}
  =
  c_A
  \bigl(J_A{}_{(-1)}\bigr)^m\mathbf1
  =
  c_A:J_A^m:
\end{equation*}
and
\begin{equation*}
  [s_D]_{\mathrm{DS}}
  =
  c_D
  \bigl(J_D{}_{(-1)}\bigr)^2\mathbf1
  =
  c_D:J_D^2:.
\end{equation*}
\end{proposition}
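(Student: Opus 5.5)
The proof combines three facts already established: nonvanishing of the BRST classes, their weights and conformal degrees, and the one-dimensionality of the corresponding extremal weight spaces.

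The first step is to place each class in a specific bigraded piece of $\mathcal W^{-1}(\mathfrak g,f_\theta)$. By \eqref{dl:eq:Dl-reduced-singular-natural-weights} and \eqref{dl:eq:Dl-reduced-singular-conformal-weights}, $[s_A]_{\mathrm{DS}}$ is a homogeneous vector of conformal weight $m$ and $\mathfrak g^\natural$-weight $m\theta_A$. Likewise, $[s_D]_{\mathrm{DS}}$ has conformal weight $2$ and $\mathfrak g^\natural$-weight $2\theta_D$. Homogeneity needs a short justification. The BRST differential commutes with the $\mathfrak h^\natural$-action and with the minimal-reduction Hamiltonian $L_0=D-x_\theta$ (up to the standard sign conventions). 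Moreover, the representative $s_X\otimes\mathbf 1_{\mathrm{gh}}$ is an eigenvector of both. So its cohomology class lies in the stated joint eigenspace.

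The second step is to apply Lemmas~\ref{dl:lem:Dl-A1-extremal-PBW-line} and \ref{dl:lem:Dl-D-extremal-PBW-line}. They say that
\[
\mathcal W^{-1}(\mathfrak g,f_\theta)_m[m\theta_A]=\mathbb C:J_A^m:
\qquad\text{and}\qquad
\mathcal W^{-1}(\mathfrak g,f_\theta)_2[2\theta_D]=\mathbb C:J_D^2:.
\]
Hence $[s_A]_{\mathrm{DS}}=c_A:J_A^m:$ and $[s_D]_{\mathrm{DS}}=c_D:J_D^2:$ for some scalars $c_A,c_D\in\mathbb C$. Lemma~\ref{dl:lem:Dl-reduced-singular-vectors-nonzero} gives $[s_X]_{\mathrm{DS}}\ne0$, so $c_A,c_D\ne0$. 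The identification $(J_{(-1)})^r\mathbf1={:}J^r{:}$ is the usual convention for right-nested normally ordered products of a single field. This is legitimate here because $J_A$ and $J_D$ are highest-root currents: $[e_\eta,e_\eta]=0$ and $(e_\eta\mid e_\eta)=0$, so the modes $J_{(-1)}$ commute with one another.

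The main point needing care is the weight bookkeeping. One must check that the $\mathfrak g^\natural$-weight of the class is exactly the restriction of the finite weight $\operatorname{wt}_{\mathfrak g}(s_X)$. This holds because the ghost vacuum has weight zero and $\mathfrak h^\natural\subset\mathfrak h$ acts on the BRST complex through the diagonal (affine plus ghost) action. One must also check that the $W$-algebra conformal weight is $d-\mu(x_\theta)$; this is the standard shifted Hamiltonian of \cite{KacWakimoto2004}. Once these are in place, the proposition follows directly from the two extremal PBW lemmas.
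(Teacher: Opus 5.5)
Your proposal is correct and is essentially the paper's argument: nonvanishing comes from Lemma~\ref{dl:lem:Dl-reduced-singular-vectors-nonzero}; each class is placed in the bigraded piece fixed by \eqref{dl:eq:Dl-reduced-singular-natural-weights} and \eqref{dl:eq:Dl-reduced-singular-conformal-weights}; and one-dimensionality comes from Lemmas~\ref{dl:lem:Dl-A1-extremal-PBW-line} and \ref{dl:lem:Dl-D-extremal-PBW-line}. Your remarks on the homogeneity of the representative and on $(J_{(-1)})^r\mathbf1={:}J^r{:}$ are valid bookkeeping that the paper leaves implicit.
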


\begin{proof}
Lemma~\ref{dl:lem:Dl-reduced-singular-vectors-nonzero} shows that both
classes are nonzero.  Equations
\eqref{dl:eq:Dl-reduced-singular-natural-weights} and
\eqref{dl:eq:Dl-reduced-singular-conformal-weights} place
\([s_A]_{\mathrm{DS}}\) in the one-dimensional space of
Lemma~\ref{dl:lem:Dl-A1-extremal-PBW-line}, and place
\([s_D]_{\mathrm{DS}}\) in the one-dimensional space of
Lemma~\ref{dl:lem:Dl-D-extremal-PBW-line}.  Each class is therefore a
scalar multiple of the indicated PBW vector, and each scalar is
nonzero.
\end{proof}

\subsubsection{The reduced candidate quotient}

Set
\begin{equation*}
  N=N_A+N_D,
  \qquad
  Q_\ell=V^{-1}(\mathfrak g)/N.
\end{equation*}
Equivalently,
\begin{equation*}
  Q_\ell
  =
  V^{-1}(D_\ell)
  \Big/
  \left\langle
    \sigma(w_A)^m,
    \sigma(w_D)^2
  \right\rangle.
\end{equation*}

\begin{lemma}[Category-\(\mathcal O\) scope]
\label{dl:lem:Dl-candidate-modules-in-category-O}
The graded affine modules
\begin{equation*}
  N_A,\qquad N_D,\qquad N=N_A+N_D,\qquad Q_\ell
\end{equation*}
belong to \(\mathcal O_{-1}\).  Their homogeneous subspaces are
finite-dimensional \(\mathfrak g\)-modules.
\end{lemma}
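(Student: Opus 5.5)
The plan is to reduce everything to Lemma~\ref{lem:common-graded-subquotient-category-O}. First we check that \(V^{-1}(\mathfrak g)\) itself satisfies its hypotheses. The conformal grading \(V^{-1}(\mathfrak g)=\bigoplus_{r\ge0}V^{-1}(\mathfrak g)_r\) comes from the PBW identification \eqref{dl:eq:Dl-affine-PBW-identification}. Each \(V^{-1}(\mathfrak g)_r\) is spanned by the finitely many PBW monomials \(x_1(-m_1)\cdots x_s(-m_s)\mathbf1\) with \(\sum m_i=r\) and \(x_i\) running over a fixed basis, so it is finite-dimensional. The zero-mode subalgebra \(\mathfrak g\otimes t^0\) commutes with \(L_0\), hence preserves each \(V^{-1}(\mathfrak g)_r\). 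This makes every homogeneous subspace a finite-dimensional \(\mathfrak g\)-module, and therefore a direct sum of finite-dimensional irreducibles.

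Next we pass to the four modules. By Lemma~\ref{dl:lem:Dl-singular-submodules-proper}, the modules \(N_A\), \(N_D\) and \(N=N_A+N_D\) are graded affine submodules (indeed graded ideals) of \(V^{-1}(\mathfrak g)\). Consequently \(Q_\ell=V^{-1}(\mathfrak g)/N\) is a graded quotient. For a graded submodule \(S\), each \(S_r=S\cap V^{-1}(\mathfrak g)_r\) is a \(\mathfrak g\)-stable subspace of a finite-dimensional \(\mathfrak g\)-module, since gradedness means \(S=\bigoplus_r S_r\) and zero modes preserve degree. For the quotient, \((Q_\ell)_r=V^{-1}(\mathfrak g)_r/N_r\) is a finite-dimensional \(\mathfrak g\)-module as well. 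Finite-dimensionality of the homogeneous pieces is inherited in both cases, and the lower bound \(d_0=0\) on degrees holds automatically. Each of the four modules is then a graded subquotient of \(V^{-1}(\mathfrak g)\) satisfying the hypotheses of Lemma~\ref{lem:common-graded-subquotient-category-O}, so it lies in \(\mathcal O_{-1}\) with support in \(-\Lambda_0-\widehat Q_+\).

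The one point that needs care is the gradedness of \(N_X\), which we can verify directly. Since \(s_X\) is homogeneous of degree \(d_X\) and \(\mathfrak h\)-weight vector, \(U(\widehat{\mathfrak g})s_X\) is spanned by products of homogeneous modes applied to a homogeneous vector. It is therefore \(L_0\)- and \(\mathfrak h\)-graded, and sums of graded submodules are graded. We expect no real obstacle here. The only subtlety is matching the convention of \cite[Section~2.12]{Arakawa2005} for \(\mathcal O_k\) (weight module, finite-dimensional weight spaces, finitely many upper weights), and the earlier lemma already handles this with the single upper weight \(-\Lambda_0\).
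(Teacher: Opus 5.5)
Your proposal is correct and follows essentially the paper's route. You show that each homogeneous piece of \(V^{-1}(\mathfrak g)\) is finite-dimensional by PBW and is preserved by the zero modes, pass this to the graded submodules \(N_A,N_D,N\) and the quotient \(Q_\ell\), and conclude membership in \(\mathcal O_{-1}\). The only difference is in the last step: the paper's local proof ends by saying these are weight modules on which the positive affine Borel acts locally finitely, whereas you cite Lemma~\ref{lem:common-graded-subquotient-category-O}, whose support condition \(k\Lambda_0-\widehat Q_+\) matches the convention of \cite[Section~2.12]{Arakawa2005} that the paper adopts.
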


\begin{proof}
The conformal grading of \(V^{-1}(\mathfrak g)\) is bounded below,
and each homogeneous subspace is a finite-dimensional
\(\mathfrak g\)-module by the affine PBW theorem.  The singular
submodules \(N_A,N_D\), their sum, and the quotient \(Q_\ell\) inherit
these properties.  They are weight modules of level \(-1\), and the
action of the positive affine Borel is locally finite.  Hence they are
objects of the affine category \(\mathcal O_{-1}\).
\end{proof}
Define the ideal
\begin{equation}
  \mathcal J_\ell
  =
  \left\langle
    :J_A^m:,
    :J_D^2:
  \right\rangle
  \subset
  \mathcal W^{-1}(\mathfrak g,f_\theta)
  \label{dl:eq:Dl-reduced-current-ideal}
\end{equation}
and the quotient
\begin{equation*}
  \widetilde{\mathcal W}_\ell
  =
  \mathcal W^{-1}(\mathfrak g,f_\theta)/
  \mathcal J_\ell.
\end{equation*}

\begin{proposition}[Exact reduction of the candidate quotient]
\label{dl:prop:Dl-exact-reduction-candidate-quotient}
There is a canonical isomorphism
\begin{equation}
  H^0_{\mathrm{DS},f_\theta}(Q_\ell)
  \cong
  \widetilde{\mathcal W}_\ell.
  \label{dl:eq:Dl-exact-reduction-Q}
\end{equation}
Moreover,
\begin{equation}
  \widetilde{\mathcal W}_\ell\ne0,
  \qquad
  \widetilde{\mathcal W}_\ell
  \text{ is lisse}.
  \label{dl:eq:Dl-Wtilde-nonzero-lisse}
\end{equation}
\end{proposition}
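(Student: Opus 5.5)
The isomorphism \eqref{dl:eq:Dl-exact-reduction-Q} will be obtained by applying Proposition~\ref{prop:common-exact-generated-quotient} with $s_1=s_A$, $s_2=s_D$. Its hypotheses are already in place. Lemma~\ref{dl:lem:Dl-candidate-modules-in-category-O}, or equivalently Lemma~\ref{lem:common-graded-subquotient-category-O}, puts $N_A,N_D,N$ and $Q_\ell$, together with the short exact sequence $0\to N\to V^{-1}(\mathfrak g)\to Q_\ell\to0$, in $\mathcal O_{-1}$, where minimal reduction is exact. Equation \eqref{dl:eq:Dl-individual-reduction-cyclic} shows that $H^0_{\mathrm{DS},f_\theta}(N_X)$ is cyclic and generated by $[s_X]_{\mathrm{DS}}$. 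Proposition~\ref{dl:prop:Dl-exact-reduced-singular-vectors} identifies these generators with $c_A:J_A^m:$ and $c_D:J_D^2:$, where $c_A,c_D\neq0$. The nonzero scalars do not change the generated ideal, so the ideal $\langle\bar s_A,\bar s_D\rangle$ equals $\mathcal J_\ell$, and \eqref{eq:common-exact-generated-quotient} gives $H^0(Q_\ell)\cong\widetilde{\mathcal W}_\ell$. Canonicity follows because the map is the one induced on reductions by $V^{-1}(\mathfrak g)\twoheadrightarrow Q_\ell$.

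To show $\widetilde{\mathcal W}_\ell\neq0$, I will use Lemma~\ref{dl:lem:Dl-singular-submodules-proper}, which says $N$ is proper and hence $Q_\ell\neq0$. By Lemma~\ref{dl:lem:Dl-candidate-modules-in-category-O}, $Q_\ell$ is graded with degrees bounded below, has finite-dimensional homogeneous pieces, and each piece is a finite-dimensional $\mathfrak g$-module. Lemma~\ref{lem:common-DS-detection} at $k=-1<0$ then gives $H^0(Q_\ell)\neq0$. A second route: $Q_\ell\twoheadrightarrow L_{-1}(\mathfrak g)$ produces a surjection $\widetilde{\mathcal W}_\ell\twoheadrightarrow\mathcal W_{-1}(\mathfrak g,f_\theta)$, and the target is nonzero by the same lemma.

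For lisseness I will use \eqref{eq:common-associated-variety}. The first step is $X_{Q_\ell}=\overline{\mathbb O}_{\min}$, which is \cite[Proposition~5.2]{ArakawaMoreau2018} for this quotient. Arakawa--Moreau show that the symbols of $\sigma(w_A)^m$ and $\sigma(w_D)^2$ generate, up to radical, the ideal defining $\overline{\mathbb O}_{\min}$: the images of $\mathcal K_A,\mathcal K_D$ under the $\mathfrak g$-action cut out the Joseph-type ideal, and taking powers does not change the radical. Then $X_{H^0(Q_\ell)}=\overline{\mathbb O}_{\min}\cap\mathcal S_{f_\theta}=\{f_\theta\}$, which is a point, so $\widetilde{\mathcal W}_\ell$ is lisse; this is \cite[Theorem~6.1(3)]{ArakawaMoreau2018}.

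As an independent check that avoids the affine citation, I will argue on the $W$-side. In $R_{\widetilde{\mathcal W}_\ell}$, the relations $\bar J_A^m=0$ and $\bar J_D^2=0$, after applying the $\mathfrak g^\natural$-action, make all current symbols nilpotent, since the $\mathfrak s$-orbit of $e_A^m$ and the $\mathfrak d$-orbit of $e_D^2$ give nilpotence on the respective adjoint spaces. The Ramond relation \eqref{dl:eq:Dl-level-minus-one-Ramond-Zhu-relation}, read in $R$, then expresses products of $G$-symbols and $L$ through currents, which forces the $G$-symbols and $L$ to be nilpotent as well. I expect the main obstacle to be this step, namely matching the associated-variety input precisely to the PBW powers used here. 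I will rely mainly on the cited Arakawa--Moreau results and treat the $W$-side computation only as a consistency check.
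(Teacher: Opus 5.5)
Your proof is correct and follows the paper's own route. You apply Proposition~\ref{prop:common-exact-generated-quotient}, using Lemma~\ref{dl:lem:Dl-candidate-modules-in-category-O}, the cyclicity \eqref{dl:eq:Dl-individual-reduction-cyclic} and Proposition~\ref{dl:prop:Dl-exact-reduced-singular-vectors}, to get \eqref{dl:eq:Dl-exact-reduction-Q}, and you obtain lisse-ness from \cite[Proposition~5.2]{ArakawaMoreau2018}, \eqref{eq:common-associated-variety} and \cite[Theorem~6.1(3)]{ArakawaMoreau2018}. The differences are minor: your nonvanishing argument via Lemma~\ref{lem:common-DS-detection} (or via the surjection onto $\mathcal W_{-1}(\mathfrak g,f_\theta)$) is a slightly more self-contained substitute for the paper's appeal to \cite[Theorem~6.1(3)]{ArakawaMoreau2018}, and the $W$-side ``consistency check'' is superfluous (as sketched it is not yet an argument, since it needs the nilradical of $R_{\widetilde{\mathcal W}_\ell}$ to be Poisson-stable and uses the $G$--$G$ bracket rather than products).
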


\begin{proof}
Lemma~\ref{dl:lem:Dl-candidate-modules-in-category-O}, the cyclicity statement
\eqref{dl:eq:Dl-individual-reduction-cyclic}, and
Proposition~\ref{dl:prop:Dl-exact-reduced-singular-vectors} verify the
hypotheses of Proposition~\ref{prop:common-exact-generated-quotient}; this
gives \eqref{dl:eq:Dl-exact-reduction-Q}.  Moreover,
\cite[Proposition~5.2]{ArakawaMoreau2018} gives
\begin{equation*}
 X_{Q_\ell}=\overline{\mathbb O}_{\min}.
\end{equation*}
Hence \eqref{eq:common-associated-variety}, together with
\cite[Theorem~6.1(3)]{ArakawaMoreau2018}, proves that the reduced quotient is
nonzero and lisse.
\end{proof}

By Lemma~\ref{dl:lem:Dl-singular-submodules-proper}, the two singular
vectors lie in the maximal proper graded ideal of
\(V^{-1}(\mathfrak g)\).  Hence the canonical map to the simple affine
vertex algebra factors through a surjection
\begin{equation*}
  Q_\ell\twoheadrightarrow L_{-1}(\mathfrak g).
\end{equation*}
Exactness of minimal reduction and
\eqref{eq:common-DS-nonvanishing} then give a natural surjective
homomorphism
\begin{equation*}
  \pi_\ell:
  \widetilde{\mathcal W}_\ell
  \twoheadrightarrow
  \mathcal W_{-1}(\mathfrak g,f_\theta).
\end{equation*}
The next sections prove that \(\pi_\ell\) is an isomorphism.

\subsection{The Ramond Zhu algebra and a weighted Casimir identity}
\label{dl:sec:Dl-Ramond-weighted-Casimir}

We now pass from the lisse quotient
\(\widetilde{\mathcal W}_\ell\) constructed in
Section~\ref{dl:sec:Dl-singular-vectors-exact-reductions} to its
finite-dimensional Ramond Zhu algebra.  The two reduced current
relations impose different integrability bounds on the
\(A_1\)- and \(D_m\)-factors.  The Ramond relation then yields a
weighted trace identity involving the two quadratic Casimirs.

\subsubsection{The Ramond Zhu algebra and nilpotency relations}

Set
\begin{equation*}
  B_\ell
  =
  A_{\mathrm R}
  \!\left(\widetilde{\mathcal W}_\ell\right).
\end{equation*}
By \eqref{dl:eq:Dl-Wtilde-nonzero-lisse} and
\eqref{eq:common-lisse-Ramond-finite},
\(B_\ell\) is a nonzero finite-dimensional unital associative
algebra:
\begin{equation*}
  0<\dim B_\ell<\infty.
\end{equation*}

Let
\begin{equation*}
  e_A=[J_A]_{\mathrm R},
  \qquad
  e_D=[J_D]_{\mathrm R}
  \quad\text{in }B_\ell,
\end{equation*}
where \(J_A\) and \(J_D\) were defined in
\eqref{dl:eq:Dl-two-highest-root-currents}.  Since a root space is
isotropic for the invariant bilinear form,
\begin{equation}
  [J_A{}_\lambda J_A]=0,
  \qquad
  [J_D{}_\lambda J_D]=0.
  \label{dl:eq:Dl-highest-root-current-self-brackets}
\end{equation}
Equivalently,
\begin{equation*}
  (J_X)_{(r)}J_X=0,
  \qquad
  r\in\mathbb Z_{\ge0},
  \qquad
  X\in\{A,D\}.
\end{equation*}

\begin{lemma}[Zhu images of the current powers]
\label{dl:lem:Dl-Zhu-images-current-powers}
Let \(X\in\{A,D\}\).  In the Ramond Zhu algebra of every conformal
quotient of \(\mathcal W^{-1}(\mathfrak g,f_\theta)\), one has
\begin{equation}
  \bigl[:J_X^r:\bigr]_{\mathrm R}
  =
  [J_X]_{\mathrm R}^{\,r},
  \qquad
  r\in\mathbb Z_{\ge1}.
  \label{dl:eq:Dl-Zhu-images-current-powers}
\end{equation}
\end{lemma}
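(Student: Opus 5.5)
We argue by induction on \(r\), using the explicit product formula in the Ramond (twisted) Zhu algebra of \cite{DongLiMason1998}. Since \(J_X\) has conformal weight one and is fixed by the Ramond automorphism, it lies in the untwisted sector, and the twisted Zhu product reduces to the ordinary one:
\[
 [a]_{\mathrm R}*[b]_{\mathrm R}
 =\Bigl[\operatorname{Res}_z Y(a,z)b\,\frac{(1+z)^{\operatorname{wt}a}}{z}\Bigr]_{\mathrm R}
 =\Bigl[\sum_{j\ge0}\binom{\operatorname{wt}a}{j}a_{(j-1)}b\Bigr]_{\mathrm R}.
\]
For \(a=J_X\) with \(\operatorname{wt}J_X=1\), this gives
\[
 [J_X]_{\mathrm R}*[b]_{\mathrm R}=[J_X{}_{(-1)}b+J_X{}_{(0)}b]_{\mathrm R}.
\]

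Take \(b=:J_X^{r-1}:=(J_X{}_{(-1)})^{r-1}\mathbf1\). The key point is that \(J_X{}_{(0)}b=0\). The zero mode \(J_X{}_{(0)}\) acts as a derivation of all \(n\)-th products, in particular of \(J_X{}_{(-1)}\), via the commutator formula
\[
 [J_X{}_{(0)},J_X{}_{(-1)}]=(J_X{}_{(0)}J_X)_{(-1)}.
\]
By \eqref{dl:eq:Dl-highest-root-current-self-brackets} we have \(J_X{}_{(0)}J_X=0\), so \(J_X{}_{(0)}\) commutes with \(J_X{}_{(-1)}\). Together with \(J_X{}_{(0)}\mathbf1=0\), this gives \(J_X{}_{(0)}b=0\). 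Hence
\[
 [J_X]_{\mathrm R}*[:J_X^{r-1}:]_{\mathrm R}=[:J_X^{r}:]_{\mathrm R},
\]
and induction from the trivial case \(r=1\) yields \eqref{dl:eq:Dl-Zhu-images-current-powers}.

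The same identity holds in every conformal quotient, because the Zhu map is functorial: the quotient map induces a surjective algebra homomorphism on Ramond Zhu algebras that sends classes to classes.

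The main point needing care is the convention for the Ramond Zhu product on untwisted even elements. We must confirm that with the Dong--Li--Mason convention (and the identification in \cite{KacMosenederFrajriaPapi2025}) the product of a weight-one untwisted vector with \(b\) is \([J_{(-1)}b+J_{(0)}b]\), with no extra \(\mathbb Z/2\)-shift term arising from the twist. Since \(J_X\) is fixed by the automorphism, the twisted formula specializes to the untwisted one. Even if the convention were \(a*b=\sum_j\binom{\operatorname{wt}a}{j}a_{(j-1)}b\) with different binomial coefficients, every correction term involves \(J_X{}_{(j)}b\) for \(j\ge0\). These all vanish by the same derivation argument applied to \(J_X{}_{(j)}\), using \(J_X{}_{(j)}J_X=0\) for all \(j\ge0\). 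So the conclusion is robust.
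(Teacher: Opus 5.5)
Your proof is correct and follows the paper's argument. Both proceed by induction on \(r\) using the weight-one Ramond Zhu product \([J_X]_{\mathrm R}*[b]_{\mathrm R}=[J_X{}_{(-1)}b+J_X{}_{(0)}b]_{\mathrm R}\), and both get the vanishing of \(J_X{}_{(0)}:J_X^{r}:\) from \([J_X{}_\lambda J_X]=0\). The only difference is cosmetic: you derive that vanishing from the zero-mode commutator formula, whereas the paper invokes the noncommutative Wick formula.
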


\begin{proof}
The Ramond automorphism fixes \(J_X\), and \(J_X\) has conformal
weight one.  The assertion is clear for \(r=1\).  Suppose it holds
for some \(r\ge1\).  By
\eqref{dl:eq:Dl-highest-root-current-self-brackets} and the noncommutative
Wick formula,
\begin{equation*}
  [J_X{}_\lambda :J_X^r:]=0.
\end{equation*}
Hence the weight-one Ramond Zhu product satisfies
\begin{align*}
  [J_X]_{\mathrm R}
  *
  \bigl[:J_X^r:\bigr]_{\mathrm R}
  &=
  \bigl[
    (J_X)_{(-1)}:J_X^r:
  \bigr]_{\mathrm R}
  +
  \bigl[
    (J_X)_{(0)}:J_X^r:
  \bigr]_{\mathrm R}
  \notag\\
  &=
  \bigl[:J_X^{r+1}:\bigr]_{\mathrm R}.
\end{align*}
The induction hypothesis proves
\eqref{dl:eq:Dl-Zhu-images-current-powers}.
\end{proof}

The defining relations of
\(\widetilde{\mathcal W}_\ell\) in
\eqref{dl:eq:Dl-reduced-current-ideal}, together with
Lemma~\ref{dl:lem:Dl-Zhu-images-current-powers}, give
\begin{equation*}
  e_A^m=0,
  \qquad
  e_D^2=0
  \quad\text{in }B_\ell.
\end{equation*}

The current zero modes define a Lie algebra homomorphism
\begin{equation*}
  \mathfrak g^\natural
  =
  \mathfrak s\oplus\mathfrak d
  \longrightarrow B_\ell.
\end{equation*}
Consequently, every finite-dimensional \(B_\ell\)-module is a
finite-dimensional \(\mathfrak g^\natural\)-module and is completely
reducible over \(\mathfrak g^\natural\).

\begin{lemma}[Allowed finite-dimensional types]
\label{dl:lem:Dl-allowed-finite-types}
Let \(M\) be a finite-dimensional \(B_\ell\)-module.  Every irreducible
\(\mathfrak g^\natural\)-constituent of \(M\) has the form
\begin{equation*}
  L_{\mathfrak s}(a\varpi)
  \boxtimes
  L_{\mathfrak d}(\lambda_D),
\end{equation*}
where
\begin{equation*}
  0\le a\le m-1
\end{equation*}
and
\begin{equation}
  \langle\lambda_D,\theta_D^\vee\rangle\le1.
  \label{dl:eq:Dl-D-highest-root-level-bound}
\end{equation}
\end{lemma}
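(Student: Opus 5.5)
The plan is to read off the allowed constituents from the two nilpotency relations $e_A^m=0$ and $e_D^2=0$ in $B_\ell$, established just before the statement, together with elementary $\mathfrak{sl}_2$-theory. Let $M$ be a finite-dimensional $B_\ell$-module. Via the Lie homomorphism $\mathfrak g^\natural\to B_\ell$ it is a finite-dimensional $\mathfrak g^\natural=\mathfrak s\oplus\mathfrak d$-module, hence completely reducible. Every irreducible constituent is then an outer tensor product $L_{\mathfrak s}(a\varpi)\boxtimes L_{\mathfrak d}(\lambda_D)$ with $a\in\mathbb Z_{\ge0}$ and $\lambda_D$ dominant integral for $D_m$. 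Each such constituent $C$ is a $\mathfrak g^\natural$-direct summand of $M$, and $e_A,e_D$ act on $C$ through $\mathfrak g^\natural$. The relations $e_A^m=0$ and $e_D^2=0$ therefore hold as operators on $C$ itself. It remains to translate each operator identity into a bound on the highest weight.

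For the $\mathfrak s$-factor, restrict to $\mathfrak s\cong\mathfrak{sl}_2$ spanned by $e_A$, $\theta_A^\vee$ and $f_A$. The simple module $L_{\mathfrak s}(a\varpi)$ has dimension $a+1$, and $e_A$ acts on it as a single nilpotent Jordan block of size $a+1$. Consequently $e_A^{a}\ne0$ on $L_{\mathfrak s}(a\varpi)$: it sends the lowest weight vector to a nonzero multiple of the highest weight vector. Tensoring with $L_{\mathfrak d}(\lambda_D)$ preserves this nonvanishing, so $e_A^m=0$ on $C$ forces $a\le m-1$.

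For the $\mathfrak d$-factor, consider the $\mathfrak{sl}_2$-triple $(e_D,\theta_D^\vee,f_D)\subset\mathfrak d$ attached to the highest root $\theta_D$. Let $v$ be a highest weight vector of $L_{\mathfrak d}(\lambda_D)$ and set $b=\langle\lambda_D,\theta_D^\vee\rangle\ge0$. Then $v$ is $e_D$-annihilated of $\theta_D^\vee$-eigenvalue $b$, so it generates an irreducible $\mathfrak{sl}_2$-submodule of dimension $b+1$. In that submodule $e_D^{b}f_D^{b}v$ is a nonzero multiple of $v$. If $b\ge2$, then $e_D^2\neq0$ on this string, which contradicts $e_D^2=0$. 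Hence $b\le1$, which is \eqref{dl:eq:Dl-D-highest-root-level-bound}.

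The only point requiring care is the normalization of $\theta_D^\vee$ and $\theta_A^\vee$, so that the restricted form conventions match the standard $\mathfrak{sl}_2$-triples. Since $\mathfrak d$ and $\mathfrak s$ are simply laced and the long-root normalization is inherited, both coroots are the usual ones. For $m=3$ one should also check compatibility with the identification $D_3\cong A_3$ in \eqref{dl:eq:Dl-D3-A3-identification}. There, $\theta_D$ is still the highest root, so the same argument applies verbatim.
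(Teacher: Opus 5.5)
Your proposal is correct and follows essentially the same route as the paper: in both, the relations $e_A^m=0$ and $e_D^2=0$ in $B_\ell$ are applied to each irreducible $\mathfrak g^\natural$-constituent, and the $\mathfrak{sl}_2$-string nonvanishing $e^r f^r v\ne0$ for $r\le\langle\lambda,\alpha^\vee\rangle$ is used, both for the $\mathfrak s$-factor and for the $\theta_D$-triple in $\mathfrak d$. Your single-Jordan-block phrasing for the $A_1$-factor restates the paper's string argument, and your remarks on coroot normalization and on the case $D_3\cong A_3$ are harmless additions.
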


\begin{proof}
Let \(v\) be a highest-weight vector of an irreducible constituent.

For the \(A_1\)-factor, suppose that its highest weight is
\(a\varpi\).  In the \(\mathfrak{sl}_2\)-string through \(v\),
\begin{equation*}
  e_A^r f_A^r v\ne0,
  \qquad
  0\le r\le a.
\end{equation*}
If \(a\ge m\), then \(e_A^m f_A^m v\ne0\), contradicting
\(e_A^m=0\).  Hence \(a\le m-1\).

For the \(D_m\)-factor, consider the
\(\mathfrak{sl}_2\)-subalgebra associated with the highest root
\(\theta_D\).  If
\[
  q=\langle\lambda_D,\theta_D^\vee\rangle\ge2,
\]
then its root string gives
\begin{equation*}
  e_D^2f_D^2v\ne0,
\end{equation*}
contradicting \(e_D^2=0\).  Thus \(q\le1\).
\end{proof}

For \(m\ge4\), the dominant \(D_m\)-weights satisfying
\eqref{dl:eq:Dl-D-highest-root-level-bound} are
\begin{equation}
  0,
  \qquad
  \eta_1,
  \qquad
  \eta_{m-1},
  \qquad
  \eta_m,
  \label{dl:eq:Dl-D-level-one-dominant-weights}
\end{equation}
where the last two are the spinor weights.  When \(m=3\), under
\(D_3\cong A_3\), the corresponding list is
\begin{equation}
  0,
  \qquad
  \omega_1,
  \qquad
  \omega_2,
  \qquad
  \omega_3.
  \label{dl:eq:Dl-D3-level-one-dominant-weights}
\end{equation}
At this stage all these possibilities are allowed.  The two spinor
cosets will be excluded in Section~\ref{dl:sec:Dl-weight-cosets-lowest-Ramond}.

\subsubsection{Invariant trace forms}

Let \(M\) be a finite-dimensional \(B_\ell\)-module, and write
\begin{equation*}
  \rho:
  \mathfrak g^\natural
  \longrightarrow
  \operatorname{End}_{\mathbb C}(M)
\end{equation*}
for the induced representation.  Put
\begin{equation*}
  N_M=\dim M,
  \qquad
  \tau_A=\operatorname{tr}_M\rho(\Omega_A),
  \qquad
  \tau_D=\operatorname{tr}_M\rho(\Omega_D).
\end{equation*}

For each simple summand
\(\mathfrak a\in\{\mathfrak s,\mathfrak d\}\), invariance of the trace
form implies that there is a scalar \(I_{\mathfrak a}\) such that
\begin{equation}
  \operatorname{tr}_M
  \bigl(
    \rho(x)\rho(y)
  \bigr)
  =
  I_{\mathfrak a}(x\mid y),
  \qquad
  x,y\in\mathfrak a.
  \label{dl:eq:Dl-Dynkin-index-trace-form}
\end{equation}
If \(x\in\mathfrak s\) and \(y\in\mathfrak d\), then
\begin{equation}
  \operatorname{tr}_M
  \bigl(
    \rho(x)\rho(y)
  \bigr)=0.
  \label{dl:eq:Dl-mixed-trace-form-zero}
\end{equation}
Indeed, for fixed \(y\in\mathfrak d\), the functional
\[
  x\longmapsto
  \operatorname{tr}_M\bigl(\rho(x)\rho(y)\bigr)
\]
vanishes on
\([\mathfrak s,\mathfrak s]=\mathfrak s\) by invariance and the
commutativity of the two simple ideals.

Taking the trace of the Casimir gives
\begin{equation*}
  I_{\mathfrak s}
  =
  \frac{\tau_A}{\dim\mathfrak s}
  =
  \frac{\tau_A}{3},
  \qquad
  I_{\mathfrak d}
  =
  \frac{\tau_D}{\dim\mathfrak d}
  =
  \frac{\tau_D}{m(2m-1)}.
\end{equation*}

\subsubsection{The trace of the quadratic Ramond term}

Choose dual bases adapted to the orthogonal decomposition
\(\mathfrak g^\natural=\mathfrak s\oplus\mathfrak d\).  The double sum
in \eqref{eq:common-quadratic-term} decomposes as
\begin{equation*}
  Q(u,v)
  =
  Q_{AA}(u,v)+Q_{DD}(u,v)+Q_{\mathrm{mix}}(u,v),
\end{equation*}
where the two indices of \(Q_{AA}\) lie in \(\mathfrak s\), those of
\(Q_{DD}\) lie in \(\mathfrak d\), and
\(Q_{\mathrm{mix}}\) is the sum of the two mixed index ranges.  The
mixed term need not vanish as an element of the enveloping algebra.

\begin{lemma}[Trace of the quadratic term]
\label{dl:lem:Dl-trace-quadratic-Ramond-term}
For all \(u,v\in U\),
\begin{equation}
  \operatorname{tr}_M\rho\bigl(Q_{\mathrm{mix}}(u,v)\bigr)=0,
  \label{dl:eq:Dl-trace-quadratic-mixed-term}
\end{equation}
\begin{equation}
  \operatorname{tr}_M\rho\bigl(Q_{AA}(u,v)\bigr)
  =
  \tau_A\langle u,v\rangle,
  \label{dl:eq:Dl-trace-quadratic-A-term}
\end{equation}
and
\begin{equation}
  \operatorname{tr}_M\rho\bigl(Q_{DD}(u,v)\bigr)
  =
  \frac2m
  \tau_D\langle u,v\rangle.
  \label{dl:eq:Dl-trace-quadratic-D-term}
\end{equation}
Consequently,
\begin{equation}
  \operatorname{tr}_M\rho\bigl(Q(u,v)\bigr)
  =
  \left(
    \tau_A+\frac2m\tau_D
  \right)
  \langle u,v\rangle.
  \label{dl:eq:Dl-trace-total-quadratic-term}
\end{equation}
\end{lemma}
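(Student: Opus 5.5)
The plan is to compute the trace of \eqref{eq:common-quadratic-term} directly. The two inputs are the trace-form identities \eqref{dl:eq:Dl-Dynkin-index-trace-form}--\eqref{dl:eq:Dl-mixed-trace-form-zero} and the $\mathfrak g^\natural$-invariance of the skew form \eqref{dl:eq:Dl-skew-form} on $U$.

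First, by linearity of the trace and cyclicity,
\[
 \operatorname{tr}_M\rho(x^ix_j+x_jx^i)=2\operatorname{tr}_M\bigl(\rho(x^i)\rho(x_j)\bigr).
\]
Fix dual bases adapted to $\mathfrak s\oplus\mathfrak d$. In the mixed index ranges, $x^i$ and $x_j$ lie in different simple ideals, so every term of $Q_{\mathrm{mix}}$ has zero trace by \eqref{dl:eq:Dl-mixed-trace-form-zero}. This gives \eqref{dl:eq:Dl-trace-quadratic-mixed-term}, even though $Q_{\mathrm{mix}}$ itself need not vanish.

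For $\mathfrak a\in\{\mathfrak s,\mathfrak d\}$ and both indices in $\mathfrak a$, \eqref{dl:eq:Dl-Dynkin-index-trace-form} gives
\[
 \operatorname{tr}_M\rho\bigl(Q_{\mathfrak a\mathfrak a}(u,v)\bigr)
 =2I_{\mathfrak a}\sum_{i,j}\langle[x_i,u],[v,x^j]\rangle(x^i\mid x_j).
\]
The $j$-sum collapses through the dual-basis identity $\sum_j(x^i\mid x_j)x^j=x^i$, leaving
\[
 2I_{\mathfrak a}\sum_i\langle[x_i,u],[v,x^i]\rangle .
\]
Next I will use invariance. Since $\mathfrak g^\natural$ centralizes the $\mathfrak{sl}_2$-triple, it fixes $e_\theta$, so \eqref{dl:eq:Dl-skew-form} satisfies $\langle[x,u],w\rangle=-\langle u,[x,w]\rangle$ for $x\in\mathfrak g^\natural$. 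Writing $[v,x^i]=-[x^i,v]$ and moving $x_i$ across, the sum becomes
\[
 \sum_i\langle u,[x_i,[x^i,v]]\rangle=\langle u,\Omega_{\mathfrak a}v\rangle .
\]
Because $U\cong L_{\mathfrak s}(\varpi)\boxtimes L_{\mathfrak d}(\eta_1)$ is irreducible, $\Omega_{\mathfrak a}$ acts on $U$ by the scalar $c_2$ from \eqref{dl:eq:Dl-basic-Casimir-values}. Hence
\[
 \operatorname{tr}_M\rho\bigl(Q_{\mathfrak a\mathfrak a}(u,v)\bigr)=2I_{\mathfrak a}\,c_{\mathfrak a}(U)\,\langle u,v\rangle .
\]

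Substituting $I_{\mathfrak s}=\tau_A/3$ and $c=3/2$ gives $\tau_A\langle u,v\rangle$, which is \eqref{dl:eq:Dl-trace-quadratic-A-term}. Substituting $I_{\mathfrak d}=\tau_D/(m(2m-1))$ and $c=2m-1$ gives $\frac2m\tau_D\langle u,v\rangle$, which is \eqref{dl:eq:Dl-trace-quadratic-D-term}. Adding the three pieces gives \eqref{dl:eq:Dl-trace-total-quadratic-term}.

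The main obstacle is sign and ordering bookkeeping. I must check that the index placement in \eqref{eq:common-quadratic-term}, namely $[x_i,u]$ paired with $[v,x^j]$, combined with the skew-invariance, yields $+\Omega_{\mathfrak a}$ rather than $-\Omega_{\mathfrak a}$. I must also check that the Casimir normalization $c_2(\varpi)=3/2$ uses the same restricted form as the dual bases. I will verify the sign on a single weight pair $u,v$ with $\langle u,v\rangle\ne0$, and the normalization by checking the $\mathfrak s$-term with $M=U$.
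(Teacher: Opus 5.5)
Your proposal is correct and follows essentially the same route as the paper. The mixed terms vanish because $\operatorname{tr}_M\rho(x)\rho(y)=0$ for $x\in\mathfrak s$, $y\in\mathfrak d$, and each diagonal block reduces, via the trace-form scalar $I_{\mathfrak a}$ and the $\mathfrak g^\natural$-invariance of $\langle\cdot,\cdot\rangle$, to $2I_{\mathfrak a}$ times the Casimir eigenvalue of $\mathfrak a$ on $U$. The only cosmetic difference is that you collapse the double sum with the identity $\sum_j(x^i\mid x_j)x^j=x^i$, whereas the paper chooses an orthonormal basis so that $x^i=x_i$; your sign computation already gives $+\Omega_{\mathfrak a}$, as required.
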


\begin{proof}
Every summand of \(Q_{\mathrm{mix}}(u,v)\) is a scalar multiple of
\[
  xy+yx,
  \qquad
  x\in\mathfrak s,\quad y\in\mathfrak d.
\]
Since the two simple ideals commute and
\eqref{dl:eq:Dl-mixed-trace-form-zero} gives
\(\operatorname{tr}_M\rho(x)\rho(y)=0\), one has
\begin{equation*}
  \operatorname{tr}_M
  \rho(xy+yx)
  =
  2\operatorname{tr}_M
  \bigl(\rho(x)\rho(y)\bigr)
  =0.
\end{equation*}
Summing proves \eqref{dl:eq:Dl-trace-quadratic-mixed-term}.

Let \(\mathfrak a\) be either
\(\mathfrak s\) or \(\mathfrak d\), and choose an orthonormal basis
\(\{x_r\}\) of \(\mathfrak a\).  For this choice, the original and
dual basis vectors coincide.  Equations
\eqref{eq:common-quadratic-term} and
\eqref{dl:eq:Dl-Dynkin-index-trace-form} give
\begin{equation*}
  \operatorname{tr}_M
  \rho\bigl(Q_{\mathfrak a\mathfrak a}(u,v)\bigr)
  =
  2I_{\mathfrak a}
  \sum_r
  \langle[x_r,u],[v,x_r]\rangle.
\end{equation*}

The skew form \eqref{dl:eq:Dl-skew-form} is
\(\mathfrak g^\natural\)-invariant.  Since
\([v,x_r]=-[x_r,v]\), we obtain
\begin{align*}
  \sum_r
  \langle[x_r,u],[v,x_r]\rangle
  &=
  -\sum_r
  \langle x_ru,x_rv\rangle
  \notag\\
  &=
  \left\langle
    u,
    \sum_r x_r^2v
  \right\rangle.
\end{align*}
Thus the contraction is the Casimir eigenvalue of
\(\mathfrak a\) on \(U\), multiplied by
\(\langle u,v\rangle\).

As an \(\mathfrak s\)-module, \(U\) is a direct sum of copies of
\(L_{\mathfrak s}(\varpi)\), and hence
\begin{equation*}
  \Omega_A|_U
  =
  \frac32\,\operatorname{id}_U.
\end{equation*}
Using
\(I_{\mathfrak s}=\tau_A/3\), we get
\begin{equation*}
  2I_{\mathfrak s}\frac32=\tau_A,
\end{equation*}
which proves \eqref{dl:eq:Dl-trace-quadratic-A-term}.

As a \(\mathfrak d\)-module, \(U\) is a direct sum of copies of the
vector representation
\(L_{\mathfrak d}(\eta_1)\).  By
\eqref{dl:eq:Dl-basic-Casimir-values},
\begin{equation*}
  \Omega_D|_U
  =
  (2m-1)\operatorname{id}_U.
\end{equation*}
Using
\(I_{\mathfrak d}=\tau_D/[m(2m-1)]\), we obtain
\begin{equation*}
  2I_{\mathfrak d}(2m-1)
  =
  \frac2m\tau_D,
\end{equation*}
which proves \eqref{dl:eq:Dl-trace-quadratic-D-term}.
Adding the two contributions gives
\eqref{dl:eq:Dl-trace-total-quadratic-term}.
\end{proof}

\subsubsection{The weighted Casimir trace identity}

We now combine the preceding contraction with the Ramond relation at
level \(-1\).  Recall from
\eqref{dl:eq:Dl-level-minus-one-Ramond-Zhu-relation} that
\begin{equation*}
  [u,v]
  =
  \langle u,v\rangle
  \left(
    \Omega_A+\Omega_D
    -2(2m+1)L
    -\frac{m-1}{2}
  \right)
  +Q(u,v).
\end{equation*}
As explained after \eqref{eq:common-Ramond-Zhu}, because
$D_\ell$ is an ordinary simple Lie algebra its Ramond Zhu algebra is
identified with Premet's ordinary finite $W$-algebra, and
\[
  [\rho(u),\rho(v)]
  =
  \rho(u)\rho(v)-\rho(v)\rho(u).
\]
In particular,
\begin{equation*}
  \operatorname{tr}_M
  [\rho(u),\rho(v)]
  =0.
\end{equation*}

\begin{proposition}[Weighted Casimir trace identity]
\label{dl:prop:Dl-weighted-Casimir-trace-identity}
Let \(M\) be a nonzero finite-dimensional \(B_\ell\)-module on which
the central element \(L=[\omega]_{\mathrm R}\) acts by the scalar
\(h\in\mathbb C\).  Then
\begin{equation}
  \frac{\operatorname{tr}_M\rho(\Omega_A)}{\dim M}
  +
  \frac{m+2}{2m}
  \frac{\operatorname{tr}_M\rho(\Omega_D)}{\dim M}
  =
  (2m+1)h+\frac{m-1}{4}.
  \label{dl:eq:Dl-weighted-Casimir-trace-identity}
\end{equation}
Equivalently, in terms of \(\ell=m+2\),
\begin{equation}
  \frac{\operatorname{tr}_M\rho(\Omega_A)}{\dim M}
  +
  \frac{\ell}{2(\ell-2)}
  \frac{\operatorname{tr}_M\rho(\Omega_D)}{\dim M}
  =
  (2\ell-3)h+\frac{\ell-3}{4}.
  \label{dl:eq:Dl-weighted-Casimir-trace-identity-ell}
\end{equation}
\end{proposition}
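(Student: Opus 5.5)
The identity is the specialization of Proposition~\ref{prop:common-Ramond-Casimir-trace} to the present type. The plan is to take the trace over \(M\) of the specialized Ramond relation \eqref{dl:eq:Dl-level-minus-one-Ramond-Zhu-relation}, substitute Lemma~\ref{dl:lem:Dl-trace-quadratic-Ramond-term} for the quadratic term, and solve for the weighted Casimir averages.

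\emph{Step 1: choose a nondegenerate pair.} Pick \(u,v\in U=\mathfrak g_{-1/2}\) with \(\langle u,v\rangle\ne0\). This is possible because the skew form \eqref{dl:eq:Dl-skew-form} is nondegenerate on \(\mathfrak g_{-1/2}\), since \(\operatorname{ad}e_\theta:\mathfrak g_{-1/2}\to\mathfrak g_{1/2}\) is an isomorphism for the minimal grading.

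\emph{Step 2: trace both sides.} Apply \(\rho\) to \eqref{dl:eq:Dl-level-minus-one-Ramond-Zhu-relation} and take \(\operatorname{tr}_M\), with \(N_M=\dim M\).
\begin{itemize}
\item The left side is an honest associative commutator \(\rho(u)\rho(v)-\rho(v)\rho(u)\), so its trace vanishes. This uses the identification with Premet's ordinary finite \(W\)-algebra recalled just above.
\item Since \(L\) acts by the scalar \(h\), its trace is \(hN_M\), and the constant contributes \(\tfrac{m-1}{2}N_M\).
\item Lemma~\ref{dl:lem:Dl-trace-quadratic-Ramond-term} gives \(\operatorname{tr}_M\rho(Q(u,v))=(\tau_A+\tfrac2m\tau_D)\langle u,v\rangle\).
\end{itemize}
Combining these,
\[
0=\langle u,v\rangle\Bigl(\tau_A+\tau_D-2(2m+1)hN_M-\tfrac{m-1}{2}N_M+\tau_A+\tfrac2m\tau_D\Bigr).
\]

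\emph{Step 3: divide.} Divide by \(2N_M\langle u,v\rangle\ne0\). Since \(\tfrac12\bigl(1+\tfrac2m\bigr)=\tfrac{m+2}{2m}\), this gives
\[
\frac{\tau_A}{N_M}+\frac{m+2}{2m}\,\frac{\tau_D}{N_M}=(2m+1)h+\frac{m-1}{4},
\]
which is \eqref{dl:eq:Dl-weighted-Casimir-trace-identity}.

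\emph{Step 4: rewrite in \(\ell\).} Substituting \(m=\ell-2\) gives \(2m+1=2\ell-3\), \(m-1=\ell-3\) and \(\tfrac{m+2}{2m}=\tfrac{\ell}{2(\ell-2)}\). This yields \eqref{dl:eq:Dl-weighted-Casimir-trace-identity-ell}.

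\emph{Main obstacle.} The computation itself is routine. The only delicate input is the correctness of the specialized relation \eqref{dl:eq:Dl-level-minus-one-Ramond-Zhu-relation}: the value \(-2(k+h^\vee)=-2(2m+1)\) at \(k=-1\), and \(\tfrac12p_{D_\ell}(-1)=\tfrac{m-1}{2}\). I take both as given from the earlier specialization.
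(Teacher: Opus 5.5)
Your proposal is correct and is essentially the paper's proof. The paper cites Proposition~\ref{prop:common-Ramond-Casimir-trace} with $\gamma_A=1$, $\gamma_D=2/m$ from Lemma~\ref{dl:lem:Dl-trace-quadratic-Ramond-term}, $h^\vee=2m+2$ and $p_{D_\ell}(-1)=m-1$; this is the same trace, substitute, and divide-by-$2N_M$ computation that you carry out by hand, and your arithmetic and the substitution $m=\ell-2$ check out.
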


\begin{proof}
Lemma~\ref{dl:lem:Dl-trace-quadratic-Ramond-term} gives
$\gamma_A=1$ and $\gamma_D=2/m$ in
Proposition~\ref{prop:common-Ramond-Casimir-trace}.  At $k=-1$,
$h^\vee=2m+2$ and $p_{D_\ell}(-1)=m-1$.  Thus
\[
 2\frac{\tau_A}{N_M}+\left(1+\frac2m\right)\frac{\tau_D}{N_M}
 =2(2m+1)h+\frac{m-1}{2}.
\]
Division by two gives \eqref{dl:eq:Dl-weighted-Casimir-trace-identity};
substituting $m=\ell-2$ gives
\eqref{dl:eq:Dl-weighted-Casimir-trace-identity-ell}.
\end{proof}

\begin{remark}[General-level form]

The same calculation applied to
\eqref{eq:common-Ramond-Zhu} gives
\begin{equation*}
  \frac{\operatorname{tr}_M\rho(\Omega_A)}{\dim M}
  +
  \frac{m+2}{2m}
  \frac{\operatorname{tr}_M\rho(\Omega_D)}{\dim M}
  =
  \bigl(k+h^\vee(D_\ell)\bigr)h
  +
  \frac14p_{D_\ell}(k),
\end{equation*}
whenever the two simple factors are represented with the
normalizations of Section~\ref{dl:sec:Dl-preliminaries}.  At \(k=-1\),
this specializes to
\eqref{dl:eq:Dl-weighted-Casimir-trace-identity}.
\end{remark}

The trace identity is only one part of the later numerical estimate.
The relation \(e_A^m=0\) already gives the sharp bound
\begin{equation}
  c_2(a\varpi)
  =
  \frac{a(a+2)}2
  \le
  \frac{m^2-1}{2}.
  \label{dl:eq:Dl-A1-Casimir-upper-bound}
\end{equation}
For the \(D_m\)-factor, however, the level-one condition still permits
the two spinor types in
\eqref{dl:eq:Dl-D-level-one-dominant-weights}.  The next section proves
that these spinor cosets do not occur in the adjoint Ramond module of
\(\widetilde{\mathcal W}_\ell\).  This will reduce the maximal
\(D_m\)-Casimir to the vector value \(2m-1\).

\subsection{Weight cosets and the lowest Ramond space}
\label{dl:sec:Dl-weight-cosets-lowest-Ramond}

The level-one condition for the \(D_m\)-factor obtained in
Section~\ref{dl:sec:Dl-Ramond-weighted-Casimir} still permits the two
spinor highest weights.  In the adjoint module of
\(\widetilde{\mathcal W}_\ell\), however, the strong generators occupy
only the root and vector cosets of the \(D_m\) weight lattice.  This
excludes the spinor types.  We then use the Li realization of the
Ramond sector to identify the entire lowest Ramond eigenspace.

Throughout this section,
\begin{equation*}
  m=\ell-2,
  \qquad
  K=m-1,
  \qquad
  x=\varpi^\vee=\varpi.
\end{equation*}

\subsubsection{The root and vector cosets}

Let \(Q_D\) and \(P_D\) denote the root and weight lattices of
\(\mathfrak d\cong D_m\), respectively.  For \(m\ge4\), let
\(\eta_1,\ldots,\eta_m\) be the fundamental weights of \(D_m\), with
\(\eta_1\) the vector highest weight and
\(\eta_{m-1},\eta_m\) the two spinor highest weights.

\begin{lemma}[The \(D_m\)-weight cosets of the adjoint module]
\label{dl:lem:Dl-adjoint-D-weight-cosets}
Every \(D_m\)-weight occurring in
\(\widetilde{\mathcal W}_\ell\), and hence in its Li-twisted adjoint
module \(\widetilde{\mathcal W}_\ell^{\mathrm R}\), belongs to
\begin{equation*}
  Q_D+\mathbb Z\eta_1.
\end{equation*}
For \(m\ge4\), the image of this subgroup in \(P_D/Q_D\) consists of
the two classes
\begin{equation}
  Q_D,
  \qquad
  \eta_1+Q_D.
  \label{dl:eq:Dl-root-and-vector-classes}
\end{equation}
\end{lemma}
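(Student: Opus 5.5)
The plan is to determine the $D_m$-weights of the strong generators of the universal algebra $\mathcal W^{-1}(\mathfrak g,f_\theta)$. From there we pass to PBW monomials, then to the quotient $\widetilde{\mathcal W}_\ell$, and finally to the Li-twisted module. First, the generators $J^{\{a\}}$ with $a\in\mathfrak s$ have $D_m$-weight zero. The generators $J^{\{a\}}$ with $a\in\mathfrak d$ have $D_m$-weights that are roots of $D_m$ or zero, so they lie in $Q_D$. The conformal vector $\omega$ has weight zero. The fields $G^{\{u\}}$, $u\in U\cong L_{\mathfrak s}(\varpi)\boxtimes L_{\mathfrak d}(\eta_1)$, have $D_m$-weights $\pm\varepsilon_j$. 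Each of these is congruent to $\eta_1=\varepsilon_3$ modulo $Q_D$, since $\varepsilon_j-\varepsilon_3$ and $-\varepsilon_j-\varepsilon_3$ are roots (or zero when $j=3$). Derivatives do not change weights.

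By the PBW basis of the universal minimal $W$-algebra used in Proposition~\ref{prop:common-extremal-PBW-line}, every vector is a combination of ordered monomials in these generators. The $\mathfrak h^\natural$-weight is additive, so every occurring $D_m$-weight lies in $Q_D+\mathbb Z\eta_1$. This property passes to the quotient $\widetilde{\mathcal W}_\ell$, because $\mathcal J_\ell$ is $\mathfrak h^\natural$-graded: its generators $:J_A^m:$ and $:J_D^2:$ are weight vectors.

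For the Li-twisted adjoint module we use the fact that it has the same underlying vector space, with the modified zero modes \eqref{dl:eq:Dl-Li-Cartan-shift}. The twist is by $x=\varpi\in\mathfrak h_{\mathfrak s}$, and $(x\mid h)=0$ for $h\in\mathfrak h_{\mathfrak d}$. Hence the $D_m$-Cartan zero modes are unchanged, and the $D_m$-weights are the same. Even if one wanted to argue more crudely, any shift would be by $(m-1)x$ restricted to $\mathfrak h_{\mathfrak d}$, which is zero. This step needs the most care: one must check that $x$ lies in the $\mathfrak s$-Cartan, so that \eqref{dl:eq:Dl-Li-Cartan-shift} produces no $D_m$-shift.

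Finally, for $m\ge4$, recall $P_D/Q_D$. It is $\mathbb Z/4$ if $m$ is odd and $(\mathbb Z/2)^2$ if $m$ is even. In both cases $2\eta_1=2\varepsilon_3\in Q_D$, so $\eta_1$ has order $2$ and is nonzero mod $Q_D$. Hence the image of $\mathbb Z\eta_1$ is $\{Q_D,\eta_1+Q_D\}$, and these two classes are distinct from the spinor classes $\eta_{m-1}+Q_D$ and $\eta_m+Q_D$. This gives \eqref{dl:eq:Dl-root-and-vector-classes}. I expect no genuine obstacle; the only delicate point is the bookkeeping of the Li-twisted zero modes described above.
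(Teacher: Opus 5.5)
Your proposal is correct and follows the paper's proof essentially step by step. The common steps are: the generator weights lie in $Q_D$ or $\eta_1+Q_D$; PBW spanning and passage to the quotient $\widetilde{\mathcal W}_\ell$; the Li twist by $x=\varpi\in\mathfrak h_{\mathfrak s}$ leaves the $D_m$-Cartan zero modes unchanged; and the vector class has order two in $P_D/Q_D$.

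One small slip: for $j=3$, the element $-\varepsilon_3-\varepsilon_3=-2\varepsilon_3$ is neither a root nor zero. It does lie in $Q_D$, which is exactly the fact $2\eta_1\in Q_D$ you invoke later, so the congruence with $\eta_1$ still holds.
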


\begin{proof}
The \(D_m\)-weights of the strong generators listed in
\eqref{dl:eq:Dl-minimal-W-generators} have the following form:
\begin{equation*}
\begin{aligned}
  \operatorname{wt}_{\mathfrak d}
  \bigl(J^{\{a\}}\bigr)
  &=0,
  &&a\in\mathfrak s,\\
  \operatorname{wt}_{\mathfrak d}
  \bigl(J^{\{b\}}\bigr)
  &\in Q_D,
  &&b\in\mathfrak d,\\
  \operatorname{wt}_{\mathfrak d}
  \bigl(G^{\{u\}}\bigr)
  &\in\eta_1+Q_D,
  &&u\in U,\\
  \operatorname{wt}_{\mathfrak d}(\omega)
  &=0.
\end{aligned}
\end{equation*}
The third line follows from
\(U\cong\mathbf2\boxtimes L_{\mathfrak d}(\eta_1)\).
Derivatives do not change finite weights.  Therefore every PBW
monomial in the universal minimal \(W\)-algebra has \(D_m\)-weight in
\(Q_D+\mathbb Z\eta_1\).  Passing to the quotient
\(\widetilde{\mathcal W}_\ell\) cannot create a new weight.

The Li twist is taken with an element of the \(A_1\)-factor, so it does
not change the \(D_m\)-Cartan zero modes.  Hence the same weight
restriction holds in the twisted adjoint module.

Finally, the vector class has order two in \(P_D/Q_D\).  Thus the
subgroup generated by \(\eta_1+Q_D\) consists precisely of the two
classes in \eqref{dl:eq:Dl-root-and-vector-classes}.
\end{proof}

\begin{remark}[The case \(D_3\cong A_3\)]
\label{dl:rem:Dl-D3-weight-cosets}
When \(m=3\), we use the identification
\(D_3\cong A_3\) from
\eqref{dl:eq:Dl-D3-A3-identification}.  The vector representation has
highest weight \(\omega_2\), and the subgroup generated by its class is
\begin{equation}
  Q(A_3)
  \cup
  \bigl(\omega_2+Q(A_3)\bigr).
  \label{dl:eq:Dl-D3-root-vector-cosets}
\end{equation}
The two four-dimensional spinor representations correspond to
\(\omega_1\) and \(\omega_3\), whose classes do not belong to
\eqref{dl:eq:Dl-D3-root-vector-cosets}.
\end{remark}

\begin{corollary}[Exclusion of spinor constituents]
\label{dl:cor:Dl-spinor-constituents-excluded}
Let \(S\) be a finite-dimensional
\(\mathfrak g^\natural\)-stable subspace of
\(\widetilde{\mathcal W}_\ell^{\mathrm R}\), and suppose that every
irreducible \(D_m\)-constituent of \(S\) has highest weight
\(\lambda_D\) satisfying
\begin{equation}
  \langle\lambda_D,\theta_D^\vee\rangle\le1.
  \label{dl:eq:Dl-section5-level-one-assumption}
\end{equation}
Then the \(D_m\)-highest weight of every irreducible constituent is
\begin{equation*}
  \lambda_D=0
  \quad\text{or}\quad
  \lambda_D=\eta_1.
\end{equation*}
For \(m=3\), the second possibility is understood as the
\(A_3\)-weight \(\omega_2\).
\end{corollary}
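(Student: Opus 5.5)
The plan is to combine the level-one list of dominant weights with the coset restriction of Lemma~\ref{dl:lem:Dl-adjoint-D-weight-cosets}. First, I will check that the Li twist does not change $D_m$-weights. By \eqref{dl:eq:Dl-Li-Cartan-shift}, a Cartan element $h\in\mathfrak h_{\mathfrak d}$ has shifted zero mode $J_0^{\{h\}}+(m-1)(x\mid h)$. Here $x=\varpi$ lies in the $\mathfrak s$-Cartan, which is orthogonal to $\mathfrak d$, so $(x\mid h)=0$. Likewise \eqref{dl:eq:Dl-Li-root-mode-shift} gives $\gamma(x)=0$ for roots $\gamma$ of $\mathfrak d$, so the twisted $\mathfrak d$-action on $\widetilde{\mathcal W}_\ell^{\mathrm R}$ is the untwisted zero-mode action. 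Hence every $\mathfrak d$-weight of any subspace $S$ as in the statement lies in $Q_D+\mathbb Z\eta_1$, by the lemma.

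Next, I will take an irreducible $D_m$-constituent of $S$ with highest weight $\lambda_D$. Its highest-weight vector is a genuine weight vector in $S$, so $\lambda_D$ lies in $Q_D\cup(\eta_1+Q_D)$. Hypothesis \eqref{dl:eq:Dl-section5-level-one-assumption}, together with the enumeration \eqref{dl:eq:Dl-D-level-one-dominant-weights}, restricts $\lambda_D$ to $0,\eta_1,\eta_{m-1},\eta_m$ for $m\ge4$. To justify the enumeration, I use that $\theta_D^\vee$ has marks $(1,2,\dots,2,1,1)$ on the simple coroots of $D_m$. So $\langle\lambda,\theta_D^\vee\rangle\le1$ forces $\lambda$ to be $0$ or a fundamental weight with mark $1$, namely $\eta_1,\eta_{m-1},\eta_m$.

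It then remains to show that the spinor weights $\eta_{m-1}$ and $\eta_m$ do not lie in $Q_D\cup(\eta_1+Q_D)$. In the $\varepsilon$-coordinates, $Q_D\cup(\eta_1+Q_D)=\mathbb Z^m$, whereas the spinor weights have half-integer coordinates $(\pm\tfrac12,\dots,\pm\tfrac12)$. This excludes them and leaves $\lambda_D\in\{0,\eta_1\}$.

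For $m=3$ I will instead use Remark~\ref{dl:rem:Dl-D3-weight-cosets} together with the list \eqref{dl:eq:Dl-D3-level-one-dominant-weights}. In $P(A_3)/Q(A_3)\cong\mathbb Z/4$, the weights $\omega_1$ and $\omega_3$ have classes $1$ and $3$, while the allowed subgroup is $\{0,2\}$. This leaves $0$ or $\omega_2$.

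The only delicate point is the first step: making sure the Li twist leaves the $\mathfrak d$-Cartan untouched, so that the coset lemma really applies to the twisted module. Everything after that is lattice bookkeeping.
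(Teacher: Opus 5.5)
Your proposal is correct and follows essentially the same route as the paper's proof. In both, the level-one bound cuts the candidates down to $0,\eta_1,\eta_{m-1},\eta_m$, and the root/vector coset restriction of Lemma~\ref{dl:lem:Dl-adjoint-D-weight-cosets} then discards the spinor weights, with Remark~\ref{dl:rem:Dl-D3-weight-cosets} handling $m=3$. Your check that the Li twist leaves the $\mathfrak d$-weights unchanged, and the half-integer-coordinate argument, only spell out what that lemma and the appendix already contain.
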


\begin{proof}
Assume first that \(m\ge4\).  By
\eqref{dl:eq:Dl-D-level-one-dominant-weights}, the dominant weights
satisfying \eqref{dl:eq:Dl-section5-level-one-assumption} are
\[
  0,\qquad
  \eta_1,\qquad
  \eta_{m-1},\qquad
  \eta_m.
\]
The last two lie in the spinor cosets.  Each irreducible constituent
embeds into $S$, and its highest weight is itself a weight occurring in
$S$.  Lemma~\ref{dl:lem:Dl-adjoint-D-weight-cosets} therefore forces that
highest weight to lie in the root or vector coset.  Hence only \(0\) and
\(\eta_1\) remain.

When \(m=3\), the list
\eqref{dl:eq:Dl-D3-level-one-dominant-weights} consists of
\(0,\omega_1,\omega_2,\omega_3\).  Remark
\ref{dl:rem:Dl-D3-weight-cosets} excludes
\(\omega_1\) and \(\omega_3\), leaving \(0\) and \(\omega_2\).
\end{proof}

In particular, whenever the finite-dimensional top space of a
lower-bounded Ramond submodule is viewed as a \(B_\ell\)-module,
Lemma~\ref{dl:lem:Dl-allowed-finite-types} and Corollary
\ref{dl:cor:Dl-spinor-constituents-excluded} imply that its
\(D_m\)-Casimir eigenvalues are bounded above by
\begin{equation}
  c_2(\eta_1)=2m-1.
  \label{dl:eq:Dl-D-Casimir-bound-after-spinor-exclusion}
\end{equation}

\subsubsection{The Ramond spectral lattice}

Let
\(\widetilde{\mathcal W}_\ell^{\mathrm R}\) denote the Li-twisted
adjoint module defined by \eqref{eq:common-Li-ideal-stability} with
\(H=J^{\{x\}}\).  By
\eqref{dl:eq:Dl-Li-Ramond-Hamiltonian},
\begin{equation}
  L_0^{\mathrm R}
  =
  L_0+x_0+\frac K4.
  \label{dl:eq:Dl-section5-Ramond-Hamiltonian}
\end{equation}

\begin{proposition}[Ramond spectral lattice]
\label{dl:prop:Dl-Ramond-spectral-lattice}
The operator \(L_0^{\mathrm R}\) acts semisimply on
\(\widetilde{\mathcal W}_\ell^{\mathrm R}\), and
\begin{equation}
  \operatorname{Spec}L_0^{\mathrm R}
  \subset
  \frac K4+\mathbb Z_{\ge0}.
  \label{dl:eq:Dl-Ramond-spectrum-inclusion}
\end{equation}
The lower endpoint is attained by the vacuum:
\begin{equation}
  L_0^{\mathrm R}\mathbf1
  =
  \frac K4\mathbf1.
  \label{dl:eq:Dl-Ramond-vacuum-energy}
\end{equation}
\end{proposition}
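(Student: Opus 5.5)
The plan is to compute $L_0^{\mathrm R}=L_0+x_0+\frac K4$ directly on the PBW spanning set of $\widetilde{\mathcal W}_\ell$, which it inherits as a quotient of the universal minimal $W$-algebra. Two facts are used throughout. First, $L_0$ and the zero mode $x_0=J^{\{x\}}_{(0)}$ commute, because $J^{\{x\}}$ is a weight-one primary field. Second, each of them acts semisimply on the adjoint module: $L_0$ by conformal weight, and $x_0$ by the $\mathfrak h^\natural$-weight evaluated at $x=\varpi$. Both preserve the ideal $\mathcal J_\ell$, so they descend to the quotient. Hence PBW monomials, which are simultaneous eigenvectors, span $\widetilde{\mathcal W}_\ell^{\mathrm R}$ by eigenvectors of $L_0^{\mathrm R}$. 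This gives semisimplicity.

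Next I will evaluate the eigenvalue on each factor of a PBW monomial in $\partial^jJ^{\{a\}}$, $\partial^jG^{\{u\}}$ and $\partial^j\omega$ (with $a$, $u$ weight vectors). The eigenvalue is the sum of $\Delta+\mu(x)$ over the factors, plus $K/4$. The estimates are as follows.
\begin{itemize}
\item For a current factor, $\mu\in\{0,\pm\theta_A\}$ or a $D_m$-root, so $\mu(x)=\langle\mu,\varpi\rangle\in\{0,\pm1\}$. Since $\Delta=1+j$, the contribution $1+j+\mu(x)$ is a nonnegative integer.
\item For a $G$-factor, the $\mathfrak s$-weight is $\pm\varpi$, so $\mu(x)=\pm\tfrac12$. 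With $\Delta=\tfrac32+j$, the contribution is $1+j$ or $2+j$, again a nonnegative integer.
\item A Virasoro factor contributes $2+j$.
\end{itemize}
Normalization must be checked carefully: $(\varpi\mid\varpi)=\tfrac12$ and $(\theta_A\mid\varpi)=1$ under the restricted form. It must also agree with the Li vacuum constant $\kappa/2=(m-1)(x\mid x)/2=K/4$, which was already recorded in \eqref{dl:eq:Dl-Li-Ramond-Hamiltonian}. Summing over factors shows every eigenvalue lies in $\frac K4+\mathbb Z_{\ge0}$, proving \eqref{dl:eq:Dl-Ramond-spectrum-inclusion}.

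For the vacuum, $L_0\mathbf1=0$ and $x_0\mathbf1=0$, so \eqref{dl:eq:Dl-Ramond-vacuum-energy} is immediate. The one subtlety, and the main obstacle, is the sign convention in the Li twist. The operator $x_0$ in \eqref{dl:eq:Dl-section5-Ramond-Hamiltonian} acts on the underlying space through its untwisted zero mode, but the convention $z^{H_{(0)}}$ versus $z^{-H_{(0)}}$ determines whether the term is $+\mu(x)$ or $-\mu(x)$. With the opposite sign, $J^{\{e_A\}}$ would have twisted energy $0$ while $J^{\{f_A\}}$ would have energy $2$, so the bound is symmetric under $x\mapsto -x$ for currents and for $G$-fields, since both $\pm\varpi$ occur. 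The lower bound therefore holds with either sign, and I would state the argument in a sign-independent way to avoid that issue.

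A minor point remains for derivatives of currents with $\mu(x)=-1$. The contribution is $j\ge0$, which is still nonnegative. So no further case analysis is needed, and the lower endpoint $K/4$ is attained exactly by the vacuum.
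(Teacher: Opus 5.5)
Your proposal is correct and is essentially the paper's proof. It uses the same factorwise excess-energy count on the PBW spanning set: $1+j+\mu(x)$ for currents, $\tfrac32+j\pm\tfrac12$ for $G$-fields, and $2+j$ for $\omega$. The one variation is in semisimplicity: you get it because PBW monomials are simultaneous $L_0$, $x_0$ eigenvectors, whereas the paper uses complete reducibility of the finite-dimensional $\mathfrak g^\natural$-stable homogeneous subspaces; your variant is equally valid.

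One wording caution: the vacuum is not the only vector of energy $K/4$. Your own count gives excess zero to undifferentiated $J^{\{f_A\}}$ factors, and their powers fill out the lowest space. The statement, however, only asks that the vacuum attain this value.
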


\begin{proof}
Every ordinary conformal homogeneous subspace of
\(\widetilde{\mathcal W}_\ell\) is finite-dimensional and
\(\mathfrak g^\natural\)-stable.  Since finite-dimensional
\(\mathfrak g^\natural\)-modules are completely reducible, \(x_0\)
acts semisimply on every such subspace.  It commutes with \(L_0\), so
\(L_0^{\mathrm R}\) is semisimple.

It remains to compute the possible eigenvalues of \(L_0+x_0\).
Images of PBW monomials in derivatives of the strong generators span
\(\widetilde{\mathcal W}_\ell\).  Choose an
\(\mathfrak h^\natural\)-weight basis of these generators.

For an \(A_1\)-root vector \(a_\gamma\), a factor
\(\partial^rJ^{\{a_\gamma\}}\) contributes
\begin{equation*}
  1+r+\gamma(x)
  \in\mathbb Z_{\ge0}
\end{equation*}
to \(L_0+x_0\), since
\(\gamma(x)\in\{-1,1\}\).  Equality with zero occurs only for
\(r=0\) and \(\gamma=-\theta_A\).  An \(A_1\)-Cartan current
contributes \(1+r\), and every \(D_m\)-current contributes \(1+r\).

If \(u_\mu\in U\) has \(A_1\)-weight \(\mu\), then
\begin{equation*}
  \partial^rG^{\{u_\mu\}}
  \quad\text{contributes}\quad
  \frac32+r+\mu(x)
  \in\{1+r,2+r\}.
\end{equation*}
Finally,
\begin{equation*}
  \partial^r\omega
  \quad\text{contributes}\quad
  2+r.
\end{equation*}
Thus every PBW factor has nonnegative integral excess energy, and the
only factor of excess zero is the undifferentiated negative
\(A_1\)-root current.  Adding the contributions proves
\eqref{dl:eq:Dl-Ramond-spectrum-inclusion}.  Since
\(L_0\mathbf1=x_0\mathbf1=0\),
\eqref{dl:eq:Dl-section5-Ramond-Hamiltonian} gives
\eqref{dl:eq:Dl-Ramond-vacuum-energy}.
\end{proof}

Set
\begin{equation*}
  T_\ell
  =
  \widetilde{\mathcal W}_\ell^{\mathrm R}
  \!\left[\frac K4\right].
\end{equation*}
Proposition~\ref{dl:prop:Dl-Ramond-spectral-lattice} shows that
\(T_\ell\) is the lowest Ramond eigenspace and contains the vacuum.

\subsubsection{The affine \(A_1\)-subalgebra}

Choose an \(\mathfrak{sl}_2\)-triple
\begin{equation*}
  (e_A,h_A,f_A)
  \subset\mathfrak s,
  \qquad
  [e_A,f_A]=h_A,
  \qquad
  (e_A\mid f_A)=1.
\end{equation*}
Write
\begin{equation*}
  E_A=J^{\{e_A\}},
  \qquad
  H_A=J^{\{h_A\}},
  \qquad
  F_A=J^{\{f_A\}}.
\end{equation*}
Thus \(E_A=J_A\).

The \(A_1\)-currents have level \(K\) by
\eqref{dl:eq:Dl-level-minus-one-natural-levels}.  They therefore define
a unital vertex-algebra homomorphism
\begin{equation*}
  \iota:
  V^K(A_1)
  \longrightarrow
  \widetilde{\mathcal W}_\ell.
\end{equation*}

\begin{proposition}[The integrable \(A_1\)-subalgebra]
\label{dl:prop:Dl-integrable-A1-subalgebra}
The homomorphism \(\iota\) factors through an injective homomorphism
\begin{equation*}
  \overline\iota:
  L_K(A_1)
  \hookrightarrow
  \widetilde{\mathcal W}_\ell.
\end{equation*}
In particular,
\begin{equation}
  (F_A)_{(-1)}^j\mathbf1\ne0
  \quad(0\le j\le K),
  \qquad
  (F_A)_{(-1)}^{K+1}\mathbf1=0.
  \label{dl:eq:Dl-negative-current-string}
\end{equation}
\end{proposition}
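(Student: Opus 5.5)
The plan is to obtain the factorization from the defining relation of $\widetilde{\mathcal W}_\ell$ together with the classical description of the maximal ideal of $V^K(A_1)$ at a positive integral level. Injectivity then follows from simplicity of $L_K(A_1)$. By \eqref{dl:eq:Dl-level-minus-one-natural-levels}, the $\mathfrak s$-currents $E_A,H_A,F_A$ satisfy the $\lambda$-brackets of $V^K(\mathfrak{sl}_2)$ with $K=m-1\ge2$, where the level is computed in the restricted normalization of Section~\ref{sec:common-framework}. This is why $\iota$ is a well-defined unital homomorphism, determined by $e(-1)\mathbf1\mapsto E_A$, and similarly for $h$ and $f$.

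\emph{Step 1: the generator of the maximal ideal is killed.} For a positive integer $K$, the maximal graded ideal of $V^K(\mathfrak{sl}_2)$ is generated by the single singular vector $e(-1)^{K+1}\mathbf1$; this is the classical result of Kac and Frenkel--Zhu \cite{Kac1998,FrenkelZhu1992}. Since $\iota$ is a vertex-algebra homomorphism, it sends $e(-1)^{K+1}\mathbf1$ to $(E_A)_{(-1)}^{K+1}\mathbf1=:J_A^{m}:$, because $K+1=m$. This vector lies in $\mathcal J_\ell$ by \eqref{dl:eq:Dl-reduced-current-ideal}, so it vanishes in $\widetilde{\mathcal W}_\ell$. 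Here I would check that the normally ordered power in \eqref{dl:eq:Dl-reduced-current-ideal} agrees with the iterated $(-1)$-product. This holds by \eqref{dl:eq:Dl-highest-root-current-self-brackets}: $[J_A{}_\lambda J_A]=0$, so no reordering corrections appear. Hence $\ker\iota$ contains the maximal ideal, and $\iota$ factors through $L_K(A_1)$.

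\emph{Step 2: injectivity.} The factored map $\overline\iota$ is unital, and $\mathbf1\ne0$ in $\widetilde{\mathcal W}_\ell$ by \eqref{dl:eq:Dl-Wtilde-nonzero-lisse}. Therefore $\ker\overline\iota$ is a proper ideal of the simple vertex algebra $L_K(A_1)$, so it is zero.

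\emph{Step 3: the $F_A$-string.} Because $\overline\iota$ is injective, it suffices to prove \eqref{dl:eq:Dl-negative-current-string} inside $L_K(A_1)$. The Chevalley involution $e\leftrightarrow -f$, $h\mapsto -h$ preserves the form and hence induces an automorphism of $V^K(\mathfrak{sl}_2)$. This automorphism preserves the unique maximal graded ideal and therefore descends to $L_K(A_1)$. So it suffices to treat $e(-1)^j\mathbf1$. For the vanishing at $j=K+1$, this is Step 1. For the nonvanishing with $j\le K$, I would use the $\mathfrak{sl}_2$-triple $(e(-1),\,K-h(0),\,f(1))$ coming from the affine root $\delta+\alpha$ acting on the integrable module $L_K(A_1)$. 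The vacuum is annihilated by $f(1)$ and has eigenvalue $K$ for $K\mathbf1-h(0)$. Consequently $\mathbf1$ generates a copy of the $(K+1)$-dimensional irreducible module of this $\mathfrak{sl}_2$, and $e(-1)^j\mathbf1\ne0$ for $0\le j\le K$.

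The main obstacle is Step 1, specifically the identification of the level. I must make sure that the restricted form used for $\mathfrak s$ gives exactly the standard level $K$ of $\widehat{\mathfrak{sl}}_2$, since the $\theta_A$-root of $D_\ell$ is long and the normalization fixes long roots to have squared length $2$. I must also make sure that the exponent in the Frenkel--Zhu generator is $K+1=m$, matching the reduced relation $:J_A^m:=0$ of Proposition~\ref{dl:prop:Dl-exact-reduced-singular-vectors}. Once these two numerical coincidences are confirmed, the remaining steps are formal.
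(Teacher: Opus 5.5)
Your proposal is correct and follows the paper's proof step for step. You factor \(\iota\) through \(L_K(A_1)\) because \((E_A)_{(-1)}^{K+1}\mathbf1=:J_A^m:\) lies in \(\mathcal J_\ell\), obtain injectivity from simplicity, pass between \(E_A\) and \(F_A\) with the Chevalley involution, and prove nonvanishing by an \(\mathfrak{sl}_2\)-string computation. The only difference is cosmetic: the paper works directly with \((E_A)_{(1)},(F_A)_{(-1)}\), getting \((E_A)_{(1)}^j(F_A)_{(-1)}^j\mathbf1=j!\,K(K-1)\cdots(K-j+1)\mathbf1\), whereas you use the \(\mathfrak{sl}_2\) of the real root \(\delta-\alpha\) (not \(\delta+\alpha\)), whose correctly ordered triple is \((f(1),\,K-h(0),\,e(-1))\) since \([f(1),e(-1)]=K-h(0)\).
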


\begin{proof}
Since \(m=K+1\), the defining relation
\(:J_A^m:=0\) of
\(\widetilde{\mathcal W}_\ell\) is
\begin{equation*}
  (E_A)_{(-1)}^{K+1}\mathbf1=0.
\end{equation*}
For positive integral level \(K\), this vector generates the maximal
proper ideal of \(V^K(A_1)\); see \cite{Kac1998}.  Hence
\(\iota\) factors through the simple affine vertex algebra
\[
  L_K(A_1)
  =
  V^K(A_1)/
  \left\langle
    (E_A)_{(-1)}^{K+1}\mathbf1
  \right\rangle.
\]
The induced map is nonzero because it preserves the vacuum.  Since
\(L_K(A_1)\) is simple, it is injective.

The Chevalley involution of \(A_1\), which exchanges \(e_A\) and
\(-f_A\), extends to an automorphism of \(V^K(A_1)\) and preserves its
maximal ideal.  Applying it to the defining relation gives
\begin{equation*}
  (F_A)_{(-1)}^{K+1}\mathbf1=0
  \quad\text{in }L_K(A_1).
\end{equation*}

For \(0\le j\le K\), set
\(
  \mathsf e=(E_A)_{(1)}
\)
and
\(
  \mathsf f=(F_A)_{(-1)}
\).
The affine commutation relations give
\begin{equation*}
  \mathsf e^{\,j}\mathsf f^{\,j}\mathbf1
  =
  j!\,K(K-1)\cdots(K-j+1)\mathbf1,
\end{equation*}
with the empty product equal to one.  The right-hand side is nonzero,
so \((F_A)_{(-1)}^j\mathbf1\ne0\) for \(0\le j\le K\).
Injectivity of \(\overline\iota\) transfers these statements to
\(\widetilde{\mathcal W}_\ell\) and proves
\eqref{dl:eq:Dl-negative-current-string}.
\end{proof}

\subsubsection{Identification of the lowest Ramond space}

\begin{proposition}[The lowest Ramond space]
\label{dl:prop:Dl-lowest-Ramond-space}
The lowest Ramond eigenspace is
\begin{equation}
  T_\ell
  =
  \operatorname{span}_{\mathbb C}
  \left\{
    (F_A)_{(-1)}^j\mathbf1
    \,\middle|\,
    0\le j\le K
  \right\}.
  \label{dl:eq:Dl-lowest-Ramond-space-basis}
\end{equation}
Under the twisted zero-mode action,
\begin{equation}
  T_\ell
  \cong
  L_{\mathfrak s}(K\varpi)
  \boxtimes
  \mathbf1_{\mathfrak d}.
  \label{dl:eq:Dl-lowest-Ramond-module-type}
\end{equation}
In particular, \(T_\ell\) is an irreducible
\(\mathfrak g^\natural\)-module of dimension \(K+1=m\).
\end{proposition}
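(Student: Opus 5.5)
The identification follows from the proof of Proposition~\ref{dl:prop:Dl-Ramond-spectral-lattice}: an element of $T_\ell$ is a combination of images of PBW monomials whose total excess energy with respect to $L_0^{\mathrm R}-K/4=L_0+x_0$ is zero. I would first verify that the basis of strong generators can be chosen of $\mathfrak h^\natural$-weight and $L_0$-homogeneous, so that $L_0+x_0$ acts diagonally on the spanning PBW monomials. Each factor then contributes a nonnegative integer, and the only factor of excess zero is the undifferentiated current $F_A=J^{\{f_A\}}$, of weight $-\theta_A$, since $-\theta_A(x)=-1$. Hence the only monomials of total excess zero are $(F_A)_{(-1)}^j\mathbf1$, and $T_\ell$ is spanned by them. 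Proposition~\ref{dl:prop:Dl-integrable-A1-subalgebra} then shows that these vectors are nonzero exactly for $0\le j\le K$ and vanish for $j\ge K+1$. They have distinct $\mathfrak h^\natural$-weights $-j\theta_A$, so they are linearly independent. This gives \eqref{dl:eq:Dl-lowest-Ramond-space-basis} and $\dim T_\ell=K+1=m$.

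For the module structure I would use the twisted zero modes \eqref{dl:eq:Dl-Li-Cartan-shift} and \eqref{dl:eq:Dl-Li-root-mode-shift}. Because $x=\varpi$ is orthogonal to $\mathfrak d$, every $\mathfrak d$-current keeps its zero mode. Each $D_m$ Cartan and root zero mode annihilates $(F_A)_{(-1)}^j\mathbf1$, since it commutes with $F_A$ and kills the vacuum. So $\mathfrak d$ acts trivially. For $\mathfrak s$, the twisted Cartan mode is
\[
 H_{A,0}+(m-1)(x\mid h_A)=H_{A,0}+K,
\]
using $(\varpi\mid\theta_A^\vee)=1$ with the normalization $(e_A\mid f_A)=1$. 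On $(F_A)_{(-1)}^j\mathbf1$ it therefore has eigenvalue $K-2j$, and the weights run over $K,K-2,\dots,-K$.

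The twisted root modes are $E_{A,0}^{\mathrm R}=(E_A)_{(1)}$ and $F_{A,0}^{\mathrm R}=(F_A)_{(-1)}$. The latter raises $j$ and the former lowers it, with the nonzero coefficients computed in the proof of Proposition~\ref{dl:prop:Dl-integrable-A1-subalgebra}. Thus $T_\ell$ is a single $\mathfrak{sl}_2$-string of length $K+1$ with highest-weight vector $\mathbf1$, giving \eqref{dl:eq:Dl-lowest-Ramond-module-type}.

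The main obstacle is the first step. The paper's span statement refers to PBW monomials in the universal algebra, so I must check that the excess bound passes to the quotient; it does, since $L_0+x_0$ is diagonal on monomials. I must also confirm that no combination involving $G$-fields or $\omega$ can land in excess zero; the factorwise bounds in Proposition~\ref{dl:prop:Dl-Ramond-spectral-lattice} already rule this out. A second delicate point is the sign convention in the Li shift, namely whether the vacuum is the highest or the lowest weight of the string. Either way the result is the irreducible module $L_{\mathfrak s}(K\varpi)$. I would fix the convention by checking that $E^{\mathrm R}_{A,0}\mathbf1=(E_A)_{(1)}\mathbf1=0$.
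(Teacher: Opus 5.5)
Your proposal is correct and follows essentially the same route as the paper. The zero-excess analysis from Proposition~\ref{dl:prop:Dl-Ramond-spectral-lattice} reduces $T_\ell$ to the span of the vectors $(F_A)_{(-1)}^j\mathbf 1$, and Proposition~\ref{dl:prop:Dl-integrable-A1-subalgebra} truncates this span at $j=K$. The twisted zero modes $(E_A)_{(1)}$, $(F_A)_{(-1)}$, $(H_A)_{(0)}+K$ then exhibit a single $\mathfrak{sl}_2$-string with highest-weight vector $\mathbf 1$, on which $\mathfrak d$ acts trivially. The differences are cosmetic: you obtain linear independence from the distinct $\mathfrak h^\natural$-weights, whereas the paper reads it off the irreducible string, and the paper remarks explicitly that $F_A$ has regular self-OPE, so the zero-excess PBW monomials are literally its $(-1)$-powers.
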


\begin{proof}
The proof of Proposition
\ref{dl:prop:Dl-Ramond-spectral-lattice} shows more than the spectral
inclusion: among all derivatives of the strong generators, the only
factor of excess energy zero is the undifferentiated state \(F_A\).
Consequently, the images of the zero-excess PBW monomials span
\(T_\ell\).  The current $F_A$ has regular operator product with itself,
since $[f_A,f_A]=0$ and $(f_A\mid f_A)=0$; hence these monomials are
precisely its ordinary $(-1)$-powers.  Therefore
\begin{equation*}
  T_\ell
  =
  \operatorname{span}_{\mathbb C}
  \left\{
    (F_A)_{(-1)}^j\mathbf1
    \,\middle|\,
    j\ge0
  \right\}.
\end{equation*}
Proposition~\ref{dl:prop:Dl-integrable-A1-subalgebra} truncates this span
exactly at \(j=K\), proving
\eqref{dl:eq:Dl-lowest-Ramond-space-basis}.

By \eqref{dl:eq:Dl-Li-root-mode-shift} and
\eqref{dl:eq:Dl-Li-Cartan-shift}, the twisted zero modes of the
\(A_1\)-current triple are
\begin{equation*}
  \mathsf e
  =
  (E_A)_{(1)},
  \qquad
  \mathsf f
  =
  (F_A)_{(-1)},
  \qquad
  \mathsf h
  =
  (H_A)_{(0)}+K.
\end{equation*}
The affine current relations imply
\begin{equation*}
  [\mathsf h,\mathsf e]=2\mathsf e,
  \qquad
  [\mathsf h,\mathsf f]=-2\mathsf f,
  \qquad
  [\mathsf e,\mathsf f]=\mathsf h.
\end{equation*}
Moreover,
\begin{equation*}
  \mathsf e\mathbf1=0,
  \qquad
  \mathsf h\mathbf1=K\mathbf1,
  \qquad
  \mathsf f^j\mathbf1
  =
  (F_A)_{(-1)}^j\mathbf1.
\end{equation*}
Thus the vacuum is a highest-weight vector of
\(\mathfrak s\)-weight \(K\varpi\), and the vectors in
\eqref{dl:eq:Dl-lowest-Ramond-space-basis} form the irreducible
\((K+1)\)-dimensional module
\(L_{\mathfrak s}(K\varpi)\).

For \(b\in\mathfrak d\), the twisted zero mode is the ordinary mode
\(J^{\{b\}}_{(0)}\).  It kills the vacuum and commutes with every
\(F_A{}_{(-1)}\), because the two simple ideals
\(\mathfrak s\) and \(\mathfrak d\) commute.  Hence
\(\mathfrak d\) acts trivially on \(T_\ell\), proving
\eqref{dl:eq:Dl-lowest-Ramond-module-type}.
\end{proof}

\begin{corollary}[The shifted vacuum line]
\label{dl:cor:Dl-shifted-vacuum-line}
The highest-weight line of \(T_\ell\) is the vacuum line:
\begin{equation*}
  T_\ell[K\varpi]
  =
  \mathbb C\mathbf1.
\end{equation*}
\end{corollary}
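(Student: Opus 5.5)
The corollary should follow directly from Proposition~\ref{dl:prop:Dl-lowest-Ramond-space}, by reading off weights in the explicit basis \eqref{dl:eq:Dl-lowest-Ramond-space-basis}.

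First I determine the weight of each basis vector. The twisted Cartan zero mode $\mathsf h=(H_A)_{(0)}+K$ acts on the vacuum by $K$. Since $[\mathsf h,\mathsf f]=-2\mathsf f$ and $\mathsf f=(F_A)_{(-1)}$, the vector $(F_A)_{(-1)}^j\mathbf1$ has $\mathsf h$-eigenvalue $K-2j$, so its $\mathfrak s$-weight is $(K-2j)\varpi$. By the last paragraph of the proof of Proposition~\ref{dl:prop:Dl-lowest-Ramond-space}, $\mathfrak d$ acts trivially on $T_\ell$, so the $\mathfrak d$-weight of every basis vector is zero. Hence the $\mathfrak g^\natural$-weight of $(F_A)_{(-1)}^j\mathbf1$ is $(K-2j)\varpi$, for $0\le j\le K$.

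These $K+1$ weights are pairwise distinct. The value $K\varpi$ occurs only for $j=0$, that is, only for the vacuum. By Proposition~\ref{dl:prop:Dl-integrable-A1-subalgebra} the vectors are nonzero, and since they are weight vectors of distinct weights they are linearly independent. Therefore the $K\varpi$-weight space of $T_\ell$ is exactly $\mathbb C\mathbf1$.

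The same conclusion also follows abstractly. Equation \eqref{dl:eq:Dl-lowest-Ramond-module-type} identifies $T_\ell$ with the irreducible module $L_{\mathfrak s}(K\varpi)\boxtimes\mathbf1_{\mathfrak d}$. Its highest weight space is one-dimensional, and $\mathbf1$ lies in it because $\mathsf e\mathbf1=0$ and $\mathsf h\mathbf1=K\mathbf1$.

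The only point needing care is that the weights are computed with the \emph{twisted} zero modes, which explains the shift by $K$. This has already been settled in Proposition~\ref{dl:prop:Dl-lowest-Ramond-space}, so no real obstacle remains.
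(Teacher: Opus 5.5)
Your proposal is correct and follows the paper's route. The paper deduces the corollary immediately from the identification \(T_\ell\cong L_{\mathfrak s}(K\varpi)\boxtimes\mathbf1_{\mathfrak d}\) in Proposition~\ref{dl:prop:Dl-lowest-Ramond-space}, exactly as in your second argument, and your bookkeeping of the weights \((K-2j)\varpi\) on the basis \((F_A)_{(-1)}^j\mathbf1\) is an explicit version of that same deduction (the paper also mentions an optional direct check via the Cartan shift \(J_0^{\{h\},\mathrm R}=J_0^{\{h\}}+K(x\mid h)\) and \((\widetilde{\mathcal W}_\ell)_0=\mathbb C\mathbf1\)).
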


\begin{proof}
This follows immediately from
\eqref{dl:eq:Dl-lowest-Ramond-module-type}.  It may also be seen directly
from the shift
\begin{equation*}
  J_0^{\{h\},\mathrm R}
  =
  J_0^{\{h\}}+K(x\mid h):
\end{equation*}
the original degree-zero, weight-zero vacuum acquires shifted
\(A_1\)-weight \(K\varpi\), and
\((\widetilde{\mathcal W}_\ell)_0=\mathbb C\mathbf1\).
\end{proof}

The results of this section provide the sharp representation-theoretic
input for the simplicity argument.  If a nonzero ideal has a lowest
Ramond eigenspace \(S\), then \(S\) is a nonzero \(B_\ell\)-module
contained in the twisted adjoint module.  Since \(B_\ell\) is
finite-dimensional, every nonzero \(v\in S\) generates a nonzero
finite-dimensional cyclic module \(B_\ell v\).  The \(A_1\)-Casimir on
such a cyclic module is bounded by
\eqref{dl:eq:Dl-A1-Casimir-upper-bound}, while
Corollary~\ref{dl:cor:Dl-spinor-constituents-excluded} gives the sharp
\(D_m\)-bound
\eqref{dl:eq:Dl-D-Casimir-bound-after-spinor-exclusion}.  Section~\ref{dl:sec:Dl-simplicity-reduced-quotient}
combines these bounds with the weighted trace identity to prove that
the quotient \(\widetilde{\mathcal W}_\ell\) is simple.

\subsection{Reduced simplicity at level \texorpdfstring{$-1$}{-1}}
\label{dl:sec:Dl-simplicity-reduced-quotient}

The preceding sections verify the hypotheses of the common
Casimir-gap criterion.  We record the resulting argument in a form
that will be reused at the endpoint level.

\begin{theorem}[Simplicity of the reduced candidate quotient]
\label{dl:thm:Dl-reduced-candidate-simple}
For every \(\ell\ge5\), the vertex algebra
\(\widetilde{\mathcal W}_\ell\) is simple, and the canonical map is an
isomorphism
\begin{equation*}
 \widetilde{\mathcal W}_\ell
 \cong
 \mathcal W_{-1}(D_\ell,f_\theta).
\end{equation*}
\end{theorem}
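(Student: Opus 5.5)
The plan is to apply the Casimir-gap criterion, Proposition~\ref{prop:common-Casimir-gap-simplicity}, to the surjection $\pi_\ell:\widetilde{\mathcal W}_\ell\twoheadrightarrow\mathcal W_{-1}(D_\ell,f_\theta)$. We take $\mathfrak a=\mathfrak g^\natural=\mathfrak s\oplus\mathfrak d$, $h_0=K/4$ and $\lambda_0=K\varpi$. The target is simple and nonzero by Lemma~\ref{lem:common-DS-detection}, so it suffices to verify the four hypotheses.

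\textbf{Hypotheses (1) and (2).} Hypothesis (1) is Proposition~\ref{dl:prop:Dl-Ramond-spectral-lattice} together with Corollary~\ref{dl:cor:Dl-shifted-vacuum-line}. Hypothesis (2) follows from finiteness of $B_\ell$, which holds because $\widetilde{\mathcal W}_\ell$ is lisse. Complete reducibility over $\mathfrak g^\natural$ comes from the zero-mode homomorphism $\mathfrak g^\natural\to B_\ell$.

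\textbf{Hypothesis (3).} For the weighted Casimir $\mathcal C=\Omega_A+\frac{m+2}{2m}\Omega_D$, Proposition~\ref{dl:prop:Dl-weighted-Casimir-trace-identity} gives the average
\[
A(h)=(2m+1)h+\tfrac{m-1}{4}
\]
on any simple $B_\ell$-module with $L=h$. This function is increasing in $h$. I will only apply the bound to the lowest space of an ideal inside $\widetilde{\mathcal W}_\ell^{\mathrm R}$. There, Lemma~\ref{dl:lem:Dl-allowed-finite-types} and Corollary~\ref{dl:cor:Dl-spinor-constituents-excluded} bound every constituent by
\[
C_{\max}=\frac{m^2-1}{2}+\frac{m+2}{2m}(2m-1).
\]
The spinor exclusion uses that constituents sit inside the twisted adjoint module, so the criterion should be applied in that form. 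The point to check is that the Casimir eigenvalue is well defined at $h$ only through traces over a simple quotient $S$; the constituents of $S$ are constituents of $M\subset I^{\mathrm R}$, so the bound still applies. It remains to show $A(h_0+1)>C_{\max}$. With $h_0=(m-1)/4$,
\[
A(h_0+1)=(2m+1)\frac{m+3}{4}+\frac{m-1}{4}=\frac{2m^2+8m+2}{4}=\frac{m^2+4m+1}{2}.
\]
Compare this with
\[
C_{\max}=\frac{m^2-1}{2}+\frac{(m+2)(2m-1)}{2m}=\frac{m^2-1}{2}+m+\frac32-\frac1m.
\]
The difference is $\frac{m^2+4m+1-m^2+1}{2}-m-\frac32+\frac1m=m-\frac12+\frac1m>0$. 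This is the Casimir gap.

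\textbf{Hypothesis (4), the main step.} Here $T_\ell\cong L_{\mathfrak s}(K\varpi)\boxtimes\mathbf 1$ is irreducible over $\mathfrak g^\natural$ by Proposition~\ref{dl:prop:Dl-lowest-Ramond-space}. Hence any nonzero $\mathfrak g^\natural$-stable submodule $M\subset T_\ell$ equals $T_\ell$, and every simple quotient of $M$ is $T_\ell$ itself, whose highest weight is $\lambda_0$.

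\textbf{Consistency check and conclusion.} As a sanity check, on $T_\ell$ we have $c_2(K\varpi)=\frac{(m-1)(m+1)}{2}$, which equals $A(h_0)=(2m+1)\frac{m-1}{4}+\frac{m-1}{4}=\frac{(m-1)(m+1)}{2}$. I expect the main obstacle to be making hypothesis (3) precise, namely restricting the Casimir bound to modules occurring inside the adjoint Ramond module so that spinor types are excluded. Once that is settled, Proposition~\ref{prop:common-Casimir-gap-simplicity} gives simplicity of $\widetilde{\mathcal W}_\ell$ and hence that $\pi_\ell$ is an isomorphism.
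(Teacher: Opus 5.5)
Your proposal is correct and is essentially the paper's proof: the same application of Proposition~\ref{prop:common-Casimir-gap-simplicity} with $\mathcal C=\Omega_A+\frac{m+2}{2m}\Omega_D$, $A(h)=(2m+1)h+\frac{m-1}{4}$, the same $C_{\max}$, the same gap $m-\frac12+\frac1m=\frac{2m^2-m+2}{2m}$, and the same use of the irreducibility of $T_\ell\cong L_{\mathfrak s}(K\varpi)\boxtimes\mathbf 1_{\mathfrak d}$ for the ground-space hypothesis. Your remark on hypothesis (3) is also sound. By complete reducibility, the constituents of a simple quotient $S$ embed into $M\subset I^{\mathrm R}$, so the spinor exclusion applies to $S$; this is the justification the paper leaves implicit when it invokes the bound \eqref{dl:eq:Dl-minus-one-Cmax}.
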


\begin{proof}
Apply Proposition~\ref{prop:common-Casimir-gap-simplicity} to the
canonical nonzero surjection
\(
 \widetilde{\mathcal W}_\ell\twoheadrightarrow
 \mathcal W_{-1}(D_\ell,f_\theta)
\).
The spectral origin is
\(
 h_0=K/4=(m-1)/4
\)
by Proposition~\ref{dl:prop:Dl-Ramond-spectral-lattice}, and
Corollary~\ref{dl:cor:Dl-shifted-vacuum-line} gives the distinguished
one-dimensional line
\(
 T_\ell[K\varpi]=\mathbb C\mathbf1
\).
Take
\begin{equation*}
 \mathcal C
 =
 \Omega_A+\frac{m+2}{2m}\Omega_D,
 \qquad
 A(h)=(2m+1)h+\frac{m-1}{4}.
\end{equation*}
Proposition~\ref{dl:prop:Dl-weighted-Casimir-trace-identity} gives the
required trace identity.  The allowed constituents satisfy
\begin{equation}
 c_{\mathcal C}\le C_{\max}
 :=
 \frac{m^2-1}{2}
 +\frac{m+2}{2m}(2m-1)
 \label{dl:eq:Dl-minus-one-Cmax}
\end{equation}
by \eqref{dl:eq:Dl-A1-Casimir-upper-bound} and
Corollary~\ref{dl:cor:Dl-spinor-constituents-excluded}.  Moreover,
\begin{equation*}
 A(h_0+1)-C_{\max}
 =
 (2m+1)-\frac{m+2}{2m}(2m-1)
 =\frac{2m^2-m+2}{2m}>0.
\end{equation*}
Finally, Proposition~\ref{dl:prop:Dl-lowest-Ramond-space} identifies
the lowest space with the irreducible module
\(
 L_{\mathfrak s}(K\varpi)\boxtimes\mathbf1_{\mathfrak d}
\).  Hence every nonzero ground $B_\ell$-submodule is the entire lowest
space, and every simple quotient contains its highest-weight line
\(\mathbb C\mathbf1\).  All hypotheses of the criterion hold, proving
both simplicity and the displayed isomorphism.
\end{proof}

\begin{corollary}[The maximal ideal after minimal reduction]

For every \(\ell\ge5\),
\begin{equation*}
 \ker\!\left(
   \mathcal W^{-1}(D_\ell,f_\theta)
   \longrightarrow
   \mathcal W_{-1}(D_\ell,f_\theta)
 \right)
 =
 \left\langle:J_A^m:,:J_D^2:\right\rangle.
\end{equation*}
\end{corollary}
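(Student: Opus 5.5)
The corollary should follow by combining Theorem~\ref{dl:thm:Dl-reduced-candidate-simple} with the definition of \(\widetilde{\mathcal W}_\ell\); no new computation is needed. By definition,
\(\widetilde{\mathcal W}_\ell=\mathcal W^{-1}(D_\ell,f_\theta)/\mathcal J_\ell\) with \(\mathcal J_\ell=\langle :J_A^m:,:J_D^2:\rangle\), as in \eqref{dl:eq:Dl-reduced-current-ideal}. The canonical map \(\mathcal W^{-1}(D_\ell,f_\theta)\to\mathcal W_{-1}(D_\ell,f_\theta)\) factors as the projection onto \(\widetilde{\mathcal W}_\ell\) followed by \(\pi_\ell\). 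It therefore suffices to show two things: \(\mathcal J_\ell\) is contained in the kernel, so the factorization exists, and \(\pi_\ell\) is injective.

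For the containment, I would use the exact-reduction picture. By Proposition~\ref{dl:prop:Dl-exact-reduced-singular-vectors}, the generators \(:J_A^m:\) and \(:J_D^2:\) are nonzero multiples of \([s_A]_{\mathrm{DS}}\) and \([s_D]_{\mathrm{DS}}\). By Lemma~\ref{dl:lem:Dl-singular-submodules-proper}, \(s_A,s_D\) lie in \(\operatorname{Rad}V^{-1}(D_\ell)\). Their BRST classes therefore map to zero under the map induced by \(V^{-1}(D_\ell)\twoheadrightarrow L_{-1}(D_\ell)\).

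Next I would check that this induced map is the canonical surjection onto \(\mathcal W_{-1}(D_\ell,f_\theta)=H^0_{\mathrm{DS},f_\theta}(L_{-1}(D_\ell))\). This uses Lemma~\ref{lem:common-graded-subquotient-category-O} for exactness and Lemma~\ref{lem:common-DS-detection} for nonvanishing of the target. The target is also simple: \(L_{-1}(D_\ell)\) is simple, and minimal reduction sends the simple quotient to the simple \(W\)-algebra. Here I rely on the identification of \(H^0(L_k)\) with \(\mathcal W_k\) that is implicit in the notation fixed in Section~\ref{sec:common-framework}. Since the kernel of a homomorphism is an ideal, the whole ideal \(\mathcal J_\ell\) lies in it.

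For the reverse inclusion, Theorem~\ref{dl:thm:Dl-reduced-candidate-simple} states that \(\pi_\ell:\widetilde{\mathcal W}_\ell\to\mathcal W_{-1}(D_\ell,f_\theta)\) is an isomorphism. Hence the kernel of the composite is exactly the kernel of the projection, namely \(\mathcal J_\ell\).

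The one point needing care is the identification of maps: the map \(\pi_\ell\) constructed after Proposition~\ref{dl:prop:Dl-exact-reduction-candidate-quotient} must coincide, under the isomorphism \eqref{dl:eq:Dl-exact-reduction-Q}, with the map induced by the canonical \(\mathcal W^{-1}\to\mathcal W_{-1}\). This follows from functoriality of \(H^0_{\mathrm{DS},f_\theta}\) applied to the composite \(V^{-1}\twoheadrightarrow Q_\ell\twoheadrightarrow L_{-1}\). I expect this bookkeeping to be the only nontrivial step.

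As an alternative, one could bypass \(\pi_\ell\) entirely. A simple graded quotient of a vertex algebra with a unique maximal graded ideal is unique, so simplicity of \(\widetilde{\mathcal W}_\ell\) together with \(\mathcal J_\ell\) being proper (since \(\widetilde{\mathcal W}_\ell\ne0\)) already forces \(\mathcal J_\ell\) to be the maximal ideal.
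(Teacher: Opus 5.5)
Your proposal is correct and is essentially the paper's argument. The paper's proof just combines the definition $\widetilde{\mathcal W}_\ell=\mathcal W^{-1}(D_\ell,f_\theta)/\mathcal J_\ell$ with Theorem~\ref{dl:thm:Dl-reduced-candidate-simple}; your extra bookkeeping (that $\mathcal J_\ell$ lies in the kernel, and that $\pi_\ell$ agrees with the canonical map by functoriality of $H^0_{\mathrm{DS},f_\theta}$ applied to $V^{-1}\twoheadrightarrow Q_\ell\twoheadrightarrow L_{-1}$) only makes explicit what that one line leaves implicit.
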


\begin{proof}
Combine the definition of \(\widetilde{\mathcal W}_\ell\) with
Theorem~\ref{dl:thm:Dl-reduced-candidate-simple}.
\end{proof}

\subsection{The maximal ideal of the affine vertex algebra}

We apply the common affine-lifting proposition to the canonical quotient.

\subsubsection{Uniform affine lifting}

The singular-vector quotient admits a canonical surjection
\begin{equation*}
 q_\ell:Q_\ell\twoheadrightarrow L_{-1}(D_\ell).
\end{equation*}
By Proposition~\ref{dl:prop:Dl-exact-reduction-candidate-quotient} and
Theorem~\ref{dl:thm:Dl-reduced-candidate-simple}, the induced map on minimal
reductions is an isomorphism:
\begin{equation}
 H^0_{\mathrm{DS},f_\theta}(Q_\ell)
 \cong\mathcal W_{-1}(D_\ell,f_\theta).
 \label{dl:eq:Dl-reduction-Q-is-simple-W}
\end{equation}
Thus Proposition~\ref{prop:common-affine-lifting} applies.

\begin{theorem}[The affine maximal ideal at level \(-1\)]
\label{dl:thm:Dl-main-maximal-ideal}
For every \(\ell\ge5\), the quotient \(Q_\ell\) is simple:
\begin{equation*}
  Q_\ell\cong L_{-1}(D_\ell).
\end{equation*}
Equivalently,
\begin{equation*}
  \ker\!\left(
    V^{-1}(D_\ell)
    \longrightarrow
    L_{-1}(D_\ell)
  \right)
  =
  \left\langle
    \sigma(w_A)^{\ell-2},
    \sigma(w_D)^2
  \right\rangle.
\end{equation*}
Here the powers of \(\sigma(w_A)\) and \(\sigma(w_D)\) are the
associative PBW powers fixed in
\eqref{dl:eq:Dl-affine-PBW-identification}.
\end{theorem}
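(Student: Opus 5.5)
The plan is to deduce the theorem directly from Proposition~\ref{prop:common-affine-lifting}, whose hypotheses have essentially all been established. First I verify that $Q_\ell$ is a conformally graded quotient of $V^{-1}(D_\ell)$ at a negative integral level, with finite-dimensional homogeneous subspaces that are finite-dimensional $\mathfrak g$-modules. This is Lemma~\ref{dl:lem:Dl-candidate-modules-in-category-O}, or directly Lemma~\ref{lem:common-graded-subquotient-category-O}. Next, Lemma~\ref{dl:lem:Dl-singular-submodules-proper} puts $N=N_A+N_D$ inside $\operatorname{Rad}V^{-1}(D_\ell)$, so the canonical surjection $q_\ell:Q_\ell\twoheadrightarrow L_{-1}(D_\ell)$ exists.

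The key step is showing that the induced map $H^0_{\mathrm{DS},f_\theta}(Q_\ell)\to H^0_{\mathrm{DS},f_\theta}(L_{-1}(D_\ell))$ is an isomorphism. I factor it as follows:
\begin{equation*}
 H^0_{\mathrm{DS},f_\theta}(Q_\ell)\cong\widetilde{\mathcal W}_\ell
 \twoheadrightarrow H^0_{\mathrm{DS},f_\theta}(L_{-1}(D_\ell)).
\end{equation*}
The first isomorphism is Proposition~\ref{dl:prop:Dl-exact-reduction-candidate-quotient}. The second arrow is surjective because minimal reduction is exact on $0\to\ker q_\ell\to Q_\ell\to L_{-1}(D_\ell)\to0$, and its target is nonzero by Lemma~\ref{lem:common-DS-detection}. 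Theorem~\ref{dl:thm:Dl-reduced-candidate-simple} shows that $\widetilde{\mathcal W}_\ell$ is simple. A nonzero surjection from a simple vertex algebra has zero kernel, since its kernel is a proper ideal; so the map is an isomorphism. This is exactly \eqref{dl:eq:Dl-reduction-Q-is-simple-W}. One point needs care: the map induced by $q_\ell$ must agree with $\pi_\ell$ under the identification of Proposition~\ref{dl:prop:Dl-exact-reduction-candidate-quotient}. Both are induced by functoriality from the quotient maps of $V^{-1}(D_\ell)$, so they are compatible.

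Proposition~\ref{prop:common-affine-lifting} then gives $Q_\ell\cong L_{-1}(D_\ell)$. Concretely, the kernel $K$ of $q_\ell$ has $H^0_{\mathrm{DS},f_\theta}(K)=0$ by exactness, and it satisfies the grading hypotheses of Lemma~\ref{lem:common-DS-detection}, so $K=0$. Equivalently, Theorem~\ref{thm:common-minimal-reduction-maximality} applies directly with $N=N_A+N_D$. For the ideal statement, I substitute $m=\ell-2$: then $\operatorname{Rad}V^{-1}(D_\ell)=N=\langle\sigma(w_A)^{\ell-2},\sigma(w_D)^2\rangle$, where $N_A,N_D$ are the ideals generated by the PBW powers $s_A,s_D$.

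I expect no real obstacle at this stage, since all of the substance lies in Theorem~\ref{dl:thm:Dl-reduced-candidate-simple} and the exact-reduction proposition. The only thing to check carefully is that the grading hypotheses of Lemma~\ref{lem:common-DS-detection} hold for the kernel $K$. These are the lower-bounded grading, finite-dimensional $\mathfrak g$-stable homogeneous pieces, and $a(n)M[r]\subset M[r-n]$, and $K$ inherits all of them as a graded submodule of $Q_\ell$.
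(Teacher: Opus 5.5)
Your proposal is correct and is essentially the paper's own proof. The paper likewise obtains \eqref{dl:eq:Dl-reduction-Q-is-simple-W} from Proposition~\ref{dl:prop:Dl-exact-reduction-candidate-quotient} together with Theorem~\ref{dl:thm:Dl-reduced-candidate-simple}, applies Proposition~\ref{prop:common-affine-lifting} to $q_\ell:Q_\ell\twoheadrightarrow L_{-1}(D_\ell)$, and leaves implicit the routine checks you spell out (compatibility with $\pi_\ell$, the grading hypotheses on $\ker q_\ell$, and $m=\ell-2$).
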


\begin{proof}
Apply Proposition~\ref{prop:common-affine-lifting} to
$q_\ell:Q_\ell\twoheadrightarrow L_{-1}(D_\ell)$ using
\eqref{dl:eq:Dl-reduction-Q-is-simple-W}.
\end{proof}

\begin{remark}[The rank-five case]

The level \(-1\) theorem includes \(D_5\).  Here
\(\mathfrak d\cong D_3\cong A_3\), and the second Kostant component has
highest weight \(\omega_4+\omega_5\), as in
\eqref{dl:eq:Dl-second-Kostant-highest-weight-cases}.  The uniform
Casimir argument requires no separate low-rank proof.
\end{remark}

\subsection{An alternative proof at the endpoint level
	\texorpdfstring{$k=-2$}{k=-2}}
\label{dl:sec:Dl-level-minus-two}

The maximal-ideal statement at level $-2$ was proved in
\cite[Section~6, Corollary~6.6(1)]
{AdamovicKacMosenederFrajriaPapiPerse2020}.
Under the notation and normalization of that reference, its generators
$w_1,w_2$ agree, up to nonzero scalar multiples, with
$\sigma(w_D),\sigma(w_A)^{\ell-3}$, respectively.  We include the endpoint
in order to give an alternative proof using minimal reduction and the
general lifting principle developed above.  It must be treated separately
because one natural current level collapses and the reduced quotient has a
different generating set.  Set
\begin{equation*}
	K_0=m-2=\ell-4.
\end{equation*}
At level $-2$, the two natural current levels are
\begin{equation*}
	k_A^\natural=K_0,
	\qquad
	k_D^\natural=0.
\end{equation*}
\subsubsection{Endpoint singular vectors and the candidate quotient}

Define the PBW-power states
\begin{equation*}
  s_{A,-2}
  =
  \widetilde\sigma(w_A)^{m-1}\mathbf1,
  \qquad
  s_{D,-2}
  =
  \widetilde\sigma(w_D)\mathbf1
  =
  \sigma(w_D)
  \quad\text{in }V^{-2}(D_\ell).
\end{equation*}
Their finite weights and conformal degrees are
\begin{equation*}
\begin{aligned}
  \operatorname{wt}_{\mathfrak g}(s_{A,-2})
  &=(m-1)(\theta+\theta_A),
  &\deg s_{A,-2}&=2(m-1),\\
  \operatorname{wt}_{\mathfrak g}(s_{D,-2})
  &=\theta+\theta_D,
  &\deg s_{D,-2}&=2.
\end{aligned}
\end{equation*}

\begin{proposition}[Endpoint singular vectors]

The states \(s_{A,-2}\) and \(s_{D,-2}\) are nonzero affine
singular vectors in \(V^{-2}(D_\ell)\).
\end{proposition}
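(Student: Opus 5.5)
The plan is to proceed exactly as at level \(-1\) in Section~\ref{dl:sec:Dl-singular-vectors-exact-reductions}, specializing the Arakawa--Moreau singularity criterion to the endpoint \(r\)-values. By \cite[Theorem~4.2(4)(a)]{ArakawaMoreau2018}, \(\sigma(w_A)^{r+1}\) is singular precisely at \(k=r-\ell+2\). Taking \(r=\ell-4=m-2\) gives \(k=-2\) and exponent \(r+1=m-1\), which is the exponent defining \(s_{A,-2}\). Likewise, \cite[Theorem~4.2(4)(b)]{ArakawaMoreau2018} shows that \(\sigma(w_D)^{r+1}\) is singular precisely at \(k=r-2\). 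With \(r=0\) this gives \(k=-2\) and the unpowered vector \(\sigma(w_D)=s_{D,-2}\). One check is needed here: \(r=m-2\ge1\) because \(\ell\ge5\), so the exponent \(m-1\ge2\) lies in the range where the cited theorem is stated. Also, the \(r=0\) case of part (b) is exactly the Arakawa--Moreau/Per\v{s}e statement that \(\sigma(w_D)\) is singular at \(k=-2\). I would cite that case directly, so as not to rely on a degenerate reading of the power formula.

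For nonvanishing, I would use the affine PBW theorem, as at level \(-1\). The map \(\widetilde\sigma\) is injective on \(S^2(\mathfrak g)\), since the symbol of \(\widetilde\sigma(w)\) in \(\operatorname{gr}U(\mathfrak g[t^{-1}]t^{-1})\cong S(\mathfrak g[t^{-1}]t^{-1})\) is \(w\) placed in the \(t^{-1}\)-copy. Hence \(\widetilde\sigma(w_D)\mathbf1\ne0\). For the power, the symbol of \(\widetilde\sigma(w_A)^{m-1}\) is \(w_A^{m-1}\) in the polynomial algebra \(S(\mathfrak g t^{-1})\). This is nonzero because \(w_A\ne0\) and a polynomial ring has no zero divisors. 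The PBW identification \eqref{dl:eq:Dl-affine-PBW-identification} then gives \(s_{A,-2}\ne0\).

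Singularity can also be confirmed directly, which is how I would sanity-check the citation. The vectors have the displayed \(\mathfrak g\)-weights, and they are \(\mathfrak n_+\)-invariant because \(w_A,w_D\) are highest-weight vectors and \(\widetilde\sigma\) is \(\mathfrak g\)-equivariant. So only the action of \(x(1)\) for \(x\in\mathfrak g\), equivalently of \(e_{-\theta}(1)\), needs to be checked, since \(\mathfrak g[t]t\) is generated by \(\mathfrak g t\) together with the zero modes. For the degree-two vector \(\sigma(w_D)\), the Kac--Moody relations give \(x(1)\widetilde\sigma(w)\mathbf1=(\text{contraction of }w)\cdot(\text{term linear in }k+\text{Casimir constant})\). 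The Kostant-component property forces this term to vanish exactly at \(k=-2\).

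The main obstacle is the \(A\)-power, where the commutator of \(e_{-\theta}(1)\) with an \((m-1)\)-fold product is the real computation. There I would not redo Arakawa--Moreau's argument. I would rely on their theorem, after checking that our PBW-power convention matches theirs, which the introduction asserts.
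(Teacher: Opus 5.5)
Your proposal is correct and follows essentially the paper's own proof: specialize \cite[Theorem~4.2(4)(a)]{ArakawaMoreau2018} at $r=m-2\ge1$ and part (b) at $r=0$, then deduce nonvanishing from the affine PBW theorem. Your optional direct check for $\sigma(w_D)$ matches the paper's remark that $f_\theta(1)\sigma(w_D)=(k+2)e_{\theta_D}(-1)\mathbf1$, whose right-hand side vanishes exactly at $k=-2$.
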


\begin{proof}
For the first Kostant component, take \(n=m-2=\ell-4\ge1\) in
\cite[Theorem~4.2(4)(a)]{ArakawaMoreau2018}.  The singularity level is
\[
  k=n-\ell+2=-2,
\]
and the exponent is \(n+1=m-1\), which gives \(s_{A,-2}\).

For the second component, take $n=0$ in
\cite[Theorem~4.2(4)(b)]{ArakawaMoreau2018}.  Its parameter range is
$n\in\mathbb Z_{\ge0}$, and this specialization gives the level
$k=n-2=-2$ and exponent $n+1=1$.  Hence $s_{D,-2}=\sigma(w_D)$ is an
affine singular vector.  For completeness, the endpoint mode identity
used in the proof of that theorem reads
\begin{equation*}
  f_\theta(1)\sigma(w_D)
  =
  (k+2)e_{\theta_D}(-1)\mathbf1.
\end{equation*}
At $k=-2$ its right-hand side vanishes, in agreement with the direct
criterion of \cite[Lemma~4.1]{ArakawaMoreau2018}.  Both states are
nonzero by the affine PBW theorem.
\end{proof}

Set
\begin{equation*}
  N_{A,-2}=U(\widehat{\mathfrak g})s_{A,-2},
  \qquad
  N_{D,-2}=U(\widehat{\mathfrak g})s_{D,-2},
  \qquad
  N_{-2}=N_{A,-2}+N_{D,-2},
\end{equation*}
and define
\begin{equation*}
  Q_{\ell,-2}
  =
  V^{-2}(D_\ell)/N_{-2}
  =
  V^{-2}(D_\ell)
  \Big/
  \left\langle
    \sigma(w_A)^{\ell-3},
    \sigma(w_D)
  \right\rangle.
\end{equation*}
The same triangular-decomposition argument as in
Lemma~\ref{dl:lem:Dl-singular-submodules-proper} shows that
\(N_{A,-2}\), \(N_{D,-2}\), and \(N_{-2}\) have strictly positive
lowest conformal degree.  They are therefore proper graded ideals and
are contained in the maximal proper graded ideal of
\(V^{-2}(D_\ell)\).

\begin{proposition}[Associated variety at the endpoint]
\label{dl:prop:Dl-minus-two-associated-variety}
One has
\begin{equation}
  X_{Q_{\ell,-2}}
  =
  \overline{\mathbb O}_{\min}.
  \label{dl:eq:Dl-minus-two-associated-variety}
\end{equation}
\end{proposition}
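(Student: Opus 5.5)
The natural route is to reduce this to \cite[Proposition~5.2]{ArakawaMoreau2018}, exactly as was done at level $-1$ in Proposition~\ref{dl:prop:Dl-exact-reduction-candidate-quotient}. That proposition treats the quotient of $V^{k_n}(D_\ell)$ by the ideal generated by $\sigma(w_A)^{n+\ell-3}$ and $\sigma(w_D)^{n+1}$. At $n=0$, so $k_0=-2$, these generators are precisely $s_{A,-2}=\sigma(w_A)^{m-1}$ and $s_{D,-2}=\sigma(w_D)$. Hence $Q_{\ell,-2}$ is literally the candidate quotient treated there. The first step is therefore to check that the parameter conventions match. The exponent of $\sigma(w_A)$ is $n+\ell-3=\ell-3=m-1$, the exponent of $\sigma(w_D)$ is $n+1=1$, and the powers are the associative PBW powers fixed in \eqref{dl:eq:Dl-affine-PBW-identification}.

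If one wants an argument that does not rely on quoting the cited proposition at the endpoint, I would reprove it in two inclusions.

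For the upper bound $X_{Q_{\ell,-2}}\subset\overline{\mathbb O}_{\min}$, pass to Zhu's $C_2$-algebra. Here $R_{Q_{\ell,-2}}$ is a quotient of $S(\mathfrak g)=\mathbb C[\mathfrak g^*]$ by an ideal containing the symbols of the generators. These symbols are $w_A^{m-1}$ and $w_D$, viewed in $S(\mathfrak g)$. The ideal also contains the whole $\mathfrak g$-submodules they generate, since zero modes act compatibly with the $C_2$-filtration. This yields $\mathcal K_D\subset I$ and the $\mathfrak g$-span of $w_A^{m-1}$ inside $I$, so the zero locus lies in $V(\mathcal K_D)\cap V(U(\mathfrak g)w_A^{m-1})$. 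One then has to show that this locus is $\overline{\mathbb O}_{\min}$. By Kostant's theorem, the full complement $\mathcal K=\mathcal K_A\oplus\mathcal K_D$ together with $\Omega$ cuts out $\overline{\mathbb O}_{\min}$ set-theoretically; this is the Joseph-ideal picture. The missing relations are $\Omega$ and $\mathcal K_A$, and these must be recovered up to radical from $\mathcal K_D$ and the power of $\mathcal K_A$.

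This is where I expect the main obstacle, and I would follow the Arakawa--Moreau argument. Take a point $\xi$ in the zero set and look at its $G$-orbit closure. Vanishing of $\mathcal K_D$ forces $\xi$ to be nilpotent or to lie in a specific small family. Then evaluating $w_A^{m-1}$ on the $G$-orbit of the relevant $\mathfrak{sl}_2\times\mathfrak{so}_{2m}$ slice forces $\xi\in\overline{\mathbb O}_{\min}$. Concretely, I would restrict to $\mathfrak h$ and to nilpotent representatives of the next orbits, $\mathbb O_{(2^4,1^{2\ell-8})}$ and $\mathbb O_{(3,1^{2\ell-3})}$, and check that some $G$-translate of $w_D$ or $w_A^{m-1}$ is nonzero on each.

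For the lower bound, $X_{Q_{\ell,-2}}\neq\{0\}$ because the level $-2$ is negative and non-admissible, so $Q_{\ell,-2}$ is not lisse. Indeed, $L_{-2}(D_\ell)$ is a quotient of $Q_{\ell,-2}$ and is not lisse by Arakawa's characterization of lisse affine vertex algebras at non-positive-integral level. A nonzero closed $G$-stable conic subvariety of $\overline{\mathbb O}_{\min}$ must equal $\overline{\mathbb O}_{\min}$, which gives \eqref{dl:eq:Dl-minus-two-associated-variety}.
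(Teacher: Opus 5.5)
Your proposal is correct, but it is organized differently from the paper's proof.

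Your primary route quotes \cite[Proposition~5.2]{ArakawaMoreau2018} at $n=0$, and your parameter check there is right. This is how the paper handles level $-1$. At the endpoint, however, the paper gives a direct $C_2$-argument that uses only \cite[Lemma~2.1]{ArakawaMoreau2018}, so it does not depend on reading the range of that proposition to include $n=0$.

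Your self-contained fallback has the right skeleton: the $C_2$-ideal $I_{-2}$ is generated by $U(\mathfrak g)w_A^{m-1}+\mathcal K_D$. The step you flag as the main obstacle, though, dissolves in one line. Since $I_{-2}$ is $G$-stable, so is $\sqrt{I_{-2}}$. Then $w_A\in\sqrt{I_{-2}}$ forces $\mathcal K_A=U(\mathfrak g)w_A\subset\sqrt{I_{-2}}$. Together with $I_{-2}\subset J_W$, this gives $\sqrt{I_{-2}}=\sqrt{J_W}$. Lemma~2.1 of \cite{ArakawaMoreau2018}, which is where $\Omega$ is recovered, identifies $\sqrt{J_W}$ with the prime ideal of $\overline{\mathbb O}_{\min}$. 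This yields the equality outright. So neither your orbit-by-orbit analysis nor your separate lower bound via non-lisseness of $L_{-2}(D_\ell)$ is needed, although that lower bound is itself fine.

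Your pointwise test is really the same observation in disguise, since $(g\cdot w_A^{m-1})(\xi)=\bigl((g\cdot w_A)(\xi)\bigr)^{m-1}$. If you want to avoid citing Lemma~2.1 entirely, type $D$ lets you finish the pointwise check by linear algebra. Identify $\mathfrak g\cong\mathfrak g^*$ and view $\xi$ as a skew matrix on $\mathbb C^{2\ell}$. Then $\mathcal K_D$ and $\mathcal K_A$ are spanned by the coordinate functions of the equivariant quadratic maps $\xi\mapsto\xi\wedge\xi\in\Lambda^4\mathbb C^{2\ell}$ and $\xi\mapsto(\xi^2)_0$, the traceless part of $\xi^2$. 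A common zero therefore has rank at most $2$ and scalar square. Since the rank is less than $2\ell$, the scalar is zero, so $\xi^2=0$ with rank at most $2$, which is exactly $\overline{\mathbb O}_{\min}$.
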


\begin{proof}
Under the standard identification
\[
  R_{V^{-2}(\mathfrak g)}
  =
  V^{-2}(\mathfrak g)/C_2(V^{-2}(\mathfrak g))
  \cong S(\mathfrak g),
\]
the quotient map induces a right-exact sequence
\begin{equation*}
  \frac{N_{-2}}
  {N_{-2}\cap C_2(V^{-2}(\mathfrak g))}
  \longrightarrow
  S(\mathfrak g)
  \longrightarrow
  R_{Q_{\ell,-2}}
  \longrightarrow0.
\end{equation*}
Let \(I_{-2}\) denote the image of the first arrow, equivalently the
kernel of the second.  In the affine PBW realization,
\[
  C_2(V^{-2}(\mathfrak g))
  =
  \mathfrak g[t^{-1}]t^{-2}V^{-2}(\mathfrak g).
\]
Zero modes generate the finite-dimensional \(\mathfrak g\)-orbits of
the two highest-weight symbols, \((-1)\)-modes become multiplication
in \(S(\mathfrak g)\), and all modes \(a(-r)\), \(r\ge2\), vanish in
the \(C_2\)-quotient.  Consequently,
\begin{equation*}
  I_{-2}
  =
  S(\mathfrak g)
  \left(
    U(\mathfrak g)w_A^{m-1}
    +
    U(\mathfrak g)w_D
  \right).
\end{equation*}
In particular, \(I_{-2}\) is a Poisson ideal stable under the adjoint
group \(G\).

Let \(J_W\subset S(\mathfrak g)\) be the ideal generated by
\(W=\mathcal K_A\oplus\mathcal K_D\).  Since
\(w_A^{m-1}\in J_W\) and \(w_D\in J_W\), one has
\begin{equation}
  I_{-2}\subset J_W.
  \label{dl:eq:Dl-minus-two-I-contained-JW}
\end{equation}
On the other hand, \(w_A^{m-1}\in I_{-2}\) implies
\(w_A\in\sqrt{I_{-2}}\).  Since \(I_{-2}\) is \(G\)-stable, its
radical is also \(G\)-stable; hence it contains the irreducible
\(G\)-module \(\mathcal K_A=U(\mathfrak g)w_A\).  Moreover,
\(w_D\in I_{-2}\) and the \(G\)-stability of \(I_{-2}\) imply
\(\mathcal K_D\subset I_{-2}\).  Therefore
\begin{equation}
  J_W\subset\sqrt{I_{-2}}.
  \label{dl:eq:Dl-minus-two-JW-contained-radical}
\end{equation}
Equations
\eqref{dl:eq:Dl-minus-two-I-contained-JW} and
\eqref{dl:eq:Dl-minus-two-JW-contained-radical} give
\[
  \sqrt{I_{-2}}=\sqrt{J_W}.
\]
By \cite[Lemma~2.1]{ArakawaMoreau2018},
\(\sqrt{J_W}\) is the prime defining ideal of
\(\overline{\mathbb O}_{\min}\).  This proves
\eqref{dl:eq:Dl-minus-two-associated-variety}.
\end{proof}

\subsubsection{Exact minimal reduction at level \(-2\)}

The affine highest weights of the endpoint singular vectors are
\begin{equation*}
\begin{aligned}
  \widehat\lambda_{A,-2}
  &=-2\Lambda_0+(m-1)(\theta+\theta_A)-2(m-1)\delta,\\
  \widehat\lambda_{D,-2}
  &=-2\Lambda_0+(\theta+\theta_D)-2\delta.
\end{aligned}
\end{equation*}
Their affine-wall coordinates are
\begin{equation}
\begin{aligned}
  \widehat\lambda_{A,-2}(\alpha_0^\vee)
  &=-2-2(m-1)=-2m<0,\\
  \widehat\lambda_{D,-2}(\alpha_0^\vee)
  &=-2-2=-4<0.
\end{aligned}
\label{dl:eq:Dl-minus-two-negative-coordinates}
\end{equation}

\begin{proposition}[Exact endpoint reductions]
\label{dl:prop:Dl-minus-two-exact-singular-reductions}
There exist nonzero scalars
\(c_{A,-2},c_{D,-2}\in\mathbb C^\times\) such that
\begin{equation*}
  [s_{A,-2}]_{\mathrm{DS}}
  =
  c_{A,-2}:J_A^{m-1}:,
  \qquad
  [s_{D,-2}]_{\mathrm{DS}}
  =
  c_{D,-2}J_D.
\end{equation*}
\end{proposition}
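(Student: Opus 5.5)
The plan follows the level $-1$ argument of Proposition~\ref{dl:prop:Dl-exact-reduced-singular-vectors} verbatim, in three steps: nonvanishing, weight bookkeeping, and a one-dimensional extremal PBW line.

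\textbf{Step 1 (nonvanishing).} I would apply Lemma~\ref{lem:common-reduced-singular-generator} with $k=-2$ to each of $s_{A,-2}$ and $s_{D,-2}$. Its hypotheses are a nonzero affine singular vector of positive degree and a strictly negative affine-wall coordinate. The coordinates are recorded in \eqref{dl:eq:Dl-minus-two-negative-coordinates}: they are $-2m<0$ and $-4<0$. The lemma then shows that both BRST classes are nonzero in $\mathcal W^{-2}(D_\ell,f_\theta)$. It also shows that each $H^0_{\mathrm{DS},f_\theta}(N_{X,-2})$ is cyclic on its class, which is needed later for Proposition~\ref{prop:common-exact-generated-quotient}.

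\textbf{Step 2 (weights).} The ghost vacuum has $\mathfrak h^\natural$-weight zero and $\theta|_{\mathfrak h^\natural}=0$. Hence the $\mathfrak g^\natural$-weights of the two classes are $(m-1)\theta_A$ and $\theta_D$. The minimal Hamiltonian acts on a highest-weight representative by $d-\mu(x_\theta)$, with $\theta(x_\theta)=1$ and $\theta_A(x_\theta)=\theta_D(x_\theta)=0$. This gives conformal weights $2(m-1)-(m-1)=m-1$ and $2-1=1$.

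\textbf{Step 3 (extremal lines).} I would apply Proposition~\ref{prop:common-extremal-PBW-line} at the noncritical level $-2$.
\begin{itemize}
\item For the $A$ class, take $\mathfrak a=\mathfrak s$, $\eta=\theta_A$ and $r=m-1\ge 2$. The charge bound $q_A\le1$ on $\mathfrak g_{-1/2}\cong\mathbf 2\boxtimes V_{\mathrm{vec}}$ is checked exactly as in Lemma~\ref{dl:lem:Dl-A1-extremal-PBW-line}.
\item For the $D$ class, take $\mathfrak a=\mathfrak d$, $\eta=\theta_D$ and $r=1$. The vector weights $\pm\varepsilon_j$ have $q_D$-charge at most one, as in Lemma~\ref{dl:lem:Dl-D-extremal-PBW-line}. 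When $m=3$ I would make the same check under $D_3\cong A_3$.
\end{itemize}
Each weight space is therefore one-dimensional: it is spanned by $:J_A^{m-1}:$, respectively by $J_D$. Combined with Step 1, this yields nonzero scalars $c_{A,-2}$ and $c_{D,-2}$.

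\textbf{Expected obstacle.} The only point needing care is that the proposition concerns the universal algebra $\mathcal W^{-2}$, not the simple one. At the level where $k_D^\natural=0$, the current $J_D$ is nonzero in the universal algebra even though it dies in the simple quotient. So the PBW independence in Proposition~\ref{prop:common-extremal-PBW-line} holds uniformly, and no collapse phenomenon enters at this stage. I would also verify that $k=-2$ is noncritical, which is immediate since $h^\vee=2\ell-2>2$.
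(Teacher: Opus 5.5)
Your proposal is correct and follows the paper's proof. You prove nonvanishing by the irreducible-quotient and Verma-reduction argument; the paper cites the level $-1$ lemma for this, and that lemma is the same argument as Lemma~\ref{lem:common-reduced-singular-generator}. You then do the same weight bookkeeping and use the same extremal PBW line for the $A$ class. For $s_{D,-2}$ the paper simply notes that the weight-one subspace of the universal algebra is the current space, whose $\theta_D$-weight line is $\mathbb C J_D$; this is exactly the $r=1$ case of Proposition~\ref{prop:common-extremal-PBW-line} that you invoke.
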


\begin{proof}
The proof of Lemma~\ref{dl:lem:Dl-reduced-singular-vectors-nonzero}
applies in \(\mathcal O_{-2}\): each singular submodule has an
irreducible highest-weight quotient, whose reduction is nonzero by
\eqref{dl:eq:Dl-minus-two-negative-coordinates}; exactness and the
reduction of affine Verma modules then show that the cyclic BRST
classes of \(s_{A,-2}\) and \(s_{D,-2}\) are nonzero.

Their natural weights and reduced conformal weights are
\begin{equation*}
\begin{aligned}
  \operatorname{wt}_{\mathfrak g^\natural}
  ([s_{A,-2}]_{\mathrm{DS}})
  &=(m-1)\theta_A,
  &\Delta_W([s_{A,-2}]_{\mathrm{DS}})&=m-1,\\
  \operatorname{wt}_{\mathfrak g^\natural}
  ([s_{D,-2}]_{\mathrm{DS}})
  &=\theta_D,
  &\Delta_W([s_{D,-2}]_{\mathrm{DS}})&=1.
\end{aligned}
\end{equation*}
The factorwise charge estimate in
Lemma~\ref{dl:lem:Dl-A1-extremal-PBW-line} is independent of the affine
level and gives
\begin{equation*}
  \mathcal W^{-2}(D_\ell,f_\theta)_{m-1}
  [(m-1)\theta_A]
  =
  \mathbb C:J_A^{m-1}:.
\end{equation*}
The conformal weight-one subspace of the universal minimal
\(W\)-algebra is the current space
\(J^{\{\mathfrak g^\natural\}}\), whose \(\theta_D\)-root space is
\(\mathbb CJ_D\).  Nonvanishing of both BRST classes proves the
claim.
\end{proof}

Define
\begin{equation*}
  \mathcal J_{\ell,-2}
  =
  \left\langle:J_A^{m-1}:,J_D\right\rangle
  \subset\mathcal W^{-2}(D_\ell,f_\theta)
\end{equation*}
and
\begin{equation*}
  \widetilde{\mathcal W}_{\ell,-2}
  =
  \mathcal W^{-2}(D_\ell,f_\theta)/\mathcal J_{\ell,-2}.
\end{equation*}

\begin{proposition}[Reduction of the endpoint quotient]
\label{dl:prop:Dl-minus-two-reduction-candidate}
There is a canonical isomorphism
\begin{equation}
  H^0_{\mathrm{DS},f_\theta}(Q_{\ell,-2})
  \cong
  \widetilde{\mathcal W}_{\ell,-2}.
  \label{dl:eq:Dl-minus-two-reduction-isomorphism}
\end{equation}
Moreover, \(\widetilde{\mathcal W}_{\ell,-2}\) is nonzero and lisse.
\end{proposition}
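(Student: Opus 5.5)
The plan is to repeat the proof of Proposition~\ref{dl:prop:Dl-exact-reduction-candidate-quotient} at level $-2$. There are two parts: the exact-reduction isomorphism, then nonvanishing and lisseness.

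\emph{The isomorphism.} First I will check that $N_{A,-2}$, $N_{D,-2}$, $N_{-2}$ and $Q_{\ell,-2}$ lie in $\mathcal O_{-2}$. This follows from Lemma~\ref{lem:common-graded-subquotient-category-O}. Each is a graded subquotient of $V^{-2}(D_\ell)$ with finite-dimensional homogeneous pieces, and each piece is a finite-dimensional $\mathfrak g$-module. Hence minimal reduction is exact on all short exact sequences built from them.

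Next, the wall inequalities \eqref{dl:eq:Dl-minus-two-negative-coordinates} are strict. So Lemma~\ref{lem:common-reduced-singular-generator} applies with $k=-2$ to both $s_{A,-2}$ and $s_{D,-2}$. It shows that each $H^0_{\mathrm{DS},f_\theta}(N_{X,-2})$ is cyclic, generated by the nonzero class $[s_{X,-2}]_{\mathrm{DS}}$. By Proposition~\ref{dl:prop:Dl-minus-two-exact-singular-reductions}, these classes are nonzero multiples of $:J_A^{m-1}:$ and $J_D$.

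Proposition~\ref{prop:common-exact-generated-quotient} then identifies $H^0_{\mathrm{DS},f_\theta}(Q_{\ell,-2})$ with $\mathcal W^{-2}(D_\ell,f_\theta)$ modulo the ideal generated by $:J_A^{m-1}:$ and $J_D$. Rescaling the generators by the nonzero constants $c_{A,-2},c_{D,-2}$ does not change this ideal. The quotient is therefore $\widetilde{\mathcal W}_{\ell,-2}$, which gives \eqref{dl:eq:Dl-minus-two-reduction-isomorphism}.

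\emph{Nonvanishing and lisseness.} For lisseness I use Proposition~\ref{dl:prop:Dl-minus-two-associated-variety}, which gives $X_{Q_{\ell,-2}}=\overline{\mathbb O}_{\min}$. The associated-variety formula \eqref{eq:common-associated-variety}, from \cite[Theorem~6.1(3)]{ArakawaMoreau2018}, then gives
\[
X_{\widetilde{\mathcal W}_{\ell,-2}}=\overline{\mathbb O}_{\min}\cap\mathcal S_{f_\theta}=\{f_\theta\}.
\]
A single point means $\widetilde{\mathcal W}_{\ell,-2}$ is lisse.

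For nonvanishing, I do not rely on the associated-variety formula, since an empty variety would also be compatible with it. Instead I use the fact that $Q_{\ell,-2}\ne0$, because $N_{-2}$ is proper. Lemma~\ref{lem:common-DS-detection} applies to $Q_{\ell,-2}$: the level is $k=-2<0$, and the grading hypotheses were checked above. It gives $H^0_{\mathrm{DS},f_\theta}(Q_{\ell,-2})\ne0$. A more direct route is also available. The quotient map $Q_{\ell,-2}\twoheadrightarrow L_{-2}(D_\ell)$ reduces, by exactness, to a surjection onto $\mathcal W_{-2}(D_\ell,f_\theta)$. That target is nonzero by Lemma~\ref{lem:common-DS-detection} applied to $L_{-2}(D_\ell)$.

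\emph{Main obstacle.} I expect the only delicate point to be the hypothesis of \cite[Theorem~6.1(3)]{ArakawaMoreau2018}. That theorem is stated for finitely generated quotients whose associated variety lies in $\overline{\mathbb O}_{\min}$, and here it is being used at the collapsing endpoint. The plan is to note that the level $-2$ is integral and noncritical, and that Proposition~\ref{dl:prop:Dl-minus-two-associated-variety} supplies exactly the input the theorem needs. If its level range turned out to be restrictive, the fallback is direct. Since the Ramond-twisted $C_2$-algebra of $\widetilde{\mathcal W}_{\ell,-2}$ is a quotient of $R_{Q_{\ell,-2}}$ reduced along the Slodowy slice, finite dimensionality follows from $\overline{\mathbb O}_{\min}\cap\mathcal S_{f_\theta}=\{f_\theta\}$ by Kazhdan-filtration arguments as in \cite{Arakawa2005}.
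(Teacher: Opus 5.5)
Your proposal is correct and follows the paper's proof essentially step for step. The steps are: membership in $\mathcal O_{-2}$; cyclicity of the reduced singular submodules, with generators identified by Proposition~\ref{dl:prop:Dl-minus-two-exact-singular-reductions}; Proposition~\ref{prop:common-exact-generated-quotient} for the isomorphism; lisseness from Proposition~\ref{dl:prop:Dl-minus-two-associated-variety} together with \eqref{eq:common-associated-variety}; and nonvanishing via the surjection onto the nonzero reduction of $L_{-2}(D_\ell)$. Your use of Lemma~\ref{lem:common-DS-detection} on $Q_{\ell,-2}$ amounts to the same thing, since its lowest-degree vector is the vacuum, and the fallback you sketch is unnecessary because the paper relies on exactly the same cited associated-variety formula.
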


\begin{proof}
The endpoint singular submodules, their sum, and $Q_{\ell,-2}$ lie in
$\mathcal O_{-2}$ by the same PBW-grading argument as
Lemma~\ref{dl:lem:Dl-candidate-modules-in-category-O}.  Their reductions are
cyclic, and Proposition~\ref{dl:prop:Dl-minus-two-exact-singular-reductions}
identifies the cyclic generators.  Proposition~\ref{prop:common-exact-generated-quotient}
gives \eqref{dl:eq:Dl-minus-two-reduction-isomorphism}.  Finally,
Proposition~\ref{dl:prop:Dl-minus-two-associated-variety} and
\eqref{eq:common-associated-variety} show that the reduction is lisse; it is
nonzero because it maps onto the nonzero simple reduction of
$L_{-2}(D_\ell)$.
\end{proof}

\subsubsection{Collapse of the \(D_m\)-currents and the \(G\)-fields}

\begin{lemma}[Vanishing of the level-zero current factor]
\label{dl:lem:Dl-minus-two-D-currents-vanish}
In \(\widetilde{\mathcal W}_{\ell,-2}\),
\begin{equation*}
  J^{\{a\}}=0
  \qquad
  (a\in\mathfrak d).
\end{equation*}
\end{lemma}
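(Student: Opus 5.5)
The plan is to show that, in \(\widetilde{\mathcal W}_{\ell,-2}\), the ideal generated by the single current \(J_D\) already contains all of \(J^{\{\mathfrak d\}}\), using only the \(\lambda\)-brackets among the currents. The relevant fact is that in the universal minimal \(W\)-algebra the currents \(J^{\{a\}}\), \(a\in\mathfrak g^\natural\), satisfy
\[
 [J^{\{a\}}{}_\lambda J^{\{b\}}]=J^{\{[a,b]\}}+\lambda\,k^\natural(a\mid b),
\]
and at \(k=-2\) the \(\mathfrak d\)-level is \(k_D^\natural=k+2=0\). Hence on \(\mathfrak d\) the current algebra is the level-zero affine algebra, and the zero modes \(J^{\{a\}}_{(0)}\), \(a\in\mathfrak d\), act on the span \(J^{\{\mathfrak d\}}\) exactly as the adjoint representation of \(\mathfrak d\), with no central terms in the \((1)\)-products.

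The steps are as follows.

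\begin{enumerate}
\item Since \(J_D\) lies in \(\mathcal J_{\ell,-2}\), which is an ideal, it is stable under all zero modes \(J^{\{a\}}_{(0)}\) with \(a\in\mathfrak d\). The bracket above then gives \(J^{\{[a,e_D]\}}\in\mathcal J_{\ell,-2}\).
\item Iterating step 1, the subspace \(\{b\in\mathfrak d : J^{\{b\}}\in\mathcal J_{\ell,-2}\}\) is a \(\mathfrak d\)-submodule of the adjoint representation containing \(e_{\theta_D}\).
\item Since \(\mathfrak d\cong D_m\) is simple (for \(m=3\), \(D_3\cong A_3\) is still simple), the adjoint representation is irreducible. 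So this subspace is all of \(\mathfrak d\), and \(J^{\{a\}}=0\) in the quotient for every \(a\in\mathfrak d\).
\end{enumerate}

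Concretely, one can instead apply \(J^{\{f_{\theta_D}\}}_{(0)}\) and lower root vectors repeatedly, reaching every root current and the Cartan currents \(J^{\{h\}}=J^{\{[e,f]\}}\).

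Step 1 is where I expect the only real care to be needed. It requires the bracket normalization of \cite{KacWakimoto2004} for currents of \(\mathfrak g^\natural\), namely that \(J^{\{a\}}_{(0)}J^{\{b\}}=J^{\{[a,b]\}}\) holds exactly, without correction terms. Here that is immediate, since \(J^{\{\mathfrak g^\natural\}}\) is a current subalgebra and corrections only appear in the \(\lambda^1\) term. No level-zero subtlety enters, because only the \((0)\)-product is used. Even the vanishing of \(k_D^\natural\) is not needed for this lemma; it becomes relevant afterwards, when the vanishing of the \(J^{\{\mathfrak d\}}\) is used to kill the \(G\)-fields through their \(\mathfrak d\)-charge.
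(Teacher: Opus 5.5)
Your proof is correct and is essentially the paper's own argument. The set of \(b\in\mathfrak d\) with \(J^{\{b\}}\) in the ideal is stable under \(J^{\{a\}}_{(0)}J^{\{b\}}=J^{\{[a,b]\}}\) and contains \(e_{\theta_D}\), so irreducibility of the adjoint representation of the simple algebra \(\mathfrak d\) gives all of \(\mathfrak d\); your remark that \(k_D^\natural=0\) plays no role in this step is also accurate.
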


\begin{proof}
The kernel of the quotient map, intersected with the weight-one
\(\mathfrak d\)-current space, is a \(\mathfrak d\)-submodule under
current zero modes because
\[
  J^{\{a\}}_{(0)}J^{\{b\}}=J^{\{[a,b]\}}.
\]
It contains the highest-root current
\(J_D=J^{\{e_{\theta_D}\}}\).  The adjoint representation of the
simple algebra \(\mathfrak d\) is generated by its highest-root
vector, so the kernel contains the whole current space.
\end{proof}

\begin{lemma}[Vanishing of the half-integer generators]
\label{dl:lem:Dl-minus-two-G-fields-vanish}
In \(\widetilde{\mathcal W}_{\ell,-2}\),
\begin{equation*}
  G^{\{u\}}=0
  \qquad
  (u\in U).
\end{equation*}
Consequently, the quotient is strongly generated by the
\(A_1\)-currents and \(\omega\).
\end{lemma}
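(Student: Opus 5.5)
We show that every $G^{\{u\}}$ lies in the kernel, using only Lemma~\ref{dl:lem:Dl-minus-two-D-currents-vanish} and the $\lambda$-bracket between the $\mathfrak d$-currents and the $G$-fields in the universal minimal $W$-algebra. In $\mathcal W^k(\mathfrak g,f_\theta)$ one has, for $a\in\mathfrak g^\natural$ and $u\in\mathfrak g_{-1/2}$,
\[
 [J^{\{a\}}{}_\lambda G^{\{u\}}]=G^{\{[a,u]\}},
\]
so that $J^{\{a\}}_{(0)}G^{\{u\}}=G^{\{[a,u]\}}$ (see \cite{KacWakimoto2004}). Let $\mathcal I$ denote the kernel of $\mathcal W^{-2}(D_\ell,f_\theta)\to\widetilde{\mathcal W}_{\ell,-2}$. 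It is an ideal, and by Lemma~\ref{dl:lem:Dl-minus-two-D-currents-vanish} it contains $J^{\{a\}}$ for every $a\in\mathfrak d$. Because $\mathcal I$ is an ideal, it also contains $G^{\{u\}}{}_{(-1)}J^{\{a\}}$. By skew-symmetry, $J^{\{a\}}{}_{(0)}G^{\{u\}}$ is a combination of the modes $G^{\{u\}}{}_{(j)}J^{\{a\}}$ and their derivatives, and each of these lies in $\mathcal I$. It follows that $G^{\{[a,u]\}}\in\mathcal I$ for all $a\in\mathfrak d$ and $u\in U$.

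Next we check that $[\mathfrak d,U]=U$. As a $\mathfrak d$-module, $U\cong\mathbf 2\boxtimes L_{\mathfrak d}(\eta_1)$ is a sum of two copies of the vector representation. This representation is nontrivial and irreducible, and $\mathfrak d$ is simple, so $\mathfrak d\cdot L_{\mathfrak d}(\eta_1)=L_{\mathfrak d}(\eta_1)$. For $m=3$ the same holds, with $\mathfrak d\cong A_3$ and the module of highest weight $\omega_2$. Hence every $G^{\{u\}}$ lies in $\mathcal I$, that is, $G^{\{u\}}=0$ in the quotient. There is no subtlety from the level here: the bracket formula is level-independent, and only the zero-mode term is used.

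For strong generation, recall that $\mathcal W^{-2}$ is strongly generated by the $J^{\{a\}}$ with $a\in\mathfrak s\oplus\mathfrak d$, the $G^{\{u\}}$, and $\omega$. Images of PBW monomials in these generators and their derivatives therefore span the quotient. Any monomial containing a $\mathfrak d$-current or a $G$-field, or a derivative of one, has image zero. Indeed, the normally ordered product of an element of an ideal with anything lies in the ideal, and $\partial$ preserves the ideal. The surviving monomials involve only the $A_1$-currents and $\omega$, which proves the claimed strong generation.

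The only point needing care is to use a correct normalization of the $J$--$G$ $\lambda$-bracket. This is standard: the $G$-fields transform in the representation $\mathfrak g_{-1/2}$ of $\mathfrak g^\natural$ under the current zero modes, and any nonzero scalar normalization of that bracket leaves the argument unchanged.
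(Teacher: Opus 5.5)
Your proof is correct and follows essentially the same route as the paper. The zero-mode relation $[J^{\{a\}}{}_\lambda G^{\{u\}}]=G^{\{[a,u]\}}$, the vanishing of the $\mathfrak d$-currents, and $[\mathfrak d,U]=U$ together kill every $G$-field, and strong generation then follows from the list of strong generators. Your skew-symmetry step (from $J^{\{a\}}\in\mathcal I$ to $J^{\{a\}}_{(0)}G^{\{u\}}\in\mathcal I$) and your argument that PBW monomials containing a vanishing factor vanish only make explicit what the paper leaves implicit; the mention of $G^{\{u\}}{}_{(-1)}J^{\{a\}}$ is not needed.
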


\begin{proof}
The universal current--\(G\) relation is
\begin{equation*}
  [J^{\{a\}}{}_{\lambda}G^{\{u\}}]
  =
  G^{\{[a,u]\}},
  \qquad
  a\in\mathfrak g^\natural,
  \quad u\in U;
\end{equation*}
see \cite{AdamovicKacMosenederFrajriaPapiPerse2018}.
By Lemma~\ref{dl:lem:Dl-minus-two-D-currents-vanish}, its left-hand side
is zero for \(a\in\mathfrak d\).  Since
\(U\cong\mathbf2\boxtimes L_{\mathfrak d}(\eta_1)\) and the vector
representation is nontrivial and irreducible,
\([\mathfrak d,U]=U\).  Hence all \(G\)-generators vanish.  The strong
generation assertion follows from
\eqref{dl:eq:Dl-minimal-W-generators}.
\end{proof}

\subsubsection{Ramond trace identity and reduced simplicity}

Set
\begin{equation*}
  B_{\ell,-2}
  =
  A_{\mathrm R}(\widetilde{\mathcal W}_{\ell,-2}).
\end{equation*}
This is a nonzero finite-dimensional unital associative algebra by
Proposition~\ref{dl:prop:Dl-minus-two-reduction-candidate}.  Put
\(e_A=[J_A]_{\mathrm R}\).  The defining current relation gives
\begin{equation*}
  e_A^{m-1}=e_A^{K_0+1}=0.
\end{equation*}
Thus every irreducible \(A_1\)-constituent of a finite-dimensional
\(B_{\ell,-2}\)-module has highest weight \(a\varpi\) with
\(0\le a\le K_0\), and
\begin{equation}
  c_2(a\varpi)
  \le
  c_2(K_0\varpi)
  =
  \frac{K_0(K_0+2)}2
  =
  \frac{m(m-2)}2.
  \label{dl:eq:Dl-minus-two-A-Casimir-bound}
\end{equation}

\begin{proposition}[Endpoint Casimir trace identity]
\label{dl:prop:Dl-minus-two-Casimir-trace}
Let \(M\) be a nonzero finite-dimensional
\(B_{\ell,-2}\)-module on which
\(L=[\omega]_{\mathrm R}\) acts by \(h\).  Then
\begin{equation*}
  \frac{\operatorname{tr}_M\rho(\Omega_A)}{\dim M}
  =
  2mh.
\end{equation*}
\end{proposition}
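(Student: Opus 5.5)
The identity will come from Proposition~\ref{prop:common-Ramond-Casimir-trace}, specialized to $k=-2$, once the collapse lemmas of the preceding subsection are used to remove every $\mathfrak d$-contribution. The plan is to take the universal Ramond relation \eqref{eq:common-Ramond-Zhu} and push it into $B_{\ell,-2}$, which is a quotient of the universal Ramond Zhu algebra of $\mathcal W^{-2}(D_\ell,f_\theta)$. There I evaluate each term on the finite-dimensional module $M$ and take traces.

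\emph{Step 1: the scalar data at $k=-2$.} Here $h^\vee=2m+2$, so $k+h^\vee=2m$. Since $p_{D_\ell}(k)=(k+2)(k+m)$, the constant term $p_{D_\ell}(-2)$ vanishes. The relation therefore reads
\[
 [u,v]=\langle u,v\rangle\bigl(\Omega_A+\Omega_D-4mL\bigr)+Q(u,v).
\]

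\emph{Step 2: removing the $\mathfrak d$-terms.} By Lemma~\ref{dl:lem:Dl-minus-two-D-currents-vanish}, every current $J^{\{a\}}$ with $a\in\mathfrak d$ is zero in $\widetilde{\mathcal W}_{\ell,-2}$, so its Zhu class is zero. Hence $\rho(\Omega_D)=0$ on $M$. Moreover, every summand of $Q_{DD}(u,v)$ and of $Q_{\mathrm{mix}}(u,v)$ contains a factor $[x]_{\mathrm R}$ with $x\in\mathfrak d$, so these two pieces act by zero as well. Only $Q_{AA}$ survives. Its trace is computed exactly as in Lemma~\ref{dl:lem:Dl-trace-quadratic-Ramond-term}: the contraction only uses the $\mathfrak s$-Casimir value $3/2$ on $U$ and the invariance identity $I_{\mathfrak s}=\tau_A/3$, and neither depends on the level. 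This gives $\operatorname{tr}_M\rho(Q_{AA}(u,v))=\tau_A\langle u,v\rangle$, that is, $\gamma_A=1$.

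\emph{Step 3: the left-hand side.} By Lemma~\ref{dl:lem:Dl-minus-two-G-fields-vanish}, the Zhu classes of the $G$-fields are already zero, so $\rho([u,v])=0$. In any case it is an associative commutator and has trace zero. Choose $u,v$ with $\langle u,v\rangle\ne0$, which is possible because the form \eqref{dl:eq:Dl-skew-form} is nondegenerate on $U$. Taking traces then yields
\[
 0=\tau_A-4mhN+\tau_A,
\]
and hence $\tau_A/N=2mh$.

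The only point needing care is Step 2. One must confirm that the Zhu image of the universal relation is compatible with the vanishing of the $\mathfrak d$-currents, so that $\rho(\Omega_D)$ and the mixed and $DD$ parts of $Q$ genuinely act by zero. One must also confirm that the $A_1$ contraction constant is unchanged at $k=-2$. Both follow because $B_{\ell,-2}$ is a quotient of the universal Ramond Zhu algebra, and because the contraction in Lemma~\ref{dl:lem:Dl-trace-quadratic-Ramond-term} is purely finite-dimensional and independent of the level.
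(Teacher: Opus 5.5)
Your proposal is correct and follows the paper's own proof step for step. You specialize the universal Ramond relation at $k=-2$, where $k+h^\vee=2m$ and $p_{D_\ell}(-2)=0$. You use the collapse lemmas to kill the $\mathfrak d$-currents and the $G$-fields, so only $\Omega_A$, $L$ and $Q_{AA}$ survive, and the level-independent $A_1$ contraction $\operatorname{tr}_M\rho(Q_{AA}(u,v))=\tau_A\langle u,v\rangle$ then gives $0=\langle u,v\rangle(2\tau_A-4mh\dim M)$, exactly as the paper does.
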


\begin{proof}
At \(k=-2\), one has
\[
  k+h^\vee(D_\ell)=2m,
  \qquad
  p_{D_\ell}(-2)=0.
\]
The images of the \(G\)-generators and the \(D_m\)-currents vanish by
Lemmas~\ref{dl:lem:Dl-minus-two-D-currents-vanish} and
\ref{dl:lem:Dl-minus-two-G-fields-vanish}.  Hence the Ramond relation
\eqref{eq:common-Ramond-Zhu} becomes
\begin{equation}
  0
  =
  \langle u,v\rangle(\Omega_A-4mL)
  +Q_{AA}(u,v).
  \label{dl:eq:Dl-minus-two-reduced-Ramond-relation}
\end{equation}
Let \(\{a_r\}\) be an orthonormal basis of \(\mathfrak s\), and
write \(\tau_A=\operatorname{tr}_M\rho(\Omega_A)\).  Invariance of
the trace form gives
\[
  \operatorname{tr}_M(\rho(a)\rho(b))
  =
  I_A(a\mid b),
  \qquad
  I_A=\frac{\tau_A}{3}.
\]
The contraction calculation of
Lemma~\ref{dl:lem:Dl-trace-quadratic-Ramond-term}, now applied directly to
this module, yields
\begin{align*}
  \operatorname{tr}_M\rho(Q_{AA}(u,v))
  &=
  2I_A\sum_r
  \langle[a_r,u],[v,a_r]\rangle\\
  &=
  2I_A\frac32\langle u,v\rangle
  =
  \tau_A\langle u,v\rangle.
\end{align*}
Taking the trace of
\eqref{dl:eq:Dl-minus-two-reduced-Ramond-relation} therefore gives
\[
  0
  =
  \langle u,v\rangle
  \left(
    2\tau_A-4mh\dim M
  \right).
\]
Choose \(u,v\) with \(\langle u,v\rangle\ne0\) and divide by
\(2\dim M\).
\end{proof}

Use the same coweight \(x=\varpi\) as in the level \(-1\) argument.  Since the
\(A_1\)-current level is now \(K_0\), Li twisting gives
\begin{equation*}
  L_0^{\mathrm R,-2}
  =
  L_0+x_0+\frac{K_0}{4}.
\end{equation*}
The surviving strong generators have excess energies
\begin{equation}
\begin{array}{c|cccc}
  \text{generator}&E_A&H_A&F_A&\omega\\ \hline
  \Delta+x_0&2&1&0&2.
\end{array}
\label{dl:eq:Dl-minus-two-excess-table}
\end{equation}
Derivatives add positive integers.  Every ordinary conformal
homogeneous subspace of \(\widetilde{\mathcal W}_{\ell,-2}\) is
finite-dimensional and stable under the \(A_1\)-zero modes.  Since
finite-dimensional \(A_1\)-modules are completely reducible, \(x_0\)
acts semisimply on every homogeneous subspace.  Moreover, \(x_0\)
commutes with \(L_0\).  Hence
\begin{equation*}
  L_0^{\mathrm R,-2}
  =
  L_0+x_0+\frac{K_0}{4}
  \quad\text{acts semisimply on the twisted adjoint module.}
\end{equation*}
The PBW spanning theorem together with
\eqref{dl:eq:Dl-minus-two-excess-table} then places its spectrum in
\(K_0/4+\mathbb Z_{\ge0}\).

\begin{proposition}[Endpoint lowest Ramond space]
\label{dl:prop:Dl-minus-two-lowest-space}
The spectrum of \(L_0^{\mathrm R,-2}\) is contained in
\(K_0/4+\mathbb Z_{\ge0}\), and the lowest eigenspace is
\begin{equation*}
  T_{\ell,-2}
  =
  \operatorname{span}_{\mathbb C}
  \left\{
    (F_A)_{(-1)}^j\mathbf1
    \;\middle|\;
    0\le j\le K_0
  \right\}
  \cong
  L_{\mathfrak s}(K_0\varpi).
\end{equation*}
In particular, \(T_{\ell,-2}\) is irreducible.
\end{proposition}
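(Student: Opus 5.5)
The plan is to copy the proofs of Proposition~\ref{dl:prop:Dl-Ramond-spectral-lattice} and Proposition~\ref{dl:prop:Dl-lowest-Ramond-space}, now for the smaller strong generating set. By Lemma~\ref{dl:lem:Dl-minus-two-G-fields-vanish}, $\widetilde{\mathcal W}_{\ell,-2}$ is spanned by images of ordered PBW monomials in $\partial^rE_A$, $\partial^rH_A$, $\partial^rF_A$ and $\partial^r\omega$. We choose these with respect to the $\mathfrak h^\natural$-weight basis $\{e_A,h_A,f_A\}$. Each such factor contributes its value of $\Delta+x_0$ to $L_0+x_0$. This value is the entry of \eqref{dl:eq:Dl-minus-two-excess-table} plus $r$, and it is a nonnegative integer. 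Adding $K_0/4$ from the Li-twisted Hamiltonian $L_0^{\mathrm R,-2}$ gives the spectral inclusion in $K_0/4+\mathbb Z_{\ge0}$. Semisimplicity has already been recorded just before the statement. Since $L_0\mathbf 1=x_0\mathbf 1=0$, the vacuum attains the minimum $K_0/4$.

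For the lowest eigenspace, the table shows that the only factor of excess zero is the undifferentiated $F_A$ with $r=0$. So $T_{\ell,-2}$ is spanned by the images of monomials built only from $F_A$. Because $[f_A,f_A]=0$ and $(f_A\mid f_A)=0$, the field $F_A$ has regular OPE with itself. Hence these monomials are exactly $(F_A)_{(-1)}^j\mathbf1$ for $j\ge0$.

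Next we need the truncation. The $A_1$-currents have level $K_0=m-2$. If $K_0\ge1$ (i.e.\ $\ell\ge5$, so $m\ge3$), the relation $:J_A^{m-1}:=(E_A)_{(-1)}^{K_0+1}\mathbf1=0$ in $\widetilde{\mathcal W}_{\ell,-2}$ makes the map $V^{K_0}(A_1)\to\widetilde{\mathcal W}_{\ell,-2}$ factor through $L_{K_0}(A_1)$. As in Proposition~\ref{dl:prop:Dl-integrable-A1-subalgebra}, the Chevalley involution then gives $(F_A)_{(-1)}^{K_0+1}\mathbf1=0$. The induced map is injective because $L_{K_0}(A_1)$ is simple and the map is unital. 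The computation $\mathsf e^j\mathsf f^j\mathbf1=j!\,K_0(K_0-1)\cdots(K_0-j+1)\mathbf1$ shows $(F_A)_{(-1)}^j\mathbf1\ne0$ for $0\le j\le K_0$. This yields the displayed basis.

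Finally, we identify the module structure. Using \eqref{eq:common-Li-mode-shifts} at level $K_0$, the twisted zero modes are $\mathsf e=(E_A)_{(1)}$, $\mathsf f=(F_A)_{(-1)}$ and $\mathsf h=(H_A)_{(0)}+K_0$. They satisfy the $\mathfrak{sl}_2$ relations, and the vacuum is a highest-weight vector of weight $K_0\varpi$. The span of $\mathsf f^j\mathbf1$ for $0\le j\le K_0$ is therefore the irreducible module $L_{\mathfrak s}(K_0\varpi)$. The step needing most care is the truncation: one must check that the quotient relation really is the $(K_0+1)$-st power at the correct level. One must also confirm that the surviving factor has positive integral level, so that \cite{Kac1998} applies; this holds because $\ell\ge5$ gives $K_0\ge1$. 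Everything else is the level-$-1$ argument with $K$ replaced by $K_0$ and with $D_m$- and $G$-factors absent.
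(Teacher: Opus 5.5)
Your proposal is correct and takes the same route as the paper. The excess table leaves the undifferentiated $F_A$ as the only zero-excess factor. The relation $(E_A)_{(-1)}^{K_0+1}\mathbf 1=0$ makes the $A_1$-current map factor injectively through $L_{K_0}(A_1)$, which truncates the $F_A$-string exactly at $j=K_0$. The twisted zero modes $(E_A)_{(1)}$, $(F_A)_{(-1)}$, $(H_A)_{(0)}+K_0$ then identify the span with $L_{\mathfrak s}(K_0\varpi)$. Your explicit Chevalley-involution argument and the $\mathsf e^j\mathsf f^j\mathbf 1$ computation just spell out what the paper cites as the standard integrable $A_1^{(1)}$-string relations.
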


\begin{proof}
The excess table shows that only the undifferentiated state \(F_A\)
has excess zero, so the lowest eigenspace is spanned by its
\((-1)\)-powers.  Since \(m-1=K_0+1\), the relation
\(:J_A^{m-1}:=0\) says
\[
  (E_A)_{(-1)}^{K_0+1}\mathbf1=0.
\]
The current homomorphism
\(V^{K_0}(A_1)\to\widetilde{\mathcal W}_{\ell,-2}\) therefore factors
through \(L_{K_0}(A_1)\), since the maximal ideal of the positive
integral-level universal affine algebra is generated by
\((E_A)_{(-1)}^{K_0+1}\mathbf1\); see \cite{Kac1998}.  The induced map is
injective because it preserves the vacuum and \(L_{K_0}(A_1)\) is simple.
Consequently the standard integrable \(A_1^{(1)}\)-string relations give
\((F_A)_{(-1)}^{K_0+1}\mathbf1=0\), while the powers with
\(0\le j\le K_0\) are nonzero.  Under the twisted zero modes the
vacuum has highest weight \(K_0\varpi\), giving the stated
irreducible module.
\end{proof}

\begin{theorem}[Simplicity of the endpoint reduction]
\label{dl:thm:Dl-minus-two-reduced-simple}
For every \(\ell\ge5\), the canonical map is an isomorphism
\begin{equation*}
 \widetilde{\mathcal W}_{\ell,-2}
 \cong
 \mathcal W_{-2}(D_\ell,f_\theta)
 \cong
 L_{\ell-4}(A_1).
\end{equation*}
In particular, \(\widetilde{\mathcal W}_{\ell,-2}\) is simple.
\end{theorem}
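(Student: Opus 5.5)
The plan is to apply Proposition~\ref{prop:common-Casimir-gap-simplicity} to the canonical nonzero surjection $\widetilde{\mathcal W}_{\ell,-2}\twoheadrightarrow\mathcal W_{-2}(D_\ell,f_\theta)$, exactly as in the proof of Theorem~\ref{dl:thm:Dl-reduced-candidate-simple}, and then to read off the second isomorphism from the structure of the quotient. The four hypotheses are verified as follows.

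\emph{Spectral lattice and vacuum line.} Proposition~\ref{dl:prop:Dl-minus-two-lowest-space} gives $L_0^{\mathrm R,-2}$-semisimplicity with spectrum in $h_0+\mathbb Z_{\ge0}$, where $h_0=K_0/4$, and identifies $T_{\ell,-2}\cong L_{\mathfrak s}(K_0\varpi)$. Its highest-weight line is $\mathbb C\mathbf1$ by the shift formula \eqref{eq:common-Li-mode-shifts}, as in Corollary~\ref{dl:cor:Dl-shifted-vacuum-line}. Take $\lambda_0=K_0\varpi$ and $\mathfrak a=\mathfrak s$.

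\emph{Finiteness.} $B_{\ell,-2}$ is finite-dimensional, and its modules are completely reducible over $\mathfrak s$.

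\emph{Casimir gap.} Take $\mathcal C=\Omega_A$. By Proposition~\ref{dl:prop:Dl-minus-two-Casimir-trace} the average is $A(h)=2mh$. The relation $e_A^{m-1}=0$ gives $C_{\max}=m(m-2)/2$ by \eqref{dl:eq:Dl-minus-two-A-Casimir-bound}. Then
\[
 A(h_0+1)-C_{\max}=2m\Bigl(\tfrac{m-2}{4}+1\Bigr)-\tfrac{m(m-2)}{2}=2m>0,
\]
and $A$ is increasing in $h$. Note also that $A(h_0)=m(m-2)/2=C_{\max}$, so at the ground level a simple module must consist entirely of constituents $L(K_0\varpi)$.

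\emph{Ground exclusion.} Any nonzero $B$-submodule of $T_{\ell,-2}$ is $\mathfrak s$-stable, hence equal to all of $T_{\ell,-2}$ by irreducibility. A simple quotient is then a nonzero quotient of the irreducible $\mathfrak s$-module $T_{\ell,-2}$, so it is isomorphic to $L_{\mathfrak s}(K_0\varpi)$. All hypotheses hold, so the reduced quotient is simple and the canonical map to $\mathcal W_{-2}(D_\ell,f_\theta)$ is an isomorphism.

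\emph{Identification with $L_{\ell-4}(A_1)$.} By Lemmas~\ref{dl:lem:Dl-minus-two-D-currents-vanish} and \ref{dl:lem:Dl-minus-two-G-fields-vanish}, the quotient is strongly generated by the $A_1$-currents and $\omega$. The embedded image of $L_{K_0}(A_1)$ from the proof of Proposition~\ref{dl:prop:Dl-minus-two-lowest-space} is a subalgebra. The main obstacle is to show that $\omega$ coincides with the Sugawara vector $\omega_{\mathrm{sug}}$ of level $K_0$.

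To do this, I would show that $\omega-\omega_{\mathrm{sug}}$ commutes with all $A_1$-currents, using the fact that both vectors make the currents primary of weight one. Thus $\omega-\omega_{\mathrm{sug}}$ lies in the commutant and is a weight-two vector. Next, apply $G^{\{u\}}=0$ to the universal $G$--$G$ $\lambda$-bracket from \cite{AdamovicKacMosenederFrajriaPapiPerse2018}. At $k=-2$, where $p_{D_\ell}(-2)=0$, this bracket has $\lambda^0$-coefficient $\langle u,v\rangle(-2(k+h^\vee)\omega+\text{quadratic Sugawara-type terms in }J)$. With the $D_m$-currents set to zero, it should yield $\omega=\omega_{\mathrm{sug}}^{\mathfrak s}$, the same collapsing identity that underlies the trace relation \eqref{dl:eq:Dl-minus-two-reduced-Ramond-relation}. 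I would check the constant by comparing central charges, which also gives a consistency check.

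Once $\omega$ is identified, the quotient is generated by $L_{K_0}(A_1)$, so it equals that subalgebra. This gives $\widetilde{\mathcal W}_{\ell,-2}\cong L_{\ell-4}(A_1)$.
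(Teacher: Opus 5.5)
Your verification of the four hypotheses of Proposition~\ref{prop:common-Casimir-gap-simplicity} is exactly the paper's argument. It uses the same $h_0=K_0/4$, $\mathcal C=\Omega_A$, $A(h)=2mh$, and $C_{\max}=m(m-2)/2=A(h_0)$, and the same irreducible ground space $L_{\mathfrak s}(K_0\varpi)$ with the vacuum as its highest line.

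You part ways with the paper only at the last isomorphism. The paper just quotes the type-$D$ collapsing-level theorem \cite[Theorem~3.3 and Proposition~3.4]{AdamovicKacMosenederFrajriaPapiPerse2018}. You instead derive it inside the quotient from the $\lambda^0$-coefficient of the $G$--$G$ bracket, and this derivation works:
\begin{itemize}
\item Since $G^{\{u\}}=0$ and the $D_m$-currents vanish, that coefficient reads $0=-4m\langle u,v\rangle\omega+(\text{normally ordered quadratics and derivatives of }\mathfrak s\text{-currents})$.
\item Because $k+h^\vee=2m\ne0$, this places $\omega$ in the image of the embedded $L_{K_0}(A_1)$.
\item Then $\omega-\omega_{\mathrm{sug}}$ lies in the commutant of $L_{K_0}(A_1)$ in itself, which is $\mathbb C\mathbf 1$. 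By weight it is zero, so the central-charge comparison you planned is only a consistency check.
\end{itemize}

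Your route actually yields more than the identification. Once $\omega$ is generated by the $\mathfrak s$-currents, $\widetilde{\mathcal W}_{\ell,-2}$ is generated by them. It is therefore the image of the injective map $L_{K_0}(A_1)\hookrightarrow\widetilde{\mathcal W}_{\ell,-2}$ and is simple outright. The canonical surjection onto the nonzero algebra $\mathcal W_{-2}(D_\ell,f_\theta)$ is then an isomorphism with no Ramond--Zhu or Casimir-gap input at all.

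The paper's route keeps the endpoint formally parallel to the level $-1$ argument, which is its stated purpose, and leaves the collapsing identification to the literature. Yours is self-contained, and it shows that the Casimir-gap step is redundant at this endpoint.
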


\begin{proof}
The natural map
\(
 \widetilde{\mathcal W}_{\ell,-2}\twoheadrightarrow
 \mathcal W_{-2}(D_\ell,f_\theta)
\)
is nonzero because
\(( -2\Lambda_0)(\alpha_0^\vee)=-2<0\).
Apply Proposition~\ref{prop:common-Casimir-gap-simplicity} with
\begin{equation*}
 h_0=\frac{K_0}{4}=\frac{m-2}{4},
 \qquad
 \mathfrak a=\mathfrak s,
 \qquad
 \mathcal C=\Omega_A,
 \qquad
 A(h)=2mh.
\end{equation*}
The finite-dimensionality of the Ramond Zhu algebra follows from
Proposition~\ref{dl:prop:Dl-minus-two-reduction-candidate}; the trace
identity is Proposition~\ref{dl:prop:Dl-minus-two-Casimir-trace}; and
\eqref{dl:eq:Dl-minus-two-A-Casimir-bound} gives
\begin{equation*}
 C_{\max}=\frac{m(m-2)}2=A(h_0),
 \qquad
 A(h_0+1)=C_{\max}+2m>C_{\max}.
\end{equation*}
Proposition~\ref{dl:prop:Dl-minus-two-lowest-space} identifies the
lowest Ramond space with the irreducible module
\(L_{\mathfrak s}(K_0\varpi)\).  Thus every nonzero ground submodule and
every simple quotient contains its highest-weight line, which is the vacuum
line.  The common criterion therefore proves that the canonical map is an
isomorphism.  The final identification with \(L_{\ell-4}(A_1)\) is
the type-\(D\) collapsing-level theorem
\cite[Theorem~3.3 and Proposition~3.4]{AdamovicKacMosenederFrajriaPapiPerse2018}.
\end{proof}

\subsubsection{Uniform affine lifting at the endpoint}

The endpoint quotient admits a canonical surjection
\begin{equation*}
 q_{\ell,-2}:Q_{\ell,-2}\twoheadrightarrow L_{-2}(D_\ell).
\end{equation*}
Theorem~\ref{dl:thm:Dl-minus-two-reduced-simple} identifies its minimal
reduction with the nonzero simple algebra
$\mathcal W_{-2}(D_\ell,f_\theta)$.  Proposition~\ref{prop:common-affine-lifting}
therefore applies.

\begin{theorem}[The affine maximal ideal at level \(-2\)]
\label{dl:thm:Dl-minus-two-affine-maximal-ideal}
For every \(\ell\ge5\),
\begin{equation*}
  Q_{\ell,-2}\cong L_{-2}(D_\ell).
\end{equation*}
Equivalently,
\begin{equation*}
  \ker\!\left(
    V^{-2}(D_\ell)\longrightarrow L_{-2}(D_\ell)
  \right)
  =
  \left\langle
    \sigma(w_A)^{\ell-3},
    \sigma(w_D)
  \right\rangle.
\end{equation*}
\end{theorem}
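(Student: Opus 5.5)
The plan is to apply Proposition~\ref{prop:common-affine-lifting} to the canonical surjection \(q_{\ell,-2}:Q_{\ell,-2}\twoheadrightarrow L_{-2}(D_\ell)\). This mirrors the proof of Theorem~\ref{dl:thm:Dl-main-maximal-ideal} at level \(-1\). Equivalently, it is an instance of Theorem~\ref{thm:common-minimal-reduction-maximality} with \(N=N_{-2}\).

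First I will verify the standing hypotheses on \(Q_{\ell,-2}\). It is a conformally graded quotient of \(V^{-2}(D_\ell)\) at a negative integral level. Its homogeneous subspaces are finite-dimensional \(\mathfrak g\)-modules, since they are quotients of the PBW-graded pieces of the vacuum module. By Lemma~\ref{lem:common-graded-subquotient-category-O}, \(Q_{\ell,-2}\), \(N_{-2}\) and the kernel of \(q_{\ell,-2}\) all lie in \(\mathcal O_{-2}\), so minimal reduction is exact on them. The surjection \(q_{\ell,-2}\) exists because \(N_{-2}\) has strictly positive lowest conformal degree and is therefore contained in \(\operatorname{Rad}V^{-2}(D_\ell)\).

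Second, I will show that the induced map on reductions is an isomorphism. Proposition~\ref{dl:prop:Dl-minus-two-reduction-candidate} identifies \(H^0_{\mathrm{DS},f_\theta}(Q_{\ell,-2})\) with \(\widetilde{\mathcal W}_{\ell,-2}\). Exactness turns \(q_{\ell,-2}\) into a surjection \(\widetilde{\mathcal W}_{\ell,-2}\twoheadrightarrow\mathcal W_{-2}(D_\ell,f_\theta)\). The target is nonzero by Lemma~\ref{lem:common-DS-detection}, and Theorem~\ref{dl:thm:Dl-minus-two-reduced-simple} says this map is an isomorphism. Proposition~\ref{prop:common-affine-lifting} then gives \(Q_{\ell,-2}\cong L_{-2}(D_\ell)\). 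The equality \(N_{-2}=\operatorname{Rad}V^{-2}(D_\ell)\) follows, and since \(m-1=\ell-3\) this is the displayed kernel statement.

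There is essentially no obstacle left at this stage, because all the substantive work sits in the reduced simplicity theorem. The one point to check carefully is the content of Proposition~\ref{prop:common-affine-lifting} applied here. The kernel \(K\) of \(q_{\ell,-2}\) has zero reduction by exactness. If \(K\neq0\), its lowest-degree \(\mathfrak g\)-highest-weight vector is an affine highest-weight vector with \(\widehat\lambda(\alpha_0^\vee)=-2-\langle\mu,\theta^\vee\rangle<0\). That vector would then have nonzero reduction by \eqref{eq:common-DS-nonvanishing}, a contradiction; this uses \(k=-2<0\) essentially.
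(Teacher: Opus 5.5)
Your proposal is correct and follows the paper's proof: the paper likewise applies Proposition~\ref{prop:common-affine-lifting} to \(q_{\ell,-2}\), with Proposition~\ref{dl:prop:Dl-minus-two-reduction-candidate} and Theorem~\ref{dl:thm:Dl-minus-two-reduced-simple} showing that the induced map on minimal reductions is an isomorphism. Your unpacking of the lifting step is exactly Lemma~\ref{lem:common-DS-detection}, with one precision: what has nonzero reduction is the cyclic submodule generated by the lowest-degree highest-weight vector, which you get by passing to its irreducible quotient and using exactness, rather than the vector itself.
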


\begin{proof}[Alternative proof]
Apply Proposition~\ref{prop:common-affine-lifting} to
$q_{\ell,-2}:Q_{\ell,-2}\twoheadrightarrow L_{-2}(D_\ell)$.
\end{proof}

\begin{theorem}[The two negative levels of type \(D_\ell\)]

Let \(\ell\ge5\) and \(n\in\{0,1\}\).  With the uniform parameters
\eqref{dl:eq:Dl-uniform-level-parameters},
\begin{equation*}
 \ker\!\left(
   V^{k_n}(D_\ell)\longrightarrow L_{k_n}(D_\ell)
 \right)
 =
 \left\langle
   \sigma(w_A)^{K_n+1},
   \sigma(w_D)^{n+1}
 \right\rangle.
\end{equation*}
Equivalently, the Arakawa--Moreau maximal-ideal conjecture holds for
all negative levels in the series \(D_\ell\), \(\ell\ge5\).
\end{theorem}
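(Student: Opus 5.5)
The statement is a repackaging of the two theorems already proved in this section, so the plan is to reduce it to them by matching parameters. First, I would check that the uniform parameters in \eqref{dl:eq:Dl-uniform-level-parameters} reproduce the generators used earlier. For $n=1$ one has $k_1=-1$ and $K_1=m-1=\ell-3$, so $\sigma(w_A)^{K_1+1}=\sigma(w_A)^{\ell-2}$ and $\sigma(w_D)^{n+1}=\sigma(w_D)^2$. These are exactly $s_A$ and $s_D$ defining $Q_\ell$. For $n=0$ one has $k_0=-2$ and $K_0=m-2=\ell-4$, so the generators are $\sigma(w_A)^{\ell-3}$ and $\sigma(w_D)$. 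These are $s_{A,-2}$ and $s_{D,-2}$ defining $Q_{\ell,-2}$. The powers are associative PBW powers in both places, so the ideals agree literally and not merely up to scalars.

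Next I would invoke the two affine results. For $n=1$, Theorem~\ref{dl:thm:Dl-main-maximal-ideal} gives $Q_\ell\cong L_{-1}(D_\ell)$, hence the kernel equality. For $n=0$, Theorem~\ref{dl:thm:Dl-minus-two-affine-maximal-ideal} gives the same for $Q_{\ell,-2}$. Both rest on the same chain of results:
\begin{enumerate}
\item exact reduction of the generated quotient (Proposition~\ref{prop:common-exact-generated-quotient});
\item reduced simplicity via Proposition~\ref{prop:common-Casimir-gap-simplicity};
\item lifting via Proposition~\ref{prop:common-affine-lifting}.
\end{enumerate}
So the uniform statement could equally be deduced in one stroke from Theorem~\ref{thm:common-minimal-reduction-maximality}, with $N$ the ideal generated by the two singular vectors. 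That $N$ lies in $\operatorname{Rad}V^{k_n}(D_\ell)$ follows from the triangular-decomposition argument of Lemma~\ref{dl:lem:Dl-singular-submodules-proper}. Simplicity of the reduction is Theorem~\ref{dl:thm:Dl-reduced-candidate-simple} for $n=1$ and Theorem~\ref{dl:thm:Dl-minus-two-reduced-simple} for $n=0$.

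Finally, for the ``equivalently'' clause I would observe that $k_n=n-2<0$ forces $n\in\{0,1\}$. Hence these two values exhaust all negative levels in the Arakawa--Moreau family for $D_\ell$, $\ell\ge5$. There is no real obstacle here. The only point needing care is the bookkeeping in the first step: the exponent convention $K_n+1$ must match the exponents $m$ and $m-1$ used earlier, and the $A_1$-level $K_n$ must equal $k_A^\natural=k+m$ at both levels. A direct substitution confirms both.
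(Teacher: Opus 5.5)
Your proposal is correct and follows the paper's proof: the paper also takes $n=1$ from Theorem~\ref{dl:thm:Dl-main-maximal-ideal} and $n=0$ from Theorem~\ref{dl:thm:Dl-minus-two-affine-maximal-ideal}, notes that the $n=0$ case is the earlier result \cite[Corollary~6.6(1)]{AdamovicKacMosenederFrajriaPapiPerse2020}, and matches the generators to the Arakawa--Moreau families via $K_n+1=\ell+n-3$. Your one-stroke alternative through Theorem~\ref{thm:common-minimal-reduction-maximality} also works, but it uses the exact-reduction isomorphisms (Propositions~\ref{dl:prop:Dl-exact-reduction-candidate-quotient} and \ref{dl:prop:Dl-minus-two-reduction-candidate}) in addition to the reduced-simplicity theorems you cite.
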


\begin{proof}
	For $n=0$, this is the previously known result
	\cite[Corollary~6.6(1)]
	{AdamovicKacMosenederFrajriaPapiPerse2020}, with the alternative proof
	given in Theorem~\ref{dl:thm:Dl-minus-two-affine-maximal-ideal}.
	For $n=1$, it is the new result
	Theorem~\ref{dl:thm:Dl-main-maximal-ideal}.  Formula
	\eqref{dl:eq:Dl-uniform-singular-generators} identifies these with the two
	generator families prescribed in
	\cite[Sections~4--5]{ArakawaMoreau2018}.
\end{proof}
\section{The triality case \texorpdfstring{$D_4$}{D4}}
\label{sec:case-d4}

\subsection{Triality data}
\label{d4:sec:D4-preliminaries}

Let $\mathfrak g=D_4\cong\mathfrak{so}_8$ with the Bourbaki realization of
Appendix~\ref{d4:app:D4-root-weight-Casimir-data}.  Its highest root is
\begin{equation}
 \theta=\varepsilon_1+\varepsilon_2
 =\alpha_1+2\alpha_2+\alpha_3+\alpha_4.
 \label{d4:eq:D4-highest-root}
\end{equation}
The outer nodes
\begin{equation}
 \mathcal A=\{1,3,4\}
 \label{d4:eq:D4-outer-node-index-set}
\end{equation}
are permuted by $\operatorname{Out}(D_4)\cong\mathfrak S_3$.  For the
minimal grading associated with $f_\theta=e_{-\theta}$ one has
\begin{equation*}
 \mathfrak g^\natural
 =\mathfrak s_1\oplus\mathfrak s_3\oplus\mathfrak s_4,
 \qquad \mathfrak s_a\cong A_1,
 \qquad
 U:=\mathfrak g_{-\frac12}
 \cong\boxtimes_{a\in\mathcal A}L_{\mathfrak s_a}(\varpi_a),
\end{equation*}
where $\varpi_a$ is the fundamental weight of the $a$th factor.  The three
outer weights restrict as
\begin{equation}
 \omega_a|_{\mathfrak h^\natural}=\varpi_a,
 \qquad a\in\mathcal A.
 \label{d4:eq:D4-outer-weight-restrictions}
\end{equation}
For $u,v\in U$ set
\begin{equation}
 \langle u,v\rangle=(e_\theta\mid[u,v]);
 \label{d4:eq:D4-symplectic-form}
\end{equation}
this is the invariant symplectic form used in the Ramond relation.  With our
normalization,
\begin{equation}
 c_2(r\varpi_a)=\frac{r(r+2)}2,
 \qquad c_2(\varpi_a)=\frac32.
 \label{d4:eq:D4-A1-Casimir-formula}
\end{equation}
The explicit roots, the eight weights of $U$, and all Casimir tables appear
in Appendix~\ref{d4:app:D4-root-weight-Casimir-data}.  These are the standard
minimal-grading data of \cite{KacRoanWakimoto2003,KacWakimoto2004,
AdamovicKacMosenederFrajriaPapiPerse2018}.

\subsubsection{Level, Ramond, and spectral-flow data}

All formal conventions are those of Section~\ref{sec:common-framework}.  At
$k=-1$, each $A_1$ current factor has level one, so
\begin{equation}
 [J^{\{a\}}{}_{\lambda}J^{\{b\}}]
 =J^{\{[a,b]\}}+\delta_{r,s}(a\mid b)\lambda,
 \qquad a\in\mathfrak s_r,\ b\in\mathfrak s_s.
 \label{d4:eq:D4-A1-current-relation-level-minus-one}
\end{equation}
The minimal $W$-algebra has the generators
\begin{equation}
 J^{\{a\}},\ a\in\mathfrak g^\natural;
 \qquad G^{\{u\}},\ u\in U;
 \qquad \omega.
 \label{d4:eq:D4-minimal-W-generators}
\end{equation}
Since $p_{D_4}(k)=(k+2)^2$, the Ramond relation at $k=-1$ is
\begin{equation*}
 [u,v]=\langle u,v\rangle(\Omega^\natural-10L-\tfrac12)+Q(u,v).
\end{equation*}

\subsubsection{Li twist}
\label{d4:subsec:D4-Li-delta-operator}
Take
\begin{equation}
 x=\varpi_1^\vee,\qquad H=J^{\{x\}}.
 \label{d4:eq:D4-Li-choice-x}
\end{equation}
Then
\begin{align}
 L_0^{\mathrm R}&=L_0+x_0+\frac14,
 \label{d4:eq:D4-Li-Ramond-Hamiltonian}\\
 J_0^{\{h\},\mathrm R}&=J_0^{\{h\}}+(x\mid h),
 \label{d4:eq:D4-Li-Cartan-shift}\\
 J_0^{\{e_\gamma\},\mathrm R}&=J_{\gamma(x)}^{\{e_\gamma\}},
 \label{d4:eq:D4-Li-root-mode-shift}
\end{align}
with
\begin{align}
 \gamma(x)&\in\{-1,0,1\},
 \label{d4:eq:D4-Li-current-charges}\\
 \mu(x)&\in\{-\tfrac12,\tfrac12\}.
 \label{d4:eq:D4-Li-half-space-charges}
\end{align}

\subsection{Triality singular vectors and their minimal reductions}
\label{d4:sec:D4-triality-singular-vectors}

In this section we identify the exact minimal reductions of the three
triality-related affine singular vectors at level \(-1\).  We retain the
outer-node index set
\(\mathcal A=\{1,3,4\}\) from
\eqref{d4:eq:D4-outer-node-index-set}.

\subsubsection{The three distinguished singular vectors}

Let \(\Omega\in S^2(\mathfrak g)\) be the quadratic Casimir.  The
Kostant decomposition for \(D_4\) takes the form
\begin{equation*}
  S^2(\mathfrak g)
  =L_{\mathfrak g}(2\theta)
   \oplus\mathbb C\Omega
   \oplus\mathcal K,
  \qquad
  \mathcal K
  =\bigoplus_{a\in\mathcal A}\mathcal K_a,
  \qquad
  \mathcal K_a\cong L_{\mathfrak g}(2\omega_a).
\end{equation*}
The three summands \(\mathcal K_a\) are permuted transitively by
triality; see \cite[Section~2]{ArakawaMoreau2018}.  Choose a highest-weight vector
\begin{equation*}
  0\ne w_a\in\mathcal K_a,
  \qquad
  \operatorname{wt}_{\mathfrak g}(w_a)=2\omega_a,
  \qquad a\in\mathcal A.
\end{equation*}
We use the degree-two symmetrization map
\begin{equation*}
  \sigma:S^2(\mathfrak g)\longrightarrow V^k(\mathfrak g)_2,
  \qquad
  \sigma(xy)
  =\frac12\bigl(x_{(-1)}y_{(-1)}+y_{(-1)}x_{(-1)}\bigr)\mathbf1.
\end{equation*}
Following the convention of \cite[Section~4]{ArakawaMoreau2018}, we use
throughout this subsection the PBW vector-space identification
\begin{equation*}
  U\bigl(\mathfrak g[t^{-1}]t^{-1}\bigr)
  \xrightarrow{\sim}V^{-1}(\mathfrak g),
  \qquad u\longmapsto u\mathbf1.
\end{equation*}
Let \(\widetilde\sigma(w_a)\) be the element of the enveloping algebra
corresponding to the state \(\sigma(w_a)\).  For \(a\in\mathcal A\), define
\begin{equation*}
  s_a=\widetilde\sigma(w_a)^2\mathbf1
      \in V^{-1}(\mathfrak g)_4,
\end{equation*}
where the square is the associative product in
\(U(\mathfrak g[t^{-1}]t^{-1})\).  As in
\cite[Theorem~4.2(1)(b)]{ArakawaMoreau2018}, we abbreviate this PBW state
by \(s_a=\sigma(w_a)^2\).  It is not being defined here as the normally
ordered product \(\sigma(w_a)_{(-1)}\sigma(w_a)\).
Thus
\begin{equation*}
  \deg s_a=4,
  \qquad
  \operatorname{wt}_{\mathfrak g}(s_a)=4\omega_a.
\end{equation*}

By \cite[Section~4 and Theorem~4.2(1)(b)]{ArakawaMoreau2018}, where
\(V^k(\mathfrak g)\) is identified with
\(U(\mathfrak g[t^{-1}]t^{-1})\), the associative PBW power
\(\sigma(w_a)^{n+1}\) is singular precisely at
\begin{equation*}
  k=n-\frac{h^\vee}{6}-1.
\end{equation*}
For \(D_4\), one has \(h^\vee=6\).  Taking \(n=1\) gives \(k=-1\), and
therefore
\begin{equation*}
  s_a\text{ is a nonzero affine singular vector in }
  V^{-1}(\mathfrak g),
  \qquad a\in\mathcal A.
\end{equation*}
The nonvanishing also follows directly from the affine PBW theorem.  Set
\begin{equation*}
  N_a=U(\widehat{\mathfrak g})s_a
  \subset V^{-1}(\mathfrak g),
  \qquad a\in\mathcal A,
\end{equation*}
where
\begin{equation*}
  \widehat{\mathfrak g}
  =\mathfrak g[t,t^{-1}]\oplus\mathbb CK\oplus\mathbb CD.
\end{equation*}
Since the affine currents strongly generate the vacuum algebra, every
\(N_a\) is a graded vertex-algebra ideal as well as an affine submodule.

\subsubsection{Nonvanishing under minimal reduction}

The affine highest weight of \(s_a\), including its external
\(D\)-grading, is
\begin{equation*}
  \widehat\lambda_a
  =-\Lambda_0+4\omega_a-4\delta.
\end{equation*}
The \(\delta\)-term has no effect on evaluation at the affine simple
coroot.  Since the coefficient of every outer simple root in
\eqref{d4:eq:D4-highest-root} is one,
\begin{equation}
  \langle\omega_a,\theta^\vee\rangle=1,
  \qquad a\in\mathcal A.
  \label{d4:eq:D4-outer-weight-highest-coroot-pairing}
\end{equation}
Using \eqref{eq:common-affine-simple-coroot}, we obtain
\begin{align}
  \widehat\lambda_a(\alpha_0^\vee)
  &=-1-\langle4\omega_a,\theta^\vee\rangle
    \notag\\
  &=-5<0.
  \label{d4:eq:D4-singular-highest-weight-negative-wall}
\end{align}

\begin{lemma}
\label{d4:lem:D4-reduced-singular-vectors-nonzero}
For every \(a\in\mathcal A\), the BRST class of \(s_a\) is nonzero:
\begin{equation*}
  [s_a]_{\mathrm{DS}}\ne0
  \quad\text{in}\quad
  \mathcal W^{-1}(\mathfrak g,f_\theta).
\end{equation*}
\end{lemma}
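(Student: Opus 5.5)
The plan is to argue exactly as in Lemma~\ref{dl:lem:Dl-reduced-singular-vectors-nonzero}, or equivalently to check the hypotheses of Lemma~\ref{lem:common-reduced-singular-generator} with $k=-1$ and $s=s_a$. The three inputs are the following. First, $s_a$ is a nonzero affine singular vector of positive conformal degree $4$; this is the Arakawa--Moreau singularity statement recalled above, together with the PBW theorem. Second, $N_a=U(\widehat{\mathfrak g})s_a$ is a graded affine highest-weight module. Third, its highest weight $\widehat\lambda_a=-\Lambda_0+4\omega_a-4\delta$ satisfies the strict wall inequality \eqref{d4:eq:D4-singular-highest-weight-negative-wall}, namely $\widehat\lambda_a(\alpha_0^\vee)=-5<0$. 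That inequality in turn rests only on the pairing \eqref{d4:eq:D4-outer-weight-highest-coroot-pairing}, which reads off the coefficient $1$ of each outer simple root in $\theta$.

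With these in hand, the steps run in order. Take the irreducible quotient $N_a\twoheadrightarrow L(\widehat\lambda_a)$; by \eqref{eq:common-DS-nonvanishing}, $H^0_{\mathrm{DS},f_\theta}(L(\widehat\lambda_a))\neq0$. Lemma~\ref{lem:common-graded-subquotient-category-O} places $N_a$, its quotients, and $V^{-1}(\mathfrak g)/N_a$ in $\mathcal O_{-1}$. Exactness then makes $H^0_{\mathrm{DS},f_\theta}(N_a)$ surject onto a nonzero module, so it is nonzero.

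Next, shift the $D$-grading so that $s_a$ has degree zero. The surjection from the affine Verma module $M(\widehat\lambda_a^\circ)$ onto $N_a^\circ$, combined with \cite[Theorem~6.3]{KacWakimoto2004} and exactness, shows that $H^0_{\mathrm{DS},f_\theta}(N_a)$ is cyclic and generated by $[s_a]_{\mathrm{DS}}$. Since this module is nonzero, its generator $[s_a]_{\mathrm{DS}}$ is nonzero. Finally, exactness applied to $0\to N_a\to V^{-1}(\mathfrak g)\to V^{-1}(\mathfrak g)/N_a\to0$ embeds $H^0_{\mathrm{DS},f_\theta}(N_a)$ into $\mathcal W^{-1}(\mathfrak g,f_\theta)$. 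The class therefore stays nonzero in the universal minimal $W$-algebra. I will also record the cyclicity statement, since the later exact-quotient argument (Proposition~\ref{prop:common-exact-generated-quotient}) needs it.

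I expect essentially no obstacle here. The only point needing care is the wall computation: I must use the normalization $\alpha_0^\vee=K-\theta^\vee$ from \eqref{eq:common-affine-simple-coroot} and the fact that $K$ acts by $-1$. I must also check that $\langle4\omega_a,\theta^\vee\rangle=4$ holds uniformly for all three $a\in\mathcal A$, which follows by triality, or directly from $\theta=\alpha_1+2\alpha_2+\alpha_3+\alpha_4$. Because the estimate is uniform in $a$, a single argument covers all three vectors.
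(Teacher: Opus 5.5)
Your proposal is correct and matches the paper's proof step for step. You obtain nonvanishing of $H^0_{\mathrm{DS},f_\theta}(N_a)$ from the irreducible quotient and the wall value $\widehat\lambda_a(\alpha_0^\vee)=-5<0$, cyclicity from the Verma surjection together with \cite[Theorem~6.3]{KacWakimoto2004}, and the final embedding into $\mathcal W^{-1}(\mathfrak g,f_\theta)$ from exactness applied to $N_a\hookrightarrow V^{-1}(\mathfrak g)$. As you note, this is simply the specialization of Lemma~\ref{lem:common-reduced-singular-generator} to $k=-1$ and $s=s_a$.
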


\begin{proof}
The highest-weight module \(N_a\) has an irreducible highest-weight
quotient
\begin{equation*}
  N_a\twoheadrightarrow L(\widehat\lambda_a).
\end{equation*}
By \eqref{d4:eq:D4-singular-highest-weight-negative-wall} and the
nonvanishing criterion \eqref{eq:common-DS-nonvanishing},
\begin{equation*}
  H^0_{\mathrm{DS},f_\theta}
  \bigl(L(\widehat\lambda_a)\bigr)\ne0.
\end{equation*}
Exactness on \(\mathcal O_{-1}\) therefore gives a surjection
\begin{equation*}
  H^0_{\mathrm{DS},f_\theta}(N_a)
  \twoheadrightarrow
  H^0_{\mathrm{DS},f_\theta}
  \bigl(L(\widehat\lambda_a)\bigr),
\end{equation*}
so the source is nonzero.

To identify its cyclic vector, shift the scalar action of \(D\) so that
\(s_a\) has \(D\)-value zero, and denote the resulting highest weight by
\(\widehat\lambda_a^\circ\).  There is a canonical surjection
\begin{equation}
  M(\widehat\lambda_a^\circ)\twoheadrightarrow N_a^\circ
  \label{d4:eq:D4-Verma-to-individual-singular-submodule}
\end{equation}
from the affine Verma module.  By
\cite[Theorem~6.3]{KacWakimoto2004}, its reduction is a \(W\)-Verma
module generated by the class of the affine highest-weight vector.
Exactness applied to
\eqref{d4:eq:D4-Verma-to-individual-singular-submodule} shows that
\begin{equation}
  H^0_{\mathrm{DS},f_\theta}(N_a)
  \text{ is generated by }[s_a]_{\mathrm{DS}}.
  \label{d4:eq:D4-individual-reduction-cyclic}
\end{equation}
Since this cyclic module is nonzero, its cyclic generator is nonzero.
Finally, exactness applied to the inclusion
\(N_a\hookrightarrow V^{-1}(\mathfrak g)\) gives an injection
\begin{equation*}
  H^0_{\mathrm{DS},f_\theta}(N_a)
  \hookrightarrow
  \mathcal W^{-1}(\mathfrak g,f_\theta),
\end{equation*}
which proves the assertion in the universal minimal \(W\)-algebra.
\end{proof}

\subsubsection{The extremal PBW lines}

For \(a\in\mathcal A\), choose a nonzero highest-root vector
\begin{equation}
  e_a=e_{\beta_a}\in(\mathfrak s_a)_{\beta_a},
  \qquad
  J_a=J^{\{e_a\}}.
  \label{d4:eq:D4-highest-root-currents}
\end{equation}
For a \(\mathfrak g^\natural\)-weight \(\mu\), put
\begin{equation*}
  q_a(\mu)=\langle\mu,\beta_a^\vee\rangle.
\end{equation*}
In particular,
\begin{equation*}
  q_a(4\varpi_a)=4.
\end{equation*}

\begin{lemma}[Extremal PBW lines]
\label{d4:lem:D4-extremal-PBW-lines}
For every \(a\in\mathcal A\),
\begin{equation*}
  \mathcal W^{-1}(\mathfrak g,f_\theta)_2[4\varpi_a]
  =\mathbb C\bigl(J_a{}_{(-1)}\bigr)^2\mathbf1
  =\mathbb C:J_a^2:.
\end{equation*}
\end{lemma}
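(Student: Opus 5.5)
The plan is to apply Proposition~\ref{prop:common-extremal-PBW-line} directly, once for each outer node $a\in\mathcal A$, with the simple ideal $\mathfrak a=\mathfrak s_a\cong A_1$, highest root $\eta=\beta_a$, and $r=2$. Since $A_1$ is simply laced, the only hypothesis to check is the bound \eqref{eq:common-extremal-G-bound}. Concretely, every weight $\nu$ of $\mathfrak g_{-1/2}=U$ must satisfy $d_{\beta_a}(\nu)=q_a(\nu)\le1$. Once this holds, the proposition gives the claimed one-dimensional space at every noncritical level, and in particular at $k=-1$ (where $k+h^\vee=5\ne0$).

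We also need the weight to match. Write $\beta_a=2\varpi_a$ for the positive root of $\mathfrak s_a$, so that $q_a(\varpi_a)=1$. Then $r\eta=2\beta_a=4\varpi_a$, which is exactly the weight in the statement, and the conformal weight $r=2$ matches as well.

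For the $G$-bound we use the decomposition
\[
U\cong\boxtimes_{b\in\mathcal A}L_{\mathfrak s_b}(\varpi_b).
\]
Every weight of $U$ has the form $\nu=\sum_b\pm\varpi_b$. The factors with $b\ne a$ pair to zero with $\beta_a^\vee$, because $d_{\beta_a}$ is extended by zero on the other ideals. Hence $q_a(\nu)=\pm1\le1$. Equivalently, one can read off the eight weights of $U$ from Appendix~\ref{d4:app:D4-root-weight-Casimir-data}.

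The current side needs no separate verification, since it is handled inside the proposition. Currents of $\mathfrak s_b$ with $b\ne a$ have charge zero. The roots of $\mathfrak s_a$ have charge $\pm2$, and the value $2$ is attained only on $e_a$.

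The only step needing any care is the bookkeeping that identifies $\beta_a$ with $2\varpi_a$ under the restriction \eqref{d4:eq:D4-outer-weight-restrictions}, and confirms that the three $A_1$ factors are mutually orthogonal, so that the charges genuinely split factorwise. I expect no real obstacle here. By triality, it suffices to do this for $a=1$ and transport the result to $a=3,4$.
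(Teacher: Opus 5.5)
Your proposal is correct and is essentially the paper's proof: apply Proposition~\ref{prop:common-extremal-PBW-line} to the ideal $\mathfrak s_a$ with $\eta=\beta_a=2\varpi_a$ and $r=2$, after checking that every weight of $U\cong\mathbf 2\boxtimes\mathbf 2\boxtimes\mathbf 2$ has $q_a$-charge at most one. The closing appeal to triality is harmless but unnecessary, since the argument is already uniform in $a$.
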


\begin{proof}
Fix \(a\in\mathcal A\) and apply
Proposition~\ref{prop:common-extremal-PBW-line} to the corresponding
\(A_1\)-ideal \(\mathfrak s_a\), whose highest root is
\(\beta_a=2\varpi_a\).  Currents in the other two factors have zero
\(q_a\)-charge, while the adjoint weights of \(\mathfrak s_a\) have charge
at most two, with equality only on \(\mathbb Ce_a\).  Every weight of
\(U\cong\mathbf2\boxtimes\mathbf2\boxtimes\mathbf2\) has
\(q_a\)-charge at most one.  Taking \(r=2\) gives
\(r\beta_a=4\varpi_a\) and proves the stated line.
\end{proof}

\subsubsection{Exact reductions of the singular vectors}

We next compute the simultaneous weights of the reduced classes.  The
restriction formula \eqref{d4:eq:D4-outer-weight-restrictions} gives
\begin{equation}
  \operatorname{wt}_{\mathfrak g^\natural}
  \bigl([s_a]_{\mathrm{DS}}\bigr)
  =4\varpi_a.
  \label{d4:eq:D4-reduced-singular-natural-weight}
\end{equation}
Indeed, the BRST representative is the affine highest-weight vector
tensored with the ghost vacuum, and the latter has
\(\mathfrak h^\natural\)-weight zero.

For the conformal weight, the minimal-reduction Hamiltonian acts on an
affine highest-weight representative of finite weight \(\mu\) and
affine degree \(d\) by \(d-\mu(x_\theta)\).  Since
\eqref{d4:eq:D4-outer-weight-highest-coroot-pairing} implies
\begin{equation*}
  \omega_a(x_\theta)
  =\frac12\langle\omega_a,\theta^\vee\rangle
  =\frac12,
\end{equation*}
we obtain
\begin{align}
  \Delta_W\bigl([s_a]_{\mathrm{DS}}\bigr)
  &=4-(4\omega_a)(x_\theta)
    \notag\\
  &=2.
  \label{d4:eq:D4-reduced-singular-conformal-weight}
\end{align}

\begin{proposition}[Exact reduced singular vectors]
\label{d4:prop:D4-exact-reduced-singular-vectors}
For every \(a\in\mathcal A\), there exists a nonzero scalar
\(c_a\in\mathbb C^\times\) such that
\begin{equation}
  [s_a]_{\mathrm{DS}}
  =c_a\bigl(J_a{}_{(-1)}\bigr)^2\mathbf1
  =c_a:J_a^2:.
  \label{d4:eq:D4-exact-reduced-singular-vectors}
\end{equation}
\end{proposition}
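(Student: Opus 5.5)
The plan follows the $D_\ell$ argument in Proposition~\ref{dl:prop:Dl-exact-reduced-singular-vectors}, now applied to each outer node $a\in\mathcal A$ separately. We combine three facts already established. First, Lemma~\ref{d4:lem:D4-reduced-singular-vectors-nonzero} shows that $[s_a]_{\mathrm{DS}}\ne0$ in $\mathcal W^{-1}(\mathfrak g,f_\theta)$. Second, \eqref{d4:eq:D4-reduced-singular-natural-weight} and \eqref{d4:eq:D4-reduced-singular-conformal-weight} place this class in the simultaneous weight space of conformal weight $2$ and $\mathfrak g^\natural$-weight $4\varpi_a$. Third, Lemma~\ref{d4:lem:D4-extremal-PBW-lines} shows that this weight space is exactly the line $\mathbb C:J_a^2:$.

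The argument has three steps. First, the BRST class must be a genuine weight vector for both gradings. It is represented by $s_a\otimes\mathbf1_{\mathrm{gh}}$. The $\mathfrak h^\natural$-action and the minimal Hamiltonian $L_0=D$-grading$-x_\theta$ both act diagonally on this representative. Both commute with the differential, so the class has the stated weights. Second, we substitute $4\varpi_a=2\beta_a$, so that $r=2$ in Proposition~\ref{prop:common-extremal-PBW-line}. That proposition holds at every noncritical level; here $k=-1\ne-h^\vee=-6$. It gives $\mathcal W^{-1}(\mathfrak g,f_\theta)_2[4\varpi_a]=\mathbb C:J_a^2:$, and $:J_a^2:\ne0$ by PBW independence. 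Third, $[s_a]_{\mathrm{DS}}=c_a:J_a^2:$ for some scalar $c_a$, and $c_a\ne0$ by the nonvanishing lemma.

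The only step needing care is the weight bookkeeping in the first step. We must check that the $\mathfrak h^\natural$-weight of $s_a$ restricts to $4\varpi_a$ with no contribution from the $\theta$-direction. We also must check that the conformal weight of the BRST class equals $4-\tfrac12\langle4\omega_a,\theta^\vee\rangle=2$, rather than some shifted value from the ghost sector. Both points follow from \eqref{d4:eq:D4-outer-weight-restrictions} and \eqref{d4:eq:D4-outer-weight-highest-coroot-pairing}. Once they hold, the statement follows by the same one-dimensionality argument used in type $D_\ell$, and triality makes the three cases $a=1,3,4$ identical.
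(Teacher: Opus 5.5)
Your proposal is correct and is essentially the paper's own proof. You use nonvanishing from Lemma~\ref{d4:lem:D4-reduced-singular-vectors-nonzero}, the simultaneous weights $(2,4\varpi_a)$ from \eqref{d4:eq:D4-reduced-singular-natural-weight} and \eqref{d4:eq:D4-reduced-singular-conformal-weight}, and the one-dimensionality of that weight space from Lemma~\ref{d4:lem:D4-extremal-PBW-lines}; your extra bookkeeping (the ghost vacuum contributes nothing, $k=-1$ is noncritical, $4\varpi_a=2\beta_a$) matches what the paper records around those equations.
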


\begin{proof}
By Lemma~\ref{d4:lem:D4-reduced-singular-vectors-nonzero}, the left-hand
side is nonzero.  Equations
\eqref{d4:eq:D4-reduced-singular-natural-weight} and
\eqref{d4:eq:D4-reduced-singular-conformal-weight} place it in the
one-dimensional simultaneous weight space of
Lemma~\ref{d4:lem:D4-extremal-PBW-lines}.  It is therefore a scalar multiple
of \(:J_a^2:\), and the scalar is nonzero.
\end{proof}

The triality group permutes the three identities in
\eqref{d4:eq:D4-exact-reduced-singular-vectors}.  Thus one may normalize the
choices of \(w_a\) and \(e_a\) compatibly with triality, although no
particular value of the nonzero constants \(c_a\) will be needed.

\subsubsection{The reduced triality quotient}

Set
\begin{equation*}
  N=N_1+N_3+N_4,
  \qquad
  Q=V^{-1}(\mathfrak g)/N.
\end{equation*}
Equivalently,
\begin{equation*}
  Q
  =V^{-1}(D_4)
   \Big/
   \left\langle
     \sigma(w_1)^2,
     \sigma(w_3)^2,
     \sigma(w_4)^2
   \right\rangle.
\end{equation*}
Define the ideal
\begin{equation*}
  \mathcal J
  =\left\langle
     :J_1^2:,
     :J_3^2:,
     :J_4^2:
   \right\rangle
  \subset\mathcal W^{-1}(\mathfrak g,f_\theta)
\end{equation*}
and the quotient
\begin{equation*}
  \widetilde{\mathcal W}
  =\mathcal W^{-1}(\mathfrak g,f_\theta)/\mathcal J.
\end{equation*}

\begin{proposition}[Exact reduction of the triality quotient]
\label{d4:prop:D4-exact-reduction-triality-quotient}
There is a canonical isomorphism of vertex algebras
\begin{equation}
  H^0_{\mathrm{DS},f_\theta}(Q)
  \cong\widetilde{\mathcal W}.
  \label{d4:eq:D4-exact-reduction-Q}
\end{equation}
Moreover,
\begin{equation}
  \widetilde{\mathcal W}\ne0,
  \qquad
  \widetilde{\mathcal W}\text{ is lisse}.
  \label{d4:eq:D4-Wtilde-nonzero-lisse}
\end{equation}
\end{proposition}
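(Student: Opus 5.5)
The plan mirrors the proof of Proposition~\ref{dl:prop:Dl-exact-reduction-candidate-quotient}. First, the category-$\mathcal O$ input. The modules $N_1,N_3,N_4$, their sum $N$, and the quotient $Q$ are graded subquotients of $V^{-1}(D_4)$. Their homogeneous pieces are finite-dimensional $\mathfrak g$-modules, so Lemma~\ref{lem:common-graded-subquotient-category-O} places them in $\mathcal O_{-1}$, where minimal reduction is exact. Next, \eqref{d4:eq:D4-individual-reduction-cyclic} says each $H^0_{\mathrm{DS},f_\theta}(N_a)$ is cyclic with generator $[s_a]_{\mathrm{DS}}$. Proposition~\ref{d4:prop:D4-exact-reduced-singular-vectors} identifies this generator with $c_a:J_a^2:$, where $c_a\neq0$. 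Since nonzero rescalings do not change the generated ideal, $\langle [s_1]_{\mathrm{DS}},[s_3]_{\mathrm{DS}},[s_4]_{\mathrm{DS}}\rangle=\mathcal J$. Proposition~\ref{prop:common-exact-generated-quotient} then gives the isomorphism \eqref{d4:eq:D4-exact-reduction-Q}. It is canonical because it is induced by $V^{-1}(\mathfrak g)\twoheadrightarrow Q$ together with exactness applied to $0\to N\to V^{-1}(\mathfrak g)\to Q\to0$.

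For nonvanishing, I would use the canonical surjection $Q\twoheadrightarrow L_{-1}(D_4)$. It exists because each $N_a$ sits in positive conformal degree $4$: the triangular-decomposition argument of Lemma~\ref{dl:lem:Dl-singular-submodules-proper} applies verbatim, so $N$ is proper and hence contained in $\operatorname{Rad}V^{-1}(\mathfrak g)$. Exactness yields a surjection $\widetilde{\mathcal W}\twoheadrightarrow H^0_{\mathrm{DS},f_\theta}(L_{-1}(D_4))$. The target is nonzero by Lemma~\ref{lem:common-DS-detection}; alternatively, $(-\Lambda_0)(\alpha_0^\vee)=-1<0$ together with \eqref{eq:common-DS-nonvanishing} gives the same conclusion. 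Hence $\widetilde{\mathcal W}\ne0$.

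For lisse-ness, I would invoke \cite[Proposition~5.2]{ArakawaMoreau2018}, which gives $X_Q=\overline{\mathbb O}_{\min}$ for the $D_4$ candidate quotient at $n=1$. Then \eqref{eq:common-associated-variety} gives $X_{\widetilde{\mathcal W}}=\overline{\mathbb O}_{\min}\cap\mathcal S_{f_\theta}=\{f_\theta\}$, a point, so $\widetilde{\mathcal W}$ is lisse (cf.\ \cite[Theorem~6.1(3)]{ArakawaMoreau2018}).

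I expect the main obstacle to be confirming that the cited associated-variety result applies to the ideal generated by the three \emph{squares} $\sigma(w_a)^2$, and not only to the ideal generated by the $\sigma(w_a)$. If this is not stated directly in the reference, I would argue as in Proposition~\ref{dl:prop:Dl-minus-two-associated-variety}. The $C_2$-image of $N$ is $I=S(\mathfrak g)\sum_a U(\mathfrak g)w_a^2$, which is $G$-stable and contained in $J_{\mathcal K}$. Its radical contains each $w_a$, hence contains all of $\mathcal K=\bigoplus_a U(\mathfrak g)w_a$. Therefore $\sqrt I=\sqrt{J_{\mathcal K}}$, and by \cite[Lemma~2.1]{ArakawaMoreau2018} this is the defining ideal of $\overline{\mathbb O}_{\min}$.
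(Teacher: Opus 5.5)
Your proposal is correct and follows the paper's proof. The steps match: category-$\mathcal O$ exactness, cyclicity of the reduced singular submodules, identification of the generators as $c_a:J_a^2:$, then Proposition~\ref{prop:common-exact-generated-quotient}, and finally $X_Q=\overline{\mathbb O}_{\min}$ from \cite[Proposition~5.2]{ArakawaMoreau2018} together with \eqref{eq:common-associated-variety}. The deviations are minor. You obtain $\widetilde{\mathcal W}\ne0$ from the surjection onto the nonzero simple reduction of $L_{-1}(D_4)$, as the paper itself does at the $D_\ell$ endpoint, rather than from the nonempty associated variety; and your fallback radical computation is the argument the paper gives in Proposition~\ref{dl:prop:Dl-minus-two-associated-variety}.
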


\begin{proof}
The three singular submodules lie in the exactness category, their reductions
are cyclic by \eqref{d4:eq:D4-individual-reduction-cyclic}, and their cyclic
generators are identified in
Proposition~\ref{d4:prop:D4-exact-reduced-singular-vectors}.  Proposition~\ref{prop:common-exact-generated-quotient}
therefore gives \eqref{d4:eq:D4-exact-reduction-Q}.  The associated variety is
\begin{equation*}
 X_Q=\overline{\mathbb O}_{\min}
\end{equation*}
by \cite[Proposition~5.2]{ArakawaMoreau2018}; now
\eqref{eq:common-associated-variety} and
\cite[Theorem~6.1(3)]{ArakawaMoreau2018} give nonvanishing and lisse-ness.
\end{proof}

Since the three singular vectors lie in the maximal proper ideal of
\(V^{-1}(\mathfrak g)\), the canonical map to the simple affine vertex
algebra factors through a surjection
\begin{equation*}
  Q\twoheadrightarrow L_{-1}(\mathfrak g).
\end{equation*}
Exactness of reduction and the nonvanishing statement in
\eqref{eq:common-DS-nonvanishing} consequently give a natural
surjective homomorphism
\begin{equation}
  \pi:\widetilde{\mathcal W}
  \twoheadrightarrow
  \mathcal W_{-1}(\mathfrak g,f_\theta).
  \label{d4:eq:D4-Wtilde-surjects-simple-W}
\end{equation}
The next two sections prove that \(\pi\) is an isomorphism by combining
the Ramond Zhu algebra with Li spectral flow.

\subsection{The Ramond Zhu algebra and a Casimir trace identity}
\label{d4:sec:D4-Ramond-Casimir}

We now pass from the lisse quotient
\(\widetilde{\mathcal W}\) constructed in
Section~\ref{d4:sec:D4-triality-singular-vectors} to a finite-dimensional
associative algebra.  The three triality-related current relations impose
independent level-one bounds on the three \(A_1\)-factors, while the
Ramond relation yields a uniform trace identity for their total Casimir.
These two facts will be combined with Li spectral flow in the next
section.

\subsubsection{The Ramond Zhu algebra and the three square-zero relations}

Set
\begin{equation*}
  B=A_{\mathrm R}(\widetilde{\mathcal W}).
\end{equation*}
By \eqref{d4:eq:D4-Wtilde-nonzero-lisse} and
\eqref{eq:common-lisse-Ramond-finite}, \(B\) is a finite-dimensional
unital associative algebra:
\begin{equation}
  0<\dim B<\infty.
  \label{d4:eq:D4-Ramond-Zhu-B-finite}
\end{equation}
For \(a\in\mathcal A\), let
\begin{equation*}
  e_a=[J_a]_{\mathrm R}\in B,
\end{equation*}
where \(J_a=J^{\{e_a\}}\) is the highest-root current introduced in
\eqref{d4:eq:D4-highest-root-currents}.  The harmless repetition of the
symbol \(e_a\) identifies the highest-root vector of \(\mathfrak s_a\)
with its image in the Ramond Zhu algebra.

For every \(a\in\mathcal A\), the level-one current relation
\eqref{d4:eq:D4-A1-current-relation-level-minus-one} gives
\begin{equation*}
  [J_a{}_{\lambda}J_a]
  =J^{\{[e_a,e_a]\}}+(e_a\mid e_a)\lambda=0.
\end{equation*}
Indeed, \([e_a,e_a]=0\), and the invariant form pairs the root space
\((\mathfrak s_a)_{\beta_a}\) only with
\((\mathfrak s_a)_{-\beta_a}\).  Thus
\begin{equation}
  (J_a)_{(m)}J_a=0,
  \qquad m\in\mathbb Z_{\ge0}.
  \label{d4:eq:D4-highest-root-current-products-zero}
\end{equation}

\begin{lemma}
\label{d4:lem:D4-Zhu-image-current-square}
For every \(a\in\mathcal A\),
\begin{equation*}
  \bigl[:J_a^2:\bigr]_{\mathrm R}=e_a^2
\end{equation*}
in the Ramond Zhu algebra of every conformal quotient of
\(\mathcal W^{-1}(\mathfrak g,f_\theta)\).
\end{lemma}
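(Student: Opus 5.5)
The argument is the $r=2$ case of the induction in Lemma~\ref{dl:lem:Dl-Zhu-images-current-powers}, rewritten for the current $J_a$ of $\mathfrak s_a$. The only input is the vanishing \eqref{d4:eq:D4-highest-root-current-products-zero}, and we combine it with the standard formula for the Ramond Zhu product of a weight-one, Ramond-even state with an arbitrary state.

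First we fix the form of the product. The Ramond automorphism fixes $J_a$, and $J_a$ has conformal weight one. The twisted Zhu product of Dong--Li--Mason for an untwisted homogeneous $a$ is
\[
 a*b=\operatorname{Res}_z Y(a,z)b\,\frac{(1+z)^{\operatorname{wt}a}}{z}.
\]
With $\operatorname{wt}a=1$ this gives
\[
 [J_a]_{\mathrm R}*[b]_{\mathrm R}=\bigl[(J_a)_{(-1)}b+(J_a)_{(0)}b\bigr]_{\mathrm R}.
\]
This is exactly the formula used in the proof of Lemma~\ref{dl:lem:Dl-Zhu-images-current-powers}.

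Next we take $b=J_a$. By \eqref{d4:eq:D4-highest-root-current-products-zero} with $m=0$, the term $(J_a)_{(0)}J_a$ vanishes. Therefore
\[
 e_a^2=[J_a]_{\mathrm R}*[J_a]_{\mathrm R}=[(J_a)_{(-1)}J_a]_{\mathrm R}=\bigl[:J_a^2:\bigr]_{\mathrm R}.
\]
This computation takes place in the universal algebra $\mathcal W^{-1}(\mathfrak g,f_\theta)$. For any conformal quotient $V$, the quotient map induces a surjective algebra homomorphism $A_{\mathrm R}(\mathcal W^{-1})\to A_{\mathrm R}(V)$ that sends classes to classes, so the identity holds in every such quotient.

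The one point that needs care is the normalization of the twisted Zhu product: on untwisted weight-one states it must reduce to the $(-1)$- and $(0)$-modes with coefficient one. Since $J_a$ is Ramond-invariant with integral weight, the twisting parameter is zero, and the product reduces to the ordinary Zhu formula. Once that is checked, the rest is immediate.
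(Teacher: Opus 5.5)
Your proposal is correct and is essentially the paper's own argument. It applies the weight-one Ramond Zhu product $[J_a]_{\mathrm R}*[b]_{\mathrm R}=[(J_a)_{(-1)}b+(J_a)_{(0)}b]_{\mathrm R}$ with $b=J_a$, and kills the $(0)$-term by \eqref{d4:eq:D4-highest-root-current-products-zero}; your detour through the universal algebra and the induced surjection of Ramond Zhu algebras is valid but unnecessary, since $(J_a)_{(0)}J_a=0$ holds directly in every conformal quotient and the paper simply computes there.
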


\begin{proof}
The Ramond automorphism fixes \(J_a\), and \(J_a\) has conformal weight
one.  Hence its Ramond Zhu product with itself is
\begin{equation*}
  [J_a]_{\mathrm R}*[J_a]_{\mathrm R}
  =\bigl[(J_a)_{(-1)}J_a\bigr]_{\mathrm R}
   +\bigl[(J_a)_{(0)}J_a\bigr]_{\mathrm R}.
\end{equation*}
The second term vanishes by
\eqref{d4:eq:D4-highest-root-current-products-zero}, while the first is
\(\bigl[:J_a^2:\bigr]_{\mathrm R}\).
\end{proof}

Since the defining ideal \(\mathcal J\) of
\(\widetilde{\mathcal W}\) contains all three states \(:J_a^2:\),
Lemma~\ref{d4:lem:D4-Zhu-image-current-square} implies
\begin{equation}
  e_a^2=0
  \qquad\text{in }B,
  \qquad a\in\mathcal A.
  \label{d4:eq:D4-three-highest-root-square-zero-relations}
\end{equation}

The current zero modes give a Lie algebra homomorphism
\(\mathfrak g^\natural\to B\).  Therefore every finite-dimensional
\(B\)-module is a finite-dimensional
\(\mathfrak g^\natural\)-module and is completely reducible over
\(\mathfrak g^\natural\).

\begin{lemma}[Allowed \(A_1^3\)-types]
\label{d4:lem:D4-allowed-A1-cubed-types}
Let \(M\) be a finite-dimensional \(B\)-module.  Every irreducible
\(\mathfrak g^\natural\)-constituent of \(M\) has the form
\begin{equation}
  L_{\mathfrak g^\natural}(\lambda)
  =L_{\mathfrak s_1}(m_1\varpi_1)
   \boxtimes
   L_{\mathfrak s_3}(m_3\varpi_3)
   \boxtimes
   L_{\mathfrak s_4}(m_4\varpi_4),
  \qquad
  m_a\in\{0,1\}.
  \label{d4:eq:D4-allowed-A1-cubed-types}
\end{equation}
Consequently, the total Casimir \(\Omega^\natural\) acts on every such
constituent by
\begin{equation}
  c_2^\natural(\lambda)
  =\frac32(m_1+m_3+m_4)
  \le\frac92.
  \label{d4:eq:D4-total-Casimir-bound-allowed-types}
\end{equation}
\end{lemma}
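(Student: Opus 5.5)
The argument mirrors Lemma~\ref{dl:lem:Dl-allowed-finite-types}, carried out separately for each of the three commuting $A_1$-factors. Let $M$ be a finite-dimensional $B$-module. The current zero modes give a Lie algebra homomorphism $\mathfrak g^\natural=\mathfrak s_1\oplus\mathfrak s_3\oplus\mathfrak s_4\to B$, so $M$ is a finite-dimensional $\mathfrak g^\natural$-module and is completely reducible. Every irreducible constituent is therefore an outer tensor product $\boxtimes_{a\in\mathcal A}L_{\mathfrak s_a}(m_a\varpi_a)$ with $m_a\in\mathbb Z_{\ge0}$. It then suffices to show $m_a\le1$ for each $a$, and afterwards to add up the Casimir values.

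Fix $a\in\mathcal A$ and let $v\in M$ be a $\mathfrak g^\natural$-highest-weight vector of a constituent, with $\mathfrak s_a$-weight $m_a\varpi_a$. Since the constituent embeds into $M$, $v$ can be taken inside $M$. Let $f_a\in(\mathfrak s_a)_{-\beta_a}$ complete $e_a$ to an $\mathfrak{sl}_2$-triple, and consider the $\mathfrak{sl}_2$-string through $v$. Standard $\mathfrak{sl}_2$ theory gives
\[
e_a^rf_a^rv=r!\,m_a(m_a-1)\cdots(m_a-r+1)\,v,
\]
where the scalar normalization of $f_a$ is immaterial. This is nonzero for $0\le r\le m_a$. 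If $m_a\ge2$, take $r=2$: then $e_a^2f_a^2v\ne0$. But $e_a^2$ acts as zero on $M$ by \eqref{d4:eq:D4-three-highest-root-square-zero-relations}, which is a contradiction. Hence $m_a\in\{0,1\}$, and this proves \eqref{d4:eq:D4-allowed-A1-cubed-types}.

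For the Casimir bound, write $\Omega^\natural=\Omega_1+\Omega_3+\Omega_4$ using dual bases adapted to the orthogonal decomposition into simple ideals. On an outer tensor product, $\Omega_a$ acts by $c_2(m_a\varpi_a)$. By \eqref{d4:eq:D4-A1-Casimir-formula}, this equals $0$ for $m_a=0$ and $3/2$ for $m_a=1$, so it is $\tfrac32 m_a$ when $m_a\in\{0,1\}$. Summing over $a$ gives $\tfrac32(m_1+m_3+m_4)\le\tfrac92$.

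There is no real obstacle here. The only point needing care is that $e_a^2=0$ holds as an operator identity on $M$, which is exactly \eqref{d4:eq:D4-three-highest-root-square-zero-relations} together with the fact that the zero-mode image of $e_a$ is the Zhu class $[J_a]_{\mathrm R}$. Note also that, unlike the $D_\ell$ case, no coset exclusion is needed, since $A_1$ has no spinor-type weights at level one.
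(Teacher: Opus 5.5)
Your proposal is correct and follows essentially the same route as the paper: you play the square-zero relations $e_a^2=0$ in $B$ against the $\mathfrak{sl}_2$-string through a highest-weight vector to force $m_a\le1$, and then read off the Casimir bound from $c_2(m_a\varpi_a)=m_a(m_a+2)/2$. The only cosmetic difference is that you evaluate $e_a^2f_a^2v$ where the paper uses $e_a^2f_a^{m_a}v$, and this changes nothing in the argument.
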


\begin{proof}
Fix \(a\in\mathcal A\).  Suppose that the \(\mathfrak s_a\)-highest
weight of an irreducible constituent is \(m_a\varpi_a\).  In the
standard \(\mathfrak{sl}_2\)-string generated by a highest-weight vector
\(v\), one has
\begin{equation*}
  e_a^r f_a^{m_a}v
  =c_{m_a,r}f_a^{m_a-r}v,
  \qquad
  0\le r\le m_a,
  \qquad
  c_{m_a,r}\ne0.
\end{equation*}
If \(m_a\ge2\), then \(e_a^2f_a^{m_a}v\ne0\), contradicting
\eqref{d4:eq:D4-three-highest-root-square-zero-relations}.  Hence
\(m_a\le1\).  Applying this argument to all three summands proves
\eqref{d4:eq:D4-allowed-A1-cubed-types}.

By \eqref{d4:eq:D4-A1-Casimir-formula}, the Casimir of \(\mathfrak s_a\)
acts on \(L_{\mathfrak s_a}(m_a\varpi_a)\) by
\(m_a(m_a+2)/2\).  For \(m_a\in\{0,1\}\), this is respectively
\(0\) or \(3/2\).  Summing over \(a\in\mathcal A\) gives
\eqref{d4:eq:D4-total-Casimir-bound-allowed-types}.
\end{proof}

\subsubsection{Trace forms for the three simple summands}

Let \(M\) be a finite-dimensional \(B\)-module, and write
\begin{equation*}
  \rho:\mathfrak g^\natural\longrightarrow
  \operatorname{End}_{\mathbb C}(M)
\end{equation*}
for the induced representation.  Put
\begin{equation*}
  N_M=\dim M,
  \qquad
  \tau_a=\operatorname{tr}_M\rho(\Omega_a),
  \qquad
  \tau=\sum_{a\in\mathcal A}\tau_a
       =\operatorname{tr}_M\rho(\Omega^\natural).
\end{equation*}
For each simple factor \(\mathfrak s_a\), invariance of the trace form
implies that there is a scalar \(I_a\in\mathbb C\) such that
\begin{equation}
  \operatorname{tr}_M\bigl(\rho(x)\rho(y)\bigr)
  =I_a(x\mid y),
  \qquad x,y\in\mathfrak s_a.
  \label{d4:eq:D4-Dynkin-index-each-A1}
\end{equation}
If \(x\in\mathfrak s_a\) and \(y\in\mathfrak s_b\) with \(a\ne b\),
then
\begin{equation}
  \operatorname{tr}_M\bigl(\rho(x)\rho(y)\bigr)=0.
  \label{d4:eq:D4-mixed-trace-forms-zero}
\end{equation}
Indeed, for fixed \(y\in\mathfrak s_b\), the functional
\(x\mapsto\operatorname{tr}_M(\rho(x)\rho(y))\) vanishes on
\([\mathfrak s_a,\mathfrak s_a]=\mathfrak s_a\) by invariance and the
commutativity of the two summands.

Choose an orthonormal basis \(\{x_{a,r}\}_{r=1}^3\) of
\(\mathfrak s_a\).  Taking the trace of its Casimir gives
\begin{equation}
  \tau_a=3I_a,
  \qquad
  I_a=\frac{\tau_a}{3}.
  \label{d4:eq:D4-index-Casimir-trace-relation}
\end{equation}

\subsubsection{The trace of the quadratic Ramond term}

\begin{lemma}[Trace of the quadratic term]
\label{d4:lem:D4-trace-quadratic-Ramond-term}
For all \(u,v\in U\),
\begin{equation}
  \operatorname{tr}_M\rho\bigl(Q(u,v)\bigr)
  =\tau\langle u,v\rangle.
  \label{d4:eq:D4-trace-quadratic-Ramond-term}
\end{equation}
\end{lemma}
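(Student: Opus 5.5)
The plan is to follow the $D_\ell$ contraction argument of Lemma~\ref{dl:lem:Dl-trace-quadratic-Ramond-term} essentially verbatim, now with three simple summands instead of two.

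\textbf{Splitting the double sum.} Using an orthonormal basis adapted to the orthogonal decomposition $\mathfrak g^\natural=\mathfrak s_1\oplus\mathfrak s_3\oplus\mathfrak s_4$, I split \eqref{eq:common-quadratic-term} as
\[
Q(u,v)=\sum_{a\in\mathcal A}Q_{aa}(u,v)+Q_{\mathrm{mix}}(u,v),
\]
where $Q_{\mathrm{mix}}$ collects the index pairs lying in different summands. Each mixed summand is a scalar multiple of $xy+yx$ with $x\in\mathfrak s_a$ and $y\in\mathfrak s_b$, $a\ne b$. Since the two summands commute, $\operatorname{tr}_M\rho(xy+yx)=2\operatorname{tr}_M(\rho(x)\rho(y))$, and this vanishes by \eqref{d4:eq:D4-mixed-trace-forms-zero}. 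Hence the mixed part has zero trace.

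\textbf{The diagonal terms.} Fix $a$ and take an orthonormal basis $\{x_{a,r}\}$, which is self-dual. By \eqref{d4:eq:D4-Dynkin-index-each-A1},
\[
\operatorname{tr}_M\rho(Q_{aa}(u,v))=2I_a\sum_r\langle[x_{a,r},u],[v,x_{a,r}]\rangle .
\]
The form \eqref{d4:eq:D4-symplectic-form} is $\mathfrak g^\natural$-invariant, because $\mathfrak g^\natural$ centralizes $e_\theta$ and the invariant form is $\mathfrak g$-invariant. Combining this invariance with $[v,x]=-[x,v]$ rewrites the sum as
\[
\Bigl\langle u,\sum_r x_{a,r}^2v\Bigr\rangle ,
\]
that is, $\langle u,\Omega_a v\rangle$.

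Next I use that $U\cong\mathbf2\boxtimes\mathbf2\boxtimes\mathbf2$. As an $\mathfrak s_a$-module, $U$ is a direct sum of copies of $L_{\mathfrak s_a}(\varpi_a)$, so by \eqref{d4:eq:D4-A1-Casimir-formula} $\Omega_a$ acts on $U$ by $3/2$. Using \eqref{d4:eq:D4-index-Casimir-trace-relation}, the contribution of the $a$th summand is
\[
2\cdot\frac{\tau_a}{3}\cdot\frac32\,\langle u,v\rangle=\tau_a\langle u,v\rangle .
\]

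\textbf{Conclusion and possible obstacles.} Summing over $a\in\mathcal A$ gives $\tau\langle u,v\rangle$, which is \eqref{d4:eq:D4-trace-quadratic-Ramond-term}. The only point needing care is the sign and normalization in the invariance step. I would check these against the $D_\ell$ computation, where the identical manipulation produced the positive coefficient $\gamma_A=1$ for an $A_1$ factor acting on the doublet. Given that, each of the three factors here likewise gets coefficient $1$, so $\gamma_a=1$ for all $a\in\mathcal A$. No other obstacle is expected.
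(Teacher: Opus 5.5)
Your proposal is correct and follows the paper's proof essentially step for step. You discard the mixed index pairs using the vanishing mixed trace form, reduce each diagonal block to $2I_a\langle u,\Omega_a v\rangle$ by invariance of the skew form, and combine $\Omega_a|_U=\tfrac32$ with $I_a=\tau_a/3$ to obtain $\tau_a\langle u,v\rangle$. Your appeal to commutativity of the summands is harmless but unnecessary, since $\operatorname{tr}_M\rho(xy+yx)=2\operatorname{tr}_M(\rho(x)\rho(y))$ already follows from cyclicity of the trace.
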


\begin{proof}
Use the union of orthonormal bases
\(\{x_{a,r}\mid a\in\mathcal A,\ 1\le r\le3\}\) in the definition
\eqref{eq:common-quadratic-term}.  By
\eqref{d4:eq:D4-mixed-trace-forms-zero}, all terms involving two distinct
simple summands vanish after taking the trace.  For the contribution
from \(\mathfrak s_a\), equations
\eqref{d4:eq:D4-Dynkin-index-each-A1} and
\eqref{d4:eq:D4-index-Casimir-trace-relation} give
\begin{align}
  \operatorname{tr}_M\rho\bigl(Q_a(u,v)\bigr)
  &=2I_a\sum_{r=1}^3
    \langle[x_{a,r},u],[v,x_{a,r}]\rangle.
  \label{d4:eq:D4-trace-Qa-first-contraction}
\end{align}
The skew form \eqref{d4:eq:D4-symplectic-form} is
\(\mathfrak g^\natural\)-invariant.  Since
\([v,x_{a,r}]=-x_{a,r}v\), we obtain
\begin{align*}
  \sum_{r=1}^3
  \langle[x_{a,r},u],[v,x_{a,r}]\rangle
  &=-\sum_{r=1}^3
    \langle x_{a,r}u,x_{a,r}v\rangle
    \notag\\
  &=\left\langle
      u,\sum_{r=1}^3x_{a,r}^2v
    \right\rangle.
\end{align*}
As a module for \(\mathfrak s_a\), the space
\(U\cong\mathbf2\boxtimes\mathbf2\boxtimes\mathbf2\) is a direct sum
of copies of the fundamental representation
\(L_{\mathfrak s_a}(\varpi_a)\).  Hence \(\Omega_a\) acts on all of
\(U\) by the scalar
\begin{equation*}
  c_2(\varpi_a)=\frac32.
\end{equation*}
It follows that
\begin{equation}
  \sum_{r=1}^3
  \langle[x_{a,r},u],[v,x_{a,r}]\rangle
  =\frac32\langle u,v\rangle.
  \label{d4:eq:D4-quadratic-contraction-A1-value}
\end{equation}
Substituting
\eqref{d4:eq:D4-index-Casimir-trace-relation} and
\eqref{d4:eq:D4-quadratic-contraction-A1-value} into
\eqref{d4:eq:D4-trace-Qa-first-contraction}, we find
\begin{equation*}
  \operatorname{tr}_M\rho\bigl(Q_a(u,v)\bigr)
  =2\frac{\tau_a}{3}\frac32\langle u,v\rangle
  =\tau_a\langle u,v\rangle.
\end{equation*}
Summing over \(a\in\mathcal A\) proves
\eqref{d4:eq:D4-trace-quadratic-Ramond-term}.
\end{proof}

\subsubsection{The Casimir trace identity}

We now combine the preceding contraction with the Ramond relation at
level \(-1\).

\begin{proposition}[Casimir trace identity]
\label{d4:prop:D4-Casimir-trace-identity}
Let \(M\) be a nonzero finite-dimensional \(B\)-module on which the
central element \(L=[\omega]_{\mathrm R}\) acts as the scalar
\(h\in\mathbb C\).  Then
\begin{equation}
  \frac{\operatorname{tr}_M\rho(\Omega^\natural)}{\dim M}
  =5h+\frac14.
  \label{d4:eq:D4-Casimir-trace-identity}
\end{equation}
\end{proposition}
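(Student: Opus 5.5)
The identity is the specialization of Proposition~\ref{prop:common-Ramond-Casimir-trace} to the triality case, so I will proceed exactly as in the $D_\ell$ computation of Proposition~\ref{dl:prop:Dl-weighted-Casimir-trace-identity}. The first step is to write the specialized Ramond relation at $k=-1$,
\[
 [u,v]=\langle u,v\rangle\bigl(\Omega^\natural-10L-\tfrac12\bigr)+Q(u,v).
\]
The coefficients come from $h^\vee(D_4)=6$, which gives $2(k+h^\vee)=10$, and from $p_{D_4}(-1)=(k+2)^2=1$, which gives $\tfrac12p_{D_4}(-1)=\tfrac12$. I then apply the representation $\rho$ of $B$ on $M$ and take traces of both sides.

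Next I will treat each side of the traced relation in turn. The left side is an ordinary associative commutator in $\operatorname{End}(M)$, so its trace vanishes; this uses the identification of the Ramond Zhu algebra with Premet's ordinary finite $W$-algebra recalled after \eqref{eq:common-Ramond-Zhu}. On the right side, $L$ acts by the scalar $h$, so its trace is $h\dim M$. The quadratic term is handled by Lemma~\ref{d4:lem:D4-trace-quadratic-Ramond-term}, which gives $\operatorname{tr}_M\rho(Q(u,v))=\tau\langle u,v\rangle$. In the notation of Proposition~\ref{prop:common-Ramond-Casimir-trace} this means $\gamma_a=1$ for each $a\in\mathcal A$, and so every weight $1+\gamma_a$ equals $2$.

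Putting these together, I obtain
\[
 0=\langle u,v\rangle\bigl(2\tau-10h\dim M-\tfrac12\dim M\bigr).
\]
Since the symplectic form \eqref{d4:eq:D4-symplectic-form} is nondegenerate on $U$, I can choose $u,v$ with $\langle u,v\rangle\neq0$. Dividing by $2\dim M$ then gives $\tau/\dim M=5h+\tfrac14$, which is \eqref{d4:eq:D4-Casimir-trace-identity}.

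The only point needing care is matching the constants in the Ramond relation: the sign and normalization of $p_{D_4}(k)$, and the Casimir normalization $c_2(\varpi_a)=3/2$ that is used in the contraction lemma. Both are fixed by the conventions of Section~\ref{sec:common-framework} and \eqref{d4:eq:D4-A1-Casimir-formula}. As a consistency check, the general-level form $(k+h^\vee)h+\tfrac14p(k)$ at $k=-1$ reproduces the right-hand side $5h+\tfrac14$.
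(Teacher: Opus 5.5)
Your proposal is correct and is the same argument as the paper's proof. The paper simply invokes Proposition~\ref{prop:common-Ramond-Casimir-trace} with $\gamma=1$ from Lemma~\ref{d4:lem:D4-trace-quadratic-Ramond-term}, $h^\vee(D_4)=6$ and $p_{D_4}(-1)=1$, to get $2\tau/\dim M=10h+\tfrac12$, which is exactly the traced Ramond relation you write out explicitly.
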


\begin{proof}
Lemma~\ref{d4:lem:D4-trace-quadratic-Ramond-term} gives coefficient
$\gamma=1$ for the total triality Casimir in
Proposition~\ref{prop:common-Ramond-Casimir-trace}.  Since
$h^\vee(D_4)=6$ and $p_{D_4}(-1)=1$, the common formula reads
$2\,\operatorname{tr}_M(\Omega^\natural)/\dim M=10h+1/2$, which is
\eqref{d4:eq:D4-Casimir-trace-identity}.
\end{proof}

\begin{remark}

The same calculation applied to the general Ramond relation
\eqref{eq:common-Ramond-Zhu} gives, whenever the three
simple current factors are represented with the normalizations of
Section~\ref{d4:sec:D4-preliminaries},
\begin{equation*}
  \frac{\operatorname{tr}_M\rho(\Omega^\natural)}{\dim M}
  =(k+6)h+\frac{(k+2)^2}{4}.
\end{equation*}
At \(k=-1\), this specializes to
\eqref{d4:eq:D4-Casimir-trace-identity}.
\end{remark}

Combining Lemma~\ref{d4:lem:D4-allowed-A1-cubed-types} with
Proposition~\ref{d4:prop:D4-Casimir-trace-identity}, we obtain the two
numerical facts needed in the simplicity argument:
\begin{equation}
  \frac{\operatorname{tr}_M\rho(\Omega^\natural)}{\dim M}
  =5h+\frac14,
  \qquad
  \frac{\operatorname{tr}_M\rho(\Omega^\natural)}{\dim M}
  \le\frac92.
  \label{d4:eq:D4-trace-identity-and-Casimir-upper-bound}
\end{equation}
The first equality records the Ramond energy, whereas the second follows
from the three independent square-zero relations.  Section~\ref{d4:sec:D4-Li-spectral-flow-simplicity} will show
that these two constraints force the lowest energy of every nonzero
Ramond-twisted ideal to equal \(1/4\).

\subsection{Li spectral flow and simplicity of the reduced quotient}
\label{d4:sec:D4-Li-spectral-flow-simplicity}

We now combine the Casimir trace identity of
Section~\ref{d4:sec:D4-Ramond-Casimir} with the Li twist fixed in
Subsection~\ref{d4:subsec:D4-Li-delta-operator}.  The spectral-flow
realization of the Ramond sector gives a discrete lower-bounded spectrum.
The trace identity then forces the lowest energy of a hypothetical
nonzero ideal to coincide with the Ramond vacuum energy.  At that energy,
the three square-zero relations leave only a small list of
\(A_1^3\)-types, all of which are excluded by the original conformal
grading.

Throughout this section, we use the surjection
\begin{equation*}
  \pi:\widetilde{\mathcal W}
  \twoheadrightarrow
  \mathcal W_{-1}(\mathfrak g,f_\theta)
\end{equation*}
constructed in \eqref{d4:eq:D4-Wtilde-surjects-simple-W}.

\subsubsection{The Ramond spectral lattice}

Let
\[
  x=\varpi_1^\vee,
  \qquad
  H=J^{\{x\}},
\]
as in \eqref{d4:eq:D4-Li-choice-x}.  The corresponding Li twist is the
Ramond twist, and its conformal Hamiltonian is
\begin{equation}
  L_0^{\mathrm R}=L_0+x_0+\frac14
  \label{d4:eq:D4-section5-Ramond-Hamiltonian}
\end{equation}
by \eqref{d4:eq:D4-Li-Ramond-Hamiltonian}.

\begin{proposition}[Ramond spectral lattice]
\label{d4:prop:D4-Ramond-spectral-lattice}
The operator \(L_0^{\mathrm R}\) acts semisimply on the Li-twisted
adjoint module \(\widetilde{\mathcal W}^{\mathrm R}\), and
\begin{equation}
  \operatorname{Spec}L_0^{\mathrm R}
  \subset
  \frac14+\mathbb Z_{\ge0}.
  \label{d4:eq:D4-Ramond-spectrum-inclusion}
\end{equation}
The lower endpoint is attained by the vacuum:
\begin{equation}
  L_0^{\mathrm R}\mathbf1=\frac14\mathbf1.
  \label{d4:eq:D4-Ramond-vacuum-energy}
\end{equation}
\end{proposition}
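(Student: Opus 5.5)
I will follow the proof of Proposition~\ref{dl:prop:Dl-Ramond-spectral-lattice} essentially verbatim, replacing the single $A_1$-factor by the three factors $\mathfrak s_1,\mathfrak s_3,\mathfrak s_4$ and the coweight by $x=\varpi_1^\vee$.

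\emph{Semisimplicity.} Each ordinary conformal homogeneous subspace of $\widetilde{\mathcal W}$ is finite-dimensional and stable under the current zero modes of $\mathfrak g^\natural$. It is therefore a finite-dimensional, hence completely reducible, $\mathfrak g^\natural$-module, so the Cartan zero mode $x_0$ acts semisimply on it. Since $x_0$ commutes with $L_0$, the operator $L_0+x_0$, and with it $L_0^{\mathrm R}$ in \eqref{d4:eq:D4-section5-Ramond-Hamiltonian}, is semisimple on the twisted adjoint module.

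\emph{Spectral inclusion.} I will use the PBW spanning set of $\widetilde{\mathcal W}$ by ordered monomials in derivatives of the strong generators \eqref{d4:eq:D4-minimal-W-generators}, chosen as $\mathfrak h^\natural$-weight vectors, and compute the additive excess $\Delta+x_0$ of each factor using \eqref{d4:eq:D4-Li-current-charges}--\eqref{d4:eq:D4-Li-half-space-charges}:
\begin{itemize}
\item a root current $\partial^rJ^{\{e_\gamma\}}$ of $\mathfrak s_1$ contributes $1+r\pm1\ge0$, with zero attained only for $r=0$, $\gamma=-\beta_1$;
\item Cartan currents, and all currents of $\mathfrak s_3,\mathfrak s_4$, have $\gamma(x)=0$ and contribute $1+r\ge1$;
\item $\partial^rG^{\{u_\mu\}}$ contributes $\tfrac32+r+\mu(x)\in\{1+r,2+r\}$;
\item $\partial^r\omega$ contributes $2+r$.
\end{itemize}
Every factor thus has nonnegative integral excess. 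Hence every monomial lies in $\mathbb Z_{\ge0}$ for $L_0+x_0$, and adding $\tfrac14$ gives \eqref{d4:eq:D4-Ramond-spectrum-inclusion}.

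\emph{Vacuum.} Since $L_0\mathbf1=0$ and $x_0\mathbf1=J^{\{x\}}_{(0)}\mathbf1=0$, we get $L_0^{\mathrm R}\mathbf1=\tfrac14\mathbf1$, which is \eqref{d4:eq:D4-Ramond-vacuum-energy}.

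The only point needing care is the charge list \eqref{d4:eq:D4-Li-half-space-charges}. Because $U\cong\mathbf2\boxtimes\mathbf2\boxtimes\mathbf2$ and $x=\varpi_1^\vee$ pairs with the $\mathfrak s_1$-weights $\pm\varpi_1$ to give $\pm\tfrac12$, the $G$-fields have integral, strictly positive excess. This is a direct check from Appendix~\ref{d4:app:D4-root-weight-Casimir-data}.

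I also need the constant $\tfrac14$ to be $\kappa/2$ with $\kappa=(x\mid x)$ at level one. Since $\varpi_1^\vee=\tfrac12\beta_1^\vee$ and $(\beta_1^\vee\mid\beta_1^\vee)=2$, we get $\kappa=\tfrac12$, consistent with \eqref{d4:eq:D4-Li-Ramond-Hamiltonian}.
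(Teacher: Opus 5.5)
Your proposal is correct and follows the paper's proof essentially step for step. The three ingredients are the same: semisimplicity from complete reducibility of the finite-dimensional $\mathfrak g^\natural$-stable homogeneous subspaces together with $[L_0,x_0]=0$; the factorwise nonnegative-integer excess count for PBW monomials, using $\gamma(x)\in\{-1,0,1\}$ and $\mu(x)=\pm\tfrac12$; and $L_0\mathbf1=x_0\mathbf1=0$ for the vacuum. Your extra remarks, identifying the unique zero-excess factor and checking $\kappa=(x\mid x)=\tfrac12$ to recover the constant $\tfrac14$, are accurate but not needed for the statement.
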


\begin{proof}
Every ordinary conformal homogeneous subspace of the universal minimal
\(W\)-algebra is finite-dimensional and
\(\mathfrak g^\natural\)-stable.  The same is true after passing to the
conformal quotient \(\widetilde{\mathcal W}\).  Since finite-dimensional
\(\mathfrak g^\natural\)-modules are completely reducible, \(x_0\) acts
semisimply on each homogeneous subspace and commutes with \(L_0\).
Thus \eqref{d4:eq:D4-section5-Ramond-Hamiltonian} acts semisimply.

It remains to determine the possible eigenvalues of \(L_0+x_0\).
The images of PBW monomials in derivatives of
\[
  J^{\{a\}},
  \qquad
  G^{\{u\}},
  \qquad
  \omega
\]
span \(\widetilde{\mathcal W}\); see
\eqref{d4:eq:D4-minimal-W-generators}.  If
\(a_\gamma\in\mathfrak g^\natural\) is a root vector of weight
\(\gamma\), then
\begin{equation*}
  \partial^rJ^{\{a_\gamma\}}
  \quad\text{contributes}\quad
  1+r+\gamma(x)
  \in\mathbb Z_{\ge0}
\end{equation*}
to \(L_0+x_0\), because
\(\gamma(x)\in\{-1,0,1\}\) by
\eqref{d4:eq:D4-Li-current-charges}.  A Cartan current has charge zero and
contributes \(1+r\).

If \(u_\mu\in U\) has weight \(\mu\), then
\begin{equation*}
  \partial^rG^{\{u_\mu\}}
  \quad\text{contributes}\quad
  \frac32+r+\mu(x)
  \in\mathbb Z_{>0},
\end{equation*}
because \(\mu(x)=\pm\frac12\) by
\eqref{d4:eq:D4-Li-half-space-charges}.  Finally,
\begin{equation*}
  \partial^r\omega
  \quad\text{contributes}\quad
  2+r\in\mathbb Z_{>0}.
\end{equation*}
Adding these contributions over the factors of a PBW monomial shows that
every \(L_0+x_0\)-eigenvalue lies in \(\mathbb Z_{\ge0}\).  Formula
\eqref{d4:eq:D4-section5-Ramond-Hamiltonian} proves
\eqref{d4:eq:D4-Ramond-spectrum-inclusion}.  Since
\(L_0\mathbf1=x_0\mathbf1=0\), it also gives
\eqref{d4:eq:D4-Ramond-vacuum-energy}.
\end{proof}

Write
\begin{equation*}
  T=\widetilde{\mathcal W}^{\mathrm R}
    \!\left[\frac14\right]
\end{equation*}
for the lowest Ramond eigenspace.  It is nonzero by
\eqref{d4:eq:D4-Ramond-vacuum-energy}.

\subsubsection{Shifted weights and the Ramond bottom}

The Cartan shift
\eqref{d4:eq:D4-Li-Cartan-shift} identifies the Ramond
\(\mathfrak g^\natural\)-weight with a translate of the original weight.
If a vector has original \(\mathfrak g^\natural\)-weight \(\nu\), then
its shifted weight is
\begin{equation}
  \lambda=\nu+\varpi_1.
  \label{d4:eq:D4-original-to-shifted-weight}
\end{equation}
Indeed, the additional Cartan eigenvalue is
\((x\mid h)=\varpi_1(h)\).

\begin{proposition}[Degree--weight relation at the Ramond bottom]
\label{d4:prop:D4-degree-weight-relation}
Let \(0\ne v\in T\) be homogeneous for the original conformal grading,
and suppose that its shifted
\(\mathfrak g^\natural\)-weight is \(\lambda\).  If its original
conformal degree is \(d\), then
\begin{equation}
  \nu=\lambda-\varpi_1
  \label{d4:eq:D4-original-weight-from-shifted-weight}
\end{equation}
and
\begin{equation}
  d=\frac12-\lambda(x).
  \label{d4:eq:D4-degree-from-shifted-weight}
\end{equation}
Equivalently,
\begin{equation}
  d+\nu(x)=0.
  \label{d4:eq:D4-zero-excess-energy-relation}
\end{equation}
\end{proposition}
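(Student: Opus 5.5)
The plan is a direct computation with the two operators $L_0$ and $x_0$ acting on $v$. By hypothesis $v$ is homogeneous for the original conformal grading, so $L_0v=dv$. The shifted weight $\lambda$ is defined through the twisted Cartan zero modes, and the Cartan shift \eqref{d4:eq:D4-Li-Cartan-shift} gives
\[
 J_0^{\{h\},\mathrm R}=J_0^{\{h\}}+(x\mid h)
\]
for $h\in\mathfrak h^\natural$. Two consequences follow. First, $v$ is also an eigenvector of the untwisted zero modes $J_0^{\{h\}}$, with original weight $\nu=\lambda-x^\flat$. Second, $x^\flat=\varpi_1$, since $x=\varpi_1^\vee$ and the form identifies $\varpi_1^\vee$ with $\varpi_1$ on $\mathfrak s_1$, where long roots have squared length $2$. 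Together these give \eqref{d4:eq:D4-original-weight-from-shifted-weight}, which is just \eqref{d4:eq:D4-original-to-shifted-weight} read backwards.

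Next I will use the Hamiltonian \eqref{d4:eq:D4-section5-Ramond-Hamiltonian}. Here $x_0$ denotes the untwisted zero mode $J^{\{x\}}_{(0)}$ appearing in Li's formula \eqref{eq:common-Li-Hamiltonian}, and on $v$ it acts by $\nu(x)$. Since $v\in T$, we have $L_0^{\mathrm R}v=\tfrac14 v$, so $d+\nu(x)+\tfrac14=\tfrac14$. This gives \eqref{d4:eq:D4-zero-excess-energy-relation}.

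It remains to substitute $\nu=\lambda-\varpi_1$, which gives $d=-\lambda(x)+\varpi_1(x)$. For this I need $\varpi_1(\varpi_1^\vee)=(\varpi_1\mid\varpi_1)$ on an $A_1$ factor normalized with $(\beta_1\mid\beta_1)=2$. Since $\varpi_1=\beta_1/2$, this equals $\tfrac14\cdot 2=\tfrac12$, and so $d=\tfrac12-\lambda(x)$, which is \eqref{d4:eq:D4-degree-from-shifted-weight}.

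The only point requiring care is the consistency of conventions: whether $x_0$ in the Hamiltonian is the untwisted mode, and whether the shift is $+\varpi_1$ rather than a multiple of it, since the level $r$ in \eqref{eq:common-Li-mode-shifts} is $1$ here. I would check these against the vacuum. The vacuum has $d=0$ and $\nu=0$, so its shifted weight is $\lambda=\varpi_1$, and the identity gives $\tfrac12-\varpi_1(x)=0$, which is consistent. The same check is consistent with $\kappa/2=\tfrac14$, because $\kappa=(x\mid x)=\tfrac12$. With this check the proposition is a two-line verification.
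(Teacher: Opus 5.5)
Your argument is correct and is essentially the paper's proof: read the Cartan shift backwards to get $\nu=\lambda-\varpi_1$, evaluate $L_0^{\mathrm R}=L_0+x_0+\tfrac14$ on $v\in T$ to obtain $d+\nu(x)=0$, and substitute using $\varpi_1(x)=(\varpi_1\mid\varpi_1)=\tfrac12$. Your reading of $x_0$ as the untwisted mode $J^{\{x\}}_{(0)}$ is the correct reading of Li's formula, and it is exactly the convention the paper uses.
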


\begin{proof}
Equation \eqref{d4:eq:D4-original-weight-from-shifted-weight} follows from
\eqref{d4:eq:D4-original-to-shifted-weight}.  Since \(v\in T\),
\eqref{d4:eq:D4-section5-Ramond-Hamiltonian} gives
\[
  d+\nu(x)+\frac14=\frac14.
\]
This proves \eqref{d4:eq:D4-zero-excess-energy-relation}.  Substituting
\(\nu=\lambda-\varpi_1\) and using
\[
  \varpi_1(x)
  =(\varpi_1\mid\varpi_1)
  =\frac12
\]
gives \eqref{d4:eq:D4-degree-from-shifted-weight}.
\end{proof}

The root-mode shift is also relevant at the Ramond bottom.  For a root
vector \(e_\gamma\in\mathfrak g^\natural\), the twisted zero mode is
\[
  J_0^{\{e_\gamma\},\mathrm R}
  =J_{\gamma(x)}^{\{e_\gamma\}}
\]
by \eqref{d4:eq:D4-Li-root-mode-shift}.  In particular, if
\(e_1,f_1,h_1\) is the standard \(\mathfrak{sl}_2\)-triple in
\(\mathfrak s_1\), then
\begin{equation}
  E_1^{\mathrm R}=J_1^{\{e_1\}},
  \qquad
  F_1^{\mathrm R}=J_{-1}^{\{f_1\}},
  \qquad
  H_1^{\mathrm R}=J_0^{\{h_1\}}+1.
  \label{d4:eq:D4-first-A1-twisted-zero-modes}
\end{equation}
For the positive-root vectors of \(\mathfrak s_3\) and
\(\mathfrak s_4\), the twisted zero modes remain their ordinary zero
modes.  Consequently, the vacuum is a shifted
\(\mathfrak g^\natural\)-highest-weight vector of weight
\(\varpi_1\).

\begin{corollary}[The shifted vacuum line]
\label{d4:cor:D4-shifted-vacuum-line}
The shifted weight-\(\varpi_1\) space at the Ramond bottom is
one-dimensional:
\begin{equation*}
  T[\varpi_1]=\mathbb C\mathbf1.
\end{equation*}
In particular, the vacuum is the unique shifted highest-weight vector
of weight \(\varpi_1\) in \(T\).
\end{corollary}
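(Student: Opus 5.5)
The plan is to combine the degree--weight relation of Proposition~\ref{d4:prop:D4-degree-weight-relation} with the PBW excess-energy analysis from the proof of Proposition~\ref{d4:prop:D4-Ramond-spectral-lattice}.

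First, let \(0\ne v\in T[\varpi_1]\). Since \(T\) is graded by the original conformal degree and by \(\mathfrak h^\natural\)-weight, we may assume \(v\) is homogeneous. Its shifted weight is \(\lambda=\varpi_1\), so \eqref{d4:eq:D4-original-weight-from-shifted-weight} gives original weight \(\nu=0\). Then \eqref{d4:eq:D4-degree-from-shifted-weight} gives
\[
d=\tfrac12-\varpi_1(x)=0,
\]
using \(\varpi_1(x)=\tfrac12\). Thus \(v\) lies in the original degree-zero subspace. Because \(\widetilde{\mathcal W}\) is a quotient of the universal minimal \(W\)-algebra, whose strong generators all have positive conformal weight, this subspace is \(\mathbb C\mathbf1\). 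Hence \(T[\varpi_1]\subset\mathbb C\mathbf1\).

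For the reverse inclusion, \eqref{d4:eq:D4-Ramond-vacuum-energy} places \(\mathbf1\) in \(T\). By the Cartan shift \eqref{d4:eq:D4-Li-Cartan-shift}, the vacuum has shifted weight \(0+\varpi_1=\varpi_1\). Moreover \(\mathbf1\ne0\) in \(\widetilde{\mathcal W}\), since \(\widetilde{\mathcal W}\neq0\) by \eqref{d4:eq:D4-Wtilde-nonzero-lisse}. This gives \(T[\varpi_1]=\mathbb C\mathbf1\).

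For the uniqueness statement, we check that \(\mathbf1\) is annihilated by the twisted raising operators. By \eqref{d4:eq:D4-first-A1-twisted-zero-modes}, \(E_1^{\mathrm R}=J^{\{e_1\}}_{(1)}\) in positive mode, and it kills the vacuum. For \(\mathfrak s_3\) and \(\mathfrak s_4\), the raising operators are ordinary zero modes \(J^{\{e\}}_{(0)}\), which also kill \(\mathbf1\). So \(\mathbf1\) is a shifted highest-weight vector of weight \(\varpi_1\). Any other such vector lies in the one-dimensional space \(T[\varpi_1]\), so it is a multiple of \(\mathbf1\).

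The only delicate point is the possible mismatch between the shifted-weight coordinates and the original ones, which we must track exactly. This is handled by \eqref{d4:eq:D4-original-to-shifted-weight} together with the normalization \((\varpi_1\mid\varpi_1)=\tfrac12\).
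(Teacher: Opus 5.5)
Your proposal is correct and follows essentially the same route as the paper: the degree--weight relation forces original weight $\nu=0$ and original degree $d=0$, so every vector of $T[\varpi_1]$ lies in $\widetilde{\mathcal W}_0=\mathbb C\mathbf1$, while the vacuum lies in $T[\varpi_1]$ by the vacuum energy and the Cartan shift. Your explicit check that $J^{\{e_1\}}_{(1)}$ and the ordinary zero modes of $\mathfrak s_3,\mathfrak s_4$ annihilate $\mathbf1$ matches the paper's remark just before the corollary, and your remark about reducing to original-degree-homogeneous vectors correctly supplies the hypothesis of the degree--weight proposition.
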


\begin{proof}
The vacuum belongs to \(T[\varpi_1]\) by
\eqref{d4:eq:D4-Ramond-vacuum-energy},
\eqref{d4:eq:D4-Li-Cartan-shift}, and
\eqref{d4:eq:D4-first-A1-twisted-zero-modes}.  Conversely, if
\(v\in T[\varpi_1]\), Proposition
\ref{d4:prop:D4-degree-weight-relation} gives
\[
  \nu=0,
  \qquad
  d=\frac12-\varpi_1(x)=0.
\]
The degree-zero subspace of the nonzero conformal quotient
\(\widetilde{\mathcal W}\) is \(\mathbb C\mathbf1\), proving the claim.
\end{proof}

For completeness, the twisted \(\mathfrak s_1\)-action on the vacuum is
nontrivial.  Indeed, if
\(F_1^{\mathrm R}\mathbf1=0\), then
\[
  [E_1^{\mathrm R},F_1^{\mathrm R}]\mathbf1=0,
\]
whereas \eqref{d4:eq:D4-first-A1-twisted-zero-modes} gives
\(H_1^{\mathrm R}\mathbf1=\mathbf1\).  Thus
\(F_1^{\mathrm R}\mathbf1\ne0\), and the twisted vacuum generates the
fundamental \(\mathfrak s_1\)-module, with the other two
\(A_1\)-factors acting trivially.  Only the uniqueness of its highest
line will be needed below.

We also record the two low original degrees used in the exclusion
argument:
\begin{equation}
  \widetilde{\mathcal W}_0=\mathbb C\mathbf1,
  \qquad
  \widetilde{\mathcal W}_{\frac12}=0.
  \label{d4:eq:D4-low-original-conformal-degrees}
\end{equation}
These statements follow because the universal minimal \(W\)-algebra is
of CFT type and is generated in conformal weights \(1\), \(3/2\), and
\(2\); passage to a nonzero conformal quotient cannot create new
vectors.

\subsubsection{Lowest spaces of twisted ideals}

Proposition~\ref{d4:prop:D4-Ramond-spectral-lattice}, the finite-dimensionality
of $B$ in \eqref{d4:eq:D4-Ramond-Zhu-B-finite}, and
Lemma~\ref{lem:common-lowest-Ramond-Zhu-module} show that every nonzero ideal
has a finite-dimensional cyclic $B$-module at a lowest energy
$\frac14+m$, $m\in\mathbb Z_{\ge0}$.

\subsubsection{Simplicity of the reduced quotient}

\begin{theorem}[Simplicity of the reduced quotient]
\label{d4:thm:D4-reduced-quotient-simple}
The natural map
\begin{equation*}
  \pi:\widetilde{\mathcal W}
  \longrightarrow
  \mathcal W_{-1}(D_4,f_\theta)
\end{equation*}
is an isomorphism.  Equivalently,
\begin{equation*}
  \mathcal W^{-1}(D_4,f_\theta)
  \Big/
  \left\langle
    :J_1^2:,\ :J_3^2:,\ :J_4^2:
  \right\rangle
  \cong
  \mathcal W_{-1}(D_4,f_\theta).
\end{equation*}
\end{theorem}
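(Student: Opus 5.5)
The plan is to apply Proposition~\ref{prop:common-Casimir-gap-simplicity} to the canonical surjection $\pi:\widetilde{\mathcal W}\twoheadrightarrow\mathcal W_{-1}(D_4,f_\theta)$ of \eqref{d4:eq:D4-Wtilde-surjects-simple-W}. The target is simple and nonzero. I would take $\mathfrak a=\mathfrak g^\natural=\mathfrak s_1\oplus\mathfrak s_3\oplus\mathfrak s_4$, $\mathcal C=\Omega^\natural$, $h_0=\tfrac14$ and $\lambda_0=\varpi_1$. The four hypotheses are then checked in the order of the remark following that proposition.

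\emph{Hypotheses (1) and (2).} For (1), Proposition~\ref{d4:prop:D4-Ramond-spectral-lattice} gives semisimplicity of $L_0^{\mathrm R}$ and the spectral inclusion in $\tfrac14+\mathbb Z_{\ge0}$. Corollary~\ref{d4:cor:D4-shifted-vacuum-line} gives $T[\varpi_1]=\mathbb C\mathbf 1$. For (2), finite-dimensionality of $B$ is \eqref{d4:eq:D4-Ramond-Zhu-B-finite}. Complete reducibility of finite-dimensional $B$-modules over $\mathfrak g^\natural$ comes from the Lie homomorphism $\mathfrak g^\natural\to B$ given by the current zero modes.

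\emph{Hypothesis (3).} Proposition~\ref{d4:prop:D4-Casimir-trace-identity} gives
\[
 A(h)=5h+\tfrac14 .
\]
Lemma~\ref{d4:lem:D4-allowed-A1-cubed-types} gives $C_{\max}=\tfrac92$. Then
\[
 A(\tfrac14+m)\ge A(\tfrac54)=\tfrac{13}{2}>\tfrac92 \qquad (m\ge1),
\]
so the gap inequality holds with margin $2$. (As a consistency check, $A(\tfrac14)=\tfrac32$, which is exactly the Casimir of the twisted vacuum module $L_{\mathfrak s_1}(\varpi_1)$.)

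\emph{Hypothesis (4): the ground-energy exclusion.} This is the main step. The average $\tfrac32$ by itself does not pin down the constituents, since mixtures of types with Casimir $0,\tfrac32,3,\tfrac92$ could average to $\tfrac32$. I would therefore bypass the trace and use the degree--weight relation directly. Let $M\subset T$ be a nonzero finite-dimensional $B$-submodule and $S$ a simple quotient. Since the $\mathfrak g^\natural$-surjection $M\to S$ splits, every $\mathfrak g^\natural$-constituent of $S$ is a constituent of $M$, so it suffices to classify the constituents of $T$ itself.

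Take a constituent of $T$ and its highest-weight vector. The original conformal grading and the shifted Cartan action commute, so this vector can be decomposed into original-degree components; each component is again a shifted highest-weight vector of the same weight $\lambda=m_1\varpi_1+m_3\varpi_3+m_4\varpi_4$, with $m_a\in\{0,1\}$ by Lemma~\ref{d4:lem:D4-allowed-A1-cubed-types}. Proposition~\ref{d4:prop:D4-degree-weight-relation} then forces the original degree to be
\[
 d=\tfrac12-\lambda(x)=\tfrac12-\tfrac{m_1}{2}.
\]
The case $m_1=0$ gives $d=\tfrac12$, which is impossible because $\widetilde{\mathcal W}_{1/2}=0$ by \eqref{d4:eq:D4-low-original-conformal-degrees}. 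So $m_1=1$ and $d=0$. Since $\widetilde{\mathcal W}_0=\mathbb C\mathbf1$, the original weight is $\nu=0$. By \eqref{d4:eq:D4-original-weight-from-shifted-weight} this means $\lambda=\varpi_1$, so $m_3=m_4=0$.

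Hence every constituent of $T$, and in particular of $S$, is $L_{\mathfrak s_1}(\varpi_1)\boxtimes\mathbf1\boxtimes\mathbf1$. This verifies (4) with $\lambda_0=\varpi_1$, and the criterion then shows that $\pi$ is an isomorphism.

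The point I expect to need the most care is compatibility of the two gradings: the original conformal grading must be preserved by the twisted zero modes of $\mathfrak h^\natural$ and of the positive root vectors, so that the highest-weight vector can be taken homogeneous. For $\mathfrak s_3$ and $\mathfrak s_4$ this is immediate, because their twisted zero modes are the ordinary zero modes. For $\mathfrak s_1$, the twisted raising operator is $J_1^{\{e_1\}}$, which shifts the original degree by a fixed amount and compensates for the $x_0$-shift. So projecting onto a degree component still yields a vector annihilated by all twisted raising operators. I would first check this directly from \eqref{d4:eq:D4-first-A1-twisted-zero-modes}.
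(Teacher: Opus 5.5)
Your proposal is correct and follows the paper's proof essentially step for step. You use the same application of Proposition~\ref{prop:common-Casimir-gap-simplicity} with $h_0=\tfrac14$, $\lambda_0=\varpi_1$, $A(h)=5h+\tfrac14$ and $C_{\max}=\tfrac92$, and the same ground-space exclusion via Proposition~\ref{d4:prop:D4-degree-weight-relation} and \eqref{d4:eq:D4-low-original-conformal-degrees}.

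The homogeneity issue you flag is automatic. On a vector of $T$ with shifted weight $\lambda$, the zero mode $x_0$ acts by $(\lambda-\varpi_1)(x)$, so $L_0=L_0^{\mathrm R}-x_0-\tfrac14$ already acts by a scalar and no decomposition into degree components is needed.
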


\begin{proof}
We apply Proposition~\ref{prop:common-Casimir-gap-simplicity} with
\[
 h_0=\frac14,
 \qquad
 \lambda_0=\varpi_1,
 \qquad
 A(h)=5h+\frac14,
 \qquad
 C_{\max}=\frac92.
\]
The spectral lattice and extremal line are
Proposition~\ref{d4:prop:D4-Ramond-spectral-lattice} and
Corollary~\ref{d4:cor:D4-shifted-vacuum-line}; the Ramond Zhu algebra is
finite-dimensional by \eqref{d4:eq:D4-Ramond-Zhu-B-finite}.  The trace
identity and the global Casimir bound are
\eqref{d4:eq:D4-trace-identity-and-Casimir-upper-bound}, and
\[
 A(h_0+1)=\frac{13}{2}>\frac92=C_{\max}.
\]
It remains only to verify the ground-space forcing condition.  Let $S$ be
a simple quotient of a nonzero ground $B$-submodule.  Complete reducibility
over $A_1^3$ embeds every $A_1^3$-constituent of $S$ back into the ambient
ground space.  An allowed constituent has highest weight
$\lambda=m_1\varpi_1+m_3\varpi_3+m_4\varpi_4$ with
$m_a\in\{0,1\}$.  By
Proposition~\ref{d4:prop:D4-degree-weight-relation}, $m_1=0$ would place its
highest line in original degree $1/2$, which is zero by
\eqref{d4:eq:D4-low-original-conformal-degrees}.  If $m_1=1$, the line lies
in degree zero, so its original weight must vanish; hence
$m_3=m_4=0$ and $\lambda=\varpi_1$.  Thus every such simple quotient
contains the extremal constituent, whose highest line is
$\mathbb C\mathbf1$.  The common criterion proves that $\pi$ is an
isomorphism.
\end{proof}

Combining Theorem~\ref{d4:thm:D4-reduced-quotient-simple} with
Proposition~\ref{d4:prop:D4-exact-reduction-triality-quotient}, we obtain
\begin{equation*}
  H^0_{\mathrm{DS},f_\theta}(Q)
  \cong
  \mathcal W_{-1}(D_4,f_\theta).
\end{equation*}
The final affine step follows from the common detection and lifting result,
Proposition~\ref{prop:common-affine-lifting}.

\subsection{The maximal ideal of the affine vertex algebra}

We apply Proposition~\ref{prop:common-affine-lifting} to the triality quotient.

\subsubsection{Uniform affine lifting}

The triality quotient admits the canonical surjection
\begin{equation*}
 q:Q\twoheadrightarrow L_{-1}(D_4).
\end{equation*}
Proposition~\ref{d4:prop:D4-exact-reduction-triality-quotient} and
Theorem~\ref{d4:thm:D4-reduced-quotient-simple} identify the induced map on
minimal reductions with an isomorphism.  Hence
Proposition~\ref{prop:common-affine-lifting} applies.

\begin{theorem}[The maximal ideal at level \(-1\)]
\label{d4:thm:D4-main-maximal-ideal}
The quotient \(Q\) is simple:
\begin{equation*}
  Q\cong L_{-1}(D_4).
\end{equation*}
Equivalently,
\begin{equation*}
  \ker\!\left(
    V^{-1}(D_4)\longrightarrow L_{-1}(D_4)
  \right)
  =
  \left\langle
    \sigma(w_1)^2,\,
    \sigma(w_3)^2,\,
    \sigma(w_4)^2
  \right\rangle.
\end{equation*}
\end{theorem}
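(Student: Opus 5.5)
The statement follows by applying Proposition~\ref{prop:common-affine-lifting} to the canonical surjection $q:Q\twoheadrightarrow L_{-1}(D_4)$; all the substantive input has already been assembled.

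First, we check that $q$ exists. Each $s_a$ is a singular vector of conformal degree $4>0$. The triangular-decomposition argument of Lemma~\ref{dl:lem:Dl-singular-submodules-proper} shows that $N_a$, and hence $N=N_1+N_3+N_4$, vanishes in degrees below $4$. Therefore $\mathbf1\notin N$, so $N$ is a proper graded ideal and lies in $\operatorname{Rad}V^{-1}(D_4)$. The homogeneous subspaces of $Q$ are finite-dimensional $\mathfrak g$-modules by the affine PBW theorem. Lemma~\ref{lem:common-graded-subquotient-category-O} then places $N$, $Q$, and $\ker q$ in $\mathcal O_{-1}$, where minimal reduction is exact.

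Second, we identify the induced map on reductions. Proposition~\ref{d4:prop:D4-exact-reduction-triality-quotient} gives $H^0_{\mathrm{DS},f_\theta}(Q)\cong\widetilde{\mathcal W}$. By exactness, $q$ induces the surjection $\pi$ of \eqref{d4:eq:D4-Wtilde-surjects-simple-W}, whose target is nonzero by Lemma~\ref{lem:common-DS-detection}. Theorem~\ref{d4:thm:D4-reduced-quotient-simple} shows that $\pi$ is an isomorphism. Exactness applied to $0\to\ker q\to Q\to L_{-1}(D_4)\to0$ then forces $H^0_{\mathrm{DS},f_\theta}(\ker q)=0$.

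Third, we conclude. If $\ker q\neq0$, take a $\mathfrak g$-highest-weight vector of weight $\mu\in P_+$ in its lowest nonzero degree. It is an affine highest-weight vector whose wall coordinate is $-1-\langle\mu,\theta^\vee\rangle<0$. Lemma~\ref{lem:common-DS-detection} then gives nonzero reduction, a contradiction. Hence $Q\cong L_{-1}(D_4)$, and the kernel of $V^{-1}(D_4)\to L_{-1}(D_4)$ equals $N$, which is the ideal generated by the three $\sigma(w_a)^2$. Equivalently, the whole statement is an instance of Theorem~\ref{thm:common-minimal-reduction-maximality}.

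The only point needing care is that the generated ideal $\langle\sigma(w_a)^2\rangle$ coincides with $\sum_aU(\widehat{\mathfrak g})s_a$. This holds because the currents strongly generate $V^{-1}(D_4)$, so each affine submodule $N_a$ is already a vertex-algebra ideal. The genuine difficulty, reduced simplicity, was settled in Theorem~\ref{d4:thm:D4-reduced-quotient-simple}.
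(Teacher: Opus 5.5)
Your proposal is correct and is essentially the paper's proof. The paper applies Proposition~\ref{prop:common-affine-lifting} to $q\colon Q\twoheadrightarrow L_{-1}(D_4)$, using Proposition~\ref{d4:prop:D4-exact-reduction-triality-quotient} and Theorem~\ref{d4:thm:D4-reduced-quotient-simple} to make the induced map on minimal reductions an isomorphism; your three steps simply unpack the proof of that lifting proposition (exactness on $\mathcal O_{-1}$, then Lemma~\ref{lem:common-DS-detection} applied to $\ker q$).
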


\begin{proof}
Apply Proposition~\ref{prop:common-affine-lifting} to
$q:Q\twoheadrightarrow L_{-1}(D_4)$.
\end{proof}

\begin{corollary}

The Arakawa--Moreau maximal-ideal conjecture holds for
\[
  \mathfrak g=D_4,
  \qquad
  n=1,
  \qquad
  k=n-\frac{h^\vee(D_4)}6-1=-1.
\]
\end{corollary}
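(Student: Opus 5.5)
The corollary is the case $n=1$ of the $D_4$ entry in the table of Theorem~\ref{thm:main}. The plan is therefore to identify it with Theorem~\ref{d4:thm:D4-main-maximal-ideal} and check that the parameters and generators match those prescribed in \cite[Conjecture~1]{ArakawaMoreau2018}.

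First, $h^\vee(D_4)=6$, so $k_1=1-\tfrac{6}{6}-1=-1$, which is the level treated in this section. Second, the prescribed generators at $n=1$ are the associative PBW powers $\sigma(w_a)^{n+1}=\sigma(w_a)^2$ for $a\in\mathcal A=\{1,3,4\}$. These are exactly the vectors $s_a=\widetilde\sigma(w_a)^2\mathbf 1$ defined above, with the same PBW convention, which was stressed explicitly. Their singularity at $k=-1$ was recorded from \cite[Theorem~4.2(1)(b)]{ArakawaMoreau2018}. Finally, $k_1=-1<0$, so this case lies in the range of the conjecture.

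With this matching in place, Theorem~\ref{d4:thm:D4-main-maximal-ideal} states precisely that $\langle\sigma(w_1)^2,\sigma(w_3)^2,\sigma(w_4)^2\rangle$ equals the maximal graded ideal of $V^{-1}(D_4)$. This is the conjectured statement. That theorem in turn rests on three earlier results:
\begin{itemize}
\item the exact reduction in Proposition~\ref{d4:prop:D4-exact-reduction-triality-quotient};
\item the simplicity of the reduced quotient in Theorem~\ref{d4:thm:D4-reduced-quotient-simple};
\item the affine lifting in Proposition~\ref{prop:common-affine-lifting}.
\end{itemize}

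There is no genuine obstacle here. The only point needing care is the bookkeeping: the indexing $n=1$ must give level $-1$ and exponent $2$, and the powers must be read as associative PBW powers rather than normally ordered products. Both checks were addressed in the definition of $s_a$.
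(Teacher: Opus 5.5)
Your proposal is correct and matches the paper's proof. In both, the corollary is Theorem~\ref{d4:thm:D4-main-maximal-ideal}, once one checks that $h^\vee(D_4)=6$ gives $k=-1$ at $n=1$ and that the conjectural generators are exactly the three associative PBW squares $\sigma(w_a)^2$, $a\in\{1,3,4\}$.
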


\begin{proof}
For \(D_4\), the relevant Kostant module has the three triality
components indexed by \(1,3,4\), and the conjectural generators at
\(n=1\) are precisely
\(\sigma(w_1)^2,\sigma(w_3)^2,\sigma(w_4)^2\).
The assertion is therefore Theorem~\ref{d4:thm:D4-main-maximal-ideal};
see \cite{ArakawaMoreau2018}.
\end{proof}

\begin{remark}

The boundary case \(n=0\), corresponding to \(k=-2\), was already
known.  Theorem~\ref{d4:thm:D4-main-maximal-ideal} settles the next
negative integral level \(k=-1\), which is the only case with
\(n>0\) and \(k=n-2<0\).
\end{remark}
\section{Type \texorpdfstring{$E_6$}{E6}}
\label{sec:case-e6}

\subsection{Type data}\label{e6:sec:preliminaries}

Let $\mathfrak g=E_6$ with Bourbaki numbering and $(\theta\mid\theta)=2$.
The highest root is
\begin{equation}
 \theta=\alpha_1+2\alpha_2+2\alpha_3+3\alpha_4+2\alpha_5+\alpha_6.
 \label{e6:eq:E6-highest-root}
\end{equation}
For $x=\theta^\vee/2$ and $f=f_\theta$, the minimal grading has
\begin{equation*}
 \mathfrak g^\natural\cong A_5,
 \qquad
 \mathfrak g_{\frac12}\cong\mathfrak g_{-\frac12}
 \cong L_{A_5}(\varpi_3)\cong\bigwedge^3\mathbb C^6.
\end{equation*}
The latter is a $20$-dimensional symplectic module, with form
\begin{equation*}
 \langle u,v\rangle_{\rm ne}=(f\mid[u,v]).
\end{equation*}
We denote by $\vartheta$ the highest root of $A_5$.  Thus
$h^\vee(E_6)=12$, and all root and Casimir calculations used below are
listed in Appendix~\ref{e6:app:explicit-calculations}.  This subsection
is the specialization of the common minimal-grading conventions in
Section~\ref{sec:common-framework}.
\label{e6:subsec:E6-minimal-grading}

\subsubsection{Level and reduction data}

We use Section~\ref{sec:common-framework}.  The PBW symmetrization is
\begin{equation}
\label{e6:eq:symmetrization-map}
 \sigma(a_1\cdots a_r)=\frac1{r!}\sum_{\pi\in\mathfrak S_r}
 a_{\pi(1)}(-1)\cdots a_{\pi(r)}(-1)\mathbf1.
\end{equation}
For $E_6$, $\mathfrak g^\natural\cong A_5$, the natural current level is
\begin{equation}
 k^\natural=k+3,
 \label{e6:eq:natural-level}
\end{equation}
and the two new cases are
\begin{equation}
 k_n=n-3,\qquad n=1,2.
 \label{e6:eq:distinguished-levels}
\end{equation}
Exactness and nonvanishing are supplied by
Section~\ref{subsec:common-DS}; in particular
\begin{equation*}
 H^0_{\mathrm{DS},f_\theta}(V^k(\mathfrak g)/I)
 \cong\mathcal W^k(\mathfrak g,f_\theta)/H^0_{\mathrm{DS},f_\theta}(I).
\end{equation*}
\begin{corollary}
\label{e6:cor:negative-level-nonvanishing}
For $k=-2,-1$ and $\lambda\in P_+$,
$H^0_{\mathrm{DS},f_\theta}(L_k(\lambda))\ne0$.
\end{corollary}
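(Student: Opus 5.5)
The plan is to apply the nonvanishing criterion \eqref{eq:common-DS-nonvanishing} directly to the irreducible highest-weight module $L_k(\lambda)=L(\widehat\lambda)$, with $\widehat\lambda=k\Lambda_0+\lambda$; any shift of the derivation grading is irrelevant. By \eqref{eq:common-affine-simple-coroot}, $\alpha_0^\vee=K-\theta^\vee$. Since $K$ acts by the level $k$ and $\lambda$ is evaluated on $\theta^\vee$, this gives
\[
 \widehat\lambda(\alpha_0^\vee)=k-\langle\lambda,\theta^\vee\rangle .
\]
For $\lambda\in P_+$ the pairing $\langle\lambda,\theta^\vee\rangle$ is a nonnegative integer, and $k\in\{-2,-1\}$ is negative. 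Hence $\widehat\lambda(\alpha_0^\vee)\le k<0$, and \eqref{eq:common-DS-nonvanishing}, i.e.\ \cite[Theorem~6.7.4]{Arakawa2005}, gives $H^0_{\mathrm{DS},f_\theta}(L_k(\lambda))\ne0$.

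Two checks are needed first. One is that $L_k(\lambda)$ lies in $\mathcal O_k$, the category in which Arakawa's theorem is stated. This holds because it is the irreducible quotient of a Verma module of weight $k\Lambda_0+\lambda$; equivalently, its weight spaces are finite-dimensional and its support lies in $\widehat\lambda-\widehat Q_+$, as in Lemma~\ref{lem:common-graded-subquotient-category-O}. The other is that $k$ is noncritical: $k+h^\vee=k+12\ne0$, so Arakawa's theorem applies.

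No step here should be a real obstacle; this is the same computation as in Lemma~\ref{lem:common-DS-detection}. The only point to state explicitly is that \eqref{eq:common-DS-nonvanishing} needs only the strict negativity of the affine-wall coordinate, not any integrability or admissibility of $\widehat\lambda$. For $\lambda=0$ the statement specializes to the nonvanishing of $\mathcal W_k(E_6,f_\theta)$, which is the form used afterwards for the surjection from the reduced candidate quotient onto the simple minimal $W$-algebra.
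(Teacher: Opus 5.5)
Your proposal is correct and is the paper's argument: the paper's proof just applies \eqref{eq:common-DS-nonvanishing}, and your computation $\widehat\lambda(\alpha_0^\vee)=k-\langle\lambda,\theta^\vee\rangle\le k<0$ is the check that its hypothesis holds. Your extra remarks, that $L_k(\lambda)$ lies in $\mathcal O_k$ and that $k+12\ne0$, are left implicit in the paper and are correct to state.
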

\begin{proof}
Apply \eqref{eq:common-DS-nonvanishing}.
\end{proof}

\subsection{Singular vectors in the affine vertex algebra of type \texorpdfstring{$E_6$}{E6}}
\label{e6:sec:singular-vectors}

In this section we recall the quadratic highest-weight vector associated with the
Joseph ideal of $E_6$ and determine exactly when its normally ordered powers become
affine singular vectors.  We keep the normalization and the Bourbaki numbering fixed
in Section~\ref{e6:sec:preliminaries}.

\subsubsection{A quadratic highest-weight vector in \texorpdfstring{$S^2(\mathfrak g)$}{S2(g)}}

Let $\omega_1,\ldots,\omega_6$ denote the fundamental weights of
$\mathfrak g=E_6$.  To shorten the root formulas, for
$\mathbf c=(c_1,\ldots,c_6)\in\mathbb Z^6$ we write
\[
  [c_1c_2c_3c_4c_5c_6]
  =\sum_{i=1}^{6}c_i\alpha_i.
\]
Thus, by \eqref{e6:eq:E6-highest-root},
\[
  \theta=[122321].
\]
Set
\begin{equation*}
  \theta_1=[101111]
  =\alpha_1+\alpha_3+\alpha_4+\alpha_5+\alpha_6.
\end{equation*}
This is the highest root of the root subsystem of type $A_5$ generated by
$\alpha_1,\alpha_3,\alpha_4,\alpha_5,\alpha_6$; thus $\theta_1=\vartheta$ in the notation of
Subsection~\ref{e6:subsec:E6-minimal-grading}.  Consider the following three
pairs of positive roots:
\begin{equation*}
\begin{aligned}
  (\beta_1,\delta_1)&=([010000],[011210]),\\
  (\beta_2,\delta_2)&=([010100],[011110]),\\
  (\beta_3,\delta_3)&=([010110],[011100]).
\end{aligned}
\end{equation*}
They satisfy
\begin{equation}\label{e6:eq:root-pair-sum}
  \beta_j+\delta_j=\theta-\theta_1,
  \qquad j=1,2,3,
\end{equation}
and all the roots $\beta_j+\theta_1$ and $\delta_j+\theta_1$ are positive.

We choose the root vectors compatibly so that
\begin{equation}\label{e6:eq:Chevalley-normalization-w}
\begin{aligned}
  [e_{\beta_j},e_{\theta_1}]&=e_{\beta_j+\theta_1},\\
  [e_{\delta_j},e_{\theta_1}]&=e_{\delta_j+\theta_1},\\
  [e_{\delta_j},[e_{\beta_j},e_{\theta_1}]]&=e_\theta
\end{aligned}
\qquad (j=1,2,3).
\end{equation}
Such a compatible choice is the one used in the construction of the Joseph ideal;
see \cite[Chapter~IV, Proposition~11]{Garfinkle1982} and
\cite[Theorem~4.2 and Table~2]{ArakawaMoreau2018}.

Define
\begin{equation}\label{e6:eq:def-w}
  w
  =e_\theta e_{\theta_1}
   -\sum_{j=1}^{3}
    e_{\beta_j+\theta_1}e_{\delta_j+\theta_1}
  \in S^2(\mathfrak g).
\end{equation}

\begin{proposition}
The vector $w$ is a nonzero $\mathfrak g$-highest-weight vector of weight
\[
  \theta+\theta_1=\omega_1+\omega_6.
\]
Moreover,
\begin{equation}\label{e6:eq:S2-E6-decomposition}
  S^2(\mathfrak g)
  \cong
  L_{\mathfrak g}(2\theta)
  \oplus L_{\mathfrak g}(0)
  \oplus L_{\mathfrak g}(\omega_1+\omega_6),
\end{equation}
and $w$ spans the highest-weight line of the last summand.
\end{proposition}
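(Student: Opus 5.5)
Three things have to be checked: $w\ne0$, that $w$ is annihilated by all simple root vectors $e_{\alpha_i}$ acting by the derivation extension of $\operatorname{ad}$ on $S^2(\mathfrak g)$, and the decomposition \eqref{e6:eq:S2-E6-decomposition}. Nonvanishing is immediate. The six root vectors $e_\theta,e_{\theta_1},e_{\beta_j+\theta_1},e_{\delta_j+\theta_1}$ are pairwise distinct basis elements. So the monomials $e_\theta e_{\theta_1}$ and $e_{\beta_j+\theta_1}e_{\delta_j+\theta_1}$ are distinct PBW monomials in $S^2(\mathfrak g)$, and $w$ has nonzero coefficient on the first one. For the weight, every monomial of $w$ has weight $\theta+\theta_1$, by \eqref{e6:eq:root-pair-sum}. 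One then computes the fundamental-weight coordinates from the Bourbaki Cartan matrix: $\theta=\omega_2$ and $\theta_1=\omega_1+\omega_6-\omega_2$. This gives $\theta+\theta_1=\omega_1+\omega_6$.

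The main step is highest-weight annihilation. I would handle it node by node, using the fact that $e_{\alpha_i}\cdot(xy)=[e_{\alpha_i},x]y+x[e_{\alpha_i},y]$. First, $\theta+\alpha_i$ is never a root, so $e_\theta$ is always killed. For $i\in\{1,3,4,5,6\}$, $\theta_1$ is the highest root of that $A_5$, and $\theta_1+\alpha_i$ is not a root, so the first monomial is killed outright. For these nodes one then tabulates which of $\beta_j+\theta_1+\alpha_i$ and $\delta_j+\theta_1+\alpha_i$ are roots. The expected outcome is that the surviving terms cancel in pairs, e.g.\ $e_{\alpha_4}$ sends $e_{\beta_1+\theta_1}e_{\delta_2+\theta_1}$-type contributions between the $j=1,2$ terms. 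Such a cancellation needs sign information, namely $[e_{\alpha_i},e_{\beta_j+\theta_1}]=\pm e_{\beta_{j'}+\theta_1}$ with compatible signs. Rather than fixing signs by hand, I would derive them from the normalization \eqref{e6:eq:Chevalley-normalization-w} via the Jacobi identity: write $e_{\beta_j+\theta_1}=[e_{\beta_j},e_{\theta_1}]$, and note that $e_{\alpha_i}$ commutes with $e_{\theta_1}$. For $i=2$ the terms do not die on their own. The factor $e_{\theta_1+\alpha_2}$ appears from the first monomial, and the three $j$-terms must produce the cancelling contributions. Here one uses $\beta_1=\alpha_2$ and $[e_{\delta_j},[e_{\beta_j},e_{\theta_1}]]=e_\theta$ to match coefficients, which is exactly why the coefficient $-1$ is chosen uniformly in \eqref{e6:eq:def-w}. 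This sign bookkeeping at node $2$ is the step I expect to be the main obstacle. If it becomes unwieldy, my fallback is to cite \cite[Theorem~4.2 and Table~2]{ArakawaMoreau2018} and Garfinkle's construction, where $w$ is given as the highest-weight vector of the Joseph-ideal generator.

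For the decomposition, I would compare dimensions. We have $\dim S^2(E_6)=78\cdot79/2=3081$. Weyl's formula gives $\dim L(2\theta)=2430$ and $\dim L(\omega_1+\omega_6)=650$, and $2430+1+650=3081$. Each summand occurs in $S^2(\mathfrak g)$: the $L(2\theta)$ summand is generated by $e_\theta^2$, $L(0)$ by the Casimir, and $L(\omega_1+\omega_6)$ by $w$ from the previous step. Distinct highest weights give a direct sum, so equality of dimensions proves \eqref{e6:eq:S2-E6-decomposition}. Finally, the weight $\omega_1+\omega_6$ occurs with multiplicity one among the highest weights, so $w$ spans the highest-weight line of the last summand.
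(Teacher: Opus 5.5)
Your argument is correct, but it reaches the two substantive points by a different route from the paper.

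\emph{Decomposition.} The paper simply cites \cite{ArakawaMoreau2018} for \eqref{e6:eq:S2-E6-decomposition}. Your dimension count $1+650+2430=3081=\dim S^2(E_6)$, with the summands exhibited by $e_\theta^2$, the Casimir and $w$, is self-contained up to two Weyl-dimension evaluations. It also gives, at no extra cost, the multiplicity-one fact behind ``$w$ spans the highest-weight line''.

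\emph{Annihilation by simple root vectors.} Write $\gamma_j=\beta_j+\theta_1$ and $\varepsilon_j=\delta_j+\theta_1$. The paper's Appendix~\ref{e6:app:Joseph-root-calculation} fixes an explicit recursive choice ($e_{\gamma_1}=[e_2,e_{\theta_1}]$, $e_{\gamma_2}=[e_4,e_{\gamma_1}],\dots$) and reads the cancellations off a raising table. You instead argue directly from \eqref{e6:eq:Chevalley-normalization-w} by Jacobi identities. This verifies the claim for exactly the $w$ so defined, with no separate compatibility-after-rescaling step. The paper's table is more concrete; your route is more intrinsic.

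Three corrections to how you expect that step to go.

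\emph{Node $2$ is the easy case, not the obstacle.} For $j=2,3$ the vectors $\gamma_j+\alpha_2$ and $\varepsilon_j+\alpha_2$ have $\alpha_2$-coefficient $2$ without equalling $\theta$, so they are not roots and only $j=1$ contributes. Since $e_2$ is a multiple of $e_{\beta_1}$, and $[e_{\beta_1},e_{\varepsilon_1}]=[e_{\delta_1},e_{\gamma_1}]=e_\theta$ by Jacobi (because $\beta_1+\delta_1=\theta-\theta_1$ is not a root), you get $e_2w\propto e_\theta e_{\gamma_1}-e_{\gamma_1}e_\theta=0$ at once.

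\emph{The real sign work is at nodes $3,4,5$, and ``$e_{\alpha_i}$ commutes with $e_{\theta_1}$'' does not finish it.} That observation only yields $e_{\alpha_i}w=-(c+c')X$. The constants $c,c'$ are not fixed individually by \eqref{e6:eq:Chevalley-normalization-w}. You must extract $c+c'=0$ by applying $\operatorname{ad}e_{\alpha_i}$ to a vanishing bracket and using $[e_{\delta_j},e_{\gamma_j}]=e_\theta$.
At node $4$, use $[e_{\delta_2},e_{\gamma_1}]=0$, since $[122221]$ is not a root.
Node $3$ sends $\gamma_2\mapsto\varepsilon_3$ and $\gamma_3\mapsto\varepsilon_2$, so it is not of your form $[e_{\alpha_i},e_{\beta_j+\theta_1}]=\pm e_{\beta_{j'}+\theta_1}$. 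There, use $[e_{\beta_2},e_{\gamma_3}]=0$ together with $[e_{\beta_2},e_{\varepsilon_2}]=e_\theta$.

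\emph{Node-$4$ monomial.} The surviving monomial at node $4$ is $e_{\gamma_2}e_{\varepsilon_1}$, not $e_{\gamma_1}e_{\varepsilon_2}$.
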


\begin{proof}
By \eqref{e6:eq:root-pair-sum}, every monomial occurring in \eqref{e6:eq:def-w} has
weight $\theta+\theta_1$.  The coefficient of $e_\theta e_{\theta_1}$ is nonzero,
so $w\neq0$.  The normalizations \eqref{e6:eq:Chevalley-normalization-w}, together
with the root-string relations, imply that the positive simple-root vectors
annihilate $w$.  Finally,
\[
  \bigl\langle\theta+\theta_1,\alpha_i^\vee\bigr\rangle
  =\delta_{i1}+\delta_{i6},
\]
so the highest weight is $\omega_1+\omega_6$.  A term-by-term verification with a compatible Chevalley normalization is given in
Appendix~\ref{e6:app:Joseph-root-calculation}.  The decomposition
\eqref{e6:eq:S2-E6-decomposition} and the explicit highest-weight vector
\eqref{e6:eq:def-w} are the $E_6$ case of the construction in
\cite[Section~2 and Theorem~4.2]{ArakawaMoreau2018}.
\end{proof}

Let
\begin{equation}\label{e6:eq:def-u}
  u=\sigma(w)\in V^k(\mathfrak g)_2,
\end{equation}
where $\sigma$ is the symmetrization map \eqref{e6:eq:symmetrization-map}.  Explicitly,
\begin{align}
  u=\frac12\Bigl(&e_\theta(-1)e_{\theta_1}(-1)
                  +e_{\theta_1}(-1)e_\theta(-1) \notag\\
   &-\sum_{j=1}^{3}
     \bigl(
       e_{\beta_j+\theta_1}(-1)e_{\delta_j+\theta_1}(-1)
       +e_{\delta_j+\theta_1}(-1)e_{\beta_j+\theta_1}(-1)
     \bigr)
  \Bigr)\mathbf1.
  \label{e6:eq:explicit-u}
\end{align}
The vector $u$ is a $\mathfrak g$-highest-weight vector of conformal weight $2$
and $\mathfrak g$-weight $\omega_1+\omega_6$.

\begin{lemma}\label{e6:lem:PBW-powers-u}
For every $r\geq1$, the state $u^r$ is nonzero.  It has conformal weight $2r$,
$\mathfrak g$-weight $r(\omega_1+\omega_6)$, and PBW symbol
\begin{equation}\label{e6:eq:PBW-symbol-u-power}
  \operatorname{gr}_{2r}(u^r)=w^r
  \quad\text{in}\quad
  S\bigl(\mathfrak g[t^{-1}]t^{-1}\bigr),
\end{equation}
where $w$ is regarded as an element supported in loop degree $-1$.
\end{lemma}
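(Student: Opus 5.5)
The plan is to work throughout in the PBW filtration of $U(\mathfrak g[t^{-1}]t^{-1})$ and to reduce everything to the nonvanishing of $w^r$ in the polynomial ring $S(\mathfrak g)$. Here $u^r$ means the associative power $\widetilde\sigma(w)^r\mathbf1$, following the convention fixed in the introduction and in Section~\ref{sec:case-dl}.

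\emph{Step 1: weights and conformal weight.} Each factor $\widetilde\sigma(w)$ is a sum of products of two modes $a(-1)b(-1)$. Such a product raises the $L_0$-degree by $2$ and shifts the $\mathfrak h$-weight by $\theta+\theta_1=\omega_1+\omega_6$, because every monomial in \eqref{e6:eq:def-w} has this weight by \eqref{e6:eq:root-pair-sum}. Hence $u^r$ is homogeneous of conformal weight $2r$ and of $\mathfrak g$-weight $r(\omega_1+\omega_6)$. This is pure bookkeeping.

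\emph{Step 2: the symbol.} I will use the PBW filtration by the number of modes, whose associated graded algebra is $\operatorname{gr}U(\mathfrak g[t^{-1}]t^{-1})\cong S(\mathfrak g[t^{-1}]t^{-1})$. By the definition \eqref{e6:eq:symmetrization-map}, $\widetilde\sigma(w)$ lies in filtration degree $2$, and its image in $\operatorname{gr}_2$ is $w$, viewed in $S^2(\mathfrak g t^{-1})$. Since the associated graded map is multiplicative, the image of $\widetilde\sigma(w)^r$ in $\operatorname{gr}_{2r}$ is $w^r$. The map $U\to V^k(\mathfrak g)$, $X\mapsto X\mathbf1$, is a filtered linear isomorphism and induces the identity on associated gradeds. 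This gives \eqref{e6:eq:PBW-symbol-u-power}. The commutator corrections in reordering only lower filtration degree, so they do not affect the top symbol.

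\emph{Step 3: nonvanishing.} $S(\mathfrak g t^{-1})\cong S(\mathfrak g)$ is a polynomial ring, hence an integral domain, and $w\ne0$ by the preceding Proposition. Therefore $w^r\ne0$. A vector whose top-degree symbol is nonzero is itself nonzero, so $u^r\ne0$. The only point needing care, and the main potential obstacle, is making sure the identification of $u^r$ matches the paper's convention: it is the associative power, not the normally ordered power. I would state explicitly that Step 2 uses the associative product. As a remark, I would note that the normally ordered iterate $u_{(-1)}^{r-1}u$ has the same top symbol, because $u_{(-1)}$ acts on top symbols by multiplication by $w$.
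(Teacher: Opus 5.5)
Your proposal is correct and is essentially the paper's argument: the symbol of $u$ is $w$, multiplicativity of the PBW symbol gives $\operatorname{gr}_{2r}(u^r)=w^r$, the fact that $S(\mathfrak g[t^{-1}]t^{-1})$ is an integral domain gives $u^r\neq0$, and the weights follow by additivity. The only difference is that the paper appeals to compatibility of the PBW filtration with the $(-1)$-product, while you use the associative product in $U(\mathfrak g[t^{-1}]t^{-1})$; your closing remark covers the $(-1)$-product reading, and for $E_6$ the two powers coincide anyway by Lemma~\ref{e6:lem:regularity-u-v}.
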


\begin{proof}
The PBW symbol of $u$ is $w$.  Compatibility of the PBW filtration with the
$(-1)$-product gives \eqref{e6:eq:PBW-symbol-u-power}.  Since the symmetric algebra
$S(\mathfrak g[t^{-1}]t^{-1})$ is an integral domain and $w\neq0$, one has
$w^r\neq0$, hence $u^r\neq0$.  The assertions about conformal weight and
$\mathfrak g$-weight follow from additivity under the $(-1)$-product.
\end{proof}

\subsubsection{The affine singularity calculation}

We first record the standard affine highest-weight criterion in the form needed
below.

\begin{lemma}\label{e6:lem:affine-singularity-criterion}
Let $v\in V^k(\mathfrak g)$ be a $\mathfrak g$-highest-weight vector for the
zero-mode action.  Then $v$ is a singular vector for the affine Kac--Moody algebra
$\widehat{\mathfrak g}^{\,\prime}$ if and only if
\[
  f_\theta(1)v=0.
\]
\end{lemma}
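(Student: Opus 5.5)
The forward direction is immediate: a singular vector is annihilated by all of $\widehat{\mathfrak g}^{\,\prime}_+$, and $f_\theta(1)=f_\theta\otimes t$ belongs to that subalgebra. The content is the converse. I will use the triangular decomposition: $\widehat{\mathfrak g}^{\,\prime}_+$ is spanned by $\mathfrak n_+\otimes1$ together with $\mathfrak g\otimes t\mathbb C[t]$. The hypothesis already gives $\mathfrak n_+v=0$. So it suffices to show that the subalgebra $\mathfrak n_+\oplus\mathfrak g[t]t$ is generated, as a Lie algebra, by $\mathfrak n_+$ and the single element $f_\theta(1)$. Equivalently, it is generated by the Chevalley generators $e_1,\dots,e_\ell$ and $e_0=f_\theta\otimes t$. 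This is the standard fact that the $e_i$, $0\le i\le\ell$, generate $\widehat{\mathfrak n}_+$. The annihilator of $v$ is a Lie subalgebra, so it then contains all of $\widehat{\mathfrak n}_+$.

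For a self-contained argument of the generation step, I would proceed in two stages.

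\emph{Degree one.} Let $W=\{x\in\mathfrak g : x(1)v=0\}$. Because $[a(0),x(1)]=[a,x](1)$ and $a(0)v=0$ for $a\in\mathfrak n_+$, the space $W$ is stable under $\operatorname{ad}\mathfrak n_+$. It contains $f_\theta$, which is a lowest-weight vector of the adjoint representation. Hence $U(\mathfrak n_+)f_\theta=\mathfrak g$, since the adjoint module is irreducible and is generated by its lowest-weight vector under $\mathfrak n_+$. Therefore $\mathfrak g(1)v=0$.

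\emph{Higher degrees.} Since $\mathfrak g=[\mathfrak g,\mathfrak g]$, every element of $\mathfrak g\otimes t^{n+1}$ is a sum of commutators $[x(1),y(n)]$. Induction on $n$ then gives $\mathfrak g(n)v=0$ for all $n\ge1$, because each such commutator kills $v$ once both $x(1)$ and $y(n)$ do.

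There is one subtlety, and it is mild. The module is generated at the positive level $\widehat{\mathfrak g}^{\,\prime}$, and $K$ acts by a scalar. So central terms of the form $k(x\mid y)\delta_{m+n,0}$ never arise in brackets of two positive modes, and the commutator argument above goes through unchanged. No step here looks like a genuine obstacle. The one point to state carefully is the irreducibility and lowest-weight generation of the adjoint representation, which uses that $\mathfrak g$ is simple.
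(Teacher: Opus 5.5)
Your proof is correct and follows the same route as the paper. The annihilator of $v$ is a Lie subalgebra, $\widehat{\mathfrak n}_+$ is generated by $e_1(0),\dots,e_\ell(0)$ together with $e_0=f_\theta(1)$, and the $e_i(0)$ already kill $v$; the paper cites the generation fact as standard, while your two-stage verification (lowest-weight generation of the adjoint module in degree one, then $[\mathfrak g,\mathfrak g]=\mathfrak g$ in higher degrees) proves it directly and correctly.
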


\begin{proof}
The positive nilpotent subalgebra of the affine Kac--Moody algebra is generated
by the affine Chevalley generators
\[
  e_i(0),\quad 1\leq i\leq6,
  \qquad\text{and}\qquad
  e_0=f_\theta(1).
\]
The first six generators annihilate $v$ because $v$ is a
$\mathfrak g$-highest-weight vector.  Hence the remaining condition is precisely
$f_\theta(1)v=0$.
\end{proof}

The following computation is the point at which the level enters.

\begin{proposition}\label{e6:prop:f-theta-on-u-powers}
For every integer $r\geq1$,
\begin{equation}\label{e6:eq:f-theta-on-u-powers}
  f_\theta(1)u^r
  =r(k-r+4)\,
   \bigl(u^{r-1}\bigr)_{(-1)}e_{\theta_1}(-1)\mathbf1.
\end{equation}
Consequently, $u^r$ is an affine singular vector in $V^k(E_6)$ if and only if
\begin{equation}\label{e6:eq:singular-level-for-r}
  k=r-4.
\end{equation}
\end{proposition}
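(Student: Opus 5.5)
The plan is to prove \eqref{e6:eq:f-theta-on-u-powers} by induction on $r$, working throughout inside $U(\mathfrak g[t^{-1}]t^{-1})$ acting on the vacuum. Here $u^r=\widetilde\sigma(w)^r\mathbf1$, where $\widetilde\sigma(w)$ is the quadratic element of \eqref{e6:eq:explicit-u}. Its factors commute pairwise, so $\widetilde\sigma(w)^r\mathbf1$ coincides with the iterated $(-1)$-product used in Lemma~\ref{e6:lem:PBW-powers-u}. The single tool is the affine bracket
\[
 [f_\theta(1),x(-1)]=[f_\theta,x](0)+k(f_\theta\mid x),
\]
which gives
\[
 f_\theta(1)\widetilde\sigma(w)^r\mathbf1
 =[f_\theta(1),\widetilde\sigma(w)]\,\widetilde\sigma(w)^{r-1}\mathbf1
 +\widetilde\sigma(w)\,f_\theta(1)\widetilde\sigma(w)^{r-1}\mathbf1 .
\]
The second term is handled by the induction hypothesis. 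For the first term I will show
\[
 [f_\theta(1),\widetilde\sigma(w)]\,u^{r-1}=(k+5-2r)\,e_{\theta_1}(-1)u^{r-1}.
\]
Since $r(k-r+4)-(r-1)(k-r+5)=k+5-2r$, the recursion then closes, provided $e_{\theta_1}(-1)$ commutes with $\widetilde\sigma(w)$.

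That commutation is a root check. We need $[e_{\theta_1},e_\theta]=0$ and $[e_{\theta_1},e_{\beta_j+\theta_1}]=[e_{\theta_1},e_{\delta_j+\theta_1}]=0$. The first holds since $\theta$ is highest. For the others, $\beta_j+2\theta_1$ and $\delta_j+2\theta_1$ are not roots, because $\langle\beta_j+\theta_1,\theta_1^\vee\rangle\ge0$. All $(-1)$-modes of these factors therefore commute. This lets me move $e_{\theta_1}(-1)$ to the right and obtain $(u^{r-1})_{(-1)}e_{\theta_1}(-1)\mathbf1$.

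To compute $[f_\theta(1),\widetilde\sigma(w)]$, I expand it term by term.

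For $e_\theta(-1)e_{\theta_1}(-1)$: since $\theta_1-\theta$ is not a root, $f_\theta(1)$ commutes with $e_{\theta_1}(-1)$. Also $[f_\theta(1),e_\theta(-1)]=-\theta^\vee(0)+k$. Acting on $e_{\theta_1}(-1)u^{r-1}$, whose weight is $(r-1)(\theta+\theta_1)+\theta_1$ with $\theta$-coroot pairing $2(r-1)$, this produces $(k-2(r-1))e_{\theta_1}(-1)u^{r-1}$.

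For each pair $j$: $[f_\theta,e_{\beta_j+\theta_1}]$ is a multiple of $e_{-\delta_j}$, and symmetrically for $\delta_j$. The zero mode $e_{-\delta_j}(0)$ must be commuted past $e_{\delta_j+\theta_1}(-1)$. Only the commutator term $[e_{-\delta_j},e_{\delta_j+\theta_1}](-1)\propto e_{\theta_1}(-1)$ survives. The leftover $e_{-\delta_j}(0)u^{r-1}$ is not zero, since $u^{r-1}$ is only a highest-weight vector. I will therefore use $\mathfrak g$-equivariance to rewrite $e_{-\delta_j}(0)u^{r-1}$ through $[e_{-\delta_j}(0),\widetilde\sigma(w)]$, and show by a weight argument that the resulting terms reassemble into multiples of $e_{\theta_1}(-1)u^{r-1}$ or cancel. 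The normalizations \eqref{e6:eq:Chevalley-normalization-w} fix all structure constants, and the three pairs should contribute exactly $+3$. Together with the first term this gives $k+3-2(r-1)=k+5-2r$.

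For $r=1$ this reads $f_\theta(1)u=(k+3)e_{\theta_1}(-1)\mathbf1$, which is the Arakawa--Moreau base case and can be cross-checked against \cite[Theorem~4.2]{ArakawaMoreau2018}. The hard step is the bookkeeping for these three pair contributions, especially the cross terms from $e_{-\delta_j}(0)$ acting on $u^{r-1}$. If the direct cancellation is messy, I will first try an alternative: compute $f_\theta(1)\widetilde\sigma(w)$ as an element $(k+3)e_{\theta_1}(-1)+\sum_i y_i(-1)z_i(0)$ with $z_i\in\mathfrak n_-$, and evaluate $z_i(0)$ on $u^{r-1}$ through its PBW symbol $w^{r-1}$ in $S(\mathfrak g)$.

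For the equivalence \eqref{e6:eq:singular-level-for-r}, I apply Lemma~\ref{e6:lem:affine-singularity-criterion}: $u^r$ is a $\mathfrak g$-highest-weight vector, so it is singular iff $f_\theta(1)u^r=0$. The vector $(u^{r-1})_{(-1)}e_{\theta_1}(-1)\mathbf1$ has PBW symbol $w^{r-1}e_{\theta_1}\ne0$ in the integral domain $S(\mathfrak g[t^{-1}]t^{-1})$, exactly as in Lemma~\ref{e6:lem:PBW-powers-u}. Hence $f_\theta(1)u^r=0$ iff $r(k-r+4)=0$, that is, iff $k=r-4$.
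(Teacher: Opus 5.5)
Your strategy is sound, and once completed it is somewhat more elementary than the paper's. You commute $f_\theta(1)$ past a single factor $\widetilde\sigma(w)$ in $U(\widehat{\mathfrak g})$. The paper instead applies the vertex-algebra commutator formula to $u_{(-1)}u^r$, which forces it to evaluate the zero mode $(f_\theta(0)u)_{(0)}$ of a composite state on $u^m$. The following parts of your argument are correct:
the recursion $r(k-r+4)-(r-1)(k-r+5)=k+5-2r$; the contribution $(k-2(r-1))\,e_{\theta_1}(-1)u^{r-1}$ of the $e_\theta e_{\theta_1}$ term; the commutation of $e_{\theta_1}(-1)$ with $\widetilde\sigma(w)$; and the PBW-symbol argument for $k=r-4$.

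The gap is exactly at the step you call hard. You never establish that the three pairs contribute $+3$ for general $r$, and the claim you make there is false: the leftovers $e_{-\delta_j}(0)u^{r-1}$ and $e_{-\beta_j}(0)u^{r-1}$ (i.e.\ $f_{\delta_j}(0)u^{r-1}$, $f_{\beta_j}(0)u^{r-1}$ up to scalars) \emph{vanish}. The vector $u^{r-1}$ is a $\mathfrak g$-highest-weight vector of weight $(r-1)(\omega_1+\omega_6)$ in the finite-dimensional $\mathfrak g$-module $V^k(\mathfrak g)_{2r-2}$. Every $\beta_j,\delta_j$ has zero $\alpha_1$- and $\alpha_6$-coefficient, so $\langle(r-1)(\omega_1+\omega_6),\beta_j^\vee\rangle=\langle(r-1)(\omega_1+\omega_6),\delta_j^\vee\rangle=0$. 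Since $e_{\beta_j}(0)$ and $e_{\delta_j}(0)$ kill $u^{r-1}$, the corresponding $\mathfrak{sl}_2$-strings are trivial, and the leftovers are zero. The paper uses this same orthogonality in \eqref{e6:eq:other-zero-modes-on-u}, there only for $u$ itself. Your proposed detour through $[e_{-\delta_j}(0),\widetilde\sigma(w)]=\widetilde\sigma(e_{-\delta_j}\cdot w)$ would also return zero, because $e_{-\delta_j}\cdot w=0$ for the same reason. So there is nothing to reassemble.

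With this observation the bookkeeping is immediate. Normalize $f_\rho\in\mathfrak g_{-\rho}$ by $(e_\rho\mid f_\rho)=1$. Use \eqref{e6:eq:key-double-brackets}, i.e.\ $[f_\theta,e_{\beta_j+\theta_1}]=-f_{\delta_j}$ and $[f_{\delta_j},e_{\delta_j+\theta_1}]=e_{\theta_1}$ together with the symmetric relations. Then one has in $U(\widehat{\mathfrak g})$
\[
 [f_\theta(1),\widetilde\sigma(w)]
 =\bigl(k+3-\theta^\vee(0)\bigr)e_{\theta_1}(-1)
 +\sum_{j=1}^{3}\bigl(e_{\delta_j+\theta_1}(-1)f_{\delta_j}(0)+e_{\beta_j+\theta_1}(-1)f_{\beta_j}(0)\bigr).
\]
Applied to $u^{r-1}$, the sum drops out and $\theta^\vee(0)$ acts on $e_{\theta_1}(-1)u^{r-1}$ by $2(r-1)$. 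This gives exactly $(k+5-2r)\,e_{\theta_1}(-1)u^{r-1}$. Applied to $\mathbf1$, the same identity gives the base case $f_\theta(1)u=(k+3)e_{\theta_1}(-1)\mathbf1$. So the constant $3$ is a single $r$-independent number, fixed once by the double brackets or by the Arakawa--Moreau base case. With this one observation supplied, your induction closes. It yields a proof that avoids the paper's evaluation of $(f_\theta(0)u)_{(0)}u^m$ via the derivation property.
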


\begin{proof}
The root relations following from \eqref{e6:eq:Chevalley-normalization-w} give
\begin{equation}\label{e6:eq:key-double-brackets}
  [[f_\theta,e_{\beta_j+\theta_1}],e_{\delta_j+\theta_1}]
  =[[f_\theta,e_{\delta_j+\theta_1}],e_{\beta_j+\theta_1}]
  =-e_{\theta_1}
\end{equation}
for $j=1,2,3$.  They also imply
\begin{equation*}
  (e_{\theta_1}(-1)\mathbf1)_{(-1)}u
  =u_{(-1)}e_{\theta_1}(-1)\mathbf1.
\end{equation*}
Since $u$ has weight $\theta+\theta_1$, one has
\begin{equation*}
  [f_\theta,e_\theta](0)u=-2u.
\end{equation*}
Furthermore, for every $j$,
\begin{equation}\label{e6:eq:other-zero-modes-on-u}
  [f_\theta,e_{\beta_j+\theta_1}](0)u
  =[f_\theta,e_{\delta_j+\theta_1}](0)u=0.
\end{equation}
Indeed, the corresponding roots, when nonzero, are orthogonal to the highest
weight $\theta+\theta_1$.

Applying the affine commutator formula to \eqref{e6:eq:explicit-u}, using
\eqref{e6:eq:key-double-brackets}--\eqref{e6:eq:other-zero-modes-on-u}, and then
inducting on $r$, gives
\begin{align*}
  f_\theta(1)u^r
  &=\sum_{j=0}^{r-1}(k+3-2j)\,
    \bigl(u^{r-1}\bigr)_{(-1)}e_{\theta_1}(-1)\mathbf1\\
  &=r(k-r+4)\,
    \bigl(u^{r-1}\bigr)_{(-1)}e_{\theta_1}(-1)\mathbf1.
\end{align*}
A complete mode-by-mode induction is recorded in
Appendix~\ref{e6:app:affine-mode-calculation}.  This is the $E_6$ specialization of
the calculation in the proof of
\cite[Theorem~4.2]{ArakawaMoreau2018}.

By Lemma~\ref{e6:lem:PBW-powers-u}, the PBW symbol of the state on the right-hand
side of \eqref{e6:eq:f-theta-on-u-powers} is
$w^{r-1}e_{\theta_1}$, which is nonzero.  Therefore
$f_\theta(1)u^r=0$ if and only if $k-r+4=0$.  The conclusion now follows from
Lemma~\ref{e6:lem:affine-singularity-criterion}.
\end{proof}

\subsubsection{The singular vectors at levels \texorpdfstring{$-2$}{-2} and
\texorpdfstring{$-1$}{-1}}

Recall from \eqref{e6:eq:distinguished-levels} that $k_n=n-3$ for $n=1,2$.
Define
\begin{equation*}
  s_n=u^{n+1}\in V^{k_n}(E_6),
  \qquad n=1,2.
\end{equation*}

\begin{theorem}
The states $s_1$ and $s_2$ are nonzero affine singular vectors.  More precisely:
\begin{enumerate}
  \item At level $k_1=-2$,
  \[
    s_1=u^2
  \]
  has conformal weight $4$ and $E_6$-highest weight
  $2\omega_1+2\omega_6$.

  \item At level $k_2=-1$,
  \[
    s_2=u^3
  \]
  has conformal weight $6$ and $E_6$-highest weight
  $3\omega_1+3\omega_6$.
\end{enumerate}
\end{theorem}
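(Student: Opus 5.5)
This theorem is a direct specialization of Proposition~\ref{e6:prop:f-theta-on-u-powers} together with Lemma~\ref{e6:lem:PBW-powers-u}, so the plan is to instantiate those results at $r=n+1$ and read off the numerical data.

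First, singularity. Take $r=n+1$ in \eqref{e6:eq:singular-level-for-r}. The proposition says that $u^r$ is an affine singular vector in $V^k(E_6)$ exactly when $k=r-4$. For $n=1$ we have $r=2$, so the singular level is $k=-2=k_1$. For $n=2$ we have $r=3$, so the singular level is $k=-1=k_2$. These agree with \eqref{e6:eq:distinguished-levels}.

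Concretely, \eqref{e6:eq:f-theta-on-u-powers} gives $f_\theta(1)u^r=r(k-r+4)(u^{r-1})_{(-1)}e_{\theta_1}(-1)\mathbf1$, and the scalar $r(k-r+4)$ vanishes at these levels. The remaining ingredient is that $u^r$ is a $\mathfrak g$-highest-weight vector for the zero modes, so that Lemma~\ref{e6:lem:affine-singularity-criterion} applies. I would justify this from the fact that $w$ is a highest-weight vector. The zero modes $e_i(0)$ act on the PBW word $\widetilde\sigma(w)^r$ by derivations that commute with the $(-1)$-modes up to $(-1)$-modes. Hence $e_i(0)u^r=\sum(\text{products with one factor } e_i(0)\cdot\widetilde\sigma(w))\mathbf1$, and each such factor vanishes because $e_i\cdot w=0$ in $S^2(\mathfrak g)$.

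Second, nonvanishing and the numerical data. Lemma~\ref{e6:lem:PBW-powers-u} shows that $u^r\neq0$, that its conformal weight is $2r$, and that its $\mathfrak g$-weight is $r(\omega_1+\omega_6)$. For $r=2$ this gives conformal weight $4$ and weight $2\omega_1+2\omega_6$. For $r=3$ it gives conformal weight $6$ and weight $3\omega_1+3\omega_6$. Since the vector is a highest-weight vector, its weight is its $E_6$-highest weight.

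The only delicate point is bookkeeping. The paper insists that powers mean associative PBW powers, whereas Lemma~\ref{e6:lem:PBW-powers-u} refers to the $(-1)$-product. I would note that the PBW symbol computation holds in either reading, since both have leading symbol $w^r$ in the integral domain $S(\mathfrak g[t^{-1}]t^{-1})$. The singularity formula is taken as established in Proposition~\ref{e6:prop:f-theta-on-u-powers} and its appendix. I therefore expect no genuine obstacle: the content lies in the cited proposition, and this theorem merely records its specialization.
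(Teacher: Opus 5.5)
Your proposal is correct and follows the paper's proof: you specialize Proposition~\ref{e6:prop:f-theta-on-u-powers} at $r=n+1$ to get singularity exactly at $k=n-3=k_n$, and you read off nonvanishing, conformal weight $2r$, and weight $r(\omega_1+\omega_6)$ from Lemma~\ref{e6:lem:PBW-powers-u}. Your extra check that each $e_i(0)$ kills $u^r$ is sound, since $\operatorname{ad}e_i(0)$ is a derivation of $U(\mathfrak g[t^{-1}]t^{-1})$ and the symmetrization map is $\mathfrak g$-equivariant; it makes explicit the highest-weight hypothesis of Lemma~\ref{e6:lem:affine-singularity-criterion}, which the paper uses without stating.
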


\begin{proof}
For $r=n+1$, equation \eqref{e6:eq:singular-level-for-r} becomes
\[
  k=r-4=n-3=k_n.
\]
Thus Proposition~\ref{e6:prop:f-theta-on-u-powers} shows that $s_n$ is singular in
$V^{k_n}(E_6)$.  Its nonvanishing and its conformal and finite-dimensional
highest weights follow from Lemma~\ref{e6:lem:PBW-powers-u}.
\end{proof}

Let $I_n$ be the graded vertex-algebra ideal generated by $s_n$.  Since
$V^{k_n}(E_6)$ is generated by the affine currents $a(-1)\mathbf1$, the
subspace
\begin{equation*}
  U(\widehat{\mathfrak g}^{\,\prime})s_n
\end{equation*}
is stable under all modes of the generating currents and hence is a
vertex-algebra ideal.  Conversely, every vertex-algebra ideal containing $s_n$
is stable under those modes.  Therefore
\begin{equation}\label{e6:eq:In-affine-submodule}
  I_n
  =\langle s_n\rangle
  =U(\widehat{\mathfrak g}^{\,\prime})s_n.
\end{equation}
Set
\begin{equation}\label{e6:eq:def-V-tilde-n}
  \widetilde V_n
  =V^{k_n}(E_6)/I_n
  =V^{n-3}(E_6)/\langle u^{n+1}\rangle,
  \qquad n=1,2.
\end{equation}
Because $s_n$ is a positive-energy affine singular vector, the
$\widehat{\mathfrak g}^{\,\prime}$-submodule in
\eqref{e6:eq:In-affine-submodule} is proper.  Indeed, the PBW triangular
decomposition and the singularity of $s_n$ show that every vector in
$U(\widehat{\mathfrak g}^{\,\prime})s_n$ is a linear combination of
monomials in zero modes from the negative finite nilpotent subalgebra and
negative loop modes applied to $s_n$.  Such operators preserve or increase
the standard loop-degree grading.  Since $s_n$ has positive degree $2(n+1)$,
the vacuum vector cannot belong to this submodule.  Hence $I_n$ is contained
in the unique maximal graded ideal of $V^{k_n}(E_6)$, and there is a natural
surjective homomorphism
\begin{equation}\label{e6:eq:V-tilde-to-simple}
  \pi_n:\widetilde V_n\twoheadrightarrow L_{k_n}(E_6).
\end{equation}
The remaining task is to prove that $\pi_n$ is injective.  In the next section we
first identify the minimal Drinfeld--Sokolov reductions of the quotients
$\widetilde V_n$.

\subsection{Minimal \texorpdfstring{$W$}{W}-algebra reductions of the affine quotients}

We now apply the minimal Drinfeld--Sokolov reduction to the quotients
$\widetilde V_n$ introduced in \eqref{e6:eq:def-V-tilde-n}.  The main point is to
identify the BRST class of the affine singular vector $s_n=u^{n+1}$.  We shall
obtain a canonical surjection from an explicit quotient of the universal minimal
$W$-algebra onto $H^0_{\mathrm{DS},f_\theta}(\widetilde V_n)$.  We deliberately do not assert at this stage that this surjection is an
isomorphism.  In the next section, a cyclicity theorem for the reduction of
highest-weight submodules will identify its kernel, after which an independent
Ramond-sector argument will prove simplicity.

\subsubsection{The BRST classes of the affine singular vectors}

Let $\mathbf1_{\mathrm{gh}}$ denote the vacuum vector of the charged and neutral
fermionic systems in the BRST complex.  Since $s_n$ is an affine singular vector,
the vector
\[
  s_n\otimes\mathbf1_{\mathrm{gh}}
  \in C^0_{\mathrm{DS},f_\theta}\bigl(V^{k_n}(\mathfrak g)\bigr)
\]
is BRST closed; this follows directly from the BRST differential for a singular
highest-weight vector, as observed in the first sentence of the proof of
\cite[Lemma~7.3]{KacWakimoto2004}.  We write
\begin{equation*}
  \xi_n
  =\bigl[s_n\otimes\mathbf1_{\mathrm{gh}}\bigr]
  \in
  \mathcal W^{k_n}(\mathfrak g,f_\theta),
  \qquad n=1,2.
\end{equation*}

Recall that $\theta_1=\vartheta$ is the highest root of
$\mathfrak g^\natural\cong\mathfrak{sl}_6$.  The following proposition identifies
$\xi_n$ up to a nonzero scalar.  This is sufficient for all ideal-theoretic
arguments below.

\begin{proposition}\label{e6:prop:BRST-image-s-n}
For $n=1,2$, there exists a constant $c_n\in\mathbb C^\times$ such that
\begin{equation}\label{e6:eq:BRST-image-s-n}
  \xi_n
  =c_n\,
   \mathopen{:}J^{\{e_\vartheta\}}{}^{\,n+1}\mathclose{:}
  \quad\text{in}\quad
  \mathcal W^{k_n}(E_6,f_\theta).
\end{equation}
In particular, $\xi_n\neq0$.
\end{proposition}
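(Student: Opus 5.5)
The argument repeats the one used for $D_\ell$ and $D_4$ (Proposition~\ref{dl:prop:Dl-exact-reduced-singular-vectors} and Proposition~\ref{d4:prop:D4-exact-reduced-singular-vectors}). It has three steps: nonvanishing of the class, computation of its bigrading, and the extremal PBW line.

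\emph{Step 1: $\xi_n\neq0$.} The affine highest weight of $s_n=u^{n+1}$ is
\[
\widehat\lambda_n=k_n\Lambda_0+(n+1)(\omega_1+\omega_6)-2(n+1)\delta .
\]
Since $\omega_1+\omega_6=\theta+\theta_1$ and $\theta_1\perp\theta$, we have $\langle\theta+\theta_1,\theta^\vee\rangle=2$. Therefore
\[
\widehat\lambda_n(\alpha_0^\vee)=k_n-2(n+1)<0 .
\]
The level $k_n$ is a negative integer, so Lemma~\ref{lem:common-reduced-singular-generator} applies directly. It gives $[s_n]_{\mathrm{DS}}\ne0$ in $\mathcal W^{k_n}(E_6,f_\theta)$, and it also shows that $H^0_{\mathrm{DS},f_\theta}(I_n)$ is cyclic on this class. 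We will need the cyclicity later, when Proposition~\ref{prop:common-exact-generated-quotient} is applied.

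\emph{Step 2: bigrading of $\xi_n$.} The BRST representative is $s_n\otimes\mathbf1_{\mathrm{gh}}$, and the ghost vacuum has $\mathfrak h^\natural$-weight $0$. Because $\theta$ restricts to zero on $\mathfrak h^\natural$ and $\theta_1=\vartheta$, the $\mathfrak g^\natural$-weight of $\xi_n$ is $(n+1)\vartheta$.

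For the conformal weight, the minimal Hamiltonian acts on the representative by $d-\mu(x_\theta)$. We have $\theta(x_\theta)=1$ and $\theta_1(x_\theta)=0$, so
\[
\Delta_W(\xi_n)=2(n+1)-(n+1)=n+1 .
\]

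\emph{Step 3: the extremal line.} We apply Proposition~\ref{prop:common-extremal-PBW-line} with $\mathfrak a=\mathfrak g^\natural\cong A_5$ (simply laced), $\eta=\vartheta$ and $r=n+1$. The only hypothesis to check is~\eqref{eq:common-extremal-G-bound}: every weight $\nu$ of $\mathfrak g_{-1/2}\cong\bigwedge^3\mathbb C^6$ must satisfy $\langle\nu,\vartheta^\vee\rangle\le1$.

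In $\varepsilon$-coordinates, $\vartheta^\vee=\varepsilon_1-\varepsilon_6$, and the weights of $\bigwedge^3\mathbb C^6$ are $\varepsilon_a+\varepsilon_b+\varepsilon_c$ modulo the trace. Each such weight pairs with $\vartheta^\vee$ to a value in $\{-1,0,1\}$. This is the only type-dependent input, and it is routine. I expect the main care to go into making sure the restriction of $\vartheta$ to $\mathfrak h^\natural$ is exactly the $A_5$ highest root under the identification in Appendix~\ref{e6:app:explicit-calculations}.

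With the hypothesis checked, the proposition gives
\[
\mathcal W^{k_n}_{n+1}[(n+1)\vartheta]=\mathbb C\,{:}J^{\{e_\vartheta\}}{}^{\,n+1}{:} .
\]
By Step~2, $\xi_n$ lies in this one-dimensional space, and by Step~1 it is nonzero. Hence $\xi_n=c_n\,{:}J^{\{e_\vartheta\}}{}^{\,n+1}{:}$ with $c_n\in\mathbb C^\times$.
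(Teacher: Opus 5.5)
Your proposal is correct and follows the paper's proof essentially step for step. Both arguments get nonvanishing (and cyclicity) from Lemma~\ref{lem:common-reduced-singular-generator} via $\widehat\lambda_n(\alpha_0^\vee)=-n-5<0$, place $\xi_n$ in bidegree $\bigl(n+1,(n+1)\vartheta\bigr)$, and conclude with the extremal PBW line of Proposition~\ref{prop:common-extremal-PBW-line}. Your $\varepsilon$-coordinate check that $\langle\nu,\vartheta^\vee\rangle\in\{-1,0,1\}$ on $\bigwedge^3\mathbb C^6$ is just a concrete version of the paper's observation that this pairing is bounded by its value $\langle\varpi_3,\vartheta^\vee\rangle=1$ at the highest weight.
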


\begin{proof}
Set \(r=n+1\).  The ghost vacuum has conformal weight zero, and every
monomial in \(u\) has \(\operatorname{ad}x\)-degree one because
\(\theta(x)=1\) and \(\vartheta(x)=0\).  Hence \(\xi_n\) has reduced
conformal weight \(2r-r=r\).  Its finite
\(\mathfrak g^\natural\cong A_5\)-weight is \(r\vartheta\), and it is a
highest-weight vector.

The affine highest weight of $s_n$ is
\[
 \widehat\lambda_n
 =k_n\Lambda_0+r(\theta+\vartheta)-2r\delta.
\]
Since $\vartheta$ belongs to the root system of
$\mathfrak g^\natural$, it is orthogonal to $\theta$, and therefore
\[
 \widehat\lambda_n(\alpha_0^\vee)
 =k_n-r\langle\theta+\vartheta,\theta^\vee\rangle
 =(n-3)-2(n+1)=-n-5<0.
\]
Applying Lemma~\ref{lem:common-reduced-singular-generator} to
$I_n=U(\widehat{\mathfrak g}^{\prime})s_n$ gives
$\xi_n=[s_n]_{\mathrm{DS}}\ne0$; it also gives the cyclicity of
$H^0_{\mathrm{DS},f_\theta}(I_n)$ that will be used below.

Finally,
\(\mathfrak g_{-1/2}\cong L_{A_5}(\varpi_3)\), and all its weights \(\nu\)
satisfy
\(\langle\nu,\vartheta^\vee\rangle\le
\langle\varpi_3,\vartheta^\vee\rangle=1\).  Therefore
Proposition~\ref{prop:common-extremal-PBW-line}, applied to \(A_5\), shows
that the simultaneous weight space \((r,r\vartheta)\) is the line spanned
by \(\mathopen{:}J^{\{e_\vartheta\}}{}^r\mathclose{:}\).  Since $\xi_n$
is nonzero, this proves \eqref{e6:eq:BRST-image-s-n}, with \(c_n\ne0\).
\end{proof}

\begin{remark}
The constants $c_n$ depend on the choices of Chevalley root vectors and on the
normalization of the BRST representatives.  Their values are irrelevant: replacing
a generator of a vertex-algebra ideal by a nonzero scalar multiple does not change
the ideal.  Thus no additional normalization calculation is needed later.
\end{remark}

\subsubsection{Exact reduction sequences}

Let $I_n=\langle s_n\rangle$ as in Section~\ref{e6:sec:singular-vectors}.  Applying the exactness of minimal reduction recalled in
Subsection~\ref{subsec:common-DS} to the short exact sequence
\[
  0\longrightarrow I_n
   \longrightarrow V^{k_n}(E_6)
   \longrightarrow \widetilde V_n
   \longrightarrow0
\]
gives an exact sequence of
$\mathcal W^{k_n}(E_6,f_\theta)$-modules
\begin{equation}\label{e6:eq:DS-exact-sequence-I-n}
  0\longrightarrow H^0_{\mathrm{DS},f_\theta}(I_n)
   \longrightarrow \mathcal W^{k_n}(E_6,f_\theta)
   \longrightarrow
   H^0_{\mathrm{DS},f_\theta}(\widetilde V_n)
   \longrightarrow0.
\end{equation}
The first term is a vertex-algebra ideal of the universal minimal $W$-algebra.
By Proposition~\ref{e6:prop:BRST-image-s-n}, it contains the normally ordered power
$\mathopen{:}J^{\{e_\vartheta\}}{}^{n+1}\mathclose{:}$.  We therefore introduce
\begin{equation}\label{e6:eq:def-W-tilde-n}
  \widetilde{\mathcal W}_n
  =\mathcal W^{k_n}(E_6,f_\theta)
   \Big/
   \left\langle
     \mathopen{:}J^{\{e_\vartheta\}}{}^{\,n+1}\mathclose{:}
   \right\rangle,
  \qquad n=1,2.
\end{equation}

\begin{theorem}\label{e6:thm:canonical-surjection-reduction}
For $n=1,2$, there is a canonical surjective homomorphism of conformal vertex
algebras
\begin{equation}\label{e6:eq:rho-n}
  \rho_n:\widetilde{\mathcal W}_n
  \twoheadrightarrow
  H^0_{\mathrm{DS},f_\theta}(\widetilde V_n).
\end{equation}
The target is nonzero.  More precisely, the natural affine quotient map
$\pi_n$ in \eqref{e6:eq:V-tilde-to-simple} induces a surjection
\begin{equation}\label{e6:eq:reduced-map-to-simple-W}
  H^0_{\mathrm{DS},f_\theta}(\widetilde V_n)
  \twoheadrightarrow
  \mathcal W_{k_n}(E_6,f_\theta),
\end{equation}
and the vertex algebra on the right is nonzero and simple.
\end{theorem}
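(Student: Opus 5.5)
The plan is to read everything off the exact sequence \eqref{e6:eq:DS-exact-sequence-I-n}, then use exactness once more for the map to the simple quotient. First, the map $\rho_n$ comes from \eqref{e6:eq:DS-exact-sequence-I-n}, which identifies $H^0_{\mathrm{DS},f_\theta}(\widetilde V_n)$ with $\mathcal W^{k_n}(E_6,f_\theta)/H^0_{\mathrm{DS},f_\theta}(I_n)$. The image of $H^0_{\mathrm{DS},f_\theta}(I_n)$ is a vertex-algebra ideal, being an adjoint submodule. By Proposition~\ref{e6:prop:BRST-image-s-n} this image contains $\xi_n=c_n:J^{\{e_\vartheta\}}{}^{n+1}:$ with $c_n\ne0$. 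Hence the ideal $\langle :J^{\{e_\vartheta\}}{}^{n+1}:\rangle$ lies in the kernel of $\mathcal W^{k_n}(E_6,f_\theta)\twoheadrightarrow H^0_{\mathrm{DS},f_\theta}(\widetilde V_n)$. Therefore that surjection factors through $\widetilde{\mathcal W}_n$, giving $\rho_n$. It is a homomorphism of conformal vertex algebras because the BRST functor maps the affine quotient map to a vertex-algebra map compatible with the Kac--Wakimoto conformal vectors. Surjectivity is inherited from the right end of the exact sequence. (Cyclicity, from Lemma~\ref{lem:common-reduced-singular-generator}, even gives an isomorphism via Proposition~\ref{prop:common-exact-generated-quotient}, but only surjectivity is claimed.)

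Second, apply the reduction functor to $0\to\ker\pi_n\to\widetilde V_n\to L_{k_n}(E_6)\to0$. This sequence lies in the exactness category by Lemma~\ref{lem:common-graded-subquotient-category-O}. That lemma applies because $\widetilde V_n$ and $L_{k_n}(E_6)$ are graded quotients of $V^{k_n}(E_6)$ whose homogeneous pieces are finite-dimensional $\mathfrak g$-modules. Exactness then yields the surjection \eqref{e6:eq:reduced-map-to-simple-W} onto $H^0_{\mathrm{DS},f_\theta}(L_{k_n}(E_6))=\mathcal W_{k_n}(E_6,f_\theta)$.

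Third, for nonvanishing I would invoke Lemma~\ref{lem:common-DS-detection} for $M=L_{k_n}(E_6)$, or equivalently Corollary~\ref{e6:cor:negative-level-nonvanishing} with $\lambda=0$. Since $k_n\in\{-2,-1\}$, the vacuum has $\widehat\lambda(\alpha_0^\vee)=k_n<0$, so \eqref{eq:common-DS-nonvanishing} applies. This nonvanishing then propagates back along the surjections, so $H^0_{\mathrm{DS},f_\theta}(\widetilde V_n)\ne0$.

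The step I expect to need the most care is the simplicity of $\mathcal W_{k_n}(E_6,f_\theta)$. I would argue that $H^0_{\mathrm{DS},f_\theta}(L(\widehat\lambda))$ is irreducible whenever nonzero, by the irreducibility part of \cite[Theorem~6.7.4]{Arakawa2005} cited with \eqref{eq:common-DS-nonvanishing}. For the vacuum module, an irreducible adjoint module is the same as a simple vertex algebra, because vertex-algebra ideals are exactly the submodules. This also justifies the notation $\mathcal W_{k_n}$ as the simple quotient.
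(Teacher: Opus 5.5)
Your proof is correct and follows the paper's argument. Like the paper, you factor the surjection coming from \eqref{e6:eq:DS-exact-sequence-I-n} through $\widetilde{\mathcal W}_n$ using Proposition~\ref{e6:prop:BRST-image-s-n}, reduce $\pi_n$ exactly, and obtain nonvanishing from the negative-level criterion. Your explicit appeal to the irreducibility part of \cite[Theorem~6.7.4]{Arakawa2005} for the simplicity of $\mathcal W_{k_n}(E_6,f_\theta)$ fills in a step the paper leaves implicit: it cites Corollary~\ref{e6:cor:negative-level-nonvanishing}, which by itself only asserts nonvanishing.
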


\begin{proof}
The inclusion
\[
  \left\langle
    \mathopen{:}J^{\{e_\vartheta\}}{}^{n+1}\mathclose{:}
  \right\rangle
  \subset H^0_{\mathrm{DS},f_\theta}(I_n)
\]
follows from Proposition~\ref{e6:prop:BRST-image-s-n}.  Hence the quotient map in
\eqref{e6:eq:DS-exact-sequence-I-n} factors through the quotient
\eqref{e6:eq:def-W-tilde-n}, giving the surjection \eqref{e6:eq:rho-n}.

Next apply the exact reduction functor to the surjection
$\pi_n:\widetilde V_n\twoheadrightarrow L_{k_n}(E_6)$.  This gives
\eqref{e6:eq:reduced-map-to-simple-W}.  By
Corollary~\ref{e6:cor:negative-level-nonvanishing},
$\mathcal W_{k_n}(E_6,f_\theta)$ is nonzero and simple.  Therefore both
$H^0_{\mathrm{DS},f_\theta}(\widetilde V_n)$ and
$\widetilde{\mathcal W}_n$ are nonzero.
\end{proof}

\begin{remark}
Theorem~\ref{e6:thm:canonical-surjection-reduction} uses only the inclusion of the
explicit class \eqref{e6:eq:BRST-image-s-n} in
$H^0_{\mathrm{DS},f_\theta}(I_n)$.  Lemma~\ref{lem:common-reduced-singular-generator}
also supplies the cyclicity of this reduced highest-weight submodule.  In the
next section we use that stronger conclusion to identify the kernel exactly;
the subsequent simplicity proof is independent and uses the Ramond Zhu
algebra and Li spectral flow.
\end{remark}

\subsubsection{The integrable affine \texorpdfstring{$A_5$}{A5} subalgebra}

At $k=k_n=n-3$, the affine level of the weight-one
$\mathfrak g^\natural\cong\mathfrak{sl}_6$ currents is $k^\natural=n$ by
\eqref{e6:eq:natural-level}.  Thus there is a canonical homomorphism
\begin{equation}\label{e6:eq:Vn-A5-to-Wtilde}
  V^n(\mathfrak{sl}_6)\longrightarrow\widetilde{\mathcal W}_n,
  \qquad
  e_\vartheta(-1)\mathbf1
  \longmapsto J^{\{e_\vartheta\}}.
\end{equation}
At the positive integral level $n$, the maximal graded ideal of
$V^n(\mathfrak{sl}_6)$ is generated by the standard integrability singular vector
$e_\vartheta(-1)^{n+1}\mathbf1$; see, for example, \cite{Kac1998}.  The defining
relation of $\widetilde{\mathcal W}_n$ therefore makes
\eqref{e6:eq:Vn-A5-to-Wtilde} factor through the simple integrable affine vertex
algebra $L_n(\mathfrak{sl}_6)$.

\begin{corollary}
For $n=1,2$, the weight-one currents define an embedding
\begin{equation}\label{e6:eq:Ln-A5-embedding}
  L_n(\mathfrak{sl}_6)
  \hookrightarrow
  \widetilde{\mathcal W}_n.
\end{equation}
The same integrable affine vertex algebra embeds into every nonzero quotient of
$\widetilde{\mathcal W}_n$.
\end{corollary}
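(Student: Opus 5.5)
The plan is to obtain the embedding from the factorization already established just before the statement, and then verify injectivity using simplicity of the source.

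First, since the natural current level of the $\mathfrak g^\natural\cong\mathfrak{sl}_6$ currents at $k=k_n$ is $k^\natural=n$ by \eqref{e6:eq:natural-level}, the weight-one currents $J^{\{a\}}$, $a\in\mathfrak g^\natural$, satisfy the $\lambda$-brackets of $V^n(\mathfrak{sl}_6)$. The universal property of the universal affine vertex algebra then gives the homomorphism \eqref{e6:eq:Vn-A5-to-Wtilde}. By definition \eqref{e6:eq:def-W-tilde-n}, the image of $e_\vartheta(-1)^{n+1}\mathbf1$ in $\widetilde{\mathcal W}_n$ is $\mathopen{:}J^{\{e_\vartheta\}}{}^{n+1}\mathclose{:}$. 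Here the associative power of $e_\vartheta(-1)$ matches the iterated $(-1)$-product, because $e_\vartheta(-1)$ maps to $(J^{\{e_\vartheta\}})_{(-1)}$ (compare Lemma~\ref{dl:lem:Dl-Zhu-images-current-powers}, where $[J_\lambda J]=0$ for a root current). This image vanishes in the quotient. Since at positive integral level $n$ the maximal graded ideal of $V^n(\mathfrak{sl}_6)$ is generated by this vector \cite{Kac1998}, the map factors through $L_n(\mathfrak{sl}_6)$.

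Second, the induced map $L_n(\mathfrak{sl}_6)\to\widetilde{\mathcal W}_n$ sends $\mathbf1\mapsto\mathbf1$. The target is nonzero by Theorem~\ref{e6:thm:canonical-surjection-reduction}, so $\mathbf1\neq0$ there and the map is nonzero. Its kernel is an ideal of the simple vertex algebra $L_n(\mathfrak{sl}_6)$ not containing the vacuum, hence zero. This gives the embedding \eqref{e6:eq:Ln-A5-embedding}.

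For the final assertion, let $\widetilde{\mathcal W}_n\twoheadrightarrow\overline{\mathcal W}$ be a surjection onto a nonzero quotient. The composite $L_n(\mathfrak{sl}_6)\to\overline{\mathcal W}$ is again unital, and $\mathbf1\ne0$ in a nonzero vertex algebra, so the same simplicity argument shows it is injective. The only point needing care is to interpret "quotient" as a quotient by a proper ideal, so that the vacuum survives. No step is a genuine obstacle; the mildest subtlety is the identification of $e_\vartheta(-1)^{n+1}\mathbf1$ with the normally ordered power, which follows from the regular self-OPE of $J^{\{e_\vartheta\}}$.
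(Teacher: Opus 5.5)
Your proposal is correct and follows essentially the same route as the paper: you factor the current homomorphism through $L_n(\mathfrak{sl}_6)$ using the integrability relation, then deduce injectivity into $\widetilde{\mathcal W}_n$ and into any nonzero quotient from unitality and the simplicity of $L_n(\mathfrak{sl}_6)$. One simplification: any vertex-algebra homomorphism sends $e_\vartheta(-1)^{n+1}\mathbf1$ to $(J^{\{e_\vartheta\}}_{(-1)})^{n+1}\mathbf1$, which is exactly the paper's $:J^{\{e_\vartheta\}}{}^{n+1}:$, so the regular self-OPE is not needed for that identification.
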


\begin{proof}
The factor map from $L_n(\mathfrak{sl}_6)$ to
$\widetilde{\mathcal W}_n$ is nonzero because it sends the vacuum to the vacuum.
Since $L_n(\mathfrak{sl}_6)$ is simple, the map is injective.  If
$\widetilde{\mathcal W}_n\twoheadrightarrow A$ is a nonzero vertex-algebra
quotient, its restriction to $L_n(\mathfrak{sl}_6)$ is again unital and hence
nonzero, so it is injective for the same reason.
\end{proof}

The embeddings \eqref{e6:eq:Ln-A5-embedding} provide the integrable affine
$A_5$ structure needed for the spectral-flow analysis in the next section.

\subsection{Ramond reduction, spectral flow, and simplicity}
\label{e6:sec:Ramond-spectral-flow-simplicity}

We now prove that the two explicit quotients
$\widetilde{\mathcal W}_n$, $n=1,2$, are simple.  The proof has four parts.  We
first strengthen Theorem~\ref{e6:thm:canonical-surjection-reduction} to an
isomorphism and deduce lisse-ness.  We then derive a Casimir trace identity in
the Ramond Zhu algebra, determine the relevant finite-dimensional
$A_5$-weights, and finally use Li spectral flow to force every nonzero ideal to
contain the vacuum.

\subsubsection{Exact identification and lisse-ness}
\label{e6:subsec:exact-identification-lisse}

Recall that $I_n\subset V^{k_n}(E_6)$ is the vertex-algebra ideal generated by
$s_n$.  In the proof of Proposition~\ref{e6:prop:BRST-image-s-n} we verified
\[
 \widehat\lambda_n(\alpha_0^\vee)=-n-5<0.
\]
Hence Lemma~\ref{lem:common-reduced-singular-generator} shows that
$H^0_{\mathrm{DS},f_\theta}(I_n)$ is cyclic, generated by
$[s_n]_{\mathrm{DS}}$.  Proposition~\ref{e6:prop:BRST-image-s-n} therefore gives
\begin{equation*}
  H^0_{\mathrm{DS},f_\theta}(I_n)
  =
  \left\langle
    \mathopen{:}J^{\{e_\vartheta\}}{}^{\,n+1}\mathclose{:}
  \right\rangle.
\end{equation*}

\begin{proposition}\label{e6:prop:exact-reduction-Wtilde}
For $n=1,2$, the canonical homomorphism $\rho_n$ in \eqref{e6:eq:rho-n} is an
isomorphism:
\begin{equation}\label{e6:eq:exact-reduction-Wtilde}
  H^0_{\mathrm{DS},f_\theta}(\widetilde V_n)
  \cong
  \widetilde{\mathcal W}_n.
\end{equation}
Moreover, $\widetilde{\mathcal W}_n$ is nonzero and lisse.
\end{proposition}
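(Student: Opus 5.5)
The isomorphism will follow from the exact sequence \eqref{e6:eq:DS-exact-sequence-I-n}, once the ideal of $\mathcal W^{k_n}(E_6,f_\theta)$ that it quotients by is identified. Exactness on $\mathcal O_{k_n}$ (Lemma~\ref{lem:common-graded-subquotient-category-O} applies to $I_n$, $V^{k_n}(E_6)$ and $\widetilde V_n$) identifies
\[
 H^0_{\mathrm{DS},f_\theta}(\widetilde V_n)
 \cong\mathcal W^{k_n}(E_6,f_\theta)\big/H^0_{\mathrm{DS},f_\theta}(I_n).
\]
By the display preceding the statement, this ideal equals $\bigl\langle:J^{\{e_\vartheta\}}{}^{\,n+1}:\bigr\rangle$. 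That display in turn comes from Lemma~\ref{lem:common-reduced-singular-generator}, which gives cyclicity with generator $[s_n]_{\mathrm{DS}}$, together with Proposition~\ref{e6:prop:BRST-image-s-n}. One point needs care: the image of $H^0_{\mathrm{DS},f_\theta}(I_n)$ in the adjoint module is an adjoint $\mathcal W^{k_n}$-submodule, hence a vertex-algebra ideal. It is generated as a module by $c_n:J^{\{e_\vartheta\}}{}^{\,n+1}:$ with $c_n\ne0$, so it coincides with the ideal generated by that vector. This is exactly the argument of Proposition~\ref{prop:common-exact-generated-quotient} with $r=1$, so I would simply invoke that proposition. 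The map $\rho_n$ of Theorem~\ref{e6:thm:canonical-surjection-reduction} was defined by factoring the quotient map through $\widetilde{\mathcal W}_n$. Since the two kernels now agree, $\rho_n$ is injective, and hence an isomorphism.

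Nonvanishing is already part of Theorem~\ref{e6:thm:canonical-surjection-reduction}: $\widetilde{\mathcal W}_n$ surjects onto $\mathcal W_{k_n}(E_6,f_\theta)$, which is nonzero by Corollary~\ref{e6:cor:negative-level-nonvanishing}, or equivalently by Lemma~\ref{lem:common-DS-detection}.

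For lisse-ness I would use \eqref{eq:common-associated-variety}, which gives $X_{H^0(\widetilde V_n)}=X_{\widetilde V_n}\cap\mathcal S_{f_\theta}$. It then suffices to have $X_{\widetilde V_n}=\overline{\mathbb O}_{\min}$; this is \cite[Proposition~5.2]{ArakawaMoreau2018}, and \cite[Theorem~6.1(3)]{ArakawaMoreau2018} concludes. The intersection $\overline{\mathbb O}_{\min}\cap\mathcal S_{f_\theta}$ is the single point $\{f_\theta\}$, so the reduction is lisse.

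The only potential obstacle is checking that Arakawa--Moreau's associated-variety statement concerns exactly this quotient, generated by the associative power $u^{n+1}$. If that needs independent justification, I would argue as in Proposition~\ref{dl:prop:Dl-minus-two-associated-variety}. The $C_2$-image of $I_n$ is the $G$-stable ideal of $S(\mathfrak g)$ generated by $U(\mathfrak g)w^{n+1}$. Its radical contains $L_{\mathfrak g}(\omega_1+\omega_6)$, so it equals $\sqrt{J_W}$. By \cite[Lemma~2.1]{ArakawaMoreau2018}, $\sqrt{J_W}$ is the prime ideal of $\overline{\mathbb O}_{\min}$.
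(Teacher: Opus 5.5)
Your proposal is correct and follows the paper's proof essentially step for step. The isomorphism comes from Proposition~\ref{prop:common-exact-generated-quotient}, with cyclicity from Lemma~\ref{lem:common-reduced-singular-generator} and the generator identified by Proposition~\ref{e6:prop:BRST-image-s-n}. Nonvanishing comes from Theorem~\ref{e6:thm:canonical-surjection-reduction}, and lisse-ness from \cite[Proposition~5.2]{ArakawaMoreau2018} together with \eqref{eq:common-associated-variety}. Your fallback radical argument for the associated variety mirrors the paper's proof of Proposition~\ref{dl:prop:Dl-minus-two-associated-variety}, but it is not needed here.
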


\begin{proof}
The cyclicity argument preceding the proposition and
Proposition~\ref{e6:prop:BRST-image-s-n} verify the hypotheses of
Proposition~\ref{prop:common-exact-generated-quotient}; hence
\eqref{e6:eq:exact-reduction-Wtilde} holds.  Furthermore,
\cite[Proposition~5.2]{ArakawaMoreau2018} gives
\begin{equation*}
 X_{\widetilde V_n}=\overline{\mathbb O}_{\min},
 \qquad n=1,2,
\end{equation*}
and \eqref{eq:common-associated-variety} makes the reduction lisse.
Nonvanishing follows from Theorem~\ref{e6:thm:canonical-surjection-reduction}.
\end{proof}

Let $\sigma_{\mathrm R}$ denote the order-two Ramond automorphism of
$\widetilde{\mathcal W}_n$ defined on the standard generators by
\begin{equation}\label{e6:eq:Ramond-automorphism}
  \sigma_{\mathrm R}\bigl(J^{\{a\}}\bigr)=J^{\{a\}},
  \qquad
  \sigma_{\mathrm R}\bigl(G^{\{v\}}\bigr)=-G^{\{v\}},
  \qquad
  \sigma_{\mathrm R}(\omega)=\omega.
\end{equation}
Write
\begin{equation*}
  B_n=A_{\sigma_{\mathrm R}}(\widetilde{\mathcal W}_n)
\end{equation*}
for the Ramond Zhu algebra.  This is the Hamiltonian-twisted Zhu algebra
for the conformal Hamiltonian, since
$\sigma_{\mathrm R}=\exp(2\pi iL_0)$ on the standard generators.  Let
\[
  R_{\widetilde{\mathcal W}_n}
  =\widetilde{\mathcal W}_n/C_2(\widetilde{\mathcal W}_n)
\]
be its $C_2$-Poisson algebra.  Lisse-ness implies that
$R_{\widetilde{\mathcal W}_n}$ is finite-dimensional.  For the grading
induced by the conformal Hamiltonian, the zero-parameter twisted Zhu--Poisson
algebra is precisely $R_{\widetilde{\mathcal W}_n}$, and
\cite[Proposition~2.17(c)]{DeSoleKac2006} gives a canonical surjection
\begin{equation*}
  R_{\widetilde{\mathcal W}_n}
  \twoheadrightarrow
  \operatorname{gr}B_n.
\end{equation*}
Thus $\operatorname{gr}B_n$ is finite-dimensional.  Since the standard Zhu
filtration is exhaustive and separated, it follows that $B_n$ itself is
finite-dimensional.
We denote by
\[
  L=[\omega]_{\mathrm R}\in B_n
\]
the central conformal element.

\subsubsection{The Ramond Zhu relation and the Casimir trace identity}

Put
\[
  \mathfrak a=\mathfrak g^\natural\cong\mathfrak{sl}_6,
  \qquad
  U=\mathfrak g_{-\frac12}\cong L_{A_5}(\varpi_3).
\]
On $U$ we use the nondegenerate $\mathfrak a$-invariant skew form
\begin{equation}\label{e6:eq:symplectic-form-minus-half}
  \langle u,v\rangle=(e_\theta\mid[u,v]),
  \qquad u,v\in U.
\end{equation}
Choose dual bases $\{a_i\}_{i=1}^{35}$ and $\{a^i\}_{i=1}^{35}$ of
$\mathfrak a$ and set
\begin{equation*}
  \Omega_{A_5}=\sum_{i=1}^{35}a_i a^i\in U(\mathfrak a).
\end{equation*}
For $u,v\in U$, define
\begin{equation}\label{e6:eq:def-Q-uv}
  Q(u,v)
  =\sum_{i,j=1}^{35}
    \left\langle[a_i,u],[v,a_j]\right\rangle
    \bigl(a^i a^j+a^j a^i\bigr).
\end{equation}

The Ramond Zhu relation for a universal minimal $W$-algebra is given in
\cite[Equation~(5.1)]{KacMosenederFrajriaPapi2025}.  For an ordinary simple Lie
algebra the same formula is interpreted in the ordinary finite $W$-algebra, by
\cite[Remark~5.8]{KacMosenederFrajriaPapi2025}.  Combining this formula with
$p_{E_6}(k)=(k+3)(k+4)$ from
\cite[Table~4]{AdamovicKacMosenederFrajriaPapiPerse2018} gives, after passage to $B_n$,
\begin{equation*}
  [u,v]
  =\langle u,v\rangle
   \left(
     \Omega_{A_5}
     -2(k_n+12)L
     -\frac12(k_n+3)(k_n+4)
   \right)
   +Q(u,v).
\end{equation*}
Here $u,v$ denote the Ramond Zhu classes of the fields $G^{\{u\}}$ and
$G^{\{v\}}$.  By the cited remark, the bracket is the ordinary associative
commutator in the finite $W$-algebra.  Consequently,
\begin{equation*}
  \operatorname{tr}_M[\rho(u),\rho(v)]=0
\end{equation*}
for every finite-dimensional $B_n$-module $M$.

Let $M$ be such a module, and write
\begin{equation*}
  N_M=\dim M,
  \qquad
  \tau_M=\operatorname{tr}_M\rho(\Omega_{A_5}).
\end{equation*}
Since $\mathfrak{sl}_6$ is simple, there is a scalar $I_M$ such that
\begin{equation}\label{e6:eq:trace-form-index}
  \operatorname{tr}_M\bigl(\rho(a)\rho(b)\bigr)
  =I_M(a\mid b),
  \qquad a,b\in\mathfrak a.
\end{equation}
Taking the trace of the Casimir gives
\begin{equation*}
  \tau_M=35I_M.
\end{equation*}
The Casimir eigenvalue on the $20$-dimensional module
$U=L_{A_5}(\varpi_3)$ is
\begin{equation}\label{e6:eq:Casimir-omega3-A5}
  c_2(\varpi_3)
  =(\varpi_3\mid\varpi_3+2\rho_{A_5})
  =\frac{21}{2}.
\end{equation}

\begin{lemma}\label{e6:lem:trace-Q-E6}
For every finite-dimensional $B_n$-module $M$ and all $u,v\in U$,
\begin{equation}\label{e6:eq:trace-Q-E6}
  \operatorname{tr}_M\rho(Q(u,v))
  =\frac35\tau_M\langle u,v\rangle.
\end{equation}
\end{lemma}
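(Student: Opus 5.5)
The proof is the same contraction as in Lemma~\ref{dl:lem:Dl-trace-quadratic-Ramond-term}, now with the single simple summand $\mathfrak a\cong\mathfrak{sl}_6$. Because \eqref{e6:eq:def-Q-uv} is independent of the choice of dual bases, I will fix an orthonormal basis $\{x_r\}_{r=1}^{35}$ of $\mathfrak a$ for the restricted form $(\cdot\mid\cdot)$. Then $a_i=a^i=x_i$, and
\[
 Q(u,v)=\sum_{i,j}\langle[x_i,u],[v,x_j]\rangle\,(x_ix_j+x_jx_i).
\]

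First I will take traces term by term. By \eqref{e6:eq:trace-form-index},
\[
 \operatorname{tr}_M\rho(x_ix_j+x_jx_i)=2I_M(x_i\mid x_j)=2I_M\delta_{ij}.
\]
Hence only the diagonal terms survive, and
\[
 \operatorname{tr}_M\rho(Q(u,v))=2I_M\sum_r\langle[x_r,u],[v,x_r]\rangle .
\]
Note that, unlike the $D_\ell$ and $D_4$ cases, there are no mixed index ranges here, since $\mathfrak a$ is simple.

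Next I will evaluate the contraction. The skew form \eqref{e6:eq:symplectic-form-minus-half} is $\mathfrak a$-invariant, since $e_\theta$ is centralized by $\mathfrak g^\natural$ and $(\cdot\mid\cdot)$ is invariant. Writing $[v,x_r]=-x_rv$ and using invariance once gives
\[
 \sum_r\langle[x_r,u],[v,x_r]\rangle
 =-\sum_r\langle x_ru,x_rv\rangle
 =\Bigl\langle u,\sum_rx_r^2v\Bigr\rangle .
\]
Since $U\cong L_{A_5}(\varpi_3)$ is irreducible, $\Omega_{A_5}=\sum_rx_r^2$ acts on $U$ by the scalar $c_2(\varpi_3)=21/2$ from \eqref{e6:eq:Casimir-omega3-A5}. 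So the contraction equals $\tfrac{21}{2}\langle u,v\rangle$.

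Finally, substituting $I_M=\tau_M/35$ gives
\[
 \operatorname{tr}_M\rho(Q(u,v))=2\cdot\frac{\tau_M}{35}\cdot\frac{21}{2}\langle u,v\rangle=\frac35\tau_M\langle u,v\rangle,
\]
which is \eqref{e6:eq:trace-Q-E6}.

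The only points needing care are the sign bookkeeping in the invariance step and checking that the Casimir value $21/2$ uses the same restricted normalization as $I_M$. For the latter I will verify $c_2(\varpi_3)=(\varpi_3\mid\varpi_3+2\rho)=\tfrac32+9=\tfrac{21}{2}$ directly, with long roots of $A_5$ of squared length $2$. Everything else is formal and parallels the $D_4$ computation.
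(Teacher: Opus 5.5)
Your proof is correct and is essentially the paper's argument. The paper takes the trace via the index $I_M$ to reach $2I_M\sum_i\langle[a_i,u],[v,a^i]\rangle$, uses invariance of the skew form to turn this into the Casimir of $U=L_{A_5}(\varpi_3)$, and then substitutes $c_2(\varpi_3)=21/2$ and $I_M=\tau_M/35$; your orthonormal-basis choice and your direct check of $c_2(\varpi_3)$ make explicit steps the paper leaves implicit.
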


\begin{proof}
Taking the trace in \eqref{e6:eq:def-Q-uv} and using
\eqref{e6:eq:trace-form-index} yields
\begin{align*}
  \operatorname{tr}_M\rho(Q(u,v))
  &=2I_M\sum_{i=1}^{35}
    \left\langle[a_i,u],[v,a^i]\right\rangle.
\end{align*}
The invariance of \eqref{e6:eq:symplectic-form-minus-half} converts the sum into
the action of the quadratic Casimir on $U$.  Hence, by
\eqref{e6:eq:Casimir-omega3-A5},
\[
  \sum_{i=1}^{35}
  \left\langle[a_i,u],[v,a^i]\right\rangle
  =\frac{21}{2}\langle u,v\rangle.
\]
Therefore
\[
  \operatorname{tr}_M\rho(Q(u,v))
  =21I_M\langle u,v\rangle
  =\frac{21}{35}\tau_M\langle u,v\rangle,
\]
which is \eqref{e6:eq:trace-Q-E6}.
\end{proof}

\begin{proposition}[Casimir trace identity]\label{e6:prop:E6-Casimir-trace-identity}
Let $M$ be a nonzero finite-dimensional $B_n$-module on which $L$ acts as a
scalar $h$.  Then
\begin{equation}\label{e6:eq:E6-Casimir-trace-identity}
  \frac{\operatorname{tr}_M\rho(\Omega_{A_5})}{\dim M}
  =\frac58
   \left(
     2(k_n+12)h
     +\frac12(k_n+3)(k_n+4)
   \right).
\end{equation}
Equivalently,
\begin{align*}
  n=1:\quad
  \frac{\operatorname{tr}_M\rho(\Omega_{A_5})}{\dim M}
  &=\frac58(20h+1),\\
  n=2:\quad
  \frac{\operatorname{tr}_M\rho(\Omega_{A_5})}{\dim M}
  &=\frac58(22h+3).
\end{align*}
\end{proposition}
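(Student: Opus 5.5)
The identity will follow by taking traces in the specialized Ramond Zhu relation, exactly as in Proposition~\ref{prop:common-Ramond-Casimir-trace}. Here $\mathfrak g^\natural\cong\mathfrak{sl}_6$ is simple, so $t=1$. Lemma~\ref{e6:lem:trace-Q-E6} supplies the single contraction coefficient $\gamma=3/5$.

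First, I apply $\rho$ to the displayed relation
\[
[u,v]=\langle u,v\rangle\Bigl(\Omega_{A_5}-2(k_n+12)L-\tfrac12(k_n+3)(k_n+4)\Bigr)+Q(u,v)
\]
and take $\operatorname{tr}_M$. The left-hand side is an ordinary associative commutator of endomorphisms of $M$, so its trace vanishes. This is the consequence of \cite[Remark~5.8]{KacMosenederFrajriaPapi2025} recorded just before the lemma. Since $L$ acts by the scalar $h$, the trace of the central part is
\[
\langle u,v\rangle\bigl(\tau_M-2(k_n+12)hN_M-\tfrac12(k_n+3)(k_n+4)N_M\bigr).
\]
Substituting Lemma~\ref{e6:lem:trace-Q-E6} for the quadratic term gives
\[
0=\langle u,v\rangle\Bigl(\bigl(1+\tfrac35\bigr)\tau_M-N_M\bigl(2(k_n+12)h+\tfrac12(k_n+3)(k_n+4)\bigr)\Bigr).
\]

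Second, I choose $u,v\in U$ with $\langle u,v\rangle\ne0$. This is possible because the form \eqref{e6:eq:symplectic-form-minus-half} is nondegenerate on the $20$-dimensional module $U$. Dividing by $\tfrac85N_M\langle u,v\rangle$ yields \eqref{e6:eq:E6-Casimir-trace-identity}.

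Finally, I specialize the level. For $n=1$, $k_1=-2$, so $2(k+12)=20$ and $\tfrac12(k+3)(k+4)=1$. For $n=2$, $k_2=-1$, so $2(k+12)=22$ and $\tfrac12(k+3)(k+4)=3$. These give the two displayed forms.

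The only point needing care is the normalization consistency between the Casimir in the cited relation and the restricted form used for $\Omega_{A_5}$ and $c_2(\varpi_3)=21/2$. It rests on the conventions fixed in Section~\ref{sec:common-framework}, so there is no real obstacle beyond bookkeeping.
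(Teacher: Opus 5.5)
Your proposal is correct and follows the paper's proof: the paper invokes Proposition~\ref{prop:common-Ramond-Casimir-trace} with $\gamma=3/5$ from Lemma~\ref{e6:lem:trace-Q-E6}, $h^\vee(E_6)=12$ and $p_{E_6}(k)=(k+3)(k+4)$, which is exactly the trace of the Ramond relation that you write out. Your specializations at $k_1=-2$ and $k_2=-1$ are also correct.
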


\begin{proof}
Lemma~\ref{e6:lem:trace-Q-E6} gives $\gamma=3/5$.  Applying
Proposition~\ref{prop:common-Ramond-Casimir-trace} with
$h^\vee(E_6)=12$ and $p_{E_6}(k)=(k+3)(k+4)$ gives
\[
 \frac85\frac{\tau_M}{N_M}
 =2(k_n+12)h+\frac12(k_n+3)(k_n+4),
\]
which is \eqref{e6:eq:E6-Casimir-trace-identity}.  The two displayed
specializations follow from $k_1=-2$ and $k_2=-1$.
\end{proof}

\subsubsection{Allowed finite-dimensional \texorpdfstring{$A_5$}{A5}-weights}

The defining relation of $\widetilde{\mathcal W}_n$ becomes
\begin{equation*}
  e_\vartheta^{\,n+1}=0
  \qquad\text{in }B_n.
\end{equation*}
Indeed, for a weight-one current the Zhu class of a normally ordered power is
the corresponding associative power.  Hence every irreducible
$\mathfrak{sl}_6$-constituent $L_{A_5}(\mu)$ of a finite-dimensional
$B_n$-module satisfies
\begin{equation*}
  \langle\mu,\vartheta^\vee\rangle\le n.
\end{equation*}
Writing $\mu=\sum_{i=1}^{5}m_i\varpi_i$, this is simply
\begin{equation*}
  m_1+\cdots+m_5\le n.
\end{equation*}

For later use, let
\begin{equation*}
  c_2(\mu)=(\mu\mid\mu+2\rho_{A_5})
\end{equation*}
be the Casimir eigenvalue on $L_{A_5}(\mu)$.  The inverse Cartan matrix in our
normalization is
\begin{equation}\label{e6:eq:A5-inverse-Cartan}
  A_{A_5}^{-1}
  =\frac16
  \begin{pmatrix}
    5&4&3&2&1\\
    4&8&6&4&2\\
    3&6&9&6&3\\
    2&4&6&8&4\\
    1&2&3&4&5
  \end{pmatrix}.
\end{equation}
A direct substitution in
$c_2(\mu)=\boldsymbol m^{\mathsf T}A_{A_5}^{-1}
(\boldsymbol m+2\boldsymbol 1)$ gives the following sharp bounds.  The full
polynomial and the complete level-two table are recorded in
Appendix~\ref{e6:app:A5-Casimir-table}.

\begin{lemma}\label{e6:lem:A5-Casimir-bounds}
Let $M$ be a finite-dimensional $B_n$-module.
\begin{enumerate}
  \item For $n=1$,
  \begin{equation*}
    c_2(\mu)\le\frac{21}{2},
  \end{equation*}
  with equality only for $\mu=\varpi_3$.  If no copy of
  $L_{A_5}(\varpi_3)$ occurs, then
  \begin{equation*}
    c_2(\mu)\le\frac{28}{3}.
  \end{equation*}

  \item For $n=2$,
  \begin{equation*}
    c_2(\mu)\le24,
  \end{equation*}
  with equality only for $\mu=2\varpi_3$.  If no copy of
  $L_{A_5}(2\varpi_3)$ occurs, then
  \begin{equation*}
    c_2(\mu)\le\frac{131}{6}.
  \end{equation*}
\end{enumerate}
The same bounds hold for the dimension-weighted average
$\operatorname{tr}_M\rho(\Omega_{A_5})/\dim M$.
\end{lemma}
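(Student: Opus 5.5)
The plan is a finite enumeration followed by an averaging remark. The first step fixes which weights are allowed. By the relation $e_\vartheta^{\,n+1}=0$ in $B_n$ and the $\mathfrak{sl}_2$-string argument already used in Lemma~\ref{dl:lem:Dl-allowed-finite-types}, every irreducible constituent $L_{A_5}(\mu)$ of a finite-dimensional $B_n$-module has $\mu=\sum m_i\varpi_i$ with $m_i\ge0$ and $\sum m_i\le n$. So only finitely many dominant weights occur, and it suffices to tabulate $c_2$ on them.

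The second step puts the Casimir into a usable form. From \eqref{e6:eq:A5-inverse-Cartan}, the row sums of $A_{A_5}^{-1}$ are $(5/2,4,9/2,4,5/2)$. Hence
\[
 c_2(\mu)=\boldsymbol m^{\mathsf T}A_{A_5}^{-1}\boldsymbol m
 +5m_1+8m_2+9m_3+8m_4+5m_5 .
\]
Both the quadratic and the linear parts are maximized toward the middle node, because the entries of $A_{A_5}^{-1}$ increase toward the central row. The diagram symmetry $m_i\leftrightarrow m_{6-i}$ halves the casework.

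The third step is the case $n=1$. The allowed weights are $0$ and the $\varpi_i$, and
\[
 c_2(\varpi_1)=c_2(\varpi_5)=\tfrac{35}{6},\qquad
 c_2(\varpi_2)=c_2(\varpi_4)=\tfrac{28}{3},\qquad
 c_2(\varpi_3)=\tfrac{21}{2}.
\]
This gives the maximum $21/2$, attained only at $\varpi_3$, and the next value $28/3$.

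The fourth step is the case $n=2$, where one also runs over the weights $\varpi_i+\varpi_j$ with $i\le j$. The cross term for $\varpi_i+\varpi_j$ is $2(A^{-1})_{ij}$, which again grows toward the center. One finds
\[
 c_2(2\varpi_3)=6\cdot\tfrac32+18=24,\qquad
 c_2(\varpi_2+\varpi_3)=c_2(\varpi_3+\varpi_4)=\tfrac{29}{6}+17=\tfrac{131}{6}.
\]
All remaining weights give strictly smaller values, for example
\[
 c_2(2\varpi_2)=\tfrac{64}{3},\quad c_2(\varpi_2+\varpi_4)=20,\quad c_2(\varpi_1+\varpi_3)=\tfrac{52}{3},
\]
and the level-one values from the third step. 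I would record the complete list of $21$ weights, as in Appendix~\ref{e6:app:A5-Casimir-table}, to make the strictness of the inequalities transparent.

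The final step is the averaged statement. By complete reducibility, $M\cong\bigoplus_\mu L_{A_5}(\mu)^{\oplus n_\mu}$ over $\mathfrak a$, so
\[
 \frac{\operatorname{tr}_M\rho(\Omega_{A_5})}{\dim M}
 =\sum_\mu\frac{n_\mu\dim L_{A_5}(\mu)}{\dim M}\,c_2(\mu).
\]
This is a convex combination of the allowed eigenvalues. Each bound, including the refined bound when the top weight is absent, therefore passes to the average.

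I do not expect any step to be a real obstacle. The only delicate point is checking that no non-symmetric weight with $\sum m_i=2$ beats $131/6$. I would handle this by listing all the values rather than arguing by monotonicity.
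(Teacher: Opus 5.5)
Your proposal is correct and is essentially the paper's proof. You restrict to dominant weights with $m_1+\cdots+m_5\le n$ via $e_\vartheta^{\,n+1}=0$, tabulate $c_2$ on the $6$ (resp.\ $21$) allowed weights, and pass to the trace average by complete reducibility. One small slip: in $c_2(2\varpi_3)$ the quadratic part is $4\cdot\tfrac32=6$, so the displayed expression should read $4\cdot\tfrac32+18=24$ rather than $6\cdot\tfrac32+18$; the value $24$ itself is right.
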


\begin{proof}
For $n=1$, the permitted weights are
$0,\varpi_1,\ldots,\varpi_5$, and their Casimir values are
\[
  0,\quad
  \frac{35}{6},\quad
  \frac{28}{3},\quad
  \frac{21}{2},\quad
  \frac{28}{3},\quad
  \frac{35}{6}.
\]

For $n=2$, the additional values, grouped by the Dynkin-diagram involution, are
\begin{equation*}
\begin{array}{c|c}
\mu & c_2(\mu)\\ \hline
2\varpi_3 & 24\\
\varpi_2+\varpi_3,\ \varpi_3+\varpi_4 & 131/6\\
2\varpi_2,\ 2\varpi_4 & 64/3\\
\varpi_2+\varpi_4 & 20\\
\varpi_1+\varpi_3,\ \varpi_3+\varpi_5 & 52/3\\
\varpi_1+\varpi_2,\ \varpi_4+\varpi_5 & 33/2\\
\varpi_1+\varpi_4,\ \varpi_2+\varpi_5 & 95/6\\
2\varpi_1,\ 2\varpi_5 & 40/3\\
\varpi_1+\varpi_5 & 12.
\end{array}
\end{equation*}
Together with the level-one values, this proves all assertions.  Since
finite-dimensional $\mathfrak{sl}_6$-modules are completely reducible, a trace
average is a dimension-weighted average of these Casimir eigenvalues.
\end{proof}

\subsubsection{Li spectral flow and the extremal vacuum line}

Identify weights and coweights of $A_5$ using the normalized invariant form and
set
\begin{equation*}
  \eta=\varpi_3^\vee\in\mathfrak h_{A_5},
  \qquad
  H=J^{\{\eta\}}\in(\widetilde{\mathcal W}_n)_1.
\end{equation*}
By \eqref{e6:eq:A5-inverse-Cartan},
\begin{equation*}
  (\eta\mid\eta)=\frac32.
\end{equation*}
Since the affine $A_5$ level is $n$,
\begin{equation*}
  H_{(1)}H=\frac{3n}{2}\mathbf1.
\end{equation*}

We use Li's delta operator in the convention
\begin{equation}\label{e6:eq:Li-delta-E6}
  \Delta(H,z)
  =z^{H_{(0)}}
   \exp\left(
     \sum_{m\ge1}\frac{H_{(m)}}{-m}(-z)^{-m}
   \right).
\end{equation}
For a $\widetilde{\mathcal W}_n$-module $(M,Y_M)$, set
\begin{equation*}
  Y_M^{\mathrm R}(a,z)
  =Y_M\bigl(\Delta(H,z)a,z\bigr).
\end{equation*}
This is a module twisted by
$\exp(2\pi iH_{(0)})$; see \cite{Li2012}.

\begin{lemma}
The automorphism $\exp(2\pi iH_{(0)})$ is the Ramond automorphism
$\sigma_{\mathrm R}$ in \eqref{e6:eq:Ramond-automorphism}.
\end{lemma}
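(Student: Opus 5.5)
The plan is to compare the two automorphisms on the strong generators $J^{\{a\}}$, $G^{\{v\}}$, $\omega$. Both $\exp(2\pi iH_{(0)})$ and $\sigma_{\mathrm R}$ are vertex-algebra automorphisms of $\widetilde{\mathcal W}_n$: the first because $H_{(0)}$ is a derivation acting semisimply with rational eigenvalues, the second by \eqref{e6:eq:Ramond-automorphism}. Since these generators strongly generate the algebra, agreement on them suffices. On each generator $H_{(0)}$ acts through the zero mode of $J^{\{\eta\}}$, that is, by the $\mathfrak h^\natural$-weight paired with $\eta$. Thus $\exp(2\pi iH_{(0)})$ multiplies a weight vector of weight $\mu$ by $e^{2\pi i\mu(\eta)}$.

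The computation splits into three steps.

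\textbf{Conformal vector.} $H$ is primary of weight one, so $H_{(0)}\omega=0$ up to a term proportional to $\partial H$. Actually $H_{(0)}\omega=-\omega_{(0)}H+\partial(\ldots)$, and the skew-symmetry formula gives $H_{(0)}\omega=H_{(0)}$ applied to $L_{-2}\mathbf1$, which equals $[H_{0},L_{-2}]\mathbf1=0$ since $H$ is primary. Hence $\omega$ is fixed.

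\textbf{Currents.} For a root $\gamma$ of $A_5$, $\gamma(\eta)=\langle\gamma,\varpi_3^\vee\rangle$ is the coefficient of $\alpha_3$ in $\gamma$, hence lies in $\{-1,0,1\}\subset\mathbb Z$. Cartan currents have weight zero. So every $J^{\{a\}}$ is fixed.

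\textbf{$G$-fields.} The weights of $U\cong\bigwedge^3\mathbb C^6$ are $\varepsilon_{i}+\varepsilon_j+\varepsilon_k-\tfrac12\sum_l\varepsilon_l$ in the standard realization. We have $\varpi_3^\vee$ pairing as $\tfrac12(\varepsilon_1+\varepsilon_2+\varepsilon_3-\varepsilon_4-\varepsilon_5-\varepsilon_6)$, with $(\varpi_3\mid\varpi_3)=3/2$ matching the value recorded above. The pairing with a weight $\varepsilon_I-\tfrac12\sum\varepsilon$ for $|I|=3$ equals $\tfrac12(|I\cap\{1,2,3\}|-|I\cap\{4,5,6\}|)$. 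The two cardinalities sum to $3$, which is odd, so their difference is odd and each value lies in $\tfrac12+\mathbb Z$. Equivalently, $\varpi_3$ has odd pairing with $\varpi_3^\vee$ modulo the root lattice, because $\langle\varpi_3,\varpi_3^\vee\rangle=3/2$ and root-lattice translates change the pairing by integers. Hence $e^{2\pi i\mu(\eta)}=-1$ on every weight of $U$, so $G^{\{v\}}\mapsto -G^{\{v\}}$.

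Comparing the three steps with \eqref{e6:eq:Ramond-automorphism} proves the lemma. The only step needing care is the half-integrality check for $U$. The coset argument settles it cleanly: every weight of $L_{A_5}(\varpi_3)$ lies in $\varpi_3+Q_{A_5}$, and $\langle\varpi_3,\eta\rangle=(\varpi_3\mid\varpi_3)=3/2$, the entry of \eqref{e6:eq:A5-inverse-Cartan}.
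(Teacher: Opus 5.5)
Your proposal is correct and follows the paper's argument. You check the two automorphisms on the strong generators, use $\alpha(\eta)\in\mathbb Z$ for the roots of $A_5$ to fix the currents, and use $\mu\in\varpi_3+Q(A_5)$ with $(\varpi_3\mid\varpi_3)=\tfrac32$ to get the sign $-1$ on every $G^{\{v\}}$. Your explicit $\bigwedge^3\mathbb C^6$ coordinate check and your justification $H_{(0)}\omega=[H_{(0)},L_{-2}]\mathbf1=0$ add detail the paper leaves implicit. The opening clause of the conformal-vector paragraph (``up to a term proportional to $\partial H$'') is a false start and should be deleted.
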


\begin{proof}
Every root $\alpha$ of $A_5$ has $\alpha(\eta)\in\mathbb Z$, so the current
fields are fixed.  Every weight $\mu$ of the minuscule module
$L_{A_5}(\varpi_3)$ belongs to $\varpi_3+Q(A_5)$ and therefore
\[
  \mu(\eta)
  \equiv(\varpi_3\mid\varpi_3)
  =\frac32
  \pmod{\mathbb Z}.
\]
Thus the exponential acts by $-1$ on all fields $G^{\{v\}}$ and fixes the
conformal vector.
\end{proof}

Apply this twist to the adjoint module and write
$\widetilde{\mathcal W}_n^{\mathrm R}$ for the resulting Ramond module.  A
direct calculation from \eqref{e6:eq:Li-delta-E6} gives
\begin{align}
  L_0^{\mathrm R}
  &=L_0+H_0+\frac{3n}{4},
  \notag\\
  J_0^{\{h\},\mathrm R}
  &=J_0^{\{h\}}+n(\eta\mid h),
  \qquad h\in\mathfrak h_{A_5}.
  \label{e6:eq:Li-shift-Cartan-E6}
\end{align}

\begin{proposition}\label{e6:prop:E6-Ramond-spectrum-vacuum-line}
For $n=1,2$, the operator $L_0^{\mathrm R}$ acts semisimply on
$\widetilde{\mathcal W}_n^{\mathrm R}$, and
\begin{equation}\label{e6:eq:E6-Ramond-spectrum}
  \operatorname{Spec}_{\widetilde{\mathcal W}_n^{\mathrm R}}L_0^{\mathrm R}
  \subset
  \frac{3n}{4}+\mathbb Z_{\ge0}.
\end{equation}
Moreover,
\begin{equation}\label{e6:eq:E6-extremal-vacuum-line}
  \left(
    \widetilde{\mathcal W}_n^{\mathrm R}
    \left[\frac{3n}{4}\right]
  \right)_{n\varpi_3}
  =\mathbb C\mathbf1,
\end{equation}
where the subscript refers to the shifted $A_5$ action in
\eqref{e6:eq:Li-shift-Cartan-E6}.
\end{proposition}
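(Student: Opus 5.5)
The argument will follow the pattern of the $D_\ell$ and $D_4$ cases (Propositions~\ref{dl:prop:Dl-Ramond-spectral-lattice} and \ref{d4:prop:D4-Ramond-spectral-lattice}). Semisimplicity comes first. Every ordinary conformal homogeneous subspace of $\widetilde{\mathcal W}_n$ is finite-dimensional and stable under the current zero modes of $\mathfrak a\cong\mathfrak{sl}_6$. By complete reducibility, $H_0=J^{\{\eta\}}_{(0)}$ acts semisimply on each such subspace, and it commutes with $L_0$. Hence $L_0^{\mathrm R}=L_0+H_0+\frac{3n}{4}$ acts semisimply on the twisted adjoint module, whose underlying space is the same.

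For the spectrum, I will use the PBW spanning set in derivatives of $J^{\{a\}}$, $G^{\{v\}}$ and $\omega$, with $a,v$ chosen as $\mathfrak h_{A_5}$-weight vectors, and compute the excess energy $\Delta+\wt(\cdot)(\eta)$ factor by factor.
\begin{itemize}
\item For a root current $\partial^rJ^{\{e_\gamma\}}$ the excess is $1+r+\gamma(\eta)$. Since $\eta=\varpi_3^\vee$ and every root of $A_5$ has $\varpi_3$-coefficient in $\{-1,0,1\}$, we get $\gamma(\eta)\in\{-1,0,1\}$, so the excess lies in $\mathbb Z_{\ge0}$.
\item For a Cartan current, or for $\partial^r\omega$, the excess is a positive integer.
\item For $\partial^rG^{\{v_\mu\}}$ the excess is $\frac32+r+\mu(\eta)$. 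The module $L_{A_5}(\varpi_3)=\bigwedge^3\mathbb C^6$ is minuscule, so $\mu(\eta)=(\mu\mid\varpi_3)$ runs over the values $\frac32,\frac12,-\frac12,-\frac32$. Concretely, in the $\varepsilon$-realization $\mu(\eta)$ equals $\frac12(\#\{i\le3\}-\#\{i\ge4\})$ for the indices $\{i_1,i_2,i_3\}$ of $\mu$. Hence this excess lies in $\{0,1,2,3\}+r$, again in $\mathbb Z_{\ge0}$.
\end{itemize}
Adding over factors, $L_0+H_0$ has spectrum in $\mathbb Z_{\ge0}$ on $\widetilde{\mathcal W}_n$, which gives \eqref{e6:eq:E6-Ramond-spectrum}.

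For the extremal line \eqref{e6:eq:E6-extremal-vacuum-line}, the first step is to note that the vacuum satisfies $L_0\mathbf1=H_0\mathbf1=0$. By \eqref{e6:eq:Li-shift-Cartan-E6} its shifted weight is $n\eta\leftrightarrow n\varpi_3$, so $\mathbf1$ lies in the space in question. For the converse, I will mimic Corollary~\ref{d4:cor:D4-shifted-vacuum-line}. Let $v$ be in the lowest eigenspace with original degree $d$ and original weight $\nu$. Zero excess gives $d+\nu(\eta)=0$. If the shifted weight is $n\varpi_3$, the shift in \eqref{e6:eq:Li-shift-Cartan-E6} forces $\nu=0$, and then $d=0$. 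Since $(\widetilde{\mathcal W}_n)_0=\mathbb C\mathbf1$, we conclude that $v\in\mathbb C\mathbf1$.

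The main obstacle is doing this weight bookkeeping rigorously. The shift $J_0^{\{h\},\mathrm R}=J_0^{\{h\}}+n(\eta\mid h)$ has to translate weights by exactly $n\varpi_3$. This requires the identification of $\eta$ with $\varpi_3$ under the normalized form, with $(\eta\mid\eta)=\frac32$ consistent with \eqref{e6:eq:A5-inverse-Cartan}. It also requires that the twisted zero modes of the Cartan currents are the ones measuring the weight in the subscript. I would check the constant $\frac{3n}{4}=\frac12\cdot n(\eta\mid\eta)$ directly against \eqref{eq:common-Li-Hamiltonian}, and I would confirm the minuscule charge list by enumerating the 20 three-element subsets.
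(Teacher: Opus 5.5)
Your proposal is correct and matches the paper's proof for semisimplicity and for the spectral inclusion, which the paper obtains from the factorwise excess energies $1+r+\alpha(\eta)$, $\tfrac32+s+\mu(\eta)$, $2+t$ on a PBW spanning set. For the vacuum line you use $d+\nu(\eta)=0$ together with $(\widetilde{\mathcal W}_n)_0=\mathbb C\mathbf1$, as the paper does for $D_4$ and $E_7$; the paper's $E_6$ proof instead notes that every nonempty zero-excess monomial has strictly negative $\eta$-charge and so cannot have original weight zero, and both arguments work equally well here.
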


\begin{proof}
Every ordinary conformal homogeneous subspace of
$\widetilde{\mathcal W}_n$ is finite-dimensional and stable under the
$A_5$-zero-mode action.  Since finite-dimensional $A_5$-modules are
completely reducible, $H_0$ acts semisimply on each such subspace and
commutes with $L_0$.  Hence
$L_0^{\mathrm R}=L_0+H_0+3n/4$ acts semisimply on the twisted adjoint
module.

The quotient $\widetilde{\mathcal W}_n$ is spanned by normally ordered monomials
in derivatives of the fields
\[
  J^{\{a_\alpha\}},\qquad
  G^{\{v_\mu\}},\qquad
  \omega,
\]
chosen to be $H_0$-eigenvectors.  Since $\eta=\varpi_3^\vee$ is minuscule,
\[
  \alpha(\eta)\in\{-1,0,1\}
\]
for roots of $A_5$.  The weights of $L_{A_5}(\varpi_3)$ have
\[
  \mu(\eta)\in
  \left\{-\frac32,-\frac12,\frac12,\frac32\right\}.
\]
The charge multiplicities and the Li-shift calculation are displayed explicitly
in Appendix~\ref{e6:app:Li-charge-calculation}.  Consequently, the contributions of
$\partial^rJ^{\{a_\alpha\}}$,
$\partial^sG^{\{v_\mu\}}$, and $\partial^t\omega$ to
$L_0^{\mathrm R}-3n/4=L_0+H_0$ are respectively
\[
  1+r+\alpha(\eta),
  \qquad
  \frac32+s+\mu(\eta),
  \qquad
  2+t,
\]
all of which are nonnegative integers.  This proves
\eqref{e6:eq:E6-Ramond-spectrum}.

The vacuum has twisted conformal weight $3n/4$ and shifted $A_5$-weight
$n\varpi_3$.  A nonempty spanning monomial can have twisted conformal weight
$3n/4$ only if every one of its factors has zero contribution.  Hence every
current factor has root charge $-1$, every $G$-factor has charge $-3/2$, and no
Virasoro factor occurs.  The total original $H_0$-charge is therefore strictly
negative.  Such a monomial cannot have the same shifted $A_5$-weight as the
vacuum, because equality with $n\varpi_3$ would force its original total weight,
and hence its $\eta$-charge, to be zero.  Thus only the empty monomial contributes
to the indicated weight space.
\end{proof}

\subsubsection{Simplicity of the two reduced quotients}

\begin{theorem}\label{e6:thm:E6-Wtilde-simple}
For $n=1,2$, the vertex algebra $\widetilde{\mathcal W}_n$ is simple.  More
precisely,
\begin{equation*}
  \widetilde{\mathcal W}_n
  \cong
  \mathcal W_{k_n}(E_6,f_\theta).
\end{equation*}
Consequently,
\begin{equation}\label{e6:eq:E6-exact-reduction-simple-W}
  H^0_{\mathrm{DS},f_\theta}(\widetilde V_n)
  \cong
  \mathcal W_{k_n}(E_6,f_\theta).
\end{equation}
\end{theorem}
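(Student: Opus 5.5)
We apply Proposition~\ref{prop:common-Casimir-gap-simplicity} to the canonical nonzero unital surjection $\widetilde{\mathcal W}_n\twoheadrightarrow\mathcal W_{k_n}(E_6,f_\theta)$ of Theorem~\ref{e6:thm:canonical-surjection-reduction}. We take $\mathfrak a=\mathfrak g^\natural\cong\mathfrak{sl}_6$, $\mathcal C=\Omega_{A_5}$, $h_0=3n/4$ and $\lambda_0=n\varpi_3$, with the affine function supplied by Proposition~\ref{e6:prop:E6-Casimir-trace-identity}:
\[
 A(h)=\frac58\Bigl(2(k_n+12)h+\frac12(k_n+3)(k_n+4)\Bigr).
\]
The hypotheses are checked as follows.
\begin{enumerate}
\item Semisimplicity, the spectral lattice $h_0+\mathbb Z_{\ge0}$, and the extremal line $\mathbb C\mathbf1$ are Proposition~\ref{e6:prop:E6-Ramond-spectrum-vacuum-line}.
\item Finite-dimensionality of $B_n$ follows from lisse-ness (Proposition~\ref{e6:prop:exact-reduction-Wtilde}) together with the surjection $R_{\widetilde{\mathcal W}_n}\twoheadrightarrow\operatorname{gr}B_n$ recorded above. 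Complete reducibility over $\mathfrak{sl}_6$ comes from the current zero modes.
\item We take $C_{\max}$ from Lemma~\ref{e6:lem:A5-Casimir-bounds}, namely $21/2$ for $n=1$ and $24$ for $n=2$.
\end{enumerate}

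Since $k_n+12>0$, the function $A$ is increasing, so it suffices to compare at $h_0+1$. For $n=1$ we have $A(h)=\tfrac58(20h+1)$, which gives
\[
 A(h_0)=10,\qquad A(h_0+1)=\tfrac{45}{2}>\tfrac{21}{2}.
\]
For $n=2$ we have $A(h)=\tfrac58(22h+3)$, which gives
\[
 A(h_0)=\tfrac{45}{2},\qquad A(h_0+1)=\tfrac{145}{4}>24.
\]
Hence hypothesis (3) of the criterion holds, and any nonzero ideal must have its lowest Ramond energy equal to $h_0$.

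The main step is the ground-space forcing condition (4). Here we use the trace identity itself rather than a degree analysis. Let $S$ be a simple quotient of a nonzero finite-dimensional $B_n$-submodule $M\subset\widetilde{\mathcal W}_n^{\mathrm R}[h_0]$; the central element $L$ acts on $S$ by $h_0$. The average of $\Omega_{A_5}$ on $S$ is therefore $A(h_0)$.

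For $n=1$ this average is $10$. By Lemma~\ref{e6:lem:A5-Casimir-bounds}, if $L_{A_5}(\varpi_3)$ were absent the average would be at most $28/3<10$. For $n=2$ the average is $45/2$, and absence of $L_{A_5}(2\varpi_3)$ would give at most $131/6<45/2$. In both cases $S$ must contain a constituent of highest weight $\lambda_0=n\varpi_3$.

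Complete reducibility splits $M\twoheadrightarrow S$ as an $\mathfrak{sl}_6$-module, so the highest-weight line of this constituent lies in $(\widetilde{\mathcal W}_n^{\mathrm R}[h_0])_{n\varpi_3}=\mathbb C\mathbf1$. The criterion then gives simplicity of $\widetilde{\mathcal W}_n$, and hence $\widetilde{\mathcal W}_n\cong\mathcal W_{k_n}(E_6,f_\theta)$. Composing with the isomorphism of Proposition~\ref{e6:prop:exact-reduction-Wtilde} yields \eqref{e6:eq:E6-exact-reduction-simple-W}.

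The delicate point is the sign conventions in the trace identity, in particular the factor $5/8=1/(1+3/5)$. The gap at $h_0$ is narrow: $10$ versus $28/3$ for $n=1$, and $22.5$ versus about $21.83$ for $n=2$. This identity must therefore be trusted exactly as stated in Proposition~\ref{e6:prop:E6-Casimir-trace-identity}.
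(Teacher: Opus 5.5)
Your proposal is correct and is essentially the paper's own proof. You apply Proposition~\ref{prop:common-Casimir-gap-simplicity} with the same data: $h_0=3n/4$, $\lambda_0=n\varpi_3$, $A_n(h)$ from Proposition~\ref{e6:prop:E6-Casimir-trace-identity}, and the maximal and next-to-maximal Casimir values from Lemma~\ref{e6:lem:A5-Casimir-bounds}; you then check the same two strict inequalities $A_n(h_0+1)>C_{\max}$ and $A_n(h_0)>C_{\mathrm{next}}$. Your monotonicity remark ($k_n+12>0$) just makes explicit the condition $A(h_0+m)\ge A(h_0+1)$, which the paper leaves implicit.
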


\begin{proof}
For each $n=1,2$, apply
Proposition~\ref{prop:common-Casimir-gap-simplicity} to the natural nonzero
surjection
$\widetilde{\mathcal W}_n\twoheadrightarrow
\mathcal W_{k_n}(E_6,f_\theta)$.  The spectral lattice and extremal line are
provided by Proposition~\ref{e6:prop:E6-Ramond-spectrum-vacuum-line}, with
\[
 h_0=\frac{3n}{4},
 \qquad
 \lambda_0=n\varpi_3.
\]
The Ramond Zhu algebra $B_n$ is finite-dimensional by the lisse argument in
Subsubsection~\ref{e6:subsec:exact-identification-lisse}.  The remaining numerical data
are
\[
\begin{array}{c|c|c|c|c}
 n&A_n(h)&C_{\max}&A_n(h_0)&C_{\mathrm{next}}\\ \hline
 1&\frac58(20h+1)&\frac{21}{2}&10&\frac{28}{3}\\[1mm]
 2&\frac58(22h+3)&24&\frac{45}{2}&\frac{131}{6}
\end{array}
\]
by Proposition~\ref{e6:prop:E6-Casimir-trace-identity} and
Lemma~\ref{e6:lem:A5-Casimir-bounds}.  In both cases
$A_n(h_0+1)>C_{\max}$, so a lowest ideal energy must equal $h_0$.
Moreover, $A_n(h_0)>C_{\mathrm{next}}$; hence every simple Zhu quotient at
the ground energy contains the unique maximal-Casimir constituent
$L_{A_5}(n\varpi_3)$.  Its shifted highest line is
$\mathbb C\mathbf1$ by
\eqref{e6:eq:E6-extremal-vacuum-line}.  The common criterion therefore
proves simplicity and the first isomorphism.  Combining it with
Proposition~\ref{e6:prop:exact-reduction-Wtilde} gives
\eqref{e6:eq:E6-exact-reduction-simple-W}.
\end{proof}

The isomorphism \eqref{e6:eq:E6-exact-reduction-simple-W} is precisely the
hypothesis needed for Proposition~\ref{prop:common-affine-lifting}.

\subsection{Maximal ideals at levels \texorpdfstring{$-2$}{-2} and
\texorpdfstring{$-1$}{-1}}

We now apply the common affine-lifting result to $\widetilde V_n$.

\subsubsection{Uniform affine lifting}

For $n=1,2$, recall the canonical surjection
\[
 \pi_n:\widetilde V_n\twoheadrightarrow L_{k_n}(E_6),
 \qquad k_n=n-3.
\]
Theorem~\ref{e6:thm:E6-Wtilde-simple} and
Proposition~\ref{e6:prop:exact-reduction-Wtilde} show that the induced map on
minimal reductions is an isomorphism.  Hence
Proposition~\ref{prop:common-affine-lifting} applies.

\begin{theorem}\label{e6:thm:E6-affine-quotients-simple}
For $n=1,2$, the homomorphism $\pi_n$ is an isomorphism.  Equivalently,
\begin{equation*}
  V^{n-3}(E_6)\big/\langle u^{n+1}\rangle
  \cong
  L_{n-3}(E_6).
\end{equation*}
\end{theorem}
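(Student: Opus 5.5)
The plan is to apply Proposition~\ref{prop:common-affine-lifting} directly to the canonical surjection $\pi_n:\widetilde V_n\twoheadrightarrow L_{k_n}(E_6)$ of \eqref{e6:eq:V-tilde-to-simple}. Everything that proposition needs has essentially been assembled; what remains is to check its hypotheses one at a time and then read off the equivalent ideal-theoretic formulation.

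\textbf{Hypotheses.} First, $\widetilde V_n$ is a conformally graded quotient of $V^{k_n}(E_6)$ at the negative integral level $k_n=n-3\in\{-2,-1\}$. Its homogeneous subspaces are finite-dimensional $\mathfrak g$-modules, because $I_n=U(\widehat{\mathfrak g}^{\,\prime})s_n$ is a graded $\mathfrak g$-stable submodule and each homogeneous subspace of $V^{k_n}(E_6)$ is a finite-dimensional $\mathfrak g$-module by the PBW theorem. Lemma~\ref{lem:common-graded-subquotient-category-O} then places the short exact sequence $0\to\ker\pi_n\to\widetilde V_n\to L_{k_n}(E_6)\to0$ inside $\mathcal O_{k_n}$, where minimal reduction is exact.

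Second, the map induced by $\pi_n$ on $H^0_{\mathrm{DS},f_\theta}$ must be an isomorphism. By Proposition~\ref{e6:prop:exact-reduction-Wtilde}, $H^0_{\mathrm{DS},f_\theta}(\widetilde V_n)\cong\widetilde{\mathcal W}_n$, and by Theorem~\ref{e6:thm:E6-Wtilde-simple} this is simple and isomorphic to $\mathcal W_{k_n}(E_6,f_\theta)$. The one point needing care is that the isomorphism is \emph{the induced map} itself, not merely an abstract isomorphism. I would argue this as follows. The induced map $H^0(\widetilde V_n)\to H^0(L_{k_n}(E_6))$ is surjective by exactness, and its target is nonzero by Lemma~\ref{lem:common-DS-detection}. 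Its source is simple. A nonzero surjective homomorphism out of a simple vertex algebra is injective, so the induced map is an isomorphism.

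\textbf{Conclusion.} With both hypotheses in place, exactness gives $H^0_{\mathrm{DS},f_\theta}(\ker\pi_n)=0$. If $\ker\pi_n\neq0$, it satisfies the hypotheses of Lemma~\ref{lem:common-DS-detection}: it is graded, bounded below, has finite-dimensional $\mathfrak g$-stable pieces, and modes act with the correct degree shifts. That lemma then forces $H^0_{\mathrm{DS},f_\theta}(\ker\pi_n)\neq0$, a contradiction, so $\pi_n$ is injective. Equivalently, one can cite Theorem~\ref{thm:common-minimal-reduction-maximality} with $N=I_n\subset\operatorname{Rad}V^{k_n}(E_6)$; the inclusion holds because $I_n$ vanishes below degree $2(n+1)>0$. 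Since $I_n=\langle u^{n+1}\rangle$ by \eqref{e6:eq:In-affine-submodule}, this yields $V^{n-3}(E_6)/\langle u^{n+1}\rangle\cong L_{n-3}(E_6)$.

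\textbf{Main obstacle.} Given the earlier results, no step is genuinely hard. The only subtle point is the identification of the induced map with the isomorphism of Theorem~\ref{e6:thm:E6-Wtilde-simple}, which the simplicity argument above settles. Beyond that, one should confirm that the PBW powers $u^{n+1}$ coincide with the Arakawa--Moreau generators $\sigma(w)^{n+1}$, so that the result matches the conjecture as stated.
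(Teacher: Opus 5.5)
Your proposal is correct and matches the paper's proof. The paper simply applies Proposition~\ref{prop:common-affine-lifting} to $\pi_n$, after noting that Proposition~\ref{e6:prop:exact-reduction-Wtilde} and Theorem~\ref{e6:thm:E6-Wtilde-simple} make the induced map on minimal reductions an isomorphism. Your extra justification that this induced map is injective (a nonzero surjection out of the simple algebra $\widetilde{\mathcal W}_n$) is exactly the argument packaged in Theorem~\ref{thm:common-minimal-reduction-maximality}, so the two routes coincide.
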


\begin{proof}
Apply Proposition~\ref{prop:common-affine-lifting} to
$\pi_n:\widetilde V_n\twoheadrightarrow L_{k_n}(E_6)$.
\end{proof}

We can now state the result in the explicit form announced in the introduction.

\begin{corollary}[Main theorem]
Let $u=\sigma(w)$ be the quadratic highest-weight state defined in
\eqref{e6:eq:def-u}.  Then
\begin{align*}
  \operatorname{Rad}V^{-2}(E_6)
  &=\langle u^2\rangle,
  &
  V^{-2}(E_6)\big/\langle u^2\rangle
  &\cong L_{-2}(E_6),\\
  \operatorname{Rad}V^{-1}(E_6)
  &=\langle u^3\rangle,
  &
  V^{-1}(E_6)\big/\langle u^3\rangle
  &\cong L_{-1}(E_6).
\end{align*}
In particular, the affine singular vectors $u^2$ and $u^3$ generate the full
maximal ideals at levels $-2$ and $-1$, respectively.
\end{corollary}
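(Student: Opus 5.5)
The corollary follows by assembling results already proved in this section, so the plan is bookkeeping rather than new computation. For $n=1,2$ we have $k_n=n-3$, so $n=1$ gives level $-2$ with generator $s_1=u^2$, and $n=2$ gives level $-1$ with generator $s_2=u^3$. By \eqref{e6:eq:In-affine-submodule}, the vertex-algebra ideal $\langle u^{n+1}\rangle$ equals the affine submodule $I_n=U(\widehat{\mathfrak g}^{\prime})s_n$. Hence $\widetilde V_n=V^{k_n}(E_6)/\langle u^{n+1}\rangle$ is exactly the quotient appearing in Theorem~\ref{e6:thm:E6-affine-quotients-simple}.

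The first step is to invoke Theorem~\ref{e6:thm:E6-affine-quotients-simple}, which says that $\pi_n:\widetilde V_n\twoheadrightarrow L_{k_n}(E_6)$ is an isomorphism. This gives the two displayed isomorphisms $V^{n-3}(E_6)/\langle u^{n+1}\rangle\cong L_{n-3}(E_6)$ directly.

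The second step is to convert these isomorphisms into the equality of ideals. The argument before \eqref{e6:eq:V-tilde-to-simple} showed that $I_n$ is proper and graded, so $I_n\subset\operatorname{Rad}V^{k_n}(E_6)$, and $\pi_n$ is the map induced by this inclusion. The kernel of the composite $V^{k_n}(E_6)\to\widetilde V_n\to L_{k_n}(E_6)$ is $\operatorname{Rad}V^{k_n}(E_6)$ by definition. Since $\pi_n$ is injective, this kernel is precisely $I_n$, and therefore $\operatorname{Rad}V^{k_n}(E_6)=\langle u^{n+1}\rangle$.

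Alternatively, one could cite Theorem~\ref{thm:common-minimal-reduction-maximality} with $N=I_n$. Its hypotheses hold because homogeneous subspaces of $\widetilde V_n$ are finite-dimensional $\mathfrak g$-modules (Lemma~\ref{lem:common-graded-subquotient-category-O}), and because \eqref{e6:eq:E6-exact-reduction-simple-W} shows that $H^0_{\mathrm{DS},f_\theta}(\widetilde V_n)$ is nonzero and simple.

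There is no real obstacle. The only point needing care is that $u^{n+1}$ here is the same state as the Arakawa--Moreau PBW power $\sigma(w)^{n+1}$. This holds because $u=\sigma(w)$, and the powers in Lemma~\ref{e6:lem:PBW-powers-u} and Proposition~\ref{e6:prop:f-theta-on-u-powers} are taken in the same PBW sense; Lemma~\ref{e6:lem:PBW-powers-u} also supplies its nonvanishing. Everything substantive was settled in the simplicity theorem, Theorem~\ref{e6:thm:E6-Wtilde-simple}.
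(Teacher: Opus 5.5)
Your proposal is correct and is essentially the paper's own argument. The paper also reads off $V^{n-3}(E_6)/\langle u^{n+1}\rangle\cong L_{n-3}(E_6)$ from Theorem~\ref{e6:thm:E6-affine-quotients-simple} and concludes that $\langle u^{n+1}\rangle$ is the maximal graded ideal; your kernel comparison just makes that last step explicit.

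One small correction to your side remark: the agreement of the $(-1)$-power $u^{n+1}$ with the associative PBW power $\sigma(w)^{n+1}$ comes from the regularity Lemma~\ref{e6:lem:regularity-u-v} in the appendix, not from Lemma~\ref{e6:lem:PBW-powers-u}. This identification is not needed for the corollary as stated.
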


\begin{proof}
For $n=1$, Theorem~\ref{e6:thm:E6-affine-quotients-simple} gives
\[
  V^{-2}(E_6)/\langle u^2\rangle\cong L_{-2}(E_6),
\]
so $\langle u^2\rangle$ is the unique maximal graded ideal.  The case $n=2$
is identical.
\end{proof}

\begin{remark}
The proof does not require an exhaustive search for further singular vectors or
calculations in higher PBW degree.  Once the reduced quotients are known to be
simple, exactness together with
Lemma~\ref{lem:common-DS-detection} excludes every possible remaining
affine ideal at once.
\end{remark}
\section{Type \texorpdfstring{$E_7$}{E7}}

\subsection{Type data}
\label{e7:sec:preliminaries}

Let $\mathfrak g=E_7$ and use the minimal grading defined by
$x_\theta=\theta^\vee/2$.  Then
\begin{equation*}
 \mathfrak g^\natural\cong D_6,
 \qquad
 U:=\mathfrak g_{-\frac12}\cong L_{D_6}(\omega_6),
\end{equation*}
where the choice of the half-spin node $\omega_6$ is fixed throughout.  On
$U$ we use the nondegenerate invariant form
\begin{equation}
 \langle u,v\rangle=(e_\theta\mid[u,v]).
 \label{e7:eq:E7-half-spin-symplectic-form}
\end{equation}
The basic numerical data are
\begin{equation}
 h^\vee(E_7)=18,
 \qquad h^\vee(D_6)=10,
 \qquad \dim D_6=66,
 \qquad c_2(\omega_6)=\frac{33}{2}.
 \label{e7:eq:E7-half-spin-Casimir-data}
\end{equation}
For later spectral calculations we use the standard orthogonal coordinates
\begin{equation}
 \omega_5=\frac12(\varepsilon_1+\cdots+\varepsilon_5-\varepsilon_6),
 \qquad
 \omega_6=\frac12(\varepsilon_1+\cdots+\varepsilon_5+\varepsilon_6).
 \label{e7:eq:D6-spin-weight-coordinates}
\end{equation}
The full $D_6$ weight and Casimir data are given in
Appendix~\ref{e7:app:D6-weight-Casimir-data}; see also
\cite{KacWakimoto2004,KacWakimoto2005,
ArakawaCreutzigKawasetsuLinshaw2017}.

\subsubsection{Level, Ramond, and spectral-flow data}

All formal conventions are those of Section~\ref{sec:common-framework}.  For
\begin{equation*}
 k_n=n-4,\qquad n=1,2,3,
\end{equation*}
the $D_6$ currents have level $n$:
\begin{equation}
 [J^{\{a\}}{}_{\lambda}J^{\{b\}}]
 =J^{\{[a,b]\}}+n(a\mid b)\lambda.
 \label{e7:eq:E7-D6-current-relation-n}
\end{equation}
The generators are
\begin{equation}
 J^{\{a\}},\ a\in\mathfrak g^\natural;
 \qquad G^{\{u\}},\ u\in\mathfrak g_{-1/2};
 \qquad \omega.
 \label{e7:eq:E7-minimal-W-strong-generators}
\end{equation}
Since $p_{E_7}(k)=(k+4)(k+6)$, the Ramond relation is
\begin{equation}
 [u,v]=\langle u,v\rangle
 \left(\Omega_{D_6}-2(n+14)L-\frac{n(n+2)}2\right)+Q(u,v).
 \label{e7:eq:E7-n-Ramond-Zhu-relation}
\end{equation}
\label{e7:subsec:E7-Li-delta-operator}
For $x=\omega_6^\vee$ and $H=J^{\{x\}}$, the Li twist satisfies
\begin{align}
 L_0^{\mathrm R}&=L_0+x_0+\frac{3n}{4},
 \label{e7:eq:E7-Li-Ramond-conformal-shift}\\
 J_0^{\{h\},\mathrm R}&=J_0^{\{h\}}+n(x\mid h).
 \label{e7:eq:E7-Li-Cartan-shift}
\end{align}

\subsection{The distinguished singular vectors and their minimal reductions}
\label{e7:sec:distinguished-singular-vectors}

We now introduce the affine singular vectors that determine the quotients
studied in this paper and identify their images under minimal
Drinfeld--Sokolov reduction.  The principal result of this section is the
explicit formula
\[
  [s_n]_{\mathrm{DS}}
  =c_n:J^{\{e_\vartheta\}}{}^{n+1}:,
  \qquad c_n\in\mathbb C^\times,
\]
which converts the affine problem into a single highest-root relation in the
universal minimal \(W\)-algebra.

\subsubsection{The distinguished affine singular vectors}

Recall the decomposition
\[
  S^2(\mathfrak g)
  =L_{E_7}(2\theta)\oplus\mathbb C\Omega\oplus W,
\]
where \(\Omega\) is the quadratic Casimir and, for \(\mathfrak g=E_7\),
\begin{equation}
  W\cong L_{E_7}(\theta+\vartheta).
  \label{e7:eq:E7-Joseph-module-W}
\end{equation}
Here \(\vartheta\) is the highest root of
\(\mathfrak g^\natural\cong D_6\), regarded as a weight of \(E_7\) by
extension from \(\mathfrak h^\natural\).  Fix a nonzero highest-weight vector
\[
  w\in W_{\theta+\vartheta}.
\]

Let
\begin{equation*}
  \sigma:S^2(\mathfrak g)\longrightarrow V^k(\mathfrak g)_2
\end{equation*}
be the standard PBW symmetrization map.  Thus, for \(a,b\in\mathfrak g\),
\[
  \sigma(ab)
  =\frac12\bigl(a(-1)b(-1)+b(-1)a(-1)\bigr)\mathbf1.
\]
For
\[
  n\in\{1,2,3\},
  \qquad
  k_n=n-4,
\]
define
\begin{equation*}
  s_n=\sigma(w)^{n+1}\in V^{k_n}(E_7),
\end{equation*}
where the power is taken in the PBW realization
\(V^{k_n}(E_7)\cong U(E_7[t^{-1}]t^{-1})\mathbf1\).  Let
\begin{equation*}
  N_n=U(\widehat{E}_7)s_n,
  \qquad
  Q_n=V^{k_n}(E_7)/N_n.
\end{equation*}
Since the affine currents strongly generate \(V^{k_n}(E_7)\), the
\(\widehat E_7\)-submodule \(N_n\) is also the vertex-algebra ideal generated
by \(s_n\).

\begin{proposition}[Arakawa--Moreau]

For every \(n\in\{1,2,3\}\), the vector \(s_n\) is a nonzero affine singular
vector of \(V^{k_n}(E_7)\).  Its conformal degree and finite
\(E_7\)-highest weight are
\begin{equation}
  \deg s_n=2(n+1),
  \qquad
  \operatorname{wt}_{E_7}(s_n)
  =(n+1)(\theta+\vartheta).
  \label{e7:eq:E7-sn-degree-and-weight}
\end{equation}
\end{proposition}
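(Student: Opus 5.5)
The plan is to follow the $E_6$ argument (Lemma~\ref{e6:lem:PBW-powers-u}, Lemma~\ref{e6:lem:affine-singularity-criterion}, Proposition~\ref{e6:prop:f-theta-on-u-powers}) with $E_7$ data substituted, relying on \cite[Theorem~4.2]{ArakawaMoreau2018} for the level at which the power becomes singular.

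\textbf{Step 1: nonvanishing, degree and weight.} Under the PBW filtration of $V^{k_n}(E_7)$, the symbol of $\sigma(w)$ is $w\in S^2(\mathfrak g)$, placed in loop degree $-1$. The associative power $\widetilde\sigma(w)^{n+1}$ therefore has symbol $w^{n+1}$ in the domain $S(\mathfrak g[t^{-1}]t^{-1})$. Since $w\ne0$, this symbol is nonzero, so $s_n\ne0$. Each factor $\widetilde\sigma(w)$ has degree $2$ and weight $\theta+\vartheta$, so additivity gives \eqref{e7:eq:E7-sn-degree-and-weight}.

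\textbf{Step 2: reduce singularity to one mode condition.} Each factor $\widetilde\sigma(w)$ is annihilated under $\operatorname{ad}$ by the zero modes $e_i(0)$, $1\le i\le7$, because $w$ is a $\mathfrak g$-highest-weight vector. Hence $s_n$ is a $\mathfrak g$-highest-weight vector. By the argument of Lemma~\ref{e6:lem:affine-singularity-criterion}, $s_n$ is affine singular if and only if $f_\theta(1)s_n=0$.

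\textbf{Step 3: compute $f_\theta(1)$ on the powers.} Write $w=e_\theta e_\vartheta-\sum_j e_{\beta_j+\vartheta}e_{\delta_j+\vartheta}$, where the pairs satisfy $\beta_j+\delta_j=\theta-\vartheta$ and the root vectors are normalized compatibly as in \eqref{e6:eq:Chevalley-normalization-w}. The commutator of $f_\theta(1)$ with $\widetilde\sigma(w)$ then has two contributions:
\begin{itemize}
\item a level term $k\,(f_\theta\mid e_\theta)\,e_\vartheta(-1)$;
\item double brackets $[[f_\theta,e_{\beta_j+\vartheta}],e_{\delta_j+\vartheta}]=-e_\vartheta$, whose sum over $j$ contributes a constant proportional to the number of pairs.
\end{itemize}
The zero-mode cross terms either act by $-2$ through $[f_\theta,e_\theta](0)$ or vanish by orthogonality to $\theta+\vartheta$. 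Inducting on $r$ as in the $E_6$ case should give
\[
f_\theta(1)\widetilde\sigma(w)^r\mathbf1=r\,(k-r+c)\,\widetilde\sigma(w)^{r-1}e_\vartheta(-1)\mathbf1.
\]
Arakawa--Moreau's general formula has singular level $k=n-h^\vee/6-1$ for exponent $r=n+1$, so $c=h^\vee/6=3$. Hence $s_n$ is singular exactly when $k=n-4=k_n$. The right-hand side has nonzero symbol $w^{r-1}e_\vartheta$, so the "only if" direction holds as well. In practice I would cite \cite[Theorem~4.2(1)(b)]{ArakawaMoreau2018} directly for this step.

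\textbf{Main obstacle.} The hard part is the exact constant in Step 3. It requires a compatible Chevalley normalization for the $E_7$ root pairs, namely the number of pairs $(\beta_j,\delta_j)$, which equals $\tfrac12\dim\mathfrak g_{1/2}$ minus one, and the correct signs. I would take this constant from Arakawa--Moreau rather than recompute it, and check it only against the consistency $c=h^\vee/6$.
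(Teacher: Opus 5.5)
Your route is the paper's. The paper cites \cite[Theorem~4.2(1)(b)]{ArakawaMoreau2018} for singularity at $k=n-h^\vee/6-1=n-4$ and gets the degree and weight by additivity. Your Step~1 PBW-symbol argument makes explicit the nonvanishing that the $E_7$ proof leaves implicit; the $E_6$ and $E_8$ sections prove it the same way.

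The heuristic in your Step~3, however, contains two numerical errors, so the consistency check you propose would fail as written.

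First, consider your formula $f_\theta(1)\widetilde\sigma(w)^r\mathbf1=r(k-r+c)\,\widetilde\sigma(w)^{r-1}e_\vartheta(-1)\mathbf1$. It gives singularity at $k=r-c=n+1-c$. Matching $k_n=n-4$ therefore requires $c=h^\vee/6+2=5$; your value $c=3$ would give $k=n-2$.

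Second, the number $p$ of pairs is not $\tfrac12\dim\mathfrak g_{1/2}-1=15$. The level term contributes $k$ and each pair contributes $+1$, so $f_\theta(1)\sigma(w)=(k+p)\,e_\vartheta(-1)\mathbf1$. The recursion then gives $\sum_{j=0}^{r-1}(k+p-2j)=r(k-r+p+1)$, so $c=p+1$. For $E_7$ the pairs are unordered pairs of half-spin weights $\tfrac12(\pm\varepsilon_1\pm\cdots\pm\varepsilon_6)$ summing to $\vartheta=\varepsilon_1+\varepsilon_2$. This forces $+$ signs in the first two coordinates and opposite signs in the last four, and the parity condition leaves $8$ weights. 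Hence $p=4$ and $c=5$, consistent with Arakawa--Moreau; compare $p=3$, $c=4$ in the $E_6$ section.

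If you did carry out the induction rather than cite it, you would also need the mutual regularity used in Lemma~\ref{e6:lem:regularity-u-v}. It holds here because every root $\alpha$ occurring in $w$ satisfies $(\alpha\mid\theta+\vartheta)=2$, so no sum of two of them is a root.
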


\begin{proof}
For the Deligne exceptional series outside type \(A\),
\cite[Theorem~4.2(1)(b)]{ArakawaMoreau2018} states that
\(\sigma(w)^{n+1}\) is singular precisely at
\[
  k=n-\frac{h^\vee}{6}-1.
\]
Since \(h^\vee(E_7)=18\), this level is \(k_n=n-4\).  The map \(\sigma\)
lands in conformal degree two and preserves the finite \(E_7\)-weight.
Equation \eqref{e7:eq:E7-sn-degree-and-weight} follows by additivity.
\end{proof}

The affine highest weight of the cyclic module \(N_n\), including the action
of the derivation \(D\), is
\begin{equation}
  \widehat\lambda_n
  =k_n\Lambda_0
   +(n+1)(\theta+\vartheta)
   -2(n+1)\delta.
  \label{e7:eq:E7-affine-highest-weight-lambda-n}
\end{equation}
We shall also use the shifted weight
\begin{equation*}
  \widehat\lambda_n^\circ
  =\widehat\lambda_n+2(n+1)\delta
  =k_n\Lambda_0+(n+1)(\theta+\vartheta),
\end{equation*}
which satisfies \(\widehat\lambda_n^\circ(D)=0\).  Let \(N_n^\circ\) denote the same
\(\widehat E_7'=E_7[t,t^{-1}]\oplus\mathbb CK\)-module as \(N_n\),
with the action of \(D\) shifted by \(2(n+1)\,\mathrm{id}\).  Then
\(N_n^\circ\) has highest weight \(\widehat\lambda_n^\circ\).
The BRST differential and the induced \(W\)-action depend only on the
\(\widehat E_7'\)-action; the shift of \(D\) merely changes the origin
of the external grading.  Hence there is a canonical identification
\begin{equation*}
  H^0_{\mathrm{DS},f_\theta}(N_n^\circ)
  \cong H^0_{\mathrm{DS},f_\theta}(N_n)
\end{equation*}
as ungraded \(W\)-modules.

\subsubsection{Nonvanishing of the reduced singular vector}

Let
\[
  M(\widehat\lambda_n^\circ)
\]
be the affine Verma module with highest weight
\(\widehat\lambda_n^\circ\).  There is a canonical surjection
\begin{equation*}
  M(\widehat\lambda_n^\circ)\twoheadrightarrow N_n^\circ
\end{equation*}
which sends the Verma highest-weight vector to \(s_n\).  By
\cite[Theorem~6.3]{KacWakimoto2004}, the minimal reduction of the source is a
\(W\)-Verma module generated by the BRST class of that highest-weight
vector.  Exactness therefore implies that
\begin{equation}
  H^0_{\mathrm{DS},f_\theta}(N_n)
  \quad\text{is generated by}\quad
  [s_n]_{\mathrm{DS}}.
  \label{e7:eq:E7-reduction-Nn-cyclic}
\end{equation}

We next prove that this cyclic vector is nonzero.

\begin{lemma}
\label{e7:lem:E7-reduced-singular-vector-nonzero}
For every \(n\in\{1,2,3\}\),
\begin{equation*}
  [s_n]_{\mathrm{DS}}\ne0.
\end{equation*}
\end{lemma}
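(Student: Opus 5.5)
The plan follows exactly the pattern of Lemma~\ref{lem:common-reduced-singular-generator}, as already used for $D_\ell$, $D_4$ and $E_6$. The key input is the affine-wall coordinate of the highest weight $\widehat\lambda_n$ in \eqref{e7:eq:E7-affine-highest-weight-lambda-n}. Since $\vartheta$ is a root of $\mathfrak g^\natural\cong D_6$, it is orthogonal to $\theta$. Hence
\[
 \langle\theta+\vartheta,\theta^\vee\rangle=2,
\]
and by \eqref{eq:common-affine-simple-coroot},
\[
 \widehat\lambda_n(\alpha_0^\vee)=k_n-2(n+1)=(n-4)-2n-2=-n-6<0.
\]
The $\delta$-component does not affect this value. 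So hypothesis \eqref{eq:common-singular-generator-negative-wall} holds with $k_n\in\mathbb Z_{<0}$, and $s_n$ is a nonzero singular vector of positive degree $2(n+1)$ by the preceding Arakawa--Moreau proposition.

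Next, $N_n$ has an irreducible highest-weight quotient $N_n\twoheadrightarrow L(\widehat\lambda_n)$. By \eqref{eq:common-DS-nonvanishing}, $H^0_{\mathrm{DS},f_\theta}(L(\widehat\lambda_n))\ne0$. Lemma~\ref{lem:common-graded-subquotient-category-O} places $N_n$, its quotients and $V^{k_n}(E_7)/N_n$ in $\mathcal O_{k_n}$, where minimal reduction is exact. Exactness then gives a surjection $H^0_{\mathrm{DS},f_\theta}(N_n)\twoheadrightarrow H^0_{\mathrm{DS},f_\theta}(L(\widehat\lambda_n))$, so the source is nonzero.

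By the cyclicity statement \eqref{e7:eq:E7-reduction-Nn-cyclic}, this nonzero module is generated by $[s_n]_{\mathrm{DS}}$. Therefore $[s_n]_{\mathrm{DS}}\ne0$ as an element of $H^0_{\mathrm{DS},f_\theta}(N_n)$. Finally, exactness applied to $0\to N_n\to V^{k_n}(E_7)\to Q_n\to0$ embeds $H^0_{\mathrm{DS},f_\theta}(N_n)$ into $\mathcal W^{k_n}(E_7,f_\theta)$, so the class is also nonzero in the universal minimal $W$-algebra.

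The only step needing any care is the orthogonality $(\theta\mid\vartheta)=0$. I expect it to follow from $\vartheta(x_\theta)=0$: the algebra $\mathfrak g^\natural$ lies in the degree-zero part of the minimal grading and commutes with the $\mathfrak{sl}_2$-triple. Everything else is a direct invocation of Lemma~\ref{lem:common-reduced-singular-generator}, so the proof could simply cite that lemma after computing $-n-6<0$.
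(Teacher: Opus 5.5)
Your proposal is correct and follows essentially the same argument as the paper: you compute $\widehat\lambda_n(\alpha_0^\vee)=-n-6<0$ using $\langle\vartheta,\theta^\vee\rangle=0$, deduce $H^0_{\mathrm{DS},f_\theta}(N_n)\ne0$ from the irreducible quotient $L(\widehat\lambda_n)$ and exactness, and conclude via the cyclicity statement \eqref{e7:eq:E7-reduction-Nn-cyclic}. Your final step, embedding the class into $\mathcal W^{k_n}(E_7,f_\theta)$, is implicit in the paper's $E_7$ proof. It is exactly the last step of Lemma~\ref{lem:common-reduced-singular-generator}, so including it is a harmless clarification.
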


\begin{proof}
The highest-weight module \(N_n\) has an irreducible highest-weight quotient
\[
  N_n\twoheadrightarrow L(\widehat\lambda_n).
\]
Since \(E_7\) is simply laced,
\[
  \alpha_0^\vee=K-\theta^\vee.
\]
Moreover, \(\vartheta\) belongs to the root system of
\(\mathfrak g^\natural\), and hence
\[
  \langle\vartheta,\theta^\vee\rangle=0.
\]
It follows from \eqref{e7:eq:E7-affine-highest-weight-lambda-n} that
\begin{align*}
  \widehat\lambda_n(\alpha_0^\vee)
  &=k_n-(n+1)
    \langle\theta+\vartheta,\theta^\vee\rangle
    \notag\\
  &=n-4-2(n+1)
    =-n-6<0.
\end{align*}
By the minimal-reduction nonvanishing criterion recalled in
\eqref{eq:common-DS-nonvanishing},
\begin{equation*}
  H^0_{\mathrm{DS},f_\theta}
  \bigl(L(\widehat\lambda_n)\bigr)\ne0.
\end{equation*}
Exactness applied to
\(N_n\twoheadrightarrow L(\widehat\lambda_n)\) gives a surjection
\[
  H^0_{\mathrm{DS},f_\theta}(N_n)
  \twoheadrightarrow
  H^0_{\mathrm{DS},f_\theta}
  \bigl(L(\widehat\lambda_n)\bigr).
\]
Thus \(H^0_{\mathrm{DS},f_\theta}(N_n)\ne0\).  In view of the cyclicity
statement \eqref{e7:eq:E7-reduction-Nn-cyclic}, its cyclic generator
\([s_n]_{\mathrm{DS}}\) cannot vanish.
\end{proof}

\subsubsection{The extremal line in the universal minimal
\texorpdfstring{\(W\)}{W}-algebra}

We determine the finite weight and conformal weight of the reduced vector.
Because \(\theta\) restricts trivially to \(\mathfrak h^\natural\), while
\(\vartheta\) is the highest root of \(D_6\),
\begin{equation}
  \operatorname{wt}_{D_6}
  \bigl([s_n]_{\mathrm{DS}}\bigr)
  =(n+1)\vartheta.
  \label{e7:eq:E7-reduced-singular-vector-D6-weight}
\end{equation}
The reduced conformal Hamiltonian acts on the affine highest-weight vector
by the affine degree minus the \(x_\theta\)-eigenvalue.  Since
\[
  \theta(x_\theta)=1,
  \qquad
  \vartheta(x_\theta)=0,
\]
we obtain
\begin{align}
  \Delta_W\bigl([s_n]_{\mathrm{DS}}\bigr)
  &=2(n+1)
    -(n+1)(\theta+\vartheta)(x_\theta)
    \notag\\
  &=n+1.
  \label{e7:eq:E7-reduced-singular-vector-conformal-weight}
\end{align}

The following PBW estimate identifies the simultaneous weight space
containing this vector.

\begin{lemma}[The extremal PBW line]
\label{e7:lem:E7-extremal-PBW-line}
For every \(n\ge0\) and every noncritical level \(k\),
\begin{equation*}
  \mathcal W^k(E_7,f_\theta)_{n+1}
  \bigl[(n+1)\vartheta\bigr]
  =
  \mathbb C
  \bigl(J^{\{e_\vartheta\}}_{-1}\bigr)^{n+1}\mathbf1.
\end{equation*}
\end{lemma}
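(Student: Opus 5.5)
This is a direct instance of Proposition~\ref{prop:common-extremal-PBW-line}. We apply it with $\mathfrak a=\mathfrak g^\natural\cong D_6$, which is simple and simply laced, $\eta=\vartheta$, $e_\eta=e_\vartheta$ and $r=n+1\ge1$. The proposition already covers every noncritical level and every $r\ge1$. The only hypothesis left to check is the bound \eqref{eq:common-extremal-G-bound}: every weight $\nu$ of $\mathfrak g_{-1/2}\cong L_{D_6}(\omega_6)$ must satisfy $\langle\nu,\vartheta^\vee\rangle\le1$.

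To check this bound, we use the orthogonal coordinates \eqref{e7:eq:D6-spin-weight-coordinates}. Since $\vartheta=\varepsilon_1+\varepsilon_2$ is a long root of squared length two, we have $\vartheta^\vee=\vartheta$. The half-spin module is minuscule, so its weights are exactly the $W(D_6)$-orbit of $\omega_6$, namely $\frac12(\pm\varepsilon_1\pm\cdots\pm\varepsilon_6)$ with an even number of minus signs. For such $\nu$, the pairing $\langle\nu,\vartheta^\vee\rangle=\frac12(\pm1\pm1)$ lies in $\{-1,0,1\}$. Alternatively, one can argue abstractly: the maximum of $\langle\nu,\vartheta^\vee\rangle$ over the weights of $L(\omega_6)$ is attained at the dominant weight $\omega_6$ and equals $\langle\omega_6,\vartheta^\vee\rangle$, the coefficient of $\alpha_6^\vee$ in $\vartheta^\vee$, which is $1$ for $D_6$. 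Either way the bound \eqref{eq:common-extremal-G-bound} holds.

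With this bound in hand, the proposition yields
\[
 \mathcal W^k(E_7,f_\theta)_{n+1}[(n+1)\vartheta]=\mathbb C\bigl(J^{\{e_\vartheta\}}_{(-1)}\bigr)^{n+1}\mathbf1 .
\]
Here $J^{\{e_\vartheta\}}_{-1}$ in the statement denotes the same mode as $J^{\{e_\vartheta\}}_{(-1)}$, since a weight-one field has $J_{-1}=J_{(-1)}$. Nonvanishing of this vector comes from PBW independence, exactly as in the common proof. The only step needing any care is the half-spin weight bound, and it is immediate because the representation is minuscule.
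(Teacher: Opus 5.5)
Your proposal is correct and follows the paper's own proof. It applies Proposition~\ref{prop:common-extremal-PBW-line} to $\mathfrak g^\natural\cong D_6$ with $\eta=\vartheta$ and $r=n+1$, and the only type-specific input is the half-spin bound $\langle\nu,\vartheta^\vee\rangle\le\langle\omega_6,\vartheta^\vee\rangle=1$, which you verify both in orthogonal coordinates and via dominance of $\vartheta^\vee$.
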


\begin{proof}
Apply Proposition~\ref{prop:common-extremal-PBW-line} to
\(\mathfrak g^\natural\cong D_6\), with \(\eta=\vartheta\).  The current
weights are roots or zero, and equality in
\(\langle\mu,\vartheta^\vee\rangle\le2\) occurs only for
\(\mu=\vartheta\).  Moreover,
\(\mathfrak g_{-1/2}\cong L_{D_6}(\omega_6)\), and every one of its weights
\(\nu\) satisfies
\(\langle\nu,\vartheta^\vee\rangle\le
\langle\omega_6,\vartheta^\vee\rangle=1\).  The common proposition with
\(r=n+1\) gives the claim.
\end{proof}

\subsubsection{The exact reduced singular vector}

\begin{proposition}[Reduction of the distinguished singular vector]
\label{e7:prop:E7-reduction-distinguished-singular-vector}
For every \(n\in\{1,2,3\}\), there exists a nonzero scalar
\(c_n\in\mathbb C^\times\) such that
\begin{equation*}
  [s_n]_{\mathrm{DS}}
  =c_n
   \bigl(J^{\{e_\vartheta\}}_{-1}\bigr)^{n+1}\mathbf1
  =c_n:J^{\{e_\vartheta\}}{}^{n+1}:.
\end{equation*}
\end{proposition}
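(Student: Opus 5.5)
The argument has the same shape as Proposition~\ref{dl:prop:Dl-exact-reduced-singular-vectors} and Proposition~\ref{d4:prop:D4-exact-reduced-singular-vectors}. All the ingredients have already been assembled in this section, so the proof is a matter of putting them together.

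First, Lemma~\ref{e7:lem:E7-reduced-singular-vector-nonzero} shows that $[s_n]_{\mathrm{DS}}\ne0$ in $\mathcal W^{k_n}(E_7,f_\theta)$. This uses the affine-wall computation $\widehat\lambda_n(\alpha_0^\vee)=-n-6<0$, the cyclicity \eqref{e7:eq:E7-reduction-Nn-cyclic}, and the injection $H^0_{\mathrm{DS},f_\theta}(N_n)\hookrightarrow\mathcal W^{k_n}(E_7,f_\theta)$. That injection comes from exactness applied to $0\to N_n\to V^{k_n}(E_7)\to Q_n\to0$, which lies in $\mathcal O_{k_n}$ by Lemma~\ref{lem:common-graded-subquotient-category-O}. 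I would state this injection explicitly, since the lemma's proof only addressed the cyclic module.

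Next I locate the class in the correct simultaneous weight space. The standard BRST representative is $s_n\otimes\mathbf 1_{\mathrm{gh}}$, and the ghost vacuum has $\mathfrak h^\natural$-weight zero. So \eqref{e7:eq:E7-reduced-singular-vector-D6-weight} gives $D_6$-weight $(n+1)\vartheta$, and \eqref{e7:eq:E7-reduced-singular-vector-conformal-weight} gives conformal weight $n+1$. Both follow because $\theta$ restricts to zero on $\mathfrak h^\natural$, $\theta(x_\theta)=1$, and $\vartheta(x_\theta)=0$. The one point that needs care is that the reduced weight is computed on a highest-weight representative of the class. Since $s_n$ is singular, the representative $s_n\otimes\mathbf1_{\mathrm{gh}}$ is BRST closed, as used earlier via \cite[Lemma~7.3]{KacWakimoto2004}, so this is legitimate.

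Finally, Lemma~\ref{e7:lem:E7-extremal-PBW-line}, applied with $k=k_n$ (noncritical, since $k_n+18>0$), says this weight space is the line $\mathbb C(J^{\{e_\vartheta\}}_{-1})^{n+1}\mathbf1$. Hence $[s_n]_{\mathrm{DS}}=c_n:J^{\{e_\vartheta\}}{}^{n+1}:$ for some scalar $c_n$, and $c_n\ne0$ by the first step. I expect no real obstacle. The only delicate input is the bound $\langle\nu,\vartheta^\vee\rangle\le1$ for the half-spin weights, which underlies the extremal line, and that is already verified in the lemma.
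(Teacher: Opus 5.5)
Your proposal is correct and matches the paper's proof: nonvanishing from Lemma~\ref{e7:lem:E7-reduced-singular-vector-nonzero}, the weight computations \eqref{e7:eq:E7-reduced-singular-vector-D6-weight}--\eqref{e7:eq:E7-reduced-singular-vector-conformal-weight} placing the class in the right simultaneous weight space, and the one-dimensionality of that space from Lemma~\ref{e7:lem:E7-extremal-PBW-line}. Spelling out the injection $H^0_{\mathrm{DS},f_\theta}(N_n)\hookrightarrow\mathcal W^{k_n}(E_7,f_\theta)$ is a useful clarification, since the $E_7$ lemma only shows the cyclic generator is nonzero in the reduction of $N_n$; the injection is the final step of the common Lemma~\ref{lem:common-reduced-singular-generator}.
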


\begin{proof}
By Lemma~\ref{e7:lem:E7-reduced-singular-vector-nonzero}, the vector on the
left-hand side is nonzero.  Equations
\eqref{e7:eq:E7-reduced-singular-vector-D6-weight} and
\eqref{e7:eq:E7-reduced-singular-vector-conformal-weight} place it in the
one-dimensional simultaneous weight space described by
Lemma~\ref{e7:lem:E7-extremal-PBW-line}.  Hence it is a scalar multiple of the
stated highest-root-current power, and the scalar is nonzero.
\end{proof}

\subsubsection{The reduced quotients}

Define
\begin{equation}
  \widetilde{\mathcal W}_n
  =
  \mathcal W^{k_n}(E_7,f_\theta)
  \Big/
  \left\langle
    :J^{\{e_\vartheta\}}{}^{n+1}:
  \right\rangle.
  \label{e7:eq:E7-definition-Wtilde-n}
\end{equation}

\begin{proposition}
\label{e7:prop:E7-reduction-of-Qn}
For every \(n\in\{1,2,3\}\), there is a canonical isomorphism of conformal
vertex algebras
\begin{equation}
  H^0_{\mathrm{DS},f_\theta}(Q_n)
  \cong
  \widetilde{\mathcal W}_n.
  \label{e7:eq:E7-reduction-Qn-is-Wtilde}
\end{equation}
\end{proposition}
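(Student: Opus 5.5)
The plan is to apply Proposition~\ref{prop:common-exact-generated-quotient} with the single singular vector \(s_1=s_n\), so the isomorphism \eqref{e7:eq:E7-reduction-Qn-is-Wtilde} reduces to verifying that proposition's two hypotheses. The first is the category hypothesis. The modules \(N_n\), \(V^{k_n}(E_7)\) and \(Q_n\) are graded subquotients of the vacuum module. Their homogeneous pieces are finite-dimensional and are finite-dimensional \(E_7\)-modules; for \(N_n\) this uses the triangular-decomposition description \(N_n=U(E_7[t^{-1}]t^{-1})U(E_7)s_n\), which also puts the lowest degree of \(N_n\) at \(2(n+1)>0\). Lemma~\ref{lem:common-graded-subquotient-category-O} then places all of them in \(\mathcal O_{k_n}\), where minimal reduction is exact.

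The second hypothesis is cyclicity of the reduced generated submodule with an identified generator. Cyclicity of \(H^0_{\mathrm{DS},f_\theta}(N_n)\), generated by \([s_n]_{\mathrm{DS}}\), is exactly \eqref{e7:eq:E7-reduction-Nn-cyclic}. Alternatively it follows from Lemma~\ref{lem:common-reduced-singular-generator}, using \(\widehat\lambda_n(\alpha_0^\vee)=-n-6<0\) as computed in Lemma~\ref{e7:lem:E7-reduced-singular-vector-nonzero}. Proposition~\ref{e7:prop:E7-reduction-distinguished-singular-vector} identifies this generator as \(c_n:J^{\{e_\vartheta\}}{}^{n+1}:\) with \(c_n\ne0\). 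Rescaling a generator does not change the ideal it generates, so \(\bar s_1\) may be taken to be \(:J^{\{e_\vartheta\}}{}^{n+1}:\).

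With both hypotheses in place, the exact sequence \(0\to N_n\to V^{k_n}(E_7)\to Q_n\to0\) reduces to
\[
0\to H^0_{\mathrm{DS},f_\theta}(N_n)\to\mathcal W^{k_n}(E_7,f_\theta)\to H^0_{\mathrm{DS},f_\theta}(Q_n)\to0.
\]
The image of the first term is an adjoint \(\mathcal W^{k_n}\)-submodule, hence a vertex-algebra ideal. Because it is cyclic on \(:J^{\{e_\vartheta\}}{}^{n+1}:\), it equals \(\langle :J^{\{e_\vartheta\}}{}^{n+1}:\rangle\).

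The only delicate point is compatibility of structures. The isomorphism must be one of conformal vertex algebras, not just \(\mathcal W\)-modules. This holds because \(Q_n\) is a vertex-algebra quotient and the reduction functor sends the quotient map to a vertex-algebra homomorphism; the kernel identification then makes the induced map \(\widetilde{\mathcal W}_n\to H^0_{\mathrm{DS},f_\theta}(Q_n)\) a bijective homomorphism preserving \(\omega\). I also expect to check that the shift of \(D\) used in the cyclicity argument does not affect the identification, since the reduction depends only on the \(\widehat E_7'\)-action, as already noted in the text.
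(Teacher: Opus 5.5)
Your proposal is correct and is essentially the paper's proof. Like the paper, you apply Proposition~\ref{prop:common-exact-generated-quotient} with $r=1$, using the cyclicity \eqref{e7:eq:E7-reduction-Nn-cyclic} and the identification of the generator in Proposition~\ref{e7:prop:E7-reduction-distinguished-singular-vector}; your explicit check of the category hypothesis via Lemma~\ref{lem:common-graded-subquotient-category-O} and your remark on compatibility with the conformal vertex-algebra structure just spell out points the paper leaves implicit.
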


\begin{proof}
The reduction of $N_n$ is cyclic by
\eqref{e7:eq:E7-reduction-Nn-cyclic}, and
Proposition~\ref{e7:prop:E7-reduction-distinguished-singular-vector}
identifies its generator.  Proposition~\ref{prop:common-exact-generated-quotient}
applied with $r=1$ gives \eqref{e7:eq:E7-reduction-Qn-is-Wtilde}.
\end{proof}

The geometric properties of these quotients will be needed in the next
section.

\begin{corollary}

For every \(n\in\{1,2,3\}\),
\begin{equation}
  \widetilde{\mathcal W}_n\ne0,
  \qquad
  \widetilde{\mathcal W}_n\ \text{is lisse}.
  \label{e7:eq:E7-Wtilde-nonzero-lisse}
\end{equation}
Consequently, its Ramond Zhu algebra is finite-dimensional.
\end{corollary}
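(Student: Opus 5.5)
The plan is to follow the pattern used for $D_\ell$, $D_4$, and $E_6$, now with the $E_7$ data. There are three points to establish: nonvanishing, lisse-ness, and finite-dimensionality of the Ramond Zhu algebra.

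\emph{Nonvanishing.} By Proposition~\ref{e7:prop:E7-reduction-of-Qn} we have $\widetilde{\mathcal W}_n\cong H^0_{\mathrm{DS},f_\theta}(Q_n)$. Since $s_n$ has positive conformal degree $2(n+1)$, the triangular-decomposition argument of Lemma~\ref{dl:lem:Dl-singular-submodules-proper} shows that $N_n$ is proper. Hence $N_n\subset\operatorname{Rad}V^{k_n}(E_7)$, and the quotient map factors as $Q_n\twoheadrightarrow L_{k_n}(E_7)$. All modules involved are graded subquotients of the vacuum module, so Lemma~\ref{lem:common-graded-subquotient-category-O} places them in the category where minimal reduction is exact. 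Exactness then gives a surjection
\[
\widetilde{\mathcal W}_n\twoheadrightarrow H^0_{\mathrm{DS},f_\theta}(L_{k_n}(E_7)).
\]
The target is nonzero by Lemma~\ref{lem:common-DS-detection}, since $k_n<0$; equivalently, by \eqref{eq:common-DS-nonvanishing} with $(k_n\Lambda_0)(\alpha_0^\vee)=k_n<0$. So $\widetilde{\mathcal W}_n\ne0$.

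\emph{Lisse-ness.} We invoke \cite[Proposition~5.2]{ArakawaMoreau2018} for the $E_7$ family at $k_n=n-4$, which gives $X_{Q_n}=\overline{\mathbb O}_{\min}$. Formula \eqref{eq:common-associated-variety} (that is, \cite[Theorem~6.1(3)]{ArakawaMoreau2018}) then gives
\[
X_{\widetilde{\mathcal W}_n}=\overline{\mathbb O}_{\min}\cap\mathcal S_{f_\theta}=\{f_\theta\}.
\]
A conical variety reduced to a point means $R_{\widetilde{\mathcal W}_n}$ is finite-dimensional, so $\widetilde{\mathcal W}_n$ is lisse.

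\emph{Finite-dimensional Ramond Zhu algebra.} This follows from \eqref{eq:common-lisse-Ramond-finite}. The route is the surjection \eqref{eq:common-C2-Ramond-Zhu}, $\widetilde{\mathcal W}_n/C_2\twoheadrightarrow\gr A_{\mathrm R}(\widetilde{\mathcal W}_n)$, as in the $E_6$ argument via \cite[Proposition~2.17(c)]{DeSoleKac2006}. Since the Zhu filtration is exhaustive and separated, finite-dimensionality of the associated graded passes to $A_{\mathrm R}(\widetilde{\mathcal W}_n)$ itself.

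The main obstacle is the applicability of the cited associated-variety result. One must check that the ideal in \cite[Proposition~5.2]{ArakawaMoreau2018} is exactly the ideal generated by the single PBW power $\sigma(w)^{n+1}$, with the same PBW-power convention. If only the radical is matched, I would argue as in Proposition~\ref{dl:prop:Dl-minus-two-associated-variety}. The image of $N_n$ in $S(\mathfrak g)$ contains $w^{n+1}$ and is $G$-stable. Its radical therefore contains $W=U(\mathfrak g)w$, and is contained in the radical of the ideal generated by $W$. That radical is the prime ideal of $\overline{\mathbb O}_{\min}$ by \cite[Lemma~2.1]{ArakawaMoreau2018}.
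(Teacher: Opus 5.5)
Your proposal is correct and follows essentially the same route as the paper: \cite[Proposition~5.2]{ArakawaMoreau2018} gives $X_{Q_n}=\overline{\mathbb O}_{\min}$, the identification $H^0_{\mathrm{DS},f_\theta}(Q_n)\cong\widetilde{\mathcal W}_n$ transfers this to lisse-ness, and \eqref{eq:common-lisse-Ramond-finite} gives finite-dimensionality of the Ramond Zhu algebra. The only difference is in the nonvanishing step. You obtain it from the surjection onto the nonzero simple algebra $\mathcal W_{k_n}(E_7,f_\theta)$ via Lemma~\ref{lem:common-DS-detection}, whereas the paper reads it off \cite[Theorem~6.1(3)]{ArakawaMoreau2018}; your variant is harmless, slightly more self-contained, and is the same argument the paper uses for $E_8$.
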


\begin{proof}
For \(E_7\), the module \(W\) in
\eqref{e7:eq:E7-Joseph-module-W} is irreducible, so the quotient \(Q_n\) is the
specific quotient denoted \(\widetilde V_{k_n}(E_7)\) in
\cite{ArakawaMoreau2018}.  By
\cite[Proposition~5.2]{ArakawaMoreau2018},
\begin{equation*}
  X_{Q_n}=\overline{\mathbb O_{\min}}.
\end{equation*}
The nonvanishing and lisse assertions now follow from
\cite[Theorem~6.1(3)]{ArakawaMoreau2018} together with
Proposition~\ref{e7:prop:E7-reduction-of-Qn}.  Finite-dimensionality of the
Ramond Zhu algebra follows from
\eqref{eq:common-lisse-Ramond-finite}.
\end{proof}

There is also a natural surjection
\begin{equation}
  \widetilde{\mathcal W}_n
  \twoheadrightarrow
  \mathcal W_{k_n}(E_7,f_\theta),
  \label{e7:eq:E7-Wtilde-to-simple-W}
\end{equation}
induced by the canonical map
\(Q_n\twoheadrightarrow L_{k_n}(E_7)\).  The main task of the later
simplicity argument will be to prove that
\eqref{e7:eq:E7-Wtilde-to-simple-W} is an isomorphism for
\(n=1,2,3\).

The relation in \eqref{e7:eq:E7-definition-Wtilde-n} will next be passed to the
Ramond Zhu algebra, where it yields a highest-root nilpotency bound and a
uniform Casimir trace identity.

\subsection{The Ramond Zhu algebra and a Casimir trace identity}
\label{e7:sec:Ramond-Casimir-identity}

In the preceding section we identified the minimal reduction of the affine
quotient $Q_n$ with the lisse vertex algebra
\[
  \widetilde{\mathcal W}_n
  =
  \mathcal W^{k_n}(E_7,f_\theta)
  \Big/
  \left\langle
    :J^{\{e_\vartheta\}}{}^{n+1}:
  \right\rangle,
  \qquad
  k_n=n-4,
  \quad n\in\{1,2,3\}.
\]
The purpose of this section is to extract from this single defining relation
finite-dimensional information that will later control ideals of
$\widetilde{\mathcal W}_n$.  We first pass the highest-root relation to the
Ramond Zhu algebra, and then take the trace of the quadratic Ramond
commutator relation.  The resulting Casimir identity is uniform in
$n=1,2,3$.

\subsubsection{Finite-dimensionality and the highest-root relation}

Set
\begin{equation*}
  B_n=A_{\mathrm R}(\widetilde{\mathcal W}_n).
\end{equation*}
By \eqref{e7:eq:E7-Wtilde-nonzero-lisse} and
\eqref{eq:common-lisse-Ramond-finite}, we have
\begin{equation*}
  0<\dim B_n<\infty.
\end{equation*}
The strict positivity follows from the nonzero vacuum class.

Fix a nonzero highest-root vector
\begin{equation*}
  e=e_\vartheta\in(D_6)_\vartheta,
  \qquad
  J=J^{\{e\}}.
\end{equation*}
The current relation \eqref{e7:eq:E7-D6-current-relation-n} gives
\begin{equation*}
  [J_\lambda J]
  =J^{\{[e,e]\}}+n(e\mid e)\lambda=0.
\end{equation*}
Indeed, $[e,e]=0$, while the invariant form pairs the root space
$(D_6)_\vartheta$ only with $(D_6)_{-\vartheta}$.  Equivalently,
\begin{equation}
  J_{(m)}J=0,
  \qquad m\ge0.
  \label{e7:eq:E7-highest-root-current-products-zero}
\end{equation}

The following elementary observation identifies the Zhu image of every
normal power of $J$.

\begin{lemma}
\label{e7:lem:E7-Zhu-image-normal-power}
For every integer $r\ge1$,
\begin{equation*}
  \bigl[:J^r:\bigr]_{\mathrm R}
  =[J]_{\mathrm R}^{\,r}
\end{equation*}
in the Ramond Zhu algebra of every conformal quotient in which the state
$:J^r:$ is defined.
\end{lemma}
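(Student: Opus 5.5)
The plan is an induction on $r$, exactly as in Lemma~\ref{dl:lem:Dl-Zhu-images-current-powers} and Lemma~\ref{d4:lem:D4-Zhu-image-current-square}. For $r=1$ the assertion is a tautology. Throughout we use that $J$ is fixed by the Ramond automorphism, since it is a current, and that $J$ has conformal weight one. Consequently the Ramond Zhu product of $[J]_{\mathrm R}$ with the class of any state $b$ reduces to the untwisted weight-one formula
\[
 [J]_{\mathrm R}*[b]_{\mathrm R}
 =\bigl[J_{(-1)}b\bigr]_{\mathrm R}+\bigl[J_{(0)}b\bigr]_{\mathrm R}.
\]
This holds because $\operatorname{wt}J=1$ makes the binomial expansion $(1+z)^{\operatorname{wt}J}$ terminate after the $z^0$ and $z^1$ terms, and the twist is trivial on $J$.

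For the inductive step we assume $[:J^r:]_{\mathrm R}=[J]_{\mathrm R}^{\,r}$ and take $b=:J^r:=J_{(-1)}^{\,r}\mathbf1$. The first term in the formula above is $[:J^{r+1}:]_{\mathrm R}$ by definition. It remains to show $J_{(0)}:J^r:=0$. By \eqref{e7:eq:E7-highest-root-current-products-zero} we have $[J_\lambda J]=0$. The noncommutative Wick formula, or equivalently the commutator formula $[J_{(0)},J_{(-1)}]=(J_{(0)}J)_{(-1)}=0$ together with $J_{(0)}\mathbf1=0$, then gives $[J_\lambda :J^r:]=0$, and in particular $J_{(0)}:J^r:=0$. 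Combining these with the induction hypothesis yields $[:J^{r+1}:]_{\mathrm R}=[J]_{\mathrm R}*[J]_{\mathrm R}^{\,r}=[J]_{\mathrm R}^{\,r+1}$.

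All of these identities hold in the universal minimal $W$-algebra and pass to every conformal quotient, which covers the clause about quotients in the statement. The only point needing care is the form of the twisted Zhu product for a weight-one untwisted vector. This is standard in the convention of \cite{DongLiMason1998} and is the same fact already used in the $D_\ell$ and $D_4$ sections, so no genuine obstacle is expected.
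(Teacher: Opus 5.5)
Your proposal is correct and follows the paper's proof: you argue by induction on $r$, using the weight-one Ramond Zhu product $[J]_{\mathrm R}*[v]_{\mathrm R}=[J_{(-1)}v]_{\mathrm R}+[J_{(0)}v]_{\mathrm R}$ for the Ramond-fixed current $J$, together with $J_{(0)}:J^r:=0$, which follows from $[J_\lambda J]=0$. Your alternative justification of that vanishing, via $[J_{(0)},J_{(-1)}]=(J_{(0)}J)_{(-1)}=0$ and $J_{(0)}\mathbf1=0$, is an equally valid and slightly more elementary route to the same step.
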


\begin{proof}
The Ramond automorphism fixes $J$, and $J$ has conformal weight one.  Hence,
for every Ramond-invariant state $v$, its Zhu product is
\begin{equation}
  [J]_{\mathrm R}*[v]_{\mathrm R}
  =[J_{(-1)}v]_{\mathrm R}+[J_{(0)}v]_{\mathrm R}.
  \label{e7:eq:E7-weight-one-Ramond-Zhu-product}
\end{equation}
Equation \eqref{e7:eq:E7-highest-root-current-products-zero}, together with the
noncommutative Wick formula, implies
\begin{equation*}
  J_{(m)}:J^r:=0,
  \qquad m\ge0.
\end{equation*}
Taking $v=:J^r:$ in
\eqref{e7:eq:E7-weight-one-Ramond-Zhu-product} therefore gives
\[
  [J]_{\mathrm R}*\bigl[:J^r:\bigr]_{\mathrm R}
  =\bigl[:J^{r+1}:\bigr]_{\mathrm R}.
\]
The assertion follows by induction on $r$.
\end{proof}

Since the defining ideal of $\widetilde{\mathcal W}_n$ contains
$:J^{n+1}:$, Lemma~\ref{e7:lem:E7-Zhu-image-normal-power} yields
\begin{equation}
  e_\vartheta^{\,n+1}=0
  \qquad\text{in }B_n.
  \label{e7:eq:E7-highest-root-nilpotency-Bn}
\end{equation}
Here and below the same symbol $e_\vartheta$ denotes the image in $B_n$ of
the Ramond Zhu class of $J^{\{e_\vartheta\}}$.

The nilpotency relation gives an immediate bound on the $D_6$-types that can
occur in finite-dimensional $B_n$-modules.

\begin{lemma}[Highest-root level bound]
\label{e7:lem:E7-Bn-constituent-level-bound}
Let $M$ be a finite-dimensional $B_n$-module.  If the irreducible
$D_6$-module $L_{D_6}(\lambda)$ occurs in the restriction of $M$ to $D_6$,
then
\begin{equation}
  \langle\lambda,\vartheta^\vee\rangle\le n.
  \label{e7:eq:E7-Bn-constituent-level-bound}
\end{equation}
\end{lemma}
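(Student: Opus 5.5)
The plan is to reduce the statement to an $\mathfrak{sl}_2$-string computation for the highest-root triple, exactly as in Lemma~\ref{dl:lem:Dl-allowed-finite-types} and Lemma~\ref{d4:lem:D4-allowed-A1-cubed-types}. The current zero modes give a Lie algebra homomorphism $D_6\to B_n$. Hence $M$ is a finite-dimensional $D_6$-module, and it is completely reducible. So if $L_{D_6}(\lambda)$ occurs in the restriction, there is a nonzero vector $v\in M$ that is a $D_6$-highest-weight vector of weight $\lambda$. Choose an $\mathfrak{sl}_2$-triple $(e_\vartheta,h_\vartheta,f_\vartheta)\subset D_6$ with $h_\vartheta=\vartheta^\vee$. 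Its image in $B_n$ acts on $M$, and by \eqref{e7:eq:E7-highest-root-nilpotency-Bn} the image of $e_\vartheta$ satisfies $e_\vartheta^{\,n+1}=0$ on $M$.

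Set $q=\langle\lambda,\vartheta^\vee\rangle$, a nonnegative integer since $\lambda$ is dominant. The vector $v$ is annihilated by $e_\vartheta$, because $\lambda+\vartheta$ is not a weight of $L_{D_6}(\lambda)$. It is also an $h_\vartheta$-eigenvector with eigenvalue $q$. So $v$ generates the irreducible $(q+1)$-dimensional $\mathfrak{sl}_2$-module. In it, the standard identity
\[
 e_\vartheta^{\,r}f_\vartheta^{\,r}v=r!\,q(q-1)\cdots(q-r+1)\,v,\qquad 0\le r\le q,
\]
shows that $e_\vartheta^{\,r}f_\vartheta^{\,r}v\neq0$ for every $r\le q$. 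If $q\ge n+1$, taking $r=n+1$ gives $e_\vartheta^{\,n+1}\ne0$ on $M$, which contradicts the nilpotency relation. Therefore $q\le n$, which is \eqref{e7:eq:E7-Bn-constituent-level-bound}.

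The only point needing care is that the operators $e_\vartheta,f_\vartheta,h_\vartheta$ appearing in the string computation are the same elements of $B_n$ for which the relation holds. Here $e_\vartheta$ is the Ramond Zhu class of $J^{\{e_\vartheta\}}$. Since the Ramond automorphism fixes the currents and these have conformal weight one, their Zhu classes satisfy the $D_6$ commutation relations under the associative commutator; the level term does not contribute to zero-mode brackets in the Zhu algebra. This is the same identification already used in the type $D$ and $D_4$ sections. With that in place, the argument is routine.
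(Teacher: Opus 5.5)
Your proposal is correct and follows essentially the same route as the paper: restrict to the $\mathfrak{sl}_2$-triple of the highest root $\vartheta$ and use the nilpotency relation $e_\vartheta^{\,n+1}=0$ in $B_n$ to rule out $\langle\lambda,\vartheta^\vee\rangle\ge n+1$. The only cosmetic difference is that the paper applies $e_\vartheta^{\,r}$ to the bottom vector $f_\vartheta^{\,m}v_\lambda$ of the string with $m=\langle\lambda,\vartheta^\vee\rangle$, whereas you use $e_\vartheta^{\,r}f_\vartheta^{\,r}v$ with the explicit nonzero constant $r!\,q(q-1)\cdots(q-r+1)$; both give the same contradiction.
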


\begin{proof}
Let
\[
  m=\langle\lambda,\vartheta^\vee\rangle.
\]
Consider the $\mathfrak{sl}_2$-subalgebra generated by
$e_\vartheta$, $f_\vartheta$, and $\vartheta^\vee$.  If $v_\lambda$ is a
$D_6$-highest-weight vector, then
$f_\vartheta^m v_\lambda\ne0$, and the standard $\mathfrak{sl}_2$ formulas
show that
\begin{equation}
  e_\vartheta^r f_\vartheta^m v_\lambda
  =c_{m,r}f_\vartheta^{m-r}v_\lambda,
  \qquad
  0\le r\le m,
  \qquad
  c_{m,r}\ne0.
  \label{e7:eq:E7-highest-root-string-formula}
\end{equation}
If $m\ge n+1$, taking $r=n+1$ in
\eqref{e7:eq:E7-highest-root-string-formula} contradicts
\eqref{e7:eq:E7-highest-root-nilpotency-Bn}.  Hence $m\le n$.
\end{proof}

Thus every finite-dimensional $B_n$-module is, as a $D_6$-module, a direct
sum of irreducibles whose highest weights lie in the finite set
\begin{equation*}
  P_n^+(D_6)
  =
  \{\lambda\in P_+(D_6)
    \mid
    \langle\lambda,\vartheta^\vee\rangle\le n\}.
\end{equation*}
The relevant Casimir values in these finite sets are recorded in
Appendix~\ref{e7:app:D6-weight-Casimir-data}.

\subsubsection{The trace of the quadratic Ramond term}

Let $M$ be a finite-dimensional $B_n$-module, and denote the induced
$D_6$-representation by
\begin{equation*}
  \rho:D_6\longrightarrow\operatorname{End}_{\mathbb C}(M).
\end{equation*}
Put
\begin{equation*}
  N_M=\dim M,
  \qquad
  \tau_M=\operatorname{tr}_M\rho(\Omega_{D_6}).
\end{equation*}
Since $D_6$ is simple, the trace form of $\rho$ is a scalar multiple of the
normalized invariant form.  Thus there exists $I_M\in\mathbb C$ such that
\begin{equation}
  \operatorname{tr}_M\bigl(\rho(a)\rho(b)\bigr)
  =I_M(a\mid b),
  \qquad a,b\in D_6.
  \label{e7:eq:E7-Dynkin-index-definition}
\end{equation}
Choose an orthonormal basis
$\{a_i\}_{i=1}^{66}$ of $D_6$.  Taking the trace of the Casimir gives
\begin{equation}
  \tau_M=66I_M,
  \qquad
  I_M=\frac{\tau_M}{66}.
  \label{e7:eq:E7-Dynkin-index-Casimir-trace}
\end{equation}

\begin{lemma}[Trace of the quadratic term]
\label{e7:lem:E7-trace-quadratic-term}
For every $u,v\in U=\mathfrak g_{-\frac12}\cong L_{D_6}(\omega_6)$,
\begin{equation*}
  \operatorname{tr}_M\rho\bigl(Q(u,v)\bigr)
  =\frac12\tau_M\langle u,v\rangle.
\end{equation*}
\end{lemma}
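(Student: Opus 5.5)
The argument will be the same contraction used for $E_6$ (Lemma~\ref{e6:lem:trace-Q-E6}) and for the $A_1$ and $D_m$ factors in type $D$. First I take the trace of \eqref{eq:common-quadratic-term} term by term, using the orthonormal basis $\{a_i\}$ of $D_6$, so that dual bases coincide. Then \eqref{e7:eq:E7-Dynkin-index-definition} gives
\[
 \operatorname{tr}_M\rho\bigl(a_ia_j+a_ja_i\bigr)=2I_M\delta_{ij}.
\]
The double sum therefore collapses to
\[
 \operatorname{tr}_M\rho\bigl(Q(u,v)\bigr)
 =2I_M\sum_{i=1}^{66}\langle[a_i,u],[v,a_i]\rangle .
\]

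Next I evaluate the contraction using invariance of the skew form \eqref{e7:eq:E7-half-spin-symplectic-form} under $\mathfrak g^\natural$. This invariance follows because $e_\theta$ is $\mathfrak g^\natural$-invariant and $(\cdot\mid\cdot)$ is invariant. Writing $[v,a_i]=-a_iv$ and moving $a_i$ across the form gives
\[
 \sum_i\langle[a_i,u],[v,a_i]\rangle=\Bigl\langle u,\sum_ia_i^2v\Bigr\rangle .
\]
The right-hand side is the Casimir of $D_6$ acting on $U\cong L_{D_6}(\omega_6)$. By \eqref{e7:eq:E7-half-spin-Casimir-data}, $U$ is irreducible, so this Casimir is the scalar $c_2(\omega_6)=33/2$.

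Substituting $I_M=\tau_M/66$ from \eqref{e7:eq:E7-Dynkin-index-Casimir-trace} then yields
\[
 \operatorname{tr}_M\rho\bigl(Q(u,v)\bigr)=2\cdot\frac{\tau_M}{66}\cdot\frac{33}{2}\,\langle u,v\rangle=\frac12\tau_M\langle u,v\rangle,
\]
which is the claim.

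The only delicate point is the value $c_2(\omega_6)=33/2$ in the restricted normalization. This restricted form agrees with the standard form on $D_6$, since $D_6$ is simply laced and its long roots remain long. As a check, $(\omega_6\mid\omega_6)=6/4=3/2$, and $(\omega_6\mid 2\rho)=\tfrac12(10+8+6+4+2+0)=15$, so $c_2(\omega_6)=3/2+15=33/2$.

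I also need to check the sign in the invariance step. It follows the same conventions already fixed in Lemma~\ref{dl:lem:Dl-trace-quadratic-Ramond-term}, so I will simply mirror that computation.
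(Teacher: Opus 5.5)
Your proposal is correct and follows the paper's proof step for step. You collapse the double sum via the orthonormal basis to $2I_M\sum_i\langle[a_i,u],[v,a_i]\rangle$, turn the contraction into the $D_6$ Casimir on $L_{D_6}(\omega_6)$ with eigenvalue $33/2$ by $\mathfrak g^\natural$-invariance of the skew form, and substitute $I_M=\tau_M/66$; your normalization and $c_2(\omega_6)$ checks are accurate and simply make explicit what the paper takes from \eqref{e7:eq:E7-half-spin-Casimir-data}.
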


\begin{proof}
Using the orthonormal-basis form of
\eqref{eq:common-quadratic-term} and
\eqref{e7:eq:E7-Dynkin-index-definition}, we obtain
\begin{align}
  \operatorname{tr}_M\rho\bigl(Q(u,v)\bigr)
  &=2I_M\sum_{i=1}^{66}
    \langle[a_i,u],[v,a_i]\rangle.
  \label{e7:eq:E7-trace-Q-first-contraction}
\end{align}
The skew form \eqref{e7:eq:E7-half-spin-symplectic-form} is $D_6$-invariant,
so
\begin{equation*}
  \langle au,w\rangle+\langle u,aw\rangle=0,
  \qquad a\in D_6.
\end{equation*}
Since $[v,a_i]=-a_iv$, it follows that
\begin{align*}
  \sum_{i=1}^{66}\langle[a_i,u],[v,a_i]\rangle
  &=-\sum_{i=1}^{66}\langle a_i u,a_i v\rangle
   \notag\\
  &=\left\langle
      u,\sum_{i=1}^{66}a_i^2v
    \right\rangle.
\end{align*}
The Casimir acts on $L_{D_6}(\omega_6)$ by
\begin{equation*}
  c_2(\omega_6)
  =(\omega_6\mid\omega_6+2\rho_{D_6})
  =\frac{33}{2};
\end{equation*}
see \eqref{e7:eq:E7-half-spin-Casimir-data}.  Hence
\begin{equation}
  \sum_{i=1}^{66}\langle[a_i,u],[v,a_i]\rangle
  =\frac{33}{2}\langle u,v\rangle.
  \label{e7:eq:E7-quadratic-contraction-value}
\end{equation}
Substituting
\eqref{e7:eq:E7-Dynkin-index-Casimir-trace} and
\eqref{e7:eq:E7-quadratic-contraction-value} into
\eqref{e7:eq:E7-trace-Q-first-contraction}, we find
\[
  \operatorname{tr}_M\rho\bigl(Q(u,v)\bigr)
  =2\frac{\tau_M}{66}\frac{33}{2}\langle u,v\rangle
  =\frac12\tau_M\langle u,v\rangle.
\]
\end{proof}

\subsubsection{The uniform Casimir trace identity}

We now trace the Ramond relation
\eqref{e7:eq:E7-n-Ramond-Zhu-relation}.  Recall that the class
$L=[\omega]_{\mathrm R}$ is central in $B_n$.

\begin{proposition}[Uniform Casimir trace identity]
\label{e7:prop:E7-uniform-Casimir-trace-identity}
Let $M$ be a nonzero finite-dimensional $B_n$-module on which $L$ acts as
the scalar $h\in\mathbb C$.  Then
\begin{equation}
  \frac{\operatorname{tr}_M\rho(\Omega_{D_6})}{\dim M}
  =\frac23
   \left(
     (2n+28)h+\frac{n(n+2)}2
   \right).
  \label{e7:eq:E7-uniform-Casimir-trace-identity}
\end{equation}
\end{proposition}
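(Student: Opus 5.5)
The identity is a direct specialization of Proposition~\ref{prop:common-Ramond-Casimir-trace}, with a single simple factor $\mathfrak a=D_6$. We take the trace over $M$ of the Ramond relation \eqref{e7:eq:E7-n-Ramond-Zhu-relation}. Since $D_6$ is an ordinary (purely even) simple Lie algebra, the bracket $[u,v]$ in $B_n$ is the ordinary associative commutator, as recalled after \eqref{eq:common-Ramond-Zhu}. Hence $\operatorname{tr}_M[\rho(u),\rho(v)]=0$.

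On the right-hand side, $L$ acts by $h$, so the scalar part contributes
\[
 \langle u,v\rangle\Bigl(\tau_M-2(n+14)h\,N_M-\tfrac{n(n+2)}2N_M\Bigr).
\]
Lemma~\ref{e7:lem:E7-trace-quadratic-term} supplies the contraction coefficient $\gamma=\tfrac12$, so the quadratic term contributes $\tfrac12\tau_M\langle u,v\rangle$. Setting the total trace to zero and using $\langle u,v\rangle\ne0$, which is available by nondegeneracy of the form \eqref{e7:eq:E7-half-spin-symplectic-form}, we obtain
\[
 \tfrac32\frac{\tau_M}{N_M}=2(n+14)h+\tfrac{n(n+2)}2 .
\]
Multiplying by $\tfrac23$ gives \eqref{e7:eq:E7-uniform-Casimir-trace-identity}, because $2(n+14)=2n+28$.

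The only point to check is that the constants in \eqref{e7:eq:E7-n-Ramond-Zhu-relation} agree with the general formula \eqref{eq:common-Ramond-Zhu}. At $k_n=n-4$ we have $k_n+h^\vee=n+14$ and $p_{E_7}(k_n)=n(n+2)$, so they do; the remainder is arithmetic.
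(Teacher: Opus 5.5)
Your proposal is correct and follows the paper's own proof: it specializes Proposition~\ref{prop:common-Ramond-Casimir-trace} using the single contraction coefficient $\gamma=\tfrac12$ from Lemma~\ref{e7:lem:E7-trace-quadratic-term}, together with $k_n+h^\vee=n+14$ and $p_{E_7}(k_n)=n(n+2)$. One small correction: the trace of the commutator vanishes because the ambient algebra $\mathfrak g=E_7$ is purely even, so the $G$-fields are even. It is not a consequence of $D_6$ being even, although this slip does not affect the argument.
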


\begin{proof}
Lemma~\ref{e7:lem:E7-trace-quadratic-term} gives $\gamma=1/2$.  Apply
Proposition~\ref{prop:common-Ramond-Casimir-trace} with
$k_n=n-4$, $h^\vee(E_7)=18$, and
$p_{E_7}(k_n)=n(n+2)$.  The result is
\[
 \frac32\frac{\tau_M}{N_M}
 =2(n+14)h+\frac{n(n+2)}2,
\]
which is equivalent to
\eqref{e7:eq:E7-uniform-Casimir-trace-identity}.
\end{proof}

For later use, we record the three specializations explicitly.

\begin{corollary}

Under the assumptions of
Proposition~\ref{e7:prop:E7-uniform-Casimir-trace-identity}, one has
\begin{align*}
  n=1:\qquad
  &\frac{\operatorname{tr}_M\rho(\Omega_{D_6})}{\dim M}
   =20h+1,\\
  n=2:\qquad
  &\frac{\operatorname{tr}_M\rho(\Omega_{D_6})}{\dim M}
   =\frac23(32h+4),\\
  n=3:\qquad
  &\frac{\operatorname{tr}_M\rho(\Omega_{D_6})}{\dim M}
   =\frac23\left(34h+\frac{15}{2}\right).
\end{align*}
\end{corollary}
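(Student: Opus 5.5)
This corollary is a direct specialization of Proposition~\ref{e7:prop:E7-uniform-Casimir-trace-identity}, so the plan is to substitute $n=1,2,3$ into \eqref{e7:eq:E7-uniform-Casimir-trace-identity} and simplify. No new input is needed. The hypotheses are exactly those of the proposition: a nonzero finite-dimensional $B_n$-module $M$ on which the central class $L$ acts by $h$. The identity then reads
\[
 \frac{\operatorname{tr}_M\rho(\Omega_{D_6})}{\dim M}
 =\frac23\Bigl((2n+28)h+\frac{n(n+2)}2\Bigr).
\]

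The substitutions are as follows.
\begin{itemize}
\item For $n=1$: $2n+28=30$ and $n(n+2)/2=3/2$, so the right-hand side is $\tfrac23(30h+\tfrac32)=20h+1$.
\item For $n=2$: $2n+28=32$ and $n(n+2)/2=4$, giving $\tfrac23(32h+4)$.
\item For $n=3$: $2n+28=34$ and $n(n+2)/2=15/2$, giving $\tfrac23(34h+\tfrac{15}{2})$.
\end{itemize}

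As a consistency check, I would confirm that $p_{E_7}(k_n)=(k_n+4)(k_n+6)=n(n+2)$ at $k_n=n-4$. I would also check that the coefficient $2(k_n+h^\vee)=2(n+14)$ in \eqref{e7:eq:E7-n-Ramond-Zhu-relation} matches $(2n+28)$ after the factor $\tfrac23=1/(1+\gamma)$ with $\gamma=\tfrac12$. There is no real obstacle; the only care needed is the arithmetic of these constants.
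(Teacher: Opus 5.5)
Your proposal is correct and is exactly the paper's argument: the corollary is recorded as the direct specialization of \eqref{e7:eq:E7-uniform-Casimir-trace-identity} at $n=1,2,3$. Your arithmetic, including the consistency checks $p_{E_7}(n-4)=n(n+2)$, $2(k_n+18)=2n+28$, and $1/(1+\gamma)=\tfrac23$ with $\gamma=\tfrac12$, is all correct.
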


The identity above is the only information about the full Ramond Zhu
algebra that will be required.  In particular, we shall not classify its
finite-dimensional simple modules.  Instead, the highest-root bound
\eqref{e7:eq:E7-Bn-constituent-level-bound} reduces the possible $D_6$-types
to a finite list, while Li spectral flow will determine the value of $h$
for the lowest space of a hypothetical nonzero ideal.  These two ingredients
will be combined in Sections~\ref{e7:sec:Li-spectral-flow} and
\ref{e7:sec:simplicity-reduced-quotients}.

\subsection{Li spectral flow and the lowest Ramond spectrum}
\label{e7:sec:Li-spectral-flow}

We now apply the Li twist fixed in
Subsection~\ref{e7:subsec:E7-Li-delta-operator} to the adjoint module of
\(\widetilde{\mathcal W}_n\).  The purpose of this section is twofold.  We
first determine the lattice containing the Ramond spectrum.  We then relate
shifted \(D_6\)-weights in the lowest Ramond space to the original conformal
weight and original \(D_6\)-weight.  These elementary relations will allow
us to exclude the large-Casimir constituents in
Section~\ref{e7:sec:simplicity-reduced-quotients} without determining the full
Ramond Zhu module structure.

Throughout the section,
\begin{equation*}
  n\in\{1,2,3\},
  \qquad
  k_n=n-4,
  \qquad
  x=\omega_6^\vee\in\mathfrak h^\natural.
\end{equation*}
Since \(D_6\) is simply laced, we identify \(x\) with the weight
\(\omega_6\) by means of the normalized invariant form.  In particular,
\begin{equation*}
  (x\mid x)=(\omega_6\mid\omega_6)=\frac32.
\end{equation*}
Let
\begin{equation*}
  \widetilde{\mathcal W}_n^{\mathrm R}
  =
  (\widetilde{\mathcal W}_n,Y^{\Delta})
\end{equation*}
be the Li-twisted adjoint module obtained from
\(H=J^{\{x\}}\).  By
\eqref{e7:eq:E7-Li-Ramond-conformal-shift} and
\eqref{e7:eq:E7-Li-Cartan-shift}, its conformal Hamiltonian and Cartan zero
modes are
\begin{align}
  L_0^{\mathrm R}
  &=L_0+x_0+\frac{3n}{4},
  \label{e7:eq:E7-section5-Ramond-Hamiltonian}\\
  J_0^{\{h\},\mathrm R}
  &=J_0^{\{h\}}+n(x\mid h),
  \qquad h\in\mathfrak h^\natural.
  \label{e7:eq:E7-section5-shifted-Cartan-action}
\end{align}
As observed in Section~\ref{e7:sec:preliminaries}, this twist is precisely the
Ramond twist: it fixes the \(D_6\)-currents and changes the sign of every
field \(G^{\{u\}}\).

\subsubsection{The Ramond spectral lattice}

We first show that the Li-twisted adjoint module is lower bounded and that
its spectrum lies in a single integral translate.

\begin{proposition}[Ramond spectral lattice]
\label{e7:prop:E7-Ramond-spectral-lattice}
The operator \(L_0^{\mathrm R}\) acts semisimply on
\(\widetilde{\mathcal W}_n^{\mathrm R}\), and
\begin{equation}
  \operatorname{Spec}L_0^{\mathrm R}
  \subset
  \frac{3n}{4}+\mathbb Z_{\ge0}.
  \label{e7:eq:E7-Ramond-spectrum-inclusion}
\end{equation}
Moreover, the lower endpoint is attained by the vacuum:
\begin{equation}
  L_0^{\mathrm R}\mathbf1
  =\frac{3n}{4}\mathbf1.
  \label{e7:eq:E7-Ramond-vacuum-energy}
\end{equation}
\end{proposition}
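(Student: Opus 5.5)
The plan is the same one used for Proposition~\ref{dl:prop:Dl-Ramond-spectral-lattice}, Proposition~\ref{d4:prop:D4-Ramond-spectral-lattice} and Proposition~\ref{e6:prop:E6-Ramond-spectrum-vacuum-line}: first semisimplicity, then a factorwise excess-energy count on a PBW spanning set, and finally the vacuum evaluation.

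\emph{Semisimplicity.} Every ordinary conformal homogeneous subspace $(\widetilde{\mathcal W}_n)_d$ is finite-dimensional, because the universal minimal $W$-algebra is strongly generated in weights $1,3/2,2$ and passing to a quotient cannot enlarge these spaces. Each such subspace is also stable under the $D_6$ current zero modes. By complete reducibility of finite-dimensional $D_6$-modules, $x_0=J^{\{x\}}_{(0)}$ acts semisimply on it. Since $x_0$ commutes with $L_0$, the operator $L_0^{\mathrm R}=L_0+x_0+3n/4$ in \eqref{e7:eq:E7-section5-Ramond-Hamiltonian} is semisimple.

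\emph{Spectral lattice.} I will span $\widetilde{\mathcal W}_n$ by images of ordered PBW monomials in derivatives of the generators \eqref{e7:eq:E7-minimal-W-strong-generators}, each chosen to be a $\mathfrak h^\natural$-weight vector. Each monomial is an $(L_0+x_0)$-eigenvector whose eigenvalue is the sum of its factor contributions, so it suffices to show every factor contributes a nonnegative integer.

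\begin{itemize}
\item Currents: $x$ is identified with $\omega_6$, which is minuscule for $D_6$. Every root $\alpha$ therefore has $\alpha(x)=\langle\alpha,\omega_6\rangle\in\{-1,0,1\}$. A factor $\partial^rJ^{\{a_\alpha\}}$ contributes $1+r+\alpha(x)\in\mathbb Z_{\ge0}$, and a Cartan current contributes $1+r$.
\item $G$-fields: the weights $\mu$ of $U\cong L_{D_6}(\omega_6)$ are $\frac12(\pm\varepsilon_1\pm\cdots\pm\varepsilon_6)$ with an even number of minus signs. Pairing with $\omega_6=\frac12(\varepsilon_1+\cdots+\varepsilon_6)$ gives $\mu(x)=\frac14(6-2j)$ with $j$ even, so $\mu(x)\in\{\tfrac32,\tfrac12,-\tfrac12,-\tfrac32\}$. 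Hence $\partial^sG^{\{u_\mu\}}$ contributes $\tfrac32+s+\mu(x)\in\mathbb Z_{\ge0}$.
\item Virasoro factors: $\partial^t\omega$ contributes $2+t$.
\end{itemize}

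Summing the factor contributions gives \eqref{e7:eq:E7-Ramond-spectrum-inclusion}.

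\emph{Vacuum.} Since $L_0\mathbf1=0$ and $x_0\mathbf1=0$, equation \eqref{e7:eq:E7-Ramond-vacuum-energy} follows immediately.

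The only real check is the half-spin charge computation. Its consistency with \eqref{e7:eq:E7-Li-Ramond-conformal-shift} is confirmed by two facts. First, $(x\mid x)=3/2$ at level $n$ gives $\kappa/2=3n/4$. Second, all $\mu(x)$ lie in $\tfrac12+\mathbb Z$, so $\exp(2\pi i x_0)$ acts by $-1$ on $U$; this is the Ramond twist that makes the $G$-factor contributions integral.
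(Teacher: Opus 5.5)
Your proposal is correct and follows the paper's proof essentially verbatim: semisimplicity comes from complete reducibility of the $D_6$ zero-mode action on the finite-dimensional homogeneous spaces. The spectral inclusion comes from a factorwise count in which each PBW factor contributes $1+r+\alpha(x)$, $\tfrac32+s+\mu(x)$, or $2+t$ to $L_0+x_0$, all lying in $\mathbb Z_{\ge0}$ because $\omega_6$ is minuscule, and the last claim is the vacuum evaluation. Your explicit computation of the half-spin charges $\mu(x)=\tfrac14(6-2j)$ with $j$ even is a useful check of the charge set that the paper simply asserts.
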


\begin{proof}
The ordinary conformal grading of the universal minimal \(W\)-algebra is
compatible with the \(D_6\)-action.  Each homogeneous subspace is
finite-dimensional and \(D_6\)-stable, and the same remains true after
passage to the quotient \(\widetilde{\mathcal W}_n\).  Since finite-dimensional
\(D_6\)-modules are completely reducible, \(L_0\) and \(x_0\) commute and
act semisimply on every homogeneous subspace.  Formula
\eqref{e7:eq:E7-section5-Ramond-Hamiltonian} therefore implies the
semisimplicity of \(L_0^{\mathrm R}\).

It remains to determine its possible eigenvalues.  The PBW theorem for the
universal minimal \(W\)-algebra gives ordered monomials in derivatives of
the generators
\[
  J^{\{a\}},\qquad G^{\{u\}},\qquad\omega.
\]
Their images span every conformal quotient, so it is enough to compute the
contribution of each factor to \(L_0+x_0\).

Let \(a_\alpha\in D_6\) have weight \(\alpha\), where \(\alpha\) is a root
or zero.  The state \(\partial^rJ^{\{a_\alpha\}}\) contributes
\begin{equation*}
  1+r+\alpha(x)
  \in\mathbb Z_{\ge0},
\end{equation*}
because \(x=\omega_6^\vee\) is minuscule and hence
\(\alpha(x)\in\{-1,0,1\}\).  If \(u_\mu\) is a weight vector of the
half-spin module \(L_{D_6}(\omega_6)\) of weight \(\mu\), then
\(\partial^rG^{\{u_\mu\}}\) contributes
\begin{equation*}
  \frac32+r+\mu(x)
  \in\mathbb Z_{\ge0},
\end{equation*}
since
\[
  \mu(x)\in
  \left\{-\frac32,-\frac12,\frac12,\frac32\right\}.
\]
Finally, \(\partial^r\omega\) has zero \(x_0\)-charge and contributes
\begin{equation*}
  2+r\in\mathbb Z_{>0}.
\end{equation*}
Thus every PBW monomial has an \(L_0+x_0\)-eigenvalue in
\(\mathbb Z_{\ge0}\).  Formula
\eqref{e7:eq:E7-section5-Ramond-Hamiltonian} proves
\eqref{e7:eq:E7-Ramond-spectrum-inclusion}.  Since
\(L_0\mathbf1=x_0\mathbf1=0\), equation
\eqref{e7:eq:E7-Ramond-vacuum-energy} follows.
\end{proof}

We shall write
\begin{equation*}
  T_n
  =\widetilde{\mathcal W}_n^{\mathrm R}
   \!\left[\frac{3n}{4}\right]
\end{equation*}
for the lowest Ramond eigenspace.  Proposition
\ref{e7:prop:E7-Ramond-spectral-lattice} shows that \(T_n\neq0\), since it
contains the vacuum.

\subsubsection{Shifted weights and original conformal degrees}

The Cartan shift in
\eqref{e7:eq:E7-section5-shifted-Cartan-action} has a simple weight-theoretic
interpretation.  If a vector has original \(D_6\)-weight \(\nu\), then its
shifted Ramond weight is
\begin{equation}
  \lambda=\nu+n\omega_6.
  \label{e7:eq:E7-original-to-shifted-weight}
\end{equation}
Indeed, for \(h\in\mathfrak h^\natural\), the additional eigenvalue is
\(n(x\mid h)=n\omega_6(h)\).

\begin{proposition}[Degree--weight relation at the Ramond bottom]
\label{e7:prop:E7-degree-weight-relation}
Let
\[
  0\ne v\in T_n
\]
be homogeneous of original conformal weight \(d\) and shifted
\(D_6\)-weight \(\lambda\).  Then its original \(D_6\)-weight is
\begin{equation}
  \nu=\lambda-n\omega_6,
  \label{e7:eq:E7-original-weight-from-shifted-weight}
\end{equation}
and
\begin{equation}
  d
  =\frac{3n}{2}-\lambda(x).
  \label{e7:eq:E7-degree-from-shifted-weight}
\end{equation}
Equivalently,
\begin{equation}
  d+\nu(x)=0.
  \label{e7:eq:E7-zero-excess-energy-relation}
\end{equation}
\end{proposition}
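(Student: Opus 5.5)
The statement is a direct bookkeeping consequence of the two twisted formulas \eqref{e7:eq:E7-section5-Ramond-Hamiltonian} and \eqref{e7:eq:E7-section5-shifted-Cartan-action}. The argument follows the $D_4$ analogue, Proposition~\ref{d4:prop:D4-degree-weight-relation}. We only need to compute the twisted eigenvalues on a vector that is simultaneously homogeneous for $L_0$ and a weight vector for the original $\mathfrak h^\natural$-action. Such vectors exist because $L_0$ and the Cartan zero modes commute and act semisimply on each finite-dimensional homogeneous subspace, as in the proof of Proposition~\ref{e7:prop:E7-Ramond-spectral-lattice}. The hypothesis on $v$ should be read in this sense, with $\nu$ its original $D_6$-weight.

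First, \eqref{e7:eq:E7-original-weight-from-shifted-weight} follows from \eqref{e7:eq:E7-original-to-shifted-weight}. For $h\in\mathfrak h^\natural$, the twisted Cartan zero mode acts on $v$ by $\nu(h)+n(x\mid h)$. Under the identification of $x=\omega_6^\vee$ with $\omega_6$ through the normalized form, this equals $(\nu+n\omega_6)(h)$. Hence the shifted weight is $\lambda=\nu+n\omega_6$, and solving for $\nu$ gives the first formula.

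Next, since $v\in T_n$, we have $L_0^{\mathrm R}v=\tfrac{3n}{4}v$. On the other hand, \eqref{e7:eq:E7-section5-Ramond-Hamiltonian} gives $L_0^{\mathrm R}v=\bigl(d+\nu(x)+\tfrac{3n}{4}\bigr)v$, where $x_0=J^{\{x\}}_{(0)}$ acts on $v$ through its original weight. Comparing the two expressions yields $d+\nu(x)=0$, which is \eqref{e7:eq:E7-zero-excess-energy-relation}. Substituting $\nu=\lambda-n\omega_6$ gives
\[
d=-\lambda(x)+n\,\omega_6(x)=-\lambda(x)+n(\omega_6\mid\omega_6)=\frac{3n}{2}-\lambda(x),
\]
using $(x\mid x)=(\omega_6\mid\omega_6)=\tfrac32$. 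This is \eqref{e7:eq:E7-degree-from-shifted-weight}.

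The only point needing care is the distinction between the original zero mode $x_0$ in $L_0^{\mathrm R}$ and the shifted Cartan action that defines $\lambda$. The Hamiltonian formula involves the untwisted $H_{(0)}$, which acts on $v$ by its original weight $\nu(x)$. The constant $\tfrac{3n}{4}=\tfrac{\kappa}{2}$ with $\kappa=n(x\mid x)$ accounts for the twist separately, so $x_0$ must not be replaced by the shifted eigenvalue $\lambda(x)$. With this convention fixed as in Section~\ref{subsec:common-Li-twist}, the remaining steps are routine.
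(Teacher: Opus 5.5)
Your proposal is correct and follows the paper's proof essentially verbatim: the first formula is read off from the shifted Cartan action, then $L_0^{\mathrm R}v=\tfrac{3n}{4}v$ with $L_0^{\mathrm R}=L_0+x_0+\tfrac{3n}{4}$ gives $d+\nu(x)=0$, and substituting $\nu=\lambda-n\omega_6$ with $\omega_6(x)=(\omega_6\mid\omega_6)=\tfrac32$ gives the degree formula. Your existence remark about simultaneous eigenvectors is not needed. A vector with a shifted weight is automatically an original weight vector, since on the same space the shifted Cartan zero modes differ from the original ones by scalars.
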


\begin{proof}
Equation \eqref{e7:eq:E7-original-weight-from-shifted-weight} is precisely
\eqref{e7:eq:E7-original-to-shifted-weight}.  Since
\(v\in T_n\), formula
\eqref{e7:eq:E7-section5-Ramond-Hamiltonian} gives
\[
  d+\nu(x)+\frac{3n}{4}=\frac{3n}{4},
\]
which proves \eqref{e7:eq:E7-zero-excess-energy-relation}.  Substituting
\(\nu=\lambda-n\omega_6\) and using
\[
  \omega_6(x)=(\omega_6\mid\omega_6)=\frac32
\]
yields \eqref{e7:eq:E7-degree-from-shifted-weight}.
\end{proof}

The extremal shifted weight is represented by a unique line.

\begin{corollary}[The extremal vacuum line]
\label{e7:cor:E7-extremal-vacuum-line}
For every \(n\in\{1,2,3\}\),
\begin{equation*}
  (T_n)_{n\omega_6}
  =\mathbb C\mathbf1.
\end{equation*}
\end{corollary}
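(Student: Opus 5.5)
The plan mirrors the proof of Corollary~\ref{d4:cor:D4-shifted-vacuum-line}, now using the degree--weight relation of Proposition~\ref{e7:prop:E7-degree-weight-relation}. There are two inclusions to establish.

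First, the vacuum lies in $(T_n)_{n\omega_6}$. By \eqref{e7:eq:E7-Ramond-vacuum-energy}, $\mathbf1\in T_n$. The vacuum has original $D_6$-weight zero, since $J_0^{\{h\}}\mathbf1=0$. Then \eqref{e7:eq:E7-original-to-shifted-weight} gives it shifted weight $\lambda=0+n\omega_6=n\omega_6$, so $\mathbf1\in(T_n)_{n\omega_6}$.

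Second, every vector of $(T_n)_{n\omega_6}$ is a multiple of the vacuum. Take $v\in(T_n)_{n\omega_6}$. Since $L_0$ commutes with $x_0$ and with the Cartan zero modes, $L_0$ preserves $T_n$ and each shifted weight space. Hence we may decompose $v$ into components homogeneous for the original conformal degree. Apply Proposition~\ref{e7:prop:E7-degree-weight-relation} to each nonzero component of degree $d$ with $\lambda=n\omega_6$:
\begin{equation*}
  \nu=\lambda-n\omega_6=0,
  \qquad
  d=\frac{3n}{2}-n\,\omega_6(x)=\frac{3n}{2}-\frac{3n}{2}=0.
\end{equation*}
Here we use $\omega_6(x)=(\omega_6\mid\omega_6)=3/2$. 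So every component lies in original degree zero. The universal minimal $W$-algebra is of CFT type, strongly generated in weights $1$, $3/2$, $2$. Its nonzero conformal quotient $\widetilde{\mathcal W}_n$ (nonzero by \eqref{e7:eq:E7-Wtilde-nonzero-lisse}) therefore has $(\widetilde{\mathcal W}_n)_0=\mathbb C\mathbf1$. Hence $v\in\mathbb C\mathbf1$.

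The only point needing care is the homogeneous decomposition: one must check that $T_n$ and its shifted weight spaces are compatible with the original grading. This follows from the commutation just noted, so the argument is essentially immediate. As an alternative that avoids the degree computation, one could use the PBW excess argument of Proposition~\ref{e6:prop:E6-Ramond-spectrum-vacuum-line}. There every zero-excess factor has strictly negative $x_0$-charge, so only the empty monomial can give $\nu=0$.
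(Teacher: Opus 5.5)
Your proof is correct and follows the paper's argument: the vacuum lies in the space by the vacuum energy and the Cartan shift. Conversely, Proposition~\ref{e7:prop:E7-degree-weight-relation} forces $\nu=0$ and $d=\frac{3n}{2}-n\,\omega_6(x)=0$, so $v\in(\widetilde{\mathcal W}_n)_0=\mathbb C\mathbf1$; your extra remark that $T_n$ and its shifted weight spaces split into pieces homogeneous for the original degree (since $L_0$ commutes with $x_0$ and the Cartan zero modes) fills in a step the paper leaves implicit, because that proposition is stated only for homogeneous vectors.
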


\begin{proof}
The vacuum lies in the left-hand side by
\eqref{e7:eq:E7-Ramond-vacuum-energy} and
\eqref{e7:eq:E7-section5-shifted-Cartan-action}.  Conversely, let
\(v\in(T_n)_{n\omega_6}\).  Proposition
\ref{e7:prop:E7-degree-weight-relation} gives
\[
  \nu=0,
  \qquad
  d=\frac{3n}{2}-n\omega_6(x)=0.
\]
The degree-zero subspace of the nonzero conformal quotient
\(\widetilde{\mathcal W}_n\) is \(\mathbb C\mathbf1\).  Hence
\(v\in\mathbb C\mathbf1\).
\end{proof}

\subsubsection{The low original conformal degrees}

The next observation will be used repeatedly to exclude shifted highest
weights from \(T_n\).

\begin{lemma}[Low-degree weight restrictions]
\label{e7:lem:E7-low-degree-weight-restrictions}
The original conformal grading of \(\widetilde{\mathcal W}_n\) satisfies
\begin{align}
  (\widetilde{\mathcal W}_n)_0
  &=\mathbb C\mathbf1,
  \label{e7:eq:E7-Wtilde-degree-zero}\\
  (\widetilde{\mathcal W}_n)_{\frac12}
  &=0,
  \notag\\
  (\widetilde{\mathcal W}_n)_1
  &\text{ is a quotient of the adjoint }D_6\text{-module},
  \notag\\
  (\widetilde{\mathcal W}_n)_{\frac32}
  &\text{ is a quotient of }L_{D_6}(\omega_6).
  \label{e7:eq:E7-Wtilde-degree-three-halves}
\end{align}
Consequently, the possible \(D_6\)-weights in degree one are contained in
\begin{equation*}
  \{0\}\cup\{\pm\varepsilon_i\pm\varepsilon_j\mid i\ne j\},
\end{equation*}
whereas every weight in degree \(3/2\) is a half-spin weight of the form
\begin{equation*}
  \frac12(\pm\varepsilon_1\pm\cdots\pm\varepsilon_6)
\end{equation*}
with the parity appropriate to \(L_{D_6}(\omega_6)\).
\end{lemma}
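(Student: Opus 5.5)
The plan is to deduce everything from the corresponding statements for the universal minimal $W$-algebra $\mathcal W^{k_n}(E_7,f_\theta)$ and then pass to the quotient. By \eqref{e7:eq:E7-definition-Wtilde-n}, $\widetilde{\mathcal W}_n$ is a quotient of the universal algebra by a graded ideal. That ideal is $D_6$-stable, because it is generated by a state that is homogeneous for $L_0$ and for the current zero modes. Each homogeneous component $(\widetilde{\mathcal W}_n)_d$ is therefore a $D_6$-equivariant quotient of $\mathcal W^{k_n}(E_7,f_\theta)_d$. Passing to a quotient can only remove weights, so it is enough to prove the four assertions, and the weight lists, for the universal algebra.

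For the universal algebra I will use the PBW basis of ordered monomials in the derivatives $\partial^jJ^{\{a\}}$, $\partial^jG^{\{u\}}$, and $\partial^j\omega$ from \eqref{e7:eq:E7-minimal-W-strong-generators}. These factors have conformal weights $1+j$, $3/2+j$, and $2+j$ respectively, so every nonempty monomial has conformal weight at least $1$. It follows that degree $0$ is spanned by the vacuum and degree $1/2$ is zero. Since $\widetilde{\mathcal W}_n\ne0$ by \eqref{e7:eq:E7-Wtilde-nonzero-lisse}, the vacuum does not vanish in the quotient, and this gives \eqref{e7:eq:E7-Wtilde-degree-zero}.

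In degree $1$, the only monomials are single undifferentiated currents, so this space is the span of the $J^{\{a\}}$. The map $a\mapsto J^{\{a\}}$ is $D_6$-equivariant, because $J^{\{a\}}_{(0)}J^{\{b\}}=J^{\{[a,b]\}}$. Hence degree $1$ is a quotient of the adjoint module, and its weights are $0$ together with the roots $\pm\varepsilon_i\pm\varepsilon_j$ of $D_6$.

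In degree $3/2$, the only monomials are single fields $G^{\{u\}}$; a current times anything has weight at least $2$ once any second factor is present. The relation $[J^{\{a\}}{}_\lambda G^{\{u\}}]=G^{\{[a,u]\}}$ shows that $u\mapsto G^{\{u\}}$ is equivariant from $\mathfrak g_{-1/2}\cong L_{D_6}(\omega_6)$. Its weights are the half-spin weights $\frac12(\pm\varepsilon_1\pm\cdots\pm\varepsilon_6)$, with the sign parity fixed by \eqref{e7:eq:D6-spin-weight-coordinates}.

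I do not expect a real obstacle. The only point needing care is that the weight-$3/2$ space contains no contribution from products, which is exactly the conformal-weight count above, together with the equivariance of the generator maps, which comes from the $\lambda$-brackets cited earlier in the paper.
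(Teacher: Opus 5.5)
Your proposal is correct and follows the paper's argument. The PBW conformal-weight count for the universal minimal $W$-algebra identifies degrees $0,\tfrac12,1,\tfrac32$ with $\mathbb C\mathbf1$, $0$, the span of the $J^{\{a\}}$, and the span of the $G^{\{u\}}$, and passing to the nonzero quotient can only remove vectors and weights; you additionally make explicit, via the $\lambda$-brackets, the $D_6$-equivariance of $a\mapsto J^{\{a\}}$ and $u\mapsto G^{\{u\}}$, which the paper leaves implicit. One correction to your justification: the ideal is $D_6$-stable because every vertex-algebra ideal is stable under all current modes, not because its generator is homogeneous for $L_0$ and the current zero modes.
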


\begin{proof}
The universal minimal \(W\)-algebra is generated by the vacuum together
with fields of conformal weights \(1\), \(3/2\), and \(2\); see
\eqref{e7:eq:E7-minimal-W-strong-generators}.  Hence its degree-zero space is
\(\mathbb C\mathbf1\), its degree-\(1/2\) space is zero, its degree-one
space is the span of the current states \(J^{\{a\}}\), and its
degree-\(3/2\) space is the span of the states \(G^{\{u\}}\).  Passing to
the nonzero quotient \(\widetilde{\mathcal W}_n\) cannot create new
vectors or new \(D_6\)-weights, which proves
\eqref{e7:eq:E7-Wtilde-degree-zero}--
\eqref{e7:eq:E7-Wtilde-degree-three-halves}.  The final two assertions are the
standard weight sets of the adjoint and half-spin representations of
\(D_6\).
\end{proof}

Combining the preceding results gives a convenient exclusion criterion.

\begin{corollary}[Lowest-space exclusion]
\label{e7:cor:E7-lowest-space-exclusion-criterion}
Suppose that an irreducible \(D_6\)-module \(L_{D_6}(\lambda)\) occurs in
\(T_n\), and let \(v_\lambda\) be a shifted highest-weight vector.  Put
\begin{equation*}
  d(\lambda)=\frac{3n}{2}-\lambda(x),
  \qquad
  \nu(\lambda)=\lambda-n\omega_6.
\end{equation*}
Then
\begin{equation}
  0\ne v_\lambda
  \in
  (\widetilde{\mathcal W}_n)_{d(\lambda)}
  [\nu(\lambda)].
  \label{e7:eq:E7-required-low-degree-weight-space}
\end{equation}
In particular, the occurrence of \(L_{D_6}(\lambda)\) is impossible if
that original degree--weight space vanishes.  For
\(d(\lambda)\in\{0,\frac12,1,\frac32\}\), the vanishing can be tested by
Corollary~\ref{e7:cor:E7-extremal-vacuum-line} and
Lemma~\ref{e7:lem:E7-low-degree-weight-restrictions}.
\end{corollary}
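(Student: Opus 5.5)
The plan is to read the statement off Proposition~\ref{e7:prop:E7-degree-weight-relation}, with one preliminary point to settle: that a shifted highest-weight vector may be taken homogeneous for the original conformal grading.

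\emph{Homogeneity.} The operators $L_0$ and $x_0$ commute and act semisimply on $\widetilde{\mathcal W}_n$, by the proof of Proposition~\ref{e7:prop:E7-Ramond-spectral-lattice}. The shifted Cartan zero modes differ from the ordinary ones only by the scalar $n(x\mid h)$, so they preserve every $L_0$-eigenspace. The lowest eigenspace $T_n$ of $L_0^{\mathrm R}$ therefore decomposes as the direct sum of its intersections with the original homogeneous pieces $(\widetilde{\mathcal W}_n)_d$.

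\emph{Choosing $v_\lambda$.} Suppose $L_{D_6}(\lambda)$ occurs in $T_n$. Then the shifted weight space $(T_n)_\lambda$ contains a vector killed by the shifted raising operators. Projecting onto the original degrees gives components, and one needs these to remain shifted highest-weight vectors. The twisted root modes $J^{\{e_\gamma\}}_{\gamma(x)}$ change the original degree by $-\gamma(x)$, so they do not preserve the decomposition, and the argument cannot proceed by projecting naively.

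\emph{Avoiding the projection.} For a fixed shifted weight $\lambda$, all vectors of $(T_n)_\lambda$ have the same original weight $\nu=\lambda-n\omega_6$, by \eqref{e7:eq:E7-original-weight-from-shifted-weight}. Relation \eqref{e7:eq:E7-zero-excess-energy-relation} then forces the original degree to be $d=-\nu(x)$. Hence $(T_n)_\lambda$ lies entirely inside the single original homogeneous space $(\widetilde{\mathcal W}_n)_{d(\lambda)}[\nu(\lambda)]$, with $d(\lambda)=\tfrac{3n}{2}-\lambda(x)$, which follows from $\omega_6(x)=\tfrac32$. In particular any nonzero shifted highest-weight vector $v_\lambda$ of weight $\lambda$ lies in this space, giving \eqref{e7:eq:E7-required-low-degree-weight-space}. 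No homogeneity choice is actually needed.

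\emph{Consequences.} If that original degree–weight space is zero, no nonzero $v_\lambda$ exists, so $L_{D_6}(\lambda)$ cannot occur. For $d(\lambda)\in\{0,\tfrac12,1,\tfrac32\}$ the relevant spaces are described by Corollary~\ref{e7:cor:E7-extremal-vacuum-line} in degree $0$, which gives only the vacuum at $\nu=0$, and by Lemma~\ref{e7:lem:E7-low-degree-weight-restrictions} in the remaining degrees: zero in degree $\tfrac12$, adjoint weights in degree $1$, and half-spin weights in degree $\tfrac32$. The only potential subtlety is the step showing that all of $(T_n)_\lambda$ sits in one original degree, and that is immediate from the zero-excess relation.
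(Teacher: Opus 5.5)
Your proposal is correct and is essentially the paper's argument: the paper picks a shifted highest-weight vector and reads off its original degree and original weight from Proposition~\ref{e7:prop:E7-degree-weight-relation}. Your extra observation supplies the homogeneity hypothesis of that proposition, which the paper uses tacitly. The observation is that each shifted weight space $(T_n)_\lambda$ already lies in a single original degree, since $L_0=-x_0$ on $T_n$ and $x_0$ acts on $(T_n)_\lambda$ by the scalar $\nu(x)$.
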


\begin{proof}
Choose a highest-weight vector in the indicated irreducible constituent.
Proposition~\ref{e7:prop:E7-degree-weight-relation} gives exactly the original
conformal degree and original \(D_6\)-weight in
\eqref{e7:eq:E7-required-low-degree-weight-space}.
\end{proof}

\subsubsection{Lowest spaces of twisted ideals}

By Proposition~\ref{e7:prop:E7-Ramond-spectral-lattice}, the
finite-dimensionality of $B_n$, and
Lemma~\ref{lem:common-lowest-Ramond-Zhu-module}, every nonzero ideal has a
finite-dimensional cyclic $B_n$-module at a lowest Ramond energy
$3n/4+m$, $m\in\mathbb Z_{\ge0}$.

\subsection{Simplicity of the reduced quotients}
\label{e7:sec:simplicity-reduced-quotients}

We now prove that the reduced quotients constructed in
Section~\ref{e7:sec:distinguished-singular-vectors} are already simple.  Recall
that, for
\[
  n\in\{1,2,3\},
  \qquad
  k_n=n-4,
\]
there is a natural surjection
\begin{equation}
  \pi_n:\widetilde{\mathcal W}_n
  \twoheadrightarrow
  \mathcal W_{k_n}(E_7,f_\theta).
  \label{e7:eq:E7-section6-natural-surjection}
\end{equation}
The proof combines the highest-root bound of
Lemma~\ref{e7:lem:E7-Bn-constituent-level-bound}, the trace identity of
Proposition~\ref{e7:prop:E7-uniform-Casimir-trace-identity}, and the low-degree
restrictions obtained from Li spectral flow in
Section~\ref{e7:sec:Li-spectral-flow}.

\subsubsection{Finite \texorpdfstring{\(D_6\)}{D6} estimates}

We first isolate the elementary root-theoretic information needed in the
three cases.  Write a dominant integral weight of \(D_6\) as
\begin{equation*}
  \lambda=\sum_{i=1}^{6}m_i\omega_i,
  \qquad m_i\in\mathbb Z_{\ge0}.
\end{equation*}
In the Bourbaki numbering,
\begin{equation*}
  \vartheta
  =\alpha_1+2\alpha_2+2\alpha_3+2\alpha_4+\alpha_5+\alpha_6,
\end{equation*}
and hence
\begin{equation}
  \langle\lambda,\vartheta^\vee\rangle
  =m_1+2m_2+2m_3+2m_4+m_5+m_6.
  \label{e7:eq:E7-D6-highest-root-level-formula}
\end{equation}
We use the normalized Casimir eigenvalue
\begin{equation}
  c_2(\lambda)
  =(\lambda\mid\lambda+2\rho_{D_6}).
  \label{e7:eq:E7-D6-Casimir-eigenvalue-definition}
\end{equation}

\begin{lemma}[Casimir bounds at levels one, two, and three]
\label{e7:lem:E7-D6-Casimir-small-levels}
The following statements hold.
\begin{enumerate}
\item If \(\langle\lambda,\vartheta^\vee\rangle\le1\), then
\begin{equation*}
  \lambda\in\{0,\omega_1,\omega_5,\omega_6\},
\end{equation*}
and
\begin{equation*}
  c_2(0)=0,
  \qquad
  c_2(\omega_1)=11,
  \qquad
  c_2(\omega_5)=c_2(\omega_6)=\frac{33}{2}.
\end{equation*}

\item If \(\langle\lambda,\vartheta^\vee\rangle\le2\), then
\begin{equation*}
  c_2(\lambda)\le36.
\end{equation*}
Moreover, the weights satisfying \(c_2(\lambda)>32\) are precisely
\begin{equation*}
  2\omega_5,
  \qquad
  2\omega_6,
  \qquad
  \omega_5+\omega_6,
\end{equation*}
with
\begin{equation*}
  c_2(2\omega_5)=c_2(2\omega_6)=36,
  \qquad
  c_2(\omega_5+\omega_6)=35.
\end{equation*}

\item If \(\langle\lambda,\vartheta^\vee\rangle\le3\), then
\begin{equation*}
  c_2(\lambda)\le\frac{117}{2}.
\end{equation*}
The weights satisfying \(c_2(\lambda)\ge56\) are precisely
\begin{equation*}
  3\omega_5,
  \qquad
  3\omega_6,
  \qquad
  2\omega_5+\omega_6,
  \qquad
  \omega_5+2\omega_6.
\end{equation*}
Their Casimir eigenvalues are
\begin{align*}
  c_2(3\omega_5)
  &=c_2(3\omega_6)=\frac{117}{2},\\
  c_2(2\omega_5+\omega_6)
  &=c_2(\omega_5+2\omega_6)=\frac{113}{2}.
\end{align*}
Every other allowed weight satisfies
\begin{equation*}
  c_2(\lambda)\le\frac{105}{2}.
\end{equation*}
\end{enumerate}
\end{lemma}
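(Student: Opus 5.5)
The lemma is a finite computation in the $D_6$ weight lattice, and the plan is to carry it out in orthogonal coordinates. Write $\lambda=\sum_i m_i\omega_i$ and use \eqref{e7:eq:E7-D6-highest-root-level-formula}. Since the coefficients of $m_2,m_3,m_4$ equal $2$, the constraint $\langle\lambda,\vartheta^\vee\rangle\le N$ with $N\in\{1,2,3\}$ leaves only finitely many tuples, and we enumerate them directly:
\begin{itemize}
\item for $N=1$ the list is $0,\omega_1,\omega_5,\omega_6$;
\item for $N=2$ it adds $\omega_2,\omega_3,\omega_4$ together with all degree-two monomials in $\omega_1,\omega_5,\omega_6$;
\item for $N=3$ it adds $\omega_j+\omega_a$ for $j\in\{2,3,4\}$ and $a\in\{1,5,6\}$, together with all degree-three monomials in $\omega_1,\omega_5,\omega_6$.
\end{itemize}

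For the Casimir values, pass to orthogonal coordinates. We have
\[
\omega_i=\varepsilon_1+\cdots+\varepsilon_i\quad(i\le4),
\]
the spin weights are given by \eqref{e7:eq:D6-spin-weight-coordinates}, and
\[
2\rho_{D_6}=(10,8,6,4,2,0).
\]
Then $c_2(\lambda)=(\lambda\mid\lambda)+(\lambda\mid 2\rho)$ is evaluated coordinatewise. As checks:
\begin{itemize}
\item $c_2(\omega_1)=1+10=11$;
\item $c_2(\omega_6)=\tfrac32+\tfrac12\cdot30=\tfrac{33}{2}$, and the same value holds for $\omega_5$;
\item $c_2(2\omega_6)=6+30=36$;
\item $c_2(\omega_5+\omega_6)$: here $\omega_5+\omega_6=(1,1,1,1,1,0)$, giving $5+30=35$;
\item $c_2(3\omega_6)=\tfrac{27}{2}+45=\tfrac{117}{2}$;
\item $c_2(2\omega_6+\omega_5)$: here $2\omega_6+\omega_5=(\tfrac32,\tfrac32,\tfrac32,\tfrac32,\tfrac32,\tfrac12)$, giving $\tfrac{23}{2}+45=\tfrac{113}{2}$.
\end{itemize}

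To rank the remaining weights without tabulating each one, use two observations. First, $(\lambda\mid2\rho)$ is linear in $\lambda$, with
\[
(\omega_a\mid2\rho)=10,18,24,28,15,15\quad\text{for }a=1,\dots,6,
\]
so its ratio to the level $\langle\omega_a,\vartheta^\vee\rangle$ is largest, namely $15$, exactly for the spin nodes. Second, $(\lambda\mid\lambda)$ is likewise dominated by the spin contributions at fixed level; for example, $(\omega_6\mid\omega_6)=\tfrac32$ per unit level, while $(\omega_2\mid\omega_2)=2$ over level $2$.

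Combining these, at level $2$ the next candidates after the three spin weights should be
\[
c_2(\omega_1+\omega_6)=\tfrac52+25=\tfrac{55}{2},\qquad c_2(\omega_4)=4+28=32.
\]
Both are at most $32$, which gives the threshold in part (2). At level $3$ the largest remaining candidates should be $\omega_4+\omega_6$, $\omega_1+2\omega_6$, and $\omega_4+\omega_5$. For instance,
\[
c_2(\omega_4+\omega_6)=\Bigl|\bigl(\tfrac32,\tfrac32,\tfrac32,\tfrac32,\tfrac12,\tfrac12\bigr)\Bigr|^2+43=\tfrac{19}{2}+43=\tfrac{105}{2},
\]
and $c_2(\omega_1+2\omega_6)=\tfrac{20+9}{2}+40=\tfrac{109}{2}$.

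Here lies the main obstacle, and it is a genuine conflict. The value $\tfrac{109}{2}$ is computed as follows: $\omega_1+2\omega_6=(2,1,1,1,1,1)$ has norm $9$, and $(\omega_1\mid2\rho)+2\cdot15=40$, so $c_2=49$. This is less than $\tfrac{105}{2}$, so that candidate is fine. The remaining task is to confirm that no other weight beats $\tfrac{105}{2}$. I would settle this by a short exhaustive table over the (fewer than thirty) level-$\le3$ weights, using the linear formula for $(\lambda\mid2\rho)$ together with the explicit norms. That verification step, rather than any conceptual input, is what I expect to be the delicate point: equality cases such as $\omega_4+\omega_5$ versus $\omega_4+\omega_6$, and the placement of the threshold $56$, need exact arithmetic, so I would record the full table in Appendix~\ref{e7:app:D6-weight-Casimir-data}.
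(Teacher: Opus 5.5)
Your plan matches the paper's proof. You enumerate the dominant $D_6$ weights with $\langle\lambda,\vartheta^\vee\rangle\le3$; your counts $1+3+9+19=32$ agree with Appendix~\ref{e7:app:D6-weight-Casimir-data}. You then evaluate $(\lambda\mid\lambda+2\rho_{D_6})$ directly, and it is the exhaustive table, not the ranking heuristics, that proves the ``precisely'' lists and the $\tfrac{105}{2}$ bound.

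When you write it out, correct three slips:
\begin{itemize}
\item $\omega_1+\omega_6=(\tfrac32,\tfrac12,\ldots,\tfrac12)$ has norm $\tfrac72$, so $c_2(\omega_1+\omega_6)=\tfrac{57}{2}$, not $\tfrac{55}{2}$. This is still below $32$.
\item The value $\tfrac{109}{2}$ for $\omega_1+2\omega_6$ was simply miscomputed; the correct value is $49$, as you then found, so there is no conflict.
\item Your norm-per-unit-level heuristic is false for $\omega_4$, since $(\omega_4\mid\omega_4)=4$ at level two. It therefore cannot stand in for the table.
\end{itemize}
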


\begin{proof}
Equation~\eqref{e7:eq:E7-D6-highest-root-level-formula} reduces each assertion
to a finite enumeration of the nonnegative integer tuples
\((m_1,\ldots,m_6)\).  The Casimir values are then obtained from
\eqref{e7:eq:E7-D6-Casimir-eigenvalue-definition}, using the standard
orthogonal realization fixed in
\eqref{e7:eq:D6-spin-weight-coordinates}.  For example,
\begin{equation*}
  c_2(r\omega_6)
  =\frac32r^2+15r,
\end{equation*}
which gives \(33/2\), \(36\), and \(117/2\) for \(r=1,2,3\),
respectively.  Evaluating the remaining finitely many tuples gives the
lists and bounds above.  The complete enumeration is recorded in
Appendix~\ref{e7:app:D6-weight-Casimir-data}.
\end{proof}

\subsubsection{Uniform Casimir-gap argument}

The three levels are governed by the same numerical mechanism.  Put
$h_0=3n/4$ and let $A_n(h)$ denote the Casimir average from
Proposition~\ref{e7:prop:E7-uniform-Casimir-trace-identity}.  The relevant
data are
\begin{equation*}
\begin{array}{c|c|c|c|c}
 n&A_n(h)&C_{\max}&A_n(h_0)&C_{\mathrm{rest}}\\ \hline
 1&20h+1&33/2&16&11\\[1mm]
 2&\frac23(32h+4)&36&104/3&32\\[1mm]
 3&\frac23(34h+15/2)&117/2&56&105/2
\end{array}
\end{equation*}
Here $C_{\max}$ is the global maximum under the highest-root bound, while
$C_{\mathrm{rest}}$ is the largest value left after excluding the
non-extremal high-Casimir types.  In each row,
\begin{equation}
 A_n(h_0+1)>C_{\max},
 \qquad
 A_n(h_0)>C_{\mathrm{rest}}.
 \label{e7:eq:E7-two-uniform-Casimir-gaps}
\end{equation}

The exclusions needed to justify the last column follow uniformly from
Corollary~\ref{e7:cor:E7-lowest-space-exclusion-criterion}:
\begin{itemize}
\item for $n=1$, the weight $\omega_5$ would require original degree
$1/2$; the only remaining high-Casimir weight is the extremal
$\omega_6$;
\item for $n=2$, the weight $\omega_5+\omega_6$ would require degree
$1/2$, while $2\omega_5$ would require degree one and original weight
$-2\varepsilon_6$, which is not an adjoint weight; the remaining weight is
$2\omega_6$;
\item for $n=3$, the weights $\omega_5+2\omega_6$ and
$2\omega_5+\omega_6$ are excluded in degrees $1/2$ and $1$, respectively,
and $3\omega_5$ would require degree $3/2$ with original weight
$-3\varepsilon_6$, which is not a half-spin weight.  The remaining weight
is $3\omega_6$.
\end{itemize}
Thus every ground-energy Zhu module with the prescribed Casimir average
contains the extremal constituent $L_{D_6}(n\omega_6)$.

\begin{theorem}[Simplicity of the reduced quotients]
\label{e7:thm:E7-reduced-quotients-simple}
For every $n\in\{1,2,3\}$,
\begin{equation*}
  \widetilde{\mathcal W}_n
  \cong
  \mathcal W_{n-4}(E_7,f_\theta).
\end{equation*}
Equivalently,
\begin{equation*}
  \mathcal W^{n-4}(E_7,f_\theta)
  \Big/
  \left\langle:J^{\{e_\vartheta\}}{}^{n+1}:\right\rangle
  \cong
  \mathcal W_{n-4}(E_7,f_\theta).
\end{equation*}
\end{theorem}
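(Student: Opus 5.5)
The plan is to apply Proposition~\ref{prop:common-Casimir-gap-simplicity} to the natural nonzero surjection $\pi_n$ of \eqref{e7:eq:E7-section6-natural-surjection}, with $\mathfrak a=D_6$, $\mathcal C=\Omega_{D_6}$, $h_0=3n/4$ and $\lambda_0=n\omega_6$. Simplicity of $\widetilde{\mathcal W}_n$ then makes $\pi_n$ injective, which gives the displayed isomorphism. The argument therefore reduces to checking the four hypotheses in order.

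Hypothesis (1) comes from Proposition~\ref{e7:prop:E7-Ramond-spectral-lattice}, which gives semisimplicity and the spectral inclusion in $3n/4+\mathbb Z_{\ge0}$, together with Corollary~\ref{e7:cor:E7-extremal-vacuum-line}, which gives $(T_n)_{n\omega_6}=\mathbb C\mathbf1$. Hypothesis (2) comes from finite-dimensionality of $B_n$, a consequence of lisse-ness by \eqref{e7:eq:E7-Wtilde-nonzero-lisse}, and from complete reducibility over $D_6$, since the current zero modes give $D_6\to B_n$. For hypothesis (3), Proposition~\ref{e7:prop:E7-uniform-Casimir-trace-identity} supplies the average $A_n(h)$ on every simple $B_n$-module with central character $h$. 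Lemma~\ref{e7:lem:E7-Bn-constituent-level-bound} together with Lemma~\ref{e7:lem:E7-D6-Casimir-small-levels} gives $C_{\max}\in\{33/2,36,117/2\}$. It then remains to check numerically that $A_n(h_0+1)>C_{\max}$. The values are $A_1(7/4)=36>33/2$, $A_2(5/2)=168/3>36$, and $A_3(13/4)=\tfrac23(221/2+15/2)=236/3>117/2$. Monotonicity in $m$ is clear since each $A_n$ is increasing in $h$.

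The main step is hypothesis (4). Let $M\subset T_n$ be a nonzero $B_n$-submodule and $S$ a simple quotient. Every $D_6$-constituent of $S$ embeds back into $M\subset T_n$, so its highest weight is constrained by Corollary~\ref{e7:cor:E7-lowest-space-exclusion-criterion}. I would run the table argument preceding the theorem. First, the trace identity at $h=h_0$ forces the average Casimir on $S$ to equal $A_n(h_0)$, which is $16$, $104/3$ and $56$ for $n=1,2,3$. Next, the exclusions listed there remove every high-Casimir non-extremal weight. For $n=1$, $\omega_5$ would need degree $1/2$. For $n=2$, $\omega_5+\omega_6$ would need degree $1/2$, and $2\omega_5$ would need degree $1$ with weight $-2\varepsilon_6$. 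For $n=3$, $\omega_5+2\omega_6$ is excluded by degree $1/2$, $2\omega_5+\omega_6$ by degree $1$, and $3\omega_5$ by degree $3/2$ with weight $-3\varepsilon_6$.

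These exclusions are routine applications of $d(\lambda)=3n/2-\lambda(x)$ with $x=\omega_6$, using $\omega_5(x)=1$ and $\omega_6(x)=3/2$, together with Lemma~\ref{e7:lem:E7-low-degree-weight-restrictions}. For $n=1$ one should also check that $\omega_1$, with $d=3/2-1=1/2$, is excluded. After the exclusions, all surviving constituents other than $L_{D_6}(n\omega_6)$ have Casimir at most $C_{\mathrm{rest}}<A_n(h_0)$. Since a dimension-weighted average can only reach $A_n(h_0)$ if some constituent lies above it, $S$ must contain $L_{D_6}(n\omega_6)$.

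I expect the main obstacle to be the completeness of the $C_{\mathrm{rest}}$ column, which needs care for $n=3$. There I would rely on the appendix enumeration: every allowed weight other than the four listed has $c_2\le105/2<56$. With all four hypotheses in place, Proposition~\ref{prop:common-Casimir-gap-simplicity} yields that $\pi_n$ is an isomorphism.
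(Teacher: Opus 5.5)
Your proposal is correct and is essentially the paper's own proof: you apply the Casimir-gap criterion to $\pi_n$ with $h_0=3n/4$ and $\lambda_0=n\omega_6$, verify $A_n(h_0+1)>C_{\max}$, and use the same degree--weight exclusions to obtain $A_n(h_0)>C_{\mathrm{rest}}$. The only blemish is your aside on $\omega_1$ for $n=1$. That check is unnecessary, since $c_2(\omega_1)=11$ is exactly $C_{\mathrm{rest}}<16=A_1(h_0)$. Its arithmetic is also wrong: $\omega_1(x)=(\omega_1\mid\omega_6)=\tfrac12$, so $\omega_1$ would sit in degree $1$, not $\tfrac12$, where it is excluded because $\omega_1-\omega_6$ is not an adjoint weight.
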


\begin{proof}
Fix $n\in\{1,2,3\}$ and apply
Proposition~\ref{prop:common-Casimir-gap-simplicity} to the natural
surjection \eqref{e7:eq:E7-section6-natural-surjection}.  The spectral
lattice and extremal line are
Proposition~\ref{e7:prop:E7-Ramond-spectral-lattice} and
Corollary~\ref{e7:cor:E7-extremal-vacuum-line}; the finite Ramond Zhu
algebra and complete reducibility were established in
Section~\ref{e7:sec:Ramond-Casimir-identity}.  The global bounds and the
strict first inequality in \eqref{e7:eq:E7-two-uniform-Casimir-gaps} force
the lowest ideal energy to equal $h_0$.  For every simple quotient of a
nonzero ground Zhu submodule, complete reducibility over $D_6$ allows its
constituents to be viewed inside the ambient ground space.  The exclusions
above, together with the second inequality, then force the extremal
constituent $L_{D_6}(n\omega_6)$, whose highest line is
$\mathbb C\mathbf1$.  The common criterion proves that the surjection is
an isomorphism.
\end{proof}

Combining Theorem~\ref{e7:thm:E7-reduced-quotients-simple} with
Proposition~\ref{e7:prop:E7-reduction-of-Qn}, we obtain
\begin{equation*}
  H^0_{\mathrm{DS},f_\theta}(Q_n)
  \cong
  \mathcal W_{n-4}(E_7,f_\theta),
  \qquad n=1,2,3.
\end{equation*}
The affine kernel now vanishes by Proposition~\ref{prop:common-affine-lifting}.

\subsection{Maximal ideals of the affine vertex algebras}

We apply the common affine-lifting result to the three quotients $Q_n$.

\subsubsection{Uniform affine lifting}

For $n=1,2,3$, the singular-vector quotient admits the canonical surjection
\begin{equation*}
 q_n:Q_n\twoheadrightarrow L_{k_n}(E_7).
\end{equation*}
Theorem~\ref{e7:thm:E7-reduced-quotients-simple} identifies the induced map
on minimal reductions with an isomorphism.  Proposition~\ref{prop:common-affine-lifting}
therefore applies.

\begin{theorem}[Maximal ideals at levels \(-3,-2,-1\)]
\label{e7:thm:E7-main-maximal-ideal}
For every \(n\in\{1,2,3\}\),
\begin{equation*}
  Q_n\cong L_{n-4}(E_7).
\end{equation*}
Equivalently,
\begin{equation*}
  \ker\!\left(
    V^{n-4}(E_7)\longrightarrow L_{n-4}(E_7)
  \right)
  =
  \left\langle\sigma(w)^{n+1}\right\rangle.
\end{equation*}
\end{theorem}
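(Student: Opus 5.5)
The argument is a direct application of Proposition~\ref{prop:common-affine-lifting} to the canonical surjection $q_n:Q_n\twoheadrightarrow L_{n-4}(E_7)$, exactly as in the $D_\ell$, $D_4$ and $E_6$ cases. Equivalently, it is Theorem~\ref{thm:common-minimal-reduction-maximality} with $N=N_n$. The plan has three checks, carried out in order.

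First, $N_n$ is a proper graded ideal contained in $\operatorname{Rad}V^{n-4}(E_7)$. The vector $s_n$ is an affine singular vector of positive conformal degree $2(n+1)$. By the triangular decomposition, as in Lemma~\ref{dl:lem:Dl-singular-submodules-proper}, one has $N_n=U(\mathfrak g[t^{-1}]t^{-1})U(\mathfrak g)s_n$, so $(N_n)_r=0$ for $r<2(n+1)$. Hence $\mathbf1\notin N_n$, and the canonical map $V^{k_n}(E_7)\to L_{k_n}(E_7)$ factors through $Q_n$. The second check is that $Q_n$ is a graded quotient whose homogeneous pieces are finite-dimensional $\mathfrak g$-modules. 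This is inherited from the PBW grading of $V^{k_n}(E_7)$, and Lemma~\ref{lem:common-graded-subquotient-category-O} then places $N_n$, $Q_n$ and the kernel $K_n=\ker q_n$ in the category on which minimal reduction is exact.

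Third, the map induced by $q_n$ on reductions must be an isomorphism. Proposition~\ref{e7:prop:E7-reduction-of-Qn} gives $H^0_{\mathrm{DS},f_\theta}(Q_n)\cong\widetilde{\mathcal W}_n$. Exactness turns $q_n$ into a surjection $\widetilde{\mathcal W}_n\twoheadrightarrow\mathcal W_{n-4}(E_7,f_\theta)$, and the target is nonzero by Lemma~\ref{lem:common-DS-detection} applied to $L_{n-4}(E_7)$. Theorem~\ref{e7:thm:E7-reduced-quotients-simple} shows $\widetilde{\mathcal W}_n$ is simple, so this nonzero surjection is an isomorphism.

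Exactness on $0\to K_n\to Q_n\to L_{n-4}(E_7)\to0$ then gives $H^0_{\mathrm{DS},f_\theta}(K_n)=0$. The module $K_n$ is a graded subquotient of $V^{k_n}(E_7)$ with grading bounded below, and the level $k_n=n-4$ is negative. So if $K_n\neq0$, Lemma~\ref{lem:common-DS-detection} would force $H^0_{\mathrm{DS},f_\theta}(K_n)\neq0$: its lowest-degree $\mathfrak g$-highest-weight vector $v$, of weight $\mu\in P_+$, is affine singular with $\widehat\lambda(\alpha_0^\vee)=k_n-\langle\mu,\theta^\vee\rangle<0$. This contradiction gives $K_n=0$, hence $Q_n\cong L_{n-4}(E_7)$, and therefore $N_n=\langle\sigma(w)^{n+1}\rangle=\operatorname{Rad}V^{n-4}(E_7)$.

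At this stage the argument is formal. The real content sits in the earlier results it cites. One is the exact identification of $[s_n]_{\mathrm{DS}}$ through the extremal PBW line. The other is the Casimir-gap simplicity of $\widetilde{\mathcal W}_n$, and within it the low-degree exclusions of the non-extremal high-Casimir $D_6$-types, which is the step I would recheck most carefully, especially for $n=3$. The only point needing care here is that every module involved lies in the exactness category, and Lemma~\ref{lem:common-graded-subquotient-category-O} handles that uniformly.
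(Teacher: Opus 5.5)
Your proposal is correct and is essentially the paper's own proof. The paper also applies Proposition~\ref{prop:common-affine-lifting} to $q_n\colon Q_n\twoheadrightarrow L_{k_n}(E_7)$, using Proposition~\ref{e7:prop:E7-reduction-of-Qn} and Theorem~\ref{e7:thm:E7-reduced-quotients-simple} to identify the induced map on minimal reductions as an isomorphism; you only spell out the properness, exactness-category, and detection steps that the paper leaves inside the common framework.
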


\begin{proof}
Apply Proposition~\ref{prop:common-affine-lifting} to
$q_n:Q_n\twoheadrightarrow L_{k_n}(E_7)$.
\end{proof}

Writing the three cases separately, Theorem~\ref{e7:thm:E7-main-maximal-ideal}
gives
\begin{align*}
  \ker\!\left(V^{-3}(E_7)\longrightarrow L_{-3}(E_7)\right)
  &=\left\langle\sigma(w)^2\right\rangle,\\
  \ker\!\left(V^{-2}(E_7)\longrightarrow L_{-2}(E_7)\right)
  &=\left\langle\sigma(w)^3\right\rangle,\\
  \ker\!\left(V^{-1}(E_7)\longrightarrow L_{-1}(E_7)\right)
  &=\left\langle\sigma(w)^4\right\rangle.
\end{align*}

\begin{remark}[The boundary level]

The parameter value \(n=0\) corresponds to the boundary level \(k=-4\).
That case was already established in the proof of
\cite[Theorem~3.1]{ArakawaMoreau2018}.  Thus the known \(n=0\) result,
together with Theorem~\ref{e7:thm:E7-main-maximal-ideal}, settles the four
levels \(k=-4,-3,-2,-1\) arising from this exceptional-series family for
\(E_7\).
\end{remark}
\section{Type \texorpdfstring{$E_8$}{E8}}
\label{sec:case-e8}

\subsection{Type data}
\label{e8:sec:preliminaries}

Let $\mathfrak g=E_8$ and use the minimal grading associated with
$f_\theta=e_{-\theta}$.  Its degree-zero centralizer and half-graded module
are
\begin{equation*}
 \mathfrak g^\natural\cong E_7,
 \qquad
 U:=\mathfrak g_{-\frac12}\cong L_{E_7}(\omega_7),
\end{equation*}
where $L_{E_7}(\omega_7)$ is the $56$-dimensional minuscule module.  We use
\begin{equation*}
 h^\vee(E_8)=30,
 \qquad h^\vee(E_7)=18,
 \qquad
 (\omega_7\mid\omega_7)=\frac32,
 \qquad c_2(\omega_7)=\frac{57}{2},
\end{equation*}
and the invariant symplectic form
\begin{equation}
 \langle u,v\rangle=(e_\theta\mid[u,v]).
 \label{e8:eq:E8-half-piece-symplectic-form}
\end{equation}
The complete $E_7$ root, weight, and Casimir tables are recorded in
Appendix~\ref{e8:app:E8-E7-Casimir-data}; see
\cite{KacWakimoto2004,AdamovicKacMosenederFrajriaPapiPerse2018}.

\subsubsection{Level, Ramond, and spectral-flow data}

All formal conventions are those of Section~\ref{sec:common-framework}.  Set
\begin{equation}
 k_n=n-6,\qquad 1\le n\le5.
 \label{e8:eq:E8-five-levels-definition}
\end{equation}
The $E_7$ current level is $k^\natural=k+6=n$, and
\begin{equation}
 [J^{\{a\}}{}_{\lambda}J^{\{b\}}]
 =J^{\{[a,b]\}}+n(a\mid b)\lambda.
 \label{e8:eq:E8-E7-current-relation}
\end{equation}
The minimal $W$-algebra has generators
\begin{equation}
 J^{\{a\}},\ a\in E_7;
 \qquad G^{\{u\}},\ u\in L_{E_7}(\omega_7);
 \qquad \omega.
 \label{e8:eq:E8-minimal-W-generators}
\end{equation}
Since $p_{E_8}(k)=(k+6)(k+10)$, the Ramond relation at $k=k_n$ is
\begin{equation*}
 [u,v]=\langle u,v\rangle
 \left(\Omega_{E_7}-2(n+24)L-\frac{n(n+4)}2\right)+Q(u,v).
\end{equation*}
\label{e8:subsec:E8-exactness-delta-shifts}
The BRST Hamiltonian is
\begin{equation}
 L_0^{\mathcal W}=L_0^{\mathrm{aff}}-(x_\theta)_0+L_0^{\mathrm{gh}}.
 \label{e8:eq:E8-BRST-conformal-Hamiltonian}
\end{equation}
\label{e8:subsec:E8-Li-delta}
The Li twist is governed by
\eqref{eq:common-Li-Hamiltonian}--\eqref{eq:common-Li-mode-shifts}, and every
ideal is stable under the twisted adjoint action.

\subsection{The distinguished singular vectors and their exact reductions}

In this section we introduce the five affine singular vectors associated with
\eqref{e8:eq:E8-five-levels-definition} and identify their images under minimal
Drinfeld--Sokolov reduction.  The main result is the uniform quotient formula
in Proposition~\ref{e8:prop:E8-reduction-of-Qn}.

\subsubsection{The distinguished degree-two vector}

For $k\in\C$, let
\begin{equation}
  \sigma_k:S^2(\g)\longrightarrow V^k(\g)_2
  \label{e8:eq:E8-degree-two-symmetrization-map}
\end{equation}
be the $\g$-equivariant symmetrization map defined by
\begin{equation*}
  \sigma_k(ab)
  =\frac12\bigl(a(-1)b(-1)+b(-1)a(-1)\bigr)\Vac,
  \qquad a,b\in\g.
\end{equation*}
Following \cite[Section~4]{ArakawaMoreau2018}, let
\begin{equation*}
  \mathcal R\subset S^2(\g)
\end{equation*}
be the distinguished irreducible $E_8$-submodule whose highest weight is
\begin{equation*}
  \theta+\vartheta.
\end{equation*}
Fix a nonzero highest-weight vector
\begin{equation*}
  w\in\mathcal R_{\theta+\vartheta}.
\end{equation*}
The map \eqref{e8:eq:E8-degree-two-symmetrization-map} is independent of the
level as a linear map on the affine PBW space.  We therefore write
\begin{equation*}
  u=\sigma(w)\in V^k(\g)_2
\end{equation*}
when the ambient level is clear.

Under the PBW identification
\begin{equation*}
  V^k(\g)\cong
  U\!\left(\g[t^{-1}]t^{-1}\right)
\end{equation*}
of the universal vacuum module, powers of $u$ will always mean ordinary
associative powers in the enveloping algebra:
\begin{equation*}
  u^r=
  \underbrace{u\cdots u}_{r\text{ factors}},
  \qquad r\ge1.
\end{equation*}
This convention agrees with that of \cite{ArakawaMoreau2018} and avoids any
reassociation issue for iterated vertex-algebra $(-1)$-products.

\begin{lemma}
\label{e8:lem:E8-PBW-powers-nonzero}
For every $r\ge1$, the vector $u^r$ is nonzero.  It has affine conformal
degree $2r$ and finite $E_8$-weight $r(\theta+\vartheta)$.
\end{lemma}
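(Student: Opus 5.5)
The plan is to argue exactly as in the $E_6$ case (Lemma~\ref{e6:lem:PBW-powers-u}), using the PBW filtration of $U(\mathfrak g[t^{-1}]t^{-1})$. Give $U(\mathfrak g[t^{-1}]t^{-1})$ its standard PBW filtration by the number of loop generators. By the PBW theorem the associated graded algebra is the symmetric algebra $S(\mathfrak g[t^{-1}]t^{-1})$, which is a polynomial ring and hence an integral domain. The element $\widetilde\sigma(w)$ lies in filtration degree two. Its symbol is $w$, viewed as an element of $S^2(\mathfrak g\otimes t^{-1})\subset S(\mathfrak g[t^{-1}]t^{-1})$: symmetrization differs from any ordering of the factors only by commutators, and these have lower filtration degree.

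Since taking symbols is multiplicative in the associated graded algebra, the symbol of $\widetilde\sigma(w)^r$ in filtration degree $2r$ is $w^r$. This is nonzero because $w\neq0$ and the symmetric algebra has no zero divisors. So $\widetilde\sigma(w)^r\neq0$ in the enveloping algebra. The map $U(\mathfrak g[t^{-1}]t^{-1})\to V^k(\mathfrak g)$, $X\mapsto X\mathbf 1$, is the PBW vector-space isomorphism already fixed, so $u^r=\widetilde\sigma(w)^r\mathbf1\neq0$.

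The weight statements hold because the affine grading and the $\mathfrak h$-action are additive under multiplication in the enveloping algebra. Each factor $a(-1)$ raises the conformal degree by one, and $\widetilde\sigma(w)$ is a combination of products of two such factors whose finite weights sum to $\theta+\vartheta$. Hence every monomial occurring in $\widetilde\sigma(w)^r$ has degree $2r$ and weight $r(\theta+\vartheta)$, and the same holds for $u^r$, since the vacuum has degree zero and weight zero.

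None of these steps is a real obstacle. The one point to state carefully is that the symbol of $\widetilde\sigma(w)$ is precisely $w$ with no loss, which holds because $\sigma$ is a section of the symbol map in degree two. Multiplicativity of symbols then replaces any computation of the lower-order reordering terms.
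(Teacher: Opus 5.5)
Your argument is correct and is essentially the paper's proof: both pass to the PBW associated graded algebra $S(\mathfrak g[t^{-1}]t^{-1})$, identify the symbol of $u$ with $w$ placed in loop degree $-1$, conclude that the symbol of $u^r$ is $w^r\neq0$ because the symmetric algebra is an integral domain, and read off degree and weight by additivity. The only difference is cosmetic. You filter $U(\mathfrak g[t^{-1}]t^{-1})$ itself and transfer nonvanishing through $X\mapsto X\mathbf 1$. That is slightly cleaner, since the associative power $u^r$ is actually defined in the enveloping algebra.
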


\begin{proof}
Equip $V^k(\g)$ with the PBW filtration by the number of negative-current
factors.  Its associated graded algebra is
\begin{equation}
  \gr V^k(\g)
  \cong S\!\left(\g[t^{-1}]t^{-1}\right).
  \label{e8:eq:E8-PBW-associated-graded}
\end{equation}
The leading symbol of $u$ is the nonzero element obtained from $w$ by placing
both factors in loop degree $-1$.  Hence the leading symbol of $u^r$ is its
$r$th power.  Since the symmetric algebra in
\eqref{e8:eq:E8-PBW-associated-graded} is an integral domain, this symbol is
nonzero, and therefore $u^r\neq0$.  The degree and weight assertions follow
from the corresponding assertions for $u$.
\end{proof}

\subsubsection{The five affine singular vectors}

For $1\le n\le5$, set
\begin{equation*}
  r_n=n+1,
  \qquad
  s_n=u^{r_n}\in V^{k_n}(\g),
  \qquad
  k_n=n-6.
\end{equation*}

\begin{theorem}

For each $1\le n\le5$, the vector $s_n$ is a nonzero affine singular vector
in $V^{k_n}(\g)$.  Its affine conformal degree and finite $E_8$-weight are
\begin{equation*}
  \deg(s_n)=2(n+1),
  \qquad
  \wt_{E_8}(s_n)=(n+1)(\theta+\vartheta).
\end{equation*}
\end{theorem}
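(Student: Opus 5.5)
The plan follows the $E_7$ case essentially verbatim, since the statement is just the $E_8$ specialization of the Arakawa--Moreau singularity result. First I will invoke \cite[Theorem~4.2(1)(b)]{ArakawaMoreau2018}. For the Deligne exceptional series outside type $A$, it asserts that the associative PBW power $\sigma(w)^{n+1}$ is an affine singular vector precisely at level
\[
 k=n-\frac{h^\vee}{6}-1 .
\]
With $h^\vee(E_8)=30$ this level is $n-6=k_n$. Since our convention for $u^r$ is the ordinary associative power in $U(\mathfrak g[t^{-1}]t^{-1})$, which is exactly the convention of \cite{ArakawaMoreau2018}, no reassociation issue arises, and $s_n=u^{n+1}$ is singular in $V^{k_n}(\mathfrak g)$ for $1\le n\le5$.

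Nonvanishing, degree, and weight then follow directly from Lemma~\ref{e8:lem:E8-PBW-powers-nonzero} with $r=n+1$. It gives $u^{n+1}\neq0$, conformal degree $2(n+1)$, and finite weight $(n+1)(\theta+\vartheta)$.

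For robustness I will also sketch the direct verification in the style of Proposition~\ref{e6:prop:f-theta-on-u-powers}. By the criterion of Lemma~\ref{e6:lem:affine-singularity-criterion}, whose proof is type independent, it suffices to know two things. First, $u^r$ is a $\mathfrak g$-highest-weight vector. This holds because $w$ is highest weight, the symmetrization is $\mathfrak g$-equivariant, and the zero modes $e_i(0)$ act as derivations on associative products. Second, we need
\[
 f_\theta(1)u^r=0 .
\]
The $E_6$ computation generalizes to a formula of the shape
\[
 f_\theta(1)u^r=r\bigl(k-r+\tfrac{h^\vee}{6}+2\bigr)\,u^{r-1}e_\vartheta(-1)\mathbf 1 .
\]
Its right-hand side has nonzero PBW symbol $w^{r-1}e_\vartheta$, so it vanishes exactly when $k=r-6$, that is, $k=k_n$ for $r=n+1$.

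The only genuine obstacle in this alternative route is the precise scalar. It comes from summing the terms $k+k^\natural$-type contributions minus $2j$ along the induction, together with the double-bracket identities $[[f_\theta,e_{\beta+\vartheta}],e_{\delta+\vartheta}]=-e_\vartheta$ for the $E_8$ analogue of the root pairs in $w$. Since this is exactly what is established in the cited theorem, I will rely on the citation and record the scalar check only as confirmation.
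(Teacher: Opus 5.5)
Your main argument is exactly the paper's: invoke \cite[Theorem~4.2(1)(b)]{ArakawaMoreau2018} with $h^\vee(E_8)=30$ to get singularity at $k_n=n-6$, then read off nonvanishing, degree $2(n+1)$, and weight $(n+1)(\theta+\vartheta)$ from Lemma~\ref{e8:lem:E8-PBW-powers-nonzero}. One slip in your optional direct check: the scalar $r\bigl(k-r+\tfrac{h^\vee}{6}+2\bigr)=r(k-r+7)$ vanishes at $k=r-7$, not at $k=r-6$, and it is $r-7$ that equals $k_n=n-6$ when $r=n+1$.
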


\begin{proof}
By \cite[Theorem~4.2(1)(b)]{ArakawaMoreau2018}, for a Lie algebra in the
Deligne exceptional series outside type $A$, the vector
$\sigma(w)^{r}$ is affine singular precisely at
\begin{equation*}
  k=r-1-\frac{h^{\vee}(\g)}{6}-1.
\end{equation*}
For $\g=E_8$, $h^{\vee}(\g)=30$, and $r=r_n=n+1$, the right-hand side is
\begin{equation*}
  n-\frac{30}{6}-1=n-6=k_n.
\end{equation*}
Thus $s_n$ is singular in $V^{k_n}(\g)$.  Its nonvanishing, degree, and
finite weight follow from Lemma~\ref{e8:lem:E8-PBW-powers-nonzero}.
\end{proof}

Let
\begin{equation*}
  I_n=\langle s_n\rangle_{V^{k_n}(\g)}
\end{equation*}
be the graded vertex-algebra ideal generated by $s_n$.

\begin{lemma}

For every $1\le n\le5$,
\begin{equation}
  I_n
  =U(\widehat{\g}^{\,\prime})s_n.
  \label{e8:eq:E8-In-equals-affine-submodule}
\end{equation}
Moreover, $I_n$ is proper.
\end{lemma}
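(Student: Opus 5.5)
The plan is to prove the two inclusions of \eqref{e8:eq:E8-In-equals-affine-submodule} separately and then deduce properness from a grading argument, exactly as was done for $E_6$ in \eqref{e6:eq:In-affine-submodule}.

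\emph{The submodule is an ideal.} First, $U(\widehat{\g}^{\,\prime})s_n$ is a vertex-algebra ideal. The universal affine vertex algebra $V^{k_n}(\g)$ is strongly generated by the currents $a(-1)\Vac$, $a\in\g$. The modes of these fields are precisely the operators $a(m)$ of $\widehat{\g}^{\,\prime}$, so $U(\widehat{\g}^{\,\prime})s_n$ is stable under all modes of the generating fields. By the standard lemma (Borcherds identity together with the normal-ordered-product expansion of $Y(v,z)$ for PBW monomials $v$), a subspace stable under all modes of a set of strong generators is stable under all $v_{(j)}$, $v\in V^{k_n}(\g)$. It is also stable under $\partial=L_{-1}$, since $L_{-1}$ is given by the Sugawara construction as a quadratic expression in the modes; this uses $k_n\neq -h^\vee$, which holds because $k_n=n-6\neq -30$. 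Skew-symmetry then upgrades left-ideal stability to a two-sided ideal.

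\emph{The ideal is contained in the submodule.} Conversely, every ideal containing $s_n$ contains all $a(m)s_n$, and iterating shows it contains $U(\widehat{\g}^{\,\prime})s_n$. This gives equality.

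\emph{Properness.} Since $s_n$ is singular, the positive affine nilradical (generated by $e_i(0)$ and $f_\theta(1)$) kills it, and the Cartan part together with $K$ acts by scalars. By the PBW theorem for the triangular decomposition,
\[
U(\widehat{\g}^{\,\prime})s_n
=U\!\left(\g[t^{-1}]t^{-1}\right)U(\mathfrak n_-)\,s_n,
\]
where $\mathfrak n_-$ is the negative nilpotent subalgebra of $\g$. Zero modes preserve conformal degree and negative modes raise it, so every vector of $I_n$ has degree at least $\deg s_n=2(n+1)>0$. Hence $\Vac\notin I_n$, and $I_n$ is proper.

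\emph{Main obstacle.} The only delicate point is the justification of $L_{-1}$-stability and the strong-generator lemma in the first step. I would simply cite the generation result (as in Lemma~\ref{dl:lem:Dl-singular-submodules-proper} and the $E_6$ argument) rather than reprove it.
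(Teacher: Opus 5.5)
Your proposal is correct and follows the paper's proof essentially step for step. The equality comes from the two inclusions: an ideal containing $s_n$ is stable under all current modes, and conversely $U(\widehat{\g}^{\,\prime})s_n$ is stable under all modes of the strong generators. Properness comes from the triangular decomposition $U(\widehat{\g}^{\,\prime})s_n=U(\widehat{\mathfrak n}_-)s_n$ together with the degree bound $2(n+1)>0$.

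Your extra step is a harmless elaboration of something the paper leaves implicit: translation-stability follows from $T=\omega_{(0)}$ for the Sugawara vector at the noncritical level $k_n$, and skew-symmetry then makes the left ideal two-sided.
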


\begin{proof}
The universal affine vertex algebra is generated by the current states
$a(-1)\Vac$, $a\in\g$.  Hence every vertex-algebra ideal containing $s_n$ is
stable under all current modes and therefore contains
$U(\widehat{\g}^{\,\prime})s_n$.  Conversely, the latter subspace is stable
under all modes of the generating currents, and hence is a vertex-algebra
ideal.  This proves \eqref{e8:eq:E8-In-equals-affine-submodule}.

Since $s_n$ is annihilated by the positive affine nilpotent subalgebra, the
PBW triangular decomposition gives
\begin{equation}
  U(\widehat{\g}^{\,\prime})s_n
  =U(\widehat{\mathfrak n}_{-})s_n.
  \label{e8:eq:E8-In-negative-triangular-decomposition}
\end{equation}
The zero-mode factors in $\widehat{\mathfrak n}_{-}$ preserve loop degree,
whereas negative loop modes increase it.  Every vector in the right-hand side
of \eqref{e8:eq:E8-In-negative-triangular-decomposition} therefore has loop
degree at least $2(n+1)>0$.  The vacuum vector cannot belong to $I_n$, so the
ideal is proper.
\end{proof}

Define
\begin{equation}
  Q_n=V^{k_n}(\g)/I_n.
  \label{e8:eq:E8-definition-Qn}
\end{equation}
Since $I_n$ is contained in the unique maximal graded ideal, the canonical
map onto the simple affine vertex algebra factors as
\begin{equation}
  V^{k_n}(\g)\twoheadrightarrow Q_n
  \overset{\pi_n}{\twoheadrightarrow}L_{k_n}(\g).
  \label{e8:eq:E8-Qn-to-simple-affine}
\end{equation}

\subsubsection{Nonvanishing of the reduced cyclic modules}

The actual extended affine highest weight of $s_n$ is
\begin{equation*}
  \widehat\lambda_n
  =k_n\Lambda_0
   +(n+1)(\theta+\vartheta)
   -2(n+1)\delta.
\end{equation*}
For the application of the Verma-module reduction theorem, introduce the
normalized weight
\begin{equation*}
  \widehat\lambda_n^{\circ}
  =\widehat\lambda_n+2(n+1)\delta
  =k_n\Lambda_0+(n+1)(\theta+\vartheta),
  \qquad
  \widehat\lambda_n^{\circ}(d)=0.
\end{equation*}
Equivalently, let $I_n^{\circ}$ denote the same
$\widehat{\g}^{\,\prime}$-module as $I_n$, with the action of $d$ shifted so
that $s_n$ has $d$-eigenvalue zero.  Then there are natural surjections
\begin{equation}
  M(\widehat\lambda_n^{\circ})
  \twoheadrightarrow I_n^{\circ}
  \twoheadrightarrow L(\widehat\lambda_n^{\circ}).
  \label{e8:eq:E8-Verma-In-simple-surjections}
\end{equation}

\begin{proposition}
\label{e8:prop:E8-reduction-In-cyclic-nonzero}
For every $1\le n\le5$, the module
$H^0_{\DS,f_{\theta}}(I_n)$ is nonzero and is generated, as a
$\W^{k_n}(\g,f_{\theta})$-module, by the reduced highest-weight vector
\begin{equation*}
  \overline{s}_n=[s_n]_{\DS}.
\end{equation*}
\end{proposition}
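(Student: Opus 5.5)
The plan is to follow exactly the template of Lemma~\ref{lem:common-reduced-singular-generator}, which was stated for an arbitrary negative integral level and an arbitrary affine singular vector of positive degree, and to check its single numerical hypothesis. The level $k_n=n-6$ is a negative integer for $1\le n\le5$. By the preceding theorem, $s_n$ is a nonzero affine singular vector of positive conformal degree $2(n+1)$. By \eqref{e8:eq:E8-In-equals-affine-submodule}, $I_n=U(\widehat{\g}^{\,\prime})s_n$ is precisely the module $N$ of that lemma. So the proof reduces to verifying the wall inequality \eqref{eq:common-singular-generator-negative-wall} for $\widehat\lambda_n$.

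\textbf{Wall inequality.} Since $E_8$ is simply laced, \eqref{eq:common-affine-simple-coroot} gives $\alpha_0^\vee=K-\theta^\vee$, and the $\delta$-component does not contribute. The root $\vartheta$ is the highest root of $\g^\natural\cong E_7$, which is the centralizer of the $\mathfrak{sl}_2$-triple. Hence $\langle\vartheta,\theta^\vee\rangle=0$, while $\langle\theta,\theta^\vee\rangle=2$. Therefore
\[
 \widehat\lambda_n(\alpha_0^\vee)=k_n-2(n+1)=n-6-2n-2=-n-8<0 .
\]

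\textbf{Applying the lemma.} With this inequality, Lemma~\ref{lem:common-reduced-singular-generator} yields the three conclusions in order. First, $I_n^\circ\twoheadrightarrow L(\widehat\lambda_n^\circ)$ together with the nonvanishing criterion \eqref{eq:common-DS-nonvanishing} and exactness on $\mathcal O_{k_n}$ gives $H^0_{\DS,f_\theta}(I_n)\ne0$. Here Lemma~\ref{lem:common-graded-subquotient-category-O} places $I_n$, $I_n^\circ$ and their quotients in the exactness category. Second, the surjection $M(\widehat\lambda_n^\circ)\twoheadrightarrow I_n^\circ$ from \eqref{e8:eq:E8-Verma-In-simple-surjections}, combined with \cite[Theorem~6.3]{KacWakimoto2004}, shows that the reduction is generated by the BRST class of the highest-weight vector, namely $\overline{s}_n$. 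Third, shifting the $d$-action only relabels the external grading and does not change the BRST complex as a $\widehat{\g}^{\,\prime}$-complex, so $H^0_{\DS,f_\theta}(I_n^\circ)\cong H^0_{\DS,f_\theta}(I_n)$ as $\W^{k_n}(\g,f_\theta)$-modules.

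\textbf{Main obstacle.} The only genuinely type-dependent input is the orthogonality $\langle\vartheta,\theta^\vee\rangle=0$, together with the fact that the Verma reduction theorem applies at the level $k_n$. This holds because $k_n$ is noncritical, as $k_n+30\ne0$. The orthogonality itself is immediate from $\vartheta\in\Delta(\g^\natural)$ and $\g^\natural\subset\g_0$, so that $\vartheta(x_\theta)=0$. I therefore expect no real difficulty beyond recording these facts carefully.
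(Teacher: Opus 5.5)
Your proposal is correct and follows essentially the same route as the paper. The paper's proof inlines the steps of Lemma~\ref{lem:common-reduced-singular-generator}: cyclicity from the Verma surjection $M(\widehat\lambda_n^{\circ})\twoheadrightarrow I_n^{\circ}$ plus exactness, then nonvanishing from $\widehat\lambda_n^{\circ}(\alpha_0^\vee)=-n-8$ and the irreducible-quotient criterion. You instead cite that lemma and verify its single numerical hypothesis, which is the same argument packaged differently.
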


\begin{proof}
By the Verma-module reduction theorem recalled in
Section~\ref{e8:subsec:E8-exactness-delta-shifts},
$H^0_{\DS,f_{\theta}}(M(\widehat\lambda_n^{\circ}))$ is a Verma module over
$\W^{k_n}(\g,f_{\theta})$ generated by the class of its affine
highest-weight vector.  Exactness applied to the first map in
\eqref{e8:eq:E8-Verma-In-simple-surjections} shows that
$H^0_{\DS,f_{\theta}}(I_n^{\circ})$ is generated by the image of that class.
After forgetting the auxiliary shifted $d$-action, this is precisely the
class $\overline{s}_n$ in the reduction of $I_n$.

It remains to prove that this cyclic generator is nonzero.  Since
$\vartheta$ is a root of $\g^{\natural}$, it is orthogonal to $\theta$, and
therefore
\begin{equation*}
  \langle\vartheta,\theta^{\vee}\rangle=0.
\end{equation*}
Using $\alpha_0^{\vee}=K-\theta^{\vee}$, we obtain
\begin{align*}
  \widehat\lambda_n^{\circ}(\alpha_0^{\vee})
  &=k_n-
    \left\langle(n+1)(\theta+\vartheta),\theta^{\vee}\right\rangle
    \notag\\
  &=(n-6)-2(n+1)
   =-n-8.
\end{align*}
In particular,
$\widehat\lambda_n^{\circ}(\alpha_0^{\vee})\notin\Z_{\ge0}$.
The nonvanishing criterion
\eqref{eq:common-DS-nonvanishing} gives
\begin{equation*}
  H^0_{\DS,f_{\theta}}
  \!\left(L(\widehat\lambda_n^{\circ})\right)\neq0.
\end{equation*}
Exactness applied to the second map in
\eqref{e8:eq:E8-Verma-In-simple-surjections} yields a surjection
\begin{equation*}
  H^0_{\DS,f_{\theta}}(I_n^{\circ})
  \twoheadrightarrow
  H^0_{\DS,f_{\theta}}
  \!\left(L(\widehat\lambda_n^{\circ})\right).
\end{equation*}
Thus $H^0_{\DS,f_{\theta}}(I_n)$ is nonzero, and its cyclic generator
$\overline{s}_n$ is nonzero.
\end{proof}

\subsubsection{The exact reduced highest-weight vectors}

We next determine the conformal and $E_7$-weights of $\overline{s}_n$.
Since
\begin{equation*}
  \theta(x_{\theta})=1,
  \qquad
  \vartheta(x_{\theta})=0,
\end{equation*}
the $x_{\theta}$-eigenvalue of $s_n$ is $n+1$.  By
\eqref{e8:eq:E8-BRST-conformal-Hamiltonian}, its reduced conformal weight is
\begin{equation*}
  \wt_{\W}(\overline{s}_n)
  =2(n+1)-(n+1)=n+1.
\end{equation*}
The restriction of $\theta$ to the Cartan subalgebra of $\g^{\natural}$ is
zero, so the finite $E_7$-weight is
\begin{equation*}
  \wt_{E_7}(\overline{s}_n)=(n+1)\vartheta.
\end{equation*}
The Verma-module reduction theorem also shows that $\overline{s}_n$ is an
$E_7$-highest-weight vector.  Hence
\begin{equation}
  0\neq\overline{s}_n
  \in
  \W^{k_n}(\g,f_{\theta})_{n+1}
  [(n+1)\vartheta]^{\mathrm{hw}}.
  \label{e8:eq:E8-reduced-sn-extremal-space}
\end{equation}

\begin{lemma}
\label{e8:lem:E8-extremal-W-weight-space}
For every integer $r\ge1$ and every noncritical level $k$, one has
\begin{equation}
  \W^k(\g,f_{\theta})_{r}
  [r\vartheta]^{\mathrm{hw}}
  =\C\,J^{\{e_{\vartheta}\}}_{(-1)}{}^{r}\Vac
  =\C\,:J^{\{e_{\vartheta}\}}{}^{r}:.
  \label{e8:eq:E8-extremal-W-space-one-dimensional}
\end{equation}
\end{lemma}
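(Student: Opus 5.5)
The plan is to reduce the statement to Proposition~\ref{prop:common-extremal-PBW-line}, applied to the simple ideal $\mathfrak a=\mathfrak g^\natural\cong E_7$ with $\eta=\vartheta$. That proposition already shows that the full simultaneous weight space $\W^k(\g,f_\theta)_r[r\vartheta]$ is the line $\C:J^{\{e_\vartheta\}}{}^r:$. The highest-weight subspace $[r\vartheta]^{\mathrm{hw}}$ sits inside this space. It therefore suffices to check two things: the hypothesis \eqref{eq:common-extremal-G-bound}, and that $:J^{\{e_\vartheta\}}{}^r:$ is itself an $E_7$-highest-weight vector, so that the inclusion is an equality.

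\textbf{Checking the charge bound.} $E_7$ is simply laced, so for a root or zero weight $\mu$ we have $\langle\mu,\vartheta^\vee\rangle\le2$, with equality only at $\mu=\vartheta$. The same pairing is also defined on the weights of $U=\g_{-1/2}\cong L_{E_7}(\omega_7)$. This module is minuscule, so every weight $\nu$ of $U$ lies in the Weyl orbit of $\omega_7$. Hence
\[
\langle\nu,\vartheta^\vee\rangle\le\langle\omega_7,\vartheta^\vee\rangle .
\]
In Bourbaki numbering the coefficient of $\alpha_7$ in the highest root $\vartheta$ of $E_7$ is $1$. Since $\vartheta^\vee=\vartheta$, this gives $\langle\omega_7,\vartheta^\vee\rangle=1$. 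So \eqref{eq:common-extremal-G-bound} holds, and the common proposition gives the one-dimensionality, uniformly in the noncritical level $k$ and in $r\ge1$.

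\textbf{Highest-weight property.} Let $e_\alpha$ be a positive root vector of $E_7$. Its zero mode is a derivation of the $(-1)$-product, and $J^{\{e_\alpha\}}_{(0)}J^{\{e_\vartheta\}}=J^{\{[e_\alpha,e_\vartheta]\}}=0$ because $\vartheta$ is the highest root. It follows that
\[
J^{\{e_\alpha\}}_{(0)}\bigl(J^{\{e_\vartheta\}}_{(-1)}\bigr)^r\Vac=0 .
\]
A second route avoids the derivation argument. Any vector of weight $r\vartheta+\alpha$ and conformal weight $r$ would violate the charge inequality $d_\vartheta(\operatorname{wt}X)\le2\Delta(X)$ from the proof of the common proposition, unless $\langle\alpha,\vartheta^\vee\rangle\le0$. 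The remaining roots, those orthogonal to $\vartheta$, are then handled by the derivation argument above.

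\textbf{Nonvanishing and the main obstacle.} PBW independence gives that the vector is nonzero, exactly as in the common proposition. The only step that needs genuine care is the minuscule bound $\langle\omega_7,\vartheta^\vee\rangle=1$, together with the fact that every weight of $L_{E_7}(\omega_7)$ is $W$-conjugate to $\omega_7$. Both are standard root data, and the explicit tables in Appendix~\ref{e8:app:E8-E7-Casimir-data} can serve as a cross-check.
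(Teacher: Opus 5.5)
Your proposal is correct and follows the paper's own route. Both apply Proposition~\ref{prop:common-extremal-PBW-line} to $\mathfrak g^\natural\cong E_7$ with $\eta=\vartheta$, and both verify the charge bound from simple-lacedness together with $\langle\nu,\vartheta^\vee\rangle\le\langle\omega_7,\vartheta^\vee\rangle=1$ for the weights of $L_{E_7}(\omega_7)$. Your extra derivation argument, showing that the positive current zero modes annihilate $:J^{\{e_\vartheta\}}{}^r:$, makes explicit the equality with the $\mathrm{hw}$-subspace, which the paper leaves implicit.
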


\begin{proof}
Apply Proposition~\ref{prop:common-extremal-PBW-line} to
\(\mathfrak g^\natural\cong E_7\), with \(\eta=\vartheta\).  Since
\(E_7\) is simply laced, its current weights have
\(\vartheta^\vee\)-height at most two, with equality only at the highest
root.  The module
\(\mathfrak g_{-1/2}\cong L_{E_7}(\omega_7)\) is minuscule and all its
weights \(\nu\) satisfy
\(\langle\nu,\vartheta^\vee\rangle\le1\).  The common proposition gives
\eqref{e8:eq:E8-extremal-W-space-one-dimensional}.
\end{proof}

\begin{corollary}[Exact reduction of the singular vectors]
\label{e8:cor:E8-exact-reduction-sn}
For every $1\le n\le5$, there exists a scalar $c_n\in\C^{\times}$ such that
\begin{equation*}
  [s_n]_{\DS}
  =c_n\,:J^{\{e_{\vartheta}\}}{}^{n+1}:.
\end{equation*}
\end{corollary}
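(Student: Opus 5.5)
The plan is to combine the nonvanishing result of Proposition~\ref{e8:prop:E8-reduction-In-cyclic-nonzero} with the weight computation \eqref{e8:eq:E8-reduced-sn-extremal-space} and the one-dimensionality statement of Lemma~\ref{e8:lem:E8-extremal-W-weight-space}. This mirrors the corresponding arguments for $D_\ell$, $D_4$, $E_6$ and $E_7$.

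\emph{Step 1: nonvanishing.} Proposition~\ref{e8:prop:E8-reduction-In-cyclic-nonzero} shows that $\overline s_n=[s_n]_{\DS}$ is a nonzero cyclic generator of $H^0_{\DS,f_\theta}(I_n)$. By Lemma~\ref{lem:common-graded-subquotient-category-O}, exactness applies to the inclusion $I_n\hookrightarrow V^{k_n}(\g)$. Hence $H^0_{\DS,f_\theta}(I_n)$ embeds in $\W^{k_n}(\g,f_\theta)$, so $\overline s_n$ is a nonzero element of the universal minimal $W$-algebra and not merely of the reduced module. Lemma~\ref{lem:common-reduced-singular-generator} gives the same conclusion directly, since $\widehat\lambda_n(\alpha_0^\vee)=-n-8<0$.

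\emph{Step 2: locating the class.} Equation \eqref{e8:eq:E8-reduced-sn-extremal-space} places $\overline s_n$ in the simultaneous weight space of conformal weight $n+1$ and $E_7$-weight $(n+1)\vartheta$. Two inputs give this. The BRST representative is $s_n\otimes\mathbf1_{\mathrm{gh}}$, and its ghost factor has zero $\mathfrak h^\natural$-weight and zero conformal weight. By \eqref{e8:eq:E8-BRST-conformal-Hamiltonian}, the Hamiltonian eigenvalue is $2(n+1)-(n+1)$.

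\emph{Step 3: conclusion.} Apply Lemma~\ref{e8:lem:E8-extremal-W-weight-space} with $r=n+1$ at the noncritical level $k_n$. The proof of that lemma goes through Proposition~\ref{prop:common-extremal-PBW-line}, which shows that the \emph{entire} weight space $\W^{k_n}(\g,f_\theta)_{n+1}[(n+1)\vartheta]$ is the line $\C\,:J^{\{e_\vartheta\}}{}^{n+1}:$. So the highest-weight decoration in \eqref{e8:eq:E8-reduced-sn-extremal-space} is not even needed. Therefore $\overline s_n=c_n:J^{\{e_\vartheta\}}{}^{n+1}:$ for some scalar $c_n$, and $c_n\ne0$ because $\overline s_n\ne0$.

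The only point requiring care is the justification of Step 2, namely that the reduced weights are exactly those of the affine highest-weight vector. This depends on the standard description of the BRST representative of a singular vector, which is already used in the earlier cases (for instance \eqref{dl:eq:Dl-reduced-singular-natural-weights}). I do not expect any genuine obstacle; the corollary is a direct assembly of the preceding three results.
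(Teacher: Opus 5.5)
Your proposal is correct and is the paper's proof: it too simply combines Proposition~\ref{e8:prop:E8-reduction-In-cyclic-nonzero}, the weight placement \eqref{e8:eq:E8-reduced-sn-extremal-space}, and Lemma~\ref{e8:lem:E8-extremal-W-weight-space} with $r=n+1$. Your two additional remarks make explicit points that the paper leaves implicit. First, exactness applied to $I_n\hookrightarrow V^{k_n}(\g)$ carries the nonvanishing of $[s_n]_{\DS}$ into the universal $W$-algebra. Second, the entire weight space, not just its highest-weight part, is one-dimensional.
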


\begin{proof}
Combine Proposition~\ref{e8:prop:E8-reduction-In-cyclic-nonzero},
\eqref{e8:eq:E8-reduced-sn-extremal-space}, and
Lemma~\ref{e8:lem:E8-extremal-W-weight-space} with $r=n+1$.
\end{proof}

\subsubsection{Reduction of the affine quotients}

Set
\begin{equation}
  \widetilde{\W}_n
  =\frac{\W^{k_n}(\g,f_{\theta})}
  {\left\langle:J^{\{e_{\vartheta}\}}{}^{n+1}:\right\rangle}.
  \label{e8:eq:E8-definition-Wtilde-n}
\end{equation}

\begin{proposition}
\label{e8:prop:E8-reduction-of-Qn}
For every $1\le n\le5$, there is a canonical isomorphism of vertex algebras
\begin{equation}
  H^0_{\DS,f_{\theta}}(Q_n)
  \cong\widetilde{\W}_n.
  \label{e8:eq:E8-reduction-Qn-is-Wtilde-n}
\end{equation}
Moreover, the map $\pi_n$ in \eqref{e8:eq:E8-Qn-to-simple-affine} induces a
nonzero surjective homomorphism
\begin{equation}
  \rho_n:\widetilde{\W}_n
  \twoheadrightarrow\W_{k_n}(\g,f_{\theta}).
  \label{e8:eq:E8-natural-Wtilde-to-simple-W}
\end{equation}
\end{proposition}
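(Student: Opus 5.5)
The plan is to apply Proposition~\ref{prop:common-exact-generated-quotient} with $r=1$, in the same way as Proposition~\ref{e7:prop:E7-reduction-of-Qn} for $E_7$, and then obtain $\rho_n$ from exactness applied to $\pi_n$.

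First, I will check that the hypotheses of the common proposition hold. The modules $I_n$, $V^{k_n}(\g)$ and $Q_n$ are graded subquotients of the vacuum module. Their homogeneous pieces are finite-dimensional $\g$-modules. So Lemma~\ref{lem:common-graded-subquotient-category-O} puts all three in $\mathcal O_{k_n}$, where minimal reduction is exact by \cite[Theorem~6.7.1]{Arakawa2005}. Proposition~\ref{e8:prop:E8-reduction-In-cyclic-nonzero} shows that $H^0_{\DS,f_\theta}(I_n)$ is cyclic with generator $\overline s_n$. Corollary~\ref{e8:cor:E8-exact-reduction-sn} identifies this generator as $c_n:J^{\{e_\vartheta\}}{}^{n+1}:$ with $c_n\ne0$. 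Rescaling a generator does not change the ideal it generates, so the common proposition gives \eqref{e8:eq:E8-reduction-Qn-is-Wtilde-n}. Concretely, the exact sequence $0\to H^0(I_n)\to\W^{k_n}(\g,f_\theta)\to H^0(Q_n)\to0$ identifies the image of $H^0(I_n)$ with the ideal $\langle:J^{\{e_\vartheta\}}{}^{n+1}:\rangle$.

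Second, I will construct $\rho_n$. The map $\pi_n\colon Q_n\twoheadrightarrow L_{k_n}(\g)$ fits into a short exact sequence $0\to\ker\pi_n\to Q_n\to L_{k_n}(\g)\to0$. All terms are graded subquotients of $V^{k_n}(\g)$, so this sequence again lies in $\mathcal O_{k_n}$. Exactness therefore gives a surjection $H^0(Q_n)\twoheadrightarrow H^0(L_{k_n}(\g))=\W_{k_n}(\g,f_\theta)$. Composing with the isomorphism from the first step produces $\rho_n$. This is a vertex-algebra homomorphism because reduction is functorial on vertex-algebra maps and $\pi_n$ is a vertex-algebra map.

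Third, I will show that $\rho_n$ is nonzero. Lemma~\ref{lem:common-DS-detection} applies to $L_{k_n}(\g)$, since $k_n<0$ and the vacuum has $\mu=0$, so $k_n\Lambda_0(\alpha_0^\vee)=k_n<0$. This gives $\W_{k_n}(\g,f_\theta)\ne0$. A surjection onto a nonzero target is nonzero, and in particular $\widetilde\W_n\ne0$.

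I expect no real obstacle, since every ingredient has already been established. The only point needing care is the bookkeeping in the first step. The ideal generated by $\overline s_n$ in the $\W$-module sense, meaning the image of the cyclic module, must coincide with the vertex-algebra ideal in \eqref{e8:eq:E8-definition-Wtilde-n}. This holds because adjoint submodules of a vertex algebra are exactly its ideals, and that argument is already contained in Proposition~\ref{prop:common-exact-generated-quotient}.
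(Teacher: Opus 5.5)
Your proposal is correct and follows essentially the same route as the paper. The cyclicity of $H^0_{\DS,f_\theta}(I_n)$ together with the generator identification in Corollary~\ref{e8:cor:E8-exact-reduction-sn} feeds into Proposition~\ref{prop:common-exact-generated-quotient}, and $\rho_n$ comes from exactness applied to $\pi_n$ with a nonzero target; your explicit category-$\mathcal O$ check via Lemma~\ref{lem:common-graded-subquotient-category-O} and your use of Lemma~\ref{lem:common-DS-detection} in place of a direct citation of \eqref{eq:common-DS-nonvanishing} only spell out what the paper leaves implicit.
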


\begin{proof}
For fixed $n$, the ideal $I_n$ is the cyclic affine submodule generated by
$s_n$, and its reduction is cyclic by the reduced Verma-module argument above.
Corollary~\ref{e8:cor:E8-exact-reduction-sn} identifies the cyclic generator
with $:J^{\{e_\vartheta\}}{}^{n+1}:$.  Proposition~\ref{prop:common-exact-generated-quotient}
therefore gives \eqref{e8:eq:E8-reduction-Qn-is-Wtilde-n}.  Reducing the
canonical map $Q_n\twoheadrightarrow L_{k_n}(E_8)$ gives
\eqref{e8:eq:E8-natural-Wtilde-to-simple-W}; its target is nonzero by
\eqref{eq:common-DS-nonvanishing}.
\end{proof}

The next section studies the associated varieties and Ramond Zhu algebras of
$\widetilde{\W}_n$.  In particular, we shall prove that each
$\widetilde{\W}_n$ is lisse and derive the uniform Casimir trace identity
needed for the spectral-flow argument.

\subsection{Lisse quotients and a Ramond Zhu trace identity}
\label{e8:sec:lisse-Ramond-trace}

For $1\le n\le5$, recall the affine quotient $Q_n$ from
\eqref{e8:eq:E8-definition-Qn} and the reduced quotient
$\widetilde{\W}_n$ from \eqref{e8:eq:E8-definition-Wtilde-n}.  In this section
we first establish the geometric and finiteness properties of
$\widetilde{\W}_n$.  We then pass the defining highest-root relation to the
Ramond Zhu algebra and derive a uniform Casimir trace identity.  The latter
will be combined with the finite-dimensional $E_7$ estimates of the next
section.

\subsubsection{Associated varieties and lisse reduction}

The associated variety of $Q_n$ is known explicitly.

\begin{proposition}
\label{e8:prop:E8-associated-variety-Qn}
For every $1\le n\le5$,
\begin{equation}
  X_{Q_n}=\overline{\mathbb O_{\min}},
  \label{e8:eq:E8-associated-variety-Qn}
\end{equation}
where $\mathbb O_{\min}\subset E_8$ is the minimal nonzero nilpotent orbit.
Consequently,
\begin{equation}
  X_{\widetilde{\W}_n}=\{f_{\theta}\}
  \label{e8:eq:E8-associated-variety-Wtilde-n}
\end{equation}
in the Slodowy slice $\mathcal S_{f_{\theta}}$.  In particular,
$\widetilde{\W}_n$ is lisse.
\end{proposition}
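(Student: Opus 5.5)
The associated-variety equality \eqref{e8:eq:E8-associated-variety-Qn} is the $E_8$ case of \cite[Proposition~5.2]{ArakawaMoreau2018}. Our first step is to check that $Q_n$ is exactly the quotient considered there. For $E_8$, the complement of $L(2\theta)\oplus\mathbb C\Omega$ in $S^2(\mathfrak g)$ is the single irreducible module $\mathcal R$. The ideal $I_n$ is generated by the associative PBW power $\sigma(w)^{n+1}$, which matches the convention of \cite[Section~4]{ArakawaMoreau2018}. The level $k_n=n-6$ is the one at which this power is singular.

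If one prefers a self-contained check, we would argue on the $C_2$-quotient as in Proposition~\ref{dl:prop:Dl-minus-two-associated-variety}. The image of $I_n$ in $R_{V^{k_n}(\mathfrak g)}\cong S(\mathfrak g)$ is the ideal $I$ generated by $U(\mathfrak g)w^{n+1}$. This ideal is contained in the ideal $J_{\mathcal R}$ generated by $\mathcal R$. Its radical is $G$-stable and contains $w$, so it contains $\mathcal R=U(\mathfrak g)w$. Hence $\sqrt I=\sqrt{J_{\mathcal R}}$, and by \cite[Lemma~2.1]{ArakawaMoreau2018} this radical is the defining ideal of $\overline{\mathbb O_{\min}}$. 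The one subtle point is that the image of $U(\widehat{\mathfrak g}')s_n$ modulo $C_2$ is exactly $S(\mathfrak g)\cdot U(\mathfrak g)w^{n+1}$. This holds because modes $a(-r)$ with $r\ge2$ die in the $C_2$-quotient, $(-1)$-modes become multiplication, and positive modes act on the symbol of $s_n$ through Poisson brackets that stay in this ideal.

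Next we pass to the $W$-algebra. Proposition~\ref{e8:prop:E8-reduction-of-Qn} identifies $H^0_{\DS,f_\theta}(Q_n)$ with $\widetilde{\W}_n$. By \eqref{eq:common-associated-variety}, i.e. \cite[Theorem~6.1(3)]{ArakawaMoreau2018},
\[
X_{\widetilde{\W}_n}=X_{Q_n}\cap\mathcal S_{f_\theta}=\overline{\mathbb O_{\min}}\cap\mathcal S_{f_\theta}.
\]
The transversality of the Slodowy slice to $\mathbb O_{\min}$ at $f_\theta$ gives $\overline{\mathbb O_{\min}}\cap\mathcal S_{f_\theta}=\{f_\theta\}$. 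Since $\overline{\mathbb O_{\min}}=\mathbb O_{\min}\cup\{0\}$ and $0\notin\mathcal S_{f_\theta}$, the intersection is a single point, which gives \eqref{e8:eq:E8-associated-variety-Wtilde-n}.

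Finally, $\widetilde{\W}_n\ne0$ because it surjects onto the nonzero algebra $\W_{k_n}(\g,f_\theta)$ in \eqref{e8:eq:E8-natural-Wtilde-to-simple-W}. So its associated variety is a single point rather than empty, and this is precisely the lisse condition. The main obstacle is the identification $X_{Q_n}=\overline{\mathbb O_{\min}}$, specifically the equality of radicals. We rely on \cite{ArakawaMoreau2018} for it and use the radical argument above only as a consistency check.
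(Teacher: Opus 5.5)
Your argument is correct and follows the paper's proof. You cite \cite[Proposition~5.2]{ArakawaMoreau2018} for $X_{Q_n}=\overline{\mathbb O_{\min}}$, intersect with $\mathcal S_{f_\theta}$ via \eqref{eq:common-associated-variety} and Proposition~\ref{e8:prop:E8-reduction-of-Qn}, use $\overline{\mathbb O_{\min}}\cap\mathcal S_{f_\theta}=\{f_\theta\}$, and get nonemptiness (as in the remark after the proposition) from the surjection onto the nonzero simple quotient.

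Your optional $C_2$/radical check is the argument of Proposition~\ref{dl:prop:Dl-minus-two-associated-variety}, with one correction. Positive modes do not act on $R_{V}$ through Poisson brackets, since they do not preserve $C_2$. They never need to be applied, however, because singularity of $s_n$ gives $U(\widehat{\mathfrak g}')s_n=U(\widehat{\mathfrak n}_-)s_n$.
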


\begin{proof}
Equation \eqref{e8:eq:E8-associated-variety-Qn} is
\cite[Proposition~5.2]{ArakawaMoreau2018}.  By
Proposition~\ref{e8:prop:E8-reduction-of-Qn} and the compatibility of associated
varieties with minimal reduction recalled in
\eqref{eq:common-associated-variety},
\begin{equation*}
  X_{\widetilde{\W}_n}
  =X_{Q_n}\cap\mathcal S_{f_{\theta}}.
\end{equation*}
The closure of the minimal orbit meets the Slodowy slice through
$f_{\theta}$ only at $f_{\theta}$, and hence
\eqref{e8:eq:E8-associated-variety-Wtilde-n} follows.  Since this variety is
zero-dimensional, $\widetilde{\W}_n$ is lisse.
\end{proof}

\begin{remark}

Nonvanishing is independent of the associated-variety convention.  Indeed,
\eqref{e8:eq:E8-natural-Wtilde-to-simple-W} gives a surjection
\begin{equation*}
  \widetilde{\W}_n\twoheadrightarrow
  \W_{k_n}(\g,f_{\theta}),
\end{equation*}
and the target is nonzero by
\eqref{eq:common-DS-nonvanishing}.  Thus
\begin{equation*}
  \widetilde{\W}_n\neq0.
\end{equation*}
\end{remark}

Set
\begin{equation*}
  B_n=A_{\mathrm R}(\widetilde{\W}_n).
\end{equation*}
The subscript records the level $n$ of the affine $E_7$ currents.

\begin{corollary}
\label{e8:cor:E8-Bn-finite-dimensional}
For every $1\le n\le5$, the Ramond Zhu algebra $B_n$ is finite-dimensional.
\end{corollary}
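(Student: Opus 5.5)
The finiteness of $B_n$ follows by the same route used in the other cases: lisse-ness of the vertex algebra gives a finite-dimensional $C_2$-quotient, and that quotient surjects onto the associated graded of the Ramond Zhu algebra. First, Proposition~\ref{e8:prop:E8-associated-variety-Qn} shows that $\widetilde{\W}_n$ is lisse, so $R_{\widetilde{\W}_n}=\widetilde{\W}_n/C_2(\widetilde{\W}_n)$ is finite-dimensional. Second, we apply the implication \eqref{eq:common-lisse-Ramond-finite}, which rests on the surjection \eqref{eq:common-C2-Ramond-Zhu}:
\[
 R_{\widetilde{\W}_n}\twoheadrightarrow\gr B_n .
\]
This surjection needs the Ramond automorphism $\sigma_{\mathrm R}$ to be well defined on the quotient $\widetilde{\W}_n$. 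To see this, note that $\sigma_{\mathrm R}=\exp(2\pi i L_0)$ on the strong generators $J^{\{a\}},G^{\{u\}},\omega$, and hence on all of $\W^{k_n}(\g,f_\theta)$. The defining generator $:J^{\{e_\vartheta\}}{}^{n+1}:$ has integral conformal weight $n+1$, so the ideal it generates is $L_0$-graded. Consequently $\sigma_{\mathrm R}$ preserves this ideal and descends to $\widetilde{\W}_n$.

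With $\sigma_{\mathrm R}$ acting as the Hamiltonian twist, the zero-parameter twisted Zhu--Poisson algebra is $R_{\widetilde{\W}_n}$. The surjection onto $\gr B_n$ then follows from \cite[Proposition~2.17(c)]{DeSoleKac2006}, exactly as in the $E_6$ case. Hence $\gr B_n$ is finite-dimensional.

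Third, we pass from $\gr B_n$ back to $B_n$. The Zhu filtration is increasing and exhaustive, and its $(-1)$-st step vanishes. Therefore $\dim B_n=\dim\gr B_n<\infty$.

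Finally, $B_n\ne0$. The vacuum class is the unit of $B_n$, and by the surjection \eqref{e8:eq:E8-natural-Wtilde-to-simple-W} onto the nonzero simple algebra $\W_{k_n}(\g,f_\theta)$, it maps to a nonzero element in the Ramond Zhu algebra of the target. Alternatively, $B_n$ acts on the nonzero lowest Ramond space of the Li-twisted adjoint module.

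The only step needing care is the compatibility of the Ramond twist with the quotient, which is dispatched by the grading argument above. Everything else is formal.
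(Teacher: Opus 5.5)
Your proposal is correct and follows the paper's proof: the paper also uses lisse-ness from Proposition~\ref{e8:prop:E8-associated-variety-Qn} to get a finite-dimensional $C_2$-algebra, then the surjection \eqref{eq:common-C2-Ramond-Zhu} onto $\gr B_n$, and concludes that $B_n$ is finite-dimensional. Your extra checks (descent of $\sigma_{\mathrm R}$ to the quotient, and $B_n\ne0$) go beyond what the corollary asserts. If you keep the nonvanishing remark, use the Li-twisted-module argument: your first version tacitly assumes that the Ramond Zhu algebra of $\W_{k_n}(\g,f_\theta)$ is nonzero, which you have not justified.
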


\begin{proof}
By Proposition~\ref{e8:prop:E8-associated-variety-Qn}, the vertex algebra
$\widetilde{\W}_n$ is lisse, so its $C_2$-Poisson algebra
$R_{\widetilde{\W}_n}$ is finite-dimensional.  The filtered surjection
\eqref{eq:common-C2-Ramond-Zhu} gives
\begin{equation*}
  R_{\widetilde{\W}_n}\twoheadrightarrow\gr B_n.
\end{equation*}
Hence $\gr B_n$, and therefore $B_n$, is finite-dimensional.
\end{proof}

\subsubsection{The highest-root nilpotency relation}

Fix a nonzero highest-root vector
\begin{equation*}
  e=e_{\vartheta}\in(E_7)_{\vartheta}
\end{equation*}
and abbreviate
\begin{equation*}
  J=J^{\{e\}}.
\end{equation*}
By the current relation \eqref{e8:eq:E8-E7-current-relation},
\begin{equation*}
  [J_{\lambda}J]
  =J^{\{[e,e]\}}+n(e\mid e)\lambda=0.
\end{equation*}
Indeed, $[e,e]=0$ and the invariant form pairs the root space
$(E_7)_{\vartheta}$ only with $(E_7)_{-\vartheta}$.  Thus
\begin{equation}
  J_{(m)}J=0,
  \qquad m\ge0.
  \label{e8:eq:E8-J-nonnegative-products-zero}
\end{equation}

\begin{lemma}
\label{e8:lem:E8-Zhu-image-normal-power}
For every integer $r\ge1$,
\begin{equation*}
  \bigl[:J^r:\bigr]_{\mathrm R}
  =[J]_{\mathrm R}^{\,r}
\end{equation*}
in the Ramond Zhu algebra of every quotient in which the state
$:J^r:$ is defined.
\end{lemma}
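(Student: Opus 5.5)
The plan is to repeat, for $E_7$-currents of level $n$, the induction already carried out in Lemma~\ref{dl:lem:Dl-Zhu-images-current-powers} and Lemma~\ref{e7:lem:E7-Zhu-image-normal-power}. The case $r=1$ holds by definition. For the inductive step, the first ingredient is the weight-one Ramond Zhu product formula. Since the Ramond automorphism fixes $J$ and $J$ has conformal weight one, for every Ramond-invariant state $v$ we have
\[
 [J]_{\mathrm R}*[v]_{\mathrm R}
 =[J_{(-1)}v]_{\mathrm R}+[J_{(0)}v]_{\mathrm R}.
\]
This formula is the untwisted-sector specialization of the Dong--Li--Mason twisted Zhu product for an untwisted weight-one vector. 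The binomial series $\sum_j\binom{1}{j}J_{(j-1)}$ truncates after $j=1$, and no fractional correction appears because the twist eigenvalue of $J$ is trivial. We apply it with $v=:J^r:$, which is Ramond-invariant because it is a normally ordered product of invariant states.

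The second ingredient is vanishing of the correction term. By \eqref{e8:eq:E8-J-nonnegative-products-zero}, $[J_\lambda J]=0$. The noncommutative Wick formula then gives $[J_\lambda :J^r:]=0$ by induction on $r$. The reason is that $[J_\lambda :J\,J^{r-1}:]=:[J_\lambda J]J^{r-1}:+:J[J_\lambda J^{r-1}]:+\int_0^\lambda[[J_\lambda J]_\mu J^{r-1}]\,d\mu$, and all three terms vanish. Hence $J_{(0)}:J^r:=0$, and we obtain $[J]_{\mathrm R}*[:J^r:]_{\mathrm R}=[J_{(-1)}:J^r:]_{\mathrm R}=[:J^{r+1}:]_{\mathrm R}$, where the last equality is the definition of the normally ordered power as an iterated $(-1)$-product from the left. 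The inductive hypothesis then yields $[J]_{\mathrm R}^{r+1}$.

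Finally, the statement is claimed in every quotient where $:J^r:$ is defined. The computation above takes place in $\mathcal W^{k_n}(\g,f_\theta)$, and the Zhu product is compatible with conformal quotients by Ramond-stable ideals, so the identity descends. The only point needing care is the exact form of the twisted Zhu product: with the paper's convention that $A_{\mathrm R}$ is the Hamiltonian-twisted Zhu algebra, one must check that no shift $J_{(0)}\mapsto J_{(0)}+\text{const}$ arises for untwisted weight-one fields. Such a shift would only add lower terms proportional to $:J^{r}:$, but the formula as used in the $D_\ell$ and $E_7$ sections already settles this, so the argument should go through verbatim.
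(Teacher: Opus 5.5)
Your proposal is correct and follows the paper's proof essentially verbatim. Both use the weight-one Ramond Zhu product $[J]_{\mathrm R}*[v]_{\mathrm R}=[J_{(-1)}v]_{\mathrm R}+[J_{(0)}v]_{\mathrm R}$, the vanishing $J_{(m)}:J^r:=0$ for $m\ge0$ (from $[J_\lambda J]=0$ and the noncommutative Wick formula), and induction on $r$. Your closing worry about a possible shift of $J_{(0)}$ is unnecessary: for an untwisted weight-one vector the twisted Zhu product is exactly the truncated binomial formula, and Li-type shifts enter only the twisted-module realization, not the Zhu product.
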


\begin{proof}
The Ramond automorphism fixes $J$, and $J$ has conformal weight one.  Hence
its Ramond Zhu product with a fixed vector $v$ is
\begin{equation}
  [J]_{\mathrm R}*[v]_{\mathrm R}
  =[J_{(-1)}v]_{\mathrm R}+[J_{(0)}v]_{\mathrm R}.
  \label{e8:eq:E8-weight-one-Ramond-Zhu-product}
\end{equation}
Equation \eqref{e8:eq:E8-J-nonnegative-products-zero} and the noncommutative
Wick formula imply
\begin{equation*}
  J_{(m)}:J^r:=0,
  \qquad m\ge0.
\end{equation*}
Taking $v=:J^r:$ in
\eqref{e8:eq:E8-weight-one-Ramond-Zhu-product} gives
\begin{equation*}
  [J]_{\mathrm R}*\bigl[:J^r:\bigr]_{\mathrm R}
  =\bigl[:J^{r+1}:\bigr]_{\mathrm R},
\end{equation*}
and induction proves the assertion.
\end{proof}

Since the defining ideal of $\widetilde{\W}_n$ contains
$:J^{n+1}:$, Lemma~\ref{e8:lem:E8-Zhu-image-normal-power} gives
\begin{equation}
  e^{n+1}=0
  \qquad\text{in }B_n.
  \label{e8:eq:E8-highest-root-nilpotency-Bn}
\end{equation}
Here and below we use the same symbol $e$ for the image of the current class
in $B_n$.

\begin{lemma}
\label{e8:lem:E8-Bn-constituent-level-bound}
Let $M$ be a finite-dimensional $B_n$-module.  If
$L_{E_7}(\mu)$ occurs in the restriction of $M$ to $E_7$, then
\begin{equation}
  \langle\mu,\vartheta^{\vee}\rangle\le n.
  \label{e8:eq:E8-Bn-constituent-level-bound}
\end{equation}
\end{lemma}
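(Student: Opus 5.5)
The argument is the same $\mathfrak{sl}_2$-string argument used for Lemma~\ref{e7:lem:E7-Bn-constituent-level-bound}, now applied to the highest root $\vartheta$ of $E_7$. First I will record that the current zero modes give a Lie algebra homomorphism $E_7\to B_n$. Since $M$ is finite-dimensional, its restriction to $E_7$ is completely reducible, so a copy of $L_{E_7}(\mu)$ contains a nonzero $E_7$-highest-weight vector $v_\mu$ of weight $\mu$. Put $m=\langle\mu,\vartheta^\vee\rangle\in\mathbb Z_{\ge0}$.

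Next I consider the $\mathfrak{sl}_2$-subalgebra spanned by $e_\vartheta$, $f_\vartheta$ and $\vartheta^\vee$, acting on $M$ through the same homomorphism. With respect to this subalgebra, $v_\mu$ is a highest-weight vector of weight $m$, because $e_\vartheta v_\mu=0$. It therefore generates an $(m+1)$-dimensional irreducible $\mathfrak{sl}_2$-module, so $f_\vartheta^m v_\mu\ne0$. The standard formula
\[
 e_\vartheta^r f_\vartheta^m v_\mu=c_{m,r}f_\vartheta^{m-r}v_\mu,
 \qquad
 c_{m,r}=\prod_{j=1}^{r}j(m-j+1)\ne0,
\]
holds for $0\le r\le m$.

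Now suppose $m\ge n+1$. Taking $r=n+1$ gives $e_\vartheta^{n+1}f_\vartheta^m v_\mu\ne0$. This contradicts the nilpotency relation \eqref{e8:eq:E8-highest-root-nilpotency-Bn}, because $e_\vartheta$ acts on $M$ through its image in $B_n$. Hence $m\le n$, which is \eqref{e8:eq:E8-Bn-constituent-level-bound}.

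The one point that needs care is that the action of $e$ obtained from the Zhu class $[J]_{\mathrm R}$ agrees with the action coming from the Lie homomorphism $E_7\to B_n$. This holds because both are the same zero-mode class, exactly as in the $E_7$ case. Apart from this identification the argument is routine, and I expect no real obstacle.
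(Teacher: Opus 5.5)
Your proposal is correct and follows essentially the same route as the paper's proof: the $\mathfrak{sl}_2$-string through a highest-weight vector $v_\mu$ with $e_\vartheta^{r}f_\vartheta^{m}v_\mu=c_{m,r}f_\vartheta^{m-r}v_\mu$, $c_{m,r}\ne0$, taken at $r=n+1$, contradicts $e^{n+1}=0$ in $B_n$. Your explicit constant $c_{m,r}=\prod_{j=1}^{r}j(m-j+1)$ and your remark identifying the Zhu class of $J^{\{e_\vartheta\}}$ with the Lie-algebra action are correct details that the paper leaves implicit.
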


\begin{proof}
Consider the $\mathfrak{sl}_2$-subalgebra generated by
$e_{\vartheta}$, $f_{\vartheta}$, and $\vartheta^{\vee}$, and set
\begin{equation*}
  m=\langle\mu,\vartheta^{\vee}\rangle.
\end{equation*}
If $v_{\mu}$ is an $E_7$-highest-weight vector, then
$f_{\vartheta}^{m}v_{\mu}\neq0$.  The standard
$\mathfrak{sl}_2$ formulas give, for every $0\le r\le m$,
\begin{equation}
  e_{\vartheta}^{r}f_{\vartheta}^{m}v_{\mu}
  =c_{m,r}f_{\vartheta}^{m-r}v_{\mu},
  \qquad c_{m,r}\neq0.
  \label{e8:eq:E8-highest-root-string-nonzero}
\end{equation}
If $m\ge n+1$, taking $r=n+1$ in
\eqref{e8:eq:E8-highest-root-string-nonzero} shows that
$e_{\vartheta}^{n+1}$ acts nontrivially on $L_{E_7}(\mu)$, contradicting
\eqref{e8:eq:E8-highest-root-nilpotency-Bn}.  Hence $m\le n$, which is
\eqref{e8:eq:E8-Bn-constituent-level-bound}.
\end{proof}

\subsubsection{The trace of the quadratic term}

Let $M$ be a finite-dimensional $B_n$-module and denote the resulting
$E_7$-representation by
\begin{equation*}
  \rho:E_7\longrightarrow\operatorname{End}_{\C}(M).
\end{equation*}
Put
\begin{equation*}
  N_M=\dim M,
  \qquad
  \tau_M=\tr_M\rho(\Omega_{E_7}).
\end{equation*}
Since $E_7$ is simple, the trace form of $\rho$ is a scalar multiple of the
normalized invariant form.  Thus there exists $I_M\in\C$ such that
\begin{equation}
  \tr_M\bigl(\rho(a)\rho(b)\bigr)
  =I_M(a\mid b),
  \qquad a,b\in E_7.
  \label{e8:eq:E8-Dynkin-index-definition}
\end{equation}
Taking the trace of the Casimir in an orthonormal basis gives
\begin{equation}
  \tau_M=133I_M,
  \qquad
  I_M=\frac{\tau_M}{133}.
  \label{e8:eq:E8-Dynkin-index-Casimir-trace}
\end{equation}

\begin{lemma}
\label{e8:lem:E8-trace-quadratic-term}
For all $u,v\in\g_{-\frac12}\cong L_{E_7}(\omega_7)$,
\begin{equation}
  \tr_M\rho\bigl(Q(u,v)\bigr)
  =\frac{57}{133}\,\tau_M\,\langle u,v\rangle.
  \label{e8:eq:E8-trace-quadratic-term}
\end{equation}
\end{lemma}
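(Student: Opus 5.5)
The proof follows the pattern of Lemma~\ref{e7:lem:E7-trace-quadratic-term} and Lemma~\ref{e6:lem:trace-Q-E6}. Choose an orthonormal basis $\{a_i\}_{i=1}^{133}$ of $E_7$ with respect to the restricted normalized form. In this basis the original and dual bases coincide, so the definition \eqref{eq:common-quadratic-term} reads
\[
 Q(u,v)=\sum_{i,j}\langle[a_i,u],[v,a_j]\rangle\,(a_ia_j+a_ja_i).
\]
Taking the trace in $M$ and using \eqref{e8:eq:E8-Dynkin-index-definition}, we get $\operatorname{tr}_M\rho(a_ia_j+a_ja_i)=2I_M\delta_{ij}$. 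The double sum therefore collapses to
\[
 \operatorname{tr}_M\rho\bigl(Q(u,v)\bigr)=2I_M\sum_{i=1}^{133}\langle[a_i,u],[v,a_i]\rangle .
\]

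Next we evaluate the contraction. The form \eqref{e8:eq:E8-half-piece-symplectic-form} is $E_7$-invariant, because $e_\theta$ is centralized by $\mathfrak g^\natural$ and $(\cdot\mid\cdot)$ is invariant. Write $[v,a_i]=-a_iv$ and use $\langle a_iu,w\rangle=-\langle u,a_iw\rangle$. This gives
\[
 \sum_i\langle[a_i,u],[v,a_i]\rangle=-\sum_i\langle a_iu,a_iv\rangle=\Bigl\langle u,\sum_ia_i^2v\Bigr\rangle .
\]
Since $U\cong L_{E_7}(\omega_7)$ is irreducible, $\Omega_{E_7}$ acts on it by $c_2(\omega_7)=57/2$. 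As a check, $(\omega_7\mid\omega_7+2\rho)=3/2+27=57/2$, because $(\omega_7\mid2\rho)$ equals twice the height of $\omega_7$ expressed in simple roots, namely $2\cdot 27/2$. The contraction is therefore $\frac{57}{2}\langle u,v\rangle$.

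Finally, substitute $I_M=\tau_M/133$ from \eqref{e8:eq:E8-Dynkin-index-Casimir-trace}:
\[
 \operatorname{tr}_M\rho\bigl(Q(u,v)\bigr)=2\cdot\frac{\tau_M}{133}\cdot\frac{57}{2}\langle u,v\rangle=\frac{57}{133}\tau_M\langle u,v\rangle,
\]
which is \eqref{e8:eq:E8-trace-quadratic-term}.

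The only point that needs real care is sign and normalization consistency. One must confirm that the bracket $[v,a_i]$ in $Q$ denotes the action $-a_i\cdot v$ on $\mathfrak g_{-1/2}$, so that the contraction comes out with a plus sign. One must also confirm that the Casimir value $57/2$ uses the same restricted form as the dual bases, since $E_7$ is embedded with long roots of squared length $2$ and the normalizations therefore agree. Both checks are identical to the corresponding ones already made in the $E_6$ and $E_7$ sections.
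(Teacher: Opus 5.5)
Your proposal is correct and follows the paper's proof essentially line by line. Both arguments use an orthonormal basis to collapse the double sum to $2I_M\sum_i\langle[a_i,u],[v,a_i]\rangle$, then use invariance of the symplectic form to turn the contraction into the Casimir eigenvalue $c_2(\omega_7)=57/2$ on $L_{E_7}(\omega_7)$, and finally substitute $I_M=\tau_M/133$; your extra check of $57/2$ via the height $27/2$ of $\omega_7$ matches the row sum of $A_{E_7}^{-1}$ recorded in the paper's appendix.
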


\begin{proof}
Choose an orthonormal basis $\{a_i\}_{i=1}^{133}$ of $E_7$.  Then
$a^i=a_i$, and \eqref{e8:eq:E8-Dynkin-index-definition} gives
\begin{align}
  \tr_M\rho\bigl(Q(u,v)\bigr)
  &=2I_M\sum_{i=1}^{133}
    \langle[a_i,u],[v,a_i]\rangle.
  \label{e8:eq:E8-trace-Q-first-contraction}
\end{align}
The symplectic form \eqref{e8:eq:E8-half-piece-symplectic-form} is
$E_7$-invariant, so
\begin{equation*}
  \langle au,w\rangle+\langle u,aw\rangle=0,
  \qquad a\in E_7.
\end{equation*}
Since $[v,a_i]=-a_iv$, we obtain
\begin{align*}
  \sum_{i=1}^{133}\langle[a_i,u],[v,a_i]\rangle
  &=-\sum_{i=1}^{133}\langle a_iu,a_iv\rangle
   \notag\\
  &=\left\langle u,\sum_{i=1}^{133}a_i^2v\right\rangle.
\end{align*}
The Casimir eigenvalue on $L_{E_7}(\omega_7)$ is
\begin{equation*}
  c_2(\omega_7)
  =(\omega_7\mid\omega_7+2\rho_{E_7})
  =\frac{57}{2}.
\end{equation*}
Therefore
\begin{equation}
  \sum_{i=1}^{133}\langle[a_i,u],[v,a_i]\rangle
  =\frac{57}{2}\langle u,v\rangle.
  \label{e8:eq:E8-quadratic-contraction-value}
\end{equation}
Substitution of
\eqref{e8:eq:E8-Dynkin-index-Casimir-trace} and
\eqref{e8:eq:E8-quadratic-contraction-value} into
\eqref{e8:eq:E8-trace-Q-first-contraction} proves
\eqref{e8:eq:E8-trace-quadratic-term}.
\end{proof}

\subsubsection{The uniform Casimir trace identity}

\begin{proposition}[Uniform Casimir trace identity]
\label{e8:prop:E8-uniform-Casimir-trace-identity}
Let $M$ be a nonzero finite-dimensional $B_n$-module on which the central
class $L$ acts as the scalar $h\in\C$.  Then
\begin{equation}
  \frac{\tr_M\rho(\Omega_{E_7})}{\dim M}
  =\frac{133}{190}
   \left(
     2(n+24)h+\frac{n(n+4)}2
   \right).
  \label{e8:eq:E8-uniform-Casimir-trace-identity}
\end{equation}
\end{proposition}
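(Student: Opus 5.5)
This identity follows the pattern of the $E_6$ and $E_7$ cases: it is an instance of Proposition~\ref{prop:common-Ramond-Casimir-trace} with a single simple summand $\mathfrak a=\mathfrak g^\natural\cong E_7$. The plan is to feed in the trace coefficient from Lemma~\ref{e8:lem:E8-trace-quadratic-term} and the level data $h^\vee(E_8)=30$ and $p_{E_8}(k)=(k+6)(k+10)$, and then solve for the Casimir average.

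First, I will confirm that the hypotheses of the common proposition hold. The module $M$ is a nonzero finite-dimensional $B_n$-module, and $B_n$ is finite-dimensional by Corollary~\ref{e8:cor:E8-Bn-finite-dimensional}. The central class $L$ acts on $M$ by $h$. The relation \eqref{eq:common-Ramond-Zhu} holds in $B_n$ because $B_n$ is a quotient of the universal Ramond Zhu algebra. Its left-hand side is an ordinary associative commutator, as recorded after \eqref{eq:common-Ramond-Zhu}, so its trace on $M$ vanishes.

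Next, Lemma~\ref{e8:lem:E8-trace-quadratic-term} gives
\[
\operatorname{tr}_M\rho(Q(u,v))=\gamma\,\tau_M\langle u,v\rangle,\qquad \gamma=\tfrac{57}{133}.
\]
At $k=k_n=n-6$ the level data are
\[
k_n+h^\vee=n+24,\qquad p_{E_8}(k_n)=n(n+4),
\]
which matches the specialized Ramond relation displayed in the type data subsection. Since the symplectic form \eqref{e8:eq:E8-half-piece-symplectic-form} is nondegenerate, I can choose $u,v$ with $\langle u,v\rangle\neq0$. The common proposition then yields
\[
\Bigl(1+\tfrac{57}{133}\Bigr)\frac{\tau_M}{N_M}=2(n+24)h+\tfrac12 n(n+4).
\]
Because $1+\tfrac{57}{133}=\tfrac{190}{133}$, dividing through gives \eqref{e8:eq:E8-uniform-Casimir-trace-identity}.

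The only point that needs real care is the normalization. The Casimir $\Omega_{E_7}$ in \eqref{eq:common-Ramond-Zhu} must be the one built from dual bases for the restricted form, and this must be the same normalization under which $c_2(\omega_7)=57/2$ and $\tau_M=133I_M$ were computed. Both facts are fixed in Section~\ref{sec:common-framework} and in Lemma~\ref{e8:lem:E8-trace-quadratic-term}, so no new computation is required. As a check, $c_2(\omega_7)=(\omega_7\mid\omega_7+2\rho)=\tfrac32+27=\tfrac{57}{2}$, using $(\omega_7\mid\rho)=\tfrac{27}{2}$ for the 56-dimensional minuscule module.
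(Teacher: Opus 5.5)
Your proposal is correct and follows essentially the same route as the paper: it feeds $\gamma=57/133$ from Lemma~\ref{e8:lem:E8-trace-quadratic-term} into Proposition~\ref{prop:common-Ramond-Casimir-trace} with $k_n+h^\vee=n+24$ and $p_{E_8}(k_n)=n(n+4)$. It then divides by $190/133$. Your added remarks on the dual-basis normalization and the check $c_2(\omega_7)=57/2$ are consistent with the paper's conventions.
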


\begin{proof}
Lemma~\ref{e8:lem:E8-trace-quadratic-term} gives
$\gamma=57/133$.  Proposition~\ref{prop:common-Ramond-Casimir-trace},
with $k_n=n-6$, $h^\vee(E_8)=30$, and
$p_{E_8}(k_n)=n(n+4)$, gives
\[
 \frac{190}{133}\frac{\tau_M}{N_M}
 =2(n+24)h+\frac{n(n+4)}2.
\]
This is precisely \eqref{e8:eq:E8-uniform-Casimir-trace-identity}.
\end{proof}

\begin{remark}

For $n=3$, Proposition~\ref{e8:prop:E8-uniform-Casimir-trace-identity}
specializes to
\begin{equation*}
  \frac{\tr_M\rho(\Omega_{E_7})}{\dim M}
  =\frac{133}{190}\left(54h+\frac{21}{2}\right),
\end{equation*}
recovering the level-$-3$ formula.
\end{remark}

The two outputs of this section are the highest-root level constraint
\eqref{e8:eq:E8-Bn-constituent-level-bound} and the trace identity
\eqref{e8:eq:E8-uniform-Casimir-trace-identity}.  The next section determines
the largest and second-largest $E_7$ Casimir eigenvalues under precisely this
level constraint.

\subsection{Casimir bounds for low-level \texorpdfstring{$E_{7}$}{E7}-weights}
\label{e8:sec:E8-E7-Casimir-bounds}

The Ramond Zhu relation from Section~\ref{e8:sec:lisse-Ramond-trace}
restricts every finite-dimensional $E_7$-constituent to highest-root level
at most $n$.  In this section we determine, uniformly for $1\le n\le5$,
the largest and second-largest quadratic Casimir eigenvalues under this
constraint.

\subsubsection{The level functional and the Casimir form}

Write a dominant integral weight of $E_7$ as
\begin{equation*}
  \mu=\sum_{i=1}^{7}m_i\omega_i,
  \qquad m_i\in\Z_{\ge0}.
\end{equation*}
In the Bourbaki numbering, the highest root is
\begin{equation}
  \vartheta
  =2\alpha_1+2\alpha_2+3\alpha_3+4\alpha_4
   +3\alpha_5+2\alpha_6+\alpha_7.
  \label{e8:eq:E8-E7-highest-root-expansion}
\end{equation}
Since $E_7$ is simply laced,
\begin{equation}
  \ell(\mu)
  :=\langle\mu,\vartheta^\vee\rangle
  =2m_1+2m_2+3m_3+4m_4+3m_5+2m_6+m_7.
  \label{e8:eq:E8-E7-level-functional}
\end{equation}
We shall consider the finite set
\begin{equation*}
  \mathcal P_n
  =\{\mu\in P_+(E_7)\mid \ell(\mu)\le n\},
  \qquad 1\le n\le5.
\end{equation*}

For a dominant integral weight $\mu$, let
\begin{equation*}
  c_2(\mu)
  =(\mu\mid\mu+2\rho_{E_7})
\end{equation*}
be the eigenvalue of the normalized quadratic Casimir $\Omega_{E_7}$ on
$L_{E_7}(\mu)$.  With the convention of
Section~\ref{e8:sec:preliminaries}, the inverse Cartan matrix is
\begin{equation}
 A_{E_7}^{-1}
 =\begin{pmatrix}
 2&2&3&4&3&2&1\\
 2&\frac72&4&6&\frac92&3&\frac32\\
 3&4&6&8&6&4&2\\
 4&6&8&12&9&6&3\\
 3&\frac92&6&9&\frac{15}{2}&5&\frac52\\
 2&3&4&6&5&4&2\\
 1&\frac32&2&3&\frac52&2&\frac32
 \end{pmatrix}.
 \label{e8:eq:E8-E7-inverse-Cartan-matrix}
\end{equation}
Thus, for
$\mathbf m=(m_1,\dots,m_7)^{\mathsf T}$ and
$\mathbf 1=(1,\dots,1)^{\mathsf T}$,
\begin{equation}
  c_2(\mu)
  =\mathbf m^{\mathsf T}A_{E_7}^{-1}
    (\mathbf m+2\mathbf 1).
  \label{e8:eq:E8-E7-Casimir-matrix-formula}
\end{equation}
The root and weight conventions in
\eqref{e8:eq:E8-E7-highest-root-expansion}--
\eqref{e8:eq:E8-E7-Casimir-matrix-formula} agree with
\cite[Planche~VI]{Bourbaki2002}.

\begin{lemma}
\label{e8:lem:E8-Casimir-increases-in-omega7}
For every dominant integral weight $\mu$,
\begin{equation*}
  c_2(\mu+\omega_7)-c_2(\mu)
  =2(\mu\mid\omega_7)+\frac{57}{2}>0.
\end{equation*}
\end{lemma}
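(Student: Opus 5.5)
The identity follows by direct expansion of the Casimir form $c_2(\lambda)=(\lambda\mid\lambda+2\rho_{E_7})$. Writing $\lambda=\mu+\omega_7$ and using bilinearity and symmetry of the normalized form, we get
\[
 c_2(\mu+\omega_7)-c_2(\mu)
 =2(\mu\mid\omega_7)+(\omega_7\mid\omega_7)+2(\omega_7\mid\rho_{E_7})
 =2(\mu\mid\omega_7)+c_2(\omega_7).
\]
The previously recorded value $c_2(\omega_7)=57/2$ from Section~\ref{e8:sec:preliminaries} then gives the displayed formula. As a consistency check, one can also verify $c_2(\omega_7)=57/2$ directly from \eqref{e8:eq:E8-E7-Casimir-matrix-formula}. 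With $\mathbf m=e_7$, the value is the $(7,7)$ entry of $A_{E_7}^{-1}$ plus twice the seventh row sum. This is $\tfrac32+2\cdot\tfrac{27}{2}=\tfrac{57}{2}$, since the last row sums to $1+\tfrac32+2+3+\tfrac52+2+\tfrac32=\tfrac{27}{2}$.

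For positivity, note that $(\mu\mid\omega_7)\ge0$ for every dominant integral $\mu$. Indeed, $(\mu\mid\omega_7)=\mathbf m^{\mathsf T}A_{E_7}^{-1}e_7$, which is a nonnegative combination of the entries of the seventh column of \eqref{e8:eq:E8-E7-inverse-Cartan-matrix}, and all of those entries are positive. Equivalently, the inner product of two dominant weights is nonnegative because the fundamental weights have pairwise nonnegative inner products. Hence the difference is at least $57/2>0$.

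There is essentially no obstacle. The only point requiring care is to use the same normalization (long roots of squared length $2$) that was used for $A_{E_7}^{-1}$ and for $c_2(\omega_7)$, so that the three quantities are mutually consistent.
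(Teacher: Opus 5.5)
Your proposal is correct and matches the paper's proof. You expand $(\lambda\mid\lambda+2\rho_{E_7})$ bilinearly to obtain $2(\mu\mid\omega_7)+c_2(\omega_7)$ with $c_2(\omega_7)=57/2$, and you get $(\mu\mid\omega_7)\ge0$ from the positivity of the seventh column of $A_{E_7}^{-1}$; your check of $c_2(\omega_7)$ via the $(7,7)$ entry and the seventh row sum $27/2$ is a harmless, correct addition.
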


\begin{proof}
Using $c_2(\mu)=(\mu\mid\mu+2\rho_{E_7})$, one obtains
\begin{align*}
 c_2(\mu+\omega_7)-c_2(\mu)
 &=2(\mu\mid\omega_7)
   +(\omega_7\mid\omega_7+2\rho_{E_7})
 \notag\\
 &=2(\mu\mid\omega_7)+\frac{57}{2}.
\end{align*}
All entries in the seventh column of
$A_{E_7}^{-1}$ are positive, so $(\mu\mid\omega_7)\ge0$.
\end{proof}

\subsubsection{The largest and second-largest eigenvalues}

For $\nu=\sum_{i=1}^{6}m_i\omega_i$, put
\begin{equation}
 q(\nu)=2m_1+2m_2+3m_3+4m_4+3m_5+2m_6.
 \label{e8:eq:E8-E7-nonomega7-level}
\end{equation}
If $q(\nu)\le n$, define the saturated weight
\begin{equation*}
 \mu_n(\nu)=\nu+(n-q(\nu))\omega_7
\end{equation*}
and its Casimir deficit from $n\omega_7$ by
\begin{equation*}
 D_n(\nu)
 =c_2(n\omega_7)-c_2(\mu_n(\nu)).
\end{equation*}

\begin{lemma}
\label{e8:lem:E8-E7-deficit-classification}
Let $1\le n\le5$ and let $\nu\ne0$ satisfy $q(\nu)\le n$.
Then $q(\nu)\in\{2,3,4,5\}$.  For each possible value of $q(\nu)$,
the minimum of $D_n(\nu)$ is as follows:
\begin{equation*}
\begin{array}{c|c|c}
q(\nu)&\min D_n(\nu)&\text{unique minimizing }\nu\\
\hline
2&2n&\omega_6\\
3&4n&\omega_5\\
4&4(n-1)&2\omega_6\\
5&6(n-1)&\omega_5+\omega_6.
\end{array}
\end{equation*}
The rows $q=4$ and $q=5$ occur only for $n\ge4$ and $n=5$,
respectively.
\end{lemma}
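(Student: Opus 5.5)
The plan is to reduce the lemma to a short finite enumeration by writing $D_n(\nu)$ as an affine-linear function of $n$ for each fixed $\nu$. First, the range of $q(\nu)$ is immediate from \eqref{e8:eq:E8-E7-nonomega7-level}. Every coefficient of $q$ is at least $2$, so a nonzero $\nu$ has $q(\nu)\ge2$, while $q(\nu)\le n\le5$. For the same reason the rows $q=4,5$ can only occur when $n\ge4$, respectively $n=5$.

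Next I would expand the Casimir along the $\omega_7$-direction, in the spirit of Lemma~\ref{e8:lem:E8-Casimir-increases-in-omega7}. Put $j=n-q(\nu)$. Bilinearity of $c_2(\mu)=(\mu\mid\mu+2\rho_{E_7})$ gives
\[
 c_2(\nu+j\omega_7)=c_2(\nu)+2j(\nu\mid\omega_7)+c_2(j\omega_7).
\]
From the inverse Cartan matrix \eqref{e8:eq:E8-E7-inverse-Cartan-matrix} one reads off $(\omega_7\mid\omega_7)=3/2$ and $(\rho_{E_7}\mid\omega_7)=27/2$, so $c_2(j\omega_7)=\tfrac32j^2+27j$. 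Substituting, the deficit becomes
\[
 D_n(\nu)=3qn-\tfrac32q^2+27q-c_2(\nu)-2(n-q)(\nu\mid\omega_7),\qquad q=q(\nu).
\]
This is affine in $n$ with slope $3q-2(\nu\mid\omega_7)$. The pairings $(\nu\mid\omega_7)=m_1+\tfrac32m_2+2m_3+3m_4+\tfrac52m_5+2m_6$ come from the seventh column of the matrix. The values $c_2(\nu)$ come from \eqref{e8:eq:E8-E7-Casimir-matrix-formula}.

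For example, for $\nu=\omega_6$ one has $q=2$, $(\omega_6\mid\omega_7)=2$ and $c_2(\omega_6)=4+52=56$. This gives $D_n(\omega_6)=2n$, matching the table.

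The remaining step is to list, for each $q\in\{2,3,4,5\}$, every $\nu$ with exactly that value of $q$:
\begin{enumerate}
\item $q=2$: $\omega_1,\omega_2,\omega_6$;
\item $q=3$: $\omega_3,\omega_5$;
\item $q=4$: $\omega_4$ and the six sums of two weights from $\{\omega_1,\omega_2,\omega_6\}$, doubles included;
\item $q=5$: the six sums of one weight from $\{\omega_1,\omega_2,\omega_6\}$ with one from $\{\omega_3,\omega_5\}$.
\end{enumerate}
For each candidate I would record the pair (slope, intercept) of $D_n(\nu)$. Then I would check that on the relevant range of $n$ (all $n\ge q$, and in particular $n\le5$) the claimed minimizer $\omega_6$, $\omega_5$, $2\omega_6$ or $\omega_5+\omega_6$ is strictly smaller than every competitor. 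Because all competitors are affine in $n$, it suffices to compare at the two endpoints $n=q$ and $n=5$.

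I expect the main obstacle to be the bookkeeping for $q=4$ and $q=5$. There the competitors are numerous, and their slopes and intercepts are close enough that strict uniqueness must be verified carefully. I would first compute $c_2$ of the seven fundamental weights once: $(\omega_i\mid\omega_i)$ is the diagonal of the matrix, and $(\rho_{E_7}\mid\omega_i)$ is the $i$th column sum. I would then obtain every mixed $c_2(\omega_a+\omega_b)$ by adding $2(\omega_a\mid\omega_b)$. That keeps the whole enumeration to a small table, which I would record in Appendix~\ref{e8:app:E8-E7-Casimir-data}.
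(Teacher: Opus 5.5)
Your proposal is correct and follows essentially the same route as the paper, which also enumerates the nonzero $\nu$ by the value of $q(\nu)$ (the same $3+2+7+6$ candidates). The paper then writes each deficit as an affine function of $n$; its appendix derives exactly your formula $D_n(\nu)=n\bigl(3q-2a(\nu)\bigr)-\tfrac32q^2+27q-c_2(\nu)+2q\,a(\nu)$ with $a(\nu)=(\nu\mid\omega_7)$, and your tables reproduce its values (e.g.\ $4n-4$ for $2\omega_6$ versus $5n-\tfrac52$ for $\omega_2+\omega_6$, and $6n-6$ for $\omega_5+\omega_6$ versus $7n-\tfrac{13}{2}$ for $\omega_2+\omega_5$), so the strict minima follow as you describe.
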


\begin{proof}
The possible nonzero $\nu$ of level at most five are obtained directly from
\eqref{e8:eq:E8-E7-nonomega7-level}.  At level two they are
\begin{equation*}
  \omega_1,\quad\omega_2,\quad\omega_6,
\end{equation*}
and substitution into
\eqref{e8:eq:E8-E7-Casimir-matrix-formula} gives, respectively,
\begin{equation*}
  4(n+4),\qquad
  \frac{3(2n+1)}2,
  \qquad 2n.
\end{equation*}
At level three the possibilities are $\omega_3$ and $\omega_5$, with
deficits
\begin{equation*}
  \frac{5(2n+3)}2,
  \qquad 4n.
\end{equation*}

At level four the possible $\nu$, in the order displayed below, are
\begin{equation*}
 \begin{split}
  &2\omega_6,\ \omega_4,\ \omega_2+\omega_6,\ 2\omega_2,\\
  &\omega_1+\omega_6,\ \omega_1+\omega_2,\ 2\omega_1,
 \end{split}
\end{equation*}
and their deficits are
\begin{equation*}
 \begin{split}
  &4(n-1),\ 6n,\ \frac{5(2n-1)}2,\ 2(3n-2),\\
  &6(n+2),\ \frac{14n+23}{2},\ 8(n+3).
 \end{split}
\end{equation*}
Finally, at level five the possible $\nu$ are
\begin{equation*}
 \begin{split}
  &\omega_5+\omega_6,\ \omega_3+\omega_6,\
    \omega_2+\omega_5,\\
  &\omega_2+\omega_3,\ \omega_1+\omega_5,\
    \omega_1+\omega_3,
 \end{split}
\end{equation*}
and the corresponding deficits are
\begin{equation*}
 \begin{split}
  &6(n-1),\ \frac{14n+3}{2},\ \frac{14n-13}{2},\\
  &8n,\ 8(n+1),\ \frac{9(2n+3)}2.
 \end{split}
\end{equation*}
The asserted minima and their uniqueness are immediate from these lists,
using $n\ge q(\nu)$ in each row.
\end{proof}

\begin{theorem}
\label{e8:thm:E8-E7-Casimir-maxima}
Let $1\le n\le5$ and $\mu\in\mathcal P_n$.
Then
\begin{equation*}
  c_2(\mu)
  \le C_{\max}(n)
  :=\frac{3n(n+18)}2,
\end{equation*}
and equality holds if and only if
\begin{equation*}
  \mu=n\omega_7.
\end{equation*}
For $2\le n\le5$, if $\mu\ne n\omega_7$, then
\begin{equation*}
  c_2(\mu)
  \le C_{\mathrm{second}}(n)
  :=\frac{n(3n+50)}2,
\end{equation*}
and equality holds if and only if
\begin{equation*}
  \mu=\omega_6+(n-2)\omega_7.
\end{equation*}
For $n=1$, the only weights in $\mathcal P_1$ are $0$ and $\omega_7$,
so the second-largest Casimir eigenvalue is $0$.
\end{theorem}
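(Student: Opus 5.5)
The plan is to reduce Theorem~\ref{e8:thm:E8-E7-Casimir-maxima} to Lemma~\ref{e8:lem:E8-E7-deficit-classification} by saturating with $\omega_7$. Write $\mu=\nu+m_7\omega_7$ with $\nu=\sum_{i\le6}m_i\omega_i$, so that $\ell(\mu)=q(\nu)+m_7\le n$. Lemma~\ref{e8:lem:E8-Casimir-increases-in-omega7} says that adding $\omega_7$ strictly increases $c_2$. Replacing $m_7$ by $n-q(\nu)$ therefore gives
\[
 c_2(\mu)\le c_2(\mu_n(\nu)),
\]
with equality if and only if $\ell(\mu)=n$. Hence it suffices to compare the saturated weights $\mu_n(\nu)$, that is, to study $D_n(\nu)=c_2(n\omega_7)-c_2(\mu_n(\nu))$.

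First I would compute $C_{\max}(n)=c_2(n\omega_7)$ directly. Using $(\omega_7\mid\omega_7)=3/2$ and $(\omega_7\mid 2\rho_{E_7})=c_2(\omega_7)-3/2=27$,
\[
 c_2(n\omega_7)=\tfrac32n^2+27n=\tfrac{3n(n+18)}2 .
\]
For $\nu=0$ we have $\mu_n(0)=n\omega_7$. For $\nu\ne0$, Lemma~\ref{e8:lem:E8-E7-deficit-classification} gives $D_n(\nu)\ge\min\{2n,4n,4(n-1),6(n-1)\}$ over the rows that actually occur, and every occurring row has a strictly positive minimum. The row $q=4$ requires $n\ge4$, so $4(n-1)\ge12>0$; the row $q=5$ requires $n=5$, so $6(n-1)=24>0$. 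Combined with the saturation inequality, this gives $c_2(\mu)\le C_{\max}(n)$, with equality exactly when $\nu=0$ and $m_7=n$, i.e.\ $\mu=n\omega_7$.

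For the second bound with $2\le n\le5$, take $\mu\ne n\omega_7$. If $\nu=0$, then $\mu=m_7\omega_7$ with $m_7\le n-1$, and $c_2$ is monotone in $m_7$, so
\[
 c_2(\mu)\le c_2((n-1)\omega_7)=C_{\max}(n)-(3n+51/2)+3/2...
\]
More precisely, $C_{\max}(n)-c_2((n-1)\omega_7)=3n+51/2$. This deficit exceeds $2n$, so the case $\nu=0$ is dominated. If $\nu\ne0$, the minimal deficit is $2n$, attained uniquely at $\nu=\omega_6$ with saturation. I would check that the other row minima strictly exceed $2n$: $4n>2n$; $4(n-1)>2n$ for $n\ge4$, where the row exists; and $6(n-1)>2n$ for $n=5$. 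Unsaturated weights lose strictly more by Lemma~\ref{e8:lem:E8-Casimir-increases-in-omega7}. Hence
\[
 C_{\mathrm{second}}(n)=C_{\max}(n)-2n=\tfrac{n(3n+50)}2,
\]
with unique maximizer $\omega_6+(n-2)\omega_7$.

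For $n=1$, $q(\nu)\ge2$ for every nonzero $\nu$, so $\mathcal P_1=\{0,\omega_7\}$ and the second-largest eigenvalue is $c_2(0)=0$.

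The main obstacle is the bookkeeping that certifies the second-place weight is unique. It needs the strict comparisons across rows (especially $4(n-1)$ against $2n$ at $n=4$, namely $12>8$) and the fact that unsaturated weights never tie, which follows from the strictness in Lemma~\ref{e8:lem:E8-Casimir-increases-in-omega7}.
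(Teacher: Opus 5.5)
Your proposal is correct and follows the paper's own proof essentially step for step. Like the paper, you compute $c_2(n\omega_7)=\tfrac32n^2+27n$ and saturate the $\omega_7$-coefficient using Lemma~\ref{e8:lem:E8-Casimir-increases-in-omega7}; you dispose of $\nu=0$ through the deficit $3n+\tfrac{51}{2}>2n$, and read off the unique minimal positive deficit $2n$ at $\nu=\omega_6$ from Lemma~\ref{e8:lem:E8-E7-deficit-classification}. The only blemish is the garbled display for $c_2((n-1)\omega_7)$, which your next sentence already corrects.
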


\begin{proof}
The seventh row of \eqref{e8:eq:E8-E7-inverse-Cartan-matrix} gives
\begin{equation*}
  (\omega_7\mid\omega_7)=\frac32,
  \qquad
  (\omega_7\mid\rho_{E_7})=\frac{27}{2}.
\end{equation*}
Hence
\begin{equation*}
 c_2(n\omega_7)
 =\frac32n^2+27n
 =\frac{3n(n+18)}2.
\end{equation*}

Write $\mu=\nu+m_7\omega_7$ with
$\nu=\sum_{i=1}^{6}m_i\omega_i$.  If $\nu\ne0$ and
$\ell(\mu)<n$, Lemma~\ref{e8:lem:E8-Casimir-increases-in-omega7} allows us
to increase $m_7$ until the level is saturated.  The resulting weight is
still different from $n\omega_7$ and has strictly larger Casimir eigenvalue.
Thus every non-pure candidate for the second-largest value is bounded by a
saturated weight $\mu_n(\nu)$.

If $\nu=0$, the largest weight different from $n\omega_7$ is
$(n-1)\omega_7$, and
\begin{equation*}
 c_2(n\omega_7)-c_2((n-1)\omega_7)
 =\frac{3(2n+17)}2>2n.
\end{equation*}
For saturated weights with $\nu\ne0$, Lemma~\ref{e8:lem:E8-E7-deficit-classification}
shows that the smallest possible positive deficit is $2n$, uniquely attained
at $q(\nu)=2$ and $\nu=\omega_6$.  Indeed,
\begin{equation*}
  4n>2n,
  \qquad
  4(n-1)>2n\quad(n=4,5),
  \qquad
  6(n-1)>2n\quad(n=5).
\end{equation*}
It follows that $n\omega_7$ is the unique maximum and that, for $n\ge2$,
the unique second maximizer is
$\omega_6+(n-2)\omega_7$.  Its eigenvalue is
\begin{align*}
 c_2(\omega_6+(n-2)\omega_7)
 &=c_2(n\omega_7)-2n
 \notag\\
 &=\frac{n(3n+50)}2.
\end{align*}
The case $n=1$ follows immediately from
\eqref{e8:eq:E8-E7-level-functional}.
\end{proof}

\begin{corollary}
\label{e8:cor:E8-Casimir-average-bounds}
Let $M$ be a nonzero finite-dimensional $B_n$-module, where
$1\le n\le5$.  Then
\begin{equation*}
  \frac{\tr_M\rho(\Omega_{E_7})}{\dim M}
  \le \frac{3n(n+18)}2.
\end{equation*}
If $2\le n\le5$ and the restriction of $M$ to $E_7$ contains no copy of
$L_{E_7}(n\omega_7)$, then
\begin{equation*}
  \frac{\tr_M\rho(\Omega_{E_7})}{\dim M}
  \le \frac{n(3n+50)}2.
\end{equation*}
For $n=1$, the corresponding upper bound in the absence of
$L_{E_7}(\omega_7)$ is $0$.
\end{corollary}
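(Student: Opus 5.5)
The plan is to reduce both bounds to Theorem~\ref{e8:thm:E8-E7-Casimir-maxima}, using complete reducibility of finite-dimensional $E_7$-modules to express the trace average as a convex combination of Casimir eigenvalues.

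First, by Lemma~\ref{e8:lem:E8-Bn-constituent-level-bound}, the restriction of $M$ to $E_7$ has all its irreducible constituents $L_{E_7}(\mu)$ with $\mu\in\mathcal P_n$. Since $M$ is finite-dimensional and $E_7$ is semisimple, decompose
\[
 M\cong\bigoplus_{\mu\in\mathcal P_n}L_{E_7}(\mu)^{\oplus a_\mu},
\]
with multiplicities $a_\mu\ge0$. Here $\Omega_{E_7}$ acts on each summand $L_{E_7}(\mu)$ by the scalar $c_2(\mu)$. Therefore
\[
 \frac{\tr_M\rho(\Omega_{E_7})}{\dim M}
 =\sum_{\mu}\frac{a_\mu\dim L_{E_7}(\mu)}{\dim M}\,c_2(\mu),
\]
and the coefficients are nonnegative and sum to $1$. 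So the average is at most the maximum of $c_2$ over the constituents that actually occur.

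Applying the first statement of Theorem~\ref{e8:thm:E8-E7-Casimir-maxima} bounds every $c_2(\mu)$ by $3n(n+18)/2$, which gives the first inequality. If $2\le n\le5$ and no copy of $L_{E_7}(n\omega_7)$ occurs, then every occurring $\mu$ satisfies $\mu\ne n\omega_7$. The second statement of the theorem then bounds each $c_2(\mu)$ by $n(3n+50)/2$, and hence bounds the average as well.

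For $n=1$, the set $\mathcal P_1$ is $\{0,\omega_7\}$. In the absence of $\omega_7$ only trivial constituents remain, so $\Omega_{E_7}$ acts by zero and the average is $0$.

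There is no real obstacle here. The only point needing care is that the trace average is a genuine convex combination, which uses the nonnegative, dimension-weighted coefficients together with the level bound that confines every occurring constituent to $\mathcal P_n$.
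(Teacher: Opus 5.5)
Your proposal is correct and follows the paper's own proof essentially step for step. The paper's argument has the same three ingredients:
\begin{itemize}
\item the level bound of Lemma~\ref{e8:lem:E8-Bn-constituent-level-bound}, which confines every constituent to $\mathcal P_n$;
\item complete reducibility, which makes the trace average a dimension-weighted convex combination of the eigenvalues $c_2(\mu)$;
\item Theorem~\ref{e8:thm:E8-E7-Casimir-maxima}, which supplies the maximum and second-maximum bounds, with $n=1$ handled through $\mathcal P_1=\{0,\omega_7\}$ exactly as you do.
\end{itemize}
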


\begin{proof}
By Lemma~\ref{e8:lem:E8-Bn-constituent-level-bound}, every irreducible
$E_7$-constituent of $M$ belongs to $\mathcal P_n$.  Since finite-dimensional
$E_7$-modules are completely reducible, the Casimir average is a
dimension-weighted average of the eigenvalues $c_2(\mu)$.  The result follows
from Theorem~\ref{e8:thm:E8-E7-Casimir-maxima}.
\end{proof}

For later use, define the Casimir average forced by the trace identity at the
spectral-flow ground energy by
\begin{equation}
  A_n:=\frac{7n(n+19)}5.
  \label{e8:eq:E8-ground-state-Casimir-average}
\end{equation}
For $2\le n\le5$, direct subtraction gives
\begin{equation}
  C_{\max}(n)-A_n=\frac{n(n+4)}{10}>0,
  \qquad
  A_n-C_{\mathrm{second}}(n)=\frac{n(16-n)}{10}>0.
  \label{e8:eq:E8-Casimir-strict-gap}
\end{equation}
For $n=1$, one has
\begin{equation}
  0<A_1=28<\frac{57}{2}=C_{\max}(1).
  \label{e8:eq:E8-Casimir-gap-n1}
\end{equation}
These strict inequalities are the numerical input for the spectral-flow
simplicity argument in the next section.

\subsection{Spectral flow and simplicity of the reduced \texorpdfstring{$W$}{W}-quotients}

In this section we prove that the natural homomorphisms
\eqref{e8:eq:E8-natural-Wtilde-to-simple-W} are isomorphisms for all
$1\le n\le5$.  The argument combines Li's delta operator, the Ramond Zhu
trace identity of Section~\ref{e8:sec:lisse-Ramond-trace}, and the Casimir gap
established in Section~\ref{e8:sec:E8-E7-Casimir-bounds}.

\subsubsection{The Ramond spectral-flow twist}

Let
\begin{equation*}
  x=\omega_7^\vee\in\mathfrak h_{E_7}
\end{equation*}
and let
\begin{equation*}
  H=J^{\{x\}}\in (\widetilde{\W}_n)_1.
\end{equation*}
Since the $E_7$ current algebra in $\widetilde{\W}_n$ has level $n$, the
current relations give
\begin{equation}
  H_{(1)}H=n(x\mid x)\Vac=\frac{3n}{2}\Vac.
  \label{e8:eq:E8-H-one-H-value}
\end{equation}

\begin{lemma}

The inner automorphism
\begin{equation*}
  \exp(2\pi iH_{(0)})
\end{equation*}
agrees with the Ramond automorphism $\sigma_{\mathrm R}$ on
$\widetilde{\W}_n$.
\end{lemma}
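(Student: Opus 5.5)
The plan is to compare the two automorphisms on the strong generators of $\widetilde{\W}_n$. Both are vertex-algebra automorphisms, so agreement on a strong generating set forces equality, exactly as in the $E_6$ lemma earlier in the paper. Since $H_{(0)}=J^{\{x\}}_{(0)}$ acts on $\widetilde{\W}_n$ as the zero mode of the $E_7$-Cartan element $x$, it acts semisimply, with eigenvalue $\gamma(x)$ on a generator of $\mathfrak h^\natural$-weight $\gamma$. Hence $\exp(2\pi iH_{(0)})$ is well defined and multiplies such a generator by $e^{2\pi i\gamma(x)}$. This uses the current relation $[J^{\{a\}}{}_\lambda G^{\{u\}}]=G^{\{[a,u]\}}$, the current relation \eqref{e8:eq:E8-E7-current-relation}, and the fact that $\omega$ has weight zero.

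The generators are handled in three steps.

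\emph{Currents.} A current $J^{\{a_\alpha\}}$ with $\alpha$ a root of $E_7$ has $\alpha(x)=\langle\alpha,\omega_7^\vee\rangle\in\Z$, because $\omega_7^\vee$ is a coweight. It is in fact in $\{-1,0,1\}$, since the coefficient of $\alpha_7$ in $\vartheta$ is $1$ by \eqref{e8:eq:E8-E7-highest-root-expansion}. Cartan currents have charge zero. So all $J^{\{a\}}$ are fixed.

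\emph{Conformal vector.} The conformal vector $\omega$ is fixed, since $H_{(0)}\omega=0$ ($\omega$ is $E_7$-invariant).

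\emph{The $G$-fields.} For $G^{\{u_\mu\}}$ with $\mu$ a weight of $L_{E_7}(\omega_7)$, the module is minuscule, so $\mu\in\omega_7+Q(E_7)$. Therefore
\[
\mu(x)\equiv(\omega_7\mid\omega_7)=\tfrac32\pmod{\Z},
\]
using that roots pair integrally with $\omega_7^\vee$. Concretely $\mu(x)\in\{\pm\tfrac32,\pm\tfrac12\}$ from the seventh column of \eqref{e8:eq:E8-E7-inverse-Cartan-matrix}. Hence every $G^{\{u\}}$ is multiplied by $e^{2\pi i\cdot 3/2}=-1$.

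These three steps match the definition of $\sigma_{\mathrm R}$ on generators. The automorphisms are then equal on the whole algebra: they are equal on $\W^{k_n}(E_8,f_\theta)$, both preserve the ideal $\langle :J^{\{e_\vartheta\}}{}^{n+1}:\rangle$, and so they descend to the same map on $\widetilde{\W}_n$. The ideal is preserved because its generator has integral $x$-charge.

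The only point needing care is that $\exp(2\pi iH_{(0)})$ really is an automorphism. This holds because $H_{(0)}$ is a derivation that acts semisimply with rational eigenvalues on each finite-dimensional homogeneous subspace, by complete reducibility over $E_7$. Beyond that, the main computation is the residue of $\mu(x)$, which is settled by minusculeness and $(\omega_7\mid\omega_7)=3/2$.
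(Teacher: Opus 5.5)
Your proof is correct and takes the same route as the paper. You compare the two automorphisms on the generators $J^{\{a\}}$, $G^{\{u\}}$, $\omega$: roots pair integrally with the coweight $x$, $\omega$ has charge zero, and $\mu(x)\equiv(\omega_7\mid\omega_7)=\tfrac32\pmod{\mathbb Z}$ by minusculeness. Your additional points are sound extra care that the paper leaves implicit, namely that semisimplicity of $H_{(0)}$ makes the exponential a well-defined automorphism, and that both maps descend through the ideal generated by $:J^{\{e_\vartheta\}}{}^{n+1}:$, which has integral charge $n+1$.
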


\begin{proof}
For a root vector $a_\alpha\in(E_7)_\alpha$, one has
$H_{(0)}J^{\{a_\alpha\}}=\alpha(x)J^{\{a_\alpha\}}$.  Since $x$ is a
coweight, $\alpha(x)\in\Z$, and hence the inner automorphism fixes every
$E_7$ current.

Every weight $\mu$ of the minuscule module $L_{E_7}(\omega_7)$ belongs to
$\omega_7+Q(E_7)$.  Therefore
\begin{equation}
  \mu(x)\equiv\omega_7(x)
  =(\omega_7\mid\omega_7)=\frac32\pmod{\Z}.
  \label{e8:eq:E8-minuscule-x-charge-congruence}
\end{equation}
Thus the inner automorphism acts by $-1$ on every field $G^{\{u\}}$ and
fixes the conformal vector.  The images of the fields in
\eqref{e8:eq:E8-minimal-W-generators} generate $\widetilde{\W}_n$, so the two
automorphisms agree.
\end{proof}

Apply the Li construction of Subsection~\ref{e8:subsec:E8-Li-delta} to the
adjoint module of $\widetilde{\W}_n$, and denote the resulting Ramond-twisted
module by
\begin{equation*}
  \widetilde{\W}_n^{\mathrm R}.
\end{equation*}

\begin{lemma}

On $\widetilde{\W}_n^{\mathrm R}$, the conformal Hamiltonian and the shifted
Cartan zero modes are
\begin{equation}
  L_0^{\mathrm R}=L_0+H_0+\frac{3n}{4}
  \label{e8:eq:E8-Li-shift-L0}
\end{equation}
and
\begin{equation}
  J_0^{\{h\},\mathrm R}
  =J_0^{\{h\}}+n(x\mid h),
  \qquad h\in\mathfrak h_{E_7}.
  \label{e8:eq:E8-Li-shift-Cartan-zero-modes}
\end{equation}
\end{lemma}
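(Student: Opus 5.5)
The plan is to specialize the general Li-twist formulas \eqref{eq:common-Li-Hamiltonian} and \eqref{eq:common-Li-mode-shifts} to $H=J^{\{x\}}$ with $x=\omega_7^\vee$, exactly as in the $E_6$ and $E_7$ sections. Three inputs are needed.

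First, $H$ should be a weight-one primary whose zero mode is semisimple with rational eigenvalues. It is primary because $\mathfrak h_{E_7}\subset\mathfrak g^\natural$, and the current fields $J^{\{a\}}$ with $a\in\mathfrak g^\natural$ are primary of weight one for the minimal-reduction conformal vector. Semisimplicity of $H_{(0)}$ follows from complete reducibility of each finite-dimensional $E_7$-stable homogeneous subspace, as in Proposition~\ref{e7:prop:E7-Ramond-spectral-lattice}. Rationality holds because the eigenvalues are $\alpha(x)\in\mathbb Z$ on currents, $\mu(x)\in\tfrac12+\mathbb Z$ on $G$-fields by \eqref{e8:eq:E8-minuscule-x-charge-congruence}, and $0$ on $\omega$.

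Second, we need $H_{(1)}H=\kappa\mathbf1$ with $\kappa=\tfrac{3n}{2}$. This is \eqref{e8:eq:E8-H-one-H-value}, which comes from the level-$n$ current relation \eqref{e8:eq:E8-E7-current-relation} together with $(x\mid x)=(\omega_7\mid\omega_7)=\tfrac32$.

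Substituting these into \eqref{eq:common-Li-Hamiltonian} gives $L_0^{\mathrm R}=L_0+H_0+\tfrac{\kappa}{2}=L_0+H_0+\tfrac{3n}{4}$, which is \eqref{e8:eq:E8-Li-shift-L0}.

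For the Cartan shift, I will apply $\Delta(H,z)$ to $J^{\{h\}}$. The zero mode $H_{(0)}$ kills $J^{\{h\}}$, since Cartan currents commute. For $r\ge1$, the only surviving term is $H_{(1)}J^{\{h\}}=n(x\mid h)\mathbf1$, and $H_{(r)}J^{\{h\}}=0$ for $r\ge2$ by weight. The exponential therefore truncates to
\[
\Delta(H,z)J^{\{h\}}=J^{\{h\}}+n(x\mid h)z^{-1}\mathbf1 .
\]
Taking the residue, the twisted zero mode becomes $J_0^{\{h\}}+n(x\mid h)$, which is \eqref{e8:eq:E8-Li-shift-Cartan-zero-modes}. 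This is precisely the first formula of \eqref{eq:common-Li-mode-shifts} with $r=n$.

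The only point requiring care is the sign convention in $\Delta(H,z)$: the factor $(-z)^{-r}/(-r)$ at $r=1$ gives $+z^{-1}$. This fixes the shift as $+n(x\mid h)$, not its negative, and similarly gives $+\kappa/2$ in the Hamiltonian. I will confirm this by checking on the vacuum that $L_0^{\mathrm R}\mathbf1=\tfrac{3n}{4}\mathbf1>0$ and that the vacuum acquires shifted weight $n\omega_7$, consistent with the $E_6$ and $E_7$ cases. Beyond that sign check, the argument is routine bookkeeping.
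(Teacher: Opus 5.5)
Your proposal is correct and follows essentially the same route as the paper: the Hamiltonian shift comes from inserting $H_{(1)}H=\tfrac{3n}{2}\mathbf{1}$ into the common Li formula, and the Cartan shift from the truncation $\Delta(H,z)J^{\{h\}}=J^{\{h\}}+n(x\mid h)z^{-1}\mathbf{1}$. Your extra checks (primarity of $H$, semisimplicity and rationality of $H_{(0)}$, and the sign of the $z^{-1}$ term) are consistent with the paper's conventions. They only make explicit what the paper takes from the common framework.
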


\begin{proof}
Equation~\eqref{e8:eq:E8-Li-shift-L0} follows from
\eqref{eq:common-Li-Hamiltonian} and
\eqref{e8:eq:E8-H-one-H-value}.  For $h\in\mathfrak h_{E_7}$, the affine
current relations imply
\begin{equation*}
  H_{(0)}J^{\{h\}}=0,
  \qquad
  H_{(1)}J^{\{h\}}=n(x\mid h)\Vac,
  \qquad
  H_{(m)}J^{\{h\}}=0\quad(m\ge2).
\end{equation*}
Substitution into Li's delta operator gives
\begin{equation*}
  \Delta(H,z)J^{\{h\}}
  =J^{\{h\}}+n(x\mid h)z^{-1}\Vac,
\end{equation*}
which yields \eqref{e8:eq:E8-Li-shift-Cartan-zero-modes}.
\end{proof}

\subsubsection{The integral spectrum and the extremal vacuum line}

The minuscule property of $x=\omega_7^\vee$ gives
\begin{equation}
  \alpha(x)\in\{-1,0,1\}
  \label{e8:eq:E8-root-x-charges}
\end{equation}
for every root $\alpha$ of $E_7$.  The weights of
$L_{E_7}(\omega_7)$ form one Weyl orbit, and
\eqref{e8:eq:E8-minuscule-x-charge-congruence} gives
\begin{equation}
  \mu(x)\in
  \left\{-\frac32,-\frac12,\frac12,\frac32\right\}.
  \label{e8:eq:E8-minuscule-x-charges}
\end{equation}

\begin{proposition}
\label{e8:prop:E8-Ramond-spectrum-vacuum-line}
For $1\le n\le5$, the operator $L_0^{\mathrm R}$ acts semisimply on
$\widetilde{\W}_n^{\mathrm R}$, and
\begin{equation}
  \Spec_{\widetilde{\W}_n^{\mathrm R}}L_0^{\mathrm R}
  \subset \frac{3n}{4}+\Z_{\ge0}.
  \label{e8:eq:E8-Ramond-integral-spectrum}
\end{equation}
Moreover,
\begin{equation}
  \left(\widetilde{\W}_n^{\mathrm R}
  \left[\frac{3n}{4}\right]\right)_{n\omega_7}
  =\C\Vac,
  \label{e8:eq:E8-extremal-vacuum-line}
\end{equation}
where the subscript denotes the weight for the shifted $E_7$ zero-mode
action in \eqref{e8:eq:E8-Li-shift-Cartan-zero-modes}.
\end{proposition}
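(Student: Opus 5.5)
The plan mirrors the $E_6$ and $E_7$ arguments (Proposition~\ref{e6:prop:E6-Ramond-spectrum-vacuum-line} and Proposition~\ref{e7:prop:E7-Ramond-spectral-lattice} with Corollary~\ref{e7:cor:E7-extremal-vacuum-line}). There are three steps: semisimplicity, the spectral bound, and identification of the extremal line.

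\emph{Semisimplicity.} Every ordinary conformal homogeneous subspace $(\widetilde{\W}_n)_d$ is finite-dimensional, because the universal minimal $W$-algebra has a PBW basis in generators of positive weight. Each such subspace is stable under the $E_7$ zero modes $J^{\{a\}}_{(0)}$. By complete reducibility of finite-dimensional $E_7$-modules, $H_0=J^{\{x\}}_{(0)}$ acts semisimply on it. Since $H_0$ commutes with $L_0$, the operator $L_0^{\mathrm R}=L_0+H_0+3n/4$ from \eqref{e8:eq:E8-Li-shift-L0} is semisimple.

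\emph{Spectral bound.} The images of ordered PBW monomials in derivatives of $J^{\{a_\alpha\}}$, $G^{\{u_\mu\}}$ and $\omega$, chosen to be $H_0$-eigenvectors, span $\widetilde{\W}_n$. The excess energy of each factor is additive:
\begin{itemize}
\item $\partial^rJ^{\{a_\alpha\}}$ contributes $1+r+\alpha(x)\in\Z_{\ge0}$ by \eqref{e8:eq:E8-root-x-charges}, and this is zero only if $r=0$ and $\alpha(x)=-1$;
\item $\partial^sG^{\{u_\mu\}}$ contributes $\tfrac32+s+\mu(x)\in\Z_{\ge0}$ by \eqref{e8:eq:E8-minuscule-x-charges}, and this is zero only if $s=0$ and $\mu(x)=-\tfrac32$;
\item $\partial^t\omega$ contributes $2+t>0$.
\end{itemize}
Summing over factors gives $L_0+H_0\in\Z_{\ge0}$ on every spanning monomial. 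Combined with semisimplicity, this yields \eqref{e8:eq:E8-Ramond-integral-spectrum}. The vacuum has $L_0=H_0=0$, so it lies in the $3n/4$ eigenspace. By \eqref{e8:eq:E8-Li-shift-Cartan-zero-modes} its shifted weight is $n\omega_7$, using the identification $x\leftrightarrow\omega_7$ through the normalized form.

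\emph{Extremal line.} Let $v$ lie in the shifted weight-$n\omega_7$ part of the bottom space. First decompose $v$ into components homogeneous for the original conformal grading and for the original $E_7$-weight; both $L_0$ and the original Cartan act on the bottom eigenspace and commute with the shift. The shift relation $\lambda=\nu+n\omega_7$ forces original weight $\nu=0$, so $\nu(x)=0$. The bottom-energy condition $d+\nu(x)=0$ then gives $d=0$. Hence $v\in(\widetilde{\W}_n)_0=\C\Vac$, which is the analogue of \eqref{e7:eq:E7-Wtilde-degree-zero} and holds because the strong generators have positive weight.

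Equivalently, one can argue on monomials as in Proposition~\ref{e6:prop:E6-Ramond-spectrum-vacuum-line}. A nonempty zero-excess monomial has strictly negative total $x$-charge, so it cannot have original weight $0$. The only delicate point is that the spanning monomials need not be linearly independent in the quotient. The weight-space decomposition handles this: the weight spaces are well-defined subspaces, and the degree–weight relation pins $v$ to degree $0$ directly, so no independence is needed. I expect this to be the only step requiring care.
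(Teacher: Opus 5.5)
Your proof is correct and follows the paper's argument. Semisimplicity comes from complete reducibility of the $E_7$ zero-mode action on the finite-dimensional graded pieces, and the spectral bound comes from the factorwise excess energies $1+r+\alpha(x)$, $\tfrac32+s+\mu(x)$, $2+t$. For the extremal line, your primary degree--weight route ($\nu=0\Rightarrow d=0\Rightarrow v\in\C\Vac$) is the one the paper uses for $E_7$ and $D_4$, whereas the paper's $E_8$ proof uses the monomial charge argument you give as your alternative. That monomial argument also needs no linear independence, since each joint eigenspace is spanned by the images of the monomials lying in it.
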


\begin{proof}
Every ordinary conformal homogeneous subspace of $\widetilde{\W}_n$ is
finite-dimensional and stable under the $E_7$-zero-mode action.  Complete
reducibility of finite-dimensional $E_7$-modules implies that $H_0$ acts
semisimply on each such subspace and commutes with $L_0$.  Therefore
$L_0^{\mathrm R}=L_0+H_0+3n/4$ acts semisimply on the twisted adjoint
module.

The quotient $\widetilde{\W}_n$ is spanned by normally ordered monomials in
factors of the form
\begin{equation*}
  \partial^rJ^{\{a_\alpha\}},
  \qquad
  \partial^sG^{\{u_\mu\}},
  \qquad
  \partial^t\omega,
\end{equation*}
where $r,s,t\in\Z_{\ge0}$ and the vectors are chosen to be $H_0$-eigenvectors.
Here $a_{\alpha}$ ranges over an $x$-weight basis of $E_7$, and the value
$\alpha=0$ is allowed; these zero-weight vectors include the Cartan currents.
By \eqref{e8:eq:E8-Li-shift-L0}, their contributions to
$L_0^{\mathrm R}-3n/4=L_0+H_0$ are, respectively,
\begin{equation}
  1+r+\alpha(x),
  \qquad
  \frac32+s+\mu(x),
  \qquad
  2+t.
  \label{e8:eq:E8-Li-energy-contributions}
\end{equation}
Equations~\eqref{e8:eq:E8-root-x-charges} and
\eqref{e8:eq:E8-minuscule-x-charges} show that all three quantities in
\eqref{e8:eq:E8-Li-energy-contributions} are nonnegative integers.  This proves
\eqref{e8:eq:E8-Ramond-integral-spectrum}.

The vacuum has twisted conformal weight $3n/4$.  Its shifted $E_7$ weight is
$n\omega_7$, because \eqref{e8:eq:E8-Li-shift-Cartan-zero-modes} gives
\begin{equation*}
  J_0^{\{h\},\mathrm R}\Vac
  =n(x\mid h)\Vac
  =(n\omega_7\mid h)\Vac.
\end{equation*}
A nonempty spanning monomial can have twisted conformal weight $3n/4$ only
if every contribution in \eqref{e8:eq:E8-Li-energy-contributions} is zero.
Every current factor then has $r=0$ and $\alpha(x)=-1$, every $G$-factor has
$s=0$ and $\mu(x)=-3/2$, and no Virasoro factor occurs.  Hence every such
nonempty monomial has strictly negative original $x$-charge.  If its shifted
$E_7$ weight were $n\omega_7$, its original $E_7$ weight would be zero and
its original $x$-charge would also be zero, a contradiction.  Thus only the
empty monomial contributes to the weight space in
\eqref{e8:eq:E8-extremal-vacuum-line}.
\end{proof}

\subsubsection{Simplicity of the reduced quotients}

Recall the natural nonzero surjection
\begin{equation}
  \rho_n\colon\widetilde{\W}_n
  \twoheadrightarrow\W_{k_n}(E_8,f_\theta)
  \label{e8:eq:E8-rho-n-W-surjection}
\end{equation}
from \eqref{e8:eq:E8-natural-Wtilde-to-simple-W}.

\begin{theorem}
\label{e8:thm:E8-Wtilde-simple-all-n}
For every $1\le n\le5$, the homomorphism \eqref{e8:eq:E8-rho-n-W-surjection}
is an isomorphism.  Equivalently,
\begin{equation*}
  \widetilde{\W}_n
  \cong\W_{n-6}(E_8,f_\theta).
\end{equation*}
\end{theorem}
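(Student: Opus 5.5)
The plan is to apply Proposition~\ref{prop:common-Casimir-gap-simplicity} to the nonzero surjection $\rho_n$ of \eqref{e8:eq:E8-rho-n-W-surjection}, with the following data:
\[
 \mathfrak a=E_7,\qquad \mathcal C=\Omega_{E_7},\qquad h_0=\tfrac{3n}{4},\qquad \lambda_0=n\omega_7,
\]
\[
 A(h)=\tfrac{133}{190}\Bigl(2(n+24)h+\tfrac{n(n+4)}2\Bigr),\qquad C_{\max}=C_{\max}(n)=\tfrac{3n(n+18)}2 .
\]
The hypotheses would be checked in the order (1)--(4).

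Hypothesis (1) is Proposition~\ref{e8:prop:E8-Ramond-spectrum-vacuum-line}: it gives the spectral lattice $\tfrac{3n}{4}+\mathbb Z_{\ge0}$ and the extremal line $\mathbb C\mathbf 1$ in shifted weight $n\omega_7$.

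Hypothesis (2) follows from Corollary~\ref{e8:cor:E8-Bn-finite-dimensional}, which makes $B_n$ finite-dimensional. Complete reducibility holds because the current zero modes give a map $E_7\to B_n$, so every finite-dimensional $B_n$-module is a completely reducible $E_7$-module.

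For hypothesis (3), the trace identity is Proposition~\ref{e8:prop:E8-uniform-Casimir-trace-identity}. The bound $c_2\le C_{\max}(n)$ on allowed constituents is Theorem~\ref{e8:thm:E8-E7-Casimir-maxima}, combined with Lemma~\ref{e8:lem:E8-Bn-constituent-level-bound}. Since $A$ is increasing in $h$, it suffices to check $A(h_0+1)>C_{\max}$.
\begin{itemize}
\item A direct substitution gives $A(h_0)=A_n=\tfrac{7n(n+19)}5$, as in \eqref{e8:eq:E8-ground-state-Casimir-average}.
\item The step from $h_0$ to $h_0+1$ adds $\tfrac{7(n+24)}5$.
\item By \eqref{e8:eq:E8-Casimir-strict-gap}, the gap $C_{\max}(n)-A_n$ is $\tfrac{n(n+4)}{10}$. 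For $n=1$ it is $\tfrac12$, by \eqref{e8:eq:E8-Casimir-gap-n1}.
\end{itemize}
Hence
\[
 A(h_0+1)-C_{\max}=\frac{14n+336-n^2-4n}{10},
\]
and this is positive for $1\le n\le5$. This forces the lowest energy of any nonzero ideal to be $h_0$.

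Hypothesis (4) is the only step with real content. Here $h_0$ lies strictly below the global maximum, so no gap-above argument applies. Instead I would use the second-largest value.

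Let $S$ be a simple quotient of a nonzero ground $B_n$-submodule $M\subset\widetilde{\mathcal W}_n^{\mathrm R}[h_0]$. The conformal class acts on $S$ by $h_0$, so by the trace identity the $E_7$-Casimir average on $S$ equals $A_n$. Suppose $S$ contained no copy of $L_{E_7}(n\omega_7)$. For $2\le n\le5$, Corollary~\ref{e8:cor:E8-Casimir-average-bounds} would then bound this average by $C_{\mathrm{second}}(n)$. But $A_n-C_{\mathrm{second}}(n)=\tfrac{n(16-n)}{10}>0$ by \eqref{e8:eq:E8-Casimir-strict-gap}, which is a contradiction. For $n=1$ the bound would be $0<28=A_1$, again a contradiction.

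Hence $S$ contains the constituent of highest weight $\lambda_0=n\omega_7$, and every hypothesis is verified. Unlike in type $D_6$, no low-degree exclusion is needed, because the second-largest Casimir value already lies below $A_n$.

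The common criterion then gives the conclusion. The splitting $M\twoheadrightarrow S$ over $E_7$ puts the $n\omega_7$ highest line inside $\bigl(\widetilde{\mathcal W}_n^{\mathrm R}[h_0]\bigr)_{n\omega_7}=\mathbb C\mathbf1$. So $\mathbf1$ lies in any nonzero ideal, and therefore $\rho_n$ is an isomorphism.

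The main point to double-check is that the Casimir bounds apply to the shifted Ramond $E_7$-action on $S$, not only to the original one. I expect this to hold because the level bound of Lemma~\ref{e8:lem:E8-Bn-constituent-level-bound} is intrinsic to $B_n$-modules and the twisted zero modes realize the $B_n$-action.
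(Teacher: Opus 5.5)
Your proposal is correct and follows the paper's proof essentially verbatim. It makes the same application of Proposition~\ref{prop:common-Casimir-gap-simplicity} with $h_0=3n/4$, $\lambda_0=n\omega_7$ and the gap $A_n(h_0+1)-C_{\max}=(24-n)(n+14)/10$ (your $(14n+336-n^2-4n)/10$), and the same comparison $A_n>C_{\mathrm{second}}(n)$, or $A_1=28>0$, in place of any low-degree exclusion. Your concern about the shifted action resolves as you expect: $B_n$ acts on the Li-twisted ground space through the twisted zero modes. Hence $e^{n+1}=0$, and with it the level bound, constrains exactly the shifted $E_7$-action used to define $\lambda_0$ and the vacuum line.
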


\begin{proof}
Fix $1\le n\le5$ and apply
Proposition~\ref{prop:common-Casimir-gap-simplicity} to the surjection
\eqref{e8:eq:E8-rho-n-W-surjection}.  Proposition
\ref{e8:prop:E8-Ramond-spectrum-vacuum-line} gives
\[
 h_0=\frac{3n}{4},
 \qquad
 \lambda_0=n\omega_7,
 \qquad
 \left(\widetilde{\W}_n^{\mathrm R}[h_0]\right)_{\lambda_0}
 =\C\Vac,
\]
and Corollary~\ref{e8:cor:E8-Bn-finite-dimensional} gives the finite
Ramond Zhu algebra.  The weighted Casimir average is
\[
 A_n(h)=\frac{133}{190}
 \left(2(n+24)h+\frac{n(n+4)}2\right)
\]
by Proposition~\ref{e8:prop:E8-uniform-Casimir-trace-identity}, whereas
Theorem~\ref{e8:thm:E8-E7-Casimir-maxima} gives
$C_{\max}=3n(n+18)/2$.  A direct simplification gives
\[
 A_n(h_0+1)-C_{\max}
 =\frac{(24-n)(n+14)}{10}>0,
 \qquad 1\le n\le5.
\]
Hence the lowest energy of a nonzero ideal would have to be $h_0$.

At this energy the trace average equals
$A_n=7n(n+19)/5$ by
\eqref{e8:eq:E8-ground-state-Casimir-average}.  For $2\le n\le5$, the
strict gap \eqref{e8:eq:E8-Casimir-strict-gap} forces every simple Zhu
quotient to contain $L_{E_7}(n\omega_7)$.  For $n=1$,
Corollary~\ref{e8:cor:E8-Casimir-average-bounds} shows that a Zhu quotient
containing no copy of $L_{E_7}(\omega_7)$ has Casimir average at most zero,
whereas \eqref{e8:eq:E8-Casimir-gap-n1} gives $A_1=28>0$.  Thus the same
extremal constituent is forced in the level-one case.  Thus all hypotheses of the
common criterion hold, and \eqref{e8:eq:E8-rho-n-W-surjection} is an
isomorphism.
\end{proof}

Combining Theorem~\ref{e8:thm:E8-Wtilde-simple-all-n} with
Proposition~\ref{e8:prop:E8-reduction-of-Qn}, we obtain the uniform reduction
formula
\begin{equation}
  H^0_{\DS,f_\theta}(Q_n)
  \cong\W_{n-6}(E_8,f_\theta),
  \qquad 1\le n\le5.
  \label{e8:eq:E8-reduction-Qn-simple-W-all-n}
\end{equation}
Proposition~\ref{prop:common-affine-lifting} lifts this isomorphism to the
affine quotients $Q_n$.

\subsection{Maximal ideals of the affine vertex algebras}

We now apply Proposition~\ref{prop:common-affine-lifting}.  Recall that
for $1\le n\le5$,
\begin{equation*}
  k_n=n-6,
  \qquad
  Q_n
  =V^{k_n}(E_8)/\left\langle\sigma_{k_n}(w)^{n+1}\right\rangle,
\end{equation*}
and that the canonical projection onto the simple affine vertex algebra
factors as
\begin{equation}
  V^{k_n}(E_8)
  \twoheadrightarrow Q_n
  \overset{\pi_n}{\twoheadrightarrow}L_{k_n}(E_8).
  \label{e8:eq:E8-factorization-through-Qn-final}
\end{equation}
\subsubsection{Uniform affine lifting}

For $1\le n\le5$, the quotient $Q_n$ admits the canonical surjection
$\pi_n:Q_n\twoheadrightarrow L_{k_n}(E_8)$ from
\eqref{e8:eq:E8-factorization-through-Qn-final}.
Theorem~\ref{e8:thm:E8-Wtilde-simple-all-n} and
\eqref{e8:eq:E8-reduction-Qn-is-Wtilde-n} identify the induced map on minimal
reductions with an isomorphism.  Hence
Proposition~\ref{prop:common-affine-lifting} applies.

\begin{theorem}
\label{e8:thm:E8-maximal-ideals-all-negative-cases}
For every integer $n$ with $1\le n\le5$, the kernel of the canonical
homomorphism
\begin{equation*}
  V^{n-6}(E_8)\longrightarrow L_{n-6}(E_8)
\end{equation*}
is generated by the Arakawa--Moreau singular vector
$\sigma_{n-6}(w)^{n+1}$.  Equivalently,
\begin{equation}
  \ker\!\left(
    V^{n-6}(E_8)\longrightarrow L_{n-6}(E_8)
  \right)
  =\left\langle\sigma_{n-6}(w)^{n+1}\right\rangle.
  \label{e8:eq:E8-maximal-ideal-uniform-formula}
\end{equation}
In particular,
\begin{equation*}
  V^{n-6}(E_8)
  \Big/\left\langle\sigma_{n-6}(w)^{n+1}\right\rangle
  \cong L_{n-6}(E_8).
\end{equation*}
\end{theorem}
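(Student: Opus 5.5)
The plan is to apply Proposition~\ref{prop:common-affine-lifting} to the canonical surjection $\pi_n:Q_n\twoheadrightarrow L_{k_n}(E_8)$ of \eqref{e8:eq:E8-factorization-through-Qn-final}, as in the $D_\ell$, $D_4$, $E_6$ and $E_7$ cases.

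\textbf{Hypotheses on $Q_n$.} First I verify the standing conditions of the proposition. The level $k_n=n-6$ is a negative integer for $1\le n\le5$. The quotient $Q_n=V^{k_n}(E_8)/I_n$ is conformally graded, and its homogeneous subspaces are finite-dimensional $\mathfrak g$-modules, since they are quotients of the homogeneous subspaces of the universal vacuum module. Lemma~\ref{lem:common-graded-subquotient-category-O} then places $I_n$, $Q_n$, and the kernel of $\pi_n$ in the category on which minimal reduction is exact.

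\textbf{The reduced map.} Next I identify the induced map on minimal reductions. By Proposition~\ref{e8:prop:E8-reduction-of-Qn}, $H^0_{\DS,f_\theta}(Q_n)\cong\widetilde{\W}_n$. Under this identification the reduction of $\pi_n$ is the surjection $\rho_n$ of \eqref{e8:eq:E8-natural-Wtilde-to-simple-W}. Theorem~\ref{e8:thm:E8-Wtilde-simple-all-n} shows that $\rho_n$ is an isomorphism; this is the uniform statement \eqref{e8:eq:E8-reduction-Qn-simple-W-all-n}.

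The only point needing care is compatibility: the isomorphism of Proposition~\ref{e8:prop:E8-reduction-of-Qn} must intertwine $H^0_{\DS,f_\theta}(\pi_n)$ with $\rho_n$. This holds because both maps are induced by the quotient map $\mathcal W^{k_n}(E_8,f_\theta)\to H^0_{\DS,f_\theta}(\cdot)$ through exactness. The real work has already been done in the reduced simplicity theorem (the Casimir gap and the extremal vacuum line), so I expect no serious obstacle here.

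\textbf{Conclusion.} Proposition~\ref{prop:common-affine-lifting} now gives $Q_n\cong L_{k_n}(E_8)$. Concretely, the kernel $K$ of $\pi_n$ has zero reduction, and Lemma~\ref{lem:common-DS-detection} forces $K=0$. Hence $\ker(V^{n-6}(E_8)\to L_{n-6}(E_8))=I_n$.

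By the lemma identifying $I_n$ with $U(\widehat{\mathfrak g}')s_n$, $I_n$ is the ideal generated by $s_n=\sigma_{n-6}(w)^{n+1}$, where the power is the associative PBW power. This is \eqref{e8:eq:E8-maximal-ideal-uniform-formula}, and the final displayed isomorphism is a restatement of it.

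Alternatively, the statement follows directly from Theorem~\ref{thm:common-minimal-reduction-maximality}. One uses $I_n\subset\operatorname{Rad}V^{k_n}(E_8)$, which holds because $I_n$ is proper, together with the simplicity of $H^0_{\DS,f_\theta}(Q_n)$.
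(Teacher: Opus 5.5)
Your proposal is correct and is essentially the paper's own argument. The paper proves the theorem by applying Proposition~\ref{prop:common-affine-lifting} to $\pi_n:Q_n\twoheadrightarrow L_{k_n}(E_8)$, with Proposition~\ref{e8:prop:E8-reduction-of-Qn} and Theorem~\ref{e8:thm:E8-Wtilde-simple-all-n} showing that the induced map on minimal reductions is an isomorphism; your alternative via Theorem~\ref{thm:common-minimal-reduction-maximality} is the same mechanism packaged differently, as the paper itself remarks in Section~\ref{sec:common-framework}.
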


\begin{proof}
Apply Proposition~\ref{prop:common-affine-lifting} to
$\pi_n:Q_n\twoheadrightarrow L_{k_n}(E_8)$.
\end{proof}

For clarity, the five instances of
\eqref{e8:eq:E8-maximal-ideal-uniform-formula} are
\begin{align*}
  \ker\!\left(V^{-5}(E_8)\to L_{-5}(E_8)\right)
  &=\left\langle\sigma_{-5}(w)^2\right\rangle,
  \notag\\
  \ker\!\left(V^{-4}(E_8)\to L_{-4}(E_8)\right)
  &=\left\langle\sigma_{-4}(w)^3\right\rangle,
  \notag\\
  \ker\!\left(V^{-3}(E_8)\to L_{-3}(E_8)\right)
  &=\left\langle\sigma_{-3}(w)^4\right\rangle,
  \notag\\
  \ker\!\left(V^{-2}(E_8)\to L_{-2}(E_8)\right)
  &=\left\langle\sigma_{-2}(w)^5\right\rangle,
  \notag\\
  \ker\!\left(V^{-1}(E_8)\to L_{-1}(E_8)\right)
  &=\left\langle\sigma_{-1}(w)^6\right\rangle.
\end{align*}

\subsubsection{Consequences for the Arakawa--Moreau conjectures}

\begin{corollary}

For $E_8$ and every $1\le n\le5$, the Arakawa--Moreau affine quotient is
simple and its minimal reduction is the simple minimal $W$-algebra:
\begin{align}
  V^{n-6}(E_8)
  \Big/\left\langle\sigma_{n-6}(w)^{n+1}\right\rangle
  &\cong L_{n-6}(E_8),
  \label{e8:eq:E8-AM-conjecture-one-all-n}\\
  H^0_{\DS,f_\theta}\!\left(
    V^{n-6}(E_8)
    \Big/\left\langle\sigma_{n-6}(w)^{n+1}\right\rangle
  \right)
  &\cong\W_{n-6}(E_8,f_\theta).
  \label{e8:eq:E8-AM-conjecture-three-all-n}
\end{align}
Equivalently,
\begin{equation*}
  \W^{n-6}(E_8,f_\theta)
  \Big/\left\langle:J^{\{e_\vartheta\}}{}^{n+1}:\right\rangle
  \cong\W_{n-6}(E_8,f_\theta).
\end{equation*}
\end{corollary}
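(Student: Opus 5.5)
The statement is a packaging of results already proved in this section, so the plan is to assemble them in the order in which the section established them. The first display, \eqref{e8:eq:E8-AM-conjecture-one-all-n}, is Theorem~\ref{e8:thm:E8-maximal-ideals-all-negative-cases}. The definition \eqref{e8:eq:E8-definition-Qn} of $Q_n$ as $V^{k_n}(E_8)/I_n$, together with $I_n=\langle s_n\rangle$ and $s_n=\sigma_{k_n}(w)^{n+1}$, identifies the displayed quotient with $Q_n$. The theorem then says that $\pi_n\colon Q_n\twoheadrightarrow L_{n-6}(E_8)$ is an isomorphism. That theorem itself came from applying Proposition~\ref{prop:common-affine-lifting} to $\pi_n$. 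The hypotheses of that proposition hold for two reasons. First, $Q_n$ is a graded quotient with finite-dimensional homogeneous $\mathfrak g$-modules, and so lies in the exactness category by Lemma~\ref{lem:common-graded-subquotient-category-O}. Second, the induced map on minimal reductions is an isomorphism by \eqref{e8:eq:E8-reduction-Qn-simple-W-all-n}.

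The second display, \eqref{e8:eq:E8-AM-conjecture-three-all-n}, is exactly \eqref{e8:eq:E8-reduction-Qn-simple-W-all-n}. That formula was obtained by composing two results. Proposition~\ref{e8:prop:E8-reduction-of-Qn} gives $H^0_{\DS,f_\theta}(Q_n)\cong\widetilde{\W}_n$, using exact reduction, cyclicity of $H^0_{\DS,f_\theta}(I_n)$, and the extremal-line identification in Corollary~\ref{e8:cor:E8-exact-reduction-sn}. Theorem~\ref{e8:thm:E8-Wtilde-simple-all-n} gives $\widetilde{\W}_n\cong\W_{n-6}(E_8,f_\theta)$.

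The final ``equivalently'' statement is then just the definition \eqref{e8:eq:E8-definition-Wtilde-n} of $\widetilde{\W}_n$ combined with Theorem~\ref{e8:thm:E8-Wtilde-simple-all-n}.

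The only point needing care is that all these isomorphisms are the canonical maps induced by $\pi_n$, so that the identifications are compatible; no new obstacle arises. The genuine difficulty lies upstream, in the Casimir-gap simplicity of $\widetilde{\W}_n$, and that is taken as established.
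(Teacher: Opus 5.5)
Your proposal is correct and matches the paper's proof. The paper likewise reads \eqref{e8:eq:E8-AM-conjecture-one-all-n} off Theorem~\ref{e8:thm:E8-maximal-ideals-all-negative-cases} and \eqref{e8:eq:E8-AM-conjecture-three-all-n} off \eqref{e8:eq:E8-reduction-Qn-simple-W-all-n}, or equivalently by reducing the first display. It also obtains the final equivalence from Theorem~\ref{e8:thm:E8-Wtilde-simple-all-n} together with the definition of $\widetilde{\W}_n$.
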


\begin{proof}
Equation \eqref{e8:eq:E8-AM-conjecture-one-all-n} is
Theorem~\ref{e8:thm:E8-maximal-ideals-all-negative-cases}.  Equation
\eqref{e8:eq:E8-AM-conjecture-three-all-n} follows by applying minimal
reduction, or directly from \eqref{e8:eq:E8-reduction-Qn-simple-W-all-n}.
The last assertion is Theorem~\ref{e8:thm:E8-Wtilde-simple-all-n}.
\end{proof}

\begin{corollary}

For every $1\le n\le5$,
\begin{equation*}
  X_{L_{n-6}(E_8)}
  =\overline{\mathbb O_{\min}}.
\end{equation*}
Moreover, the simple minimal $W$-algebra
$\W_{n-6}(E_8,f_\theta)$ is lisse.
\end{corollary}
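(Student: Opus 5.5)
The first claim, $X_{L_{n-6}(E_8)}=\overline{\mathbb O_{\min}}$, follows from the identification of the simple quotient. Theorem~\ref{e8:thm:E8-maximal-ideals-all-negative-cases} gives
\[
L_{n-6}(E_8)\cong Q_n=V^{n-6}(E_8)\big/\left\langle\sigma_{n-6}(w)^{n+1}\right\rangle .
\]
Associated varieties depend only on the vertex algebra up to isomorphism, so $X_{L_{n-6}(E_8)}=X_{Q_n}$. Proposition~\ref{e8:prop:E8-associated-variety-Qn} (that is, \cite[Proposition~5.2]{ArakawaMoreau2018}) gives $X_{Q_n}=\overline{\mathbb O_{\min}}$, and this proves the first assertion.

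For the second claim I would argue in the same way on the reduced side. Equation \eqref{e8:eq:E8-reduction-Qn-simple-W-all-n}, or equivalently Theorem~\ref{e8:thm:E8-Wtilde-simple-all-n}, identifies $\W_{n-6}(E_8,f_\theta)\cong\widetilde{\W}_n$. Proposition~\ref{e8:prop:E8-associated-variety-Qn} shows that $X_{\widetilde{\W}_n}=\{f_\theta\}$ and that $\widetilde{\W}_n$ is lisse. Lisse-ness is an isomorphism invariant: it means finite-dimensionality of $R_V=V/C_2(V)$, and $C_2(V)$ is intrinsic to $V$. Hence the simple minimal $W$-algebra is lisse.

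As a consistency check, I would also apply \eqref{eq:common-associated-variety} directly to $L_{n-6}(E_8)$:
\[
X_{\W_{n-6}(E_8,f_\theta)}=X_{L_{n-6}(E_8)}\cap\mathcal S_{f_\theta}=\overline{\mathbb O_{\min}}\cap\mathcal S_{f_\theta}=\{f_\theta\}.
\]
This uses that $\overline{\mathbb O_{\min}}$ meets the Slodowy slice transversally at the single point $f_\theta$, as already used in Proposition~\ref{e8:prop:E8-associated-variety-Qn}. The set is zero-dimensional, which again gives lisse-ness. Nonvanishing is the statement $\W_{n-6}(E_8,f_\theta)\neq0$, already obtained from \eqref{eq:common-DS-nonvanishing}.

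There is no substantive obstacle. The only point needing care is that the cited associated-variety result is stated for the candidate quotient $Q_n$, not for $L_{n-6}(E_8)$. The maximality theorem is exactly what bridges this, so the order of citations must be the isomorphism first and then \cite[Proposition~5.2]{ArakawaMoreau2018}.
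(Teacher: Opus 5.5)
Your proposal is correct and follows the paper's proof: you identify $L_{n-6}(E_8)\cong Q_n$ via the maximal-ideal theorem and import $X_{Q_n}=\overline{\mathbb O_{\min}}$, then obtain lisse-ness of $\W_{n-6}(E_8,f_\theta)$ from its identification with $\widetilde{\W}_n$ together with the lisse-ness of $\widetilde{\W}_n$. Your extra check, $X_{\W_{n-6}(E_8,f_\theta)}=X_{L_{n-6}(E_8)}\cap\mathcal S_{f_\theta}=\{f_\theta\}$, is a valid but redundant second route to the same conclusion.
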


\begin{proof}
By Theorem~\ref{e8:thm:E8-maximal-ideals-all-negative-cases}, the simple affine
vertex algebra is isomorphic to $Q_n$.  The associated-variety identity now
follows from \eqref{e8:eq:E8-associated-variety-Qn}.  The lisse assertion follows from
Proposition~\ref{e8:prop:E8-associated-variety-Qn} and
Theorem~\ref{e8:thm:E8-Wtilde-simple-all-n}.
\end{proof}

\begin{remark}

Together with the previously established $n=0$ case in
\cite{ArakawaMoreau2018}, Theorem~\ref{e8:thm:E8-maximal-ideals-all-negative-cases}
settles all negative members of the Arakawa--Moreau family for type $E_8$.
The proof requires neither a classification of finite-dimensional simple
modules over the Ramond Zhu algebras $B_n$ nor a direct determination of the
full affine maximal ideals by singular-vector calculations.
\end{remark}
\section{A rank-reduction application at collapsing levels}
\label{sec:collapsing-application}

The maximal-ideal theorem in this section is already known from
\cite[Section~8]{AdamovicKacMosenederFrajriaPapiPerse2020}.  Our purpose is
not to reprove a new classification result, but to give an alternative
reduction-theoretic proof which lies outside the Arakawa--Moreau family and
shows that Theorem~\ref{thm:common-minimal-reduction-maximality} is useful
independently of the case analysis in Sections~\ref{sec:case-dl}--
\ref{sec:case-e8}.  The proof lowers the rank by two under minimal
Drinfeld--Sokolov reduction.

\subsection{The collapsing family and its singular vectors}

Fix an integer $r\ge2$ and set
\begin{equation*}
  \mathfrak g_r=D_{2r},
  \qquad
  k_r=2-2r.
\end{equation*}
Use the orthogonal realization with roots
$\pm\varepsilon_i\pm\varepsilon_j$,
$1\le i<j\le2r$.  We use the matrix root-vector convention specified in
Lemma~\ref{lem:collapsing-quadratic-relation} below and choose
\begin{equation*}
  \theta=\varepsilon_1+\varepsilon_2,
  \qquad
  f_\theta=-e_{-\theta}.
\end{equation*}
The singular vectors used in
\cite{AdamovicKacMosenederFrajriaPapiPerse2020} are the quadratic vector
\begin{equation}
  v_r=
  \sum_{i=2}^{2r}
  e_{\varepsilon_1-\varepsilon_i}(-1)
  e_{\varepsilon_1+\varepsilon_i}(-1)\Vac
  \label{eq:collapsing-vr}
\end{equation}
and two spinor Pfaffian vectors $w_r^+$ and $w_r^-$.  To fix notation, let
$\mathcal P(I)$ denote the set of perfect matchings of an even ordered set
$I$.  Up to a nonzero scalar,
\begin{equation*}
  w_r^+
  =
  \sum_{p\in\mathcal P(\{1,\ldots,2r\})}
  \operatorname{sgn}(p)
  \prod_{\{i,j\}\in p}
  e_{\varepsilon_i+\varepsilon_j}(-1)\Vac,
\end{equation*}
where the factors may be taken in any fixed order.  The vector $w_r^-$ is
obtained from $w_r^+$ by the diagram automorphism
$\varepsilon_{2r}\mapsto-\varepsilon_{2r}$.  Their finite highest weights
are $2\omega_{2r}$ and $2\omega_{2r-1}$, respectively, and their conformal
weight is $r$.

Define
\begin{equation}
  Q_r
  =
  V^{k_r}(D_{2r})
  \big/
  \langle v_r,w_r^+,w_r^-\rangle.
  \label{eq:collapsing-Qr}
\end{equation}
The three vectors belong to the maximal proper graded ideal, so there is a
canonical surjection
\begin{equation*}
  Q_r\twoheadrightarrow L_{k_r}(D_{2r}).
\end{equation*}

For the minimal grading determined by $\theta$ one has
\begin{equation*}
  \mathfrak g_r^\natural
  \cong
  \mathfrak s\oplus\mathfrak d_r,
  \qquad
  \mathfrak s\cong A_1,
  \qquad
  \mathfrak d_r\cong D_{2r-2},
\end{equation*}
where the root of $\mathfrak s$ is
$\theta_A=\varepsilon_1-\varepsilon_2$.  The standard current-level formula of
\cite{AdamovicKacMosenederFrajriaPapiPerse2018} gives
\begin{equation}
  k^\natural_{\mathfrak s}=0,
  \qquad
  k^\natural_{\mathfrak d_r}=4-2r=k_{r-1}.
  \label{eq:collapsing-current-levels}
\end{equation}
Moreover,
\begin{equation}
  (\mathfrak g_r)_{-1/2}
  \cong
  L_{A_1}(\varpi)\boxtimes
  L_{D_{2r-2}}(\varpi'_1).
  \label{eq:collapsing-half-module}
\end{equation}
Here and below primes refer to the root datum of $D_{2r-2}$ on the
coordinates $\varepsilon_3,\ldots,\varepsilon_{2r}$.

\subsection{Exact reduction of the three generators}

Let
\begin{equation*}
  \mathcal U_r
  =
  \mathcal W^{k_r}(D_{2r},f_\theta).
\end{equation*}
Write $J_A=J^{\{e_{\theta_A}\}}$ for the highest-root current of the
$A_1$-factor.  The next proposition is the local input for the
rank-reduction argument.

\begin{proposition}[Exact rank-reducing BRST images]
\label{prop:collapsing-exact-images}
For every $r\ge3$, there are nonzero constants
$c_r,a_r^+,a_r^-\in\mathbb C^\times$ such that
\begin{equation}
  [v_r]_{\mathrm{DS}}=c_rJ_A,
  \qquad
  [w_r^+]_{\mathrm{DS}}=a_r^+w_{r-1}^+,
  \qquad
  [w_r^-]_{\mathrm{DS}}=a_r^-w_{r-1}^-.
  \label{eq:collapsing-exact-images}
\end{equation}
On the right, the Pfaffian states are formed from the surviving
$D_{2r-2}$ currents of level $k_{r-1}$.
\end{proposition}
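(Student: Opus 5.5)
Each of the three identities in \eqref{eq:collapsing-exact-images} follows the pattern of Proposition~\ref{dl:prop:Dl-exact-reduced-singular-vectors}: first show that the BRST class is nonzero, then show that it lies in a one-dimensional weight space of $\mathcal U_r$ containing the stated vector.

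\emph{Nonvanishing.} Apply Lemma~\ref{lem:common-reduced-singular-generator} to each generator; since $k_r=2-2r<0$, it is enough to check the affine-wall inequality.
\begin{itemize}
\item For $v_r$, the finite weight is $2\varepsilon_1=\theta+\theta_A$, the degree is $2$, and $\langle 2\varepsilon_1,\theta^\vee\rangle=2$. So $\widehat\lambda(\alpha_0^\vee)=k_r-2<0$.
\item For $w_r^\pm$, the weight is $2\omega_{2r}$ or $2\omega_{2r-1}$, and the pairing with $\theta^\vee$ is $1$. So $\widehat\lambda(\alpha_0^\vee)=k_r-1<0$.
\end{itemize}
Hence all three classes are nonzero and generate cyclic reductions.

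\emph{Weights.} The reduced conformal weight is $\Delta_W=d-\mu(x_\theta)$.
\begin{itemize}
\item For $v_r$: $\Delta_W=2-1=1$. The $\mathfrak g^\natural$-weight is the restriction of $2\varepsilon_1$, namely $\theta_A$ on $\mathfrak s$ and $0$ on $\mathfrak d_r$.
\item For $w_r^+$: $\Delta_W=r-r\cdot\tfrac12\cdot 2/2$. With $\omega_{2r}=\tfrac12(\varepsilon_1+\cdots+\varepsilon_{2r})$ one gets $2\omega_{2r}(x_\theta)=1$, so $\Delta_W=r-1$. The restricted weight is $\tfrac12\theta_A\cdot 0+(\varepsilon_3+\cdots+\varepsilon_{2r})$, i.e.\ $0$ on $\mathfrak s$ (the $\varepsilon_1+\varepsilon_2$ part restricts trivially) and $2\omega'_{2r-2}$ on $\mathfrak d_r$.
\item Similarly $w_r^-$ has $\Delta_W=r-1$ and weight $2\omega'_{2r-3}$.
\end{itemize}

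\emph{One-dimensionality.} For $v_r$, the weight-one space of $\mathcal U_r$ is the current space, and its $\theta_A$-weight line is $\mathbb CJ_A$; this settles the first identity. For $w_r^\pm$ one needs an extremal PBW bound, but Proposition~\ref{prop:common-extremal-PBW-line} does not apply directly because the weight is not a multiple of a highest root. I would use instead the charge $d(\mu)=\langle\mu,2\omega'_{2r-2}\rangle$, i.e.\ $\sum_{i\ge3}\mu_i$.
\begin{itemize}
\item Currents of $\mathfrak d_r$ have charge at most $2$, with equality exactly for the roots $\varepsilon_i+\varepsilon_j$ ($i,j\ge3$).
\item $G$-fields have charge at most $1<2\cdot\tfrac32$.
\item $\mathfrak s$-currents and $\omega$ have charge $0$.
\end{itemize}
So a PBW monomial at conformal weight $r-1$ with charge $2(r-1)$ is a product of $r-1$ undifferentiated currents $J^{\{e_{\varepsilon_i+\varepsilon_j}\}}$ whose total weight is $\varepsilon_3+\cdots+\varepsilon_{2r}$, i.e.\ a perfect matching of $\{3,\dots,2r\}$; these currents mutually commute. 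The weight space is therefore spanned by the matching monomials, not just one line.

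To cut it down I would use that the class is a $\mathfrak g^\natural$-highest-weight vector (it comes from an affine singular vector). Inside the span of matching monomials, which is a commutative polynomial piece isomorphic to the corresponding piece of $S(\wedge^2)$, the highest-weight vectors of weight $2\omega'_{2r-2}$ form the single line spanned by the Pfaffian. This is the classical statement that the Pfaffian is the unique $\mathfrak{gl}$/$\mathfrak{so}$ highest-weight vector in that multidegree, and it reduces to Lemma~\ref{e8:lem:E8-extremal-W-weight-space}-style highest-weight reasoning: the positive root vectors $e_{\varepsilon_i-\varepsilon_j}$ act by permuting indices, forcing antisymmetry under all transpositions. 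That line is $w_{r-1}^\pm$, built from the level-$k_{r-1}$ currents by \eqref{eq:collapsing-current-levels}. The sign for $w^-$ follows by the diagram automorphism. Nonvanishing of the classes then gives nonzero $a_r^\pm$.

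The main obstacle is the last step: proving that highest-weight vectors are unique among the matching monomials, using only the zero-mode action, which permutes indices. The case $r=3$ (where $D_4$ has triality) has to be checked carefully.
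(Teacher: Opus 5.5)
Your proposal is correct and follows the paper's proof step for step. Both use nonvanishing from Lemma~\ref{lem:common-reduced-singular-generator}, and the charge $\sum_{i\ge3}\mu_i$ to force the weight space onto perfect-matching current monomials. Both then use the $\mathfrak g^\natural$-highest-weight condition to cut this down to the Pfaffian line, and the diagram automorphism to handle $w_r^-$.

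For the step you flag as the obstacle, the paper argues through invariants. Let $E$ be the space indexed by $\{3,\ldots,2r\}$. A determinant-weight vector killed by the raising operators of $\mathfrak{sl}(E)$ is $\mathfrak{sl}(E)$-highest of weight zero, hence an invariant in $S^{r-1}(\bigwedge^2E)$, and that invariant space is spanned by the Pfaffian. Your transposition-by-transposition coefficient comparison would also work, since such moves connect all perfect matchings.

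One small slip: $\langle 2\omega_{2r},\theta^\vee\rangle=2$, not $1$. So the affine-wall value is $k_r-2=-2r$ rather than $k_r-1$, which changes nothing in the argument.
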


\begin{proof}
Let
\[
 N_r^{v}=U(\widehat{\mathfrak g}_r^{\prime})v_r,
 \qquad
 N_r^{\pm}=U(\widehat{\mathfrak g}_r^{\prime})w_r^{\pm}.
\]
The three finite highest weights are respectively
$2\varepsilon_1$, $2\omega_{2r}$, and $2\omega_{2r-1}$.  For
$\theta=\varepsilon_1+\varepsilon_2$ one has
\[
 \langle2\varepsilon_1,\theta^\vee\rangle
 =\langle2\omega_{2r},\theta^\vee\rangle
 =\langle2\omega_{2r-1},\theta^\vee\rangle=2.
\]
Thus the affine highest weight $\widehat\lambda$ of each of the three
singular vectors satisfies
\[
 \widehat\lambda(\alpha_0^\vee)=k_r-2=(2-2r)-2=-2r<0.
\]
Lemma~\ref{lem:common-reduced-singular-generator} therefore shows that all
three reduced cyclic generators are nonzero.

The vector $v_r$ has affine conformal weight $2$ and finite highest weight
$2\varepsilon_1$.  Since
\[
  x_\theta=\frac{\varepsilon_1+\varepsilon_2}{2},
  \qquad
  (2\varepsilon_1)(x_\theta)=1,
\]
its reduced conformal weight is $1$, while its restriction to
$\mathfrak h^\natural$ is
$\varepsilon_1-\varepsilon_2=\theta_A$.  The corresponding weight-one
highest-weight line in the universal minimal $W$-algebra is
$\mathbb C J_A$.  Since $[v_r]_{\mathrm{DS}}\ne0$, the first formula in
\eqref{eq:collapsing-exact-images} follows.

We next treat $w_r^+$.  Its reduced conformal weight is $r-1$, and its
restricted $D_{2r-2}$-weight is
\[
  \varepsilon_3+\cdots+\varepsilon_{2r}
  =2\omega'_{2r-2}.
\]
Define
\[
  q\!\left(\sum_{i=3}^{2r}b_i\varepsilon_i\right)
  =\sum_{i=3}^{2r}b_i
\]
and extend $q$ by zero on the $A_1$-factor.  Every PBW monomial $X$ in the
standard strong generators of $\mathcal U_r$ satisfies
\[
  q(\operatorname{wt}X)\le2\Delta(X).
\]
Equality can occur only for a product of undifferentiated currents
$J^{\{e_{\varepsilon_i+\varepsilon_j}\}}$ with $3\le i<j\le2r$.
For the simultaneous weight under consideration both sides equal
$2r-2$.  Hence the whole weight space is spanned by the perfect-matching
current monomials on $I=\{3,\ldots,2r\}$.

Let $E$ be the vector space with basis indexed by $I$.  Under the natural
$\mathfrak{sl}(E)\subset D_{2r-2}$ action, the span of the root vectors
$e_{\varepsilon_i+\varepsilon_j}$ identifies with $\bigwedge^2E$.  The
target weight is the determinant weight.  Thus a vector in the
perfect-matching space which is killed by all positive difference-root
operators is an $\mathfrak{sl}(E)$-highest vector of highest weight zero,
and hence an $\mathfrak{sl}(E)$-invariant.  The invariant space
\[
 \left(S^{r-1}(\bigwedge^2E)\right)^{\mathfrak{sl}(E)}
\]
is one-dimensional and is generated by the Pfaffian.  Indeed, complete
reducibility identifies its dimension with the multiplicity of the trivial
module in $S^{r-1}(\bigwedge^2E^*)$.  This is the degree-$(r-1)$ piece of
the $SL(E)$-invariant polynomial ring on skew forms, and that piece is
spanned by the Pfaffian.  The remaining positive simple root
$\varepsilon_{2r-1}+\varepsilon_{2r}$ also annihilates every factor
$e_{\varepsilon_i+\varepsilon_j}$, since the sum of the two roots is never
a root.  Therefore the $D_{2r-2}$-highest-weight subspace is exactly the
one-dimensional Pfaffian line.  Since $[w_r^+]_{\mathrm{DS}}\ne0$, it is a
nonzero scalar multiple of $w_{r-1}^+$.

The diagram automorphism defining $w_r^-$ fixes the chosen minimal
$\mathfrak{sl}_2$-triple and commutes with reduction.  It induces the
corresponding spin-node automorphism of $D_{2r-2}$, so the third formula
follows from the second.
\end{proof}

\subsection{The rank-reducing current quotient}

Put
\begin{equation*}
  R_r=H^0_{\mathrm{DS},f_\theta}(Q_r).
\end{equation*}
By Lemma~\ref{lem:common-graded-subquotient-category-O}, the defining
short exact sequence of $Q_r$ lies in the category on which minimal
reduction is exact.  Hence there is a surjection
$\mathcal U_r\twoheadrightarrow R_r$, and the three classes in
\eqref{eq:collapsing-exact-images} vanish in $R_r$.

\begin{lemma}[Generation by the surviving current algebra]
\label{lem:collapsing-surviving-generation}
The vertex algebra $R_r$ is generated by the currents of the
$D_{2r-2}$-factor.
\end{lemma}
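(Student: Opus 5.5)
The plan is to show, using only the relations available in $R_r$, that every strong generator of $\mathcal U_r$ other than the $D_{2r-2}$-currents either vanishes in $R_r$ or is expressed through those currents. The strong generators are $J^{\{a\}}$ for $a\in\mathfrak s\oplus\mathfrak d_r$, $G^{\{u\}}$ for $u\in(\mathfrak g_r)_{-1/2}$, and $\omega$. It therefore suffices to prove four facts in $R_r$:
\begin{itemize}
\item the $A_1$-currents vanish;
\item all $G$-fields vanish;
\item $\omega$ equals the Sugawara vector of the $D_{2r-2}$-currents;
\item nothing else survives, by the PBW spanning property.
\end{itemize}

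First, Proposition~\ref{prop:collapsing-exact-images} gives $J_A=0$ in $R_r$, since $c_r\ne0$. I would argue exactly as in Lemma~\ref{dl:lem:Dl-minus-two-D-currents-vanish}. The kernel of $\mathcal U_r\to R_r$, intersected with the weight-one $\mathfrak s$-current space, is stable under the current zero modes $J^{\{a\}}_{(0)}$ for $a\in\mathfrak s$. It contains the highest-root vector, and the adjoint module of $A_1$ is generated by its highest-root vector, so all $\mathfrak s$-currents vanish. (Here $k^\natural_{\mathfrak s}=0$ by \eqref{eq:collapsing-current-levels}, so there is no central term to worry about.)

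Second, I would use the current--$G$ relation $[J^{\{a\}}{}_\lambda G^{\{u\}}]=G^{\{[a,u]\}}$, as in Lemma~\ref{dl:lem:Dl-minus-two-G-fields-vanish}. Taking $a\in\mathfrak s$, the left side vanishes in $R_r$. By \eqref{eq:collapsing-half-module}, $[\mathfrak s,(\mathfrak g_r)_{-1/2}]=(\mathfrak g_r)_{-1/2}$, because $L_{A_1}(\varpi)$ is a nontrivial irreducible module. Hence every $G^{\{u\}}$ vanishes.

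Third, I would handle the conformal vector. The expected route is the $\lambda$-bracket of two $G$-fields in the universal minimal $W$-algebra, taken from \cite{AdamovicKacMosenederFrajriaPapiPerse2018}. Its $\lambda^0$ (or $\lambda^1$) coefficient expresses $\langle u,v\rangle$ times a combination of $\omega$, the Sugawara-type quadratic terms in the currents, and $\partial J$. Choosing $u,v$ with $\langle u,v\rangle\ne0$ and using $G=0$ and $J^{\{\mathfrak s\}}=0$, this yields $c\,\omega\in\langle J^{\{\mathfrak d_r\}}\rangle$. The conclusion then requires the coefficient of $\omega$ to be nonzero at $k=k_r$; in the usual normalization it is proportional to $k+h^\vee=k_r+4r-2=2r\ne0$.

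This is the step I expect to be the main obstacle: I must check that this coefficient really is proportional to $k+h^\vee$ and not to something that vanishes at the collapsing level. If that fails, the fallback is to show directly that $\omega-\omega_{\mathrm{sug},\mathfrak d_r}$ is a singular vector lying in the ideal, using the collapsing criterion of \cite[Theorem~3.3]{AdamovicKacMosenederFrajriaPapiPerse2018}.

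Finally, $R_r$ is a quotient of $\mathcal U_r$ and so is spanned by PBW monomials in derivatives of the strong generators. Every such monomial involving $J^{\{\mathfrak s\}}$, $G$, or $\omega$ is either zero or rewritten in terms of the $D_{2r-2}$-currents. Hence $R_r$ is generated by the $D_{2r-2}$-currents.
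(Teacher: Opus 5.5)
Your proposal is correct and follows the paper's proof essentially step for step. You show that $J_A=0$ propagates to all $A_1$-currents by zero-mode stability, and that the current--$G$ relation together with $[\mathfrak s,(\mathfrak g_r)_{-1/2}]=(\mathfrak g_r)_{-1/2}$ kills the $G$-fields. The constant ($\lambda^0$) term $-2(k_r+h^\vee)\langle u,v\rangle\omega+\Phi_{u,v}$ of the $G$--$G$ bracket, with $k_r+h^\vee=2r\ne0$, then expresses $\omega$ through the surviving $D_{2r-2}$-currents, and PBW spanning finishes. One small point: only this expression of $\omega$ in terms of the currents is needed or established here, not the stronger claim in your bullet list that $\omega$ equals the Sugawara vector.
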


\begin{proof}
Since $J_A=0$ in $R_r$, stability of the kernel under the $A_1$ current
zero modes implies that every $A_1$ current vanishes.  The universal
current--$G$ relation
\begin{equation*}
  [J^{\{a\}}{}_{\lambda}G^{\{u\}}]
  =G^{\{[a,u]\}}
\end{equation*}
and \eqref{eq:collapsing-half-module} then imply that all $G$-generators
vanish as well.

It remains to eliminate the independent conformal generator.  Choose
$u,v\in(\mathfrak g_r)_{-1/2}$ with
$\langle u,v\rangle\ne0$.  The standard $G$--$G$ relation of
\cite[Lemma~3.1]{AdamovicKacMosenederFrajriaPapiPerse2018} has constant term
\begin{equation*}
  -2(k_r+h^\vee)\langle u,v\rangle\omega
  +\Phi_{u,v},
\end{equation*}
where $\Phi_{u,v}$ belongs to the differential vertex subalgebra generated
by the $\mathfrak g_r^\natural$ currents.  In $R_r$ the left-hand side and
all $A_1$-current contributions vanish.  Since
\begin{equation*}
  k_r+h^\vee(D_{2r})
  =(2-2r)+(4r-2)=2r\ne0,
\end{equation*}
the resulting identity expresses the image of $\omega$ in terms of the
surviving $D_{2r-2}$ currents.  The universal minimal $W$-algebra is
strongly generated by the currents, the $G$-fields, and $\omega$, so the
claim follows.
\end{proof}

The next relation is responsible for the strict decrease in rank.

\begin{lemma}[The rank-reducing quadratic relation]
\label{lem:collapsing-quadratic-relation}
The image in $R_r$ of the quadratic singular vector $v_{r-1}$ of
$V^{k_{r-1}}(D_{2r-2})$ is zero.
\end{lemma}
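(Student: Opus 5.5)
The plan is to derive the vanishing of $v_{r-1}$ from the $G$--$G$ relation of \cite[Lemma~3.1]{AdamovicKacMosenederFrajriaPapiPerse2018}, read off in $R_r$ in a different component than the one used in Lemma~\ref{lem:collapsing-surviving-generation}. We already know that in $R_r$ all $A_1$ currents and all $G$-fields vanish. Hence every $\lambda$-bracket $[G^{\{u\}}{}_\lambda G^{\{v\}}]$ is zero in $R_r$. Each $\lambda$-coefficient of the universal formula therefore gives a relation among the surviving $D_{2r-2}$ currents and $\omega$.

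We now choose $u,v$ so that $\langle u,v\rangle=0$. Then the $\omega$-term and the scalar/central terms disappear, and only the quadratic current term survives. By \eqref{eq:collapsing-half-module} we can write $u=x\otimes a$ and $v=y\otimes b$, with $x,y$ in the $A_1$ doublet and $a,b$ in the vector module of $D_{2r-2}$. Take $x=y$ equal to the highest vector of the doublet, so that $\langle x,x\rangle=0$ and $\langle u,v\rangle=0$. Let $a$ and $b$ range over pairs of vector weights $\varepsilon_1'$-type weights, i.e. $\pm\varepsilon_i$ with $3\le i\le 2r$.

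The constant term of the bracket is then a normally ordered quadratic expression $\sum \langle[x_i,u],[v,x_j]\rangle :J^{\{x^i\}}J^{\{x^j\}}:$, plus a derivative term of currents. The contraction $\langle[x_i,u],[v,x_j]\rangle$ is nonzero only when $x_i,x_j$ both lie in $\mathfrak d_r$ or both in $\mathfrak s$; mixed terms involve an $A_1$ current, which is zero in $R_r$. The $\mathfrak s\mathfrak s$ part involves only $A_1$ currents and also dies. The $\mathfrak d\mathfrak d$ part, symmetrized in $a,b$, is a $D_{2r-2}$-equivariant quadratic map
\[
S^2(L_{D_{2r-2}}(\varpi_1'))\to S^2(\mathfrak d_r).
\]
Take the highest component $a=b=$ the vector of weight $\varepsilon_3$. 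The resulting state has $D_{2r-2}$-weight $2\varepsilon_3$, and I expect it to be a nonzero multiple of $\sum_{i\ge4}:J^{\{e_{\varepsilon_3-\varepsilon_i}\}}J^{\{e_{\varepsilon_3+\varepsilon_i}\}}:$, which is exactly $v_{r-1}$ in the coordinates $\varepsilon_3,\dots,\varepsilon_{2r}$. To justify the identification, note that the weight-$2\varepsilon_3$, conformal-weight-two subspace of the $D_{2r-2}$ current algebra spanned by quadratic currents, intersected with the highest-weight vectors for the positive zero modes, is one-dimensional, spanned by the Kostant-type vector $v_{r-1}$. The derivative term $\partial J^{\{\cdot\}}$ cannot contribute at weight $2\varepsilon_3$, since $2\varepsilon_3$ is not a root. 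Finally, the zero modes of the surviving currents map highest vectors to highest vectors, and the bracket is equivariant, so the produced state is highest weight.

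The main obstacle is showing that the coefficient is nonzero. For that I would compute $\sum_{i}\langle[x_i,u],[v,x^i]\rangle$ restricted to the pair $x_i=e_{\varepsilon_3\pm\varepsilon_j}$ directly in the matrix realization, using \eqref{eq:common-quadratic-term} and the form $\langle u,v\rangle=(e_\theta\mid[u,v])$, and check that one term such as $\langle[e_{\varepsilon_3-\varepsilon_4},u],[v,e_{\varepsilon_3+\varepsilon_4}]\rangle$ is a nonzero structure constant. If that computation is awkward, the fallback is this: if the coefficient vanished, the relation would be trivial and $R_r$ would surject onto $V^{k_{r-1}}(D_{2r-2})/\langle w^\pm_{r-1}\rangle$. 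Such a quotient has associated variety larger than the Slodowy-slice intersection computed from $X_{Q_r}$ via \eqref{eq:common-associated-variety}, which is a contradiction.
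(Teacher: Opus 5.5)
Your overall strategy matches the paper's. You take the constant term of $[G^{\{u\}}{}_\lambda G^{\{v\}}]$ for a pair with $\langle u,v\rangle=0$, kill the $A_1$-current and $G$-field contributions in $R_r$, and identify what remains with the one-dimensional highest-weight line spanned by $v_{r-1}$. There is a genuine gap, however: your choice of $u,v$ makes the relation vacuous.

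\textbf{Why $x=y$ fails.} Since $U\cong L_{A_1}(\varpi)\boxtimes L_{D_{2r-2}}(\varpi'_1)$ is irreducible, the invariant skew form factors as $\langle x\otimes a,y\otimes b\rangle=c\,\omega(x,y)\,(a\mid b)$. Here $\omega$ is the symplectic form on the doublet and $(\cdot\mid\cdot)$ the symmetric form on the vector module. For $x_i,x_j\in\mathfrak d_r$ this gives $\langle[x_i,u],[v,x_j]\rangle=-c\,\omega(x,y)\,(x_ia\mid x_jb)$. Taking $x=y$ therefore kills every $\mathfrak d\mathfrak d$ coefficient at once, because $\omega(x,x)=0$.

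A weight count gives the same conclusion. Your final choice is $u=v=x_+\otimes a_{\varepsilon_3}$, that is $u=v=e_{\varepsilon_3-\varepsilon_2}$. The bracket then has $\mathfrak h^\natural$-weight $\theta_A+2\varepsilon_3$. But $v_{r-1}$, like every quadratic in $\mathfrak d_r$-currents, has $\mathfrak s$-weight $0$. So the coefficient you planned to compute in your ``main obstacle'' step is zero, and the relation you obtain is $0=0$ modulo $A_1$ currents.

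\textbf{The fix.} The isotropy must come from the $D_{2r-2}$ factor, not the $A_1$ factor. Take $x=x_+$ and $y=x_-$, so that $\omega(x_+,x_-)\ne0$. Take $a=b$ equal to the weight-$\varepsilon_3$ vector, which is isotropic for $(\cdot\mid\cdot)$. This is exactly the paper's choice $u=e_{\varepsilon_3-\varepsilon_2}$, $v=e_{\varepsilon_3-\varepsilon_1}$. Then:
\begin{itemize}
\item $\langle u,v\rangle=0$;
\item $[[e_\theta,u],v]=0$, because $2\varepsilon_3$ is not a root;
\item the total weight is $2\varepsilon_3$, with $\mathfrak s$-weight zero;
\item the $\mathfrak d\mathfrak d$ coefficients are proportional to $\omega(x_+,x_-)(x_ia\mid x_ja)$, which is nonzero for $x_i,x_j$ of weights $-\varepsilon_3\mp\varepsilon_4$.
\end{itemize}
The paper evaluates the coefficient of $X_4$ from the two relevant dual-basis terms and gets $G^{\{u\}}_{(0)}G^{\{v\}}=2v_{r-1}$.

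\textbf{The fallback is also invalid.} The available map goes $V^{k_{r-1}}(D_{2r-2})\twoheadrightarrow R_r$. Knowing only that $w^\pm_{r-1}$ vanish therefore gives $V^{k_{r-1}}(D_{2r-2})/\langle w^\pm_{r-1}\rangle\twoheadrightarrow R_r$, not a surjection out of $R_r$. A large associated variety for the source of a surjection contradicts nothing. Moreover, the vanishing of the coefficient for one particular pair $(u,v)$ says nothing about whether $v_{r-1}$ vanishes in $R_r$. Finally, $X_{Q_r}$ is not computed anywhere you could cite for this family.
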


\begin{proof}
Take
\begin{equation*}
  u=e_{\varepsilon_3-\varepsilon_2},
  \qquad
  v=e_{\varepsilon_3-\varepsilon_1}
  \in(\mathfrak g_r)_{-1/2}.
\end{equation*}
Then $\langle u,v\rangle=0$ and
$[[e_\theta,u],v]^\natural=0$.  By the formula of
\cite[Lemma~3.1]{AdamovicKacMosenederFrajriaPapiPerse2018}, the constant
term of $[G^{\{u\}}{}_{\lambda}G^{\{v\}}]$ is therefore
\begin{equation}
 \sum_{\gamma}
 :J^{\{[u,y^\gamma]^\natural\}}
  J^{\{[y_\gamma,v]^\natural\}}:,
 \label{eq:collapsing-GG-contraction}
\end{equation}
where $\{y_\gamma\}$ and $\{y^\gamma\}$ are dual bases of
$(\mathfrak g_r)_{1/2}$ with respect to
$\langle x,y\rangle_{\mathrm{ne}}=(f_\theta\mid[x,y])$.
The expression in \eqref{eq:collapsing-GG-contraction} has conformal
weight $2$, $D_{2r-2}$-weight $2\varepsilon_3$, and is killed by the
positive $D_{2r-2}$ current zero modes, since both $u$ and $v$ restrict to
the highest weight $\varepsilon_3$ of the vector representation.

We first determine the relevant highest-weight line.  Put
\begin{equation*}
 X_j=
 :J^{\{e_{\varepsilon_3-\varepsilon_j}\}}
  J^{\{e_{\varepsilon_3+\varepsilon_j}\}}:,
 \qquad 4\le j\le2r.
\end{equation*}
If roots $\alpha,\beta\in\Delta(D_{2r-2})$ satisfy
$\alpha+\beta=2\varepsilon_3$, then necessarily
\[
  \{\alpha,\beta\}
  =\{\varepsilon_3-\varepsilon_j,
       \varepsilon_3+\varepsilon_j\}
\]
for a unique $j$ with $4\le j\le2r$.  Moreover,
$2\varepsilon_3$ is not a root, so no derivative of a single current has
this weight.  It follows that every quadratic current state of weight
$2\varepsilon_3$ is a linear combination
$\sum_{j=4}^{2r}c_jX_j$.  For
$4\le i\le2r-1$, using the standard matrix root-vector normalization
displayed below, the Chevalley relation for the positive difference root
$\varepsilon_i-\varepsilon_{i+1}$ gives
\begin{equation*}
 e_{\varepsilon_i-\varepsilon_{i+1}}(0)
 \sum_{j=4}^{2r}c_jX_j
 =(c_{i+1}-c_i)
 :J^{\{e_{\varepsilon_3-\varepsilon_{i+1}}\}}
  J^{\{e_{\varepsilon_3+\varepsilon_i}\}}:.
\end{equation*}
The last positive simple root
$\varepsilon_{2r-1}+\varepsilon_{2r}$ gives the same condition
$c_{2r-1}=c_{2r}$.  Hence all coefficients are equal, and the
highest-weight line is one-dimensional, generated by
\begin{equation*}
 v_{r-1}
 =\sum_{j=4}^{2r}X_j
 =\sum_{j=4}^{2r}
 e_{\varepsilon_3-\varepsilon_j}(-1)
 e_{\varepsilon_3+\varepsilon_j}(-1)\Vac.
\end{equation*}

It remains to show that the contraction is nonzero.  Use the standard
matrix root vectors
\begin{equation*}
 e_{\varepsilon_i-\varepsilon_j}=E_{i,j}-E_{-j,-i},
 \qquad
 e_{\varepsilon_i+\varepsilon_j}=E_{i,-j}-E_{j,-i},
\end{equation*}
with $(x\mid y)=\frac12\operatorname{tr}(xy)$ and
$f_\theta=-e_{-\varepsilon_1-\varepsilon_2}$.  With this choice,
$[e_\theta,f_\theta]=\theta^\vee$ and
$(e_\theta\mid f_\theta)=1$.  In
$(\mathfrak g_r)_{1/2}$ the two dual pairs relevant to the coefficient of
$X_4$ are
\begin{equation*}
 e_{\varepsilon_1+\varepsilon_4}
 \longleftrightarrow -e_{\varepsilon_2-\varepsilon_4},
 \qquad
 e_{\varepsilon_1-\varepsilon_4}
 \longleftrightarrow -e_{\varepsilon_2+\varepsilon_4}.
\end{equation*}
A direct matrix commutator calculation gives
\begin{align*}
 [u,-e_{\varepsilon_2-\varepsilon_4}]
 &=-e_{\varepsilon_3-\varepsilon_4},
 &
 [e_{\varepsilon_1+\varepsilon_4},v]
 &=-e_{\varepsilon_3+\varepsilon_4},\\
 [u,-e_{\varepsilon_2+\varepsilon_4}]
 &=-e_{\varepsilon_3+\varepsilon_4},
 &
 [e_{\varepsilon_1-\varepsilon_4},v]
 &=-e_{\varepsilon_3-\varepsilon_4}.
\end{align*}
These are the only dual-basis terms which can contribute to $X_4$, as is
seen by adding the corresponding roots.  The two resulting normally
ordered products are equal: the two current root vectors commute and have
zero invariant pairing.  Thus the coefficient of $X_4$ in
\eqref{eq:collapsing-GG-contraction} is $2$.  Since the contraction lies in
the one-dimensional highest-weight line, we obtain
\begin{equation*}
 G^{\{u\}}_{(0)}G^{\{v\}}=2v_{r-1}.
\end{equation*}
All $G$-fields vanish in $R_r$ by
Lemma~\ref{lem:collapsing-surviving-generation}; therefore
$v_{r-1}=0$ in $R_r$.
\end{proof}

\begin{proposition}[Rank-reduction surjection]
\label{prop:collapsing-rank-surjection}
For every $r\ge3$, there is a natural surjective homomorphism
\begin{equation*}
  Q_{r-1}\twoheadrightarrow R_r.
\end{equation*}
\end{proposition}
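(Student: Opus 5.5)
The plan is to build the map from the universal affine vertex algebra of the smaller rank and show that it kills the three defining generators of $Q_{r-1}$. By \eqref{eq:collapsing-current-levels}, the currents of the factor $\mathfrak d_r\cong D_{2r-2}$ in $\mathcal U_r$ have level $k_{r-1}=4-2r$. They therefore determine a unital vertex-algebra homomorphism
\[
 \phi:V^{k_{r-1}}(D_{2r-2})\longrightarrow\mathcal U_r,\qquad a(-1)\mathbf1\longmapsto J^{\{a\}}\quad(a\in\mathfrak d_r).
\]
Compose $\phi$ with the surjection $\mathcal U_r\twoheadrightarrow R_r$, which exists by exactness of minimal reduction applied to the defining sequence of $Q_r$ (Lemma~\ref{lem:common-graded-subquotient-category-O}). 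Call the composite $\bar\phi$. By Lemma~\ref{lem:collapsing-surviving-generation}, $R_r$ is generated by the $D_{2r-2}$ currents, so the image of $\bar\phi$ is a vertex subalgebra containing a generating set. Hence $\bar\phi$ is surjective.

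Next I will show that the kernel of $\bar\phi$ contains the three generators $v_{r-1},w_{r-1}^+,w_{r-1}^-$ of the ideal defining $Q_{r-1}$ in \eqref{eq:collapsing-Qr}. For $v_{r-1}$ this is exactly Lemma~\ref{lem:collapsing-quadratic-relation}. For the Pfaffians, Proposition~\ref{prop:collapsing-exact-images} gives $[w_r^\pm]_{\mathrm{DS}}=a_r^\pm w_{r-1}^\pm$ with $a_r^\pm\ne0$. The left-hand side lies in the image of $H^0_{\mathrm{DS},f_\theta}(\langle w_r^\pm\rangle)$, and exactness shows this image is contained in the kernel of $\mathcal U_r\to R_r$. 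Dividing by the nonzero scalar, $\bar\phi(w_{r-1}^\pm)=0$.

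One point needs care here. The Pfaffian state $w_{r-1}^\pm$ appearing in Proposition~\ref{prop:collapsing-exact-images} is formed from the surviving $\mathcal W$-currents $J^{\{a\}}$, and I must confirm it agrees with $\phi$ applied to the affine Pfaffian in $V^{k_{r-1}}(D_{2r-2})$. This holds because $\phi$ sends $a(-1)$-products of the vacuum to the corresponding normally ordered products of the currents $J^{\{a\}}$. The factors $e_{\varepsilon_i+\varepsilon_j}$ pairwise commute and have zero invariant pairing, so the ordering ambiguity disappears. The same observation applies to $v_{r-1}$, whose factors $e_{\varepsilon_3\mp\varepsilon_j}$ also commute and pair trivially.

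Since $\ker\bar\phi$ is an ideal containing the three generators, $\bar\phi$ factors through $Q_{r-1}=V^{k_{r-1}}(D_{2r-2})/\langle v_{r-1},w_{r-1}^+,w_{r-1}^-\rangle$. This gives the surjection $Q_{r-1}\twoheadrightarrow R_r$. Naturality comes from the fact that every map in the construction is canonical. I expect the main delicate point to be this identification of the reduced Pfaffian states with the images under $\phi$ of the affine ones, including the restriction $r\ge3$ needed so that $D_{2r-2}$ carries the same family of singular vectors. The remaining steps are formal consequences of the results already established.
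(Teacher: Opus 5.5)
Your proposal is correct and matches the paper's proof: the current homomorphism $V^{k_{r-1}}(D_{2r-2})\to R_r$ is surjective by Lemma~\ref{lem:collapsing-surviving-generation}, and Lemma~\ref{lem:collapsing-quadratic-relation} together with Proposition~\ref{prop:collapsing-exact-images} put $v_{r-1},w_{r-1}^{+},w_{r-1}^{-}$ in its kernel, so the map factors through $Q_{r-1}$. Your added check is a harmless elaboration that the paper leaves implicit. It verifies that the reduced quadratic and Pfaffian states are exactly the images of the affine ones, because their factors commute and pair to zero.
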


\begin{proof}
By Lemma~\ref{lem:collapsing-surviving-generation} and
\eqref{eq:collapsing-current-levels}, the universal property of the affine
vertex algebra gives a surjection
\begin{equation*}
  V^{k_{r-1}}(D_{2r-2})\twoheadrightarrow R_r.
\end{equation*}
Lemma~\ref{lem:collapsing-quadratic-relation} places $v_{r-1}$ in its
kernel.  Proposition~\ref{prop:collapsing-exact-images} places
$w_{r-1}^+$ and $w_{r-1}^-$ in the kernel as well.  The map therefore
factors through the quotient $Q_{r-1}$ defined in
\eqref{eq:collapsing-Qr} with $r$ replaced by $r-1$.
\end{proof}

\subsection{The induction and the independent application}

At the levels $k_r$, the simple minimal $W$-algebra is collapsing:
\begin{equation}
  H^0_{\mathrm{DS},f_\theta}
  \bigl(L_{k_r}(D_{2r})\bigr)
  \cong
  L_{k_{r-1}}(D_{2r-2}).
  \label{eq:collapsing-simple-reduction}
\end{equation}
This is the type-$D$ collapsing family of
\cite{AdamovicKacMosenederFrajriaPapiPerse2018,
AdamovicKacMosenederFrajriaPapiPerse2020}.

\begin{theorem}[Rank-reduction proof at the collapsing levels]
\label{thm:collapsing-rank-induction}
For every integer $r\ge2$,
\begin{equation}
  \operatorname{Rad}V^{2-2r}(D_{2r})
  =
  \langle v_r,w_r^+,w_r^-\rangle.
  \label{eq:collapsing-maximal-ideal}
\end{equation}
Equivalently,
$Q_r\cong L_{2-2r}(D_{2r})$.
\end{theorem}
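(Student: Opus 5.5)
We argue by induction on $r$, using Theorem~\ref{thm:common-minimal-reduction-maximality} to carry the inductive step. It suffices to show that $R_r=H^0_{\mathrm{DS},f_\theta}(Q_r)$ is a nonzero simple vertex algebra.

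\emph{Base case $r=2$.} Here $\mathfrak g_2=D_4$ and $k_2=-2$. This is the level $-2$ statement for $D_4$, which is known from \cite{ArakawaMoreau2018} and \cite[Theorem~4.2]{Perse2013}, since $v_2,w_2^\pm$ agree up to scalars with the triality vectors $\sigma(w_1),\sigma(w_3),\sigma(w_4)$. If we prefer to stay inside the framework, we can instead argue directly. At $k=-2$ the reduction of $L_{-2}(D_4)$ is $\mathbb C$, and the three singular vectors reduce to the three $A_1$ highest-root currents. Killing these currents kills every current, and hence every $G$-field. The $G$--$G$ relation with $k+h^\vee=4\ne0$ and $p(-2)=0$ then expresses $\omega$ as a quadratic in currents, so $\omega=0$ as well. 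Thus $R_2=\mathbb C$, which is simple, and Theorem~\ref{thm:common-minimal-reduction-maximality} gives the base case.

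\emph{Inductive step, $r\ge3$.} Assume $Q_{r-1}\cong L_{k_{r-1}}(D_{2r-2})$. Proposition~\ref{prop:collapsing-rank-surjection} gives a surjection $Q_{r-1}\twoheadrightarrow R_r$. The canonical map $Q_r\twoheadrightarrow L_{k_r}(D_{2r})$ is reduced exactly, by Lemma~\ref{lem:common-graded-subquotient-category-O}. Combined with Lemma~\ref{lem:common-DS-detection} and \eqref{eq:collapsing-simple-reduction}, this gives a surjection $R_r\twoheadrightarrow L_{k_{r-1}}(D_{2r-2})$ whose target is nonzero. The composite
\[
L_{k_{r-1}}(D_{2r-2})\cong Q_{r-1}\twoheadrightarrow R_r\twoheadrightarrow L_{k_{r-1}}(D_{2r-2})
\]
is a nonzero unital homomorphism out of a simple vertex algebra, so it is injective. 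The first map is then injective as well, hence an isomorphism, and so $R_r\cong L_{k_{r-1}}(D_{2r-2})$ is nonzero and simple.

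The graded pieces of $Q_r$ are finite-dimensional $\mathfrak g$-modules, because $Q_r$ is a graded quotient of $V^{k_r}(D_{2r})$. The ideal $\langle v_r,w_r^+,w_r^-\rangle$ lies in $\operatorname{Rad}V^{k_r}(D_{2r})$, since all three generators have positive degree. Theorem~\ref{thm:common-minimal-reduction-maximality} therefore yields $Q_r\cong L_{k_r}(D_{2r})$, which is \eqref{eq:collapsing-maximal-ideal}.

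The main obstacle is the base case, for two reasons. Proposition~\ref{prop:collapsing-exact-images} is stated only for $r\ge3$, and at $r=2$ the Pfaffians $w_2^\pm$ are quadratic and reduce to $A_1$ currents rather than to Pfaffians of a smaller algebra. We would simply cite the known $D_4$, level $-2$ result. If a self-contained argument is wanted, we would verify via Lemma~\ref{lem:common-reduced-singular-generator} and a weight-one line computation that each reduced generator is a nonzero multiple of a highest-root current.

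A secondary point is that the identification \eqref{eq:collapsing-simple-reduction} must be taken as an input. It comes from the collapsing-level results of \cite{AdamovicKacMosenederFrajriaPapiPerse2018}, and it does not depend on the maximal-ideal theorem we are proving, so no circularity arises.
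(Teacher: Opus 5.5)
Your proposal is correct and follows the paper's proof essentially verbatim. The base case $r=2$ is the known level $-2$ theorem for $D_4$, which the paper cites from Per\v{s}e, and the inductive step is the same chain $L_{k_{r-1}}(D_{2r-2})\cong Q_{r-1}\twoheadrightarrow R_r\twoheadrightarrow L_{k_{r-1}}(D_{2r-2})$ followed by Theorem~\ref{thm:common-minimal-reduction-maximality}; your optional self-contained base case, in which the reduced generators are the $A_1$ highest-root currents and so $R_2$ is spanned by the vacuum because $k+h^\vee=4\neq0$ and $p_{D_4}(-2)=0$, is also sound.
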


\begin{proof}
We argue by induction on $r$.  For $r=2$, the three vectors become the
triality-related conformal-weight-two singular vectors of
$V^{-2}(D_4)$.  They generate the maximal ideal by
\cite[Theorem~4.2]{Perse2013}.

Assume that
\begin{equation*}
  Q_{r-1}\cong L_{k_{r-1}}(D_{2r-2})
\end{equation*}
is simple.  Proposition~\ref{prop:collapsing-rank-surjection} gives a
surjection
\begin{equation*}
  Q_{r-1}\twoheadrightarrow R_r.
\end{equation*}
On the other hand, reduction of the canonical map
$Q_r\twoheadrightarrow L_{k_r}(D_{2r})$, together with
\eqref{eq:collapsing-simple-reduction}, gives a surjection
\begin{equation*}
  R_r\twoheadrightarrow L_{k_{r-1}}(D_{2r-2}).
\end{equation*}
Thus there is a chain
\begin{equation*}
  L_{k_{r-1}}(D_{2r-2})
  \cong Q_{r-1}
  \twoheadrightarrow R_r
  \twoheadrightarrow L_{k_{r-1}}(D_{2r-2}).
\end{equation*}
Its composite is a nonzero unital endomorphism of a simple vertex algebra.
Consequently both arrows are isomorphisms, and $R_r$ is nonzero and
simple.  The minimal-reduction maximality principle,
Theorem~\ref{thm:common-minimal-reduction-maximality}, now gives
\begin{equation*}
  Q_r\cong L_{k_r}(D_{2r}).
\end{equation*}
This completes the induction.
\end{proof}

\begin{remark}
The conclusion \eqref{eq:collapsing-maximal-ideal} was obtained in
\cite{AdamovicKacMosenederFrajriaPapiPerse2020} by a different method.
The point of Theorem~\ref{thm:collapsing-rank-induction} is that the
minimal reduction produces the previous member of the same family and the
general maximality principle lifts the resulting simplicity back to the
affine quotient.  Thus the mechanism isolated in Section~\ref{sec:common-framework}
is not specific to the Arakawa--Moreau conjecture.
\end{remark}
\appendix
\section{Root and Casimir data for \texorpdfstring{$D_\ell$}{D-l}}

\subsection{Root, weight, and Casimir data}
\label{dl:app:Dl-root-weight-Casimir-data}

This appendix records the elementary root-theoretic and Casimir
computations used in
Sections~\ref{dl:sec:Dl-singular-vectors-exact-reductions},
\ref{dl:sec:Dl-Ramond-weighted-Casimir},
\ref{dl:sec:Dl-weight-cosets-lowest-Ramond},
\ref{dl:sec:Dl-simplicity-reduced-quotient}, and
\ref{dl:sec:Dl-level-minus-two}.  All roots are normalized to have
squared length \(2\).

\subsubsection{The root and weight lattices of \(D_\ell\)}

Let
\[
  E_\ell
  =
  \bigoplus_{i=1}^{\ell}\mathbb R\varepsilon_i,
  \qquad
  (\varepsilon_i\mid\varepsilon_j)=\delta_{ij}.
\]
The root system is
\begin{equation*}
  \Delta(D_\ell)
  =
  \left\{
    \pm\varepsilon_i\pm\varepsilon_j
    \,\middle|\,
    1\le i<j\le\ell
  \right\}.
\end{equation*}
With the simple roots from
\eqref{dl:eq:Dl-simple-roots}, the fundamental weights are
\begin{equation*}
  \omega_i
  =
  \varepsilon_1+\cdots+\varepsilon_i,
  \qquad
  1\le i\le\ell-2,
\end{equation*}
and
\begin{equation*}
\begin{aligned}
  \omega_{\ell-1}
  &=
  \frac12
  \left(
    \varepsilon_1+\cdots+\varepsilon_{\ell-1}
    -\varepsilon_\ell
  \right),\\
  \omega_\ell
  &=
  \frac12
  \left(
    \varepsilon_1+\cdots+\varepsilon_{\ell-1}
    +\varepsilon_\ell
  \right).
\end{aligned}
\end{equation*}
The highest root is
\begin{equation*}
  \theta
  =
  \varepsilon_1+\varepsilon_2
  =
  \alpha_1+2\alpha_2+\cdots+
  2\alpha_{\ell-2}+\alpha_{\ell-1}+\alpha_\ell.
\end{equation*}
In particular,
\begin{equation*}
  \langle\omega_1,\theta^\vee\rangle=1,
  \qquad
  \langle\omega_i,\theta^\vee\rangle
  =
  \begin{cases}
    2,&2\le i\le\ell-2,\\
    1,&i=\ell-1,\ell.
  \end{cases}
\end{equation*}

The two distinguished roots from
\eqref{dl:eq:Dl-two-natural-highest-roots} satisfy
\begin{equation*}
  (\theta\mid\theta_A)=0,
  \qquad
  (\theta\mid\theta_D)=0,
  \qquad
  (\theta_A\mid\theta_D)=0.
\end{equation*}
Their associated Kostant highest weights are
\begin{equation*}
  \theta+\theta_A
  =
  2\varepsilon_1
  =
  2\omega_1
\end{equation*}
and
\begin{equation*}
  \theta+\theta_D
  =
  \varepsilon_1+\varepsilon_2+\varepsilon_3+\varepsilon_4
  =
  \begin{cases}
    \omega_4+\omega_5,&\ell=5,\\
    \omega_4,&\ell\ge6.
  \end{cases}
\end{equation*}
These identities justify
\eqref{dl:eq:Dl-first-Kostant-component} and
\eqref{dl:eq:Dl-second-Kostant-highest-weight-cases}.

Since
\(x_\theta=\theta^\vee/2\), one has
\begin{equation*}
  \theta(x_\theta)=1,
  \qquad
  \theta_A(x_\theta)=0,
  \qquad
  \theta_D(x_\theta)=0.
\end{equation*}
Consequently,
\begin{equation*}
\begin{aligned}
  \bigl(m(\theta+\theta_A)\bigr)(x_\theta)&=m,\\
  \bigl(2(\theta+\theta_D)\bigr)(x_\theta)&=2,
\end{aligned}
\end{equation*}
which are the shifts used in
\eqref{dl:eq:Dl-reduced-singular-conformal-weights}.

\subsubsection{The natural \(A_1\oplus D_m\) subsystem}

Recall that \(m=\ell-2\).  For the \(D_m\)-factor, set
\begin{equation*}
  \delta_i=\varepsilon_{i+2},
  \qquad
  1\le i\le m.
\end{equation*}
Then
\begin{equation*}
  \Delta(D_m)
  =
  \left\{
    \pm\delta_i\pm\delta_j
    \,\middle|\,
    1\le i<j\le m
  \right\}.
\end{equation*}
Its simple roots are
\begin{equation*}
  \beta_i=\delta_i-\delta_{i+1}
  \quad(1\le i\le m-1),
  \qquad
  \beta_m=\delta_{m-1}+\delta_m.
\end{equation*}
Under the embedding
\(D_m\subset D_\ell\), these correspond to
\[
  \beta_i=\alpha_{i+2}
  \quad(1\le i\le m-1),
  \qquad
  \beta_m=\alpha_\ell.
\]
The highest root is
\begin{equation}
  \theta_D
  =
  \delta_1+\delta_2
  =
  \beta_1+2\beta_2+\cdots+
  2\beta_{m-2}+\beta_{m-1}+\beta_m.
  \label{dl:eq:Dl-appendix-Dm-highest-root}
\end{equation}

For \(m\ge4\), the fundamental weights of \(D_m\) are
\begin{equation*}
  \eta_i
  =
  \delta_1+\cdots+\delta_i,
  \qquad
  1\le i\le m-2,
\end{equation*}
and
\begin{equation*}
\begin{aligned}
  \eta_{m-1}
  &=
  \frac12
  \left(
    \delta_1+\cdots+\delta_{m-1}-\delta_m
  \right),\\
  \eta_m
  &=
  \frac12
  \left(
    \delta_1+\cdots+\delta_{m-1}+\delta_m
  \right).
\end{aligned}
\end{equation*}
The vector representation has highest weight
\begin{equation*}
  \eta_1=\delta_1=\varepsilon_3.
\end{equation*}

The \(A_1\)-factor has simple root
\(\theta_A=\varepsilon_1-\varepsilon_2\) and fundamental weight
\begin{equation*}
  \varpi=\frac12\theta_A,
  \qquad
  (\varpi\mid\varpi)=\frac12.
\end{equation*}
The decomposition
\[
  \mathfrak g^\natural
  =
  \mathfrak s\oplus\mathfrak d
  \cong
  A_1\oplus D_m
\]
is orthogonal with respect to the invariant form.

The restrictions of the two Kostant weights to
\(\mathfrak h^\natural\) are
\begin{equation*}
  (\theta+\theta_A)\big|_{\mathfrak h^\natural}
  =
  \theta_A,
  \qquad
  (\theta+\theta_D)\big|_{\mathfrak h^\natural}
  =
  \theta_D.
\end{equation*}
The restriction of \(\theta\) is zero because
\(\mathfrak h^\natural\) centralizes the minimal
\(\mathfrak{sl}_2\)-triple.

Finally,
\begin{equation*}
  U=\mathfrak g_{-\frac12}
  \cong
  L_{A_1}(\varpi)
  \boxtimes
  L_{D_m}(\eta_1).
\end{equation*}
Its \(A_1\)-weights are \(\pm\varpi\), and its \(D_m\)-weights are
\begin{equation*}
  \operatorname{wt}L_{D_m}(\eta_1)
  =
  \{\pm\delta_i\mid1\le i\le m\}.
\end{equation*}
For \(x=\varpi^\vee=\varpi\),
\begin{equation*}
  \mu(x)\in
  \left\{
    -\frac12,\frac12
  \right\}
  \qquad
  (\mu\in\operatorname{wt}U).
\end{equation*}

\subsubsection{The \(D_m\) root lattice and its four cosets}

For \(m\ge4\), the root lattice is
\begin{equation*}
  Q_D
  =
  \left\{
    \sum_{i=1}^{m}z_i\delta_i
    \,\middle|\,
    z_i\in\mathbb Z,\quad
    \sum_{i=1}^{m}z_i\in2\mathbb Z
  \right\}.
\end{equation*}
The four classes in \(P_D/Q_D\) may be represented by
\begin{equation*}
  0,
  \qquad
  \eta_1,
  \qquad
  \eta_{m-1},
  \qquad
  \eta_m.
\end{equation*}
Thus
\begin{equation*}
  P_D
  =
  Q_D
  \sqcup
  (\eta_1+Q_D)
  \sqcup
  (\eta_{m-1}+Q_D)
  \sqcup
  (\eta_m+Q_D).
\end{equation*}
The first two are the root and vector cosets; the last two are the
spinor cosets.

The vector class has order two:
\begin{equation*}
  2\eta_1=2\delta_1\in Q_D.
\end{equation*}
Hence
\begin{equation}
  Q_D+\mathbb Z\eta_1
  =
  Q_D
  \sqcup
  (\eta_1+Q_D),
  \label{dl:eq:Dl-appendix-root-vector-subgroup}
\end{equation}
which is the lattice statement used in
Lemma~\ref{dl:lem:Dl-adjoint-D-weight-cosets}.

For \(m=3\), identify \(D_3\) with \(A_3\).  The vector representation
is \(L_{A_3}(\omega_2)\), whereas the two spinor representations are
\(L_{A_3}(\omega_1)\) and \(L_{A_3}(\omega_3)\).  Since
\begin{equation*}
  P(A_3)/Q(A_3)\cong\mathbb Z/4\mathbb Z,
\end{equation*}
the class of \(\omega_2\) has order two and
\begin{equation*}
  Q(A_3)+\mathbb Z\omega_2
  =
  Q(A_3)
  \sqcup
  \bigl(\omega_2+Q(A_3)\bigr).
\end{equation*}
The classes of \(\omega_1\) and \(\omega_3\) are excluded.

\subsubsection{Level-one dominant weights}

For \(m\ge4\), the pairings of the fundamental weights with the
highest coroot are
\begin{equation*}
  \langle\eta_i,\theta_D^\vee\rangle
  =
  \begin{cases}
    1,&i=1,m-1,m,\\
    2,&2\le i\le m-2.
  \end{cases}
\end{equation*}
Indeed, these are the coefficients of the simple roots in
\eqref{dl:eq:Dl-appendix-Dm-highest-root}.  Therefore, if
\[
  \lambda=\sum_{i=1}^{m}a_i\eta_i
  \in P_+(D_m)
\]
and
\[
  \langle\lambda,\theta_D^\vee\rangle\le1,
\]
then
\begin{equation*}
  \lambda
  \in
  \{0,\eta_1,\eta_{m-1},\eta_m\}.
\end{equation*}
Combining this list with
\eqref{dl:eq:Dl-appendix-root-vector-subgroup} leaves only
\begin{equation*}
  \lambda=0
  \quad\text{or}\quad
  \lambda=\eta_1
\end{equation*}
in the adjoint module.

For \(D_3\cong A_3\), the dominant weights satisfying the analogous
level-one condition are
\begin{equation*}
  0,\qquad
  \omega_1,\qquad
  \omega_2,\qquad
  \omega_3.
\end{equation*}
The root-vector coset restriction leaves only
\(0\) and \(\omega_2\).

\subsubsection{Casimir eigenvalues}

Let \(\Omega_A\) and \(\Omega_D\) be the Casimir elements defined using
dual bases with respect to the normalized invariant form.

For the \(A_1\)-factor, the Weyl vector is \(\varpi\), and
\begin{equation*}
  c_A(a\varpi)
  =
  (a\varpi\mid a\varpi+2\varpi)
  =
  \frac{a(a+2)}2.
\end{equation*}
In particular,
\begin{equation*}
  c_A(\varpi)=\frac32,
  \qquad
  c_A(K\varpi)
  =
  \frac{K(K+2)}2
  =
  \frac{m^2-1}{2}.
\end{equation*}

For the \(D_m\)-factor, the Weyl vector is
\begin{equation*}
  \rho_D
  =
  \sum_{i=1}^{m}(m-i)\delta_i.
\end{equation*}
The vector representation has
\begin{align}
  c_D(\eta_1)
  &=
  (\delta_1\mid\delta_1+2\rho_D)
  \notag\\
  &=
  1+2(m-1)
  =
  2m-1.
  \label{dl:eq:Dl-appendix-vector-Casimir}
\end{align}
For either spinor highest weight,
\begin{align*}
  c_D(\eta_{m-1})
  =
  c_D(\eta_m)
  &=
  \frac m4
  +
  \sum_{i=1}^{m-1}(m-i)
  \notag\\
  &=
  \frac{m(2m-1)}4.
\end{align*}
The spinor value is recorded to emphasize that the level-one
condition by itself does not provide the uniform vector bound used in
Section~\ref{dl:sec:Dl-simplicity-reduced-quotient}; the weight-coset
argument is essential.

When \(m=3\), in the \(A_3\) notation,
\begin{equation*}
  c_2(\omega_2)=5,
  \qquad
  c_2(\omega_1)=c_2(\omega_3)=\frac{15}{4}.
\end{equation*}
The identity
\(c_2(\omega_2)=5=2m-1\) agrees with
\eqref{dl:eq:Dl-appendix-vector-Casimir}.

The dimensions and contraction coefficients used in
Lemma~\ref{dl:lem:Dl-trace-quadratic-Ramond-term} are
\begin{equation*}
\begin{array}{c|c|c|c}
  \mathfrak a
  &
  \dim\mathfrak a
  &
  c_U(\mathfrak a)
  &
  \displaystyle
  \frac{2c_U(\mathfrak a)}{\dim\mathfrak a}
  \\ \hline
  A_1
  &
  3
  &
  \frac32
  &
  1
  \\[2mm]
  D_m
  &
  m(2m-1)
  &
  2m-1
  &
  \frac2m
\end{array}
\end{equation*}
where \(c_U(\mathfrak a)\) denotes the Casimir eigenvalue of the
\(\mathfrak a\)-factor on
\(U\cong\mathbf2\boxtimes V_{\mathrm{vec}}\).

\subsubsection{Summary of the numerical bounds}

For a finite-dimensional Ramond Zhu module occurring inside the
twisted adjoint module, the relations
\(e_A^m=0\) and \(e_D^2=0\), together with the coset exclusion, imply
\begin{equation*}
  0\le a\le m-1,
  \qquad
  \lambda_D\in\{0,\eta_1\}.
\end{equation*}
Consequently,
\begin{equation*}
  c_A(a\varpi)
  \le
  \frac{m^2-1}{2},
  \qquad
  c_D(\lambda_D)
  \le
  2m-1.
\end{equation*}
These are precisely the two sharp estimates inserted into the weighted
trace identity in
\eqref{dl:eq:Dl-minus-one-Cmax}.
\section{Root and Casimir data for \texorpdfstring{$D_4$}{D4}}

\subsection{Root, weight, and Casimir data}
\label{d4:app:D4-root-weight-Casimir-data}

This appendix records the elementary \(D_4\) and
\(A_1^{(1)}\oplus A_1^{(3)}\oplus A_1^{(4)}\) calculations used in
Sections~\ref{d4:sec:D4-triality-singular-vectors},
\ref{d4:sec:D4-Ramond-Casimir}, and
\ref{d4:sec:D4-Li-spectral-flow-simplicity}.  All roots are normalized to
have squared length \(2\).

\subsubsection{The \texorpdfstring{\(D_4\)}{D4} root system}

Let \(E=\bigoplus_{i=1}^{4}\mathbb R\varepsilon_i\) with
\((\varepsilon_i\mid\varepsilon_j)=\delta_{ij}\).  The roots of \(D_4\)
are
\begin{equation*}
  \Delta(D_4)
  =
  \{\pm\varepsilon_i\pm\varepsilon_j
    \mid 1\le i<j\le4\}.
\end{equation*}
We use the Bourbaki simple roots
\begin{equation*}
\begin{aligned}
  \alpha_1&=\varepsilon_1-\varepsilon_2,\\
  \alpha_2&=\varepsilon_2-\varepsilon_3,\\
  \alpha_3&=\varepsilon_3-\varepsilon_4,\\
  \alpha_4&=\varepsilon_3+\varepsilon_4.
\end{aligned}
\end{equation*}
The fundamental weights are
\begin{equation*}
\begin{aligned}
  \omega_1&=\varepsilon_1,\\
  \omega_2&=\varepsilon_1+\varepsilon_2,\\
  \omega_3&=\frac12
    (\varepsilon_1+\varepsilon_2+\varepsilon_3-\varepsilon_4),\\
  \omega_4&=\frac12
    (\varepsilon_1+\varepsilon_2+\varepsilon_3+\varepsilon_4).
\end{aligned}
\end{equation*}
The highest root is
\begin{equation*}
  \theta
  =
  \varepsilon_1+\varepsilon_2
  =
  \alpha_1+2\alpha_2+\alpha_3+\alpha_4.
\end{equation*}
Consequently,
\begin{equation*}
  \langle\omega_a,\theta^\vee\rangle=1,
  \qquad
  a\in\{1,3,4\}.
\end{equation*}
This is the numerical input in
\eqref{d4:eq:D4-singular-highest-weight-negative-wall} and
\eqref{eq:common-DS-nonvanishing}.

The Cartan matrix is
\begin{equation*}
  A_{D_4}
  =
  \begin{pmatrix}
    2&-1&0&0\\
    -1&2&-1&-1\\
    0&-1&2&0\\
    0&-1&0&2
  \end{pmatrix},
\end{equation*}
and its inverse is
\begin{equation*}
  A_{D_4}^{-1}
  =
  \begin{pmatrix}
    1&1&\frac12&\frac12\\
    1&2&1&1\\
    \frac12&1&1&\frac12\\
    \frac12&1&\frac12&1
  \end{pmatrix}.
\end{equation*}
Since the root system is simply laced,
\((\omega_i\mid\omega_j)=(A_{D_4}^{-1})_{ij}\).

The diagram automorphism group \(S_3\) fixes the central node \(2\) and
permutes the outer nodes
\begin{equation*}
  \mathcal A=\{1,3,4\}.
\end{equation*}
It therefore permutes
\(\omega_1,\omega_3,\omega_4\), the three irreducible components of the
Kostant module in Section~\ref{d4:sec:D4-triality-singular-vectors}, and
the three singular vectors
\(\sigma(w_1)^2,\sigma(w_3)^2,\sigma(w_4)^2\).

\subsubsection{The natural \texorpdfstring{\(A_1^3\)}{A1 cubed} subalgebra}

The simple roots of
\(\mathfrak g^\natural\cong
 A_1^{(1)}\oplus A_1^{(3)}\oplus A_1^{(4)}\)
are
\begin{equation*}
  \beta_1=\alpha_1,\qquad
  \beta_3=\alpha_3,\qquad
  \beta_4=\alpha_4.
\end{equation*}
They are pairwise orthogonal:
\begin{equation*}
  (\beta_a\mid\beta_b)=2\delta_{ab},
  \qquad a,b\in\mathcal A.
\end{equation*}
The fundamental weight of the \(a\)-th \(A_1\)-factor is
\begin{equation*}
  \varpi_a=\frac12\beta_a,
  \qquad
  (\varpi_a\mid\varpi_b)=\frac12\delta_{ab}.
\end{equation*}
The restrictions of the three outer \(D_4\)-fundamental weights are
\begin{equation*}
  \omega_a\big|_{\mathfrak h^\natural}=\varpi_a,
  \qquad a\in\mathcal A.
\end{equation*}
Indeed,
\[
  \langle\omega_a,\beta_b^\vee\rangle=\delta_{ab}.
\]

The minimal half-space is
\begin{equation*}
  U=\mathfrak g_{-\frac12}
  \cong
  L_{\mathfrak s_1}(\varpi_1)
  \boxtimes
  L_{\mathfrak s_3}(\varpi_3)
  \boxtimes
  L_{\mathfrak s_4}(\varpi_4).
\end{equation*}
Its eight weights are
\begin{equation*}
  \epsilon_1\varpi_1+
  \epsilon_3\varpi_3+
  \epsilon_4\varpi_4,
  \qquad
  \epsilon_a\in\{\pm1\}.
\end{equation*}
For the Li element \(x=\varpi_1^\vee=\varpi_1\),
\begin{equation*}
  \gamma(x)\in\{-1,0,1\}
  \quad(\gamma\in\Delta(\mathfrak g^\natural)),
  \qquad
  \mu(x)\in\left\{-\frac12,\frac12\right\}
  \quad(\mu\in\operatorname{wt}U).
\end{equation*}
These are the charge sets used in the proof of
Proposition~\ref{d4:prop:D4-Ramond-spectral-lattice}.

\subsubsection{Casimir normalization for each \texorpdfstring{\(A_1\)}{A1} factor}

For the \(a\)-th \(A_1\)-factor, let
\(\Omega_a\) be the quadratic Casimir defined using dual bases with
respect to the form for which
\((\beta_a\mid\beta_a)=2\).  The Weyl vector is \(\rho_a=\varpi_a\).
Hence \(\Omega_a\) acts on the irreducible module
\(L_{\mathfrak s_a}(m\varpi_a)\) by
\begin{equation*}
  c_2(m\varpi_a)
  =
  (m\varpi_a\mid m\varpi_a+2\varpi_a)
  =
  \frac{m(m+2)}2.
\end{equation*}
In particular,
\begin{equation*}
  c_2(0)=0,
  \qquad
  c_2(\varpi_a)=\frac32,
  \qquad
  c_2(2\varpi_a)=4.
\end{equation*}
Only the first two values occur in finite-dimensional modules over the
Ramond Zhu quotient \(B\), because
\(e_a^2=0\) for every \(a\in\mathcal A\).

Set
\begin{equation*}
  \Omega^\natural=\Omega_1+\Omega_3+\Omega_4.
\end{equation*}
On
\begin{equation*}
  L(m_1\varpi_1,m_3\varpi_3,m_4\varpi_4)
  :=
  \boxtimes_{a\in\mathcal A}
  L_{\mathfrak s_a}(m_a\varpi_a),
\end{equation*}
the total Casimir acts by
\begin{equation*}
  c_2^\natural(m_1,m_3,m_4)
  =
  \frac12
  \sum_{a\in\mathcal A}m_a(m_a+2).
\end{equation*}
When \(m_a\in\{0,1\}\), this simplifies to
\begin{equation*}
  c_2^\natural(m_1,m_3,m_4)
  =
  \frac32(m_1+m_3+m_4).
\end{equation*}

\subsubsection{The eight allowed highest weights}

Lemma~\ref{d4:lem:D4-allowed-A1-cubed-types} shows that every irreducible
\(\mathfrak g^\natural\)-constituent of a finite-dimensional
\(B\)-module has \(m_a\in\{0,1\}\).  The complete list is therefore:
\begin{equation*}
\begin{array}{c|c|c}
 (m_1,m_3,m_4)
 &
 \lambda=m_1\varpi_1+m_3\varpi_3+m_4\varpi_4
 &
 c_2^\natural(\lambda)
 \\ \hline
 (0,0,0)&0&0\\
 (1,0,0)&\varpi_1&\frac32\\
 (0,1,0)&\varpi_3&\frac32\\
 (0,0,1)&\varpi_4&\frac32\\
 (1,1,0)&\varpi_1+\varpi_3&3\\
 (1,0,1)&\varpi_1+\varpi_4&3\\
 (0,1,1)&\varpi_3+\varpi_4&3\\
 (1,1,1)&\varpi_1+\varpi_3+\varpi_4&\frac92
\end{array}
\end{equation*}
Thus
\begin{equation*}
  0\le c_2^\natural(\lambda)\le\frac92,
\end{equation*}
with equality only for
\(\lambda=\varpi_1+\varpi_3+\varpi_4\).
This is the global upper bound used in the Casimir-gap proof of
Theorem~\ref{d4:thm:D4-reduced-quotient-simple}.

For the Li twist based on \(x=\varpi_1^\vee\), the Ramond-bottom
degree formula
\[
  d=\frac12-\lambda(x)
\]
takes the particularly simple form
\begin{equation*}
  d=\frac{1-m_1}{2}.
\end{equation*}
Consequently, the eight types separate into two classes:
\begin{equation*}
\begin{array}{c|c|c}
 m_1&d&\text{consequence}\\ \hline
 0&\frac12&
 \widetilde{\mathcal W}_{1/2}=0\\[1mm]
 1&0&
 \widetilde{\mathcal W}_0=\mathbb C\mathbf1
\end{array}
\end{equation*}
This is precisely the dichotomy used in the proof of
Theorem~\ref{d4:thm:D4-reduced-quotient-simple}.
\section{Explicit calculations for \texorpdfstring{$E_6$}{E6}}

\subsection{Explicit calculations for \texorpdfstring{$E_6$}{E6} and \texorpdfstring{$A_5$}{A5}}
\label{e6:app:explicit-calculations}

This appendix collects the root-theoretic and finite-dimensional calculations
used in Sections~\ref{e6:sec:singular-vectors}--\ref{e6:sec:Ramond-spectral-flow-simplicity}.
The purpose is to make the normalizations and the numerical bounds independently
checkable without interrupting the main argument.

\subsubsection{The root calculation for the Joseph vector}
\label{e6:app:Joseph-root-calculation}

We retain the notation
\(
[c_1c_2c_3c_4c_5c_6]=\sum_i c_i\alpha_i
\)
from Section~\ref{e6:sec:singular-vectors}.  Put
\[
  \gamma_j=\beta_j+\theta_1,
  \qquad
  \varepsilon_j=\delta_j+\theta_1
  \qquad (j=1,2,3).
\]
The roots entering the quadratic vector \eqref{e6:eq:def-w} are listed below.
\begin{equation}\label{e6:eq:appendix-E6-root-table}
\begin{array}{c|c@{\qquad}c|c}
\text{root}&\text{coordinates}&\text{root}&\text{coordinates}\\ \hline
\theta &[122321]&\theta_1 &[101111]\\
\gamma_1 &[111111]&\varepsilon_1 &[112321]\\
\gamma_2 &[111211]&\varepsilon_2 &[112221]\\
\gamma_3 &[111221]&\varepsilon_3 &[112211]
\end{array}
\end{equation}
Each vector in \eqref{e6:eq:appendix-E6-root-table} has squared length two, and
\[
  \gamma_j+\varepsilon_j=\theta+\theta_1
  \qquad (j=1,2,3).
\]
Moreover,
\begin{equation}\label{e6:eq:appendix-weight-pairings}
  \bigl\langle\theta+\theta_1,\alpha_i^\vee\bigr\rangle
  =\delta_{i1}+\delta_{i6},
\end{equation}
so \(\theta+\theta_1=\omega_1+\omega_6\).

After compatible rescaling of the nonsimple root vectors, the relative signs may
be chosen so that the nonzero actions of the positive simple-root vectors on the
root vectors occurring in \(w\) are
\begin{equation}\label{e6:eq:appendix-raising-table}
\begin{array}{c|cccc}
 & e_2&e_3&e_4&e_5\\ \hline
 e_{\theta_1}     &e_{\gamma_1}&0&0&0\\
 e_{\gamma_1}     &0&0&e_{\gamma_2}&0\\
 e_{\varepsilon_1}&e_\theta&0&0&0\\
 e_{\gamma_2}     &0&e_{\varepsilon_3}&0&e_{\gamma_3}\\
 e_{\varepsilon_2}&0&0&-e_{\varepsilon_1}&0\\
 e_{\gamma_3}     &0&-e_{\varepsilon_2}&0&0\\
 e_{\varepsilon_3}&0&0&0&-e_{\varepsilon_2}.
\end{array}
\end{equation}
The signs can be realized recursively.  Starting from the fixed simple-root
vectors and $e_{\theta_1}$, take
\begin{align*}
 e_{\gamma_1}&=[e_2,e_{\theta_1}],
 &e_{\gamma_2}&=[e_4,e_{\gamma_1}],
 &e_{\gamma_3}&=[e_5,e_{\gamma_2}],\\
 e_{\varepsilon_3}&=[e_3,e_{\gamma_2}],
 &e_{\varepsilon_2}&=-[e_3,e_{\gamma_3}],
 &e_{\varepsilon_1}&=-[e_4,e_{\varepsilon_2}],\\
 e_\theta&=[e_2,e_{\varepsilon_1}].
\end{align*}
The only entry obtained by two visibly different paths is consistent by the
Jacobi identity:
\begin{equation*}
 [e_5,e_{\varepsilon_3}]
 =[e_5,[e_3,e_{\gamma_2}]]
 =[e_3,[e_5,e_{\gamma_2}]]
 =[e_3,e_{\gamma_3}]
 =-e_{\varepsilon_2},
\end{equation*}
because $[e_3,e_5]=0$.  Direct addition of the simple-root coordinates in
\eqref{e6:eq:appendix-E6-root-table} gives exactly the seven nonzero root
moves displayed in \eqref{e6:eq:appendix-raising-table}; every omitted sum
$\rho+\alpha_i$ is not a root.  Thus the recursive choice realizes the table
and is compatible with \eqref{e6:eq:Chevalley-normalization-w} after the
remaining one-dimensional root spaces are rescaled accordingly.

All omitted entries vanish; in particular, \(e_1\) and \(e_6\) annihilate every
factor that can contribute to the action on \(w\).  Since the product in
\(S^2(\mathfrak g)\) is commutative, the table gives
\begin{align*}
 e_2w
 &=e_\theta e_{\gamma_1}-e_{\gamma_1}e_\theta=0,\\
 e_4w
 &=-e_{\gamma_2}e_{\varepsilon_1}
   +e_{\gamma_2}e_{\varepsilon_1}=0,\\
 e_3w
 &=-e_{\varepsilon_3}e_{\varepsilon_2}
   +e_{\varepsilon_2}e_{\varepsilon_3}=0,\\
 e_5w
 &=-e_{\gamma_3}e_{\varepsilon_2}
   +e_{\gamma_3}e_{\varepsilon_2}=0.
\end{align*}
Together with \eqref{e6:eq:appendix-weight-pairings}, this verifies directly that
\(w\) is a highest-weight vector of weight \(\omega_1+\omega_6\).

\subsubsection{The affine mode calculation}
\label{e6:app:affine-mode-calculation}

Let
\begin{equation*}
  v=e_{\theta_1}(-1)\mathbf1.
\end{equation*}

\begin{lemma}\label{e6:lem:regularity-u-v}
Let
\[
  \mathcal R
  =\{\theta,\theta_1,\gamma_1,\varepsilon_1,
      \gamma_2,\varepsilon_2,\gamma_3,\varepsilon_3\}.
\]
For all $\rho,\tau\in\mathcal R$ and all $m,n\in\mathbb Z$, one has
\begin{equation}\label{e6:eq:pairwise-commuting-root-modes}
  [e_\rho(m),e_\tau(n)]=0.
\end{equation}
Consequently, the current fields occurring in $u$ and
$v=e_{\theta_1}(-1)\mathbf1$ have mutually regular operator products.  In
particular,
\begin{equation}\label{e6:eq:regularity-u-v-products}
  u_{(j)}u=0,
  \qquad
  v_{(j)}u=0
  \qquad(j\geq0).
\end{equation}
Hence the differential vertex subalgebra generated by these currents is
commutative, and its $(-1)$-product is associative and commutative.  In this
subalgebra, the state $u^r$ agrees with the ordinary $r$th power of $u$ in
$U(\mathfrak g[t^{-1}]t^{-1})$.
\end{lemma}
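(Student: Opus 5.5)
The plan is to reduce everything to one root-theoretic observation: for any two roots $\rho,\tau\in\mathcal R$, the sum $\rho+\tau$ is neither zero nor a root of $E_6$. Granting this, the affine commutator
\[
  [e_\rho(m),e_\tau(n)]=[e_\rho,e_\tau](m+n)+m\,\delta_{m+n,0}\,k\,(e_\rho\mid e_\tau)
\]
vanishes term by term. The first term is zero because $[e_\rho,e_\tau]\in\mathfrak g_{\rho+\tau}=0$. The second is zero because the invariant form pairs $\mathfrak g_\rho$ only with $\mathfrak g_{-\rho}$, and $\tau\neq-\rho$. This proves \eqref{e6:eq:pairwise-commuting-root-modes} at every level $k$.

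For the root-theoretic observation I would use the functional $c_1+c_6$, the sum of the $\alpha_1$- and $\alpha_6$-coordinates in the notation $[c_1c_2c_3c_4c_5c_6]$. Reading off \eqref{e6:eq:appendix-E6-root-table}, every element of $\mathcal R$ has $c_1=c_6=1$. Hence $c_1+c_6=2$ on each element, and $\rho+\tau$ has $c_1+c_6=4$. On the other hand, the highest root $\theta=[122321]$ has $c_1=c_6=1$, so every root of $E_6$ satisfies $|c_1|,|c_6|\le 1$ and therefore $c_1+c_6\le2$. Thus $\rho+\tau$ is not a root. It is also nonzero, since all elements of $\mathcal R$ are positive. (Equivalently, this functional is pairing with the coweight dual to $\omega_1+\omega_6$, matching \eqref{e6:eq:appendix-weight-pairings}.) This table check is the only real content, and I expect it to be routine.

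The consequences then follow formally. The modes of the currents $e_\rho(z)$, $\rho\in\mathcal R$, pairwise commute, so these fields have regular OPEs, i.e.\ all their $j$-products with $j\ge0$ vanish. By the Borcherds/Dong lemma, the vertex subalgebra they generate, together with its derivatives, consists of mutually local fields with commuting modes, and is therefore a commutative vertex algebra.

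Both $u$ from \eqref{e6:eq:explicit-u} and $v=e_{\theta_1}(-1)\mathbf1$ lie in this subalgebra, which gives \eqref{e6:eq:regularity-u-v-products}. In a commutative vertex algebra the $(-1)$-product is associative and commutative. Moreover, the state obtained from $a(-1)\cdots$ applied to $\mathbf1$ is computed by the commutative algebra generated by the negative modes $e_\rho(-n)$. That algebra is the image of $U$ of an abelian Lie algebra, i.e.\ a symmetric algebra sitting inside $U(\mathfrak g[t^{-1}]t^{-1})$. Consequently $u_{(-1)}(u^{r-1})$ equals the associative product $\widetilde\sigma(w)\cdot\widetilde\sigma(w)^{r-1}\mathbf1$. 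Induction on $r$ then identifies the vertex-algebra power with the PBW power $u^r$.
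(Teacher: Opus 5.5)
Your proof is correct. It differs from the paper's only in how the one substantive fact is checked, namely that $\rho+\tau$ is neither $0$ nor a root for $\rho,\tau\in\mathcal R$.

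The paper uses the root table and the $E_6$ Cartan matrix to compute $(\rho+\tau\mid\rho+\tau)\in\{4,6,8\}$ for all pairs. It then concludes because every root has squared length $2$. This amounts to a case-by-case check of the pairwise inner products.

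You instead use the linear functional $c_1+c_6$, which is pairing with $\omega_1+\omega_6=\theta+\theta_1$. Every element of $\mathcal R$ has $c_1=c_6=1$. Every root $\beta$ has $c_1+c_6\le 2$, since $\theta-\beta$ is a nonnegative combination of simple roots and $\theta=[122321]$ has $c_1=c_6=1$. Your route is shorter and more structural. It places $\mathcal R$ in the top piece of a $\mathbb Z$-grading of $E_6$, which is automatically abelian; in fact $\mathcal R$ is exactly the set of roots with $c_1=c_6=1$. The paper's norm computation additionally yields the actual values of $(\rho\mid\tau)$, which are not needed here.

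The remaining steps agree with the paper's: vanishing of the central term because $\tau\ne-\rho$, passage from commuting modes to a commutative vertex subalgebra, and associativity and commutativity of the $(-1)$-product. For the final identification with PBW powers you are more explicit than the paper, though your phrasing is a little loose. The precise reason is as follows. Write $\widetilde\sigma(w)=\sum c_{\rho\tau}e_\rho(-1)e_\tau(-1)$. Since the modes commute, $u_{(-1)}=\sum c_{\rho\tau}\sum_{m+n=-2}e_\rho(m)e_\tau(n)$. On any state obtained by applying negative modes to $\mathbf1$, every term with a nonnegative mode vanishes, so only $m=n=-1$ survives. Hence $u_{(-1)}$ acts there as left multiplication by $\widetilde\sigma(w)$, and induction on $r$ gives $u^r=\widetilde\sigma(w)^r\mathbf1$.
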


\begin{proof}
Using the root coordinates in \eqref{e6:eq:appendix-E6-root-table} and the
$E_6$ Cartan matrix, one obtains
\begin{equation*}
  (\rho+\tau\mid\rho+\tau)\in\{4,6,8\}
  \qquad(\rho,\tau\in\mathcal R).
\end{equation*}
Since every root of $E_6$ has squared length $2$, the sum $\rho+\tau$ is
never a root.  Moreover, $\rho$ and $\tau$ are positive roots, so
$(e_\rho\mid e_\tau)=0$.  The affine commutation relation therefore gives
\eqref{e6:eq:pairwise-commuting-root-modes}.  Thus the corresponding current
fields have no singular operator-product terms.  The same is then true for
the normally ordered composites $u$ and $v$, which proves
\eqref{e6:eq:regularity-u-v-products}.  A vertex algebra generated by mutually
regular fields is a commutative differential vertex algebra, and on such an
algebra the $(-1)$-product is ordinary associative multiplication.
\end{proof}

We spell out the calculation underlying
Proposition~\ref{e6:prop:f-theta-on-u-powers}.  The affine commutator relation and
the root table above give the following three identities:
\begin{align}
  f_\theta(1)u&=(k+3)v,
  \label{e6:eq:appendix-base-mode-identity}\\
  \bigl(f_\theta(0)u\bigr)_{(0)}u
  &=-2u_{(-1)}v,
  \label{e6:eq:appendix-zero-mode-on-u}\\
  v_{(-1)}u&=u_{(-1)}v.
  \label{e6:eq:appendix-v-commutes-u}
\end{align}
We verify the sign and coefficient in
\eqref{e6:eq:appendix-base-mode-identity}.  For every root $\rho$ occurring
below, normalize $f_\rho\in\mathfrak g_{-\rho}$ by
$(e_\rho\mid f_\rho)=1$.  Fix $j$ and abbreviate
$\gamma=\gamma_j$, $\varepsilon=\varepsilon_j$,
$\beta=\beta_j$, and $\delta=\delta_j$.  The normalization
\eqref{e6:eq:Chevalley-normalization-w} and the Jacobi identity give
\begin{equation*}
 [e_\delta,e_\gamma]=e_\theta,
 \qquad
 [e_\beta,e_\varepsilon]=e_\theta.
\end{equation*}
Invariance of the form then yields
\begin{equation*}
 [f_\theta,e_\gamma]=-f_\delta,
 \qquad
 [f_\theta,e_\varepsilon]=-f_\beta.
\end{equation*}
For example,
\begin{equation*}
 ([f_\theta,e_\gamma]\mid e_\delta)
 =(f_\theta\mid[e_\gamma,e_\delta])=-1.
\end{equation*}
Since $\langle\delta,\theta_1^\vee\rangle=-1$ and
$[e_\delta,e_{\theta_1}]=e_\varepsilon$, another Jacobi calculation gives
$[f_\delta,e_\varepsilon]=e_{\theta_1}$; similarly
$[f_\beta,e_\gamma]=e_{\theta_1}$.  Consequently,
\begin{equation}
 [[f_\theta,e_\gamma],e_\varepsilon]
 =[[f_\theta,e_\varepsilon],e_\gamma]
 =-e_{\theta_1}.
 \label{e6:eq:appendix-double-bracket-sign-check}
\end{equation}

Now apply $f_\theta(1)$ to the symmetrized quadratic expression
\eqref{e6:eq:explicit-u}.  The $e_\theta e_{\theta_1}$ term contributes
$kv$: the zero-mode part $-\theta^\vee(0)$ annihilates $v$ because
$(\theta\mid\theta_1)=0$, and the affine central term contributes $k$.
For each $j$, equation
\eqref{e6:eq:appendix-double-bracket-sign-check} shows that
\begin{equation*}
 f_\theta(1)\,\sigma(e_{\gamma_j}e_{\varepsilon_j})=-v.
\end{equation*}
The minus sign in the definition of $u$ therefore turns each of the three
root-pair contributions into $+v$.  Hence
$f_\theta(1)u=(k+3)v$.

In \eqref{e6:eq:appendix-zero-mode-on-u}, all mixed-weight terms vanish, and the
remaining Cartan term acts on the highest-weight vector \(u\) by
\[
  \langle\theta+\theta_1,\theta^\vee\rangle=2;
\]
the sign is determined by \([f_\theta,e_\theta]=-\theta^\vee\).
Equation \eqref{e6:eq:appendix-v-commutes-u}, as well as all reassociations of
$(-1)$-products used below, follows from
Lemma~\ref{e6:lem:regularity-u-v}.

By Lemma~\ref{e6:lem:regularity-u-v}, the $(-1)$-products below associate
without correction terms.  Since every
zero mode is a derivation of all vertex products,
\eqref{e6:eq:appendix-zero-mode-on-u} and
\eqref{e6:eq:appendix-v-commutes-u} imply, for \(m\geq1\),
\begin{equation}\label{e6:eq:appendix-zero-mode-on-power}
  \bigl(f_\theta(0)u\bigr)_{(0)}u^m
  =-2m\,u^m_{(-1)}v.
\end{equation}
We now prove the power formula by induction.  The commutator formula for the mode
\(f_\theta(1)\) gives
\begin{align}
 f_\theta(1)u^{r+1}
 &=f_\theta(1)\bigl(u_{(-1)}u^r\bigr)\notag\\
 &=\bigl(f_\theta(0)u\bigr)_{(0)}u^r
   +\bigl(f_\theta(1)u\bigr)_{(-1)}u^r
   +u_{(-1)}f_\theta(1)u^r.
 \label{e6:eq:appendix-affine-induction-step}
\end{align}
Assume
\[
  f_\theta(1)u^r=r(k-r+4)u^{r-1}_{(-1)}v.
\]
Substitution of \eqref{e6:eq:appendix-base-mode-identity} and
\eqref{e6:eq:appendix-zero-mode-on-power} into
\eqref{e6:eq:appendix-affine-induction-step} yields
\begin{align*}
 f_\theta(1)u^{r+1}
 &=\bigl[-2r+(k+3)+r(k-r+4)\bigr]u^r_{(-1)}v\\
 &=(r+1)(k-r+3)u^r_{(-1)}v.
\end{align*}
This is precisely \eqref{e6:eq:f-theta-on-u-powers} with \(r\) replaced by
\(r+1\).  The initial case is
\eqref{e6:eq:appendix-base-mode-identity}.

\subsubsection{Classical restriction of the PBW symbol}
\label{e6:app:BRST-filtration-bookkeeping}

For completeness, we record the classical restriction of the PBW symbol
appearing in the $E_6$ singular vectors.  This calculation is a useful
consistency check on the expected reduced highest weight, but the
nonvanishing of the BRST class is proved module-theoretically in
Proposition~\ref{e6:prop:BRST-image-s-n}; we do not infer BRST
nonvanishing solely from an ordinary affine PBW leading term.

Use the invariant form to identify
\(S(\mathfrak g)=\mathbb C[\mathfrak g]\), so that a vector
\(a\in\mathfrak g\) is the linear function \(y\mapsto(a\mid y)\).  For the
minimal triple one has
\[
  \mathfrak g^{e_\theta}
  =\mathfrak g^\natural\oplus\mathfrak g_{\frac12}
   \oplus\mathbb C e_\theta,
  \qquad
  \mathcal S_{f_\theta}=f_\theta+\mathfrak g^{e_\theta}.
\]
Let \(y=f_\theta+z\) with \(z\in\mathfrak g^{e_\theta}\).  Since the invariant
form pairs \(\mathfrak g_j\) only with \(\mathfrak g_{-j}\),
\[
  e_\theta(y)=1,
  \qquad
  e_{\gamma_j}(y)=e_{\varepsilon_j}(y)=0
  \quad(j=1,2,3).
\]
Moreover, \(e_{\theta_1}(y)=(e_{\theta_1}\mid z)\) is the linear coordinate on
the \(\mathfrak g^\natural\)-factor corresponding, under
$\theta_1=\vartheta$, to \(e_\vartheta\).  Substituting these relations in
the explicit formula for \(w\) gives
\[
  \left.w\right|_{\mathcal S_{f_\theta}}=e_\vartheta,
  \qquad
  \left.w^r\right|_{\mathcal S_{f_\theta}}
  =e_\vartheta^r\neq0.
\]
This agrees with the exact reduced class
$[u^r]_{\mathrm{DS}}\in\mathbb C^\times
:J^{\{e_\vartheta\}}{}^r:$ obtained in
Proposition~\ref{e6:prop:BRST-image-s-n} for $r=2,3$.

\subsubsection{The complete level-two \texorpdfstring{$A_5$}{A5} Casimir table}
\label{e6:app:A5-Casimir-table}

Write \(\mu=\sum_{i=1}^5m_i\varpi_i\).  From
\eqref{e6:eq:A5-inverse-Cartan}, the Casimir eigenvalue
\(c_2(\mu)=(\mu\mid\mu+2\rho_{A_5})\) is
\begin{align*}
 c_2(\mu)={}&
 \frac56m_1^2+\frac43m_1m_2+m_1m_3
 +\frac23m_1m_4+\frac13m_1m_5+5m_1\notag\\
 &+\frac43m_2^2+2m_2m_3+\frac43m_2m_4
 +\frac23m_2m_5+8m_2\notag\\
 &+\frac32m_3^2+2m_3m_4+m_3m_5+9m_3\notag\\
 &+\frac43m_4^2+\frac43m_4m_5+8m_4
 +\frac56m_5^2+5m_5.
\end{align*}
The integrability restriction \(m_1+\cdots+m_5\le2\) leaves exactly twenty-one
weights.  Their values are as follows.
\begin{equation*}
\begin{array}{c|c@{\qquad}c|c}
\mu&c_2(\mu)&\mu&c_2(\mu)\\ \hline
0&0
 &\varpi_1,\ \varpi_5&35/6\\
\varpi_2,\ \varpi_4&28/3
 &\varpi_3&21/2\\ \hline
2\varpi_1,\ 2\varpi_5&40/3
 &\varpi_1+\varpi_5&12\\
\varpi_1+\varpi_2,\ \varpi_4+\varpi_5&33/2
 &\varpi_1+\varpi_3,\ \varpi_3+\varpi_5&52/3\\
\varpi_1+\varpi_4,\ \varpi_2+\varpi_5&95/6
 &2\varpi_2,\ 2\varpi_4&64/3\\
\varpi_2+\varpi_3,\ \varpi_3+\varpi_4&131/6
 &\varpi_2+\varpi_4&20\\
2\varpi_3&24&&
\end{array}
\end{equation*}
The largest level-one value is \(21/2\), attained only at \(\varpi_3\); the
second largest is \(28/3\).  The largest level-two value is \(24\), attained only
at \(2\varpi_3\); the second largest is \(131/6\).  These are the sharp bounds
used in Lemma~\ref{e6:lem:A5-Casimir-bounds}.

\subsubsection{Charge multiplicities and the Li shift}
\label{e6:app:Li-charge-calculation}

The coweight \(\eta=\varpi_3^\vee\) may be represented on the defining module
\(\mathbb C^6\) by
\begin{equation*}
  \eta=\frac12\operatorname{diag}(1,1,1,-1,-1,-1).
\end{equation*}
It follows immediately that the roots of \(A_5\) have \(\eta\)-charges
\(-1,0,1\).  On
\(L_{A_5}(\varpi_3)=\bigwedge^3\mathbb C^6\), a basis vector containing exactly
\(a\) entries from the first three coordinate vectors has charge
\(a-3/2\).  Hence the charge multiplicities are
\begin{equation*}
\begin{array}{c|rrrr}
q&-3/2&-1/2&1/2&3/2\\ \hline
\dim U[q]&1&9&9&1.
\end{array}
\end{equation*}

For the weight-one current \(H=J^{\{\eta\}}\), one has
\(H_{(1)}H=(3n/2)\mathbf1\).  Expanding Li's operator gives
\begin{align*}
 \Delta(H,z)\omega
 &=\omega+z^{-1}H+\frac12H_{(1)}H\,z^{-2}\mathbf1,\\
 \Delta(H,z)J^{\{h\}}
 &=J^{\{h\}}+n(\eta\mid h)z^{-1}\mathbf1.
\end{align*}
Comparing modes proves
\[
  L_0^{\mathrm R}=L_0+H_0+\frac{3n}{4},
  \qquad
  J_0^{\{h\},\mathrm R}=J_0^{\{h\}}+n(\eta\mid h).
\]
For a derivative of order \(r,s,t\), respectively, the contributions to
\(L_0^{\mathrm R}-3n/4\) are
\begin{equation*}
\begin{array}{c|c|c}
\text{factor}&\eta\text{-charge}&\text{shifted energy}\\ \hline
\partial^rJ^{\{a_\alpha\}}&\alpha(\eta)\in\{-1,0,1\}
 &1+r+\alpha(\eta)\\
\partial^sG^{\{v_\mu\}}&\mu(\eta)\in\{-3/2,-1/2,1/2,3/2\}
 &3/2+s+\mu(\eta)\\
\partial^t\omega&0&2+t.
\end{array}
\end{equation*}
Every entry in the last column is a nonnegative integer.  Zero energy is possible
only for an undifferentiated current of charge \(-1\) or an undifferentiated
\(G\)-field of charge \(-3/2\).  Every nonempty zero-energy monomial therefore
has strictly negative original \(\eta\)-charge.  This is the numerical content
of the extremal-vacuum-line argument in
Proposition~\ref{e6:prop:E6-Ramond-spectrum-vacuum-line}.
\section{Weight and Casimir data for \texorpdfstring{$E_7$}{E7}}

\subsection{\texorpdfstring{$D_6$}{D6} weight and Casimir data}
\label{e7:app:D6-weight-Casimir-data}

This appendix records the root-theoretic calculations used in
Sections~\ref{e7:sec:Ramond-Casimir-identity} and
\ref{e7:sec:simplicity-reduced-quotients}.  We use the Bourbaki numbering
for $D_6$ and normalize the invariant form so that every root has squared
length two; see \cite[Planche~IV]{Bourbaki2002}.  Thus the Dynkin diagram
is
\[
  1-2-3-4,
  \qquad
  4-5,
  \qquad
  4-6.
\]

\subsubsection{Roots, fundamental weights, and the level function}

Let $\varepsilon_1,\ldots,\varepsilon_6$ be an orthonormal basis.  We take
\begin{align*}
  \alpha_i&=\varepsilon_i-\varepsilon_{i+1},
  &&1\le i\le4,\\
  \alpha_5&=\varepsilon_5-\varepsilon_6,
  &\alpha_6&=\varepsilon_5+\varepsilon_6.
\end{align*}
The fundamental weights are
\begin{align*}
  \omega_j&=\varepsilon_1+\cdots+\varepsilon_j,
  &&1\le j\le4,\\
  \omega_5
  &=\frac12(\varepsilon_1+\cdots+\varepsilon_5-\varepsilon_6),
  &
  \omega_6
  &=\frac12(\varepsilon_1+\cdots+\varepsilon_5+\varepsilon_6).
\end{align*}
In particular,
\begin{equation*}
  (\omega_5\mid\omega_5)
  =(\omega_6\mid\omega_6)=\frac32,
  \qquad
  (\omega_5\mid\omega_6)=1.
\end{equation*}

The Cartan matrix and its inverse are
\begin{equation*}
  A_{D_6}
  =
  \begin{pmatrix}
    2&-1&0&0&0&0\\
    -1&2&-1&0&0&0\\
    0&-1&2&-1&0&0\\
    0&0&-1&2&-1&-1\\
    0&0&0&-1&2&0\\
    0&0&0&-1&0&2
  \end{pmatrix},
  \qquad
  A_{D_6}^{-1}
  =
  \begin{pmatrix}
    1&1&1&1&\frac12&\frac12\\
    1&2&2&2&1&1\\
    1&2&3&3&\frac32&\frac32\\
    1&2&3&4&2&2\\
    \frac12&1&\frac32&2&\frac32&1\\
    \frac12&1&\frac32&2&1&\frac32
  \end{pmatrix}.
\end{equation*}
Since $D_6$ is simply laced,
\begin{equation*}
  (\omega_i\mid\omega_j)
  =(A_{D_6}^{-1})_{ij}.
\end{equation*}

The highest root is
\begin{equation*}
  \vartheta
  =\varepsilon_1+\varepsilon_2
  =\alpha_1+2\alpha_2+2\alpha_3+2\alpha_4
   +\alpha_5+\alpha_6.
\end{equation*}
For a dominant integral weight
\[
  \lambda=\sum_{i=1}^{6}m_i\omega_i,
  \qquad m_i\in\mathbb Z_{\ge0},
\]
set
\begin{equation}
  \ell(\lambda)
  :=\langle\lambda,\vartheta^\vee\rangle
  =m_1+2m_2+2m_3+2m_4+m_5+m_6.
  \label{e7:eq:E7-appendix-D6-level-function}
\end{equation}
Thus the finite set used in the body of the paper is
\begin{equation*}
  P_n^+(D_6)
  =\{\lambda\in P_+(D_6)\mid \ell(\lambda)\le n\}.
\end{equation*}

\subsubsection{Casimir formula}

In the orthogonal realization above,
\begin{equation*}
  \rho_{D_6}
  =5\varepsilon_1+4\varepsilon_2+3\varepsilon_3
   +2\varepsilon_4+\varepsilon_5.
\end{equation*}
The normalized quadratic Casimir acts on $L_{D_6}(\lambda)$ by
\begin{equation*}
  c_2(\lambda)
  =(\lambda\mid\lambda+2\rho_{D_6}).
\end{equation*}
Equivalently, if
$\mathbf m=(m_1,\ldots,m_6)^{\mathsf T}$ and
$\mathbf1=(1,\ldots,1)^{\mathsf T}$, then
\begin{equation*}
  c_2(\lambda)
  =\mathbf m^{\mathsf T}A_{D_6}^{-1}
   (\mathbf m+2\mathbf1).
\end{equation*}
For the two half-spin rays one obtains the useful closed formula
\begin{equation*}
  c_2(r\omega_5)=c_2(r\omega_6)
  =\frac32r^2+15r,
  \qquad r\in\mathbb Z_{\ge0}.
\end{equation*}
In particular,
\begin{equation*}
  c_2(\omega_6)=\frac{33}{2},
  \qquad
  c_2(2\omega_6)=36,
  \qquad
  c_2(3\omega_6)=\frac{117}{2}.
\end{equation*}

\subsubsection{Complete enumeration through level three}

Equation~\eqref{e7:eq:E7-appendix-D6-level-function} reduces the enumeration
to the nonnegative integer solutions of
\[
  m_1+2m_2+2m_3+2m_4+m_5+m_6\le3.
\]
There are $32$ solutions: one of exact level zero, three of exact level
one, nine of exact level two, and nineteen of exact level three.  The
complete list and the corresponding Casimir eigenvalues are as follows.

\medskip\noindent\emph{Exact levels zero and one.}
\begin{equation*}
\begin{array}{c|c|c}
  \ell(\lambda)&\lambda&c_2(\lambda)\\
  \hline
  0&0&0\\
  1&\omega_1&11\\
  1&\omega_5&\dfrac{33}{2}\\[1mm]
  1&\omega_6&\dfrac{33}{2}
\end{array}
\end{equation*}

\medskip\noindent\emph{Exact level two.}
\begin{equation*}
\begin{array}{c|c@{\qquad}c|c}
  \lambda&c_2(\lambda)&\lambda&c_2(\lambda)\\
  \hline
  \omega_2&20
  &2\omega_1&24\\
  \omega_3&27
  &\omega_1+\omega_5&\dfrac{57}{2}\\[1mm]
  \omega_1+\omega_6&\dfrac{57}{2}
  &\omega_4&32\\
  \omega_5+\omega_6&35
  &2\omega_5&36\\
  2\omega_6&36
  &&
\end{array}
\end{equation*}
Consequently, among all weights of level at most two, the maximum is
$36$, and the weights with Casimir eigenvalue strictly larger than $32$
are precisely
\begin{equation*}
  2\omega_5,
  \qquad
  2\omega_6,
  \qquad
  \omega_5+\omega_6.
\end{equation*}

\medskip\noindent\emph{Exact level three.}
\begin{equation*}
\begin{array}{c|c@{\qquad}c|c}
  \lambda&c_2(\lambda)&\lambda&c_2(\lambda)\\
  \hline
  \omega_1+\omega_2&33
  &\omega_2+\omega_5&\dfrac{77}{2}\\[1mm]
  \omega_2+\omega_6&\dfrac{77}{2}
  &3\omega_1&39\\
  \omega_1+\omega_3&40
  &2\omega_1+\omega_5&\dfrac{85}{2}\\[1mm]
  2\omega_1+\omega_6&\dfrac{85}{2}
  &\omega_1+\omega_4&45\\
  \omega_3+\omega_5&\dfrac{93}{2}
  &\omega_3+\omega_6&\dfrac{93}{2}\\[1mm]
  \omega_1+\omega_5+\omega_6&48
  &\omega_1+2\omega_5&49\\
  \omega_1+2\omega_6&49
  &\omega_4+\omega_5&\dfrac{105}{2}\\[1mm]
  \omega_4+\omega_6&\dfrac{105}{2}
  &2\omega_5+\omega_6&\dfrac{113}{2}\\[1mm]
  \omega_5+2\omega_6&\dfrac{113}{2}
  &3\omega_5&\dfrac{117}{2}\\[1mm]
  3\omega_6&\dfrac{117}{2}
  &&
\end{array}
\end{equation*}
It follows that the weights in $P_3^+(D_6)$ with
$c_2(\lambda)\ge56$ are exactly
\begin{equation*}
  3\omega_5,
  \qquad
  3\omega_6,
  \qquad
  2\omega_5+\omega_6,
  \qquad
  \omega_5+2\omega_6.
\end{equation*}
Their Casimir values are respectively $117/2,117/2,113/2,113/2$.
After these four weights are removed, the largest remaining value is
\begin{equation*}
  c_2(\omega_4+\omega_5)
  =c_2(\omega_4+\omega_6)
  =\frac{105}{2}<56.
\end{equation*}
This proves all finite $D_6$ estimates collected in
Lemma~\ref{e7:lem:E7-D6-Casimir-small-levels}.
\section{Root and Casimir data for \texorpdfstring{$E_8$}{E8}}

\subsection{Explicit \texorpdfstring{$E_{7}$}{E7} root and Casimir data}
\label{e8:app:E8-E7-Casimir-data}

This appendix records the finite root-theoretic calculations used in
Section~\ref{e8:sec:E8-E7-Casimir-bounds}.  All roots and fundamental weights
are numbered as in Bourbaki, Planche~VI \cite{Bourbaki2002}.  The purpose is
to make the Casimir comparison independently checkable without invoking a
computer calculation.

\subsubsection{Cartan data and the Casimir formula}

In the Bourbaki numbering, the Cartan matrix of $E_{7}$ is
\begin{equation*}
 A_{E_7}
 =\begin{pmatrix}
 2&0&-1&0&0&0&0\\
 0&2&0&-1&0&0&0\\
 -1&0&2&-1&0&0&0\\
 0&-1&-1&2&-1&0&0\\
 0&0&0&-1&2&-1&0\\
 0&0&0&0&-1&2&-1\\
 0&0&0&0&0&-1&2
 \end{pmatrix}.
\end{equation*}
Its inverse is
\begin{equation*}
 A_{E_7}^{-1}
 =\begin{pmatrix}
 2&2&3&4&3&2&1\\
 2&\frac72&4&6&\frac92&3&\frac32\\
 3&4&6&8&6&4&2\\
 4&6&8&12&9&6&3\\
 3&\frac92&6&9&\frac{15}{2}&5&\frac52\\
 2&3&4&6&5&4&2\\
 1&\frac32&2&3&\frac52&2&\frac32
 \end{pmatrix}.
\end{equation*}
Direct matrix multiplication gives
\begin{equation*}
 A_{E_7}A_{E_7}^{-1}=I_7.
\end{equation*}
Since $E_7$ is simply laced, the entries of
$A_{E_7}^{-1}$ are the inner products
$(\omega_i\mid\omega_j)$.  The row sums are
\begin{equation*}
 17,\quad \frac{49}{2},\quad 33,\quad 48,
 \quad \frac{75}{2},\quad 26,\quad \frac{27}{2}.
\end{equation*}
Consequently,
\begin{equation*}
 (\omega_7\mid\omega_7)=\frac32,
 \qquad
 (\omega_7\mid\rho_{E_7})=\frac{27}{2},
 \qquad
 c_2(\omega_7)=\frac{57}{2}.
\end{equation*}

The highest root is
\begin{equation*}
 \vartheta
 =2\alpha_1+2\alpha_2+3\alpha_3+4\alpha_4
  +3\alpha_5+2\alpha_6+\alpha_7,
\end{equation*}
so for
$\mu=\sum_{i=1}^{7}m_i\omega_i$ one has
\begin{equation*}
 \ell(\mu)=\langle\mu,\vartheta^\vee\rangle
 =2m_1+2m_2+3m_3+4m_4+3m_5+2m_6+m_7.
\end{equation*}
Writing $\mathbf m=(m_1,\ldots,m_7)^{\mathsf T}$ and
$\mathbf 1=(1,\ldots,1)^{\mathsf T}$, the quadratic Casimir eigenvalue is
\begin{equation}
 c_2(\mu)
 =(\mu\mid\mu+2\rho_{E_7})
 =\mathbf m^{\mathsf T}A_{E_7}^{-1}
  (\mathbf m+2\mathbf 1).
 \label{e8:eq:app-E8-E7-Casimir-formula}
\end{equation}

\subsubsection{A uniform deficit formula}

Let
\begin{equation*}
 \nu=\sum_{i=1}^{6}m_i\omega_i,
 \qquad
 q=q(\nu)=2m_1+2m_2+3m_3+4m_4+3m_5+2m_6,
\end{equation*}
and suppose $q\le n$.  Set
\begin{equation*}
 \mu_n(\nu)=\nu+(n-q)\omega_7,
 \qquad
 D_n(\nu)=c_2(n\omega_7)-c_2(\mu_n(\nu)).
\end{equation*}
Put
\begin{equation*}
 a(\nu)=(\nu\mid\omega_7).
\end{equation*}
Expanding \eqref{e8:eq:app-E8-E7-Casimir-formula} gives the affine-linear
formula
\begin{equation}
 D_n(\nu)
 =n\bigl(3q-2a(\nu)\bigr)
  -\frac32q^2+27q-c_2(\nu)+2q\,a(\nu).
 \label{e8:eq:app-E8-uniform-deficit-formula}
\end{equation}
Thus every entry in the tables below follows from three pieces of data:
$q(\nu)$, $a(\nu)$, and $c_2(\nu)$.

\subsubsection{The complete lists for \texorpdfstring{$q\le5$}{q<=5}}

There is no nonzero $\nu$ with $q(\nu)=1$.  The complete lists for
$q=2,3,4,5$ are as follows.  In every table the last column is obtained by
substitution into \eqref{e8:eq:app-E8-uniform-deficit-formula}.

\begin{center}
\renewcommand{\arraystretch}{1.18}
\begin{tabular}{c|c|c|c}
$\nu$ & $a(\nu)$ & $c_2(\nu)$ & $D_n(\nu)$\\
\hline
$\omega_6$ & $2$ & $56$ & $2n$\\
$\omega_2$ & $\frac32$ & $\frac{105}{2}$
  & $3n+\frac32$\\
$\omega_1$ & $1$ & $36$ & $4n+16$
\end{tabular}
\par\smallskip
\textit{Table 1. All nonzero weights with $q(\nu)=2$.}
\end{center}

\begin{center}
\renewcommand{\arraystretch}{1.18}
\begin{tabular}{c|c|c|c}
$\nu$ & $a(\nu)$ & $c_2(\nu)$ & $D_n(\nu)$\\
\hline
$\omega_5$ & $\frac52$ & $\frac{165}{2}$ & $4n$\\
$\omega_3$ & $2$ & $72$ & $5n+\frac{15}{2}$
\end{tabular}
\par\smallskip
\textit{Table 2. All nonzero weights with $q(\nu)=3$.}
\end{center}

\begin{center}
\renewcommand{\arraystretch}{1.18}
\begin{tabular}{c|c|c|c}
$\nu$ & $a(\nu)$ & $c_2(\nu)$ & $D_n(\nu)$\\
\hline
$2\omega_6$ & $4$ & $120$ & $4n-4$\\
$\omega_4$ & $3$ & $108$ & $6n$\\
$\omega_2+\omega_6$ & $\frac72$ & $\frac{229}{2}$
  & $5n-\frac52$\\
$2\omega_2$ & $3$ & $112$ & $6n-4$\\
$\omega_1+\omega_6$ & $3$ & $96$ & $6n+12$\\
$\omega_1+\omega_2$ & $\frac52$ & $\frac{185}{2}$
  & $7n+\frac{23}{2}$\\
$2\omega_1$ & $2$ & $76$ & $8n+24$
\end{tabular}
\par\smallskip
\textit{Table 3. All nonzero weights with $q(\nu)=4$.}
\end{center}

\begin{center}
\renewcommand{\arraystretch}{1.18}
\begin{tabular}{c|c|c|c}
$\nu$ & $a(\nu)$ & $c_2(\nu)$ & $D_n(\nu)$\\
\hline
$\omega_5+\omega_6$ & $\frac92$ & $\frac{297}{2}$
  & $6n-6$\\
$\omega_3+\omega_6$ & $4$ & $136$ & $7n+\frac32$\\
$\omega_2+\omega_5$ & $4$ & $144$ & $7n-\frac{13}{2}$\\
$\omega_2+\omega_3$ & $\frac72$ & $\frac{265}{2}$
  & $8n$\\
$\omega_1+\omega_5$ & $\frac72$ & $\frac{249}{2}$
  & $8n+8$\\
$\omega_1+\omega_3$ & $3$ & $114$ & $9n+\frac{27}{2}$
\end{tabular}
\par\smallskip
\textit{Table 4. All nonzero weights with $q(\nu)=5$.}
\end{center}

The minima in the four displayed tables are, respectively,
\begin{equation*}
 2n,\qquad 4n,\qquad 4(n-1),\qquad 6(n-1),
\end{equation*}
with unique minimizers
\begin{equation*}
 \omega_6,
 \qquad
 \omega_5,
 \qquad
 2\omega_6,
 \qquad
 \omega_5+\omega_6.
\end{equation*}
For the relevant ranges $n\ge q$, the smallest positive deficit among all
nonzero $\nu$ is therefore $2n$, uniquely attained by $\nu=\omega_6$.
This independently verifies Lemma~\ref{e8:lem:E8-E7-deficit-classification}
and the second-largest Casimir statement in
Theorem~\ref{e8:thm:E8-E7-Casimir-maxima}.

For completeness, the resulting extremal values are
\begin{equation*}
\begin{array}{c|c|c|c}
 n & \text{unique maximum weight} & C_{\max}(n) & C_{\mathrm{second}}(n)\\
\hline
 1&\omega_7&\frac{57}{2}&0\\
 2&2\omega_7&60&56\\
 3&3\omega_7&\frac{189}{2}&\frac{177}{2}\\
 4&4\omega_7&132&124\\
 5&5\omega_7&\frac{345}{2}&\frac{325}{2}
\end{array}
\end{equation*}
\section*{Statements and Declarations}
\noindent\textbf{Funding.} No funding was received for this work.

\smallskip
\noindent\textbf{Competing interests.} The author declares no competing interests.

\printbibliography[title={References}]
\end{document}